\documentclass{article}
\usepackage{fullpage}

\usepackage[utf8]{inputenc}
\usepackage{amsmath}
\usepackage{amsthm,amssymb}
\usepackage{nicefrac,amsfonts}
\usepackage{thmtools,mathtools,leftindex,leftidx}
\usepackage{caption,subcaption}
\usepackage{epsfig,graphicx,graphics,color,tikz,tikz-cd,tcolorbox}
\usepackage{enumerate,enumitem}
\usepackage[sort,nocompress]{cite}
\usepackage{xspace}
\usepackage{bbm}
\usepackage{hyperref}
\usepackage[capitalise,noabbrev]{cleveref}
\crefname{thm}{Theorem}{Theorems}
\usepackage{float}
\usepackage{booktabs}
\usepackage{tabularx}
\usepackage{array}
\definecolor{tableheader}{RGB}{226,232,240}
\definecolor{tablerow}{RGB}{248,250,252}
\hypersetup{
	colorlinks=true,       % false: boxed links; true: colored links
	linkcolor=blue,        % color of internal links (change box color with linkbordercolor)
	citecolor=magenta,         % color of links to bibliography
	filecolor=magenta,     % color of file links
	urlcolor=cyan,         % color of external links
	linktoc=all
}
\usepackage{titling}

\newcommand{\starref}[2]{\hyperref[#1]{\ensuremath{(\star_{#2})}}}

\newlength{\bibitemsep}
\newlength{\bibparskip}
\let\oldthebibliography\thebibliography
\renewcommand\thebibliography[1]{%
	\oldthebibliography{#1}%
	\setlength{\parskip}{\bibitemsep}%
	\setlength{\itemsep}{\bibparskip}%
}

\makeatletter
\renewcommand{\paragraph}{%
	\@startsection{paragraph}{4}%
	{\z@}{1.6ex \@plus 1ex \@minus .2ex}{-0.5em}%
	{\normalfont\normalsize\bfseries}%
}
\makeatother

\theoremstyle{plain}
\newtheorem{thm}{Theorem}[section]
\newtheorem{lem}[thm]{Lemma}
\newtheorem{cor}[thm]{Corollary}
\newtheorem{cla}[thm]{Claim}
\newtheorem{prop}[thm]{Proposition}

\theoremstyle{definition}

\newtheorem{rem}[thm]{Remark}

\newcommand*{\claimproofname}{Proof of claim.}
\newenvironment{claimproof}[1][\claimproofname]{\begin{proof}[#1]}{\end{proof}}

\makeatletter
\newcommand{\leqnomode}{\tagsleft@true}
\newcommand{\reqnomode}{\tagsleft@false}
\makeatother

\makeatletter
\newcommand{\linkdest}[1]{\Hy@raisedlink{\hypertarget{#1}{}}}
\makeatother

\def\final{0}  % set this to 1 to get a comment-free version
\def\iflong{\iffalse}
\ifnum\final=0  %namely if we allow comments in the output
\newcommand{\kristof}[1]{{\color{red}[{\tiny \textbf{Kristóf:}  #1}]\marginpar{\color{red}*}}}
\newcommand{\andris}[1]{{\color{magenta}[{\tiny \textbf{Andris:}  #1}]\marginpar{\color{magenta}*}}}
\newcommand{\adam}[1]{{\color{teal}[{\tiny \textbf{Adam:}  #1}]\marginpar{\color{teal}*}}}
\else % in this case [final=1] we don't want any comments to show
\newcommand{\kristof}[1]{}
\newcommand{\andris}[1]{}
\newcommand{\adam}[1]{}
\fi

\DeclareMathOperator{\clo}{cl}

\DeclareMathOperator{\dir}{dir}

\DeclareMathOperator\cost{cost}

\newcommand{\bN}{\mathbb{N}}

\newcommand{\bQ}{\mathbb{Q}}
\newcommand{\bR}{\mathbb{R}}
\newcommand{\bZ}{\mathbb{Z}}

\newcommand{\cA}{\mathcal{A}}
\newcommand{\cB}{\mathcal{B}}
\newcommand{\cC}{\mathcal{C}}

\newcommand{\cF}{\mathcal{F}}
\newcommand{\cG}{\mathcal{G}}

\newcommand{\cI}{\mathcal{I}}

\newcommand{\cK}{\mathcal{K}}

\newcommand{\cM}{\mathcal{M}}

\newcommand{\cP}{\mathcal{P}}

\newcommand{\cR}{\mathcal{R}}
\newcommand{\cS}{\mathcal{S}}
\newcommand{\cT}{\mathcal{T}}
\newcommand{\cU}{\mathcal{U}}

\newcommand{\cX}{\mathcal{X}}

\newcommand*\diff{\mathop{}\!\mathrm{d}}
\newcommand{\mm}{\text{\sf mm}}

\newcommand{\bmm}{\text{\sf bmm}}

\colorlet{groundcolor}{gray!5}
\colorlet{fcolor}{gray!100}
\colorlet{f0color}{gray!50}
\colorlet{f1color}{gray!62}
\colorlet{f2color}{gray!74}
\colorlet{f3color}{gray!86}

\tikzset{
	groundset/.style={thick, fill=groundcolor, rounded corners=15pt, draw=black},
	set F/.style={thick, fill=fcolor, draw=black},
	edge F/.style={thick, draw=black},
	set boundary/.style={thick, dashed, draw=black!85},
	set X/.style={thick, fill=gray!100, draw=black},
	set Y/.style={thick, fill=white, draw=black},
	label J/.style={anchor=north east, font=\Large\bfseries},
	label F/.style={font=\large\bfseries, anchor=west},
	label Fi/.style={font=\small\bfseries, color=white},
	label A/.style={font=\bfseries, color=black},
	label X/.style={font=\small\bfseries, color=white},
	label Y/.style={font=\small\bfseries, color=black}
}

\title{Measurable Matroids: Foundations and Min--Max Theorems}

\thanksmarkseries{alph}
\author{
	Kristóf Bérczi\thanks{MTA-ELTE Matroid Optimization Research Group and HUN-REN-ELTE Egerváry Research Group, Department of Operations Research, ELTE Eötvös Loránd University, and HUN-REN Alfréd Rényi Institute of Mathematics, Budapest, Hungary. Email: \texttt{kristof.berczi@ttk.elte.hu}.}
	\and
	András Imolay\thanks{MTA-ELTE Matroid Optimization Research Group, Department of Operations Research, ELTE Eötvös Loránd University, Budapest, Hungary. Email: \texttt{andras.imolay@ttk.elte.hu}.}
	\and
	Ádám Schweitzer\thanks{Department of Mathematics, KTH Royal Institute of Technology, Stockholm, Sweden. Email: \texttt{adasch@kth.se}.} 
}
\date{}

\begin{document}
	\maketitle
	\thispagestyle{empty}
	%%%%%%%%%%%%%%%%%%%%%%%%%%%%%%%%
	
	\begin{abstract}
		We develop a measure-theoretic analogue of matroid theory on standard atomless measure spaces. Motivated by the quotient-convergence framework for submodular set functions, our aim is to identify suitable measurable objects on the limit side of finite matroid theory and to develop their basic structural and optimization theory. We prove that measurable matroids admit equivalent descriptions by independent sets, bases, rank functions, and closure operators. The class includes normalized finite matroids, cycle matroids of graphings, and measurable analogues of partition, nested, lattice path, transversal, and matching matroids. We establish measurable analogues of truncation, elongation, direct sum, duality, and minors. 
		
		We prove measurable versions of Edmonds' matroid intersection theorem and the Edmonds--Fulkerson matroid union theorem, together with an attainment theorem for common bases. The intersection theorem retains the classical min--max form, with the maximum replaced in general by a supremum. Applied to partition matroids, it gives an exact Hall-deficiency formula for measurable matchings in bipartite graphings and a min--max theorem for continuous bipartite $b$-matchings, with applications to capacity-constrained transport and prescribed cross-sections. We give a rank-expansion criterion under which the supremum is attained. For bipartite graphings, this criterion specializes to the measurable perfect matching theorem of Lyons and Nazarov.
		
		For measurable union, the Edmonds--Fulkerson rank formula remains valid. As a main application of measurable union and its attainment theorem, we prove measurable Nash-Williams--Tutte theorems for graphings, characterizing approximate coverings and packings by hyperfinite essential spanning forests and obtaining exact decompositions under strengthened rank inequalities. 
		
		\medskip
		
		\noindent\textbf{Keywords:} Graphings, Matroid intersection, Matroid limits, Measurable matroids, Submodular set functions
		
		\medskip
		
		\noindent\textbf{2020 Mathematics Subject Classification:} Primary 05B35; Secondary 28A05, 28A12, 90C27.
	\end{abstract}
	%%%%%%%%%%%%%%%%%%%%%%%%%%%%%%%%
	\newpage
	\pagenumbering{roman}
	\tableofcontents
	\newpage
	\pagenumbering{arabic}
	\setcounter{page}{1}
	%%%%%%%%%%%%%%%%%%%%%%%%%%%%%%%%
	%%%%%%%%%%%%%%%%%%%%%%%%%%%%%%%%
	\section{Introduction}
	\label{chap:intro}
	%%%%%%%%%%%%%%%%%%%%%%%%%%%%%%%%
	
	Submodularity is one of the central convexity-type notions in discrete mathematics, with a foundational role in combinatorial optimization and game theory. A set function $f$ is \emph{submodular} if it satisfies 
	\[
	f(X)+f(Y) \geq f(X \cap Y) + f(X \cup Y)
	\]
	for all pairs of sets $X,Y$. Besides its role in combinatorial optimization and graph theory, submodularity naturally appears in information theory, potential theory, economics, machine learning, and several other fields; we refer to~\cite{frank2011connections,fujishige2005submodular,schrijver2003combinatorial} and the survey~\cite{bilmes2022submodularity}. Although the same inequality appears in both finite combinatorics and continuous analysis, the finite and continuous theories have largely developed along separate lines.
	
	In the finite setting, \emph{matroids} form the basic combinatorial model. A finite matroid can be defined as a nonnegative integer-valued monotone submodular function $r$ satisfying $r(X) \le |X|$ for every set $X$; several equivalent definitions exist, and this one follows the submodular language. Since the foundational works of Whitney~\cite{whitney1935abstract}, Rado~\cite{rado1942theorem}, and Edmonds~\cite{edmonds1970submodular}, matroids have become one of the central frameworks for understanding independence. They appear in linear algebra~\cite{whitney1935abstract}, polyhedral combinatorics~\cite{edmonds1970submodular,edmonds1965transversals}, structural matroid theory~\cite{seymour1980decomposition,oxley2011matroid}, and optimization~\cite{schrijver2000combinatorial,fujishige2005submodular}. Recent work has extended the theory into algebraic geometry as well, most notably through the Hodge-theoretic approach of Adiprasito, Huh, and Katz to long-standing log-concavity conjectures for matroids~\cite{adiprasito2018hodge}.
	
	Parallel to this, the case when the ground set is \emph{continuous} was first studied in the seminal work of Choquet~\cite{choquet1954theory} on capacities of Borel sets. This line of research led to the Choquet integral and to the study of infinite-alternating functions, analytic counterparts of coverage functions, which form a well-studied subclass of submodular functions~\cite{bhaskar2019complexity,chakrabarty2015recognizing,berczi2026monotonic}. Despite these parallel developments, the interaction between the discrete and continuous theories remained limited for several decades.
	
	The emergence of \emph{graph limit theory}~\cite{lovasz2012large} changed the way finite combinatorial objects are connected to measurable ones. Large finite graphs can often be interpreted as approximations of analytic objects, such as graphons in the dense setting and graphings in the bounded-degree setting~\cite{benjamini2001recurrence,BCLSV1,BCLSV2,lovasz2012large}. This makes it possible to study finite graph parameters, local structure, and optimization problems after passing to a measurable limit object. In a recent extensive work, Lovász developed Choquet's theory from a combinatorial viewpoint~\cite{lovasz2023submodular}, reinterpreting analytic capacities through the lens of submodularity and initiating a systematic study of problems arising from extending the theory of finite submodular functions to the analytic setting. A natural question is whether there is a comparable measurable framework for matroidal independence and submodular optimization.
	
	At first sight, the analogy with graph limits is compelling. Matroids generalize the independence structure of vector spaces and graphs, and encode many central graph-theoretic notions, including forests, spanning trees, matchings, connectivity, and planarity. Moreover, by Whitney's theorem~\cite{whitney19332}, the cycle matroid of a graph determines the graph up to elementary transformations under mild assumptions. Thus, a measurable theory of matroids should also shed light on graph limits, especially on questions where cycles, forests, and spanning structures play the central role.
	
	There is, however, a fundamental obstacle. Most graph limit theories rely on local neighborhoods, homomorphism densities, or sampling small induced subgraphs. Matroids do not have a comparable notion of local neighborhood, nor is there a canonical analogue of homomorphism density that captures their structure through fixed finite test objects. In particular, even for graphic matroids, the natural ground set is the edge set, and independence is a global acyclicity condition. This makes it unlikely that a matroid limit theory can be obtained by a direct adaptation of existing graph limit methods. A different approach to matroid limits, of bounded-branch-depth matroids representable over a fixed finite field, was studied by Kardo\v{s}, Kr\'al', Liebenau, and Mach~\cite{kardovs2017first}.
	
	A first step in this direction was taken by Bérczi, Borbényi, Lovász, and Tóth~\cite{berczi2026quotient}, who introduced \emph{quotient-convergence} for submodular set functions. Given a set function $\varphi$ on a set algebra and a finite measurable partition $\mathcal{Q}$ of the ground set, the quotient of $\varphi$ with respect to $\mathcal{Q}$ records the values of $\varphi$ on unions of parts of $\mathcal{Q}$. A sequence of set functions is quotient-convergent if these finite quotients converge, in Hausdorff distance, for every fixed number of parts. This notion is particularly natural for matroids, since it observes the asymptotic behavior of their rank functions rather than attempting to sample local neighborhoods.
	
	The quotient-convergence framework gives a way to discuss limits of finite submodular functions, but convergence alone does not identify the right objects on the limit side. In graph limit theory, graphons and graphings are not merely limiting set functions; they are structured measurable objects on which graph-theoretic concepts continue to make sense. The analogous problem for matroids is to find measurable objects that retain the most important structural features of finite matroids. The aim of this paper is to identify such objects and to develop their basic theory.
	
	We introduce \emph{measurable matroids}, analytic counterparts of finite matroids defined on standard atomless measure spaces. Informally, a measurable matroid consists of a family of Borel sets, called independent sets, which is downward closed, closed under increasing countable unions, and satisfies a measurable exchange axiom. Maximal independent subsets of a given measurable set are required to have the same measure, and this common value defines the rank. The resulting rank function is a monotone submodular set function dominated by the ground measure, and thus fits naturally into the framework of submodular set functions on $\sigma$-algebras discussed above. In \cref{sec:atomic-case}, we describe what happens if the same independence axioms are imposed on a finite standard Borel measure space with atoms. As we will see, this causes no loss of generality, since the atomic part separates from the atomless part.
	
	We show that measurable matroids carry the full basic calculus expected from matroid theory: independent sets, bases, rank functions, closure operators, duality, and minors all have measurable analogues. A motivating example comes from graphings. Lov\'asz~\cite{lovasz2024matroid} associated with a graphing $G=(J,\Sigma_J,\nu,E)$ a monotone submodular rank function $\rho_G$ on the Borel edge space $(E,\Sigma_E)$, generalizing the normalized rank of the cycle matroid of a finite graph. B\'erczi, Borb\'enyi, Lov\'asz, and T\'oth~\cite{berczi2026cycle} showed that, for graphings, hyperfinite forests and hyperfinite essential spanning forests behave as the independent sets and bases of a cycle matroid. They also showed that if a sequence of finite bounded-degree graphs converges to a graphing $G$ in the local-global sense, then the normalized rank functions of their cycle matroids converge to $\rho_G$ under quotient-convergence. Measurable matroids place such examples into a general axiomatic and optimization framework.
	
	Beyond this structural theory, we prove measurable analogues of two central results of finite matroid optimization: Edmonds' matroid intersection theorem~\cite{edmonds1970submodular} and the Edmonds--Fulkerson~\cite{edmonds1965transversals} matroid union theorem. The intersection theorem retains the classical min--max value, although the optimum need not be attained. Lov\'asz's fractional intersection theorem gives the same value when common independent sets are replaced by minorizing measures, so there is no gap between the fractional and measurable-set optimal values. 
	
	Attainment is therefore a separate question. We prove that a rank-expansion condition guarantees attainment, and in the full-rank case yields a common basis. For the partition matroids associated with a bipartite graphing, this condition specializes to the expansion condition in the measurable perfect matching theorem of Lyons and Nazarov~\cite{lyons2011perfect}. Thus, the same expansion principle extends from measurable matchings to arbitrary pairs of measurable matroids.
	
	%%%%%%%%%%%%%%%%%%%%%%%%%%%%%%%%
	\subsection{Previous Models of Infinite Matroids}
	%%%%%%%%%%%%%%%%%%%%%%%%%%%%%%%%
	
	There are three existing lines of work that are closely related to our approach, but none provides the framework needed here. 
	
	The first is the purely combinatorial theory of infinite matroids. One of the historically most natural extensions is the class of \emph{finitary matroids}, in which every dependent set contains a finite circuit. Finitary matroids, however, do not behave well with respect to duality. To overcome this difficulty, Bruhn, Diestel, Kriesell, Pendavingh, and Wollan~\cite{bruhn2013axioms} introduced an axiomatic framework for infinite matroids, later connected to earlier work of Higgs~\cite{higgs1969matroids} on $B$-matroids. This theory successfully incorporates topological cycle matroids of infinite graphs and gives a robust discrete infinite theory; see also~\cite{bruhn2011infinite,bowler2018infinite}. The analogues of matroid intersection, union, base packing, and base covering have also been studied in this framework; see, for example, \cite{bowler2015matroid,aigner2011infinite}. Nevertheless, these models are fundamentally set-theoretic rather than measure-theoretic. They do not naturally capture the analytic behavior of large finite matroids under convergence, nor do they interact directly with graphings or Choquet-type submodular functions. 
	
	The second line of work studies continuous analogues of matroids through lattices equipped with real-valued rank functions. Bj\"orner introduced the continuous partition lattice as a limit of finite partition lattices \cite{bjorner1987continuous}, and Bj\"orner and Lov\'asz developed a more general theory of pseudomodular lattices and continuous matroids \cite{bjorner1987pseudomodular}; see also Bj\"orner's later account \cite{bjorner2019continuous}. Haiman subsequently gave a realization of Bj\"orner's continuous partition lattice in terms of measurable partitions \cite{haiman1994realization}. These approaches are closely related in spirit to ours, but their basic objects are lattices or measurable partitions rather than families of measurable subsets of a fixed measure space satisfying independence axioms.
	
	The third line of work is convex-analytic. A \emph{charge} on a set algebra $(J,\Sigma)$ is a function $\alpha\colon\Sigma\to\bR$ such that $\alpha(\emptyset)=0$ and $\alpha(A\cup B)=\alpha(A)+\alpha(B)$ whenever $A,B\in\Sigma$ are disjoint. For an increasing submodular set function $\varphi$ on $(J,\Sigma)$ with $\varphi(\emptyset)=0$, Lovász~\cite{lovasz2023submodular} considered the set of minorizing charges
	\[
	\mm(\varphi)=\{\alpha\colon\Sigma\to\bR_{\geq 0} \mid \alpha\text{ is a charge and }\alpha(A)\leq\varphi(A)\text{ for all }A\in\Sigma\},
	\]
	and its basic version
	\[
	\bmm(\varphi)=\{\alpha\in\mm(\varphi) \mid \alpha(J)=\varphi(J)\}.
	\]
	These sets are analytic analogues of the independence and base polytopes of a polymatroid. Since $\varphi$ is nonnegative, it follows from~\cite[Corollary~5.3(b)]{lovasz2023submodular} that $\bmm(\varphi)$ is nonempty. In the finite case, when $\varphi$ is the rank function of a matroid, $\mm(\varphi)$ and $\bmm(\varphi)$ are precisely its independence and base polytopes, respectively; consequently, their extreme points are precisely the characteristic vectors of its independent sets and bases, respectively~\cite{frank2011connections,schrijver2003combinatorial}. It is therefore tempting to define bases of a measurable matroid as extreme points of $\bmm(\varphi)$.
	
	This approach faces two difficulties. First, the extreme points of $\bmm(\varphi)$ are not yet well understood in general. Even for the cycle matroid of a graphing, where exposed points correspond to hyperfinite essential spanning forests~\cite{berczi2026cycle}, it remains open whether all extreme points are exposed. Second, the minorizing-charge construction does not by itself encode the subcardinality property of matroid rank functions. In the finite setting, this is the condition that adding one element increases the rank by at most one. Thus, this definition seems too general to serve as a measurable analogue of matroids: it gives a generalization of polymatroids rather than matroids. These observations suggest that a measurable theory of matroids should not be based only on extreme measures. Our definition is intended to overcome these difficulties through an axiomatic approach.
	
	%%%%%%%%%%%%%%%%%%%%%%%%%%%%%%%%
	\subsection{Our Results}
	%%%%%%%%%%%%%%%%%%%%%%%%%%%%%%%%
	
	Our first contribution is to give an axiomatic foundation for matroids over standard atomless measure spaces. We define measurable matroids through independent sets and prove that the usual equivalent descriptions of finite matroids extend to the measurable setting; see \cref{sec:axioms}. Thus, measurable matroids can be characterized by their independent sets, bases, rank functions, or closure operators. However, several properties that are automatic over finite ground sets must be imposed explicitly in the measurable setting. In particular, closure under increasing limits and closure under convergence in measure play an essential role. We also treat the case of measures with atoms in \cref{sec:atomic-case}.
	
	We then discuss several basic examples of measurable matroids; see \cref{sec:examples}. Finite matroids embed into the theory after the usual normalization, and cycle matroids of graphings provide a motivating infinite example. We also define measurable analogues of partition, nested, lattice path, transversal, and matching matroids. These examples show that measurable matroids capture a variety of natural independence structures.
	
	In \cref{sec:operations}, we study basic operations on measurable matroids. We show that the class is closed under truncation, elongation, direct sum, duality, restriction, contraction, and union. The existence of duals is particularly important, since one of the difficulties with approaches based on finitary matroids is that this class is not closed under duality~\cite{bruhn2011infinite}. In our setting, the dual is obtained by taking complements of bases, just as in the finite case.
	
	The union operation reveals a basic difference between the measurable and finite settings. Given measurable matroids $M_1,\dots,M_k$ on the same ground space, the finite analogy suggests declaring a set independent if it can be written as a union of independent sets, one from each $M_i$. In contrast to the finite case, this family need not be closed under increasing unions. It is therefore natural to take its closure under such unions. We show that the resulting family is a measurable matroid.
	
	In \cref{sec:measurable}, we establish connections with the theory of infinite submodularity and with the convex-analytic viewpoint of minorizing charges. The linear extension gives a useful form of the generalized submodular inequality. The discussion of minorizing charges explains why the extreme-point theory of base polytopes alone does not recover the full matroidal structure: even for natural measurable rank functions, an extreme basic minorizing charge need not be the restriction of the ground measure to a basis.
	
	In \cref{sec:intun}, we prove the measurable matroid intersection theorem. If $M_1=(J,\Sigma,\mu,\cF_1)$ and $M_2=(J,\Sigma,\mu,\cF_2)$ are measurable matroids with rank functions $r_1$ and $r_2$, then
	\begin{equation*}
		\sup\{\mu(F)\mid F\in\cF_1\cap\cF_2\}
		=
		\min_{X\in\Sigma}\{r_1(X)+r_2(J\setminus X)\}.
	\end{equation*}
	As discussed above, this has the same optimal value as Lov\'asz's fractional intersection theorem~\cite[Theorem~5.15]{lovasz2023submodular}; see also~\cite{yu2026submodular}. The proof is not a formal extension of Edmonds' finite argument. In the measurable setting, the usual circuit-based exchange graph is unavailable, and single-element exchanges have no effect on the measure. We replace these tools by closure-based analogues of alternating reachability and measurable augmenting chains, together with a limiting argument that allows the augmentation process to be iterated. We also prove an attainment theorem under a rank-expansion condition; in the full-rank case, it yields a common basis. For partition matroids associated with bipartite graphings, the condition specializes to that of the measurable perfect matching theorem of Lyons and Nazarov~\cite{lyons2011perfect}.
	
	For measurable union, we obtain the Edmonds--Fulkerson rank formula. If $M_1,\dots,M_k$ have rank functions $r_1,\dots,r_k$, then  the rank of their measurable union is
	\begin{equation*}
		r_\vee(A)
		=
		\min_{X\subseteq A}
		\left\{
		\mu(A\setminus X)+\sum_{i=1}^k r_i(X)
		\right\}.
	\end{equation*}
	The proof follows the finite reduction of matroid union to matroid intersection, by passing to disjoint copies and intersecting the direct sum with a finite-classed measurable partition matroid.
	
	Finally, we discuss several structural applications in \cref{sec:applications}. Partition matroids and the intersection theorem give an exact Hall-deficiency formula for the supremal measure of a matching in a bipartite graphing, while the attainment results yield perfect matchings under rank expansion. Applied to the two partition matroids associated with the coordinate projections of a product space, the intersection theorem also gives an atomless min--max formula for continuous bipartite $b$-matchings. This problem can equivalently be viewed as a capacity-constrained marginal problem closely related to optimal transport. Together with weak-star compactness, the Krein--Milman theorem, and the extreme-point characterization of Korman and McCann~\cite[Proposition~3.2]{korman2013insights}, this yields capacitated Hall and Lorentz-type results for prescribed cross-sections. Related measurable arboricity questions have been considered previously~\cite{aim2022descriptive}. Applying the union theorem to cycle matroids of graphings gives measurable Nash-Williams--Tutte theorems: exact characterizations of approximate coverings and packings by hyperfinite essential spanning forests. Under strengthened rank inequalities, the union-attainment theorem gives exact partitions into hyperfinite forests and exact packings by hyperfinite essential spanning forests. We also obtain an approximate measurable version of the Greene--Magnanti basis-exchange theorem.
	
	%%%%%%%%%%%%%%%%%%%%%%%%%%%%%%%%
	\subsection{Organization and Reading Guide}
	%%%%%%%%%%%%%%%%%%%%%%%%%%%%%%%%
	
	The paper brings together ideas from matroid theory, measurable combinatorics, and submodular analysis. Since results and techniques that are standard in one of these areas may be less familiar to readers working in the others, we have tried to make the presentation self-contained and sometimes include details that specialists may consider routine. This also explains the paper's broad scope: besides developing the basic theory of measurable matroids, we discuss a range of examples, operations, min--max theorems, and applications. We rely on numerous fields in this paper, thus, it is not possible to include all the necessary background; for general background, we refer the reader to Oxley~\cite{oxley2011matroid} for matroid theory, Folland~\cite{folland1999real} for measure theory, Kechris~\cite{kechris1995classical} for standard Borel spaces and descriptive set theory, and Fujishige~\cite{fujishige2005submodular} for submodular functions and optimization.
	
	The rest of the paper is organized as follows. \Cref{sec:axioms} gives the axiomatic foundations of measurable matroids, discusses the case when the ground measure has atoms, and proves the equivalence of the independence, basis, rank, and closure formulations. The main examples of measurable matroids are discussed in \cref{sec:examples}. The basic operations, including truncation, elongation, direct sum, duality, restriction, contraction, and union, are developed in \cref{sec:operations}. \Cref{sec:measurable} discusses connections with infinite submodularity and minorizing charges. The measurable matroid intersection theorem is proved in \cref{sec:intun}; the same section also gives an expansion criterion for attainment of the intersection supremum and proves the rank formula for measurable union. Finally, structural applications to bipartite matchings in graphings, continuous bipartite $b$-matchings and prescribed cross-sections, measurable Nash-Williams--Tutte theorems, and basis exchanges are given in \cref{sec:applications}.
	
	%%%%%%%%%%%%%%%%%%%%%%%%%%%%%%%%
	\subsection{Tools and Notation}
	\label{sec:notation}
	%%%%%%%%%%%%%%%%%%%%%%%%%%%%%%%%
	
	We denote the sets of \emph{real numbers}, \emph{integers}, and \emph{positive integers} by $\bR$, $\bZ$, and $\bN$, respectively, adding the subscript $\geq 0$ when restricting to \emph{nonnegative} values. For a positive integer $n$, we use $[n]\coloneqq\{1,\dots,n\}$. For sets $A, B \subseteq J$, we write $A \triangle B = (A \setminus B) \cup (B \setminus A)$. If $Y$ consists of a single element $y$, then $X\setminus \{y\}$ and $X\cup \{y\}$ are abbreviated as $X-y$ and $X+y$, respectively. 
	
	We reserve the symbol $\cB$ for the family of bases of measurable matroids, and the Borel $\sigma$-algebra of a space is always denoted by $\Sigma$. Throughout the paper, all sets that appear as subsets of an ambient measure space are assumed to belong to the relevant $\sigma$-algebra, unless explicitly stated otherwise. Thus, when the ambient space is denoted by $(J,\Sigma,\mu)$, expressions such as $X\subseteq J$ or $F\subseteq X\subseteq J$ mean that all the sets involved belong to $\Sigma$. Throughout the paper, we omit routine checks that the sets and maps under consideration are Borel. We give details only when this requires a separate argument or a standard result from descriptive set theory.
	
	We use the following convention throughout, without further mention. Measurable sets are identified whenever they differ by a null set. Thus, unless a literal pointwise statement is explicitly intended, $A=B$ means $\mu(A\triangle B)=0$, while $A\subseteq B$ means $\mu(A\setminus B)=0$. Set-theoretic operations are interpreted in the resulting quotient, and whenever a pointwise argument is needed, we pass to suitable Borel representatives. These operations are well-defined for finite and countable families because the null sets form a $\sigma$-ideal. The resulting quotient is called the \emph{measure algebra} of $(J,\Sigma,\mu)$; see~\cite{kechris1995classical}. Since every set measurable with respect to the completion of $\mu$ differs from a Borel set by a null set, passing to the completion would give the same measure algebra. We therefore work with Borel sets throughout.
	
	A measurable space $(J,\Sigma)$ is called a \emph{standard Borel space} if there exists a complete separable metric on $J$ whose Borel $\sigma$-algebra is $\Sigma$. In this paper, a measure space $(J,\Sigma,\mu)$ is called \emph{standard} if $(J,\Sigma)$ is a standard Borel space and $\mu\colon\Sigma\to\bR_{\geq 0}$ is finite and atomless. If $\mu(J)>0$, then, by \cite[Theorem~17.41]{kechris1995classical}, the normalized space $(J,\Sigma,\mu/\mu(J))$ is measure-isomorphic to the unit interval equipped with Lebesgue measure. Thus, when measurable matroids are considered only up to measure-space isomorphism, one may work on $[0,1]$ without loss of generality. For measurable spaces $(S,\Sigma_S)$ and $(T,\Sigma_T)$, we write $\Sigma_S\otimes\Sigma_T$ for the product $\sigma$-algebra on $S\times T$. If $\nu$ and $\eta$ are measures on $S$ and $T$, respectively, then $\nu\otimes\eta$ denotes their product measure. For $X\subseteq S\times T$, we write $X_t\coloneqq\{s\in S\mid (s,t)\in X\}$ and $X^s\coloneqq\{t\in T\mid (s,t)\in X\}$ for its sections at $t\in T$ and $s\in S$, respectively.

	Let $\varphi$ be a set function on a set algebra $(J,\Sigma)$. We say that $\varphi$ is \emph{submodular} if
	\[
	\varphi(X)+\varphi(Y)\geq\varphi(X\cap Y)+\varphi(X\cup Y)
	\]
	for all $X,Y\in\Sigma$, and \emph{modular} if equality holds for all $X,Y\in\Sigma$. A \emph{charge} on $(J,\Sigma)$ is a function $\alpha\colon\Sigma\to\bR$ such that $\alpha(\emptyset)=0$ and $\alpha(A\cup B)=\alpha(A)+\alpha(B)$ whenever $A,B\in\Sigma$ are disjoint. For an increasing submodular set function $\varphi$ with $\varphi(\emptyset)=0$, let
	\[
	\mm(\varphi)\coloneqq\{\alpha\colon\Sigma\to\bR_{\geq0}\mid \alpha\text{ is a charge and }\alpha(A)\leq\varphi(A)\text{ for every }A\in\Sigma\}
	\]
	denote the set of \emph{minorizing charges} of $\varphi$, and let
	\[
	\bmm(\varphi)\coloneqq\{\alpha\in\mm(\varphi)\mid \alpha(J)=\varphi(J)\}
	\]
	denote the set of \emph{basic minorizing charges}.
	
	We will repeatedly use the following \emph{exact-subset property} of atomless measures: if $A\in\Sigma$ and $0\leq t\leq\mu(A)$, then there exists a measurable set $B\subseteq A$ with $\mu(B)=t$. For a set $X\in\Sigma$, we write $\mathbf 1_X$ for its \emph{indicator function}. Let $\cX\subseteq\Sigma$ be a family of sets. A set $X\in\cX$ is \emph{maximal in $\cX$} if $X\subseteq Y$ for some $Y\in\cX$ implies $Y\subseteq X$, and \emph{minimal in $\cX$} if $Y\subseteq X$ for some $Y\in\cX$ implies $X\subseteq Y$. For $Z \subseteq J$, we denote the restriction of $\cX$ to $Z$ by
	\[
	\cX|Z = \{X \in \cX \mid X \subseteq Z\}.
	\]
	We say that a sequence $(X_i)_{i \in \bN}$ in $\cX$ is \emph{increasing} if $X_i \subseteq X_{i+1}$ for all $i\in\bN$, and \emph{decreasing} if $X_i\supseteq X_{i+1}$ for all $i\in\bN$. The \emph{upward closure} of $\cX$ is defined by
	\begin{equation*}
		\overline{\cX} \coloneqq \Big\{\bigcup_{i=1}^\infty X_i \mid (X_i)_{i \in \bN} \text{ is increasing in } \cX\Big\}.   
	\end{equation*}
	Similarly, the \emph{downward closure} of $\cX$ is defined by
	\begin{equation*}
		\underline{\cX} \coloneqq \Big\{\bigcap_{i=1}^\infty X_i \mid (X_i)_{i \in \bN} \text{ is decreasing in } \cX\Big\}.   
	\end{equation*}
	We call $\cX$ \emph{$\sigma$-increasing} or \emph{$\sigma$-decreasing} if $\cX=\overline{\cX}$ or $\cX=\underline{\cX}$ holds, respectively.\footnote{Such families also appear in the definition of a \emph{monotone class}; see, e.g., \cite[Chapter 1]{halmos1950measure} for background.} 
	
	\begin{lem} \label{lem:upward_closure}
		Let $\cX \subseteq \Sigma$ be a family closed under countable intersections. Then $\overline{\cX}$ is $\sigma$-increasing.
	\end{lem}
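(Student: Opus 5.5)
We have to show that $\overline{\overline{\cX}}\subseteq\overline{\cX}$; the reverse inclusion is immediate, since constant sequences show $\cY\subseteq\overline{\cY}$ for any family $\cY$. So let $(Y_j)_{j\in\bN}$ be increasing in $\overline{\cX}$. For each $j$, write $Y_j=\bigcup_{i}X_{j,i}$ with $(X_{j,i})_{i\in\bN}$ increasing in $\cX$. We must exhibit a single increasing sequence in $\cX$ whose union is $Y=\bigcup_j Y_j$.

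The natural diagonal choice $Z_n\coloneqq X_{1,n}\cup\dots\cup X_{n,n}$ is increasing with union $Y$, but it need not lie in $\cX$, because $\cX$ is only assumed closed under intersections. We therefore use intersections instead. First observe that $\overline{\cX}$ is itself closed under finite intersections. Indeed, if $A=\bigcup_i A_i$ and $B=\bigcup_i B_i$ with both sequences increasing in $\cX$, then $A\cap B=\bigcup_i(A_i\cap B_i)$, because the sequences are increasing. Moreover, $(A_i\cap B_i)_i$ is increasing in $\cX$.

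We then replace the representing sequences so that they are compatible across $j$. Set
\[
X'_{j,i}\coloneqq X_{j,i}\cap\bigcap_{k\ge j}X_{k,i}.
\]
This lies in $\cX$ by closure under countable intersections, and it is increasing in $i$. Its union over $i$ is still $Y_j$: the inclusion $\subseteq$ is clear. Conversely, a point $x\in Y_j$ lies in $Y_k$ for all $k\ge j$. However, the index $i$ at which $x$ enters $X_{k,i}$ may depend on $k$ and be unbounded, so this naive modification can fail.

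This is the main obstacle: a countable family of "entry times" must be controlled uniformly. Two routes are available.

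\textbf{Measure route.} Since we work modulo null sets and $\mu$ is finite, we can argue measure-theoretically. Choose $i_j$ increasing with $\mu(Y_j\setminus X_{j,i_j})<2^{-j}$, and let
\[
W_n\coloneqq\bigcap_{j\ge n}X_{j,i_j}.
\]
Each $W_n$ lies in $\cX$, and the sequence $(W_n)_n$ is increasing. It remains to compute the union. We have $W_n\subseteq\bigcap_{j\ge n}Y_j=Y_n$, and
\[
\mu(Y_n\setminus W_n)\le\sum_{j\ge n}\mu(Y_j\setminus X_{j,i_j})\le 2^{1-n},
\]
since $Y_n\subseteq Y_j$ for $j\ge n$. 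Hence $\bigcup_n W_n\subseteq Y$. Moreover, $\mu(Y\setminus\bigcup_n W_n)\le\mu(Y\setminus Y_n)+2^{1-n}\to0$, so $\bigcup_n W_n=Y$ in the measure algebra. This fits the paper's convention that equalities hold up to null sets.

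\textbf{Pointwise route.} If a pointwise version were intended, it might fail without further hypotheses. I would therefore present the measure-algebra argument above, which uses countable intersections exactly to make $W_n\in\cX$. This is the key step.
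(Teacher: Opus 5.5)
Your measure route is correct and is exactly the paper's argument: pick $X_{j,i_j}\subseteq Y_j$ with $\mu(Y_j\setminus X_{j,i_j})<2^{-j}$, take the tail intersections $W_n=\bigcap_{j\ge n}X_{j,i_j}\in\cX$, and bound $\mu(Y_n\setminus W_n)\le 2^{1-n}$ to get $\bigcup_n W_n=Y$ in the measure algebra. The detours (closure of $\overline{\cX}$ under finite intersections, the abandoned $X'_{j,i}$ modification, and the pointwise remark) are never used and can be dropped.
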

	
	\begin{proof}
		Let $Z \in \overline{\overline{\cX}}$ and let $(Z_i)_{i \in \bN}$ be an increasing sequence in $\overline{\cX}$ such that $Z=\bigcup_{i=1}^\infty Z_i$. Hence, for each $i \in \bN$, there exists $X_i \in \cX$ such that $X_i \subseteq Z_i$ and $\mu(Z_i \setminus X_i) \leq \frac{1}{2^i}$. Let 
		$W_{k} \coloneqq \bigcap_{i=k}^\infty X_i \subseteq Z_k$,
		which belongs to $\cX$ for every $k \in \bN$, as it is a countable intersection of sets from $\cX$. Clearly, $(W_i)_{i \in \bN}$ is an increasing sequence. We have
		$$
		\mu(Z_k \setminus W_k) \leq \sum_{i=k}^\infty \mu(Z_k \setminus X_i) \leq  \sum_{i=k}^\infty \mu(Z_i \setminus X_i) \leq \frac{1}{2^{k-1}}.
		$$
		Consequently, 
		$$Z=\bigcup_{i=1}^\infty Z_i=\bigcup_{i=1}^\infty W_i \in \overline{\cX},$$ as desired.
	\end{proof}
	
	Unfortunately, it is not always true that for any family $\cX$ we have $\overline{\cX}=\overline{\overline{\cX}}$; that is, it is possible that $\overline{\cX}$ is not $\sigma$-increasing, as the following remark shows. However, we will only take upward closures of families that satisfy the conditions of \cref{lem:upward_closure}, which guarantees that $\overline{\cX}$ is $\sigma$-increasing.
	
	\begin{rem} \label{rem:upward_closure}
		Let $(Z_i)_{i \in \bN}$ be an increasing family with $Z_i \subsetneq Z_{i+1}$ for all $i \in \bN$, and let $A_{i,j}$ be pairwise disjoint subsets of $Z_1$ with positive measure for $i,j \in \bN$. Let $X_{k,n} \coloneqq Z_k \setminus \bigcup_{i=n}^\infty A_{k,i}$ for $k,n \in \bN$, and define $\cX \coloneqq \{ X_{k,n} \mid k,n \in \bN \}$. Then $X_{k,n} \subseteq X_{k',n'}$ if and only if $k'=k$ and $n \leq n'$. Consequently, every increasing sequence in $\cX$ is contained
		in one of the chains $(X_{k,n})_{n\in\bN}$ with $k$ fixed. The union of such a sequence is either some $X_{k,n}$, if the indices $n$ remain bounded, or $Z_k$, if
		they are unbounded. Hence
		\[
		\overline{\cX}=\cX \cup \{Z_k \mid k \in \bN\}.
		\] This is not $\sigma$-increasing, since $\bigcup_{i=1}^\infty Z_i \in \overline{\overline{\cX}} \setminus \overline{\cX}$.
	\end{rem}
	
	The following folklore lemma is a straightforward consequence of Zorn's lemma~\cite{zorn1935remark}; see also, for example, \cite[Theorem 2.4]{jech2003set}.
	
	\begin{lem} \label{lem:maximal}
		Let $(J, \Sigma, \mu)$ be a standard measure space, and let $\cX \subseteq \Sigma$ be a nonempty family. If $\cX$ is $\sigma$-increasing, then it contains at least one maximal set, while if $\cX$ is $\sigma$-decreasing, then it contains at least one minimal set.
	\end{lem}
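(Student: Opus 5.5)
The plan is to apply Zorn's lemma to $\cX$, partially ordered by inclusion modulo null sets, and to use the finiteness of $\mu$ to reduce arbitrary chains to countable monotone sequences. I treat the $\sigma$-increasing case; the $\sigma$-decreasing case is symmetric, replacing unions by intersections and suprema of measure by infima.

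First I verify that every chain $\cC\subseteq\cX$ has an upper bound in $\cX$. If $\cC$ is empty, any element of $\cX$ works, since $\cX\neq\emptyset$. Otherwise, let $s\coloneqq\sup\{\mu(C)\mid C\in\cC\}$, which is finite because $\mu(J)<\infty$. Choose $C_1,C_2,\dots\in\cC$ with $\mu(C_i)\to s$. Since $\cC$ is a chain, I may replace $C_i$ by the largest of $C_1,\dots,C_i$, so that the sequence is increasing and still satisfies $\mu(C_i)\to s$. Set $U\coloneqq\bigcup_i C_i$. Then $U\in\overline{\cX}=\cX$ and $\mu(U)=s$.

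Next I check that $U$ bounds $\cC$. Take any $C\in\cC$. If $C\subseteq C_i$ for some $i$, then $C\subseteq U$. Otherwise, because $\cC$ is a chain, $C_i\subseteq C$ for every $i$. This gives $U\subseteq C$ and $\mu(C)\le s=\mu(U)$, so $\mu(C\setminus U)=0$, that is, $C\subseteq U$ in the measure algebra. Hence $U$ is an upper bound.

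Zorn's lemma, applied to $\cX$ viewed in the measure algebra (where inclusion up to null sets is a genuine partial order), now yields an element $X$ such that every $Y\in\cX$ with $X\subseteq Y$ satisfies $Y\subseteq X$. This is exactly maximality as defined in the paper. For the decreasing case, I take the infimum of measures, a decreasing sequence, and its intersection, which lies in $\underline{\cX}=\cX$. The same comparison argument shows this intersection is a lower bound for the chain.

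The main subtlety is that chains may be uncountable, while $\sigma$-increasingness only controls countable unions. The measure-supremum step handles this: it exhibits a countable cofinal sequence up to null sets. The rest of the argument is routine.
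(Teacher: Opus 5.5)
Your proposal is correct and follows essentially the same route as the paper: Zorn's lemma, with an upper bound for each chain obtained by taking a countable increasing subchain whose measures approach the supremum, and then using the chain property to compare every other member with the union. Your explicit handling of the empty chain and of inclusion modulo null sets as a genuine partial order on the measure algebra are details the paper leaves implicit.
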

	\begin{proof}
		We only prove the first part of the lemma; the second follows analogously. 
		
		By Zorn's Lemma, it suffices to show that any chain $\cC \subseteq \cX$ has an upper bound in $\cX$. Let $\cC$ be such a chain, and set $s \coloneqq \sup\{\mu(C) \mid C \in \cC\}$. Since $\cC$ is a chain, there exists a countable increasing subchain $(C_i)_{i\in\bN}$ in $\cC$ such that
		\[
		\lim_{i \to \infty} \mu(C_i) = s.
		\]
		Let $D \coloneqq \bigcup_{i=1}^{\infty} C_i$. Then $D \in \cX$, since $\cX$ is $\sigma$-increasing, and $\mu(D) = s$.
		
		We claim that $D$ is an upper bound of $\cC$. Suppose, for contradiction, that there exists $C \in \cC$ with $\mu(C \setminus D) > 0$. Then $C \not\subseteq C_i$ for any $i$, which implies $C_i \subseteq C$ for all $i \in \bN$. Consequently, $D \subseteq C$, so $\mu(C) >\mu(D) = s$, contradicting the definition of $s$.
		Hence, every chain in $\cX$ has an upper bound, as required.
	\end{proof}
	
	We will also need the following trivial corollary of \cref{lem:maximal}. 
	
	\begin{cor} \label{cor:unionclosed}
		Let $(J, \Sigma, \mu)$ be a standard measure space, and let $\cX \subseteq \Sigma$ be a nonempty $\sigma$-increasing family that is closed under finite unions. Then $\cX$ contains a unique maximal set $X$. Furthermore, $Y \subseteq X$ for all $Y \in \cX$. Similarly, if $\cX$ is a $\sigma$-decreasing family closed under finite intersections, then it contains a unique minimal set $X$ with $X \subseteq Y$ for all $Y \in \cX$.
	\end{cor}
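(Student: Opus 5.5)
By \cref{lem:maximal}, the nonempty $\sigma$-increasing family $\cX$ contains at least one maximal set $X$. I will show that every $Y\in\cX$ satisfies $Y\subseteq X$. Uniqueness then follows at once: if $X'$ is another maximal set, then $X'\subseteq X$ and, by the same argument with the roles exchanged, $X\subseteq X'$. Hence $X=X'$ in the measure algebra, which is the sense in which equality is meant throughout the paper.

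For the containment, let $Y\in\cX$. Since $\cX$ is closed under finite unions, $X\cup Y\in\cX$. We also have $X\subseteq X\cup Y$, so maximality of $X$ gives $X\cup Y\subseteq X$, that is, $\mu((X\cup Y)\setminus X)=0$. This says $\mu(Y\setminus X)=0$, i.e.\ $Y\subseteq X$ under the null-set convention.

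The decreasing statement is proved in the same way. The second part of \cref{lem:maximal} provides a minimal set $X$ in the $\sigma$-decreasing family. For any $Y\in\cX$, the set $X\cap Y$ lies in $\cX$ and is contained in $X$, so minimality gives $X\subseteq X\cap Y\subseteq Y$. Uniqueness again follows by symmetry.

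There is no real obstacle here. The only point that needs care is that ``maximal'' and ``$\subseteq$'' are understood modulo null sets, so uniqueness is uniqueness in the measure algebra. This is exactly what the paper's conventions provide.
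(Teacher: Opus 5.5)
Your proof is correct and follows essentially the same argument as the paper: you obtain a maximal set from \cref{lem:maximal}, use closure under finite unions together with maximality to get $Y\subseteq X$ for every $Y\in\cX$, and deduce uniqueness by exchanging the roles of two maximal sets. The decreasing case is handled dually via intersections, which the paper leaves as ``analogous.''
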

	
	\begin{proof}
		We prove only the first part; the second follows analogously.
		
		By \cref{lem:maximal}, $\cX$ contains a maximal set $X$. Let $Y\in \cX$. Since $\cX$ is closed under finite unions, we have $X\cup Y\in \cX$. By the maximality of $X$, it follows that $Y\subseteq X$. In particular, if $X'$ is another maximal set in $\cX$, then $X'\subseteq X$ and $X\subseteq X'$, so $X=X'$. Thus, $X$ is the unique maximal set.
	\end{proof}
	
	Let $(J,\Sigma,\mu)$ be a finite measure space, and let $(F_i)_{i\in\bN}$ be a sequence of measurable sets. We use the usual terminology
	$$
	\liminf_{i\to\infty} F_i \coloneqq \bigcup_{n=1}^{\infty} \bigcap_{i=n}^\infty F_i \qquad \text{and} \qquad
	\limsup_{i\to\infty} F_i \coloneqq \bigcap_{n=1}^{\infty} \bigcup_{i=n}^\infty F_i.
	$$
	Furthermore, if $\liminf_{i\to\infty} F_i=\limsup_{i\to\infty} F_i$, then we say that the \emph{limit} of the sequence $(F_i)_{i\in\bN}$ exists, and we write
	$$
	\lim_{i\to\infty} F_i \coloneqq \liminf_{i\to\infty} F_i=\limsup_{i\to\infty} F_i.
	$$
	We will use the following standard form of the Borel--Cantelli lemma; see the original works of Borel~\cite{borel1909les} and Cantelli~\cite{cantelli1917sulla}, or the book of Billingsley~\cite{billingsley1995probablity} for a more recent source.
	
	\begin{prop}[Borel--Cantelli lemma] \label{prop:Borel_Cantelli}
		Let $(J,\Sigma,\mu)$ be a finite measure space, and let $(E_i)_{i\in\bN}$ be a sequence of sets. If
		$
		\sum_{i=1}^\infty \mu(E_i)<\infty
		$,
		then
		$
		\mu(\limsup_{i\to\infty} E_i)=0.
		$
	\end{prop}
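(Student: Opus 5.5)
The statement is the first Borel--Cantelli lemma, and I would prove it directly from countable subadditivity and continuity from above of the finite measure $\mu$. Set $U_n\coloneqq\bigcup_{i=n}^\infty E_i$ for $n\in\bN$. By definition, $\limsup_{i\to\infty}E_i=\bigcap_{n=1}^\infty U_n$. The sequence $(U_n)_{n\in\bN}$ is decreasing, so $\limsup_{i\to\infty}E_i\subseteq U_n$ for every $n$. Monotonicity therefore gives $\mu(\limsup_{i\to\infty}E_i)\le\mu(U_n)$ for all $n$.

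Next I bound $\mu(U_n)$ by countable subadditivity:
\[
\mu(U_n)\le\sum_{i=n}^\infty\mu(E_i).
\]
The right-hand side is the tail of the convergent series $\sum_{i=1}^\infty\mu(E_i)$, so it tends to $0$ as $n\to\infty$. Combining this with the previous paragraph, $\mu(\limsup_{i\to\infty}E_i)\le\inf_n\sum_{i\ge n}\mu(E_i)=0$.

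There is no real obstacle here. Finiteness of $\mu$ is not even needed, since the argument uses only monotonicity and subadditivity and never invokes continuity from above. The one point worth noting is that, under the paper's measure-algebra convention, the conclusion $\mu(\limsup_{i\to\infty}E_i)=0$ means that $\limsup_{i\to\infty}E_i=\emptyset$ as an element of the measure algebra. This is well defined because the limsup is a countable combination of the $E_i$, so it does not depend on the chosen Borel representatives.
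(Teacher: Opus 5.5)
Your argument is correct. The paper does not prove this statement itself; it cites it as standard (Borel, Cantelli, Billingsley), where it is proved by exactly your bound $\mu(\limsup_{i\to\infty}E_i)\le\mu\bigl(\bigcup_{i\ge n}E_i\bigr)\le\sum_{i\ge n}\mu(E_i)\to0$. Your opening sentence mentions continuity from above, but, as you note later, the argument uses only monotonicity and countable subadditivity, so finiteness of $\mu$ is indeed superfluous.
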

	
	For convenience, we will also state the following simple corollary.
	
	\begin{cor} \label{cor:Borel_Cantelli}
		Let $(J,\Sigma,\mu)$ be a finite measure space, and let $(F_i)_{i\in\bN}$ be a sequence of sets. Assume that
		$
		\sum_{i=1}^\infty \mu(F_i\triangle F_{i+1})<\infty.
		$
		Then 
		$
		\lim_{i\to\infty} F_i$ exists and $\lim_{i \to \infty} \mu(F_i)=\mu(\lim_{i \to \infty} F_i)$.
	\end{cor}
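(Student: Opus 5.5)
The plan is to apply the Borel--Cantelli lemma (\cref{prop:Borel_Cantelli}) to the sets $E_i \coloneqq F_i\triangle F_{i+1}$. By assumption $\sum_{i}\mu(E_i)<\infty$, so $N\coloneqq\limsup_{i\to\infty}E_i$ is a null set.

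\textbf{Existence of the limit.} Fix $x\notin N$. Then $x$ lies in only finitely many $E_i$, so there is an index $n$ with $x\notin F_i\triangle F_{i+1}$ for all $i\ge n$. Hence the indicator sequence $\mathbf 1_{F_i}(x)$ is constant for $i\ge n$. If that constant value is $1$, then $x\in\bigcap_{i\ge n}F_i\subseteq\liminf F_i$. If it is $0$, then $x\notin\bigcup_{i\ge n}F_i$, so $x\notin\limsup F_i$. In either case $x$ is not in $\limsup F_i\setminus\liminf F_i$. Since $\liminf F_i\subseteq\limsup F_i$ always holds pointwise, we get
\[
\limsup_{i\to\infty} F_i\setminus\liminf_{i\to\infty} F_i\subseteq N.
\]
Thus the two sets agree up to a null set. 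Under the paper's convention of identifying sets modulo null sets, this means the limit $F\coloneqq\lim_{i\to\infty} F_i$ exists.

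\textbf{Convergence of measures.} The pointwise argument shows that $\mathbf 1_{F_i}\to\mathbf 1_F$ at every point outside $N$, so almost everywhere. The functions are bounded by $1$ and $\mu$ is finite, so dominated convergence gives $\mu(F_i)\to\mu(F)$.

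Alternatively, one can avoid dominated convergence. For each $n$,
\[
\mu\Big(F_n\triangle \bigcap_{i\ge n}F_i\Big)\le\sum_{i\ge n}\mu(E_i)\to0,
\]
and continuity from below applied to the increasing sets $\bigcap_{i\ge n}F_i$ shows that their measures tend to $\mu(\liminf F_i)=\mu(F)$.

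There is no real obstacle here. The only point needing care is the null-set convention: the limit exists only modulo $\mu$-null sets, which is exactly the sense fixed in \cref{sec:notation}. To obtain a literal pointwise equality, one can replace each $F_i$ by $F_i\setminus N$.
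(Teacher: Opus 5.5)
Your proof is correct and follows the paper's argument: applying Borel--Cantelli to $E_i=F_i\triangle F_{i+1}$ makes membership in $F_i$ eventually constant off a null set, so $\liminf_{i\to\infty}F_i=\limsup_{i\to\infty}F_i$. For the convergence of measures, the paper uses $F_i\triangle F\subseteq\bigcup_{j\ge i}E_j$ in place of your dominated-convergence step; this gives the explicit tail bound $\mu(F_i\triangle F)\le\sum_{j\ge i}\mu(E_j)$, which is reused later in the proof of \cref{lem:summable_attainment}.
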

	\begin{proof}
		Set $E_i\coloneqq F_i\triangle F_{i+1}$. By the Borel--Cantelli lemma, $\mu(\limsup_{i\to\infty} E_i)=0$. If $x\notin \limsup_{i\to\infty} E_i$, then $x\notin E_i$ for all sufficiently large $i$. Consequently, for almost every $x\in J$, the membership of $x$ in $F_i$ is eventually constant, implying that $x\in \liminf_{i\to\infty}F_i$ if and only if $x\in \limsup_{i\to\infty}F_i$. Thus, $\liminf_{i\to\infty}F_i=\limsup_{i\to\infty}F_i$, ensuring the existence of the limit $F\coloneqq\lim_{i\to\infty}F_i$. Moreover, for every $i$ we have $F_i\triangle F\subseteq \bigcup_{j=i}^{\infty}E_j$. Therefore, $\mu(F_i\triangle F)\leq \sum_{j=i}^{\infty}\mu(E_j)$, and the right-hand side tends to $0$. Hence, $\mu(F_i\triangle F)\to0$, which implies $\mu(F_i)\to\mu(F)$.
	\end{proof}
	
	%%%%%%%%%%%%%%%%%%%%%%%%%%%%%%%%
	\section{Axiomatic Characterizations}
	\label{sec:axioms}
	%%%%%%%%%%%%%%%%%%%%%%%%%%%%%%%%
	
	Finite matroids can be axiomatized through independent sets, bases, circuits, rank functions, or closure operators, all of which yield the same combinatorial structure; we refer the interested reader to~\cite{oxley2011matroid}. These different perspectives provide powerful tools for addressing both structural and algorithmic questions in finite matroid theory. We develop a similarly robust axiomatic framework for measurable matroids, which generalize matroids to standard measure spaces. 
	
	In the following, we present the independent set, basis, rank, and closure formulations. We do not include a circuit formulation among the measurable characterizations. In the atomless setting, inclusion-minimal dependent sets are not a robust analogue of finite circuits: a dependent measurable set need not contain a minimal dependent subset of positive measure.  
	
	As discussed in \cref{sec:notation}, all equalities and inclusions between measurable sets are understood in the measure algebra unless explicitly stated otherwise.
	
	%%%%%%%%%%%%%%%%%%%%%%%%%%%%%%%%
	\subsection{Independence Axioms}
	\label{sec:independence}
	%%%%%%%%%%%%%%%%%%%%%%%%%%%%%%%%
	
	We call $M=(J, \Sigma, \mu, \cF)$ a \emph{measurable matroid} if $(J, \Sigma, \mu)$ is a standard measure space and $\cF \subseteq  \Sigma$ satisfies the following \emph{independence axioms}:
	\begin{enumerate}[label=\normalfont{(I\arabic*)}, left=0pt, itemsep=0em]
		\item $\emptyset \in \cF$. \label{ax:i1}
		\item If $F_1 \subseteq  F_2 \in \cF$, then $F_1 \in \cF$. \label{ax:i2}
		\item For all $X\in\Sigma$ and all maximal sets $F_1, F_2 \in \cF | X$, we have $\mu(F_1)=\mu(F_2)$. \label{ax:i3}
		\item $\cF$ is $\sigma$-increasing, that is, for every increasing sequence $(F_i)_{i\in\bN}$ in $\cF$, we have $\bigcup_{i=1}^\infty F_i \in \cF$. \label{ax:i4}
	\end{enumerate}
	
	\begin{rem}
		If $\mu(J)>0$, normalizing $\mu$ to $\mu/\mu(J)$ does not affect the definition, so one could equivalently assume that $\mu$ is a probability measure; we stick to the original form for convenience.
	\end{rem}
	
	Following the terminology of finite matroids, we call the sets in $\cF$ \emph{independent} and the remaining sets \emph{dependent}. The maximal independent sets in $\cF$ are called \emph{bases}. For $X\in\Sigma$, define the rank $r(X)\coloneqq\mu(F)$, where $F \in \cF|X$ is a maximal independent subset. A maximal such set exists by \cref{lem:maximal}, and the value of $r(X)$ does not depend on the choice of $F$ thanks to \ref{ax:i3}. The resulting function $r \colon \Sigma \to \bR_{\geq 0}$ is called the \emph{rank function}.
	
	In certain cases, it is more convenient to work with the following variant of \ref{ax:i3}. Note that the usual finite axiom, which guarantees that an element of a larger independent set can always be added to a smaller one, has no direct analogue in the measurable setting, where it is not meaningful to speak of individual elements.
	
	\begin{enumerate}[label=\normalfont{(I\arabic*')}, left=0pt, itemsep=0em]\setcounter{enumi}{2}
		\item For all $F_1, F_2 \in \cF$ with $\mu(F_1)<\mu(F_2)$, there exists $G \in \cF$ with $F_1 \subsetneq G \subseteq  F_1 \cup F_2$. \label{ax:i3'}
	\end{enumerate} 
	
	The set of properties \ref{ax:i1}, \ref{ax:i2}, \ref{ax:i3'} and \ref{ax:i4} was previously studied in~\cite{berczi2026cycle} in the context of cycle matroids of graphings. The following proposition shows
	that, in the presence of \ref{ax:i4}, axioms \ref{ax:i3} and
	\ref{ax:i3'} are equivalent.
	
	\begin{prop} \label{prop:i3i3'}
		\{\ref{ax:i3},\ref{ax:i4}\} and \{\ref{ax:i3'},\ref{ax:i4}\} are equivalent. 
	\end{prop}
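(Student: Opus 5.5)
We prove the two implications separately, assuming \ref{ax:i4} throughout. (Axioms \ref{ax:i1} and \ref{ax:i2} are not part of the statement, but we freely use downward closedness \ref{ax:i2} where convenient, as it is present in every intended application.)

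\textbf{\ref{ax:i3} $\Rightarrow$ \ref{ax:i3'}.} Let $F_1,F_2\in\cF$ with $\mu(F_1)<\mu(F_2)$, and set $X\coloneqq F_1\cup F_2$. The family $\cF|X$ is $\sigma$-increasing by \ref{ax:i4}, so \cref{lem:maximal} applies to the subfamily $\{G\in\cF|X \mid F_1\subseteq G\}$, which is nonempty and still $\sigma$-increasing. It yields a set $G\supseteq F_1$ that is maximal in this subfamily. We claim $G$ is maximal in all of $\cF|X$: any $G'\in\cF|X$ with $G'\supseteq G$ also contains $F_1$, so $G'\subseteq G$.

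Similarly, $F_2$ extends to a maximal set $G_2\in\cF|X$ with $G_2\supseteq F_2$. By \ref{ax:i3}, $\mu(G)=\mu(G_2)\ge\mu(F_2)>\mu(F_1)$. Hence $F_1\subsetneq G\subseteq F_1\cup F_2$, which is exactly \ref{ax:i3'}.

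\textbf{\ref{ax:i3'} $\Rightarrow$ \ref{ax:i3}.} Fix $X$ and let $F_1,F_2$ be maximal in $\cF|X$. Suppose for contradiction that $\mu(F_1)<\mu(F_2)$. Then \ref{ax:i3'} gives $G\in\cF$ with $F_1\subsetneq G\subseteq F_1\cup F_2\subseteq X$. Thus $G\in\cF|X$ strictly contains $F_1$ in the measure algebra, which contradicts the maximality of $F_1$. Therefore $\mu(F_1)=\mu(F_2)$. This direction does not even need \ref{ax:i4}.

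\textbf{Main obstacle.} The only delicate point is the first direction: we must produce a maximal independent subset of $X$ containing a prescribed independent set. The subtlety is to restrict to the family of independent supersets of $F_1$ inside $X$ before invoking \cref{lem:maximal}. This family inherits $\sigma$-increasingness from \ref{ax:i4}, since the union of an increasing sequence of supersets of $F_1$ contained in $X$ is again such a set. Everything else is formal.
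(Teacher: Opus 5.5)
Your proof is correct and follows essentially the same route as the paper: in the forward direction you apply \cref{lem:maximal} to the $\sigma$-increasing family of independent supersets inside $F_1\cup F_2$ and then invoke \ref{ax:i3}, and the converse is the same one-line contradiction with maximality. The only cosmetic difference is that the paper extends just $F_2$ to a maximal set $H$ and concludes that $F_1$ cannot be maximal, whereas you extend both $F_1$ and $F_2$ and compare their measures directly.
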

	
	\begin{proof}
		Assume first that \ref{ax:i3} holds, and let $F_1,F_2\in\cF$ satisfy
		$\mu(F_1)<\mu(F_2)$. By \ref{ax:i4} and \cref{lem:maximal}, there exists
		a maximal set $H\in\cF|(F_1\cup F_2)$ containing $F_2$. If $F_1$ were
		also maximal in $\cF|(F_1\cup F_2)$, then \ref{ax:i3} would give
		$\mu(F_1)=\mu(H)\geq\mu(F_2)$, a contradiction. Thus, $F_1$ is not
		maximal in $\cF|(F_1\cup F_2)$, so there exists $G\in\cF$ with
		$F_1\subsetneq G\subseteq F_1\cup F_2$. This proves \ref{ax:i3'}.
		
		Conversely, assume that \ref{ax:i3'} holds, and let $X\in\Sigma$ with
		maximal sets $F_1,F_2\in\cF|X$. If $\mu(F_1)\neq\mu(F_2)$, the set of
		smaller measure is not maximal in $F_1\cup F_2$ by \ref{ax:i3'}, hence
		it cannot be maximal in $X$. This proves \ref{ax:i3}.
	\end{proof}
	
	\begin{rem}
		The assumption \ref{ax:i4} cannot be omitted. Let $J=[0,1]$ with Lebesgue measure $\lambda$, set
		$
		A=[1/2,1]$ and $H_n=[0,1-2^{-n}]$ for all $n \in \bN$.
		Define
		\[
		\cF\coloneqq\{F\in\Sigma\mid F\subseteq A
		\text{ or }F\subseteq H_n\text{ for some }n\in\bN\}.
		\]
		Clearly, \ref{ax:i1} and \ref{ax:i2} hold. To verify \ref{ax:i3}, let $X\in\Sigma$. If $X\in\cF$, then $X$ is the unique maximal member of $\cF|X$. Otherwise, $X\setminus H_n$ has positive measure for all $n\in\bN$. Hence, for every $n\in\bN$, there exists $m>n$ such that $X\cap(H_m\setminus H_n)$ has positive measure. Thus, no $Y\subseteq H_n\cap X$ is maximal in $\cF|X$, since
		$Y\subsetneq Y\cup(X\cap(H_m\setminus H_n))\in\cF|X.$
		Hence, the only possible maximal member of $\cF|X$ is $X\cap A$, so \ref{ax:i3} holds. On the other hand, $A$ is maximal in $\cF$ and $\lambda(A)<\lambda(H_2)$, so \ref{ax:i3'} fails.
	\end{rem}
	
	As a corollary of \cref{prop:i3i3'}, we obtain the following measurable analogue of the Steinitz augmentation property; see~\cite[Section~5.3]{lovasz2023submodular}.
	
	\begin{enumerate}[label=\normalfont{(I\arabic*{'}{'})}, left=0pt, itemsep=0em]\setcounter{enumi}{2}
		\item For $F_1, F_2 \in \cF$ with $\mu(F_1)\leq \mu(F_2)$, there exists $G \in \cF$ with $F_1 \subseteq G \subseteq F_1 \cup F_2$ and $\mu(G) \geq \mu(F_2)$. \label{ax:i3''} 
	\end{enumerate}
	
	We show that, assuming \ref{ax:i4}, we can exchange \ref{ax:i3'} to \ref{ax:i3''}.
	
	\begin{prop} \label{prop:strong_prematroid}
		\{\ref{ax:i3'},\ref{ax:i4}\} and \{\ref{ax:i3''},\ref{ax:i4}\} are equivalent. 
	\end{prop}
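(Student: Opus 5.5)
We prove the two implications separately, using the equivalence with \ref{ax:i3} from \cref{prop:i3i3'} wherever it is convenient.

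\emph{\ref{ax:i3''} and \ref{ax:i4} imply \ref{ax:i3'}.} Let $F_1,F_2\in\cF$ with $\mu(F_1)<\mu(F_2)$. Applying \ref{ax:i3''} gives $G\in\cF$ with $F_1\subseteq G\subseteq F_1\cup F_2$ and $\mu(G)\geq\mu(F_2)>\mu(F_1)$. Hence $F_1\subsetneq G$ in the measure algebra, which is exactly \ref{ax:i3'}.

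\emph{\ref{ax:i3'} and \ref{ax:i4} imply \ref{ax:i3''}.} Let $F_1,F_2\in\cF$ with $\mu(F_1)\leq\mu(F_2)$. Consider the family
\[
\cX\coloneqq\{G\in\cF\mid F_1\subseteq G\subseteq F_1\cup F_2\}.
\]
This family is nonempty, since it contains $F_1$. It is also $\sigma$-increasing: if $(G_i)$ is an increasing sequence in $\cX$, then $\bigcup_i G_i\in\cF$ by \ref{ax:i4}, and the inclusions $F_1\subseteq\bigcup_i G_i\subseteq F_1\cup F_2$ are preserved. By \cref{lem:maximal}, $\cX$ therefore has a maximal element $G$.

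We claim $\mu(G)\geq\mu(F_2)$. Suppose instead that $\mu(G)<\mu(F_2)$. Then \ref{ax:i3'} applied to $G$ and $F_2$ yields $G'\in\cF$ with $G\subsetneq G'\subseteq G\cup F_2\subseteq F_1\cup F_2$. Since $G'\supseteq G\supseteq F_1$, we get $G'\in\cX$, contradicting the maximality of $G$.

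An alternative route to the same claim uses \cref{prop:i3i3'}. It shows $G$ is maximal in $\cF|(F_1\cup F_2)$. By \cref{lem:maximal}, some maximal set $H$ of $\cF|(F_1\cup F_2)$ contains $F_2$, and \ref{ax:i3} then gives $\mu(G)=\mu(H)\geq\mu(F_2)$.

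There is no real obstacle. The only point to check is that the maximal element supplied by Zorn's lemma, through \cref{lem:maximal}, is taken within the constrained family $\cX$. This is why $\sigma$-increasingness of $\cX$, inherited from \ref{ax:i4}, is needed rather than just that of $\cF$.
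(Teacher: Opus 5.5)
Your proof is correct and follows essentially the same approach as the paper. The paper also treats \ref{ax:i3''}$\Rightarrow$\ref{ax:i3'} as immediate, then takes a maximal element of the $\sigma$-increasing family $\{G\in\cF\mid F_1\subseteq G\subseteq F_1\cup F_2\}$ via \cref{lem:maximal} and uses \ref{ax:i3'} to rule out $\mu(G)<\mu(F_2)$. Your alternative route through \cref{prop:i3i3'} is also valid but not needed.
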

	
	\begin{proof}
		Since \ref{ax:i3''} generalizes \ref{ax:i3'}, one direction is immediate.
		
		For the converse, let $\cG \coloneqq \{F \in \cF \mid F_1 \subseteq  F \subseteq  F_1 \cup F_2\}$. Then $\cG$ is nonempty and $\sigma$-increasing by \ref{ax:i4}, so by \cref{lem:maximal} it contains a maximal element $G$. If $\mu(G)<\mu(F_2)$, \ref{ax:i3'} implies that $G$ is not maximal in $\cG$, a contradiction. Hence, $\mu(G) \ge \mu(F_2)$, which proves \ref{ax:i3''}.
	\end{proof}
	
	The following consequences of \ref{ax:i2} and \ref{ax:i4} will be used repeatedly.
	
	\begin{lem}\label{lem:liminf}
		Let $(J,\Sigma,\mu)$ be a standard measure space, and let $\cF\subseteq\Sigma$ satisfy \ref{ax:i2} and \ref{ax:i4}. If $(F_i)_{i\in\bN}$ is a sequence in $\cF$, then $\liminf_{i\to\infty}F_i\in\cF$.
	\end{lem}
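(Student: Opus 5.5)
By definition, $\liminf_{i\to\infty}F_i=\bigcup_{n=1}^\infty G_n$ with $G_n\coloneqq\bigcap_{i=n}^\infty F_i$. The plan is to show that each $G_n$ is independent and that the $G_n$ increase in $n$; then \ref{ax:i4} places their union in $\cF$.

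For the first point, fix $n$. Then $G_n\subseteq F_n$, and $F_n\in\cF$, so \ref{ax:i2} gives $G_n\in\cF$. The inclusion is pointwise, so it also holds in the measure algebra and no null-set issues arise. Moreover, \ref{ax:i2} is stated for arbitrary measurable subsets, and $G_n$ is measurable as a countable intersection of measurable sets.

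For the monotonicity, note that $G_n=F_n\cap G_{n+1}\subseteq G_{n+1}$, since the intersection defining $G_{n+1}$ runs over fewer indices. Thus $(G_n)_{n\in\bN}$ is an increasing sequence in $\cF$, and \ref{ax:i4} yields $\bigcup_{n=1}^\infty G_n\in\cF$. This union is exactly $\liminf_{i\to\infty}F_i$.

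There is no real obstacle. The only points to check are that \ref{ax:i2} applies to the countable intersection $G_n$, and that the sequence is increasing in the literal sense required by \ref{ax:i4}. Both hold pointwise, so they hold in the measure algebra as well. Note that neither \ref{ax:i3} nor the atomlessness of $\mu$ is used.
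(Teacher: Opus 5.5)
Your proof is correct and is essentially the same as the paper's. The paper also sets $G_n=\bigcap_{i\geq n}F_i\subseteq F_n$, applies \ref{ax:i2} to get $G_n\in\cF$, and then applies \ref{ax:i4} to the increasing sequence $(G_n)_{n\in\bN}$, whose union is $\liminf_{i\to\infty}F_i$.
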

	
	\begin{proof}
		For every $n\in\bN$, the set $G_n\coloneqq\bigcap_{i=n}^\infty F_i$ satisfies $G_n\subseteq F_n$, hence $G_n\in\cF$ by \ref{ax:i2}. Moreover, $(G_n)_{n\in\bN}$ is an increasing sequence, and $\liminf_{i\to\infty}F_i=\bigcup_{n=1}^\infty G_n$. Thus, the lemma follows by \ref{ax:i4}.
	\end{proof}
	
	\begin{lem}\label{lem:erosebb_i4}
		Let $(J,\Sigma,\mu)$ be a standard measure space, and let $\cF\subseteq\Sigma$ satisfy \ref{ax:i2} and \ref{ax:i4}. Let $(F_i)_{i\in\bN}$ be a sequence in $\cF$, and assume that $\sum_{i=1}^\infty\mu(F_i\triangle F_{i+1})<\infty$. Then $\lim_{i\to\infty}F_i$ exists, $\lim_{i\to\infty}\mu(F_i)=\mu(\lim_{i\to\infty}F_i)$, and $\lim_{i\to\infty}F_i\in\cF$.
	\end{lem}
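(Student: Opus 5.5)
The plan is to combine \cref{cor:Borel_Cantelli} with \cref{lem:liminf}. The first two assertions, that $\lim_{i\to\infty}F_i$ exists and that $\lim_{i\to\infty}\mu(F_i)=\mu(\lim_{i\to\infty}F_i)$, are exactly the conclusion of \cref{cor:Borel_Cantelli}, since its only hypothesis is the summability $\sum_{i=1}^\infty\mu(F_i\triangle F_{i+1})<\infty$ assumed here. Membership in $\cF$ plays no role at this stage, so I would simply invoke that corollary.

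For the last assertion, I would recall that by definition the limit, when it exists, equals $\liminf_{i\to\infty}F_i$. Since $(F_i)_{i\in\bN}$ is a sequence in $\cF$ and $\cF$ satisfies \ref{ax:i2} and \ref{ax:i4}, \cref{lem:liminf} gives $\liminf_{i\to\infty}F_i\in\cF$. Therefore $\lim_{i\to\infty}F_i\in\cF$.

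One point needs care: the equality $\liminf=\limsup$ in \cref{cor:Borel_Cantelli} holds only in the measure algebra, that is, up to a null set. This is harmless, because of the standing convention that sets differing by a null set are identified. The set $\liminf_{i\to\infty}F_i$ is a concrete Borel representative of the limit, and membership in $\cF$ is understood modulo null sets. No step here is a genuine obstacle; the only thing to check is that this identification is consistent, which follows directly from the conventions of \cref{sec:notation}.
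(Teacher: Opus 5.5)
Your proposal is correct and matches the paper's proof: apply \cref{cor:Borel_Cantelli} to get existence of the limit and convergence of measures, then use that the limit equals $\liminf_{i\to\infty}F_i$ and invoke \cref{lem:liminf}. Your remark that $\liminf=\limsup$ holds only up to a null set is consistent with the paper's measure-algebra convention.
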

	
	\begin{proof}
		By \cref{cor:Borel_Cantelli}, the limit $F\coloneqq\lim_{i\to\infty}F_i$ exists and $\lim_{i\to\infty}\mu(F_i)=\mu(F)$. Since $F=\liminf_{i\to\infty}F_i$, it follows from \cref{lem:liminf} that $F\in\cF$.
	\end{proof}
	
	We will also use the following corollary.
	
	\begin{cor}\label{cor:nem_novekvo_unio}
		Let $(J,\Sigma,\mu)$ be a standard measure space, and let $\cF\subseteq\Sigma$ satisfy \ref{ax:i2} and \ref{ax:i4}. Let $(F_i)_{i\in\bN}$ be a sequence in $\cF$, and let $F\in\Sigma$ satisfy $F_i\subseteq F$ for every $i\in\bN$ and $\lim_{i\to\infty}\mu(F_i)=\mu(F)$. Then $F\in\cF$.
	\end{cor}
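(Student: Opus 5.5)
The plan is to pass to a subsequence with summable symmetric differences to the limit set $F$, and then apply \cref{lem:erosebb_i4}. The hypotheses give $F_i\subseteq F$ for all $i$ and $\mu(F_i)\to\mu(F)$. Together these say $\mu(F\setminus F_i)=\mu(F)-\mu(F_i)\to 0$, i.e., $F_i\to F$ in measure.

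First I will choose indices $i_1<i_2<\cdots$ with $\mu(F\setminus F_{i_k})\leq 2^{-k}$, and set $G_k\coloneqq F_{i_k}\in\cF$. Since both $G_k$ and $G_{k+1}$ lie in $F$, we have
\[
G_k\triangle G_{k+1}\subseteq (F\setminus G_k)\cup(F\setminus G_{k+1}),
\]
so $\mu(G_k\triangle G_{k+1})\leq 2^{-k}+2^{-k-1}$. This bound is summable.

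Then \cref{lem:erosebb_i4} applies to $(G_k)_{k\in\bN}$. It shows that $G\coloneqq\lim_{k\to\infty}G_k$ exists, belongs to $\cF$, and satisfies $\mu(G)=\lim_{k\to\infty}\mu(G_k)=\mu(F)$.

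It remains to identify $G$ with $F$ in the measure algebra. Since every $G_k\subseteq F$, we get $G=\liminf_{k\to\infty} G_k\subseteq F$. Combined with $\mu(G)=\mu(F)<\infty$, this gives $\mu(F\setminus G)=0$, so $F=G\in\cF$.

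Alternatively, one can avoid the lemma and argue directly: $\bigcap_{k\geq n}G_k$ increases with $n$ to a set of full measure in $F$, and each of these sets lies in $\cF$ by \ref{ax:i2}, so \ref{ax:i4} gives the conclusion. There is no real obstacle here. The only point requiring care is the subsequence extraction, since the full sequence need not have summable differences.
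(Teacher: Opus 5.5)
Your proof is correct and follows the paper's argument essentially step for step: you extract a subsequence with $\mu(F\setminus F_{i_k})\leq 2^{-k}$, bound each consecutive symmetric difference by the sum of the two defects, apply \cref{lem:erosebb_i4}, and identify the limit with $F$ by containment and equal measure. Your alternative direct route simply inlines the proof of \cref{lem:liminf}, and it is also valid.
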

	
	\begin{proof}
		Choose a subsequence $(F_{i_k})_{k\in\bN}$ such that $\mu(F\setminus F_{i_k})<2^{-k}$ for every $k\in\bN$. Since $F_{i_k}\subseteq F$, we have
		\[
		\mu(F_{i_k}\triangle F_{i_{k+1}})
		\leq
		\mu(F\setminus F_{i_k})+\mu(F\setminus F_{i_{k+1}}),
		\]
		and hence $\sum_{k=1}^\infty\mu(F_{i_k}\triangle F_{i_{k+1}})<\infty$. Therefore, by \cref{lem:erosebb_i4}, the limit $F'\coloneqq\lim_{k\to\infty}F_{i_k}$ exists and belongs to $\cF$.
		
		It remains to identify the limit. Since $F_{i_k}\subseteq F$ for every $k\in\bN$, we have $F'\subseteq F$. On the other hand,
		\[
		\mu(F')
		=
		\lim_{k\to\infty}\mu(F_{i_k})
		=
		\mu(F).
		\]
		Thus, $F'=F$, and hence $F\in\cF$.
	\end{proof}
	
	Axiom \ref{ax:i4} is automatic in the finite setting but has to be imposed separately here; the preceding statements illustrate some of its useful consequences. As we will see, \ref{ax:i4} is also needed for the basis and rank characterizations of measurable matroids. However, if \ref{ax:i3''} is used, omitting \ref{ax:i4} leads to a natural weakening whose upward closure is a measurable matroid; see \cref{thm:upward_closure_prematroid}. We call $M=(J,\Sigma,\mu,\cF)$ a \emph{measurable pre-matroid} if it satisfies \ref{ax:i1}, \ref{ax:i2}, and \ref{ax:i3''}, where $(J,\Sigma,\mu)$ is a standard measure space. 
	
	\begin{rem}
		The sets of axioms \{\ref{ax:i1}, \ref{ax:i2}, \ref{ax:i3'}\} and \{\ref{ax:i1}, \ref{ax:i2}, \ref{ax:i3''}\} are non-equivalent, as shown by the following example. Let $(J,\Sigma,\mu)$ be a standard measure space with $\mu(J)=1$, and let $B\in\Sigma$ be such that $\mu(B)=\frac12$. Define
		$$\mathcal F\coloneqq\left\{A\in\Sigma\mid A\subseteq B \text{ or } \mu(A)<\frac14\right\}.
		$$
		Then $\cF$ satisfies \ref{ax:i1}, \ref{ax:i2} and \ref{ax:i3'}, however, it does not satisfy \ref{ax:i3''}.
	\end{rem}
	
	We show that the upward closure of a measurable pre-matroid is a measurable matroid. 
	
	\begin{thm}\label{thm:upward_closure_prematroid} 
		Let $(J,\Sigma,\mu,\cS)$ be a measurable pre-matroid, and let
		$\overline{\cS}$ denote the upward closure of $\cS$. Then
		$(J,\Sigma,\mu,\overline{\cS})$ is a measurable matroid.
	\end{thm}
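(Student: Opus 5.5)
The plan is to verify \ref{ax:i1}, \ref{ax:i2} and \ref{ax:i4} for $\overline{\cS}$ directly, then prove \ref{ax:i3'} for $\overline{\cS}$, and finally invoke \cref{prop:i3i3'} to obtain \ref{ax:i3}.

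\textbf{Step 1: the easy axioms.} Axiom \ref{ax:i1} is immediate, since the constant sequence $\emptyset,\emptyset,\dots$ lies in $\cS$.

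For \ref{ax:i2}, let $F\subseteq \bigcup_i S_i$ with $(S_i)$ increasing in $\cS$. Then $F\cap S_i\in\cS$ by \ref{ax:i2} for $\cS$. The sets $F\cap S_i$ increase, and their union is $F$, so $F\in\overline{\cS}$.

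For \ref{ax:i4}, note that $\cS$ is closed under countable intersections. Indeed, $\bigcap_i S_i\subseteq S_1\in\cS$, so this follows from \ref{ax:i2} for $\cS$. Hence \cref{lem:upward_closure} shows that $\overline{\cS}$ is $\sigma$-increasing. Consequently $\overline{\cS}$ satisfies \ref{ax:i2} and \ref{ax:i4}, so \cref{lem:liminf} and \cref{lem:erosebb_i4} apply to it.

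\textbf{Step 2: \ref{ax:i3'} for $\overline{\cS}$.} Let $F_1,F_2\in\overline{\cS}$ with $\mu(F_1)<\mu(F_2)$. Write $F_1=\bigcup_i A_i$ and $F_2=\bigcup_i B_i$ with increasing sequences in $\cS$. Fix $n$ with $\mu(B_n)>\mu(F_1)$.

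I build a sequence $(K_i)$ in $\cS$ recursively using \ref{ax:i3''} for $\cS$, as follows.
\begin{itemize}
\item First apply it to the pair $A_1,B_n$. Since $\mu(A_1)\le\mu(B_n)$, this gives $K_1\in\cS$ with $A_1\subseteq K_1\subseteq A_1\cup B_n$ and $\mu(K_1)\ge\mu(B_n)$.
\item Given $K_i$, apply \ref{ax:i3''} to the pair $A_{i+1},K_i$. The hypothesis holds because $\mu(A_{i+1})\le\mu(F_1)<\mu(B_n)\le\mu(K_i)$. This gives $K_{i+1}\in\cS$ with $A_{i+1}\subseteq K_{i+1}\subseteq A_{i+1}\cup K_i$ and $\mu(K_{i+1})\ge\mu(K_i)$.
\end{itemize}

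By induction every $K_i\subseteq F_1\cup B_n\subseteq F_1\cup F_2$, and $\mu(K_i)\ge\mu(B_n)$.

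The key estimate controls the symmetric differences. Since $A_i\subseteq K_i$, we get $K_{i+1}\setminus K_i\subseteq A_{i+1}\setminus K_i\subseteq A_{i+1}\setminus A_i$. Since $\mu(K_{i+1})\ge\mu(K_i)$, also $\mu(K_i\setminus K_{i+1})\le\mu(K_{i+1}\setminus K_i)$. Hence
\[
\sum_i\mu(K_i\triangle K_{i+1})\le 2\sum_i\mu(A_{i+1}\setminus A_i)\le 2\mu(F_1)<\infty .
\]

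By \cref{lem:erosebb_i4}, applied in $\overline{\cS}$, the limit $G\coloneqq\lim_i K_i$ exists, lies in $\overline{\cS}$, and satisfies $\mu(G)=\lim_i\mu(K_i)\ge\mu(B_n)>\mu(F_1)$. Moreover, each $x\in A_k$ lies in $A_i\subseteq K_i$ for all $i\ge k$. So $F_1\subseteq\liminf_i K_i=G$, and also $G\subseteq F_1\cup F_2$. Since $\mu(G)>\mu(F_1)$, the inclusion $F_1\subsetneq G$ is strict, which proves \ref{ax:i3'}.

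\textbf{Step 3: conclusion.} Together with \ref{ax:i4}, \cref{prop:i3i3'} yields \ref{ax:i3}, so $(J,\Sigma,\mu,\overline{\cS})$ is a measurable matroid.

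The main obstacle is Step 2. Applying \ref{ax:i3''} independently to each $A_i$ gives sets that are neither nested nor convergent, and a plain $\liminf$ could lose measure. Chaining each step off the previous $K_i$ is what bounds the symmetric differences by the increments of $(A_i)$, which makes \cref{lem:erosebb_i4} applicable.
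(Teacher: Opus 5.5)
Your proof is correct and follows the paper's argument. The recursive chain $K_{i+1}$, obtained by applying \ref{ax:i3''} to $A_{i+1}$ and $K_i$, is exactly the paper's construction, and \ref{ax:i1}, \ref{ax:i2}, \ref{ax:i4} are handled the same way. The only difference is the limit step. The paper notes that $(K_i\setminus F_1)_{i\in\bN}$ is decreasing, sets $X\coloneqq\bigcap_i(K_i\setminus F_1)$ (of measure at least $\mu(B_n)-\mu(F_1)>0$), and writes $F_1\cup X=\bigcup_i(A_i\cup X)$ as an increasing union of members of $\cS$; your summable symmetric-difference bound combined with \cref{lem:erosebb_i4} reaches the same conclusion via Borel--Cantelli.
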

	
	\begin{proof}
		Clearly, \ref{ax:i1} holds for $\overline{\cS}$. By
		\cref{lem:upward_closure}, $\overline{\cS}$ is
		$\sigma$-increasing, so \ref{ax:i4} holds.
		To verify \ref{ax:i2}, let $F'\subseteq F\in\overline{\cS}$, and choose
		an increasing sequence $(F_i)_{i\in\bN}$ in $\cS$ such that
		$F=\bigcup_{i=1}^{\infty}F_i$. Then $F_i\cap F'\in\cS$ for every
		$i\in\bN$, $(F_i\cap F')_{i\in\bN}$ is an increasing sequence and $F'=\bigcup_{i=1}^{\infty} F_i\cap F'$. Hence $F'\in\overline{\cS}$.
		
		It remains to prove \ref{ax:i3'}. Let
		$A,B\in\overline{\cS}$ with $\mu(A) \leq \mu(B)$. For $\mu(A) = \mu(B)$ the statement clearly holds, hence we assume that $\mu(A) < \mu(B)$. Choose increasing
		sequences $(A_i)_{i\in\bN}$ and $(B_i)_{i\in\bN}$ in $\cS$ such that
		\[
		A=\bigcup_{i=1}^{\infty}A_i
		\qquad\text{and}\qquad
		B=\bigcup_{i=1}^{\infty}B_i.
		\]
		By continuity of measure from below, there exists $j\in\bN$ such that
		$\mu(B_j)>\mu(A)$. Set $B'\coloneqq B_j$.
		
		We recursively construct sets $A_i'\in\cS$ such that
		\[
		A_i\subseteq A_i'\subseteq A_i\cup B'
		\qquad\text{and}\qquad
		\mu(A_i')\geq\mu(B')
		\]
		for every $i\in\bN$. Applying \ref{ax:i3''} to $A_1$ and $B'$ gives
		such a set $A_1'$. Suppose that $A_{i-1}'$ has already been defined.
		Since
		$
		\mu(A_i)\leq\mu(A)<\mu(B')\leq\mu(A_{i-1}'),
		$
		axiom \ref{ax:i3''}, applied to $A_i$ and $A_{i-1}'$, gives a set
		$A_i'\in\cS$ such that
		$
		A_i\subseteq A_i'
		\subseteq A_i\cup A_{i-1}'
		\subseteq A_i\cup B'
		$
		and
		$
		\mu(A_i')\geq\mu(A_{i-1}')\geq\mu(B').
		$
		
		The sequence $(A_i'\setminus A)_{i\in\bN}$ is decreasing, since
		$A_i'\subseteq A_i\cup A_{i-1}'$ and $A_i\subseteq A$. Define
		\[
		X\coloneqq\bigcap_{i=1}^{\infty}(A_i'\setminus A).
		\]
		By continuity of measure from above,
		\[
		\mu(X)
		=
		\lim_{i\to\infty}\mu(A_i'\setminus A)
		\geq
		\mu(B')-\mu(A)
		>
		0.
		\]
		Moreover, $X\subseteq B'$. For every $i\in\bN$, we have
		$A_i\cup X\subseteq A_i'$, and therefore
		$A_i\cup X\in\cS$ by \ref{ax:i2}. The sequence
		$(A_i\cup X)_{i\in\bN}$ is increasing, so
		\[
		A\cup X
		=
		\bigcup_{i=1}^{\infty}(A_i\cup X)
		\in\overline{\cS}.
		\]
		Since $X\cap A=\emptyset$, $\mu(X)>0$, and $X\subseteq B'\subseteq B$,
		we obtain
		$
		A\subsetneq A\cup X\subseteq A\cup B.
		$
		Thus \ref{ax:i3'} holds, completing the proof.
	\end{proof}
	
	The following simple criterion gives a more direct description of upward closures.

	\begin{lem}\label{lem:upward_closure_approximation}
		Let $(J,\Sigma,\mu,\cS)$ be a measurable pre-matroid, and let $\overline{\cS}$ be its upward closure. For $I\in\Sigma$, we have $I\in\overline{\cS}$ if and only if, for every $\varepsilon>0$, there exists $S\in\cS$ such that $S\subseteq I$ and $\mu(I\setminus S)<\varepsilon$.
	\end{lem}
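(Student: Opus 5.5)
The forward direction is immediate from the definition of the upward closure. If $I\in\overline{\cS}$, choose an increasing sequence $(S_i)_{i\in\bN}$ in $\cS$ with $I=\bigcup_{i=1}^\infty S_i$. Each $S_i$ lies in $I$, and continuity of measure from below gives $\mu(I\setminus S_i)\to 0$. So for any $\varepsilon>0$ some $S_i$ serves as the required $S$.

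For the converse, suppose that for every $\varepsilon>0$ there is $S\in\cS$ with $S\subseteq I$ and $\mu(I\setminus S)<\varepsilon$. For each $k\in\bN$ pick $S_k\in\cS$ with $S_k\subseteq I$ and $\mu(I\setminus S_k)<2^{-k}$. These sets need not be nested, and the difficulty is that $\cS$ is not assumed to be closed under unions. We can, however, shrink them: \ref{ax:i2} holds for $\cS$, since $\cS$ is a pre-matroid. Define
\[
W_k\coloneqq\bigcap_{i=k}^{\infty}S_i\subseteq S_k .
\]
Then $W_k\in\cS$ by \ref{ax:i2}, and the sequence $(W_k)_{k\in\bN}$ is increasing. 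Exactly as in the proof of \cref{lem:upward_closure}, we have $W_k\subseteq I$ and
\[
\mu(I\setminus W_k)\leq\sum_{i=k}^\infty\mu(I\setminus S_i)\leq 2^{1-k}.
\]
Hence $\bigcup_{k=1}^\infty W_k=I$ up to a null set, and therefore $I\in\overline{\cS}$, since identifications modulo null sets are allowed by the standing convention.

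This argument uses only \ref{ax:i2}, and \ref{ax:i3''} is not needed. The one point to check is that $W_k$, a countable intersection, belongs to $\cS$. This holds because $W_k$ is a subset of $S_k\in\cS$ and \ref{ax:i2} applies to arbitrary measurable subsets.

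An alternative route would be to first note from \cref{thm:upward_closure_prematroid} that $\overline{\cS}$ satisfies \ref{ax:i2} and \ref{ax:i4}, and then apply \cref{cor:nem_novekvo_unio} with $F=I$ and the sequence $S_k\in\cS\subseteq\overline{\cS}$. That also yields $I\in\overline{\cS}$ directly. The direct construction above avoids relying on the theorem.
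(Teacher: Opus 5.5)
Your proof is correct, but your main argument differs from the paper's, which is precisely your ``alternative route.'' The paper chooses $S_n\in\cS$ with $\mu(I\setminus S_n)<1/n$, invokes \cref{thm:upward_closure_prematroid} to know that $\overline{\cS}$ is the independent-set family of a measurable matroid, and then applies \cref{cor:nem_novekvo_unio} to $\overline{\cS}$.

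If you unwind that corollary (Borel--Cantelli plus \cref{lem:liminf}), it produces essentially your sets, namely tail intersections $\bigcap_{i\geq k}S_{n_i}$ along a subsequence. The paper then uses \ref{ax:i4} for $\overline{\cS}$ to put their union back into $\overline{\cS}$, and that is where the theorem is cited. Strictly speaking, this step only needs \cref{lem:upward_closure}, not the exchange axiom.

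You instead observe that these tail intersections already lie in $\cS$ by \ref{ax:i2}. Their increasing union is therefore in $\overline{\cS}$ directly by the definition of the upward closure. This is the argument of \cref{lem:upward_closure} transplanted.

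The paper's route is shorter given the results already established. Yours is self-contained and avoids Borel--Cantelli. It also shows the lemma holds for any downward-closed family $\cS$, with no exchange axiom and no appeal to the matroid structure of $\overline{\cS}$.
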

	
	\begin{proof}
		Suppose first that $I\in\overline{\cS}$. Then $I=\bigcup_{n=1}^{\infty}S_n$ for some increasing sequence $(S_n)_{n\in\bN}$ in $\cS$. Hence, $S_n\subseteq I$ for every $n\in\bN$, and $\mu(I\setminus S_n)\to0$.
		
		Conversely, suppose that the stated approximation property holds. For every $n\in\bN$, choose $S_n\in\cS$ such that $S_n\subseteq I$ and $\mu(I\setminus S_n)<1/n$. By \cref{thm:upward_closure_prematroid}, $\overline{\cS}$ is the independent-set family of a measurable matroid. Since $S_n\in\overline{\cS}$, $S_n\subseteq I$, and $\mu(S_n)\to\mu(I)$, \cref{cor:nem_novekvo_unio} applied to $\overline{\cS}$ gives $I\in\overline{\cS}$.
	\end{proof}
	
	%%%%%%%%%%%%%%%%
	\subsection{The Atomic Case}
	\label{sec:atomic-case}
	%%%%%%%%%%%%%%%%
	
	We briefly discuss what happens if the atomlessness assumption is omitted. In this section, we write $X\cup\{y\}$ and $X\setminus\{y\}$ instead of $X+y$ and $X-y$ to emphasize that the singletons appearing below are atoms of the measure space. Recall that a set $A\in\Sigma$ is an \emph{atom} of a measure $\mu$ if $\mu(A)>0$ and every measurable set $B\subseteq A$ satisfies either $\mu(B)=0$ or $\mu(A\setminus B)=0$. Every finite Borel measure on a standard Borel space consists of an atomless part and at most countably many point masses. The decomposition into purely atomic and atomless parts is standard. Moreover, every atom of a finite Borel measure on a standard Borel space agrees up to a null set with a singleton; see \cite[Subsection~2.1.6]{kadets2018course}. 
	
	Since measurable sets are identified up to null sets throughout the paper, we may therefore represent each atom by the corresponding point. Let $(J,\Sigma)$ be a standard Borel space, let $\mu$ be a finite Borel measure on it, and suppose that $\cF\subseteq\Sigma$ satisfies the independence axioms \ref{ax:i1}--\ref{ax:i4}. Set
	\[
	D\coloneqq \{i\in J\mid \mu(\{i\})>0\}
	\qquad\text{and}\qquad
	J_0\coloneqq J\setminus D.
	\]
	Then $D$ is finite or countable and the restriction of $\mu$ to $J_0$ is atomless. Set
	\[
	T\coloneqq \{\mu(\{i\})\mid i\in D\}.
	\]
	For $t\in T$, let
	\[
	D_t\coloneqq \{i\in D\mid \mu(\{i\})=t\},
	\]
	and define
	\[
	\cI_t\coloneqq \{I\subseteq D_t\mid I\in\cF\}.
	\]
	Here every subset of $D$ belongs to $\Sigma$, since $D$ is countable.
	
	\begin{prop}
		With the notation above, the following statements hold.
		\begin{enumerate}[label=(\alph*), itemsep=0em]
			\item The system $(J_0,\Sigma|J_0,\mu|J_0,\cF|J_0)$ is a measurable matroid. \label{atom:a}
			\item For every $t\in T$, the set $D_t$ is finite and $(D_t,\cI_t)$ is a finite matroid. \label{atom:b}
			\item A set $F\in\Sigma$ belongs to $\cF$ if and only if $F\cap J_0\in\cF|J_0$ and $F\cap D_t\in\cI_t$ for every $t\in T$. \label{atom:c}
		\end{enumerate}
	\end{prop}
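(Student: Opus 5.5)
The plan is to handle the three parts in order \ref{atom:a}, \ref{atom:b}, \ref{atom:c}. The last part, the ``if'' direction of \ref{atom:c}, carries the real content.

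For \ref{atom:a}, every axiom \ref{ax:i1}--\ref{ax:i4} for $\cF|J_0$ is inherited directly from $\cF$. This is because for $X\subseteq J_0$ the families $(\cF|J_0)|X$ and $\cF|X$ coincide, and because increasing unions of subsets of $J_0$ stay in $J_0$. Since $\mu|J_0$ is atomless and $(J_0,\Sigma|J_0)$ is standard Borel (a Borel subset of a standard Borel space), the system is a measurable matroid.

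For \ref{atom:b}, finiteness of $D_t$ follows from $t\,|D_t|\le\mu(D_t)\le\mu(J)<\infty$. Then $\cI_t$ contains $\emptyset$ and is closed under subsets. For $X\subseteq D_t$, any two maximal members of $\cF|X$ (which are exactly the maximal members of $\cI_t|X$) have equal measure by \ref{ax:i3}. Since every atom in $D_t$ has weight $t$, this means they have equal cardinality. That is the finite rank axiom, so $(D_t,\cI_t)$ is a finite matroid.

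For \ref{atom:c}, the ``only if'' direction is immediate from \ref{ax:i2}. For the converse, enumerate $T=\{t_1,t_2,\dots\}$ and put $F_n\coloneqq (F\cap J_0)\cup\bigcup_{k\le n}(F\cap D_{t_k})$. These form an increasing sequence with union $F$, so by \ref{ax:i4} it suffices to show $F_n\in\cF$. By induction on $n$, this reduces to the following key claim: if $K\in\cF$ and $I\in\cI_t$ with $K\cap D_t=\emptyset$, then $K\cup I\in\cF$. (The base case is $K=F\cap J_0$.)

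To prove the key claim, I would argue by contradiction. By \cref{lem:maximal} applied to the $\sigma$-increasing family $\{G\in\cF\mid K\subseteq G\subseteq K\cup I\}$, choose a maximal member $H$; it has the form $H=K\cup I'$ with $I'\subsetneq I$. Suppose some atom $x\in I\setminus I'$ exists. Also choose a maximal independent set $G\supseteq I$ inside $K\cup I$. Comparing $\mu(H)=\mu(G)$ gives
\[
\mu(K\setminus G)=t\,(|I|-|I'|)\ge t .
\]
So the ``cost'' of fitting the atoms of $I$ is a loss of measure exactly a positive multiple of $t$, taken from $K$. I then intend to destroy this exact arithmetic coincidence by perturbing. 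Pick a set $E\subseteq K\setminus D_t$ of small measure, chosen generically: a piece of the atomless part of $K$ of arbitrary small measure, or a single atom of $K$ of weight $\ne t$. Then apply \ref{ax:i3''} to $F_1=(H\setminus E)$ and a suitable independent set $F_2\ni x$ extracted from $G$ with $\mu(F_2)>\mu(F_1)$. The augmentation $G'$ with $F_1\subseteq G'\subseteq F_1\cup F_2$ must gain measure at least $\mu(E)$ plus something. The only way to gain it is to add elements of $I\cup E$. Adding $E$ back alone would contradict nothing, so I will instead choose $F_2$ disjoint from $E$. Then the gain comes from atoms of weight $t$ and from $K$ alone, and the gain being a combination that must equal $\mu(E)$ modulo $t$-multiples forces $x$ (or some atom of $I\setminus I'$) into an independent superset of $K$. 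Combined with maximality of $H$, this yields a contradiction.

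I expect this last step to be the main obstacle: making the ``genericity mod $t$'' argument precise. This is especially delicate when $K$ has no atomless part and its atoms have weights rationally related to $t$. In that case I would first try to prove the claim for $K\subseteq J_0$ and for $K$ whose atoms all have weights larger than $t$ (processing the classes $t_k$ in decreasing order, which is possible since each class is finite and the weights accumulate only at $0$), using the rank function of the finite matroid on $D_t$ to control the exchanges.
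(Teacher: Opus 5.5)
Your parts (a) and (b) are fine. So is the reduction of the ``if'' direction of (c), via \ref{ax:i4}, to the key claim that $K\cup I\in\cF$ whenever $K\in\cF$, $I\in\cI_t$ and $K\cap D_t=\emptyset$, and the identity $\mu(K\setminus G)=t(|I|-|I'|)$ is correct. But the key claim is the entire content of the proposition, and the perturbation argument does not prove it.

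Made precise, the argument deletes some $E\subseteq K\setminus G$ and then re-adds atoms of $I\setminus I'$ by repeated \ref{ax:i3'} against $G$, hoping to strictly exceed $\mu(H)$. This succeeds if $K\setminus G$ has a non-null atomless part or an atom whose weight is not an integer multiple of $t$. It cannot handle the case where $K\setminus G$ consists of atoms of weights $2t,3t,\dots$.

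For instance, let $\mu(\{a\})=2t$ and $x,y\in D_t$, and suppose the independent subsets of $\{a,x,y\}$ are $\emptyset,\{a\},\{x\},\{y\},\{x,y\}$. With $K=\{a\}$ and $I=\{x,y\}$ you get $H=\{a\}$ and $G=\{x,y\}$, and the only choice is $E=\{a\}$. Removing $a$ and adding $x,y$ lands exactly on measure $\mu(H)$, so no strict inequality ever appears. Indeed \ref{ax:i3} even holds on $\{a,x,y\}$. The actual violation sits on $\{a,x\}$, whose maximal independent subsets $\{a\}$ and $\{x\}$ have measures $2t\neq t$. Your fallback of processing weight classes in decreasing order does not help: it makes every atom of $K$ heavier than $t$, which is exactly the regime containing integer multiples.

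The missing ideas are the following, and they are what the paper uses.

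\emph{Atomic part.} Use circuits rather than measure arithmetic. By \ref{ax:i4}, a dependent $I\subseteq D$ has a finite dependent subset, since otherwise the increasing union of its finite initial segments would be independent. Hence $I$ contains a finite minimal dependent set $C$. Each $C\setminus\{i\}$ is a maximal independent subset of $C$, so \ref{ax:i3} forces all atoms of $C$ to have the same weight, i.e.\ $C\subseteq D_t$ for some $t$. Thus $I\subseteq D$ is independent if and only if every $I\cap D_t$ is, with no arithmetic at all.

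\emph{Interaction with $J_0$.} Do not perturb $K$. Instead, show that $K\cup\{i\}\in\cF$ and $K\cup H\in\cF$, with $H\subseteq J_0$ and $K,\{i\},H$ pairwise disjoint, imply $K\cup\{i\}\cup H\in\cF$. Cut $H$ into pieces $H_1,\dots,H_m$ of measure less than $\mu(\{i\})$ and add them one at a time. With $U_j=H_1\cup\dots\cup H_j$, apply \ref{ax:i3'} to $K\cup U_j$ and $K\cup\{i\}\cup U_{j-1}$. This gives an independent set strictly between $K\cup U_j$ and $K\cup\{i\}\cup U_j$, which must be the latter because $\{i\}$ has no non-null proper part.

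Adding the atoms of $F\cap D$ one by one to $F\cap J_0$ in this way, and then applying \ref{ax:i4}, completes (c).
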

	
	\begin{proof}
		The restriction of $\mu$ to $J_0$ is atomless by the discussion above. The independence axioms for $\cF|J_0$ follow immediately from those for $\cF$, proving \ref{atom:a}.
		
		Let
		\[
		\cI\coloneqq \{I\subseteq D\mid I\in\cF\}.
		\]
		If $I\notin\cI$, then some finite subset of $I$ does not belong to $\cI$. Indeed, otherwise an increasing sequence of finite subsets of $I$ would give $I\in\cF$ by \ref{ax:i4}. Consequently, every dependent subset of $D$ contains a finite inclusionwise minimal dependent set $C\subseteq D$. For every $i\in C$, the set $C\setminus\{i\}$ is a maximal independent subset of $C$. Hence, for any $i,j\in C$, axiom \ref{ax:i3} gives
		$
		\mu(C\setminus\{i\})=\mu(C\setminus\{j\}),
		$
		and therefore $\mu(\{i\})=\mu(\{j\})$. Thus, every inclusionwise minimal dependent subset of $D$ is contained in $D_t$ for some $t\in T$.
		
		The set $D_t$ is finite, since its elements are pairwise distinct points of the same positive measure. Let $X\subseteq D_t$. If $I_1$ and $I_2$ are maximal members of $\cI_t|X$, then $I_1$ and $I_2$ are maximal independent subsets of $X$. By \ref{ax:i3},
		\[
		t|I_1|=\mu(I_1)=\mu(I_2)=t|I_2|,
		\]
		and hence $|I_1|=|I_2|$. Therefore, $\cI_t$ is the family of independent sets of a finite matroid. Moreover, $I\in\cI$ if and only if $I\cap D_t\in\cI_t$ for every $t\in T$: otherwise, a finite inclusionwise minimal dependent subset of $I$ would be contained in one of the sets $D_t$. This proves \ref{atom:b} and describes independence on the atomic part.
		
		It remains to show that the atomic and atomless parts do not interact. We first prove the following claim.
		
		\begin{cla}
			Let $i\in D$, and let $K,H\in\Sigma$ be such that $K$, $\{i\}$, and $H$ are pairwise disjoint and $H\subseteq J_0$. If $K\cup\{i\}\in\cF$ and $K\cup H\in\cF$, then $K\cup\{i\}\cup H\in\cF$.
		\end{cla}
		
		\begin{claimproof}
			Since $\mu|J_0$ is atomless, there is a finite measurable partition
			$
			H=H_1\cup\dots\cup H_m
			$
			such that $\mu(H_j)<\mu(\{i\})$ for every $j\in[m]$. Set $C_j\coloneqq H_1\cup\dots\cup H_j$ for $j\in[m]$, and let $C_0\coloneqq\emptyset$. We prove by induction that $K\cup\{i\}\cup C_j\in\cF$ for every $j=0,\dots,m$. The case $j=0$ follows from the assumption. Suppose that the statement holds for $j-1$. Both $K\cup C_j$ and $K\cup\{i\}\cup C_{j-1}$ are independent, and
			$
			\mu(K\cup C_j)<\mu(K\cup\{i\}\cup C_{j-1}).
			$
			By \ref{ax:i3'}, there exists an independent set $G$ such that
			\[
			K\cup C_j\subsetneq G\subseteq K\cup\{i\}\cup C_j.
			\]
			Thus, $G\setminus(K\cup C_j)$ is a non-null subset of $\{i\}$, and hence it agrees with $\{i\}$ up to a null set. Therefore,
			$
			G=K\cup\{i\}\cup C_j.
			$
			This completes the induction and proves the claim.
		\end{claimproof}
		
		Now let $F_0\in\cF|J_0$, and let $I\in\cI$. Enumerate the elements of $I$ and apply the claim successively. Every finite subset of $I$ together with $F_0$ is independent. Axiom \ref{ax:i4} then gives
		$
		F_0\cup I\in\cF.
		$
		Conversely, if $F\in\cF$, then $F\cap J_0\in\cF|J_0$ and $F\cap D_t\in\cI_t$ for every $t\in T$ by \ref{ax:i2}. This proves \emph{(c)}.
	\end{proof}
	
	Thus, the atoms of each fixed measure determine a finite matroid, and neither atoms of different measures nor the atomic and atomless parts interact. We therefore assume throughout the rest of the paper that the ground measure is atomless; as the basic theory of finite matroids is well understood, the results extend naturally to finite measure spaces with atoms.
	
	%%%%%%%%%%%%%%%%%%%%%%%%%%%%%%%%
	\subsection{Basis Axioms}
	\label{sec:basis}
	%%%%%%%%%%%%%%%%%%%%%%%%%%%%%%%%
	
	Observe that the family $\cB$ of bases, that is, maximal independent sets of a measurable matroid $(J,\Sigma,\mu,\cF)$ uniquely determines the matroid itself, since 
	\begin{equation}
		\mathcal F = \{F \in \Sigma \mid F \subseteq B \text{ for some } B \in \mathcal B\}.\label{ind:b}
	\end{equation}
	Suppose now that $(J,\Sigma,\mu)$ is a standard measure space, and let $\mathcal{B} \subseteq \Sigma$ be a given family. A natural question arises: what properties must $\mathcal{B}$ satisfy in order for the family defined in \eqref{ind:b} to form the collection of independent sets of a measurable matroid? To answer this, we show that, analogously to the finite case, measurable matroids can be described in terms of their bases. We introduce the following \emph{basis axioms}, where $(J, \Sigma, \mu)$ is a standard measure space and $\mathcal B \subseteq \Sigma$:
	\begin{enumerate}[label=\normalfont{(B\arabic*)}, left=0pt, itemsep=0em]
		\item $\cB$ is nonempty. \label{ax:b1}
		\item For all $B_1, B_2 \in \cB$ and all $X \subseteq  B_1 \setminus B_2$, there exists $Y \subseteq  B_2 \setminus B_1$ such that $\mu(X)=\mu(Y)$ and $(B_1 \setminus X) \cup Y \in \cB$. \label{ax:b2}
		\item If $(B_i)_{i\in\bN}$ is a sequence in $\cB$ and $B\in\Sigma$ such that $\lim_{n \to \infty} \mu(B_n \triangle B)=0$, then $B \in \cB$. \label{ax:b3}
	\end{enumerate}
	
	We call \ref{ax:b2} the \emph{exchange axiom}, following the terminology from the finite setting. Note that \ref{ax:b2} implies $\mu(B_1) = \mu(B_2)$ for all $B_1, B_2 \in \mathcal B$. Indeed, taking $X \coloneqq B_1 \setminus B_2$ yields a set $Y \subseteq B_2 \setminus B_1$ with $\mu(Y) = \mu(X) \le \mu(B_2 \setminus B_1)$, which gives $\mu(B_1) \le \mu(B_2)$. Reversing the roles of $B_1$ and $B_2$ gives the opposite inequality, and hence $\mu(B_1) = \mu(B_2)$.
	
	\begin{thm} \label{thm:basis}
		The bases of any measurable matroid satisfy the basis axioms. Conversely, if $(J,\Sigma,\mu)$ is a standard measure space and $\mathcal B \subseteq \Sigma$ satisfies the basis axioms, then $\cF$ defined in \eqref{ind:b} satisfies the independence axioms, and $(J,\Sigma,\mu,\mathcal F)$ is a measurable matroid with $\mathcal B$ as the family of bases.
	\end{thm}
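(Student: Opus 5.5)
For the forward direction, let $M=(J,\Sigma,\mu,\cF)$ be a measurable matroid with bases $\cB$. Axiom \ref{ax:b1} follows from \cref{lem:maximal} applied to $\cF$, which is nonempty by \ref{ax:i1} and $\sigma$-increasing by \ref{ax:i4}.

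For \ref{ax:b3}, let $B_n\to B$ in measure with $B_n\in\cB$. Pass to a subsequence with $\sum_n\mu(B_n\triangle B)<\infty$; then $\sum_n\mu(B_n\triangle B_{n+1})<\infty$. By \cref{lem:erosebb_i4}, the limit of the subsequence lies in $\cF$, and by Borel--Cantelli this limit equals $B$. Every basis has measure $r(J)$ by \ref{ax:i3} with $X=J$. Hence $\mu(B)=\lim\mu(B_n)=r(J)$, and an independent set of measure $r(J)$ is maximal. To see this, note that if $B\subsetneq G\in\cF$, extend $G$ to a maximal set using \cref{lem:maximal}; its measure would exceed $r(J)$, a contradiction.

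For the exchange axiom \ref{ax:b2}, let $X\subseteq B_1\setminus B_2$. The set $B_1\setminus X$ is independent, and $\mu(B_1\setminus X)<\mu(B_2)$ when $\mu(X)>0$; the case $\mu(X)=0$ is trivial. Apply \ref{ax:i3''} to $B_1\setminus X$ and $B_2$ to get $G\in\cF$ with $B_1\setminus X\subseteq G\subseteq (B_1\setminus X)\cup B_2$ and $\mu(G)\ge\mu(B_2)=r(J)$. Then $G$ is a basis, and $Y\coloneqq G\setminus(B_1\setminus X)\subseteq B_2\setminus B_1$, because $B_2\cap X=\emptyset$ forces $Y$ to avoid $B_1$. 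Finally, $\mu(Y)=r(J)-\mu(B_1)+\mu(X)=\mu(X)$.

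For the converse, define $\cF$ by \eqref{ind:b}. Axioms \ref{ax:i1} and \ref{ax:i2} are immediate, and all bases have a common measure $\rho$ by the remark after the axioms. The plan is to prove \ref{ax:i4} first and then \ref{ax:i3'}, and to conclude with \cref{prop:i3i3'}.

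For \ref{ax:i4}, take an increasing sequence $F_i\subseteq B_i\in\cB$ and set $F=\bigcup_i F_i$. Fix $i<j$ and apply \ref{ax:b2} to $B_j$, $B_i$ with $X\coloneqq (B_j\setminus B_i)\setminus F_j$. Since $F_i\subseteq F_j$, this produces a basis $B_{ij}\supseteq F_j\cup(B_j\cap B_i)$. My preferred route is different: replace each $B_{i+1}$ by a basis containing $F_{i+1}$ that is as close as possible to $B_i$, repeatedly using \ref{ax:b2} on $B_{i+1}\setminus (B_i\cup F_{i+1})$. One arranges $\mu(B_i'\triangle B_{i+1}')\le 2\mu(F_{i+1}\setminus F_i)$, whose sum is finite. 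Then \cref{cor:Borel_Cantelli} gives a limit $B$ with $B_i'\to B$ in measure, so $B\in\cB$ by \ref{ax:b3}. Moreover $F\subseteq B$, because each $F_j$ lies in $B_i'$ for all $i\ge j$. The key estimate is exchanging everything of $B_{i+1}$ outside $F_{i+1}\cup B_i'$ for part of $B_i'\setminus B_{i+1}$, and a symmetric-difference computation should give the bound above.

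For \ref{ax:i3'}, let $F_1\subseteq B_1$ and $F_2\subseteq B_2$ with $\mu(F_1)<\mu(F_2)$. Apply \ref{ax:b2} to $B_1,B_2$ with $X\coloneqq B_1\setminus(B_2\cup F_1)$ to obtain a basis $B'$ containing $F_1$ with $B'\subseteq F_1\cup B_2$. If $B'\cap(F_2\setminus F_1)$ has positive measure, we are done. Otherwise $F_2\setminus F_1\subseteq B_2\setminus B'$. In that case exchange $X\coloneqq B'\setminus(F_1\cup B_2)$ — I expect a measure count, $\mu(B'\setminus F_1)\ge\rho-\mu(F_1)>\rho-\mu(F_2)$, to show that after this exchange the basis meets $F_2\setminus F_1$ in positive measure.

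The main obstacle is the converse \ref{ax:i4} step: building a convergent chain of bases through an increasing union using only \ref{ax:b2} and \ref{ax:b3}. If the telescoping bound fails, the fallback is a Zorn-type argument. Consider bases containing $F_i$, choose ones minimizing the distance to the previous basis, and use \ref{ax:b3} to take limits.
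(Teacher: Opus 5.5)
Your forward direction and your \ref{ax:i4} argument essentially follow the paper. In \ref{ax:i4}, the recursion must exchange against the already modified basis: trade $X\coloneqq B_{i+1}\setminus(B'_i\cup F_{i+1})$ for part of $B'_i\setminus B_{i+1}$. No ``closest'' basis and no Zorn fallback are needed. Then $B'_{i+1}\setminus B'_i\subseteq F_{i+1}\setminus B'_i\subseteq F_{i+1}\setminus F_i$, and together with $\mu(B'_{i+1})=\mu(B'_i)$ this gives your bound $\mu(B'_i\triangle B'_{i+1})\le 2\mu(F_{i+1}\setminus F_i)$. This is exactly the paper's construction. The paper then notes that $B'_i\setminus F_i$ decreases and writes the limit explicitly as $F\cup\bigcap_i(B'_i\setminus F_i)$; your Borel--Cantelli limit followed by \ref{ax:b3} works just as well.

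The \ref{ax:i3'} step, as written, has a gap. Your first exchange gives $B'=(B_1\cap B_2)\cup F_1\cup Y\subseteq F_1\cup B_2$. So in the ``otherwise'' branch the set $B'\setminus(F_1\cup B_2)$ is null, the second exchange does nothing, and the branch is left unresolved. The missing observation is that this branch never occurs. On one side, $B'\setminus F_1\subseteq B_2\setminus F_1=(F_2\setminus F_1)\cup\bigl(B_2\setminus(F_1\cup F_2)\bigr)$, and $\mu(B_2\setminus(F_1\cup F_2))\le\mu(B_2\setminus F_2)=\rho-\mu(F_2)$. On the other side, $\mu(B'\setminus F_1)=\rho-\mu(F_1)$. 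Hence
\[
\mu\bigl(B'\cap(F_2\setminus F_1)\bigr)\geq\mu(F_2)-\mu(F_1)>0.
\]
Completed this way, your single exchange (with the same $X=B_1\setminus(F_1\cup B_2)$ as the paper) avoids the paper's case split on $\mu((B_1\cap F_2)\setminus F_1)$ and its computation of $\mu(Y)$. Indeed $G\coloneqq F_1\cup(B'\cap F_2)\subseteq B'$ satisfies $\mu(G)\geq\mu(F_2)$, so it even yields \ref{ax:i3''} directly.

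You also omit the last clause of the theorem, that the bases of the constructed matroid are exactly $\cB$. If $B\in\cB$ and $B\subseteq F\in\cF$, then $F\subseteq B_0$ for some $B_0\in\cB$ with $\mu(B_0)=\mu(B)$, which forces $B=F=B_0$. Conversely, a maximal $F\in\cF$ lies in some member of $\cB\subseteq\cF$, hence equals it.
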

	
	\begin{proof}
		Let $\mathcal F$ be the family of independent sets and $\mathcal B$ the family of bases of a measurable matroid. By \cref{lem:maximal}, $\mathcal F$ contains at least one maximal set, so \ref{ax:b1} holds. Next, let $B_1, B_2 \in \mathcal B$ and $X \subseteq B_1 \setminus B_2$. By \ref{ax:i3''}, there exists $G \in \mathcal F$ with $B_1 \setminus X \subseteq G \subseteq (B_1 \setminus X) \cup B_2$ and $\mu(G) \ge \mu(B_2)$. Since all bases have the same measure $r$ by \ref{ax:i3}, it follows that $G \in \mathcal B$. Consequently, \ref{ax:b2} holds by taking $Y \coloneqq G \setminus (B_1 \setminus X)$. 
		
		Finally, suppose the sequence $(B_i)_{i\in\bN}$ in $\cB$ satisfies $\lim_{i \to \infty} \mu(B_i \triangle B) = 0$ for some $B \in \Sigma$. Define $F_i \coloneqq B_i \cap B$; by \ref{ax:i2}, each $F_i$ is independent. Since $F_i \subseteq B$ and $\mu(B \setminus F_i) \leq \mu(B_i \triangle B) \to 0$, we have $B \in \mathcal F$ by \cref{cor:nem_novekvo_unio}. Moreover, $\lim_{i \to \infty} \mu(F_i) = r$, and hence $\mu(B) = r$, so $B \in \mathcal B$. This establishes \ref{ax:b3}.
		
		Now assume that $\mathcal B \subseteq \Sigma$ satisfies the basis axioms, let $r$ denote the common measure of the sets in $\mathcal B$, and define $\cF$ as in \eqref{ind:b}. It is not difficult to check that \ref{ax:i1} and \ref{ax:i2} hold for $\cF$. To verify \ref{ax:i3'}, let $F_1, F_2 \in \mathcal F$ with $\mu(F_1)<\mu(F_2)$. Choose $B_1, B_2 \in \mathcal B$ such that $F_1 \subseteq B_1$ and $F_2 \subseteq B_2$. If $\mu((B_1\cap F_2)\setminus F_1)>0$, then $F_1 \cup (B_1 \cap F_2) \subseteq B_1$ belongs to $\mathcal F$ and strictly contains $F_1$, so \ref{ax:i3'} holds. Otherwise, $B_1\cap F_2\subseteq F_1$, which is equivalent to $F_2 \setminus F_1 \subseteq B_2 \setminus B_1$. Let $X \coloneqq B_1 \setminus (F_1 \cup B_2)$; by \ref{ax:b2}, there exists $Y \subseteq B_2 \setminus B_1$ with $\mu(X) = \mu(Y)$ such that $B' \coloneqq (B_1 \setminus X) \cup Y \in \cB$. Since $B_1\setminus B_2$ is the disjoint union of $F_1\setminus B_2$ and $X$, and since $\mu(B_1\setminus B_2)=\mu(B_2\setminus B_1)$, we have $\mu(Y)=\mu(B_2\setminus B_1)-\mu(F_1\setminus B_2)$. On the other hand, $\mu(F_2\setminus F_1)>\mu(F_1\setminus F_2)\geq \mu(F_1\setminus B_2)$. It follows that $Y$ cannot be contained in $(B_2\setminus B_1)\setminus(F_2\setminus F_1)$, and hence
		\[
		\mu\left(B'\cap(F_2\setminus F_1)\right)
		=
		\mu\left(Y\cap(F_2\setminus F_1)\right)
		>0.
		\]
		Therefore, $G \coloneqq F_1 \cup (B' \cap (F_2 \setminus F_1)) \subseteq B'$ belongs to $\mathcal F$ and strictly contains $F_1$, which establishes \ref{ax:i3'}.
		
		Finally, to verify \ref{ax:i4}, let $(F_i)_{i \in \bN}$ be an increasing sequence in $\mathcal F$, and set $F \coloneqq \bigcup_{i=1}^\infty F_i$. For each $F_i$, choose a basis $B_i \in \mathcal B$ with $F_i \subseteq B_i$. Define $B'_1 \coloneqq B_1$, and for $i \ge 2$, define $B'_i$ recursively as follows. Let $X \coloneqq (B_i\setminus B'_{i-1})\setminus F_i$ and apply \ref{ax:b2} to obtain $Y \subseteq B'_{i-1} \setminus B_i$ such that 
		\begin{equation*}
			B'_i \coloneqq (B_i\setminus X) \cup Y= (B'_{i-1}\cap B_i) \cup F_i \cup Y
		\end{equation*}
		is a basis. By construction, $(B'_{i-1} \cap B_i) \cup Y \subseteq B'_{i-1}$, so $B'_i\setminus B'_{i-1} = F_i\setminus B'_{i-1}\subseteq F_i\setminus F_{i-1}$. It follows that $B'_i \setminus F_i \subseteq B'_{i-1} \setminus F_{i-1}$ for all $i \ge 2$, and hence \begin{equation*}
			\mu\left(\bigcap_{i=1}^\infty (B'_i\setminus F_i)\right)=\lim_{i \to \infty} (r-\mu(F_i))=r-\mu(F).
		\end{equation*} 
		Define
		\[
		B'\coloneqq F\cup\bigcap_{i=1}^\infty(B_i'\setminus F_i).
		\]
		Since $(F_i)_{i\in\bN}$ is increasing and $(B_i'\setminus F_i)_{i\in\bN}$ is decreasing, we have $B'=\liminf_{i\to\infty}B_i'$. Moreover, $\mu(B')=r$ and $B'\setminus B_i'\subseteq F\setminus F_i$ for every $i\in\bN$. Since $\mu(B')=\mu(B_i')=r$, it follows that
		\[
		\lim_{i\to\infty}\mu(B'\triangle B_i')=
		2 \lim_{i\to\infty}\mu(B'\setminus B_i')\leq 
		2 \lim_{i\to\infty}\mu(F\setminus F_i)=
		0.
		\]
		Hence, $B'\in\cB$ by \ref{ax:b3}. Therefore, $F\subseteq B'\in\cB$, so $F\in\cF$, proving \ref{ax:i4}.
		
		It remains to identify the bases of the constructed matroid. Let $\cF\coloneqq \{F\in\Sigma\mid F\subseteq B \text{ for some } B\in\cB\}$. First, every $B\in\cB$ is maximal in $\cF$. Indeed, if $B\subseteq F\in\cF$, then $F\subseteq B'$ for some $B'\in\cB$. Since all members of $\cB$ have the same measure, we have $\mu(B)=\mu(B')$, and hence $B=F=B'$. Conversely, if $F$ is maximal in $\cF$, then by definition $F\subseteq B$ for some $B\in\cB$. Since $B\in\cF$, the maximality of $F$ gives $F=B$. Thus, the bases of the constructed measurable matroid are precisely the members of $\cB$.
	\end{proof}
	
	By \cref{thm:basis}, a measurable matroid can be equivalently defined through its family of bases. Consequently, when it is more convenient to specify a matroid via its bases, we will write $M = (J, \Sigma, \mu, \mathcal B)$, where $\mathcal B$ denotes the family of bases.
	
	We next introduce the measurable analogue of the \emph{co-exchange axiom} from the finite setting.
	
	\begin{enumerate}[label=\normalfont{(B\arabic*')}, left=0pt, itemsep=0em]\setcounter{enumi}{1}
		\item For all $B_1, B_2 \in \cB$ and all $X \subseteq B_1 \setminus B_2$ there exists $Y \subseteq B_2 \setminus B_1$ such that $\mu(X)=\mu(Y)$ and $(B_2 \setminus Y) \cup X \in \cB$. \label{ax:b2'}
	\end{enumerate}
	
	We prove that this property also holds in measurable matroids.
	
	\begin{prop} \label{prop:coexchange}
		Let $M = (J, \Sigma, \mu, \mathcal F)$ be a measurable matroid. Then the co-exchange axiom \ref{ax:b2'} holds for its family of bases $\mathcal B$.
	\end{prop}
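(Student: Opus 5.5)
The plan is to mirror the proof of \ref{ax:b2} in \cref{thm:basis}, with the roles of the two bases swapped: we grow a suitable independent subset of $B_1$ inside $B_2$ using the augmentation property \ref{ax:i3''}. This property is available for measurable matroids by \cref{prop:i3i3'} and \cref{prop:strong_prematroid}. Let $r$ denote the common measure of the bases, which is well defined by \ref{ax:i3}. Fix $B_1,B_2\in\cB$ and $X\subseteq B_1\setminus B_2$.

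The key choice is the starting set. Taking only $X$ would not work, since the resulting $Y$ could meet $B_1\cap B_2$. Instead we take
\[
F_1\coloneqq X\cup(B_1\cap B_2)\subseteq B_1 .
\]
By \ref{ax:i2}, $F_1$ is independent, and $\mu(F_1)\le r=\mu(B_2)$. Applying \ref{ax:i3''} to $F_1$ and $F_2\coloneqq B_2$ gives an independent set $G$ with
\[
F_1\subseteq G\subseteq F_1\cup B_2=X\cup B_2
\qquad\text{and}\qquad
\mu(G)\ge r .
\]

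Next we check that $G$ is a basis. By \ref{ax:i4} and \cref{lem:maximal}, $G$ is contained in some maximal member $B$ of $\cF$, and $\mu(B)=r$. Since $G\subseteq B$ and $\mu(G)\ge r=\mu(B)$, we get $G=B$ up to a null set, so $G\in\cB$.

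We then set $Y\coloneqq B_2\setminus G$. Since $X$ is disjoint from $B_2$ and $G\subseteq X\cup B_2$ with $X\subseteq G$, we have
\[
G = X\cup(G\cap B_2) = X\cup(B_2\setminus Y),
\]
and this union is disjoint. Moreover $B_1\cap B_2\subseteq G$, so $Y\subseteq B_2\setminus B_1$. For the measure, additivity over the disjoint decomposition gives $r=\mu(G)=\mu(X)+\mu(B_2)-\mu(Y)=\mu(X)+r-\mu(Y)$, hence $\mu(Y)=\mu(X)$. This establishes \ref{ax:b2'}.

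I do not expect a real obstacle here. The only points needing care are the choice of $F_1$ to include $B_1\cap B_2$, which forces $Y$ to avoid $B_1$, and the remark that an independent set of full measure $r$ is automatically a basis.
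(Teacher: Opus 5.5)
Your proof is correct and follows essentially the same route as the paper: you start from the independent set $X\cup(B_1\cap B_2)\subseteq B_1$, extend it inside $X\cup B_2$ to an independent set of measure $\mu(B_2)$, and take $Y$ to be the part of $B_2$ that is left out. The only difference is cosmetic. The paper takes a maximal independent subset of $X\cup B_2$ containing this set and applies \ref{ax:i3} directly, using that $B_2$ is also maximal in $X\cup B_2$. You instead invoke \ref{ax:i3''} and then argue that an independent set of full measure is a basis.
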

	
	\begin{proof}
		Let $B_1, B_2 \in \mathcal B$ and $X \subseteq B_1 \setminus B_2$. Consider a maximal independent set $A$ in $X \cup B_2$ containing $X \cup (B_1 \cap B_2)$. Such a set exists by \cref{lem:maximal}, since $X \cup (B_1 \cap B_2) \subseteq B_1$ and therefore $X \cup (B_1 \cap B_2) \in \mathcal F$. As $B_2 \in \mathcal F$ is maximal, it is also maximal in $X \cup B_2$, so \ref{ax:i3} implies $\mu(A) = \mu(B_2)$, and hence $A \in \mathcal B$. Setting $Y \coloneqq B_2 \setminus A$ completes the proof.
	\end{proof}
	
	\begin{rem}
		Replacing the exchange axiom \ref{ax:b2} with the co-exchange axiom \ref{ax:b2'} yields an equivalent characterization of measurable matroids. The proof is deferred to \cref{cor:coexchange}.
	\end{rem}
	
	%%%%%%%%%%%%%%%%%%%%%%%%%%%%%%%%
	\subsection{Rank Axioms}
	\label{sec:rank}
	%%%%%%%%%%%%%%%%%%%%%%%%%%%%%%%%
	
	The classical rank axioms can be naturally extended to set functions on a standard measure space $(J,\Sigma,\mu)$. The main subtlety lies in formulating an analogue of the subcardinality condition, which ensures that adding a single element increases the rank by at most one. In the measurable setting, this can be partially addressed by imposing $\varphi\leq\mu$, reflecting the finite constraint that the rank of a set is bounded by its size. However, this condition alone is not enough to capture the full structure of finite matroids.
	
	Just as a measurable matroid can be described in terms of its bases, it can also be described via a rank function. For a measurable matroid $(J,\Sigma,\mu,\cF)$ with rank function $r$, the family of independent sets is exactly   
	\begin{equation}
		\mathcal F = \{F \in \Sigma \mid r(F)=\mu(F)\}.\label{ind:r}
	\end{equation}
	Indeed, if $F\in\cF$, then $F$ is a maximal independent subset of itself, and hence $r(F)=\mu(F)$. Conversely, if $r(F)=\mu(F)$ and $F'\in\cF|F$ is maximal, then $\mu(F')=r(F)=\mu(F)$, and therefore $F'=F$. Thus, $F\in\cF$.
	
	Analogously to the case of bases, we are interested in identifying the conditions on $r$ under which the family defined in \eqref{ind:r} forms the collection of independent sets of a measurable matroid. To this end, we characterize measurable matroids in terms of their rank functions $r \colon \Sigma \to [0,\infty)$ on a standard measure space $(J, \Sigma, \mu)$ via the following \emph{rank axioms}:
	\begin{enumerate}[label=\normalfont{(R\arabic*)}, left=0pt, itemsep=0em]
		\item $r(\emptyset)=0$. \label{ax:r1}
		\item $r(X) \leq r(Y)$ for all $X \subseteq  Y$. \label{ax:r2}
		\item $r(X) \leq \mu(X)$ for all $X$. \label{ax:r3}
		\item $r(X)+r(Y) \geq r(X \cap Y)+r(X \cup Y)$ for all $X,Y$. \label{ax:r4}
		\item For all $X \subseteq  Y$ with $r(X)=\mu(X)$, there exists a set $Z$ with $X \subseteq  Z \subseteq  Y$ and $r(Y)=r(Z)=\mu(Z)$. \label{ax:r5}
	\end{enumerate}
	Note that \ref{ax:r4} asserts the \emph{submodularity} of the rank function. 
	
	\begin{rem}
		It is natural to ask whether axiom~\ref{ax:r5} can be omitted, or at least weakened to the following condition: for every $Y\in\Sigma$, there exists $Z\subseteq Y$ such that $\mu(Z)=r(Z)=r(Y)$. The next example shows that this weaker condition, together with \ref{ax:r1}--\ref{ax:r4}, does not imply \ref{ax:r5}.
		
		Let $J=[0,3]$ with the Lebesgue measure $\lambda$, and partition $J$ into $A=[0,1)$ and $B=[1,3)$. Let $f\colon B\to A$ be defined by $f(x)=(x-1)/2$. For $X\in\Sigma$, define
		\begin{equation}\label{eq:weak_r5_counterexample}
			r(X)\coloneqq \lambda(X\cap B)+\lambda((X\cap A)\setminus f(X\cap B)).
		\end{equation}
		Clearly, $r(\emptyset)=0$. Monotonicity in $X\cap A$ is immediate. If $X\cap B$ is extended by a set of measure $c$, then the first term in \eqref{eq:weak_r5_counterexample} increases by $c$, while the second term can decrease by at most $c/2$. Hence, $r$ is monotone. Also, $r(X)\leq \lambda(X\cap B)+\lambda(X\cap A)=\lambda(X)$, so \ref{ax:r1}, \ref{ax:r2}, and \ref{ax:r3} hold.
		
		We next verify submodularity. Let $S_i=A_i\cup B_i$, where $A_i\subseteq A$ and $B_i\subseteq B$ for $i=1,2$, and set $C_i\coloneqq f(B_i)$. The first term in \eqref{eq:weak_r5_counterexample} is modular. Thus, it is enough to prove
		\begin{equation*}
			\lambda(A_1\setminus C_1)+\lambda(A_2\setminus C_2)
			\geq
			\lambda((A_1\cap A_2)\setminus(C_1\cap C_2))
			+
			\lambda((A_1\cup A_2)\setminus(C_1\cup C_2)).
		\end{equation*}
		Using $\lambda(A_i\setminus C_i)=\lambda(A_i)-\lambda(A_i\cap C_i)$ and $\lambda(A_1)+\lambda(A_2)=\lambda(A_1\cap A_2)+\lambda(A_1\cup A_2)$,
		this is equivalent to
		\[
		\lambda((A_1\cup A_2)\cap(C_1\cup C_2))
		+\lambda(A_1\cap A_2\cap C_1\cap C_2)
		\geq
		\lambda(A_1\cap C_1)+\lambda(A_2\cap C_2).
		\]
		This is immediate, since $A_1\cap C_1$ and $A_2\cap C_2$ are both contained in $(A_1\cup A_2)\cap(C_1\cup C_2)$, and their intersection is $A_1\cap A_2\cap C_1\cap C_2$. Hence, \ref{ax:r4} holds.
		
		The weakened form of \ref{ax:r5} also holds. Indeed, for $Y\in\Sigma$, set
		$
		Z\coloneqq (Y\cap B)\cup ((Y\cap A)\setminus f(Y\cap B)).
		$
		Then $Z\subseteq Y$, and by definition
		$
		\lambda(Z)=r(Z)=r(Y).
		$
		
		On the other hand, \ref{ax:r5} fails. We have $\lambda(A)=r(A)=1$ and $r(J)=2$. Suppose that there exists $Z$ with $A\subseteq Z\subseteq J$ and $r(Z)=\lambda(Z)=r(J)$. Write $C\coloneqq Z\cap B$. Since $A\subseteq Z$, we have
		$
		r(Z)=\lambda(C)+\lambda(A\setminus f(C))=\lambda(C)+1-\lambda(f(C))=1+\lambda(C)/2,
		$
		while $\lambda(Z)=1+\lambda(C)$. Thus, $r(Z)=\lambda(Z)$ implies $\lambda(C)=0$, and then $r(Z)=1$, contradicting $r(Z)=r(J)=2$. Hence, \ref{ax:r5} does not follow from the weakened condition.
	\end{rem}
	
	\begin{rem}
		As \ref{ax:r5} is a rather strong assumption, one might wonder whether \ref{ax:r4} is necessary. However, the following example shows that the submodularity constraint cannot be dismissed. Let $J=[0,3]$ with the Lebesgue measure $\lambda$. Partition $J$ into $A=[0,1)$, $B=[1,2)$, and $C=[2,3]$. For any measurable $X \subseteq J$, define
		\begin{equation*}
			r(X) \coloneqq \lambda(X\cap A) + \lambda(X\cap C) + \min\{\lambda(X\cap A),\lambda(X\cap B),\lambda(X\cap C)\}.
		\end{equation*}
		Since $r(\emptyset)=0$, axiom \ref{ax:r1} holds. As both $\lambda$ and the $\min$ function are monotone, $r$ is non-decreasing, proving \ref{ax:r2}. Moreover, by definition $r(X)\le \lambda(X\cap A)+\lambda(X\cap B)+\lambda(X\cap C)=\lambda(X)$, so \ref{ax:r3} also holds. To verify \ref{ax:r5}, observe that $r(X)=\lambda(X)$ if and only if $\lambda(X\cap B) \le \min\{\lambda(X\cap A),\lambda(X\cap C)\}$. By the non-atomicity of $\lambda$, for any $X \subseteq Y$ with $r(X)=\lambda(X)$ we can choose a set $Z$ such that $Z \cap A = Y \cap A$, $Z \cap C = Y \cap C$, and $Z \cap B$ is a subset of $Y \cap B$ containing $X \cap B$ with measure $\min\{\lambda(Y\cap A),\lambda(Y\cap B),\lambda(Y\cap C)\}$. Then $X \subseteq Z \subseteq Y$ and $r(Y)=r(Z)=\lambda(Z)$. However, \ref{ax:r4} fails. Indeed, let $X=A\cup B$ and $Y=B\cup C$. Then $X\cap Y=B$ and $X\cup Y=J$, and therefore $r(X)+r(Y)=2 < 3 = r(X\cap Y)+r(X\cup Y)$.
	\end{rem}
	
	We next show that measurable matroids can be characterized by their rank functions.
	
	\begin{thm} \label{thm:rank}
		The rank function of any measurable matroid satisfies the rank axioms. Conversely, if $(J,\Sigma,\mu)$ is a standard measure space and $r \colon \Sigma \to \bR_{\geq 0}$ satisfies the rank axioms, then $\cF$ defined in \eqref{ind:r} satisfies the independence axioms, and $(J,\Sigma,\mu,\cF)$ is a measurable matroid with $r$ as the rank function.
	\end{thm}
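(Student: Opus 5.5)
For the forward direction, let $M=(J,\Sigma,\mu,\cF)$ be a measurable matroid with rank function $r$. Axiom \ref{ax:r1} is immediate. For \ref{ax:r2}, take $X\subseteq Y$ and a maximal $F\in\cF|X$. Since $\cF|Y$ is $\sigma$-increasing, we can extend $F$ to a maximal member of $\cF|Y$ by applying \cref{lem:maximal} to $\{G\in\cF|Y\mid F\subseteq G\}$. Then \ref{ax:i3} gives $r(X)=\mu(F)\le r(Y)$. Axiom \ref{ax:r3} is trivial. For \ref{ax:r5}, let $X\subseteq Y$ with $r(X)=\mu(X)$. The argument before the statement shows that $X\in\cF$, and extending $X$ to a maximal independent subset $Z$ of $Y$ as above gives $\mu(Z)=r(Z)=r(Y)$.

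For submodularity \ref{ax:r4}, I follow the finite proof. Take a maximal $F_0\in\cF|(X\cap Y)$ and extend it to a maximal $F_1\in\cF|X$. Then extend $F_1$ to a maximal $F\in\cF|(X\cup Y)$. The set $F\cap Y$ is independent, so $r(Y)\ge\mu(F\cap Y)$. Moreover $F\cap X=F_1$, since $F_1$ is maximal in $X$ and $F\cap X\supseteq F_1$ is independent. Also $F\cap X\cap Y=F_0$ by the same reasoning. Hence
$\mu(F\cap Y)=\mu(F)-\mu(F_1)+\mu(F_0)=r(X\cup Y)-r(X)+r(X\cap Y)$, which is the required inequality.

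For the converse, let $r$ satisfy \ref{ax:r1}--\ref{ax:r5} and define $\cF$ by \eqref{ind:r}. Axiom \ref{ax:i1} follows from \ref{ax:r1}. For \ref{ax:i2}, let $F_1\subseteq F_2$ with $r(F_2)=\mu(F_2)$. Submodularity applied to $F_1$ and $F_2\setminus F_1$ gives $\mu(F_2)=r(F_2)\le r(F_1)+r(F_2\setminus F_1)\le r(F_1)+\mu(F_2\setminus F_1)$. Hence $r(F_1)\ge\mu(F_1)$, and \ref{ax:r3} gives equality. For \ref{ax:i3}, suppose $F$ is maximal in $\cF|X$. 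Applying \ref{ax:r5} to $F\subseteq X$ yields $Z\in\cF$ with $F\subseteq Z\subseteq X$ and $\mu(Z)=r(X)$. Maximality forces $Z=F$, so $\mu(F)=r(X)$ for every maximal $F$. The same argument shows that the rank function of the resulting matroid is $r$, once \ref{ax:i4} is established.

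The main obstacle is \ref{ax:i4}, because no continuity of $r$ is assumed. Let $F_i\in\cF$ increase to $F$. I first show that $r(F)\ge r(F_i)+\mu(F\setminus F_i)-$ (error), by a lower bound on how much $r$ can drop. Submodularity with $F_i$ and $F\setminus F_i$ only gives upper bounds, so I instead use \ref{ax:r5}. Apply it to $F_i\subseteq F$ to obtain independent sets $Z_i$ with $F_i\subseteq Z_i\subseteq F$ and $\mu(Z_i)=r(F)$. Then $r(F)\ge\mu(F_i)\to\mu(F)$, and together with $r(F)\le\mu(F)$ from \ref{ax:r3} this gives $r(F)=\mu(F)$. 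This step works directly because \ref{ax:r5} gives $Z_i\supseteq F_i$ with $\mu(Z_i)=r(F)$, so $r(F)\ge\mu(F_i)$ for all $i$. The monotonicity \ref{ax:r2} alone would suffice here, since $r(F)\ge r(F_i)=\mu(F_i)$, so \ref{ax:i4} is in fact easy. The delicate point is really \ref{ax:i2}, which needs submodularity as used above. Finally, I will double-check that null-set conventions cause no issue when comparing $Z$ with $F$ in \ref{ax:i3}.
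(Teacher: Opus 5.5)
Your proposal is correct and follows essentially the same route as the paper: extension of independent sets to maximal ones via \cref{lem:maximal} for \ref{ax:r2}, \ref{ax:r4} and \ref{ax:r5}; submodularity applied to $F_1$ and $F_2\setminus F_1$ for \ref{ax:i2}; monotonicity together with continuity of $\mu$ for \ref{ax:i4}, which you correctly end up using after setting aside your initial worry; and \ref{ax:r5} applied to a maximal $F\subseteq X$ for \ref{ax:i3} and for identifying the rank function. The only cosmetic difference is in \ref{ax:r4}: you pass through an intermediate maximal independent set in $X$, whereas the paper extends a maximal independent subset of $X\cap Y$ directly to a maximal one $F_2$ in $X\cup Y$ and uses $\mu(F_2\cap X)+\mu(F_2\cap Y)=\mu(F_2)+\mu(F_2\cap X\cap Y)$.
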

	\begin{proof}
		Let $\mathcal F$ be the family of independent sets and $r$ the rank function of a measurable matroid. Axioms \ref{ax:r1}, \ref{ax:r2}, and \ref{ax:r3} follow directly from the definition. To verify \ref{ax:r4}, let $X$ and $Y$ be arbitrary sets. Choose $F_1 \in \mathcal F$ with $F_1 \subseteq X \cap Y$ and $\mu(F_1)=r(X \cap Y)$. Consider a maximal independent set $F_2$ in $X \cup Y$ containing $F_1$. Such a set exists by \cref{lem:maximal}, and $F_2 \cap (X \cap Y)=F_1$, since $F_1$ is maximal in $\mathcal F|(X \cap Y)$. Consequently, 
		\begin{equation*}
			r(X \cap Y)+r(X \cup Y)=\mu(F_1)+\mu(F_2)=\mu(F_2 \cap X)+\mu(F_2 \cap Y) \leq r(X)+r(Y),
		\end{equation*}
		as required. Finally, \ref{ax:r5} follows from \cref{lem:maximal}. Indeed, the family $\cG \coloneqq \{G \in \cF | X \subseteq  G \subseteq  Y\}$ is $\sigma$-increasing, hence it contains a maximal set $Z$. This set is also maximal in $\mathcal F|Y$, and therefore $r(Y)=r(Z)=\mu(Z)$.
		
		Conversely, assume that $r \colon \Sigma \to \bR_{\geq 0}$ satisfies the rank axioms and define $\cF$ as in \eqref{ind:r}. Clearly, $\cF$ satisfies \ref{ax:i1}. Let $F_1 \subseteq F_2 \in \cF$. Using \ref{ax:r3} and \ref{ax:r4}, we obtain
		\begin{equation*}
			\mu(F_1) \ge r(F_1) \ge r(F_2)+r(\emptyset)-r(F_2 \setminus F_1)
			\ge \mu(F_2)-\mu(F_2 \setminus F_1)=\mu(F_1),
		\end{equation*}
		hence $r(F_1)=\mu(F_1)$, and therefore $F_1 \in \cF$, establishing \ref{ax:i2}.
		
		To verify \ref{ax:i4}, let $(F_i)_{i\in\bN}$ be an increasing chain in $\cF$, and set $F \coloneqq \bigcup_{i=1}^\infty F_i$. By continuity of $\mu$ from below, 
		\begin{equation*}
			\lim_{i \to \infty} r(F_i)=\lim_{i \to \infty} \mu(F_i)=\mu(F).
		\end{equation*}
		By \ref{ax:r2}, we have $r(F_i) \le r(F)$ for all $i$, hence $\mu(F) \le r(F)$. Together with \ref{ax:r3}, this implies $r(F)=\mu(F)$, proving \ref{ax:i4}.
		
		Finally, we prove \ref{ax:i3} and at the same time show that the rank function of the constructed matroid is the original function $r$. Let $X\in\Sigma$, and let $F$ be a maximal member of $\cF|X$, which exists by \cref{lem:maximal}. By definition of $\cF$, we have $r(F)=\mu(F)$. Applying \ref{ax:r5} to $F\subseteq X$, there exists a set $Z$ with $F\subseteq Z\subseteq X$ and
		$
		r(X)=r(Z)=\mu(Z).
		$
		Thus, $Z\in\cF|X$. By the maximality of $F$, we get $Z=F$, and hence $\mu(F)=r(X)$. Thus, every maximal member of $\cF|X$ has measure $r(X)$. This proves \ref{ax:i3}, and also shows that the rank function of the measurable matroid $(J,\Sigma,\mu,\cF)$ is exactly $r$.
	\end{proof}
	
	By \cref{thm:rank}, a measurable matroid can be defined through its rank function. When it is more convenient to define a matroid via its rank function, we will write $M=(J,\Sigma,\mu,r)$, where $r$ denotes the rank function.
	
	The rank function is submodular; hence the theory of submodular set functions on $\sigma$-algebras applies directly. We explore this connection in detail in \cref{sec:measurable}. For now, we state a useful smoothness lemma of Lovász~\cite{lovasz2023submodular} after introducing the necessary definitions.
	
	We say that a set function $\varphi \colon \Sigma \to \bR$ on a $\sigma$-algebra $(J,\Sigma)$ is \emph{continuous from above} if, for every decreasing sequence $(A_i)_{i\in\bN}$ in $\Sigma$, we have $\lim_{i\to\infty}\varphi(A_i)=\varphi(\bigcap_{i=1}^\infty A_i)$. Similarly, we say that $\varphi$ is \emph{continuous from below} if, for every increasing sequence $(A_i)_{i\in\bN}$ in $\Sigma$, we have $\lim_{i\to\infty}\varphi(A_i)=\varphi(\bigcup_{i=1}^\infty A_i)$. 
	
	The following simple consequence of the smoothness results of Lovász~\cite[Section~6.1]{lovasz2023submodular} will be useful.
	
	\begin{prop}\label{lem:smooth}
		Let $\varphi\colon\Sigma\to\bR_{\geq 0}$ be an increasing subadditive set function on a $\sigma$-algebra $(J,\Sigma)$ with $\varphi(\emptyset)=0$. Assume that $\varphi\leq\mu$ for some finite measure $\mu$ on $(J,\Sigma)$. Then $\varphi$ is continuous from below and continuous from above.
	\end{prop}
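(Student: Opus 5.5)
The plan is to bracket $\varphi(A)$ between quantities that are squeezed together by the finiteness of $\mu$. Let $A_i$ be a monotone sequence with limit $A$ (the union in the increasing case, the intersection in the decreasing case). The key inequality combines subadditivity with domination by $\mu$: whenever $X\subseteq Y$,
\[
\varphi(X)\le\varphi(Y)\le\varphi(X)+\varphi(Y\setminus X)\le\varphi(X)+\mu(Y\setminus X).
\]
The first inequality is monotonicity. The second follows from subadditivity, since $Y=X\cup(Y\setminus X)$. The third uses $\varphi\le\mu$. Consequently,
\[
0\le\varphi(Y)-\varphi(X)\le\mu(Y\setminus X)\qquad\text{whenever } X\subseteq Y.
\]

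For continuity from below, let $(A_i)$ be increasing and set $A=\bigcup_i A_i$. Apply the inequality with $X=A_i$ and $Y=A$ to get
\[
0\le\varphi(A)-\varphi(A_i)\le\mu(A\setminus A_i).
\]
The right-hand side tends to $0$ by continuity of the finite measure $\mu$ from below.

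For continuity from above, let $(A_i)$ be decreasing and set $A=\bigcap_i A_i$. Apply the inequality with $X=A$ and $Y=A_i$ to get
\[
0\le\varphi(A_i)-\varphi(A)\le\mu(A_i\setminus A).
\]
Here $\mu(A_i\setminus A)\to0$ by continuity from above, which is valid because $\mu$ is finite. In both cases $\varphi(A_i)\to\varphi(A)$.

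The only step with any content is the Lipschitz-type estimate $\varphi(Y)-\varphi(X)\le\mu(Y\setminus X)$. It requires subadditivity for disjoint pairs only, together with $\varphi(\emptyset)=0$ so that the degenerate cases are consistent. The estimate shows that $\varphi$ is $1$-Lipschitz with respect to the pseudometric $\mu(\cdot\triangle\cdot)$ on nested sets, so the claim reduces entirely to the continuity of the finite measure $\mu$. No appeal to Lovász's general smoothness theory is needed beyond this elementary bound. In the paper's setting, nonnegativity of $\varphi$ is automatic from monotonicity and $\varphi(\emptyset)=0$.
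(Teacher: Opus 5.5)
Your proposal is correct and is essentially the paper's own argument. The paper likewise uses monotonicity and subadditivity to get $0\le\varphi(A)-\varphi(A_i)\le\varphi(A\setminus A_i)\le\mu(A\setminus A_i)$ in the increasing case and the analogous bound with $\mu(A_i\setminus A)$ in the decreasing case. It then concludes from the continuity of the finite measure $\mu$.
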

	
	\begin{proof}
		Let $(A_i)_{i\in\bN}$ be an increasing sequence in $\Sigma$ and set $A\coloneqq\bigcup_{i\in\bN}A_i$. By subadditivity and monotonicity,
		\[
		0\leq\varphi(A)-\varphi(A_i)\leq\varphi(A\setminus A_i)\leq\mu(A\setminus A_i),
		\]
		and the right-hand side converges to $0$.
		
		Similarly, let $(A_i)_{i\in\bN}$ be a decreasing sequence in $\Sigma$ and set $A\coloneqq\bigcap_{i\in\bN}A_i$. Then
		\[
		0\leq\varphi(A_i)-\varphi(A)\leq\varphi(A_i\setminus A)\leq\mu(A_i\setminus A),
		\]
		and again the right-hand side converges to $0$.
	\end{proof}
	
	Since rank functions are increasing and submodular, hence subadditive, we immediately obtain the following.
	
	\begin{cor}\label{cor:smooth}
		Let $M=(J,\Sigma,\mu,r)$ be a measurable matroid. Then $r$ is continuous from below and continuous from above.
	\end{cor}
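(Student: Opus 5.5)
This follows directly from \cref{lem:smooth}, so the plan is to check that $r$ meets its hypotheses. By \cref{thm:rank}, the rank function $r$ of $M$ satisfies the rank axioms \ref{ax:r1}--\ref{ax:r5}. We only need four of them.

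\begin{itemize}
\item $r(\emptyset)=0$ by \ref{ax:r1}.
\item $r$ is increasing by \ref{ax:r2}.
\item $r\le\mu$ by \ref{ax:r3}. Here $\mu$ is a finite measure on $(J,\Sigma)$, because the underlying space is standard.
\item $r$ is subadditive. For disjoint $X,Y$ (or arbitrary $X,Y$), submodularity \ref{ax:r4} gives $r(X)+r(Y)\ge r(X\cap Y)+r(X\cup Y)$. Since $r\ge 0$ by definition, $r(X\cap Y)\ge 0$, and hence $r(X\cup Y)\le r(X)+r(Y)$.
\end{itemize}

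With all hypotheses of \cref{lem:smooth} verified, continuity of $r$ from below and from above follows.

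For a self-contained argument, one can also argue directly. Let $(A_i)_{i\in\bN}$ be monotone with limit $A$. Subadditivity and monotonicity give
\[
|r(A)-r(A_i)|\le r(A\triangle A_i)\le\mu(A\triangle A_i).
\]
The right-hand side tends to $0$ by continuity of the finite measure $\mu$, and this holds for both increasing and decreasing sequences.

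There is no real obstacle here. The only point needing care is that subadditivity requires $r(\emptyset)=0$ and $r\ge0$, and both are supplied by the rank axioms together with the definition of $r$ as a measure of a maximal independent subset.
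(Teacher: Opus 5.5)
Your proposal is correct and follows the paper's route: the paper obtains this corollary by observing that rank functions are increasing and submodular (hence subadditive), nonnegative with $r(\emptyset)=0$, and dominated by the finite measure $\mu$, so \cref{lem:smooth} applies directly. Your direct estimate $|r(A)-r(A_i)|\le\mu(A\triangle A_i)$ is the same argument as the proof of \cref{lem:smooth} written out, and it is also fine.
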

	
	% \begin{prop} \label{lem:smooth}
		%     Let $\varphi \colon \Sigma \to \bR$ be an increasing submodular set function on a $\sigma$-algebra $(J,\Sigma)$ with $\varphi(\emptyset)=0$. Assume there exists a finite measure $\mu$ on $(J,\Sigma)$ such that $\varphi(X) \le \mu(X)$ for all $X \in \Sigma$. Then $\varphi$ is continuous from below and continuous from above.
		% \end{prop}
	
	% As a corollary, we get the following.
	
	% \begin{cor} \label{cor:smooth}
		%     Let $M=(J, \Sigma, \mu, r)$ be a measurable matroid. Then $r$ is continuous from below and continuous from above. 
		% \end{cor}
	
	As a further consequence, minorizing charges are automatically measures in our setting.
	
	\begin{cor}\label{cor:mm-measures}
		Let $\varphi\colon\Sigma\to\bR_{\geq 0}$ be an increasing submodular set function on a $\sigma$-algebra $(J,\Sigma)$ with $\varphi(\emptyset)=0$. Suppose that $\varphi\leq\mu$ for some finite measure $\mu$. Then every element of $\mm(\varphi)$ is a finite measure dominated by $\mu$. In particular, this holds when $\varphi$ is the rank function of a measurable matroid.
	\end{cor}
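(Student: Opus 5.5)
Let $\alpha\in\mm(\varphi)$. The plan is to show that $\alpha$ is countably additive and satisfies $\alpha\leq\mu$. The second property is immediate from the definition of $\mm(\varphi)$: for every $A\in\Sigma$ we have $0\le\alpha(A)\le\varphi(A)\le\mu(A)$. Thus $\alpha$ is nonnegative, finite (since $\alpha(J)\le\mu(J)<\infty$), and dominated by $\mu$. It remains to upgrade finite additivity to $\sigma$-additivity.

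For this, take pairwise disjoint sets $(A_i)_{i\in\bN}$ in $\Sigma$ and set $A\coloneqq\bigcup_{i=1}^\infty A_i$ and $R_n\coloneqq A\setminus\bigcup_{i=1}^n A_i$. Finite additivity gives
\[
\alpha(A)=\sum_{i=1}^n\alpha(A_i)+\alpha(R_n),
\qquad
0\le\alpha(R_n)\le\mu(R_n).
\]
Since $\mu$ is a finite measure and the sets $R_n$ decrease to $\emptyset$, continuity of $\mu$ from above yields $\mu(R_n)\to0$. Hence $\alpha(R_n)\to0$, so $\alpha(A)=\sum_{i=1}^\infty\alpha(A_i)$. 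Therefore $\alpha$ is a finite measure with $\alpha\le\mu$, which in particular makes it absolutely continuous with respect to $\mu$.

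This argument does not need submodularity or monotonicity of $\varphi$, nor \cref{lem:smooth}; only the domination $\varphi\le\mu$ is used. The ``in particular'' clause then follows from \cref{thm:rank}: the rank function $r$ of a measurable matroid is increasing and submodular by \ref{ax:r2} and \ref{ax:r4}, satisfies $r(\emptyset)=0$ by \ref{ax:r1}, and satisfies $r\le\mu$ by \ref{ax:r3}. There is no real obstacle here. The only point to check is the squeeze $\alpha(R_n)\le\mu(R_n)$, which relies on the nonnegativity of charges in $\mm(\varphi)$.
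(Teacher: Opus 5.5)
Your proof is correct, and it takes a more direct route than the paper. The paper first invokes \cref{lem:smooth} to obtain continuity of $\varphi$ from above, which uses monotonicity, subadditivity coming from submodularity, and $\varphi\le\mu$. It then cites Lov\'asz's result~\cite[Lemma~7.1]{lovasz2023submodular}, which deduces countable additivity of minorizing charges from continuity of $\varphi$ from above, and reads off $\alpha\le\varphi\le\mu$ from the definition.

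You instead bound the tail directly: $0\le\alpha(R_n)\le\varphi(R_n)\le\mu(R_n)\to0$. This uses only the nonnegativity of $\alpha$, the bound $\varphi\le\mu$, and continuity from above of the finite measure $\mu$. In effect, you inline the mechanism behind Lov\'asz's lemma, with $\mu$ replacing $\varphi$ as the majorant that vanishes along sequences decreasing to $\emptyset$. This makes your argument self-contained. It also shows that monotonicity and submodularity of $\varphi$ are not needed for this corollary.

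The paper's route instead places the statement within Lov\'asz's theory of submodular set functions, where continuity from above of $\varphi$ is the property that forces minorizing charges to be measures.
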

	
	\begin{proof}
		By \cref{lem:smooth}, $\varphi$ is continuous from above. Hence, every $\alpha\in\mm(\varphi)$ is countably additive by~\cite[Lemma~7.1]{lovasz2023submodular}. Moreover, $\alpha\leq\varphi\leq\mu$.
	\end{proof}
	
	Accordingly, whenever $\varphi$ satisfies the assumptions of \cref{cor:mm-measures}, we refer to the elements of $\mm(\varphi)$ and $\bmm(\varphi)$ as minorizing measures and basic minorizing measures, respectively.
	
	%%%%%%%%%%%%%%%%%%%%%%%%%%%%%%%%
	\subsection{Closure Axioms}
	\label{sec:closure}
	%%%%%%%%%%%%%%%%%%%%%%%%%%%%%%%%
	
	Finally, we can describe measurable matroids in terms of a closure operator. It is not immediate that a meaningful closure operator can be defined in the measurable setting. Let $M=(J,\Sigma,\mu,\mathcal F)$ be a measurable matroid with rank function $r$. For any $X\in\Sigma$, define
	\begin{equation*}
		\cA_X \coloneqq \{ A \in \Sigma \mid X \subseteq A,\ r(X)=r(A) \}.
	\end{equation*}
	If $A_1, A_2 \in \mathcal A_X$, then monotonicity and submodularity of $r$ yield
	\begin{equation*}
		r(X)+r(X)=r(A_1)+r(A_2) \geq r(A_1 \cap A_2)+r(A_1 \cup A_2) \geq r(X)+r(X),
	\end{equation*}
	hence $A_1 \cap A_2$ and $A_1 \cup A_2$ are in $\mathcal A_X$, showing that $\mathcal A_X$ is closed under both intersections and unions. By \cref{cor:smooth}, it is also $\sigma$-increasing. Hence, by \cref{cor:unionclosed}, $\mathcal A_X$ contains a unique maximal set. We denote this set by $\clo(X)$ and call it the \emph{closure} of $X$. We say that $X$ \emph{spans} $Y$ if $Y\subseteq\clo(X)$. We call $\clo(\emptyset)$ the \emph{loop set} of $M$, and say that $M$ is \emph{loopless} if $\clo(\emptyset)=\emptyset$. Since the loop set has rank zero, $r(X\setminus\clo(\emptyset))=r(X)$ for every $X\in\Sigma$.
	
	The independent sets can be recovered from the closure operator. Namely, the independent sets of a measurable matroid $(J,\Sigma,\mu,\cF)$ with closure operator $\clo$ are exactly the sets
	\begin{equation}
		\cF \coloneqq \{F \in \Sigma \mid \mu(X\setminus \clo(F\setminus X))>0 \text{ for every } X\subseteq F \text{ with } \mu(X)>0\}.\label{ind:cl}
	\end{equation}
	Indeed, if $F$ is independent and $X\subseteq F$ has positive measure, then $X\subseteq\clo(F\setminus X)$ would imply $r(F)=r(F\setminus X)=\mu(F\setminus X)<\mu(F)=r(F)$, a contradiction. Conversely, if $F$ is dependent, let $I$ be a maximal independent subset of $F$ and set $X\coloneqq F\setminus I$. Then $\mu(X)>0$ and $r(I)=r(F)$, so $F\subseteq\clo(I)$, and hence $\mu(X\setminus\clo(F\setminus X))=0$.
	
	Analogously to the cases of bases and rank functions, we are interested in identifying the conditions on a closure operator under which the family defined above corresponds to a measurable matroid. If $\clo\colon\Sigma\to\Sigma$ is a map, then a set $F\in\Sigma$ is called \emph{$\clo$-independent} if it satisfies the condition in \eqref{ind:cl}. We consider maps $\clo\colon\Sigma\to\Sigma$ satisfying the following closure axioms:
	\begin{enumerate}[label=\normalfont{(CL\arabic*)}, left=0pt, itemsep=0em]
		\item $X\subseteq\clo(X)$ for every $X\in\Sigma$. \label{ax:cl1}
		\item If $X\subseteq Y$, then $\clo(X)\subseteq\clo(Y)$. \label{ax:cl2}
		\item $\clo(\clo(X))=\clo(X)$ for every $X\in\Sigma$. \label{ax:cl3}
		\item If $F$ is $\clo$-independent and $F\subseteq\clo(A)$, then $\mu(F)\leq \mu(A)$. \label{ax:cl4}
		\item If $F$ is $\clo$-independent and $\mu(A\setminus \clo(F))>0$, then there exists $H\subseteq A\setminus \clo(F)$ with $\mu(H)>0$ such that $F\cup H$ is $\clo$-independent. \label{ax:cl5}
	\end{enumerate}
	
	The first three axioms are the usual closure axioms. The last two axioms connect closure with the measure. Axiom \ref{ax:cl4} says that an independent set spanned by $A$ cannot have measure larger than $A$. Axiom \ref{ax:cl5} is the corresponding augmentation axiom: if a positive-measure part of $A$ is not spanned by $F$, then some positive-measure subset of it can be added to $F$.
	
	\begin{thm}\label{thm:closure}
		The closure operator of a measurable matroid satisfies the closure axioms. Conversely, if $(J,\Sigma,\mu)$ is a standard measure space and $\clo\colon\Sigma\to\Sigma$ satisfies the closure axioms, then $\cF$ defined as in \eqref{ind:cl} satisfies the independence axioms, and $(J,\Sigma,\mu,\cF)$ is a measurable matroid with $\clo$ as the closure operator.
	\end{thm}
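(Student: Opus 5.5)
For the forward direction, let $M=(J,\Sigma,\mu,\cF)$ be a measurable matroid with rank function $r$ and closure $\clo$. Axiom \ref{ax:cl1} is immediate from $X\in\cA_X$.

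\begin{itemize}
\item[\ref{ax:cl2}] Let $X\subseteq Y$. Since $X\subseteq \clo(X)\cap Y\subseteq\clo(X)$, we get $r(\clo(X)\cap Y)=r(X)=r(\clo(X))$. Submodularity then gives $r(Y\cup\clo(X))\le r(Y)+r(\clo(X))-r(\clo(X)\cap Y)=r(Y)$. So $Y\cup\clo(X)\in\cA_Y$, and \cref{cor:unionclosed} yields $\clo(X)\subseteq\clo(Y)$.
\item[\ref{ax:cl3}] We have $r(\clo(\clo(X)))=r(\clo(X))=r(X)$, so $\clo(\clo(X))\in\cA_X$ and hence $\clo(\clo(X))\subseteq\clo(X)$.
\end{itemize}

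For the last two axioms I use that $\clo$-independence coincides with independence, by the discussion around \eqref{ind:cl}.

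\begin{itemize}
\item[\ref{ax:cl4}] If $F\subseteq\clo(A)$ is independent, then $\mu(F)=r(F)\le r(\clo(A))=r(A)\le\mu(A)$.
\item[\ref{ax:cl5}] Let $F$ be independent and put $D\coloneqq A\setminus\clo(F)$ with $\mu(D)>0$. If $r(F\cup D)=r(F)$, then $F\cup D\in\cA_F$, which forces $D\subseteq\clo(F)$, a contradiction. Hence $r(F\cup D)>\mu(F)$. Take a maximal independent $G$ with $F\subseteq G\subseteq F\cup D$, using \cref{lem:maximal}. Then $\mu(G)=r(F\cup D)>\mu(F)$, and $H\coloneqq G\setminus F$ works.
\end{itemize}

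For the converse, let $\cF$ be the family of $\clo$-independent sets.

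\begin{itemize}
\item[\ref{ax:i1}] This holds vacuously.
\item[\ref{ax:i2}] Let $F_1\subseteq F\in\cF$ and $X\subseteq F_1$ with $\mu(X)>0$. By \ref{ax:cl2}, $\clo(F_1\setminus X)\subseteq\clo(F\setminus X)$, so $X\setminus\clo(F_1\setminus X)\supseteq X\setminus\clo(F\setminus X)$, which has positive measure.
\item[\ref{ax:i4}] Let $(F_i)_{i\in\bN}$ be increasing in $\cF$ with union $F$. Suppose $X\subseteq F$, $\mu(X)>0$ and $X\subseteq\clo(F\setminus X)$. By \ref{ax:cl1}, also $F\setminus X\subseteq\clo(F\setminus X)$, so $F_i\subseteq F\subseteq\clo(F\setminus X)$ for all $i$. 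Then \ref{ax:cl4} gives $\mu(F_i)\le\mu(F)-\mu(X)$. Letting $i\to\infty$ yields $\mu(F)\le\mu(F)-\mu(X)$, a contradiction. So $F\in\cF$.
\item[\ref{ax:i3}] I verify \ref{ax:i3'} and then invoke \cref{prop:i3i3'}, which applies since \ref{ax:i4} is already established. Let $F_1,F_2\in\cF$ with $\mu(F_1)<\mu(F_2)$. If $F_2\subseteq\clo(F_1)$, then \ref{ax:cl4} gives $\mu(F_2)\le\mu(F_1)$, which is impossible. So $\mu(F_2\setminus\clo(F_1))>0$, and \ref{ax:cl5} with $A=F_2$ provides $H\subseteq F_2\setminus\clo(F_1)$ of positive measure with $F_1\cup H\in\cF$. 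This $G\coloneqq F_1\cup H$ strictly contains $F_1$ and lies in $F_1\cup F_2$.
\end{itemize}

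The step I expect to need most care is identifying the closure $\clo_M$ of the resulting matroid with the given $\clo$; let $r_M$ be its rank function.

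First I show that $X\subseteq\clo(I)$ whenever $I$ is a maximal member of $\cF|X$. Otherwise \ref{ax:cl5} with $A=X$ would give an independent set strictly between $I$ and $X$. Combining this with \ref{ax:cl2} and \ref{ax:cl3} gives $\clo(X)=\clo(I)$.

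Next I show $\clo(X)\subseteq\clo_M(X)$. Every independent $G\subseteq\clo(X)=\clo(I)$ satisfies $\mu(G)\le\mu(I)$ by \ref{ax:cl4}, so $r_M(\clo(X))=\mu(I)=r_M(X)$, and $\clo(X)\in\cA_X$.

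Conversely, let $A\supseteq X$ with $r_M(A)=r_M(X)$. Extend $I$ to a maximal independent $I'$ in $A$ via \cref{lem:maximal}. Since $I\subseteq I'$ and $\mu(I')=r_M(A)=\mu(I)$, we get $I'=I$. The observation above applied to $A$ then gives $A\subseteq\clo(I)=\clo(X)$. Hence $\clo_M(X)=\clo(X)$, and, as checked after \eqref{ind:cl}, the independent sets of $M$ are exactly the $\clo$-independent sets.
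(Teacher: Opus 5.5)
Your proof is correct and follows essentially the same route as the paper's. The differences are only local: you obtain (I3) by verifying (I3') and invoking \cref{prop:i3i3'}, whereas the paper compares two maximal members of $\cF|Y$ directly via the same (CL4)/(CL5) argument. You also prove (CL2) via $Y\cup\clo(X)\in\cA_Y$ rather than through $\clo(X)\cup\clo(Y)$. Finally, in identifying the closure you extend a maximal independent subset of $X$ inside $A$, which is slightly slicker than the paper's comparison of an arbitrary maximal independent subset of $A$ with $\clo(F)$.
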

	
	\begin{proof}
		Let $M=(J,\Sigma,\mu,\cF)$ be a measurable matroid, let $r$ be its rank function, and let $\clo$ be its closure operator. By definition, $X\subseteq\clo(X)$, so \ref{ax:cl1} holds. If $X\subseteq Y$, then submodularity gives
		\[
		r(\clo(X))+r(\clo(Y))
		\geq r(\clo(X)\cap\clo(Y))+r(\clo(X)\cup\clo(Y))
		\geq r(X)+r(Y).
		\]
		Since $r(\clo(X))=r(X)$ and $r(\clo(Y))=r(Y)$, equality holds throughout. Thus, $r(\clo(X)\cup\clo(Y))=r(Y)$, and the maximality of $\clo(Y)$ gives $\clo(X)\subseteq\clo(Y)$. This proves \ref{ax:cl2}. Axiom \ref{ax:cl3} follows directly from the definition of closure.
		
		We first check that \eqref{ind:cl} indeed recovers the independent sets of $M$. If $F$ is independent and $X\subseteq F$ has positive measure, then
		$
		r(F\setminus X)\leq \mu(F\setminus X)<\mu(F)=r(F).
		$
		Hence, $F$ cannot be contained in $\clo(F\setminus X)$, and in particular $\mu(X\setminus\clo(F\setminus X))>0$. Conversely, suppose that $F$ is not independent. Let $F'\subseteq F$ be a maximal independent subset of $F$. Then $\mu(F')=r(F)<\mu(F)$, so $X\coloneqq F\setminus F'$ has positive measure. Since $r(F')=r(F)$, we have $F\subseteq\clo(F')=\clo(F\setminus X)$, contradicting the condition in \eqref{ind:cl}.
		
		Now let $F$ be $\clo$-independent. By the previous paragraph, $F$ is independent in $M$. If $F\subseteq\clo(A)$, then
		$
		\mu(F)=r(F)\leq r(\clo(A))=r(A)\leq\mu(A),
		$
		proving \ref{ax:cl4}. Finally, suppose that $\mu(A\setminus\clo(F))>0$. Set $B\coloneqq A\setminus\clo(F)$. If $F$ were maximal independent in $F\cup B$, then $r(F\cup B)=r(F)$, and hence $B\subseteq\clo(F)$, a contradiction. Thus, $F$ is not maximal independent in $F\cup B$, and there exists $H\subseteq B$ with $\mu(H)>0$ such that $F\cup H$ is independent. By the preceding paragraph again, $F\cup H$ is $\clo$-independent. This proves \ref{ax:cl5}.
		
		Conversely, assume that $\clo$ satisfies the closure axioms, and let $\cF$ be the family of $\clo$-independent sets. Clearly, $\emptyset\in\cF$. If $F_1\subseteq F_2$ and $F_2\in\cF$, then $F_1\in\cF$; otherwise there is $X\subseteq F_1$ with $\mu(X)>0$ and $\mu(X\setminus\clo(F_1\setminus X))=0$. By \ref{ax:cl2}, this implies $\mu(X\setminus\clo(F_2\setminus X))=0$, contradicting $F_2\in\cF$. Thus, \ref{ax:i1} and \ref{ax:i2} hold.
		
		We prove \ref{ax:i4}. Let $(F_i)_{i\in\bN}$ be an increasing sequence in $\cF$, and set $F\coloneqq\bigcup_{i=1}^{\infty} F_i$ . Suppose that $F\notin\cF$. Then there is $X\subseteq F$ with $\mu(X)>0$ such that $\mu(X\setminus\clo(F\setminus X))=0$. By \ref{ax:cl1}, we also have $F\setminus X\subseteq\clo(F\setminus X)$, and therefore $F\subseteq\clo(F\setminus X)$. Hence, $F_i\subseteq\clo(F\setminus X)$ for every $i$. By \ref{ax:cl4}, applied to the $\clo$-independent set $F_i$, we have $\mu(F_i)\leq\mu(F\setminus X)$ for every $i$. Letting $i\to\infty$ gives $\mu(F)\leq\mu(F\setminus X)$, a contradiction. Hence, $F\in\cF$, proving \ref{ax:i4}.
		
		It remains to prove \ref{ax:i3}. Let $Y\in\Sigma$, and let $F_1,F_2$ be maximal members of $\cF|Y$. Such maximal members exist by \cref{lem:maximal}. If $\mu(F_2\setminus\clo(F_1))>0$, then \ref{ax:cl5}, applied with $A=F_2$, gives a set $H\subseteq F_2\setminus\clo(F_1)$ of positive measure such that $F_1\cup H\in\cF$. This contradicts the maximality of $F_1$ in $Y$. Hence, $F_2\subseteq\clo(F_1)$. By \ref{ax:cl4}, we get $\mu(F_2)\leq\mu(F_1)$. By symmetry, $\mu(F_1)\leq\mu(F_2)$, and therefore $\mu(F_1)=\mu(F_2)$. This proves \ref{ax:i3}.
		
		Finally, we show that the closure operator of the measurable matroid just constructed is $\clo$. Let $Y\in\Sigma$, and let $F$ be a maximal $\clo$-independent subset of $Y$, which exists by \cref{lem:maximal}. By \ref{ax:cl5}, maximality implies $Y\subseteq\clo(F)$, and therefore $\clo(Y)=\clo(F)$ by \ref{ax:cl2} and \ref{ax:cl3}. Also, every $\clo$-independent subset of $\clo(F)$ has measure at most $\mu(F)$ by \ref{ax:cl4}, while $F\subseteq\clo(F)$. Hence, the rank of $\clo(Y)$ in the constructed matroid is $\mu(F)$, which is also the rank of $Y$.
		
		Now let $Z$ be any measurable set with $Y\subseteq Z$ and with the same rank as $Y$. Let $I$ be a maximal $\clo$-independent subset of $Z$. Then $\mu(I)=\mu(F)$. If $\mu(I\setminus\clo(F))>0$, then \ref{ax:cl5} gives a positive-measure set $H\subseteq I\setminus\clo(F)$ such that $F\cup H$ is $\clo$-independent. Since $F\cup H\subseteq Z$, this contradicts the fact that the rank of $Z$ is $\mu(F)$. Hence, $I\subseteq\clo(F)$. By maximality of $I$ in $Z$, again using \ref{ax:cl5}, we have $Z\subseteq\clo(I)$, and therefore $Z\subseteq\clo(F)=\clo(Y)$. Thus, $\clo(Y)$ is the maximal same-rank superset of $Y$, so it is the closure of $Y$ in the constructed measurable matroid.
	\end{proof}
	
	The next lemma shows that, for independent sets of equal measure, containment in the closure is already enough to force equality of closures. We will use this observation in the alternating-chain construction for the proof of the intersection theorem in \cref{sec:intun}.
	
	\begin{lem}\label{lem:closure_same}
		Let $M=(J, \Sigma, \mu, \cF)$ be a measurable matroid and let $F_1, F_2 \in \cF$ with $\mu(F_1)=\mu(F_2)$. Assume that $F_2 \subseteq \clo(F_1)$. Then $\clo(F_2)=\clo(F_1)$. 
	\end{lem}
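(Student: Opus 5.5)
The plan is to show the two inclusions separately. Both will follow from monotonicity and idempotence of the closure operator (\cref{thm:closure}), together with the characterization of $\clo(X)$ as the unique maximal member of $\cA_X$.

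\emph{Inclusion $\clo(F_2)\subseteq\clo(F_1)$.} Apply \ref{ax:cl2} to the hypothesis $F_2\subseteq\clo(F_1)$ and then \ref{ax:cl3}:
\[
\clo(F_2)\subseteq\clo(\clo(F_1))=\clo(F_1).
\]

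\emph{Inclusion $\clo(F_1)\subseteq\clo(F_2)$.} It suffices to show $\clo(F_1)\in\cA_{F_2}$, that is, $F_2\subseteq\clo(F_1)$ and $r(\clo(F_1))=r(F_2)$. The containment is the hypothesis. For the rank, use that $F_1$ and $F_2$ are independent, so $r(F_i)=\mu(F_i)$. Then
\[
r(\clo(F_1))=r(F_1)=\mu(F_1)=\mu(F_2)=r(F_2).
\]
Since $\clo(F_2)$ is the unique maximal member of $\cA_{F_2}$, and $\cA_{F_2}$ contains every member below its maximum (\cref{cor:unionclosed}), we get $\clo(F_1)\subseteq\clo(F_2)$.

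Combining the two inclusions gives $\clo(F_1)=\clo(F_2)$ in the measure algebra. The argument contains no genuine obstacle. The only point needing care is the appeal to \cref{cor:unionclosed}: it guarantees that every member of $\cA_{F_2}$ lies inside $\clo(F_2)$, not merely that $\clo(F_2)$ is maximal. This is exactly where the equal-measure hypothesis is used, since it ensures $\clo(F_1)$ has the same rank as $F_2$.
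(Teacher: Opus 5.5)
Your proof is correct and follows essentially the paper's route. The key step in both is that the equal-measure hypothesis gives $r(\clo(F_1))=\mu(F_1)=\mu(F_2)=r(F_2)$, and hence $\clo(F_1)\in\cA_{F_2}$.

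The only difference is the finish. The paper observes that $\clo(F_1)$ is already maximal in $\cA_{F_2}$, since every strict superset of $\clo(F_1)$ has rank exceeding $\mu(F_1)$, and then invokes uniqueness of the maximal member. You instead obtain $\clo(F_2)\subseteq\clo(F_1)$ from \ref{ax:cl2} and \ref{ax:cl3}, and the reverse inclusion from \cref{cor:unionclosed}. Both finishes are valid.

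One wording slip: the relevant content of \cref{cor:unionclosed} is that every member of $\cA_{F_2}$ lies below the maximum. It is not that $\cA_{F_2}$ contains everything below its maximum, as your phrasing momentarily suggests; your final paragraph states it correctly.
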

	
	\begin{proof}
		Define 
		\begin{equation*}
			\cA_{F_2} \coloneqq \{ A \in \Sigma \mid F_2 \subseteq A,\ r(F_2)=r(A) \}.
		\end{equation*}
		By definition, $\clo(F_2)$ is the unique maximal set in this family. As $F_1,F_2\in\cF$, we have
		$$
		r(F_2)=\mu(F_2)=\mu(F_1)=r(F_1)=r(\clo(F_1)).
		$$
		Since $F_2 \subseteq \clo(F_1)$, this implies $\clo(F_1) \in \cA_{F_2}$. Furthermore, by the definition of $\clo(F_1)$, for any $X \supsetneq \clo(F_1)$, it holds that $r(X)>\mu(F_1)$. Consequently, $\clo(F_1)$ is a maximal set in $\cA_{F_2}$. Since $\clo(F_2)$ is the unique maximal set in $\cA_{F_2}$, we get $\clo(F_1)=\clo(F_2)$.
	\end{proof}
	
	%%%%%%%%%%%%%%%%%%%%%%%%%%%%%%%%
	\section{Classes of Measurable Matroids}
	\label{sec:examples}
	%%%%%%%%%%%%%%%%%%%%%%%%%%%%%%%%
	
	In this section, we show that our framework includes normalized finite matroids, cycle matroids of graphings, and measurable analogues of partition, nested, lattice path, transversal, and matching matroids.
	
	%%%%%%%%%%%%%%%%%%%%%%%%%%%%%%%%
	\subsection{Finite Matroids}
	\label{sec:finite}
	%%%%%%%%%%%%%%%%%%%%%%%%%%%%%%%%
	
	First, we show that finite matroids fit naturally into our model, even if we restrict to only atomless measures:  every finite matroid can be embedded into a measurable matroid.
	
	\begin{prop}
		Let $M_0=([n],r_0)$ be a finite matroid, and let $(J,\Sigma,\mu)$ be a standard probability space partitioned into sets $J_1,\dots,J_n$ of equal measure $\mu(J_i)=1/n$. Define $r\colon \Sigma\to \bR_{\geq 0}$ by
		\[
		r(X)\coloneqq \min\left\{ \frac{r_0(A)}{n}+\mu\big(X\setminus \bigcup_{a\in A}J_a\big)\,\middle|\, A\subseteq [n]\right\}.
		\]
		Then $M=(J,\Sigma,\mu,r)$ is a measurable matroid with $r_0(A)=n\cdot r\big(\bigcup_{a\in A}J_a\big)$ for all $A\subseteq [n]$.
	\end{prop}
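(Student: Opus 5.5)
The plan is to check the rank axioms \ref{ax:r1}--\ref{ax:r5} for $r$ and then invoke \cref{thm:rank}. The guiding idea is that $r$ is the rank function of a matroid union. Let $M'$ be the free measurable matroid, with rank $\mu$. Let $M''$ be the "inflated" matroid with rank $\rho(X)=\min_A\{r_0(A)/n+\mu(X\setminus J_A)\}$, where $J_A\coloneqq\bigcup_{a\in A}J_a$. It is cleaner, however, to verify the axioms directly.

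\ref{ax:r1} follows by taking $A=\emptyset$. \ref{ax:r2} holds because each term in the minimum is monotone in $X$. \ref{ax:r3} also follows from $A=\emptyset$. For the identity $r_0(A)=n\,r(J_A)$, choosing $A'=A$ gives $\le$. For $\ge$, we need $r_0(A')+n\mu(J_A\setminus J_{A'})=r_0(A')+|A\setminus A'|\ge r_0(A)$, which is finite subcardinality and monotonicity.

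For submodularity \ref{ax:r4}, take minimizers $A_1$ for $X$ and $A_2$ for $Y$, and use $A_1\cap A_2$ for $X\cap Y$ and $A_1\cup A_2$ for $X\cup Y$. Submodularity of $r_0$ handles the first term. For the measure part, I will check pointwise on indicators that
\[
\mathbf 1_{X\setminus J_{A_1}}+\mathbf 1_{Y\setminus J_{A_2}}\ \ge\ \mathbf 1_{(X\cap Y)\setminus J_{A_1\cap A_2}}+\mathbf 1_{(X\cup Y)\setminus J_{A_1\cup A_2}},
\]
using $J_{A_1\cap A_2}=J_{A_1}\cap J_{A_2}$ and $J_{A_1\cup A_2}=J_{A_1}\cup J_{A_2}$. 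This is a finite case check over the membership of a point in $X,Y,J_{A_1},J_{A_2}$.

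The main obstacle is \ref{ax:r5}. My approach is to describe $\cF=\{r=\mu\}$ explicitly. I expect $F\in\cF$ exactly when $\mu(F)\le r(F)$ holds with every $A$, that is, when
\[
\mu(F\cap J_A)\le r_0(A)/n \quad\text{for all } A\subseteq[n].
\]
The vector $x_a\coloneqq n\mu(F\cap J_a)$ then lies in the independence polytope of $M_0$, so $F\in\cF$ iff $x\in P(M_0)$. Given $X\subseteq Y$ with $x\coloneqq x(X)\in P(M_0)$, I will take a maximal vector $z$ in the polytope with $x\le z\le y\coloneqq x(Y)$ (the cap $y$ is coordinatewise $n\mu(Y\cap J_a)$). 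By the polymatroid rank formula, $\max\{z([n])\mid z\in P(M_0),\,z\le y\}=\min_A\{r_0(A)+y([n]\setminus A)\}=n\,r(Y)$. A standard fact is that every maximal $z$ with $x\le z$ attains this maximum; this is the polymatroid greedy property, i.e.\ the finite \ref{ax:i3''}. I will then use the exact-subset property of atomless $\mu$ to choose $Z\cap J_a$ with $X\cap J_a\subseteq Z\cap J_a\subseteq Y\cap J_a$ and measure $z_a/n$. This gives $Z\in\cF$ with $\mu(Z)=r(Y)$. Finally, $r(Z)=\mu(Z)$ and monotonicity give $r(Y)\ge r(Z)=\mu(Z)=r(Y)$. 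The point I anticipate needing most care with is citing the finite polymatroid intersection formula with a box constraint, which plays the role of Edmonds' theorem here.
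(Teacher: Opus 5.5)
Your proposal is correct. Your treatment of \ref{ax:r1}--\ref{ax:r4} and of the identity $r_0(A)=n\,r(J_A)$ is the same as the paper's: minimizers $A_1,A_2$, a pointwise check on each part $J_i$ together with submodularity of $r_0$, and $r_0(A)\le r_0(A')+|A\setminus A'|$.

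Your proof of \ref{ax:r5}, however, takes a different route. The paper stays inside the measure space. It builds $X=Z_0\subseteq Z_1\subseteq\dots\subseteq Z_n$ greedily, adding a set $W_i\subseteq (Y\setminus X)\cap J_i$ at step $i$. The measure of $W_i$ is the smaller of the available mass and the least slack $r_0(A)/n-\mu(Z_{i-1}\cap J_A)$ over $A\ni i$. It then proves $r(Y)\le\mu(Z)$ by uncrossing. Tight sets are closed under union and intersection, so the union $A'$ of the tight sets blocking each truncated step is tight. Moreover $Y\setminus Z\subseteq J_{A'}$, and the term for $A'$ in the definition of $r(Y)$ equals $\mu(Z)$.

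You instead note that $\cF$ is the preimage of the independence polytope $P(M_0)$ under $F\mapsto(n\mu(F\cap J_a))_{a\in[n]}$. By atomlessness, every vector in the box between $x(X)$ and $x(Y)$ is realized by a set lying between $X$ and $Y$. So \ref{ax:r5} reduces to Edmonds' classical fact: all maximal vectors of the polymatroid $P(M_0)\cap\{z\le y\}$ have coordinate sum $\min_A\{r_0(A)+y([n]\setminus A)\}=n\,r(Y)$. A maximal $z$ with $x\le z\le y$ exists by compactness. It is also maximal in $P(M_0)\cap\{z\le y\}$, since any vector above it is automatically above $x$; state this explicitly when you write it up.

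Your route is shorter and explains structurally why the construction works, at the cost of importing finite polymatroid theory. The paper's greedy-plus-uncrossing argument is self-contained and is in effect a direct proof of exactly that polymatroid fact in this setting.
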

	
	\begin{proof}
		For ease of discussion, for any $A\subseteq[n]$, let us define $J_A \coloneqq\bigcup_{a \in A} J_a$. We shall use the following equivalent description of independence. For $X\in\Sigma$,
		\[
		r(X)=\mu(X)
		\quad\Longleftrightarrow\quad
		\mu(X\cap J_A)\leq \frac{r_0(A)}{n}
		\text{ for every } A\subseteq[n].
		\]
		Indeed, the inequality $r(X)\leq\mu(X)$ is obtained by taking $A=\emptyset$ in the definition of $r$. Equality holds exactly when every term in the minimum defining $r(X)$ is at least $\mu(X)$, which is equivalent to the given condition.
		
		Axioms \ref{ax:r1} and \ref{ax:r2} clearly hold, while choosing $A=\emptyset$ in the definition of $r$ shows \ref{ax:r3}. To prove \ref{ax:r4}, let $X_1,X_2\in\Sigma$ and assume that $A_1,A_2\subseteq[n]$ are the sets for which
		\[
		r(X_i)=\frac{r_0(A_i)}{n}+\mu(X_i\setminus J_{A_i})
		\]
		for $i=1,2$. We have
		\[
		\mu(X_1\setminus J_{A_1})+\mu(X_2\setminus J_{A_2})\geq\mu((X_1\cap X_2)\setminus J_{A_1\cap A_2})+\mu((X_1\cup X_2)\setminus J_{A_1\cup A_2}).
		\]
		This inequality follows by checking it separately on each part $J_i$: if $i$ belongs to neither or both of $A_1$ and $A_2$, it follows from modularity of $\mu$, while the two remaining cases follow from monotonicity of $\mu$. Hence, by the submodularity of $r_0$,
		\begin{align*}
			r(X_1)+r(X_2)
			&=\frac{r_0(A_1)}{n}+\mu(X_1\setminus J_{A_1})+\frac{r_0(A_2)}{n}+\mu(X_2\setminus J_{A_2})\\
			&\geq \frac{r_0(A_1\cap A_2)}{n}+\mu((X_1\cap X_2)\setminus J_{A_1\cap A_2})+\frac{r_0(A_1\cup A_2)}{n}+\mu((X_1\cup X_2)\setminus J_{A_1\cup A_2})\\
			&\geq r(X_1\cap X_2)+r(X_1\cup X_2),
		\end{align*}
		proving submodularity.
		
		Finally, we prove \ref{ax:r5}. Let $X\subseteq Y$ with $r(X)=\mu(X)$. We construct a sequence of sets $Z_0 \subseteq Z_1 \subseteq \dots \subseteq Z_n$ as follows. Let $Z_0 \coloneqq X$. For each $i \in [n]$, define $Z_i \coloneqq Z_{i-1} \cup W_i$, where $W_i \subseteq (Y \setminus X) \cap J_i$ is chosen so that
		\[
		\mu(W_i)=\min\left\{\mu\left({(Y\setminus X)\cap J_i}\right),\ \min\left\{\frac{r_0(A)}{n}-\mu(Z_{i-1}\cap J_A)\,\middle|\, i\in A\subseteq[n]\right\}\right\}.
		\] 
		We claim that $\mu(Z_i) = r(Z_i)$ for every $i=0,\dots,n$. By the equivalent description above, it suffices to verify that $\mu(Z_i \cap J_A) \le \tfrac{r_0(A)}{n}$ holds for all $A \subseteq [n]$. We argue by induction on $i$. The claim is immediate for $i=0$. Suppose it holds for $i-1$. Fix $A \subseteq [n]$. If $i \notin A$, then $Z_i \cap J_A = Z_{i-1} \cap J_A$, and the desired inequality follows from the induction hypothesis. Thus, a violation could only occur when $i \in A$. In that case, $\mu(Z_i\cap J_A)=\mu(Z_{i-1}\cap J_A)+\mu(W_i)$. However, by the choice of $W_i$, we have $\mu(W_i)\leq \frac{r_0(A)}{n}-\mu(Z_{i-1}\cap J_A)$, and hence $\mu(Z_i \cap J_A) \le \frac{r_0(A)}{n}$, as required.
		
		Set $Z\coloneqq Z_n$. By construction, $X \subseteq Z \subseteq Y$, and the above implies that $\mu(Z)=r(Z)$. It remains to show that $r(Z)=r(Y)$. Since $Z \subseteq Y$, monotonicity implies $r(Z) \leq r(Y)$. For the reverse, let 
		\begin{equation*}
			S \coloneqq \{ i \in [n] \mid \mu(W_i) < \mu((Y \setminus X) \cap J_i) \}.
		\end{equation*} 
		For each $i \in S$, the second term in the definition of $\mu(W_i)$ is the minimum; thus there exists $A_i \subseteq [n]$ with $i \in A_i$ such that $\mu(Z_i \cap J_{A_i}) = \frac{r_0(A_i)}{n}$. As $\mu(Z_j \cap J_{A_i})$ is non-decreasing in $j$ and bounded by $\frac{r_0(A_i)}{n}$, we have $\mu(Z \cap J_{A_i}) = \frac{r_0(A_i)}{n}$. Let $A' \coloneqq \bigcup_{i \in S} A_i$. Note that if $A, B \subseteq [n]$ satisfy $\mu(Z \cap J_A) = \frac{r_0(A)}{n}$ and $\mu(Z \cap J_B) = \frac{r_0(B)}{n}$, then the submodularity of $r_0$, the modularity of $A\mapsto \mu(Z\cap J_A)$, and $\mu(Z)=r(Z)$ imply
		\begin{align*}
			\mu(Z\cap J_A)+\mu(Z\cap J_B)
			&=\frac{r_0(A)}{n}+\frac{r_0(B)}{n}\\
			&\geq \frac{r_0(A \cup B)}{n} + \frac{r_0(A \cap B)}{n}\\
			&\geq \mu(Z \cap J_{A \cup B}) + \mu(Z \cap J_{A \cap B})\\
			&=\mu(Z\cap J_A)+\mu(Z\cap J_B).
		\end{align*}
		Therefore, equality must hold for both $A \cup B$ and $A \cap B$. Thus, $A'$ is tight, that is, $\mu(Z \cap J_{A'}) = \frac{r_0(A')}{n}$. Furthermore, for any $i \notin S$, we have $W_i = (Y \setminus X) \cap J_i$, implying $Z \cap J_i = Y \cap J_i$. It follows that $Y \setminus Z \subseteq J_{S} \subseteq J_{A'}$, which yields $Y \setminus J_{A'} = Z \setminus J_{A'}$. Therefore,
		\[
		r(Y) \leq \frac{r_0(A')}{n} + \mu(Y \setminus J_{A'}) = \mu(Z \cap J_{A'}) + \mu(Z \setminus J_{A'}) = \mu(Z) = r(Z),
		\]
		concluding the proof of \ref{ax:r5}.
		
		Finally, let $A\subseteq[n]$. Taking $B=A$ in the definition gives $r(J_A)\leq r_0(A)/n$. Conversely, for every $B\subseteq[n]$, monotonicity and the fact that adding a single element increases the rank by at most one give
		\[
		r_0(A)\leq r_0(A\cup B)\leq r_0(B)+|A\setminus B|.
		\]
		Hence
		\[
		\frac{r_0(A)}{n}
		\leq
		\frac{r_0(B)}{n}+\mu(J_A\setminus J_B)
		\]
		for every $B\subseteq[n]$, and therefore $r(J_A)\geq r_0(A)/n$. These together imply $r(J_A)=r_0(A)/n$ as stated.
	\end{proof}
	
	%%%%%%%%%%%%%%%%
	\subsection{Cycle Matroids}
	\label{sec:graphic}
	%%%%%%%%%%%%%%%%
	
	For a finite graph $G=(V,E)$, the \emph{cycle matroid} $M(G)=(E,\cI)$ has ground set $E$, where a set of edges is independent if it is a forest; that is,
	\[
	\cI=\{F\subseteq E\mid F\text{ contains no cycle}\}.
	\]
	If $G$ is connected, then the bases of $M(G)$ are precisely the spanning trees of $G$, and its rank is $|V|-1$; see~\cite{oxley2011matroid} for further details.
	
	The corresponding measurable examples arise from graphings; see, e.g.,
	\cite{lovasz2012large} for background. Let $(J,\Sigma_J,\nu)$ be a standard measure space. A \emph{Borel graph} on $J$ is a graph whose edge relation
	\[
	\{(x,y)\in J\times J\mid \{x,y\}\in E\}
	\]
	is a Borel subset of $J\times J$.  A \emph{graphing} $G=(J,\Sigma_J,\nu,E)$ is a Borel graph on $V(G)=J$ with bounded degrees such that, for any Borel sets $A,B\subseteq J$,
	\[
	\int_A \deg(B,x)\diff\nu(x)=\int_B \deg(A,y)\diff\nu(y),
	\]
	where $\deg(S,x)$ is the number of neighbors of $x$ in $S$. This symmetry allows one to define an edge measure $\eta_{G}$ on Borel subsets $F \subseteq E$ by setting 
	\[
	\eta_{G}(F) = \frac12 \int_J \deg_F(x) \diff\nu(x),
	\]
	where $\deg_F(x)$ counts edges in $F$ incident to $x$. The bounded-degree assumption ensures that this integral is finite. 
	
	Throughout the graphing examples, the ground space of the corresponding measurable matroid is the Borel edge space equipped with the edge measure. More precisely, if $G=(J,\Sigma_J,\nu,E)$ is a graphing, then we denote by $\Sigma_E$ the Borel $\sigma$-algebra of the unoriented edge space and equip $(E,\Sigma_E)$ with the finite edge measure $\eta_G$. In the bounded-degree graphing setting considered here, $\eta_G$ is atomless whenever $\nu$ is atomless. Thus, the edge space satisfies the standing convention for ground spaces of measurable matroids. As everywhere in the paper, measurable edge sets are identified up to $\eta_G$-null sets.
	
	For the rest of this section, we normalize the vertex measure so that $\nu(J)=1$. Thus, all graphings considered below are probability-measure-preserving graphings unless stated otherwise. This is only a normalization: for a finite nonzero vertex measure, replacing $\nu$ by $\nu/\nu(J)$ rescales both the edge measure and the rank function by the same factor. Then, following the work of Lovász~\cite{lovasz2024matroid}, the \emph{cycle matroid} of a graphing is defined via its rank function. For a Borel set $F \subseteq E$, the rank is given by 
	\begin{equation} \label{eq:cycle_matroid_rank}
		\rho_{G}(F)=1 - \int_J \frac{1}{|V(G^F_x)|} \diff\nu(x),
	\end{equation}
	where $V(G^F_x)$ is the vertex set of the connected component containing $x$, and we adopt the convention $1/\infty = 0$. The integrand is Borel measurable by \cite[Lemmas~2.2 and~4.1]{lovasz2024matroid}. Intuitively, this generalizes the finite cycle matroid rank to the measurable setting. Lovász~\cite{lovasz2024matroid} showed that $\rho_G$ is monotone, submodular, and satisfies $\rho_G \le \eta_G$, providing a canonical example of a measurable set function with matroid-like properties.
	
	A graphing $G$ is \emph{hyperfinite} if for every $\varepsilon>0$ there exists a Borel set $S \subseteq E$ with $\eta_G(S) \le \varepsilon$ such that all components of $E \setminus S$ are finite. A \emph{forest} is an acyclic $T \subseteq E$, and an \emph{essential spanning forest} is a forest spanning each finite component and containing only infinite components within infinite parts of $G$. Sets are considered forests or essential spanning forests up to measure-zero modifications. Bérczi, Borbényi, Lovász, and Tóth~\cite{berczi2026cycle} showed that, in a graphing, hyperfinite forests and hyperfinite essential spanning forests play the roles of independent sets and bases of a cycle matroid. We recall their results in this subsection as the main motivating example. 
	
	By \cref{cor:mm-measures}, $\mm(\rho_G)$ consists of finite measures dominated by $\eta_G$. A measure $\alpha\in\mm(\rho_G)$ is an \emph{exposed point} if there exists a function $w\in L^1(\eta_G)$ such that
	\[
	\int_E w\diff\alpha>\int_E w\diff\beta
	\]
	for every $\beta\in\mm(\rho_G)$ with $\beta\neq\alpha$. These integrals are well defined because $\alpha$ and $\beta$ are dominated by $\eta_G$. Exposed points of $\bmm(\rho_G)$ are defined analogously. 
	
	\begin{thm}[Bérczi, Borbényi, Lovász, Tóth]
		Let $G=(J,\Sigma_J,\nu,E)$ be a graphing, and let $F\in\Sigma_E$. Then the following are equivalent:
		\begin{enumerate}[label=(\roman*)]\itemsep0em
			\item $F$ is a hyperfinite forest in $G$,
			\item $F$ is a subset of a hyperfinite essential spanning forest,
			\item $\rho_G(F) = \eta_G(F)$,
			\item the measure $\eta_G|_F$ is an exposed point of $\mm(\rho_G)$.
		\end{enumerate}
		Similarly, the following are equivalent:
		\begin{enumerate}[label=(\roman*')]\itemsep0em
			\item $F$ is a hyperfinite essential spanning forest in $G$,
			\item $\rho_G(F) = \eta_G(F) = \rho_G(E)$,
			\item the measure $\eta_G|F$ is an exposed point of $\bmm(\rho_G)$.
		\end{enumerate}
	\end{thm}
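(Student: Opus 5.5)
The first step is to show that $\rho_G(F)=\eta_G(F)$ whenever $F$ is a forest all of whose components are finite. This is a per-component computation. On a finite tree component with $k$ vertices, the integrand of \eqref{eq:cycle_matroid_rank} contributes $1-1/k=(k-1)/k$ per vertex. The edge measure also contributes $(k-1)/k$ per vertex, since $\frac12\sum\deg=k-1$; here I would use the mass transport (symmetry) identity of the graphing to average over the component. If $F$ is a hyperfinite forest, I would choose $S_n$ with $\eta_G(S_n)<2^{-n}$ such that $F\setminus S_n$ has finite components. Then $\rho_G(F\setminus S_n)=\eta_G(F\setminus S_n)$. Since $\rho_G$ is monotone, submodular and dominated by $\eta_G$ (Lovász), \cref{lem:smooth} and subadditivity give $|\rho_G(F)-\rho_G(F\setminus S_n)|\le\eta_G(S_n)$. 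Letting $n\to\infty$ yields (i)$\Rightarrow$(iii).

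For (iii)$\Rightarrow$(i), I would first show acyclicity. Suppose $F$ contains cycles. A Borel selection of one edge from each cycle of $F$ then gives a positive-measure set $C\subseteq F$ whose removal does not change the connected components. Hence $\rho_G(F\setminus C)=\rho_G(F)=\eta_G(F)>\eta_G(F\setminus C)$, which contradicts $\rho_G\le\eta_G$. Next, for a forest $F$, I would split the vertex set according to whether the $F$-component is finite or infinite. On the finite part the two functions agree, as above. On the infinite part $\rho_G$ contributes exactly $1$ per vertex, while $\eta_G$ contributes half the expected $F$-degree. Thus $\rho_G(F)=\eta_G(F)$ forces the expected degree on the infinite components to be exactly $2$.

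\textbf{Main obstacle.} The hard step is the converse in the measurable-dynamics sense: a p.m.p.\ treeing (acyclic graphing) with infinite components and expected degree $2$ must be hyperfinite. Here I would invoke the known theory of cost and treeings (Levitt, Gaboriau, Adams). An infinite-component p.m.p.\ forest has expected degree at least $2$, and equality holds iff almost every component has one or two ends, which in turn is equivalent to hyperfiniteness of the forest. This external input is where the real content lies.

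For (i)$\Leftrightarrow$(ii), the direction (ii)$\Rightarrow$(i) follows because a subforest of a hyperfinite forest is hyperfinite. For (i)$\Rightarrow$(ii), the family $\{F\mid\rho_G(F)=\eta_G(F)\}$ satisfies the rank axioms, since \ref{ax:r5} can be checked via the equivalence with hyperfinite forests together with the extension argument of \cref{lem:maximal}. By \cref{thm:rank} it is then a measurable matroid, so $F$ extends to a base. A base is a hyperfinite forest with $\rho_G=\rho_G(E)$, and comparing the component counts in \eqref{eq:cycle_matroid_rank} shows it must be an essential spanning forest. The primed equivalences follow the same way, using rank $\rho_G(E)$.

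For (iv), I would use the following exposing weight. Given a hyperfinite forest $F$, take $w=\mathbf 1_F-\mathbf 1_{E\setminus F}$. For $\beta\in\mm(\rho_G)$, $\int w\diff\beta=\beta(F)-\beta(E\setminus F)\le\eta_G(F)$, since $\beta\le\eta_G$ on $F$. Equality forces $\beta(E\setminus F)=0$ and $\beta|_F=\eta_G|_F$, so $\eta_G|_F$ is exposed. Conversely, suppose $\eta_G|_F\in\mm(\rho_G)$. Then $\eta_G(F)=\eta_G|_F(F)\le\rho_G(F)$, which gives (iii). The basis version is the same argument with $\bmm$.
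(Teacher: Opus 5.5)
The paper does not prove this statement; it quotes it from Bérczi, Borbényi, Lovász and Tóth \cite{berczi2026cycle}, so your outline has to stand on its own. Much of it does.
\begin{itemize}
\item (i)$\Rightarrow$(iii) by finite-component approximation is sound.
\item The reduction of (iii)$\Rightarrow$(i) to the theorem that a p.m.p.\ treeing with infinite components and expected degree $2$ is hyperfinite is sound, and citing the Levitt--Gaboriau--Adams/Aldous--Lyons theory for that theorem is legitimate.
\item The exposedness argument with $w=\mathbf 1_F-\mathbf 1_{E\setminus F}$ is sound. You should also record that $\eta_G|_F\in\mm(\rho_G)$, which holds because $\eta_G(F\cap X)=\rho_G(F\cap X)\le\rho_G(X)$ by (i)$\Rightarrow$(iii) applied to $F\cap X$.
\end{itemize}
In the acyclicity step, ``one edge from each cycle'' is not right as stated: removing one edge from every cycle can disconnect a component. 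Instead, fix a length $\ell$ such that the edges of $F$ lying on $\ell$-cycles have positive measure. Take a positive-measure colour class of a countable Borel colouring of the locally finite graph on these edges, where two edges are adjacent when they lie on a common $\ell$-cycle. Each selected edge then lies on an $\ell$-cycle avoiding all other selected edges, so the components are unchanged.

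The genuine gap is (i)$\Rightarrow$(ii). You say (R5) for $\rho_G$ follows from the equivalence with hyperfinite forests together with \cref{lem:maximal}. But \cref{lem:maximal} only produces a maximal hyperfinite forest $Z$ with $X\subseteq Z\subseteq Y$. It says nothing about whether $\rho_G(Z)=\rho_G(Y)$, and that equality is the whole content of (R5); the paper's example following the rank axioms shows that (R1)--(R4) do not imply it. Invoking \cref{thm:rank} is circular for the same reason. Extending $F$ to a maximal hyperfinite forest $T$ needs only Zorn's lemma; what must be proved is that $T$ is an essential spanning forest.

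This needs a graph-specific augmentation. Suppose the set of vertices whose $T$-component is finite but is not a whole $G$-component has positive measure. These components form a finite Borel equivalence relation. Choose, in a Borel way, one $G$-edge $e_C$ leaving each such component $C$. Take a countable Borel proper colouring of the locally finite graph on these components, where two are adjacent if joined by an edge of $G$. Let $A$ consist of the edges $e_C$ for $C$ in a colour class of positive measure.
\begin{itemize}
\item Every edge of $A$ has exactly one endpoint in a component of this colour class, since its other endpoint lies in an infinite $T$-component or in an adjacent, hence differently coloured, component. That component meets no other edge of $A$, so $T\cup A$ is still a forest.
\item $T\cup A$ is still hyperfinite: after cutting $T$ into finite pieces by an edge set of small measure, each piece acquires only finitely many finite pendant pieces.
\item $\eta_G(A)>0$ by part (iii) of \cref{lem:finite_class_selection}.
\end{itemize}
This contradicts the maximality of $T$. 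With this in hand, your comparison of component sizes in \eqref{eq:cycle_matroid_rank} gives (ii) and the primed equivalences. The same augmentation, restricted to edges of $Y$, yields (R5) if you prefer the rank-axiom route.
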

	
	Independence axioms for Borel edge sets, compatible with those used here, were previously considered in~\cite{berczi2026cycle} in the specific setting of graphings, where they were shown to be satisfied by hyperfinite forests.
	
	\begin{thm}[Bérczi, Borbényi, Lovász, Tóth]
		Let $G=(J,\Sigma_J,\nu,E)$ be a graphing. Then the family of hyperfinite forests of $G$ satisfies the independence axioms \ref{ax:i1}--\ref{ax:i4} with respect to the edge measure $\eta_G$.
	\end{thm}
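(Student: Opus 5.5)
The natural route is through the rank-function characterization, \cref{thm:rank}. Let $\cF$ be the family of hyperfinite forests. By the preceding theorem of Bérczi, Borbényi, Lovász, and Tóth, $F\in\cF$ if and only if $\rho_G(F)=\eta_G(F)$. By Lovász~\cite{lovasz2024matroid}, $\rho_G$ is monotone, submodular, and satisfies $\rho_G\le\eta_G$; clearly $\rho_G(\emptyset)=0$. So \ref{ax:r1}--\ref{ax:r4} hold for $\rho_G$ on the edge space $(E,\Sigma_E,\eta_G)$. If we also verify \ref{ax:r5}, then \cref{thm:rank} shows that $\{F\mid\rho_G(F)=\eta_G(F)\}$, which is exactly the family of hyperfinite forests, satisfies \ref{ax:i1}--\ref{ax:i4}.

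Three of the independence axioms can instead be checked directly, and I would do this as a sanity check. Axiom \ref{ax:i1} is trivial. For \ref{ax:i2}, a subset of a forest is a forest, and hyperfiniteness passes to subgraphs by removing the same small set $S$. For \ref{ax:i4}, an increasing union of forests is acyclic, since a cycle is finite and so lies in some $F_i$. For hyperfiniteness, use the rank characterization: $\rho_G(F_i)=\eta_G(F_i)\to\eta_G(F)$, and $\rho_G$ is continuous from below by \cref{lem:smooth}, so $\rho_G(F)=\eta_G(F)$.

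The main obstacle is the exchange axiom, in the form \ref{ax:i3''} or, equivalently, \ref{ax:r5}. Let $X\subseteq Y$ be a hyperfinite forest. We need a hyperfinite forest $Z$ with $X\subseteq Z\subseteq Y$ and $\eta_G(Z)=\rho_G(Y)$. By the second equivalence in the quoted theorem, applied to the subgraphing $(J,\Sigma_J,\nu,Y)$, this means $Z$ should be a hyperfinite essential spanning forest of $Y$ containing $X$. My plan is to extend $X$ in stages. First, by \cref{lem:maximal} and the $\sigma$-increasing property just established, take a maximal hyperfinite forest $Z$ with $X\subseteq Z\subseteq Y$. Then show that maximality forces $\rho_G(Z)=\rho_G(Y)$.

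Suppose instead that $\rho_G(Z)<\rho_G(Y)$. By submodularity and the integral formula \eqref{eq:cycle_matroid_rank}, some positive-measure set of vertices lies in components of $Z$ that are strictly smaller, in the quantity $1/|V(\cdot)|$, than their $Y$-components. There are two cases.

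In the first case, a positive-measure family of edges of $Y\setminus Z$ joins distinct finite components of $Z$. Using a Borel selection of such edges (a Borel maximal-independent-set or marker argument, since $G$ has bounded degree), we choose a positive-measure set $H$ whose addition to $Z$ merges finite components without creating cycles. Then $Z\cup H$ is still a forest, and it is hyperfinite because each merge joins finite pieces. This contradicts maximality.

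In the second case, $Z$ has infinite components, but we also need the measure-theoretic subtlety handled. That is, we need a hyperfinite extension to exist whenever the rank gap is positive, and this is where I expect the real work. Here I would rely on the equivalence (ii) of the quoted theorem: every hyperfinite forest extends to a hyperfinite essential spanning forest, applied inside the graphing with edge set $Y$. This directly yields $Z\supseteq X$ with $\rho_G(Z)=\eta_G(Z)=\rho_Y(Y)=\rho_G(Y)$. That establishes \ref{ax:r5}, and \cref{thm:rank} finishes the proof.
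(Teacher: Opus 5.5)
The paper gives no proof of this statement. It is quoted from \cite{berczi2026cycle}, together with the preceding characterization theorem. Your proposal is a correct derivation of the statement \emph{from} that preceding theorem, and since the paper imports both results, this is legitimate here.

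The decisive step is your last one. The subgraph $G_Y=(J,\Sigma_J,\nu,Y)$ is again a graphing. The functions $\eta_{G_Y}$ and $\rho_{G_Y}$ agree with $\eta_G$ and $\rho_G$ on subsets of $Y$, because the formula \eqref{eq:cycle_matroid_rank} depends only on the edge set. Hyperfiniteness of a forest is intrinsic to the forest. Hence (i)$\Rightarrow$(ii) and (i')$\Rightarrow$(ii'), applied inside $G_Y$, give \ref{ax:r5} at once, and \cref{thm:rank} together with (i)$\Leftrightarrow$(iii) finishes the argument.

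You could even bypass \cref{thm:rank}. The same reasoning shows that every maximal member of $\cF|Y$ is a hyperfinite essential spanning forest of $G_Y$. It therefore has measure $\rho_G(Y)$, which is \ref{ax:i3} directly.

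Be aware that all the substance is imported. The two imported facts are that a hyperfinite forest in $Y$ extends to a hyperfinite essential spanning forest of $G_Y$, and that $\rho_G(F)=\eta_G(F)$ characterizes hyperfinite forests. So your argument is a corollary of the quoted theorem, not an independent proof. The extension property is exactly what an independent proof would have to establish.

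You should delete the maximal-forest case analysis in the middle. Your Case~2 does not use it: the appeal to the quoted theorem settles \ref{ax:r5} without any case split. Case~1 is also not correct as written. If the selected edges $H$ form infinite chains in the graph of finite $Z$-components, then $Z\cup H$ has infinite components. Its hyperfiniteness then does not follow from the fact that each merge joins finite pieces. You would need, for instance, that each finite $Z$-component meets at most one edge of $H$.

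The dichotomy is misstated as well. A rank gap comes from finite $Z$-components that are proper subsets of their $Y$-components. The $Y$-edge leaving such a component may go to a finite or to an infinite $Z$-component. The mere presence of infinite components of $Z$ is not the complementary case.
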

	
	Therefore, hyperfinite forests form the independent sets of a measurable matroid. Its bases are precisely the hyperfinite essential spanning forests, and its rank function is $\rho_G$.
	
	%%%%%%%%%%%%%%%%
	\subsection{Partition Matroids}
	\label{sec:partition}
	%%%%%%%%%%%%%%%%
	
	Let $S=S_1\cup \dots\cup S_q$ be a partition of a finite set $S$, and let $k_i$ be an integer with $0\leq k_i\leq |S_i|$ for every $i\in[q]$. The corresponding \emph{partition matroid} has bases
	\[
	\cB=\{X\subseteq S\mid |X\cap S_i|=k_i\text{ for every }i\in[q]\},
	\]
	see~\cite{oxley2011matroid}. The sets $S_1,\dots,S_q$ are called \emph{partition classes}. Partition matroids are among the simplest and most useful classes of finite matroids, since many combinatorial conditions can be expressed by imposing bounds on the partition classes. For example, the matchings of a bipartite graph are the common independent sets of two partition matroids corresponding to the degree constraints on the two sides. As we will see in \cref{sec:bipartite}, the same description applies to measurable matchings in bipartite graphings with Borel bipartition.
	
	There are several ways to define measurable counterparts of partition matroids. The simplest one replaces the cardinality bounds on countably many partition classes by bounds on their measures. Let $(J,\Sigma,\mu)$ be a standard measure space, let $\{J_i\}_{i\in\bN}$ be a partition of $J$ into countably many measurable parts, and let $b_i\in\bR_{\geq 0}$ for every $i\in\bN$. Define
	\begin{equation}\label{eq:finitary_partition_matroid}
		\cF\coloneqq \{X\in\Sigma \mid \mu(X\cap J_i)\leq b_i \text{ for all } i\in\bN\}.
	\end{equation}
	We call the resulting system a \emph{finitary measurable partition matroid}.
	
	\begin{prop}\label{prop:finitary_partition_matroid}
		Let $(J,\Sigma,\mu)$ be a standard measure space, let $\{J_i\}_{i\in\bN}$ be a partition of $J$ into countably many measurable parts, and let $b_i\in\bR_{\geq 0}$ for every $i\in\bN$. If $\cF$ is defined by \eqref{eq:finitary_partition_matroid}, then $M=(J,\Sigma,\mu,\cF)$ is a measurable matroid with rank function $r(X)=\sum_{i=1}^\infty \min\{\mu(X\cap J_i),b_i\}$ for all $X\in\Sigma$.
	\end{prop}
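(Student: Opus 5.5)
The plan is to work directly with the independence axioms \ref{ax:i1}, \ref{ax:i2}, \ref{ax:i3'} and \ref{ax:i4}. By \cref{prop:i3i3'} these suffice to conclude that $M$ is a measurable matroid. The rank formula will then be identified at the end by computing the measure of a maximal independent subset.

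The first two axioms are immediate. We have $\emptyset\in\cF$, and if $F_1\subseteq F_2\in\cF$, then $\mu(F_1\cap J_i)\le\mu(F_2\cap J_i)\le b_i$ for every $i$. For \ref{ax:i4}, let $(F_n)_{n\in\bN}$ be an increasing sequence in $\cF$ with union $F$. For each fixed $i$, continuity of $\mu$ from below gives $\mu(F\cap J_i)=\lim_n\mu(F_n\cap J_i)\le b_i$. Hence $F\in\cF$.

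For \ref{ax:i3'}, let $F_1,F_2\in\cF$ with $\mu(F_1)<\mu(F_2)$. Since $\mu$ is finite, both measures decompose as countable sums over the parts:
\[
\mu(F_k)=\sum_i\mu(F_k\cap J_i)\quad (k=1,2).
\]
So there is an index $i$ with $\mu(F_1\cap J_i)<\mu(F_2\cap J_i)$. For this $i$, the set $(F_2\setminus F_1)\cap J_i$ has measure at least $\mu(F_2\cap J_i)-\mu(F_1\cap J_i)>0$. Let
\[
t\coloneqq\min\{\mu((F_2\setminus F_1)\cap J_i),\ b_i-\mu(F_1\cap J_i)\}.
\]
We have $t>0$, because $b_i\ge\mu(F_2\cap J_i)>\mu(F_1\cap J_i)$. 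By the exact-subset property, choose $H\subseteq (F_2\setminus F_1)\cap J_i$ with $\mu(H)=t$, and set $G\coloneqq F_1\cup H$. Then $G$ changes $F_1$ only in class $J_i$, and there $\mu(G\cap J_i)\le b_i$. Thus $G\in\cF$ and $F_1\subsetneq G\subseteq F_1\cup F_2$.

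For the rank formula, fix $X$. Choose in each class $J_i$ a subset $Z_i\subseteq X\cap J_i$ with $\mu(Z_i)=\min\{\mu(X\cap J_i),b_i\}$, again using the exact-subset property, and set $Z\coloneqq\bigcup_i Z_i$. Then $Z\in\cF|X$. Every $F\in\cF|X$ satisfies $\mu(F\cap J_i)\le\min\{\mu(X\cap J_i),b_i\}$ for each $i$, so $\mu(F)\le\mu(Z)$. It follows that any maximal member of $\cF|X$ has measure at most $\mu(Z)$.

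Conversely, $Z$ itself is maximal: adding a positive-measure subset of $X\setminus Z$ to $Z$ would add measure in some class $J_i$ where $Z_i$ is already all of $X\cap J_i$ up to null sets, or where $\mu(Z_i)=b_i$. In either case the result is not an independent subset of $X$. By \ref{ax:i3}, all maximal members of $\cF|X$ have the same measure, so $r(X)=\mu(Z)=\sum_i\min\{\mu(X\cap J_i),b_i\}$.

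There is no real obstacle here. The only points requiring care are the countable decomposition of $\mu$, which uses finiteness of $\mu$, and the use of atomlessness to realize exact measures.
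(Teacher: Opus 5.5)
Your proof is correct, but it is organized differently from the paper's. You verify the augmentation axiom \ref{ax:i3'} for an arbitrary pair $F_1,F_2$. To do so, you find a class $J_i$ where $F_2$ carries more mass and add a piece of $(F_2\setminus F_1)\cap J_i$ of measure at most $b_i-\mu(F_1\cap J_i)$. You then obtain \ref{ax:i3} from \cref{prop:i3i3'}, and compute the rank by building one explicit maximal set $Z$.

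The paper instead proves \ref{ax:i3} directly. For an \emph{arbitrary} maximal $I\in\cF|X$, it shows $\mu(I\cap J_i)=\min\{\mu(X\cap J_i),b_i\}$ for every $i$. Otherwise atomlessness gives $A\subseteq (X\cap J_i)\setminus I$ with $0<\mu(A)\leq b_i-\mu(I\cap J_i)$, and $I\cup A$ contradicts maximality. Summing over $i$ yields \ref{ax:i3} and the rank formula at once, with no appeal to \cref{prop:i3i3'} and no separate construction of a maximal set.

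The underlying mechanism, augmenting inside a single class capped by $b_i$, is the same in both proofs. The paper's version is shorter and also identifies every maximal member of $\cF|X$ class by class. Yours exhibits the exchange axiom \ref{ax:i3'} explicitly.

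One small wording point concerns your maximality argument for $Z$. The first alternative (a class where $Z_i$ is already all of $X\cap J_i$) does not produce a dependent set. It simply cannot occur, because $(X\setminus Z)\cap J_i$ is null in that class, so any positive-measure addition must fall in a class where $\mu(Z_i)=b_i$.
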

	\begin{proof}
		Axioms \ref{ax:i1} and \ref{ax:i2} are immediate. To prove \ref{ax:i4}, let $(F_n)_{n\in\bN}$ be an increasing sequence in $\cF$, and set $F\coloneqq \bigcup_{n=1}^{\infty}F_n$. For every $i\in\bN$, continuity of measure from below gives $\mu(F\cap J_i)=\lim_{n\to\infty}\mu(F_n\cap J_i)\leq b_i$. Hence, $F\in\cF$.
		
		It remains to verify \ref{ax:i3}. Let $X\in\Sigma$, and let $I$ be a maximal member of $\cF|X$. We claim that, for every $i\in\bN$,
		\[
		\mu(I\cap J_i)=\min\{\mu(X\cap J_i),b_i\}.
		\]
		The inequality $\mu(I\cap J_i)\leq \min\{\mu(X\cap J_i),b_i\}$ is clear. If $\min\{\mu(X\cap J_i),b_i\}=0$, then equality follows immediately. Otherwise, if the inequality were strict for some $i$, then both $\mu((X\cap J_i)\setminus I)>0$ and $b_i-\mu(I\cap J_i)>0$. Since $\mu$ is atomless, there would exist a measurable set $A\subseteq (X\cap J_i)\setminus I$ with $0<\mu(A)\leq b_i-\mu(I\cap J_i)$. Then $I\cup A\in\cF|X$, contradicting the maximality of $I$. This proves the above equality. Summing over the countable partition $\{J_i\}_{i\in\bN}$ gives
		\[
		r(X)=\mu(I)=\sum_{i=1}^{\infty}\mu(I\cap J_i)=\sum_{i=1}^{\infty}\min\{\mu(X\cap J_i),b_i\}.
		\]
		
		If $I_1$ and $I_2$ are maximal members of $\cF|X$, then the claim gives $\mu(I_1\cap J_i)=\mu(I_2\cap J_i)$ for every $i\in\bN$. Since the sets $J_i$ form a countable partition of $J$, countable additivity yields $\mu(I_1)=\mu(I_2)$. Hence, \ref{ax:i3} holds, and $M=(J,\Sigma,\mu,\cF)$ is a measurable matroid.
	\end{proof}
	
	A different natural construction uses the partition of a product space into the fibers of one of the coordinate projections. 
	
	\begin{prop}\label{prop:fiberwise_partition_matroid}
		Let $(S,\Sigma_S,\nu)$ and $(T,\Sigma_T,\eta)$ be standard measure spaces, and set $J\coloneqq S\times T$, $\Sigma\coloneqq\Sigma_S\otimes\Sigma_T$, and $\mu\coloneqq\nu\otimes\eta$. Let $b\colon T\to\bR_{\geq 0}$ be Borel measurable. Define
		\begin{equation*}\label{eq:fiberwise_partition_matroid}
			\cF\coloneqq
			\{F\in\Sigma\mid
			\nu(F_t)\leq b(t)
			\text{ for }\eta\text{-almost every }t\in T\}.
		\end{equation*}
		Then $
		M=(J,\Sigma,\mu,\cF)
		$
		is a measurable matroid. Moreover, for every $Z\in\Sigma$, its rank is given by
		\[
		r(Z)
		=
		\int_T
		\min\{\nu(Z_t),b(t)\}
		\diff\eta(t).
		\]
	\end{prop}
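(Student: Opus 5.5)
Our plan is to mimic the proof of \cref{prop:finitary_partition_matroid}, replacing the countable partition by the disintegration of $\mu=\nu\otimes\eta$ along the fibers $\{F_t\}_{t\in T}$. Throughout, we use Fubini's theorem: for $F\in\Sigma$ the map $t\mapsto\nu(F_t)$ is Borel and $\mu(F)=\int_T\nu(F_t)\diff\eta(t)$. We also note that $\cF$ respects the measure-algebra convention: if $\mu(F\triangle F')=0$, then $\nu(F_t\triangle F'_t)=0$ for $\eta$-almost every $t$, so membership in $\cF$ is well defined.

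Axioms \ref{ax:i1} and \ref{ax:i2} are immediate, since $F'\subseteq F$ up to a null set gives $\nu(F'_t)\le\nu(F_t)$ for almost every $t$. For \ref{ax:i4}, take an increasing sequence $(F_n)$ in $\cF$ with union $F$. Off a countable union of $\eta$-null sets we have $\nu((F_n)_t)\le b(t)$ for all $n$. Moreover $(F_n)_t\uparrow F_t$ for every $t$, so continuity from below gives $\nu(F_t)\le b(t)$ almost everywhere.

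The main step is \ref{ax:i3}, together with the rank formula. Fix $Z\in\Sigma$ and let $I$ be maximal in $\cF|Z$; such an $I$ exists by \cref{lem:maximal}. We will show that $\nu(I_t)=\min\{\nu(Z_t),b(t)\}$ for $\eta$-almost every $t$. Integrating this identity then gives $\mu(I)=\int_T\min\{\nu(Z_t),b(t)\}\diff\eta$, which is independent of $I$; this proves both \ref{ax:i3} and the formula for $r$.

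The inequality $\le$ is clear. Suppose the set $P$ of $t$ with $\nu(I_t)<\min\{\nu(Z_t),b(t)\}$ has positive $\eta$-measure. We then need a \emph{measurable} set $A\subseteq Z\setminus I$ whose fibers satisfy $0<\nu(A_t)\le b(t)-\nu(I_t)$ for $t\in P$ and $A_t=\emptyset$ otherwise. In the finitary case the exact-subset property was applied to one class at a time. Here it must be applied uniformly across uncountably many fibers, and doing this in a Borel way is the obstacle.

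We would handle it by choosing a Borel isomorphism of $(S,\nu)$ with an interval carrying Lebesgue measure, so that we may assume $S=[0,c]$ and $\nu$ is Lebesgue measure. Let $W\coloneqq Z\setminus I$ and set $g(t)\coloneqq\min\{\nu(W_t),\,b(t)-\nu(I_t)\}$ on $P$ and $g(t)\coloneqq0$ elsewhere; note that $g>0$ on $P$. Define
\[
A\coloneqq\{(s,t)\in W\mid \nu(W_t\cap[0,s])\le g(t)\}.
\]
The function $(s,t)\mapsto\nu(W_t\cap[0,s])$ is measurable in $t$ by Fubini and continuous in $s$, hence jointly Borel, so $A$ is Borel. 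By continuity in $s$, each fiber satisfies $\nu(A_t)=g(t)$. Consequently $I\cup A\in\cF|Z$ and $\mu(A)=\int_P g\diff\eta>0$, which contradicts the maximality of $I$.
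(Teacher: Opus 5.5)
Your proof is correct. Its overall structure matches the paper's: Fubini for measurability, \ref{ax:i1}--\ref{ax:i2} directly, \ref{ax:i4} by continuity from below off a countable union of $\eta$-null sets, and \ref{ax:i3} by showing that every maximal $I\in\cF|Z$ satisfies $\nu(I_t)=\min\{\nu(Z_t),b(t)\}$ almost everywhere.

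The genuine difference is how the augmenting set $A$ is produced. The paper sidesteps the uniform exact-subset problem you identify.
\begin{itemize}
\item It picks $\varepsilon>0$ such that $Y=\{t\mid d(t)>\varepsilon\}$ has positive measure.
\item It partitions $S$ into pieces $Q_1,\dots,Q_m$ of measure $\nu(S)/m<\varepsilon$.
\item By pigeonhole, some $A=(Z\setminus I)\cap(Q_j\times Y)$ has positive measure.
\end{itemize}
Every fiber of this $A$ has measure less than $\varepsilon$, hence below the slack. So no fiberwise selection is needed, and only atomlessness of $\nu$ is used.

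Your construction instead fills the slack exactly in each fiber. The price is two extra tools. First, you transport $(S,\nu)$ to an interval with Lebesgue measure; this isomorphism is defined only modulo null sets, which is harmless because $\phi\times\mathrm{id}_T$ preserves $\mu$-null sets. Second, you need joint measurability of the Carath\'eodory function $(s,t)\mapsto\nu(W_t\cap[0,s])$.

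In return you get a stronger, constructive statement. Your $I\cup A$ already has fibers of measure $\min\{\nu(Z_t),b(t)\}$. In particular, taking $I=\emptyset$ exhibits an explicit maximal member of $\cF|Z$: keep, in each fiber $Z_t$, the initial segment of measure $\min\{\nu(Z_t),b(t)\}$. This is a measurable, fiberwise-uniform exact-subset property that the paper's proof by contradiction does not supply.
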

	
	\begin{proof}
		By Fubini's theorem, see, e.g., \cite[Theorem~2.37]{folland1999real}, the map
		$
		t\longmapsto \nu(X_t)
		$
		is measurable for every $X\in\Sigma$. Axioms \ref{ax:i1} and \ref{ax:i2} are immediate. To prove \ref{ax:i4}, let $(F_i)_{i\in\bN}$ be an increasing sequence in $\cF$, and set
		$
		F\coloneqq \bigcup_{i=1}^{\infty}F_i.
		$
		Outside the union of the exceptional $\eta$-null sets for the sets $F_i$, continuity of measure from below gives
		\[
		\nu(F_t)
		=
		\lim_{i\to\infty}\nu((F_i)_t)
		\leq b(t).
		\]
		Hence $F\in\cF$.
		
		It remains to verify \ref{ax:i3} and the rank formula. Let $Z\in\Sigma$, and let $I$ be a maximal member of $\cF|Z$, which exists by \cref{lem:maximal}. Define
		\[
		d(t)
		\coloneqq
		\min\{\nu(Z_t),b(t)\}-\nu(I_t).
		\]
		Since $I\subseteq Z$ and $I\in\cF$, we have $d(t)\geq 0$ for $\eta$-almost every $t\in T$. We claim that $d(t)=0$ for $\eta$-almost every $t\in T$. Suppose, for contradiction, that this is not the case. Then there exists $\varepsilon>0$ such that the measurable set $Y\coloneqq \{t\in T\mid d(t)>\varepsilon\}$ has positive $\eta$-measure. Set $E\coloneqq Z\setminus I$. For $\eta$-almost every $t\in Y$, we have $\nu(E_t)=\nu(Z_t)-\nu(I_t)>\varepsilon$ and $b(t)-\nu(I_t)>\varepsilon$. Choose $m\in\bN$ such that
		$
		\frac{\nu(S)}{m}<\varepsilon.
		$
		Since $\nu$ is atomless, there exists a measurable partition
		$
		S=Q_1\cup\dots\cup Q_m
		$
		such that
		$
		\nu(Q_j)=\frac{\nu(S)}{m}
		$
		for every $j\in[m]$. Using Fubini's theorem again,
		\[
		\mu(E\cap(S\times Y))
		=
		\int_Y\nu(E_t)\diff\eta(t)
		>
		\varepsilon\eta(Y)
		>
		0.
		\]
		Hence, for some $j\in[m]$, the set
		$
		A\coloneqq E\cap(Q_j\times Y)
		$
		has positive measure. For every $t\in Y$, we have
		\[
		\nu(A_t)
		\leq
		\nu(Q_j)
		=
		\frac{\nu(S)}{m}
		<
		\varepsilon,
		\]
		while $A_t=\emptyset$ for every $t\notin Y$. It follows that
		$
		\nu((I\cup A)_t)\leq b(t)
		$
		for $\eta$-almost every $t\in T$. Thus $I\cup A\in\cF|Z$, contradicting the maximality of $I$.
		
		Consequently,
		\[
		\mu(I)
		=
		\int_T\nu(I_t)\diff\eta(t)
		=
		\int_T
		\min\{\nu(Z_t),b(t)\}
		\diff\eta(t)
		\]
		for every maximal member $I\in\cF|Z$. Hence \ref{ax:i3} holds, and the rank formula follows.
	\end{proof}
	
	We call the measurable matroid provided by \cref{prop:fiberwise_partition_matroid} a \emph{fiberwise measurable partition matroid}.
	
	Before turning to finite-classed examples, we present a simple selection principle for finite Borel equivalence relations. A \emph{Borel equivalence relation} on a standard Borel space $(J,\Sigma)$ is an equivalence relation $R\subseteq J\times J$ which is Borel as a subset of the product space. We write $xRy$ to mean $(x,y)\in R$, and
	\[
	[x]_R \coloneqq \{y\in J\mid xRy\}
	\]
	for the \emph{$R$-equivalence class} of $x$. The relation $R$ is \emph{finite} if every $R$-class is finite. A Borel set $T\subseteq J$ is a \emph{partial transversal} for $R$ if it meets every $R$-class in at most one point. A partial transversal is a \emph{transversal} if it meets every
	$R$-class. A set $C\subseteq J$ is called \emph{$R$-invariant} if it is a union of $R$-classes. The \emph{$R$-saturation} of a set $A\subseteq J$ is
	\[
	[A]_R\coloneqq\{y\in J\mid xRy\text{ for some }x\in A\}.
	\]
	A Borel bijection $\varphi\colon A\to B$ between Borel sets is called \emph{$R$-invariant} if $xR\varphi(x)$ for every $x\in A$. If $\mu$ is a measure on $(J,\Sigma)$, then a Borel equivalence relation $R$ is called \emph{measure-preserving} if every $R$-invariant Borel bijection between Borel sets preserves $\mu$.
	We will use the following standard facts about finite Borel equivalence relations.
	
	\begin{lem}\label{lem:finite_class_selection}
		Let $(J,\Sigma,\mu)$ be a standard measure space with a finite Borel equivalence relation $R$ on $J$.
		\begin{enumerate}[label=(\roman*)]\itemsep0em
			\item Every Borel set $A\subseteq J$ is a countable disjoint union of Borel partial transversals for $R$. In particular, if $\mu(A)>0$, then $A$ contains a Borel partial transversal of positive measure. \label{it:borel1}
			\item The relation $R$ is smooth; that is, it admits a Borel
			transversal. \label{it:borel2}
			\item Assume in addition that $R$ is measure-preserving. Let $C\subseteq J$ be a Borel $R$-invariant set with $\mu(C)>0$, and let $A\subseteq C$ be Borel such that $A\cap [x]_R\neq\emptyset$ for every $x\in C$. Then $A$ contains a Borel partial transversal of positive measure.\label{it:borel3}
		\end{enumerate}
	\end{lem}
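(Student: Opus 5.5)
The plan is to use the Feldman--Moore theorem together with a Borel linear order, and then reduce (ii) and (iii) to (i).

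For \ref{it:borel1}, fix a Borel linear order $<$ on $J$; for instance, transfer the order of $\bR$ through a Borel isomorphism of $J$ onto a Borel subset of $\bR$. Since every class is finite, the relation $R$ is countable. By the Feldman--Moore theorem~\cite{kechris1995classical}, there are countably many Borel involutions $g_1,g_2,\dots$ of $J$ whose graphs cover $R$. For $n\in\bN$, let $A_n$ be the set of points $x\in A$ such that $|[x]_R\cap A|=n$. These sets are Borel, because $|[x]_R\cap A|$ can be computed with the maps $g_k$. For $1\le j\le n$, let $A_{n,j}$ be the set of $x\in A_n$ that are the $j$-th smallest element of $[x]_R\cap A$ with respect to $<$. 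This set is Borel for the same reason. Each $A_{n,j}$ meets every class in at most one point, and the family $\{A_{n,j}\}$ is a countable partition of $A$. If $\mu(A)>0$, countable additivity shows that some $A_{n,j}$ has positive measure.

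For \ref{it:borel2}, I would take the set of $<$-minimal elements of the classes, namely $\{x\mid x\le g_k(x)\text{ for all }k\}$. This set is Borel and meets every class exactly once.

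For \ref{it:borel3}, I would apply \ref{it:borel1} to $A$ and write $A=\bigcup_{m}P_m$ with each $P_m$ a Borel partial transversal. It then suffices to show that $\mu(A)>0$. Suppose instead that $\mu(A)=0$. The saturation $[A]_R$ equals $\bigcup_k g_k(A)$. Each $g_k$ restricted to $A$ is an $R$-invariant Borel bijection onto its image, so by the measure-preserving assumption $\mu(g_k(A))=\mu(A)=0$. Hence $\mu([A]_R)=0$. On the other hand, the hypothesis gives $C\subseteq[A]_R$, so $\mu([A]_R)\ge\mu(C)>0$, which is a contradiction. Therefore $\mu(A)>0$, and some $P_m$ has positive measure.

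The main obstacle is the Borelness bookkeeping in \ref{it:borel1} and \ref{it:borel2}. It is handled entirely by the countable family $(g_k)$ from Feldman--Moore. For example, $|[x]_R\cap A|$ is obtained by counting the distinct values among $g_k(x)$ that lie in $A$, and each set of the form $\{x\mid g_k(x)=g_l(x)\}$ is Borel, so all the sets above are Borel.
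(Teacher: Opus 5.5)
Your proof is correct and is essentially the paper's argument. You order each finite class by a Borel linear order and split $A$ according to the position of a point within $A\cap[x]_R$. You take class minima for (ii). In (iii) you use that the $R$-saturation of a null set is null, being a countable union of images under $R$-invariant Borel bijections.

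There are two minor differences. You get all the Borel bookkeeping, and the countable family of bijections, at once from Feldman--Moore involutions; the paper instead cites Kechris's Lemma~18.12 and decomposes by class size and position. In (iii) you prove $\mu(A)>0$ and then invoke (i), whereas the paper shows directly that the set $A_1$ of $\prec$-first points of $A\cap[x]_R$ has positive measure.
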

	
	\begin{proof}
		Fix a linear order $\prec$ on $J$ whose graph $\{(x,y)\in J\times J\mid x\prec y\}$ is Borel. Such an order exists on every standard Borel space, for instance by pulling back the usual order from a Borel embedding into $[0,1]$; see~\cite[Section~15]{kechris1995classical}. For a Borel set $A\subseteq J$ and $j\in\bN$, let $A_j$ be the set of points $x\in A$ which are the $j$-th element, with respect to $\prec$, of the finite ordered set $A\cap[x]_R$. An immediate application of \cite[Lemma~18.12]{kechris1995classical} shows that the sets $A_j$ are Borel. They are pairwise disjoint partial transversals and $A=\bigcup_{j=1}^\infty A_j$. This proves \ref{it:borel1}.
		
		Taking $A=J$, the set $A_1$ is a Borel transversal for $R$, proving
		\ref{it:borel2}.
		
		For \ref{it:borel3}, let $A_1\subseteq A$ be the set defined from $A$ in the proof of \ref{it:borel1}. Since $A$ meets every $R$-class contained in $C$, the set $A_1$ is a Borel partial transversal and its $R$-saturation is $C$. If $\mu(A_1)=0$, then the $R$-saturation of $A_1$ is null: indeed, by decomposing the finite relation according to class sizes and ordered positions within each class, this saturation is a countable union of images of subsets of $A_1$ under $R$-invariant Borel bijections, each of which preserves $\mu$. This contradicts $\mu(C)>0$. Hence, $\mu(A_1)>0$, proving \ref{it:borel3}.
	\end{proof}
	
	For every Borel set $X\subseteq J$, the function
	$x\mapsto |X\cap[x]_R|$ is Borel measurable by the same application
	of \cite[Lemma~18.12]{kechris1995classical} as in the proof of
	\cref{lem:finite_class_selection}.
	
	Then another useful variant of measurable partition matroids arises from a finite, measure-preserving Borel equivalence relation $R$ on $J$. Define
	\begin{equation}\label{eq:finite_classed_partition}
		\cF \coloneqq \{X \in \Sigma \mid \mu(\{x\in J\mid |X \cap [x]_R|>1\})=0\}.
	\end{equation}
	Equivalently, a set in $\mathcal{F}$ contains at most one element from almost every $R$-equivalence class. This construction provides a measurable analogue of a partition matroid: each equivalence class serves as a ``part'', with a bound of one element per part. We call the resulting system a \emph{finite-classed measurable partition matroid}.
	
	\begin{prop} \label{prop:finite_classed_partition}
		Let $(J,\Sigma,\mu)$ be a standard measure space, and let $R$ be a finite Borel equivalence relation on $J$ that is measure-preserving with respect to $\mu$. If $\cF$ is defined by \eqref{eq:finite_classed_partition}, then $M=(J,\Sigma,\mu,\cF)$ is a measurable matroid.    
	\end{prop}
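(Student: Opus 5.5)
We verify the independence axioms directly, using \cref{lem:finite_class_selection} for \ref{ax:i3}. Axioms \ref{ax:i1} and \ref{ax:i2} are immediate: the empty set meets no class, and if $F_1\subseteq F_2$ then $\{x\mid |F_1\cap[x]_R|>1\}\subseteq\{x\mid |F_2\cap[x]_R|>1\}$, up to the null set $F_1\setminus F_2$. To make this last point precise, we pass to Borel representatives with $F_1\subseteq F_2$ pointwise. Changing $F_1$ on a null set $N$ changes the bad set only inside the saturation $[N]_R$. This saturation is null by the argument of \cref{lem:finite_class_selection}\ref{it:borel3}: it is a countable union of images of subsets of $N$ under measure-preserving $R$-invariant bijections. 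Hence $\cF$ is well defined on the measure algebra. For \ref{ax:i4}, take an increasing sequence $(F_i)$ in $\cF$ with union $F$, chosen pointwise increasing after discarding null saturations. If $|F\cap[x]_R|\ge2$, then, since classes are finite, two points of $F\cap[x]_R$ already lie in some $F_i$. Thus the bad set of $F$ is contained in the countable union of the bad sets of the $F_i$, which is null.

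The main step is \ref{ax:i3}. We plan to show that every maximal member $I$ of $\cF|X$ meets almost every class that meets $X$, that is, $I$ is (up to null sets) a partial transversal with $[I]_R\supseteq [X]_R$. The rank is then determined: $\mu(I)=\mu(T_X)$, where $T_X$ is a fixed Borel transversal of $R$ restricted to $[X]_R$, for instance the set $X_1$ from the proof of \cref{lem:finite_class_selection}\ref{it:borel1}. Equality of measures of any two partial transversals with the same saturation follows from measure preservation. Concretely, if $P,Q$ are partial transversals with $[P]_R=[Q]_R$, the map sending $p\in P$ to the unique point of $Q$ in its class is an $R$-invariant Borel bijection $P\to Q$, so $\mu(P)=\mu(Q)$. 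Its graph is Borel by Lusin--Novikov, or by the ordering trick of \cref{lem:finite_class_selection}.

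To prove the covering claim, suppose $I\in\cF|X$ is maximal and $C\coloneqq [X]_R\setminus[I]_R$ has positive measure. The set $C$ is $R$-invariant and Borel, since saturations of Borel sets under finite Borel equivalence relations are Borel. Moreover, $A\coloneqq X\cap C$ meets every class in $C$. By \cref{lem:finite_class_selection}\ref{it:borel3} (applied to the restriction of $R$ to $C$), $A$ contains a Borel partial transversal $P$ of positive measure. Then $I\cup P$ still contains at most one point per class almost everywhere, because $P$ lies in classes disjoint from $I$. It is contained in $X$ and strictly contains $I$, contradicting maximality. Hence $\mu(C)=0$.

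We expect the main obstacle to be the null-set bookkeeping rather than the combinatorics. In particular, we must check that the defining condition of $\cF$ is invariant under modification by null sets (handled via null saturations above). We must also check that the partial transversal produced in the last step genuinely has positive measure rather than only a positive-measure saturation, which is exactly what \cref{lem:finite_class_selection}\ref{it:borel3} provides. With these in place, \ref{ax:i1}--\ref{ax:i4} hold. As a byproduct, the rank function is $r(X)=\mu(T_X)$, the measure of a Borel transversal of $[X]_R$.
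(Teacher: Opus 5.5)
Your proof is correct and follows essentially the same route as the paper. It proves \ref{ax:i4} via finiteness of the classes, and proves \ref{ax:i3} by showing that a maximal member of $\cF|X$ meets almost every class meeting $X$ (a positive-measure partial transversal from \cref{lem:finite_class_selection} would otherwise enlarge it), then comparing two maximal members through the $R$-invariant bijection and measure preservation. The differences are cosmetic: you apply \cref{lem:finite_class_selection}\ref{it:borel3} directly to $[X]_R\setminus[I]_R$, where the paper applies \ref{it:borel1} to $X\setminus[I]_R$ and then uses the null-saturation argument, and you additionally verify that $\cF$ is well defined modulo null sets.
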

	
	\begin{proof}
		Axioms \ref{ax:i1} and \ref{ax:i2} are immediate. To prove \ref{ax:i4}, let $(F_i)_{i\in\bN}$ be an increasing sequence in $\cF$, and set $F\coloneqq \bigcup_{i=1}^{\infty}F_i$. Let $N_i\coloneqq\{x\in J\mid |F_i\cap[x]_R|>1\}$. Then $\mu(N_i)=0$ for every $i$. If $|F\cap[x]_R|>1$, then, since $[x]_R$ is finite and the sequence is increasing, the same violation already occurs in some $F_i$. Hence, $\{x\in J\mid |F\cap[x]_R|>1\}\subseteq\bigcup_{i=1}^{\infty}N_i$, and so $F\in\cF$.
		
		It remains to verify \ref{ax:i3}. Let $Y\in\Sigma$, and let $F_1,F_2\in\cF|Y$ be maximal. Suppose that, for some $i\in\{1,2\}$, the set
		\[
		Y_i\coloneqq \{x\in Y\mid F_i\cap [x]_R=\emptyset\}
		\]
		has positive measure. By \cref{lem:finite_class_selection}\ref{it:borel1}, $Y_i$ contains a Borel partial transversal $A$ of positive measure. Then $F_i\cup A\subseteq Y$ and $F_i\cup A\in\cF$, contradicting the maximality of $F_i$. 
		
		Thus, $\mu(Y_i)=0$ for $i\in\{1,2\}$. Since $R$ is measure-preserving, the proof of \cref{lem:finite_class_selection}\ref{it:borel3} shows that the $R$-saturation $[Y_i]_R$ is null. Consequently, outside a null set, both $F_1$ and $F_2$ meet every $R$-class that meets $Y$. Since $F_1,F_2\in\cF$, they meet each such class in exactly one point. Hence, the relation
		\[
		\{(x,y)\in F_1\times F_2\mid xRy\}
		\]
		defines, outside null sets, an $R$-invariant Borel bijection $\varphi\colon F_1\to F_2$. Since $R$ is measure-preserving, it follows that $\mu(F_1)=\mu(F_2)$. This proves \ref{ax:i3}.
	\end{proof}
	
	While finite-classed measurable partition matroids are not particularly interesting on their own, when employed as one of the measurable matroids in a measurable matroid intersection, they can give rise to nice applications. A particularly important example of this class is the following.
	
	Let $G=(J,\Sigma_J,\nu,E)$ be a bipartite graphing with Borel bipartition $J=S\cup T$, let $\Sigma_E$ denote the Borel $\sigma$-algebra of the edge space, and let $\eta_G$ be the associated edge measure. Define
	\begin{equation}\label{eq:bipartite_partition_matroid}
		\cF\coloneqq\{F\in\Sigma_E\mid \deg_F(s)\leq 1\text{ for $\nu$-almost every }s\in S\}.
	\end{equation}
	
	\begin{prop}\label{prop:bipartite_partition_matroid}
		Let $G=(J,\Sigma_J,\nu,E)$ be a bipartite graphing on a standard atomless probability space with Borel bipartition $J = S \cup T$, and let $\eta_G$ be the associated edge measure on $E$. If $\cF$ is defined by \eqref{eq:bipartite_partition_matroid}, then $M = (E,\Sigma_E,\eta_G,\mathcal{F})$ is a measurable matroid. Moreover, for every $X \in \Sigma_E$, its rank is given by
		\[
		r(X)=\nu\left(\{s \in S \mid \deg_X(s)\geq 1\}\right).
		\]    
	\end{prop}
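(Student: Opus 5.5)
The plan is to realize $M$ as a finite-classed measurable partition matroid and then invoke \cref{prop:finite_classed_partition}. On the edge space $E$, define the relation $R$ by $eRf$ if and only if $e$ and $f$ have the same endpoint in $S$. Since $G$ is bipartite with Borel bipartition, each edge has exactly one endpoint $\pi(e)\in S$. The map $\pi\colon E\to S$ is Borel, so $R=\{(e,f)\mid \pi(e)=\pi(f)\}$ is a Borel equivalence relation. Its classes have size $\deg(s)$, which is bounded, so $R$ is finite.

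The condition $|X\cap[e]_R|>1$ means $\deg_X(\pi(e))\geq 2$. Hence the exceptional set in \eqref{eq:finite_classed_partition} is $\pi^{-1}(\{s\mid \deg_X(s)\geq 2\})$. This set is $\eta_G$-null if and only if $\{s\mid\deg_X(s)\geq 2\}$ is $\nu$-null among vertices incident to edges. That holds because $\eta_G(\pi^{-1}(A))=\int_A\deg(s)\diff\nu(s)$, which follows from the definition of $\eta_G$ via $\deg_F$ together with the graphing symmetry: for an edge set $F$ lying between $S$ and $T$, one has $\eta_G(F)=\int_S\deg_F\diff\nu$. Vertices of degree $0$ are irrelevant. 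So $\cF$ coincides with \eqref{eq:finite_classed_partition}.

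The main obstacle is showing that $R$ is measure-preserving for $\eta_G$. First I would take an $R$-invariant Borel bijection $\varphi\colon A\to B$. By \cref{lem:finite_class_selection}\ref{it:borel1}, I would decompose $A$ into countably many Borel partial transversals $A_j$. On a partial transversal $P$, $\pi$ is injective, and $\deg_P(s)=\mathbf 1_{\pi(P)}(s)$. Therefore $\eta_G(P)=\nu(\pi(P))$, where $\pi(P)$ is Borel by Lusin--Souslin, since $\pi$ is injective on $P$. Because $\varphi$ is $R$-invariant, $\pi(\varphi(P))=\pi(P)$, and $\varphi(P)$ is again a partial transversal, so $\eta_G(\varphi(P))=\eta_G(P)$. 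Summing over $j$ gives $\eta_G(B)=\eta_G(A)$. Atomlessness of $\eta_G$ was noted in the text, so \cref{prop:finite_classed_partition} yields that $M$ is a measurable matroid.

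For the rank formula, fix $X\in\Sigma_E$ and apply \cref{lem:finite_class_selection}\ref{it:borel2} to $R$ restricted to $X$, which is again a finite Borel equivalence relation. This gives a Borel transversal $I\subseteq X$ meeting every nonempty class $X\cap[e]_R$ exactly once. Then $I\in\cF$ and $I$ is maximal in $\cF|X$: adding any positive-measure subset of $X\setminus I$ creates classes with two elements, and by the pushforward computation above, the set of such classes is non-null. The computation of the previous paragraph gives $\mu(I)=\nu(\pi(I))=\nu(\{s\in S\mid \deg_X(s)\geq1\})$. By \ref{ax:i3} this equals $r(X)$.
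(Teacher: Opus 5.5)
Your proposal is correct and follows the paper's proof: it uses the same ``common $S$-endpoint'' equivalence relation on edges, the same reduction to \cref{prop:finite_classed_partition}, and the same identity $\eta_G(F)=\int_S\deg_F\,\diff\nu$ for both measure preservation and the rank. The differences are minor: the paper gets measure preservation directly from $\deg_A(s)=\deg_B(s)$ for every $s\in S$ rather than by decomposing into partial transversals, and it computes the rank by showing that an arbitrary maximal $F\in\cF|X$ meets almost every nonempty $S$-fiber of $X$, rather than by exhibiting a Borel transversal of $R$ restricted to $X$ and appealing to \ref{ax:i3}.
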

	
	\begin{proof}
		Let $R$ be the Borel equivalence relation on $E$ defined by declaring two edges equivalent if they are incident to a common vertex in $S$. Since $G$ has bounded degrees, $R$ is a finite Borel equivalence relation on the edge space. Moreover, $R$ is measure-preserving with respect to $\eta_G$. Indeed, for every Borel edge set $A\subseteq E$, the graphing identity gives
		\[
		\int_S\deg_A(s)\,\diff\nu(s)
		=
		\int_T\deg_A(t)\,\diff\nu(t).
		\]
		Consequently, the definition of the edge measure yields
		\[
		\eta_G(A)
		=
		\frac{1}{2}\int_J\deg_A(x)\,\diff\nu(x)
		=
		\int_S\deg_A(s)\,\diff\nu(s).
		\]
		If $\varphi\colon A\to B$ is an $R$-invariant Borel bijection between Borel edge sets, then $\varphi$ preserves the $S$-fiber of every edge, and hence
		$
		\deg_A(s)=\deg_B(s)
		$
		for every $s\in S$. Therefore, $\eta_G(A)=\eta_G(B)$.
		
		The condition defining $\cF$ in \eqref{eq:bipartite_partition_matroid} is exactly the finite-classed partition condition associated with this equivalence relation: a set $F\subseteq E$ belongs to $\cF$ if and only if it contains at most one edge from almost every $R$-class. Hence, $M=(E,\Sigma_E,\eta_G,\cF)$ is a measurable matroid by \cref{prop:finite_classed_partition}.
		
		It remains to compute the rank. Let $X\in\Sigma_E$, and let $F$ be a maximal member of $\cF|X$. We claim that $F$ contains one edge from almost every nonempty $S$-fiber of $X$. Suppose not. Then the set
		\[
		U\coloneqq \{s\in S\mid \deg_X(s)\geq 1 \text{ and } \deg_F(s)=0\}
		\]
		has positive $\nu$-measure. Let
		\[
		A_0\coloneqq \{e\in X\mid e \text{ is incident to a vertex of } U\cap S\}.
		\]
		Then $\eta_G(A_0)>0$. By \cref{lem:finite_class_selection}, $A_0$ contains a Borel partial transversal $A$ for $R$ with positive $\eta_G$-measure. Since $A$ uses at most one edge from each $S$-fiber and all these fibers are disjoint from $F$, we have $F\cup A\in\cF$. Also $F\cup A\subseteq X$ and $\eta_G(A)>0$, contradicting the maximality of $F$ in $\cF|X$.
		
		Thus, up to a null set, $F$ contains exactly one edge from each $S$-fiber of $X$ that is nonempty. Therefore
		\[
		r(X)=\eta_G(F)
		=
		\int_S \deg_F(s)\,\diff\nu(s)
		=
		\nu\left(\{s\in S\mid \deg_X(s)\geq 1\}\right),
		\]
		as claimed.
	\end{proof}
	
	The motivation for this construction is that measurable matchings in a bipartite graphing are precisely the common independent sets of the two partition matroids obtained from the two sides of the bipartition. This will be used in \cref{sec:bipartite}.
	
	\begin{rem}
		The aim of this subsection is to present concrete examples that are easy to describe and will be used directly later, rather than to develop the constructions in their greatest possible generality. For instance, the partition-matroid models above can be placed in a common framework using disintegration of measures, while the finite-classed construction extends naturally by assigning an arbitrary finite matroid, rather than only a rank-one partition matroid, to each equivalence class in a Borel way. We leave the study of these more general constructions for future work.
	\end{rem}
	
	%%%%%%%%%%%%%%%%%%%%%%%%%%%%%%%%
	\subsection{Nested Matroids}
	\label{sec:nested}
	%%%%%%%%%%%%%%%%%%%%%%%%%%%%%%%%
	
	Let $S$ be a finite set, let $\cC\subseteq 2^S$ be a chain under inclusion, that is, for every $C,D\in\cC$, either $C\subseteq D$ or $D\subseteq C$, and let $b\colon\cC\to\bZ_{\geq 0}$. Then
	\begin{equation*}
		\cI\coloneqq\{I\subseteq S\mid |I\cap C|\leq b(C)\text{ for every }C\in\cC\}
	\end{equation*}
	is the family of independent sets of a matroid. The matroids that admit such a description are precisely the \emph{nested matroids}. Equivalently, these are the transversal matroids that admit a nested presentation. They are also known as generalized Catalan, freedom, or Schubert matroids~\cite{fife2019generalized}. Nested matroids form a proper subclass of lattice path matroids, which we discuss in the next subsection~\cite{bonin2003lattice}.
	
	We use the chain description to define a measurable analogue. A chain $\cC\subseteq\Sigma$ is \emph{closed} if it is both $\sigma$-increasing and $\sigma$-decreasing.
	
	\begin{thm}\label{thm:nested_matroids}
		Let $\cC\subseteq\Sigma$ be a closed chain containing $\emptyset$, and let $b\colon\cC\to\bR_{\geq 0}$ satisfy $b(\emptyset)=0$. Define
		\begin{equation}\label{eq:nested_matroid}
			\cF\coloneqq\{F\in\Sigma\mid \mu(F\cap C)\leq b(C)\text{ for every }C\in\cC\}.
		\end{equation}
		Then $M=(J,\Sigma,\mu,\cF)$ is a measurable matroid. Moreover, for every $X\in\Sigma$, its rank is given by
		\begin{equation}\label{eq:nested_rank}
			r(X)=\inf_{C\in\cC}\{b(C)+\mu(X\setminus C)\}.
		\end{equation}
	\end{thm}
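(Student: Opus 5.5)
I will verify the independence axioms \ref{ax:i1}, \ref{ax:i2}, \ref{ax:i4} directly and obtain \ref{ax:i3} together with the rank formula by showing that every maximal member of $\cF|X$ has measure equal to the right-hand side of \eqref{eq:nested_rank}. Axioms \ref{ax:i1} and \ref{ax:i2} are immediate from $b\geq 0$ and monotonicity of $\mu$. For \ref{ax:i4}, if $(F_i)$ is increasing in $\cF$ with union $F$, then for each $C\in\cC$ continuity from below gives $\mu(F\cap C)=\lim_i\mu(F_i\cap C)\le b(C)$. The easy inequality $r(X)\le\inf_C\{b(C)+\mu(X\setminus C)\}$ holds for every independent $I\subseteq X$, since $\mu(I)=\mu(I\cap C)+\mu(I\setminus C)\le b(C)+\mu(X\setminus C)$.

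For the reverse inequality, fix $X$ and a maximal $I\in\cF|X$, which exists by \cref{lem:maximal}. Call $C\in\cC$ \emph{tight} if $\mu(I\cap C)=b(C)$; $\emptyset$ is tight. Let $\cT$ be the tight members and let $C^*$ be the union of $\cT$. The aim is to show that $C^*\in\cC$ is tight and $X\setminus C^*\subseteq I$. Then $\mu(I)=\mu(I\cap C^*)+\mu(I\setminus C^*)=b(C^*)+\mu(X\setminus C^*)$, which yields equality in \eqref{eq:nested_rank} and shows that the infimum is attained. Since the right-hand side does not depend on $I$, \ref{ax:i3} follows as well.

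The step I expect to be the main obstacle has two parts. First, one must show that $C^*$ is tight, which requires handling a possibly uncountable chain. As in the proof of \cref{lem:maximal}, pick an increasing sequence $T_n\in\cT$ with $\mu(T_n)\to\sup_{T\in\cT}\mu(T)$. Then $C^*=\bigcup_n T_n$ up to null sets, and $C^*\in\cC$ because $\cC$ is $\sigma$-increasing. Its tightness, however, does not follow from the tightness of the $T_n$, since $b$ carries no continuity assumption. Instead I will argue by contradiction. If $\mu(I\cap C^*)<b(C^*)$, the deficiency on $C^*$ is positive, and $\mu((I\cap C^*)\setminus T_n)\to 0$. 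For large $n$, adding a small part of $(X\setminus I)\cap(C^*\setminus T_n)$ should be feasible. For this I need a uniform positive slack $b(C)-\mu(I\cap C)$ over chain members $C$ with $T_n\subsetneq C\subseteq C^*$, and this is where closedness of $\cC$ (both $\sigma$-increasing and $\sigma$-decreasing) must be used.

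Second, and more generally, suppose $A\coloneqq (X\setminus I)\setminus C^*$ has positive measure. I want to add a small positive piece of $A$ to $I$ without violating any constraint, contradicting maximality. Constraints with $C\subseteq C^*$ are unaffected. For $C\supsetneq C^*$ the slack $s(C)=b(C)-\mu(I\cap C)$ is positive since $C$ is not tight. The difficulty is that $\inf s(C)$ over such $C$ may be $0$, attained in the limit along a decreasing sequence $C_n\downarrow D$. Here I would use $\sigma$-decreasing closedness: $D\in\cC$ and $D\supseteq C^*$. If $D=C^*$ (up to null sets), I would add a piece of $A\cap(C_m\setminus C^*)$ small relative to the slacks along the sequence. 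If instead $\mu(A\cap(C\setminus C^*))$ is small for every $C$ in the chain near $C^*$, then I would work in the region where $A$ lives beyond some $C$ with a uniformly bounded-below slack. Concretely, I will choose a chain element $C_0\supsetneq C^*$ with $\mu(A\cap C_0)>0$, and then a set $H\subseteq A\cap C_0$ with $\mu(H)\le\inf\{s(C): C\supseteq C_0\}$, taking $H$ inside the smallest region to control the members between $C^*$ and $C_0$. I expect to need a measure-splitting argument on $A$ along the chain, using the exact-subset property, to finish this case.
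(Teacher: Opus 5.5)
Your plan has a genuine gap. You aim to show that the union $C^*$ of the $b$-tight members (those with $\mu(I\cap C)=b(C)$) is itself tight and that $X\setminus C^*\subseteq I$, so that the infimum in \eqref{eq:nested_rank} is attained. This claim is false in general, because $b$ may jump upward at a limit point of the chain, and neither part of your argument can be completed.

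For the second part, take $J=[0,1]$, $\cC=\{[0,t]\mid t\in[0,1]\}$, $b([0,t])=t/2$ for $t<1$ and $b([0,1])=1$. The constraint at $t=1$ is implied by the others, so $\cF$ is the interval-halving family and $r(J)=1/2$. On the other hand, $b([0,t])+\lambda(J\setminus[0,t])=1-t/2>1/2$ for $t<1$, and this expression equals $1$ at $t=1$, so the infimum is not attained. For the basis $I=[1/2,1]$, the only tight member is $\emptyset$. Hence $C^*=\emptyset$ and $J\setminus C^*\not\subseteq I$, even though $I$ is maximal. The slack $b([0,t])-\lambda(I\cap[0,t])$ equals $(1-t)/2$ for $1/2\leq t<1$, so it tends to $0$ along the \emph{increasing} sequence $t\uparrow 1$, while $[0,1]$ itself has slack $1/2$. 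Your case analysis of $C_n\downarrow D$ does not address this.

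For the first part, take $b([0,t])=t$ for $t<1/2$, $b([0,t])=1$ for $t\geq 1/2$, and $X=I=[0,1/2]$. Every $[0,t]$ with $t<1/2$ is tight, so $C^*=[0,1/2]$, which is not tight. Your contradiction argument has nothing to add, since $X\setminus I=\emptyset$. This is why the paper claims only an infimum in \eqref{eq:nested_rank} and needs lower semicontinuity of $b$ to get a minimum (\cref{cor:nested_rank_minimum}).

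The missing idea is to regularize $b$ first. Set $\beta(C)\coloneqq\inf_{D\in\cC}\{b(D)+\mu(C\setminus D)\}$. This defines the same family $\cF$ and satisfies $\beta\leq b$. Moreover, $0\leq\beta(C')-\beta(C)\leq\mu(C'\setminus C)$ for $C\subseteq C'$ in $\cC$, so the slack $s(C)\coloneqq\beta(C)-\mu(I\cap C)$ is continuous along monotone sequences in $\cC$. With $\beta$ in place of $b$, your $C^*$ is indeed tight.

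The augmentation step then works as follows. Each $\cC_\varepsilon\coloneqq\{C\in\cC\mid s(C)\leq\varepsilon\}$ is closed under monotone limits, so it has a largest member $C'_\varepsilon$. The sets $C'_{1/i}$ decrease to a tight set $C_I\in\cC$. If $\mu((X\setminus I)\setminus C_I)>0$, pick $i$ with $\mu((X\setminus I)\setminus C'_{1/i})>0$ and a subset $H$ of it with $0<\mu(H)<1/i$. Any $C\in\cC$ meeting $H$ in positive measure is not contained in $C'_{1/i}$, hence has $s(C)>1/i>\mu(H)$. So $I\cup H\in\cF$, a contradiction.

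Note that $H$ must be chosen \emph{away} from the tight set, outside $C'_{1/i}$. Your choice of $H$ ``inside the smallest region'' near $C^*$ fails, since members just above $C^*$ can have arbitrarily small slack. The outcome is $\mu(I)=\min_{C\in\cC}\{\beta(C)+\mu(X\setminus C)\}$, and a short computation shows that this minimum equals the infimum in \eqref{eq:nested_rank}.
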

	
	\begin{proof}
		Axioms \ref{ax:i1} and \ref{ax:i2} are immediate. To prove \ref{ax:i4}, let $(F_i)_{i\in\bN}$ be an increasing sequence in $\cF$, and set $F\coloneqq\bigcup_{i=1}^{\infty}F_i$. For every $C\in\cC$, continuity of measure from below gives $\mu(F\cap C)=\lim_{i\to\infty}\mu(F_i\cap C)\leq b(C)$. Hence, $F\in\cF$.
		
		We first replace $b$ by an effective bound. For $C\in\cC$, let
		\begin{equation*}
			\beta(C)\coloneqq\inf_{D\in\cC}\{b(D)+\mu(C\setminus D)\}.
		\end{equation*}
		Since $b(\emptyset)=0$, we have $\beta(\emptyset)=0$. If $C,C'\in\cC$ with $C\subseteq C'$, then $0\leq\beta(C')-\beta(C)\leq\mu(C'\setminus C)$. In particular, if $(C_i)_{i\in\bN}$ is monotone in $\cC$ and converges to $C\in\cC$ by union or intersection, then $\beta(C_i)\to\beta(C)$.
		
		The functions $b$ and $\beta$ define the same family $\cF$. Indeed, if $F$ satisfies the constraints in \eqref{eq:nested_matroid}, then, for every $C,D\in\cC$,
		\begin{equation*}
			\mu(F\cap C)\leq\mu(F\cap D)+\mu(C\setminus D)\leq b(D)+\mu(C\setminus D).
		\end{equation*}
		Taking the infimum over $D$ gives $\mu(F\cap C)\leq\beta(C)$. Conversely, $\beta(C)\leq b(C)$ follows by taking $D=C$ in the definition of $\beta$.
		
		It remains to prove \ref{ax:i3} and the rank formula. Let $X\in\Sigma$, and let $I$ be a maximal member of $\cF|X$, which exists by \cref{lem:maximal}. For $C\in\cC$, set $s(C)\coloneqq\beta(C)-\mu(I\cap C)$. For $\varepsilon>0$, let
		\begin{equation*}
			\cC_\varepsilon\coloneqq\{C\in\cC\mid s(C)\leq\varepsilon\}.
		\end{equation*}
		The family $\cC_\varepsilon$ is nonempty, since it contains $\emptyset$. The continuity of $\beta$ and of measure along monotone sequences implies that $\cC_\varepsilon$ is both $\sigma$-increasing and $\sigma$-decreasing. Thus, \cref{lem:maximal} gives a maximal member $C'_\varepsilon$ of $\cC_\varepsilon$. Since $\cC_\varepsilon$ is a chain, every member of $\cC_\varepsilon$ is contained in $C'_\varepsilon$.
		
		For $i\in\bN$, set $C_i\coloneqq C'_{1/i}$. The sequence $(C_i)_{i\in\bN}$ is decreasing. Since $\cC$ is $\sigma$-decreasing, the set $C_I\coloneqq\bigcap_{i=1}^{\infty}C_i$ belongs to $\cC$. The continuity of $\beta$ and of measure along this sequence gives $s(C_I)=\lim_{i\to\infty}s(C_i)=0$.
		
		We claim that $X\setminus I\subseteq C_I$. Suppose otherwise, and set $P\coloneqq(X\setminus I)\setminus C_I$. Then $\mu(P)>0$. Since $C_i$ decreases to $C_I$, there is an $i\in\bN$ such that $\mu(P\setminus C_i)>0$. By atomlessness, there is a set $A\subseteq P\setminus C_i$ satisfying $0<\mu(A)<1/i$. Let $C\in\cC$. If $s(C)\leq1/i$, then $C\subseteq C_i$, and hence $A\cap C=\emptyset$. If $s(C)>1/i$, then $\mu(A\cap C)\leq\mu(A)<s(C)$. In either case, $\mu((I\cup A)\cap C)\leq\beta(C)$. Thus, $I\cup A\in\cF|X$, contradicting the maximality of $I$. This proves the claim.
		
		Since $s(C_I)=0$ and $X\setminus C_I\subseteq I$, we obtain
		\begin{equation*}
			\mu(I)=\beta(C_I)+\mu(X\setminus C_I).
		\end{equation*}
		On the other hand, for every $C\in\cC$,
		\begin{equation*}
			\mu(I)=\mu(I\cap C)+\mu(I\setminus C)\leq\beta(C)+\mu(X\setminus C).
		\end{equation*}
		Therefore,
		\begin{equation*}
			\mu(I)=\min_{C\in\cC}\{\beta(C)+\mu(X\setminus C)\}.
		\end{equation*}
		In particular, all maximal members of $\cF|X$ have the same measure, proving \ref{ax:i3}.
		
		It remains to express the right-hand side in terms of the original function $b$. Since $\beta(C)\leq b(C)$ for every $C\in\cC$, the last minimum is at most the right-hand side of \eqref{eq:nested_rank}. Conversely, for every $C\in\cC$,
		\begin{equation*}
			\beta(C)+\mu(X\setminus C)=\inf_{D\in\cC}\{b(D)+\mu(C\setminus D)+\mu(X\setminus C)\}\geq\inf_{D\in\cC}\{b(D)+\mu(X\setminus D)\}.
		\end{equation*}
		Taking the minimum over $C$ gives the reverse inequality. Hence, \eqref{eq:nested_rank} follows.
	\end{proof}
	
	Although no regularity assumption on $b$ is needed in \cref{thm:nested_matroids}, lower semicontinuity with respect to convergence in measure yields a minimum in the rank formula.
	
	\begin{cor}\label{cor:nested_rank_minimum}
		In the setting of \cref{thm:nested_matroids}, suppose that $b$ is lower semicontinuous with respect to convergence in measure on $\cC$, meaning that $b(C)\leq\liminf_{i\to\infty}b(C_i)$ whenever $C,C_i\in\cC$ and $\mu(C_i\triangle C)\to0$. Then, for every $X\in\Sigma$, its rank is given by
		\begin{equation*}
			r(X)=\min_{C\in\cC}\{b(C)+\mu(X\setminus C)\}.
		\end{equation*}
	\end{cor}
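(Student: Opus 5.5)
The plan is to show that the infimum in \eqref{eq:nested_rank} is attained under the lower semicontinuity hypothesis. By \cref{thm:nested_matroids}, $r(X)=\inf_{C\in\cC}\{b(C)+\mu(X\setminus C)\}$ for every $X\in\Sigma$, so it suffices to find, for each fixed $X$, some $C^\ast\in\cC$ with $b(C^\ast)+\mu(X\setminus C^\ast)\leq r(X)$. I will take a minimizing sequence $(C_i)_{i\in\bN}$ in $\cC$ with $b(C_i)+\mu(X\setminus C_i)\to r(X)$. I then extract a subsequence that converges in measure to a member of $\cC$, and pass to the limit using lower semicontinuity of $b$ and continuity of $\mu$.

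The compactness step uses only that $\cC$ is a closed chain. Since $\cC$ is a chain, its elements are totally ordered by inclusion up to null sets, so the map $C\mapsto\mu(C)$ is monotone along the chain. I pass to a subsequence with $\mu(C_i)$ convergent, say to $m$. This subsequence can be split further into one that is monotone with respect to inclusion. Two members of a chain with equal measure coincide in the measure algebra, so inclusion order agrees with order of measures. Hence I can choose a further subsequence along which $\mu(C_i)$ is monotone, and then the sets themselves are monotone, either increasing or decreasing.

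In the increasing case I set $C^\ast\coloneqq\bigcup_i C_i$, and in the decreasing case $C^\ast\coloneqq\bigcap_i C_i$. In both cases $C^\ast\in\cC$ because $\cC$ is $\sigma$-increasing and $\sigma$-decreasing. Continuity of measure gives $\mu(C_i\triangle C^\ast)=|\mu(C_i)-\mu(C^\ast)|\to0$.

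Finally, $\mu(X\setminus C_i)\to\mu(X\setminus C^\ast)$, since $|\mu(X\setminus C_i)-\mu(X\setminus C^\ast)|\leq\mu(C_i\triangle C^\ast)$. Lower semicontinuity gives $b(C^\ast)\leq\liminf_i b(C_i)$. Therefore
\[
b(C^\ast)+\mu(X\setminus C^\ast)\leq\liminf_{i\to\infty}\bigl(b(C_i)+\mu(X\setminus C_i)\bigr)=r(X),
\]
and the reverse inequality is the infimum formula, so the minimum is attained at $C^\ast$.

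The only delicate point I expect is extracting the monotone subsequence. This requires care with the measure-algebra convention. Equality of measures within a chain forces equality of sets, so any real sequence $\mu(C_i)$ has a monotone subsequence, and the corresponding sets are then monotone under inclusion. With that in hand the rest is routine.
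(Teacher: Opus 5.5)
Your proposal is correct and follows the paper's proof essentially step for step. You take a minimizing sequence, use the chain structure to extract a monotone subsequence, and pass to its union or intersection, which lies in $\cC$ by closedness and is the limit in measure; you then conclude with lower semicontinuity of $b$ and continuity of $C\mapsto\mu(X\setminus C)$. Your argument for the monotone subsequence (equal measure within a chain forces equality, so ordering by measure agrees with inclusion) just makes explicit what the paper states in one line.
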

	
	\begin{proof}
		Fix $X\in\Sigma$, and choose a sequence $(C_i)_{i\in\bN}$ in $\cC$ such that $b(C_i)+\mu(X\setminus C_i)$ converges to the right-hand side of \eqref{eq:nested_rank}. Since $\cC$ is a chain, $(C_i)_{i\in\bN}$ has an increasing or decreasing subsequence. Passing to this subsequence, we may assume that $(C_i)_{i\in\bN}$ is monotone. Set $C\coloneqq\bigcup_{i\in\bN}C_i$ if the sequence is increasing and $C\coloneqq\bigcap_{i\in\bN}C_i$ if it is decreasing. Since $\cC$ is closed, $C\in\cC$, and continuity of measure gives $\mu(C_i\triangle C)\to0$. Consequently,
		\begin{equation*}
			b(C)+\mu(X\setminus C)\leq\liminf_{i\to\infty}\left(b(C_i)+\mu(X\setminus C_i)\right)=\inf_{D\in\cC}\{b(D)+\mu(X\setminus D)\}.
		\end{equation*}
		Thus, the infimum is attained at $C$.
	\end{proof}
	
	We call the matroids obtained from \cref{thm:nested_matroids} \emph{measurable nested matroids}. Closed chains arise naturally from continuous parametrizations. An increasing map $Z\colon[0,1]\to\Sigma$ is \emph{continuous in measure} if $\mu(Z(t_i)\triangle Z(t))\to0$ whenever $t_i\to t$. Suppose that $Z(0)=\emptyset$ and $Z(1)=J$, and set $\cC\coloneqq\{Z(t)\mid t\in[0,1]\}$. Then $\cC$ is a closed chain. Indeed, suppose that $(Z(t_i))_{i\in\bN}$ is increasing, and set $u_i\coloneqq\max\{t_1,\dots,t_i\}$. Then $(u_i)_{i\in\bN}$ is increasing and $Z(u_i)=Z(t_i)$ for every $i$. If $u\coloneqq\lim_{i\to\infty}u_i$, continuity in measure gives $\bigcup_{i=1}^{\infty}Z(t_i)=Z(u)$. For a decreasing sequence, the same argument applies with $u_i\coloneqq\min\{t_1,\dots,t_i\}$. Thus, every function $b\colon\cC\to\bR_{\geq 0}$ with $b(\emptyset)=0$ defines a measurable nested matroid by \cref{thm:nested_matroids}.
	
	\begin{figure}[t]
		
		\begin{subfigure}[t]{0.32\textwidth}
			\centering
			
			\begin{tikzpicture}[scale=0.73]
				
				% Feasible region below t/2
				\fill[gray!8]
				(0,0) --
				(6,0) --
				(6,3) --
				cycle;
				
				% Frame
				\draw[very thick] (0,0) rectangle (6,3);
				
				% Upper bound t/2
				\draw[thick, draw=black!85]
				(0,0) -- (6,3);
				
				% Cumulative function of the basis
				\draw[very thick, draw=black!70]
				(0,0) --
				(1.5,0) --
				(2.25,0.75) --
				(3,0.75) --
				(4.5,2.25) --
				(5.25,2.25) --
				(6,3);
				
				% Labels
				\node[font=\scriptsize, above left] at (2.75,1.55) {$t/2$};
				\node[font=\scriptsize, below right] at (3.85,1.80) {$h_B(t)$};
				
				% Basis B on [0,1]
				\draw[very thick] (0,-0.55) -- (6,-0.55);
				
				\fill[gray!30, draw=black!65]
				(1.5,-0.68) rectangle (2.25,-0.42);
				\fill[gray!30, draw=black!65]
				(3,-0.68) rectangle (4.5,-0.42);
				\fill[gray!30, draw=black!65]
				(5.25,-0.68) rectangle (6,-0.42);
				
				\node[font=\scriptsize, align=center] at (3,-1.08)
				{$B=[1/4,3/8]\cup[1/2,3/4]\cup[7/8,1]$};
				
			\end{tikzpicture}
			
			\caption{A basis $B$ of the interval-halving matroid, with $h_B(t)=\lambda(B\cap[0,t])$.}
			\label{fig:halving1}
			
		\end{subfigure}
		\hfill
		\begin{subfigure}[t]{0.32\textwidth}
			\centering
			
			\begin{tikzpicture}[scale=0.73]
				
				\def\len{6}
				
				% Three approximating raw unions
				\foreach \row/\i in {0/2,1/4,2/7} {
					
					\pgfmathsetmacro{\y}{-\row*1.15}
					\pgfmathsetmacro{\start}{\len/(2*\i+1)}
					
					% Raw union F'_i
					\fill[gray!8]
					(\start,\y-0.08) rectangle (\len,\y+0.08);
					
					% Ground interval
					\draw[very thick]
					(0,\y) -- (\len,\y);
					
					% F_1^i
					\foreach \k in {1,...,7} {
						\pgfmathsetmacro{\a}{\len*(2*\k-1)/(2*\i+1)}
						\pgfmathsetmacro{\b}{\len*(2*\k)/(2*\i+1)}
						\ifdim \a pt<\len pt
						\fill[gray!25, draw=black!60]
						(\a,\y+0.12)
						rectangle
						({min(\b,\len)},\y+0.36);
						\fi
					}
					
					% F_2^i
					\foreach \k in {1,...,7} {
						\pgfmathsetmacro{\a}{\len*(2*\k)/(2*\i+1)}
						\pgfmathsetmacro{\b}{\len*(2*\k+1)/(2*\i+1)}
						\ifdim \a pt<\len pt
						\fill[gray!55, draw=black!75]
						(\a,\y-0.36)
						rectangle
						({min(\b,\len)},\y-0.12);
						\fi
					}
					
					% Labels
					\node[font=\scriptsize, anchor=east] at (-0.12,\y+0.24) {$F_1^\i$};
					\node[font=\scriptsize, anchor=east] at (-0.12,\y-0.24) {$F_2^\i$};
				}
				
				% Arrow to the limit
				\draw[gray!70, line width=0.9pt, ->]
				(3,-2.75) -- (3,-3.20);
				
				% Limit interval
				\fill[gray!20]
				(0,-3.62) rectangle (6,-3.36);
				\draw[very thick]
				(0,-3.49) -- (6,-3.49);
				
				\node[font=\scriptsize, below] at (3,-3.72) {$[0,1]$};
				
			\end{tikzpicture}
			
			\caption{The raw unions $F'_i=F_1^i\cup F_2^i$ increase to $[0,1]$; see \cref{rem:unionnotm}.}
			\label{fig:halving2}
			
		\end{subfigure}
		\hfill
		\begin{subfigure}[t]{0.32\textwidth}
			\centering
			
			\begin{tikzpicture}[scale=0.73]
				
				\def\len{6}
				
				% Four common independent sets F_n
				\foreach \row/\n in {0/2,1/4,2/6,3/8} {
					
					\pgfmathsetmacro{\y}{-\row*1.05}
					\pgfmathtruncatemacro{\den}{2*\n-1}
					\pgfmathtruncatemacro{\num}{\n-1}
					\pgfmathtruncatemacro{\last}{\n-1}
					
					% Ground interval
					\draw[very thick]
					(0,\y) -- (\len,\y);
					
					% Reflection axis
					\draw[gray!55, densely dotted]
					(3,\y-0.32) -- (3,\y+0.32);
					
					% F_n
					\foreach \k in {1,...,\last} {
						\pgfmathsetmacro{\a}{\len*(2*\k-1)/\den}
						\pgfmathsetmacro{\b}{\len*(2*\k)/\den}
						\fill[gray!35, draw=black!65]
						(\a,\y-0.18)
						rectangle
						(\b,\y+0.18);
					}
					
					\node[font=\scriptsize, left] at (-0.18,\y)
					{$F_{\n}$};
					
					%    \node[font=\scriptsize, right] at (6.15,\y)
					%       {$\frac{\num}{\den}$};
				}
				
				% Indicate convergence of the measures
				\node[font=\scriptsize] at (3,-3.82)
				{$\lambda(F_n)\to 1/2$};
				
			\end{tikzpicture}
			
			\caption{Common independent sets of $N^+$ and $N^-$, with measures tending to $1/2$; see \cref{rem:intersection_sup_not_attained}.}
			\label{fig:halving3}
			
		\end{subfigure}
		
		\caption{The interval-halving matroid: a basis, the need for upward closure in measurable union, and nonattainment in measurable intersection.}
		\label{fig:halving}
		
	\end{figure}

	A particularly important example is obtained on the unit interval. Let $(J,\Sigma,\lambda)$ be $[0,1]$ with Lebesgue measure, set $Z(t)=[0,t]$, and let $b([0,t])=t/2$. Thus,
	\begin{equation}\label{eq:interval_halving}
		\cF\coloneqq\left\{F\in\Sigma\,\middle|\,\lambda(F\cap[0,t])\leq\frac{t}{2}\text{ for every }t\in[0,1]\right\}.
	\end{equation}
	We call the resulting measurable nested matroid the \emph{interval-halving matroid}; see \cref{fig:halving1}. Since $b$ is continuous with respect to convergence in measure on the chain, \cref{cor:nested_rank_minimum} gives
	\begin{equation*}
		r(X)=\min_{t\in[0,1]}\left\{\frac{t}{2}+\lambda(X\setminus[0,t])\right\}.
	\end{equation*}
	In particular, $r([0,t])=t/2$ for every $t\in[0,1]$, and the matroid has rank $1/2$. The following observation will be used several times.
	
	\begin{lem}\label{lem:halving_measure}
		Let $\alpha$ be a finite Borel measure on $[0,1]$. If $\alpha([0,t])=t/2$ for every $t\in[0,1]$, then $\alpha=\lambda/2$. Consequently, there is no Borel set $Y\subseteq[0,1]$ such that $\lambda(Y\cap[0,t])=t/2$ for every $t\in[0,1]$.
	\end{lem}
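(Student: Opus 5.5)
The first claim follows from uniqueness of measures on a $\pi$-system. The closed intervals $[0,t]$ for $t\in[0,1]$, together with $\emptyset$, are closed under finite intersections, since $[0,s]\cap[0,t]=[0,\min\{s,t\}]$. They also generate the Borel $\sigma$-algebra of $[0,1]$.

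The two finite measures $\alpha$ and $\lambda/2$ agree on this family. They also have the same total mass, because $\alpha([0,1])=1/2=\lambda([0,1])/2$. By the $\pi$--$\lambda$ (Dynkin) uniqueness theorem, e.g.\ \cite{folland1999real}, they therefore agree on all Borel sets, so $\alpha=\lambda/2$.

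One could instead argue directly: the distribution functions coincide, so the two measures agree on half-open intervals $(s,t]$ and hence on the algebra of finite disjoint unions of such intervals and $\{0\}$. Note that $\alpha(\{0\})=\alpha([0,0])=0$. Carathéodory uniqueness then extends the agreement. I would cite the Dynkin route as the cleanest.

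For the consequence, suppose such a Borel set $Y$ exists. Define $\alpha\coloneqq\lambda|_Y$, that is, $\alpha(A)=\lambda(A\cap Y)$; this is a finite Borel measure. By hypothesis $\alpha([0,t])=t/2$ for all $t$, so by the first part $\alpha=\lambda/2$.

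Evaluating at $A=Y$ gives $\lambda(Y)=\alpha(Y)=\lambda(Y)/2$, so $\lambda(Y)=0$. This contradicts $\lambda(Y)=\alpha([0,1])=1/2$.

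An alternative check evaluates at the complement: $\alpha([0,1]\setminus Y)=0$, while $\lambda([0,1]\setminus Y)/2=1/4$.

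The argument has no real obstacle. The only point needing care is to verify the generating $\pi$-system and the equality of total masses, so that the uniqueness theorem applies; both are immediate.
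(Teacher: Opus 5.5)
Your proof is correct and matches the paper's argument: the paper also obtains $\alpha=\lambda/2$ because a finite Borel measure on $[0,1]$ is determined by its values on the initial intervals $[0,t]$. It cites the distribution-function theorem in Folland, whereas you use the Dynkin uniqueness theorem, which amounts to the same thing; it then reaches the contradiction $\lambda(Y)=\lambda(Y)/2$ versus $\lambda(Y)=1/2$ from the measure $\lambda(\,\cdot\,\cap Y)$ exactly as you do.
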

	
	\begin{proof}
		The first statement follows from the fact that a finite Borel measure on $[0,1]$ is determined by its values on the initial intervals; see, e.g., \cite[Theorem~1.16]{folland1999real}.
		
		For the second statement, suppose that such a set $Y$ exists. Define the finite Borel measure $\alpha$ by $\alpha(C)\coloneqq\lambda(C\cap Y)$. By the first statement, $\alpha=\lambda/2$. Taking $t=1$ gives $\lambda(Y)=1/2$, whereas applying the equality $\alpha=\lambda/2$ to $Y$ gives $\lambda(Y)=\alpha(Y)=\lambda(Y)/2$, a contradiction.
	\end{proof}
	
	%%%%%%%%%%%%%%%%%%%%%%%%%%%%%%%%
	\subsection{Lattice Path Matroids}
	\label{sec:lattice}
	%%%%%%%%%%%%%%%%%%%%%%%%%%%%%%%%
	
	Lattice path matroids were introduced by Bonin, de Mier, and Noy~\cite{bonin2003lattice}; see also Bonin and de Mier~\cite{bonin2006lattice} for their structural properties. Let $P$ and $Q$ be lattice paths from $(0,0)$ to $(n,k)$ using east and north steps, and suppose that $P$ never goes above $Q$. For $i\in\{0,1,\dots,n+k\}$, let $p_i$ and $q_i$ denote the heights of $P$ and $Q$, respectively, after the first $i$ steps.
	
	Every lattice path $L$ between $P$ and $Q$ determines a $k$-element subset of $[n+k]$, namely the set of positions at which $L$ takes a step to the north; see \cref{fig:lattice_path_finite}. These sets are the bases of the corresponding lattice path matroid. Equivalently, the family of bases is
	\[
	\cB\coloneqq\left\{B\subseteq [n+k]\mid p_i\leq \left|B\cap [i]\right|\leq q_i\ \text{for every}\ i\in[n+k]\right\}.
	\]
	Note that $|B|=k$ for every $B\in\cB$ follows from $p_{n+k}=q_{n+k}=k$. Nested matroids correspond to the special case when one of the two bounding paths imposes a trivial constraint.
	
	We first give a measurable version in which constraints are given at an arbitrary collection of points. We work with the ground space $[0,1]$ equipped with the Borel $\sigma$-algebra $\Sigma$ and Lebesgue measure $\lambda$.
	
	\begin{thm}\label{thm:lattice_path_chain}
		Let $T\subseteq[0,1]$ with $\{0,1\}\subseteq T$, and let $0\leq R\leq 1$. Suppose that $a,b\colon T\to[0,R]$ are functions such that $a(0)=b(0)=0$, $a(1)=b(1)=R$, and $a(t)\leq b(t)$ for every $t\in T$. Assume further that $a$ and $b$ are nondecreasing and $1$-Lipschitz, that is, $0\leq a(t)-a(s)\leq t-s$ and $0\leq b(t)-b(s)\leq t-s$ for all $s,t\in T$ with $s\leq t$. Then, if the set
		\begin{equation*}
			\cB_T(a,b)\coloneqq\{B\subseteq [0,1]\mid a(t)\leq \lambda(B\cap[0,t])\leq b(t)\ \text{for every}\ t\in T\}
		\end{equation*}
		is nonempty, then it is the family of bases of a measurable matroid on $[0,1]$.
	\end{thm}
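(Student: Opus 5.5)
The plan is to apply \cref{thm:basis}: we verify \ref{ax:b1}--\ref{ax:b3} for $\cB\coloneqq\cB_T(a,b)$. Axiom \ref{ax:b1} is the hypothesis. For \ref{ax:b3}, if $\lambda(B_n\triangle B)\to0$ with $B_n\in\cB$, then for every fixed $t\in T$ we have $|\lambda(B_n\cap[0,t])-\lambda(B\cap[0,t])|\le\lambda(B_n\triangle B)\to0$. Since the constraints $a(t)\le\cdot\le b(t)$ are closed, $B\in\cB$. All the work is in the exchange axiom \ref{ax:b2}.

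For \ref{ax:b2}, fix $B_1,B_2\in\cB$ and $X\subseteq B_1\setminus B_2$. Put $D\coloneqq B_1\setminus X$ and $Z\coloneqq B_2\setminus B_1$, and write $h_W(t)\coloneqq\lambda(W\cap[0,t])$. We must find $Y\subseteq Z$ with $\lambda(Y)=\lambda(X)$ and
\[
a(t)\le h_D(t)+h_Y(t)\le b(t)\qquad\text{for every } t\in T.
\]
Two bookkeeping identities guide the choice. First, $h_D+h_X=h_{B_1}\in[a,b]$. Second, $h_D+h_Z=h_{B_2}+h_{(B_1\setminus B_2)\setminus X}\ge a$, using $D\supseteq B_1\cap B_2$. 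So the lower bound is safe if $Y$ takes as much of $Z$ as early as possible, and the upper bound is safe if $Y$ is taken late. We also have $\lambda(Z)=\lambda(B_1\setminus B_2)\ge\lambda(X)$, since all bases have measure $R$.

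Because of \cref{lem:halving_measure}, we cannot prescribe an arbitrary absolutely continuous cumulative profile for $Y$. We therefore restrict to sets that are unions of pieces $Z\cap I$ with $I$ an interval. Concretely, we take $Y$ greedy from the left subject to the upper bound: sweep $t$ from $0$ to $1$ and include $Z\cap[s,s+\mathrm d s]$ exactly when
\[
h_D(t')+h_Y(t')<\inf_{u\in T,\,u\ge t'}\bigl(b(u)-h_D(u)+h_D(t')\bigr),
\]
stopping once $\lambda(Y)=\lambda(X)$. To make this rigorous, we define $Y$ as the maximal member, in the sense of \cref{lem:maximal}, of the $\sigma$-increasing family of \emph{initial} sets $Y\subseteq Z$ satisfying the upper constraints and $\lambda(Y)\le\lambda(X)$. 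Here ``initial'' means that if $x\in Z$ is left out of $Y$ while some later point of $Z$ lies in $Y$, then including $x$ would violate an upper bound at some $u\in T$. The $1$-Lipschitz hypothesis on $b$ is what makes the slack $b-h_D-h_Y$ well behaved between points of $T$. Since $h_D\le h_{B_1}\le b$, the empty set is admissible.

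There are then two things to check. (i) $\lambda(Y)=\lambda(X)$. If not, every remaining point of $Z$ is blocked by a tight upper constraint at some $u\in T$. Taking the largest such tight point $u^*$, using closedness under limits together with continuity of $b$, we get $h_D(u^*)+h_Y(u^*)=b(u^*)\ge h_{B_1}(u^*)$. Beyond $u^*$, all of $Z$ is taken, so
\[
\lambda(Y)\ge \bigl(b(u^*)-h_D(u^*)\bigr)+\lambda(Z\setminus[0,u^*])\ge h_X(u^*)+\lambda(X\setminus[0,u^*]),
\]
where the last step is the argument we still need to justify; this would give $\lambda(Y)\ge\lambda(X)$, a contradiction. (ii) The lower bound $a\le h_D+h_Y$. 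Before the first tight upper point, $Y$ agrees with $Z$, and there the identity $h_D+h_Z\ge a$ applies. After a tight upper point $u$, we have $h_D+h_Y=b(u)\ge a(u)$ at $u$, and then compare increments on $[u,t]$ using that $a$ is $1$-Lipschitz and $h_{B_2}(t)-h_{B_2}(u)\ge a(t)-a(u)$.

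I expect step (ii) to be the main obstacle, namely interleaving the lower-bound check with the blocked stretches where $Y$ stops growing. My first attempt would be to argue by contradiction at the infimum $t_0$ of the violation set, comparing $h_D+h_Y$ with $h_{B_2}$ on the interval back to the last point where $Y$ was tight for $b$ or coincided with $Z$. If that fails, the fallback is to prove \ref{ax:i3''} directly for the family $\{F\mid F\subseteq B\text{ for some }B\in\cB\}$ by the same greedy sweep and then invoke \cref{prop:strong_prematroid}.
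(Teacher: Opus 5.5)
There is a genuine gap: your treatment of \ref{ax:b1} and \ref{ax:b3} matches the paper, but \ref{ax:b2} is not proved. Steps (i) and (ii) are left open, and the greedy route does not work as described. Consider the interval-halving data $T=[0,1]$, $R=1/2$, $a(t)=\max\{0,t-1/2\}$, $b(t)=t/2$, with $B_1=[0.2,0.4]\cup[0.7,1]$, $B_2=[0.3,0.6]\cup[0.8,1]$ and $X=[0.7,0.8]$. Then $D=[0.2,0.4]\cup[0.8,1]$ and $Z=[0.4,0.6]$. You need $\lambda(Y)=0.1$ with $h_Y(t)\le(t-0.4)/2$ on $[0.4,0.6]$.

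The pointwise supremum of the admissible profiles is $(t-0.4)/2$ on this interval. By \cref{lem:halving_measure}, no single set realizes it, so no left-greedy set exists. Your ``initial'' sets do not repair this:
\begin{itemize}
\item Read literally, adding one point changes no measure, so the condition forces $Y$ to be an initial segment $Z\cap[0,c]$. Here the only admissible such set is null, so (i) fails.
\item Read in the measure algebra, the \emph{latest} choice $Y=[0.5,0.6]$ qualifies. Adding any positive-measure part of $[0.4,0.5]$ breaks the bound at $u=0.6$.
\end{itemize}
Step (ii) also uses $h_{B_2}(t)-h_{B_2}(u)\ge a(t)-a(u)$, which is false: $u=0.6$, $t=0.8$ gives $0\ge 0.2$. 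In general only $a(t)-b(u)$ is available on the right.

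The missing idea is to stop tracking slack and instead pair $V\coloneqq B_1\setminus B_2$ with $W\coloneqq B_2\setminus B_1$ by an order-preserving, measure-preserving bijection.
\begin{itemize}
\item Set $m\coloneqq\lambda(V)=\lambda(W)$, $v(t)\coloneqq\lambda(V\cap[0,t])$ and $w(t)\coloneqq\lambda(W\cap[0,t])$.
\item Let $\psi_V,\psi_W\colon[0,m]\to[0,1]$ be the generalized inverses of $v$ and $w$.
\item The paper sets $\phi\coloneqq\psi_W\circ\psi_V^{-1}$ off a null set and takes $Y\coloneqq\phi(X)$.
\end{itemize}
With $P\coloneqq\psi_V^{-1}(X)\subseteq[0,m]$, one has $h_X(t)=\lambda(P\cap[0,v(t)])$ and $h_Y(t)=\lambda(P\cap[0,w(t)])$. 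Hence $h_Y(t)-h_X(t)$ lies between $0$ and $w(t)-v(t)=h_{B_2}(t)-h_{B_1}(t)$. So the cumulative function of $(B_1\setminus X)\cup Y$ lies between $h_{B_1}$ and $h_{B_2}$ at every $t$. Both bounds are therefore inherited at once, with no use of monotonicity or the Lipschitz property. In the example this gives $Y=[0.5,0.6]$.
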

	\begin{proof}
		We verify the basis axioms. Axiom \ref{ax:b1} is precisely the assumption that $\cB_T(a,b)$ is nonempty.
		
		To prove~\ref{ax:b2}, let $B_1,B_2\in\cB_T(a,b)$ and let
		$X\subseteq B_1\setminus B_2$. Set
		$
		V\coloneqq B_1\setminus B_2$ and 
		$W\coloneqq B_2\setminus B_1
		$.
		For $t\in[0,1]$, define
		\[
		v(t)\coloneqq\lambda(V\cap[0,t]),
		\qquad
		w(t)\coloneqq\lambda(W\cap[0,t]).
		\]
		Since $\lambda(B_1)=\lambda(B_2)=R$, we have
		$
		m\coloneqq\lambda(V)=\lambda(W).
		$
		If $m=0$, then $\lambda(X)=0$, and taking $Y\coloneqq\emptyset$
		gives $(B_1\setminus X)\cup Y\in\cB_T(a,b)$. Thus, we may assume
		that $m>0$.
		
		For $x\in[0,m]$, define the generalized inverse functions
		\[
		\psi_V(x)
		\coloneqq
		\inf\{t\in[0,1]\mid v(t)\geq x\},
		\qquad
		\psi_W(x)
		\coloneqq
		\inf\{t\in[0,1]\mid w(t)\geq x\}.
		\]
		Since $v$ and $w$ are continuous, we have
		$
		v(\psi_V(x))=x$ and 
		$w(\psi_W(x))=x
		$
		for every $x\in[0,m]$. In particular, $\psi_V$ and $\psi_W$ are
		strictly increasing Borel maps. Moreover,
		\[
		\lambda\left(\{x\in[0,m]\mid\psi_V(x)\leq t\}\right)
		=
		v(t)
		=
		\lambda(V\cap[0,t])
		\]
		for every $t\in[0,1]$, and analogously for $W$. Since finite Borel measures on $[0,1]$ are determined by their values on the initial intervals~\cite[Theorem~1.16]{folland1999real}, the pushforward of Lebesgue measure on $[0,m]$ under $\psi_V$ is the restriction of Lebesgue measure to $V$, and analogously for $\psi_W$ and $W$. Consequently, the set
		\[
		D
		\coloneqq \psi_V^{-1}(V)\cap \psi_W^{-1}(W)=
		\{x\in[0,m]\mid \psi_V(x)\in V
		\text{ and }\psi_W(x)\in W\}
		\]
		is Borel and has full measure in $[0,m]$. Set
		\[
		V_0\coloneqq\psi_V(D),
		\qquad
		W_0\coloneqq\psi_W(D).
		\]
		By the Lusin--Souslin theorem \cite[Theorem~15.1]{kechris1995classical}, $V_0$ and $W_0$ are Borel, and the
		restrictions of $\psi_V$ and $\psi_W$ to $D$ are Borel
		isomorphisms onto $V_0$ and $W_0$, respectively. Furthermore,
		$V_0$ and $W_0$ have full measure in $V$ and $W$. Consequently,
		\[
		\phi
		\coloneqq
		\psi_W\circ(\psi_V|_D)^{-1}
		\colon V_0\to W_0
		\]
		is an order-preserving, measure-preserving Borel bijection.
		
		Set
		\[
		X_0\coloneqq X\cap V_0,
		\qquad
		Y\coloneqq\phi(X_0),
		\qquad
		B'\coloneqq(B_1\setminus X)\cup Y.
		\]
		Then $Y\subseteq B_2\setminus B_1$, and
		$
		\lambda(Y)=\lambda(X_0)=\lambda(X).
		$
		For $t\in[0,1]$, define
		\[
		x(t)\coloneqq\lambda(X\cap[0,t]),
		\qquad
		y(t)\coloneqq\lambda(Y\cap[0,t]), \qquad 
		Z
		\coloneqq
		(\psi_V|_D)^{-1}(X_0)
		=
		(\psi_W|_D)^{-1}(Y).
		\]
		Then
		$
		x(t)=\lambda(Z\cap[0,v(t)])$ and
		$y(t)=\lambda(Z\cap[0,w(t)]).$
		Comparing the two initial intervals gives
		\[
		\min\{0,w(t)-v(t)\}
		\leq
		y(t)-x(t)
		\leq
		\max\{0,w(t)-v(t)\}.
		\]
		
		For $i\in\{1,2\}$, let $h_i(t)\coloneqq \lambda(B_i\cap[0,t])$. As $h_2(t)-h_1(t)=w(t)-v(t)$ and $\lambda(B'\cap[0,t])-h_1(t)=y(t)-x(t)$, we get
		\[
		\min\{h_1(t),h_2(t)\}\leq\lambda(B'\cap[0,t])\leq\max\{h_1(t),h_2(t)\}
		\]
		for every $t\in[0,1]$. Therefore,
		\[
		a(t)\leq\lambda(B'\cap[0,t])\leq b(t)
		\]
		for every $t\in T$. Moreover, $\lambda(B')=R$, yielding $B'\in\cB_T(a,b)$. 
		
		Finally, let $(B_i)_{i\in\bN}$ be a sequence in $\cB_T(a,b)$ and suppose that $\lim_{i\to\infty} \lambda(B_i\triangle B)=0$ for some $B\subseteq[0,1]$. Then $\lambda(B)=R$, and
		\[  
		|\lambda(B_i\cap[0,t])-\lambda(B\cap[0,t])|\leq\lambda(B_i\triangle B)
		\]
		for every $t\in[0,1]$. This means that $\lim_{i\to\infty}\lambda(B_i\cap[0,t])=\lambda(B\cap[0,t])$ for every $t\in[0,1]$, implying
		\[
		a(t)\leq\lambda(B\cap[0,t])\leq b(t)
		\]
		for every $t\in T$. Thus, $B\in\cB_T(a,b)$, proving \ref{ax:b3}.
	\end{proof}
	
	\begin{figure}[t]
		
		\begin{subfigure}[t]{0.48\textwidth}
			\centering
			
			\begin{tikzpicture}[
				x=0.72cm,
				y=0.72cm,
				curve/.style={thick, draw=black!85},
				axis parallel/.style={very thick, draw=black}
				]
				
				% Region between the bounding paths
				\fill[gray!8]
				(0,0) --
				(0,0.5) --
				(0.5,0.5) --
				(0.5,1) --
				(1,1) --
				(1,1.5) --
				(1.5,1.5) --
				(1.5,2) --
				(2,2) --
				(2,2.5) --
				(2.5,2.5) --
				(3,2.5) --
				(3,3) --
				(3.5,3) --
				(4,3) --
				(4,3.5) --
				(4.5,3.5) --
				(5,3.5) --
				(5,4) --
				(5.5,4) --
				(6,4) --
				(6,3.5) --
				(5.5,3.5) --
				(5.5,3) --
				(5,3) --
				(5,2.5) --
				(4.5,2.5) --
				(4.5,2) --
				(4,2) --
				(4,1.5) --
				(3.5,1.5) --
				(3,1.5) --
				(3,1) --
				(2.5,1) --
				(2,1) --
				(2,0.5) --
				(1.5,0.5) --
				(1,0.5) --
				(1,0) --
				(0.5,0) --
				cycle;
				
				% Fine dotted grid
				\draw[gray!50, densely dotted, line width=0.4pt, step=0.5]
				(0,0) grid (6,4);
				
				% Lower bounding path P
				\draw[curve]
				(0,0) --
				(0.5,0) --
				(1,0) --
				(1,0.5) --
				(1.5,0.5) --
				(2,0.5) --
				(2,1) --
				(2.5,1) --
				(3,1) --
				(3,1.5) --
				(3.5,1.5) --
				(4,1.5) --
				(4,2) --
				(4.5,2) --
				(4.5,2.5) --
				(5,2.5) --
				(5,3) --
				(5.5,3) --
				(5.5,3.5) --
				(6,3.5) --
				(6,4);
				
				% Upper bounding path Q
				\draw[curve]
				(0,0) --
				(0,0.5) --
				(0.5,0.5) --
				(0.5,1) --
				(1,1) --
				(1,1.5) --
				(1.5,1.5) --
				(1.5,2) --
				(2,2) --
				(2,2.5) --
				(2.5,2.5) --
				(3,2.5) --
				(3,3) --
				(3.5,3) --
				(4,3) --
				(4,3.5) --
				(4.5,3.5) --
				(5,3.5) --
				(5,4) --
				(5.5,4) --
				(6,4);
				
				% A basis path L
				\draw[axis parallel]
				(0,0) --
				(0.5,0) --
				(0.5,0.5) --
				(1,0.5) --
				(1,1) --
				(1.5,1) --
				(2,1) --
				(2,1.5) --
				(2.5,1.5) --
				(2.5,2) --
				(3,2) --
				(3.5,2) --
				(3.5,2.5) --
				(4,2.5) --
				(4.5,2.5) --
				(4.5,3) --
				(5,3) --
				(5,3.5) --
				(5.5,3.5) --
				(5.5,4) --
				(6,4);
				
				% Outer rectangle
				\draw[very thick] (0,0) rectangle (6,4);
				
				% Labels
				\node[font=\small, below left] at (0,0) {$(0,0)$};
				\node[font=\small, above right] at (6,4) {$(n,k)$};
				
				\node[font=\small] at (1.75,2.75) {$Q$};
				\node[font=\small] at (4.15,1.15) {$P$};
				\node[font=\small\bfseries] at (3.15,2.25) {$L$};
				
			\end{tikzpicture}
			
			\caption{A finite lattice path $L$ between the bounding paths $P$ and $Q$.}
			\label{fig:lattice_path_finite}
			
		\end{subfigure}
		\hfill
		\begin{subfigure}[t]{0.48\textwidth}
			\centering
			
			\begin{tikzpicture}[
				x=0.72cm,
				y=0.72cm,
				curve/.style={thick, draw=black!85},
				axis parallel/.style={very thick, draw=black}
				]
				
				% Region between the bounding paths
				\fill[gray!8]
				(0,0)
				.. controls (0.6,1.3) and (1.7,2.5) .. (2.5,3.0)
				.. controls (3.7,3.7) and (5.0,3.8) .. (6,4)
				.. controls (5.3,3.4) and (4.4,2.5) .. (3.5,1.5)
				.. controls (2.7,0.8) and (1.4,0.1) .. (0,0)
				-- cycle;
				
				% Upper bounding path gamma_b
				\draw[curve]
				(0,0)
				.. controls (0.6,1.3) and (1.7,2.5) .. (2.5,3.0)
				.. controls (3.7,3.7) and (5.0,3.8) .. (6,4);
				
				% Lower bounding path gamma_a
				\draw[curve]
				(0,0)
				.. controls (1.4,0.1) and (2.7,0.8) .. (3.5,1.5)
				.. controls (4.4,2.5) and (5.3,3.4) .. (6,4);
				
				\draw[axis parallel]
				(0,0) --
				(0.08,0) --
				(0.08,0.06) --
				(0.16,0.06) --
				(0.16,0.13) --
				(0.25,0.13) --
				(0.25,0.21) --
				(0.35,0.21) --
				(0.35,0.30) --
				(0.47,0.30) --
				(0.47,0.41) --
				(0.61,0.41) --
				(0.61,0.54) --
				(0.78,0.54) --
				(0.78,0.70) --
				(0.98,0.70) --
				(0.98,0.90) --
				(1.22,0.90) --
				(1.22,1.12) --
				(1.50,1.12) --
				(1.50,1.38) --
				(1.82,1.38) --
				(1.82,1.66) --
				(2.18,1.66) --
				(2.18,1.94) --
				(2.58,1.94) --
				(2.58,2.20) --
				(3.02,2.20) --
				(3.02,2.46) --
				(3.48,2.46) --
				(3.48,2.72) --
				(3.92,2.72) --
				(3.92,2.98) --
				(4.32,2.98) --
				(4.32,3.22) --
				(4.68,3.22) --
				(4.68,3.43) --
				(4.98,3.43) --
				(4.98,3.60) --
				(5.23,3.60) --
				(5.23,3.73) --
				(5.43,3.73) --
				(5.43,3.83) --
				(5.59,3.83) --
				(5.59,3.90) --
				(5.72,3.90) --
				(5.72,3.95) --
				(5.82,3.95) --
				(5.82,3.98) --
				(5.90,3.98) --
				(5.90,4.00) --
				(6,4);
				
				% Outer rectangle
				\draw[very thick] (0,0) rectangle (6,4);
				
				% Labels
				\node[font=\small, below left] at (0,0) {$(0,0)$};
				\node[font=\small, above right] at (6,4) {$(1-R,R)$};
				
				\node[font=\small] at (1.65,2.75) {$\gamma_b$};
				\node[font=\small] at (4.45,2.15) {$\gamma_a$};
				\node[font=\small\bfseries] at (3.1,2.80) {$\gamma_{h_B}$};
				
			\end{tikzpicture}
			
			\caption{A measurable basis path $\gamma_{h_B}$ between $\gamma_a$ and $\gamma_b$.}
			\label{fig:lattice_path_measurable}
			
		\end{subfigure}
		
		\caption{Finite and measurable lattice path matroids.}
		\label{fig:gamma}
		
	\end{figure}
	
	We call the matroids obtained from \cref{thm:lattice_path_chain} \emph{measurable lattice path matroids}. Taking $T=[0,1]$ gives the continuous version of the construction.
	
	\begin{cor}\label{cor:lattice_path}
		Let $0\leq R\leq 1$, and let $a,b\colon[0,1]\to[0,R]$ be nondecreasing $1$-Lipschitz functions such that $a(0)=b(0)=0$, $a(1)=b(1)=R$, and $a(t)\leq b(t)$ for every $t\in[0,1]$. Then, if the set
		\begin{equation*}
			\cB(a,b)\coloneqq\{B\subseteq [0,1]\mid a(t)\leq \lambda(B\cap[0,t])\leq b(t)\ \text{for every}\ t\in [0,1]\}
		\end{equation*}    
		is nonempty, then it is the family of bases of a measurable matroid on $[0,1]$.
	\end{cor}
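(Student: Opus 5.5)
This is the special case $T=[0,1]$ of \cref{thm:lattice_path_chain}, so I would obtain it by direct specialization. The plan is to check that every hypothesis of \cref{thm:lattice_path_chain} holds with $T\coloneqq[0,1]$, and then observe that $\cB_T(a,b)=\cB(a,b)$ by definition.

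\begin{enumerate}
\item The containment $\{0,1\}\subseteq T$ is trivial.
\item The functions $a,b\colon T\to[0,R]$ satisfy $a(0)=b(0)=0$, $a(1)=b(1)=R$, and $a\le b$ pointwise; these are exactly the assumptions of the corollary.
\item The conditions that $a$ and $b$ are nondecreasing and $1$-Lipschitz transfer verbatim, since they are assumed for all $s\le t$ in $[0,1]$.
\end{enumerate}

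With these checks, the nonemptiness hypothesis of the corollary is exactly the nonemptiness hypothesis of the theorem. The theorem then says that $\cB(a,b)$ satisfies the basis axioms \ref{ax:b1}--\ref{ax:b3}, and \cref{thm:basis} makes it the family of bases of a measurable matroid on $[0,1]$.

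There is no genuine obstacle. The only point to confirm is that the proof of \cref{thm:lattice_path_chain} placed no countability or finiteness restriction on $T$. It does not: the exchange step establishes the sandwich bound $\min\{h_1,h_2\}\le\lambda(B'\cap[0,t])\le\max\{h_1,h_2\}$ for every $t\in[0,1]$, and the closure step \ref{ax:b3} uses pointwise convergence of $\lambda(B_i\cap[0,t])$ for every $t$. Both arguments are therefore uniform in $t$ and apply equally to $T=[0,1]$.
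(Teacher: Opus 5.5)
Your proposal is correct and matches the paper exactly: the corollary is obtained by taking $T=[0,1]$ in \cref{thm:lattice_path_chain}. Re-checking that theorem's proof for uniformity in $t$ is unnecessary, since its statement already allows any $T\subseteq[0,1]$ with $\{0,1\}\subseteq T$. The extra appeal to \cref{thm:basis} is likewise redundant, because the theorem's conclusion already asserts that $\cB_T(a,b)$ is the family of bases of a measurable matroid.
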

	
	Now we explain the path interpretation of the construction.
	For an absolutely continuous function $h\colon[0,1]\to[0,R]$, consider the path $\gamma_h(t)\coloneqq(t-h(t),h(t))$. Since $h$ is differentiable almost everywhere, we may define its \emph{direction function} by $\dir_h(t)\coloneqq h'(t)$ almost everywhere. Then
	\begin{equation*}
		\gamma_h'(t)=\left(1-\dir_h(t),\dir_h(t)\right)
	\end{equation*}
	almost everywhere. Thus, the values $0$ and $1$ correspond to moving eastward and northward, respectively, while intermediate values correspond to fractional directions.
	
	For $B\in\cB(a,b)$, let $h_B(t)\coloneqq\lambda(B\cap[0,t])$. Since $h_B(t)=\int_0^t\mathbf{1}_B(s)\diff s$, we have $\dir_{h_B}=\mathbf{1}_B$ almost everywhere. Hence, $\gamma_{h_B}$ moves north on $B$ and east outside $B$. Moreover, the inequalities $a(t)\leq h_B(t)\leq b(t)$ say exactly that $\gamma_{h_B}$ remains between the paths $\gamma_a(t)\coloneqq(t-a(t),a(t))$ and $\gamma_b(t)\coloneqq(t-b(t),b(t))$; see \cref{fig:lattice_path_measurable}.
	
	Conversely, suppose that $h\colon[0,1]\to[0,R]$ is absolutely continuous, satisfies $h(0)=0$, $h(1)=R$, and $a(t)\leq h(t)\leq b(t)$ for every $t\in[0,1]$, and has $\dir_h(t)\in\{0,1\}$ almost everywhere. Let \[B\coloneqq\{t\in[0,1]\mid\dir_h(t)=1\}.\] By the fundamental theorem of calculus for absolutely continuous functions~\cite[Theorem~3.35]{folland1999real},
	\begin{equation*}
		h(t)=\int_0^t\dir_h(s)\diff s=\lambda(B\cap[0,t]).
	\end{equation*}
	Therefore, $B\in\cB(a,b)$. Thus, the bases correspond exactly to the paths between the two bounding paths whose direction is north or east almost everywhere. 
	
	\begin{rem}
		The construction contains the interval versions of measurable nested matroids, discussed in \cref{sec:nested}. For a set $B\subseteq[0,1]$ of measure $R$, the inequality $\max\{0,t+R-1\}\leq \lambda(B\cap[0,t])$ holds automatically. Thus, taking $a(t)=\max\{0,t+R-1\}$ leaves only the upper-bound constraints. The general closed-chain construction of \cref{sec:nested}, however, is not restricted to chains induced by the linear order on $[0,1]$. In particular, the interval-halving matroid from \cref{sec:nested} is a measurable lattice path matroid. Indeed, take $R=1/2$, $a(t)=\max\{0,t-1/2\}$ and $b(t)=t/2$. Then, for a set $B$ of measure $1/2$, the lower bound automatically holds, while the upper bound is precisely the condition in the definition of the interval-halving matroid. 
	\end{rem}

	\begin{rem} \label{rem:lattice_nonempty}
		The nonemptiness assumption in \cref{thm:lattice_path_chain} and \cref{cor:lattice_path} cannot be omitted. For example, let $R=1/2$ and set $a(t)=b(t)=t/2$ for every $t\in[0,1]$. A basis would have to satisfy $\lambda(B\cap[0,t])=t/2$ for every $t\in[0,1]$, but no such Borel set exists by \cref{lem:halving_measure}. 
		However, if $a(t)<b(t)$ for every $t\in(0,1)$, then $\cB(a,b)$ is nonempty. We omit the proof, as it is technical and will not be used later.
	\end{rem}
	
	%%%%%%%%%%%%%%%%%%%%%%%%%%%%%%%%
	\subsection{Transversal Matroids}
	\label{sec:transversal}
	%%%%%%%%%%%%%%%%%%%%%%%%%%%%%%%%
	
	Transversal matroids form another basic class of finite matroids. If $G=(S,T;E)$ is a finite bipartite graph, then the corresponding transversal matroid $M=(S,\cI)$ has independent sets
	\[
	\cI=\{X\subseteq S\mid X\text{ can be covered by a matching}\},
	\]
	see~\cite{oxley2011matroid}. We use the analogous notion for bipartite graphings. In the measurable setting, however, one has to distinguish between sets covered by an actual measurable matching and increasing limits of such sets.
	
	A \emph{measurable matching} in a graphing $G=(J,\Sigma_J,\nu,E)$ is a Borel set $N\subseteq E$ such that every vertex is incident to at most one edge of $N$. We denote by $V(N)$ the set of vertices incident to an edge of $N$. For $X\subseteq J$, we say that $N$ \emph{covers} $X$ if $\nu(X\setminus V(N))=0$.
	
	We will use the following standard facts about bounded-degree Borel graphs and graphings.
	
	\begin{lem}\label{lem:graphing_matching_tools}
		Let $G=(J,\Sigma_J,\nu,E)$ be a graphing.
		\begin{enumerate}[label=(\roman*), itemsep=0em]
			\item If $H$ is a bounded-degree Borel graph on $J$ and $A\subseteq J$ is Borel, then the union of the connected components of $H$ meeting $A$ is Borel. \label{it:gri}
			\item If $N\subseteq E$ is a measurable matching, then $V(N)$ is Borel, and the partner map $\tau_N\colon V(N)\to V(N)$, which sends each vertex to its unique $N$-neighbour, is a measure-preserving Borel involution. \label{it:grii}
		\end{enumerate}
	\end{lem}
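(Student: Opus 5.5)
For \ref{it:gri}, the plan is to express the saturation of $A$ under the connected-component relation of $H$ as a countable union of Borel sets. For $n\in\bN$, let $A_0\coloneqq A$ and
\[
A_{n}\coloneqq\{x\in J\mid x\in A_{n-1}\text{ or }x\text{ has an $H$-neighbour in }A_{n-1}\}.
\]
The union of the components meeting $A$ is then $\bigcup_{n}A_n$. So it suffices to show that the $H$-neighbourhood $N_H(B)$ of a Borel set $B$ is Borel.

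I would obtain this from the Lusin--Novikov uniformization theorem \cite[Theorem~18.10]{kechris1995classical}. The edge relation of $H$ is a Borel subset of $J\times J$ with finite (indeed bounded) vertical sections. Hence it is a countable union of Borel graphs of partial functions $f_k$. Then
\[
N_H(B)=\bigcup_k f_k^{-1}(B)
\]
is Borel, since each $f_k$ has a Borel domain and is Borel. Equivalently, one can use \cite[Lemma~18.12]{kechris1995classical}, as the paper does for degree functions: the projection of the Borel set $\{(x,y)\mid \{x,y\}\in E(H),\ y\in B\}$ has countable sections, so its projection is Borel. Induction then gives that every $A_n$ is Borel, which proves \ref{it:gri}.

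For \ref{it:grii}, I would first note that $V(N)$ is the projection of the Borel set $\{(x,y)\mid\{x,y\}\in N\}$, whose sections have size at most one. It is therefore Borel by the same Lusin--Novikov argument. The partner map $\tau_N$ has graph equal to this Borel set. A function with Borel graph between standard Borel spaces is Borel \cite[Theorem~14.12]{kechris1995classical}. Since every vertex of $V(N)$ has exactly one $N$-neighbour, $\tau_N\circ\tau_N=\mathrm{id}$, so $\tau_N$ is a Borel involution.

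The main step is measure preservation, which I would deduce from the graphing identity. For Borel $A\subseteq V(N)$ we have $\tau_N(A)=\tau_N^{-1}(A)$ because $\tau_N$ is an involution, so $\tau_N(A)$ is Borel. Apply the symmetry identity to the graphing restricted to the edge set $N$; the identity holds for Borel edge subsets, since $\eta_G$ is well defined on them by the same symmetry. Taking the sets $A$ and $J$ gives
\[
\int_A\deg_N(J,x)\diff\nu(x)=\int_J\deg_N(A,y)\diff\nu(y).
\]
The left side equals $\nu(A)$. The integrand on the right is $\mathbf 1_{\tau_N(A)}$, because $y$ has an $N$-neighbour in $A$ exactly when $y\in\tau_N(A)$. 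Hence $\nu(\tau_N(A))=\nu(A)$.

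The one point needing care is that the graphing identity holds for the subgraph $N$, not just for $E$. I would justify this by noting that the identity is equivalent to the two-sided edge measures agreeing, that is, to the left and right counting measures on the Borel edge set coinciding. This property passes to Borel subsets of the edge set by restricting the common measure.
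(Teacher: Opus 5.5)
Your proof is correct and follows the paper's route: iterated neighbourhoods for (i), Kechris's Borel-graph theorem for $\tau_N$, and the graphing identity applied to the subgraph with edge set $N$ (testing with $A$ and $J$ rather than with $A$ and $\tau_N(A)$ makes no difference). The only difference is that the paper cites Lov\'asz's book for two facts, that neighbourhoods of Borel sets are Borel and that a Borel edge subset of a graphing is again a graphing, while you derive them directly: the first from Lusin--Novikov, and the second from the symmetry of the edge measure, which tacitly uses Dynkin's $\pi$--$\lambda$ theorem to pass from the sets $E\cap(A\times B)$ to all Borel subsets of the edge relation.
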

	
	\begin{proof}
		For \ref{it:gri}, the neighborhood of every Borel set in a bounded-degree Borel graph is Borel by \cite[Theorem~18.2]{lovasz2012large}. Starting with $A$ and iterating the neighborhood operation, the union of the resulting sets is precisely the union of the connected components of $H$ meeting $A$. Hence, this union is Borel.
		
		For \ref{it:grii}, the same result, applied to the Borel graph with edge set $N$, shows that $V(N)$ is Borel. Since the graph of $\tau_N$ is the edge relation of $N$, which is Borel, the map $\tau_N$ is Borel by \cite[Theorem~14.12]{kechris1995classical}, and it is clearly an involution. Moreover, the Borel subgraph of $G$ with edge set $N$ is a graphing by \cite[Lemma~18.19]{lovasz2012large}. Applying its graphing identity to $A$ and $\tau_N(A)$ for any Borel set $A\subseteq V(N)$ gives
		$
		\nu(A)=\nu(\tau_N(A)).
		$
		Thus, $\tau_N$ is measure-preserving.
	\end{proof}
	
	The following lemma is a measurable version of the standard alternating-path exchange argument for finite matchings. The same componentwise switching argument works here, since \cref{lem:graphing_matching_tools} ensures that the relevant sets are Borel and that the partner maps preserve measure.
	
	\begin{lem}\label{lem:measurable_matching_exchange}
		Let $G=(J,\Sigma_J,\nu,E)$ be a graphing. Let $N_1,N_2$ be measurable matchings, and let $X_i\in\Sigma_J$ be covered by $N_i$ for $i\in\{1,2\}$. If $\nu(X_1)\leq\nu(X_2)$, then there exists a measurable matching covering a Borel set $X$ such that
		\[
		X_1\subseteq X\subseteq X_1\cup X_2
		\quad\text{and}\quad
		\nu(X)\geq\nu(X_2).
		\]
	\end{lem}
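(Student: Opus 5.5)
We would mimic the finite alternating-path exchange by a componentwise switch on the symmetric difference of the two matchings. Let $H$ be the Borel graph on $J$ with edge set $N_1\cup N_2$. Every vertex has degree at most two in $H$, so each component of $H$ is a path (finite or one- or two-way infinite) or an even cycle, with edges alternating between $N_1$ and $N_2$. First discard null sets: replace $X_i$ by $X_i\cap V(N_i)$, which changes nothing up to null sets. Then define $C$ as the union of those components of $H$ that contain more points of $X_2\setminus X_1$ than of $X_1\setminus X_2$. Only finite components can qualify, since infinite ones have no finite count, and we will handle them separately. By \cref{lem:graphing_matching_tools}\ref{it:gri} the union of components meeting any Borel set is Borel, and the counting condition can be expressed in a Borel way through the same bounded-degree neighbourhood iteration used there. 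Hence $C$ is Borel.

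Set $N\coloneqq (N_1\setminus E(C))\cup(N_2\cap E(C))$, where $E(C)$ denotes the edges of $G$ with both endpoints in $C$. Components are vertex-disjoint, so $N$ is a measurable matching. Let $X\coloneqq (X_1\setminus C)\cup ((X_1\cup X_2)\cap C)$. Outside $C$, the matching $N$ agrees with $N_1$ and covers $X_1$. Inside a component $K\subseteq C$, $N$ agrees with $N_2$ and covers $X_2\cap K$, but this does not by itself cover $X_1\cap K$.

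This is the main obstacle, as in the finite proof: we must choose the switch inside a component so that the new matching covers $X_1\cap K$ together with as much of $X_2$ as possible. The finite argument applies to each finite path or cycle $K$. On an alternating path one can cover $(X_1\cup X_2)\cap V(K)$ except possibly one endpoint. Concretely, take the maximal alternating subpath whose ends lie in $X_2\setminus X_1$ or $X_1\setminus X_2$, and switch on a subpath ending at a point of $X_1\setminus X_2$ that is uncovered by $N_2$. This yields a matching on $K$ covering $X_1\cap K$ plus one extra point of $X_2\setminus X_1$ whenever the count of such points exceeds the count of $X_1\setminus X_2$ points. Cycles need no change, because on a cycle $N_1$ and $N_2$ cover the same vertices. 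To make the choice Borel, we would fix a Borel linear order as in \cref{lem:finite_class_selection} and select canonically within each finite component, for instance switching along all maximal alternating segments from an uncovered $X_2$-endpoint.

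For the measure estimate, note that $\nu(X_2\setminus X_1)$ and $\nu(X_1\setminus X_2)$ decompose over components. The partner maps $\tau_{N_1}$ and $\tau_{N_2}$ are measure-preserving by \cref{lem:graphing_matching_tools}\ref{it:grii}. On infinite components and on finite components where no gain is possible, these maps give an injection of $(X_2\setminus X_1)\cap K$ into $(X_1\setminus X_2)\cap K$, up to null sets. This injection is obtained by walking along the alternating path, and it is realised by a countable union of compositions of $\tau_{N_1}$ and $\tau_{N_2}$. So on such components $\nu((X_2\setminus X_1)\cap K)\le\nu((X_1\setminus X_2)\cap K)$. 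On the components in $C$ we gain all of $X_2\setminus X_1$. Summing, $\nu(X)\ge\nu(X_1)+\nu(X_2\setminus X_1)-\nu(X_1\setminus X_2)$ over non-$C$ components, counted appropriately, which is $\ge\nu(X_2)$.

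The delicate points are the measurable injection on infinite components and the Borel selection. For the first, one-way infinite paths must start at a vertex uncovered by one matching, and we use the partner maps to compare densities.
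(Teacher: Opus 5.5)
Your overall strategy (componentwise switching on alternating components, with measure-preserving partner maps for the estimate) is sound. However, the switching rule and the measure accounting are keyed to the wrong sets, and this is a genuine gap.

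In a path component of $N_1\cup N_2$, every vertex of degree two is covered by both $N_1$ and $N_2$. So what matters is not how many points of $X_2\setminus X_1$ versus $X_1\setminus X_2$ a component contains, but only its endpoints lying in $A_0\coloneqq X_2\setminus V(N_1)$ and $B_0\coloneqq X_1\setminus V(N_2)$. Consequently two of your claims fail.

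\emph{First claim.} ``On the components in $C$ we gain all of $X_2\setminus X_1$'' is false. Take a path with edges $ab\in N_2$ and $bc\in N_1$, where $a,b\in X_2\setminus X_1$ and $c\in X_1\setminus X_2$. This component lies in $C$, yet no matching inside $N_1\cup N_2$ covers both $c$ and $a$.

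\emph{Second claim.} The injection of $(X_2\setminus X_1)\cap K$ into $(X_1\setminus X_2)\cap K$ that you invoke on infinite components need not exist, and in fact your $X$ can fail outright. Take $J=(\bR/\bZ)\times\{0,1\}$ with Lebesgue measure and $\alpha$ irrational. Let $N_1$ join $(x,0)$ to $(x,1)$, let $N_2$ join $(x,1)$ to $(x+\alpha,0)$, and set $X_1=\emptyset$ and $X_2=J$. All components are double rays, so $C=\emptyset$ and $X=(X_1\setminus C)\cup((X_1\cup X_2)\cap C)=\emptyset$, although $N_1$ itself covers $X_2$.

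Your proposed repair by switching on subpaths is not specified precisely. Moreover, cutting an alternating path at an interior vertex either puts two matching edges at the cut vertex or leaves it uncovered.

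The missing idea, which is what the paper uses, is to switch exactly the components of $N_1\triangle N_2$ that meet $A_0$ but not $B_0$. Their union $W$ is Borel as the difference of two saturations, by \cref{lem:graphing_matching_tools}\ref{it:gri}, so no counting is needed. It also automatically contains every ray whose endpoint lies in $A_0$.

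Let $N$ be $N_1$ switched to $N_2$ on $W$, and set $X\coloneqq(X_1\cup X_2)\cap V(N)$. Points of $X_1$ inside $W$ are not in $B_0$, hence lie in $V(N_2)$ and remain covered; this gives $X_1\subseteq X$. The only points of $X_2\setminus X_1$ lost are those of $A_0\setminus W$.

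Each such point is an endpoint of a finite alternating path whose other endpoint lies in $B_0$. The endpoint map, assembled path-length by path-length from the partner maps of \cref{lem:graphing_matching_tools}\ref{it:grii}, is a measure-preserving Borel injection $A_0\setminus W\to B_0$. Since $B_0\subseteq X_1\setminus X_2$, this yields $\nu(X)\geq\nu(X_1)+\nu(X_2\setminus X_1)-\nu(X_1\setminus X_2)=\nu(X_2)$.
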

	
	\begin{proof}
		Replacing $X_i$ by $X_i\cap V(N_i)$ changes it only by a null set, so we may assume that $X_i\subseteq V(N_i)$ holds pointwise for $i\in\{1,2\}$. By deleting unnecessary edges, we may also assume that every edge of $N_i$ is incident to a vertex of $X_i$.
		
		Set
		\[
		A\coloneqq X_2\setminus X_1,\qquad
		A_0\coloneqq X_2\setminus V(N_1),\qquad
		B_0\coloneqq X_1\setminus V(N_2).
		\]
		Thus, $A_0\subseteq A$ and $B_0\subseteq X_1\setminus X_2$. Consider the Borel graph $H$ with edge set $N_1\triangle N_2$. Every nontrivial component of $H$ is a finite path, a ray, a double ray, or a cycle, with edges alternating between $N_1$ and $N_2$. Here, a ray is a one-way infinite path, while a double ray is a two-way infinite path.
		
		Let $W$ be the union of those components of $H$ that meet $A_0$ but do not meet $B_0$. Equivalently, $W$ is the difference between the union of the $H$-components meeting $A_0$ and the union of the $H$-components meeting $B_0$. Hence, $W$ is Borel by \cref{lem:graphing_matching_tools}\ref{it:gri}. Define a new matching $N$ by switching from $N_1$ to $N_2$ on the components contained in $W$, that is, set
		\[
		N
		\coloneqq
		\left(N_1\setminus E(H[W])\right)
		\cup
		\left(N_2\cap E(H[W])\right).
		\]
		Since $W$ is a union of connected components of $H$, this componentwise switch preserves the matching property. 
		
		\begin{cla}\label{cla:first}
			Every vertex of $X_1$ is covered by $N$. 
		\end{cla}
		\begin{claimproof}
			If $x\in X_1\setminus W$, then the $N_1$-edge covering $x$ was not removed. If $x\in X_1\cap W$, then $x\notin B_0$, since no component contained in $W$ meets $B_0$. Hence, $x\in V(N_2)$, and after the switch $x$ remains covered.
		\end{claimproof}
		
		We next bound the measure of the vertices covered in $X_2\setminus X_1$. The only vertices of $A=X_2\setminus X_1$ that may fail to be covered by $N$ are those in $A_0\setminus W$. 
		
		\begin{cla}\label{cla:second}
			$\nu(A_0\setminus W)\leq\nu(B_0)$.
		\end{cla}
		
		\begin{claimproof}
			Let $C$ be an $H$-component meeting $A_0\setminus W$. Since $C$ is not contained in $W$, it also meets $B_0$. A vertex of $A_0$ is covered by $N_2$ but not by $N_1$, and hence is an endpoint of its alternating component, incident to an $N_2$-edge. Similarly, a vertex of $B_0$ is covered by $N_1$ but not by $N_2$, and hence is an endpoint incident to an $N_1$-edge. Since a path has at most two endpoints, every component meeting $A_0\setminus W$ is a finite alternating path with exactly one endpoint in $A_0$ and exactly one endpoint in $B_0$.
			
			Define $\theta\colon A_0\setminus W\to B_0$ by mapping each vertex of $A_0\setminus W$ to the other endpoint of its alternating path. For each $m\in\bN$, the set of vertices for which this path has length $m$ is Borel, and on this set $\theta$ is a composition of restrictions of the partner maps of $N_1$ and $N_2$. Thus, $\theta$ is a Borel injection. Moreover, it is measure-preserving on each such set by \cref{lem:graphing_matching_tools}\ref{it:grii}. Partitioning any Borel set $D\subseteq A_0\setminus W$ according to the length of the corresponding alternating path and using countable additivity gives $\nu(\theta(D))=\nu(D)$. Therefore, $\nu(A_0\setminus W)\leq\nu(B_0)$.
		\end{claimproof}
		
		Let
		\[
		X\coloneqq(X_1\cup X_2)\cap V(N).
		\]
		Then $X$ is covered by $N$. \Cref{cla:first} gives $X_1\subseteq X\subseteq X_1\cup X_2$. Moreover, by \cref{cla:second} and the inclusion $B_0\subseteq X_1\setminus X_2$,
		\[
		\nu(X)
		\geq
		\nu(X_1)+\nu(A)-\nu(A_0\setminus W)
		\geq
		\nu(X_1)+\nu(A)-\nu(B_0)
		\geq
		\nu(X_1)+\nu(A)-\nu(X_1\setminus X_2)
		=
		\nu(X_2).
		\]
		This concludes the proof of the lemma.
	\end{proof}
	
	Let $G=(J,\Sigma_J,\nu,E)$ be a bipartite graphing with Borel bipartition $J=S\cup T$, and let $\Sigma_S$ denote the Borel $\sigma$-algebra induced on $S$. Since $S$ is Borel, $(S,\Sigma_S,\nu|S)$ is a standard measure space. We say that $X\in\Sigma_S$ is \emph{transversal} if there exists a measurable matching covering $X$, and denote the family of transversal sets by
	\begin{equation}\label{eq:transversal_prematroid}
		\cT_G\coloneqq\{X\in\Sigma_S\mid\text{there exists a measurable matching covering }X\}.
	\end{equation}
	
	Now we show that transversal sets define a measurable pre-matroid.
	
	\begin{prop}\label{prop:transversal_strong_prematroid}
		The system $(S,\Sigma_S,\nu|S,\cT_G)$ is a measurable pre-matroid.
	\end{prop}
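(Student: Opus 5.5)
We need to verify \ref{ax:i1}, \ref{ax:i2} and \ref{ax:i3''} for $\cT_G$ on the standard measure space $(S,\Sigma_S,\nu|S)$. First, $\nu|S$ is atomless and finite since $\nu$ is, and $S$ is a Borel subset of a standard Borel space; so the ambient space is standard. Axiom \ref{ax:i1} holds because the empty matching covers $\emptyset$.

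For \ref{ax:i2}, note that if $N$ covers $X$ and $X'\subseteq X$, then $\nu(X'\setminus V(N))\le\nu(X\setminus V(N))=0$. Hence the same matching $N$ covers $X'$, and $X'\in\cT_G$. The identification of sets up to null sets causes no difficulty here, since covering is itself defined up to null sets.

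The main step is \ref{ax:i3''}. Let $X_1,X_2\in\cT_G$ with $\nu(X_1)\le\nu(X_2)$, covered by measurable matchings $N_1,N_2$. This is exactly the situation of \cref{lem:measurable_matching_exchange}, applied in the graphing $G$ with $X_1,X_2\subseteq S$. That lemma yields a measurable matching $N$ and a Borel set $X$ with $X_1\subseteq X\subseteq X_1\cup X_2$, $\nu(X)\ge\nu(X_2)$, and $X$ covered by $N$. Since $X\subseteq X_1\cup X_2\subseteq S$, we have $X\in\Sigma_S$, so $X\in\cT_G$. Thus $G\coloneqq X$ witnesses \ref{ax:i3''}.

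The only point needing care is that the lemma is stated for a general graphing with arbitrary Borel vertex sets. It therefore applies verbatim to subsets of $S$, and no use of bipartiteness is required beyond the sets lying in $S$. Consequently, the real obstacle, the alternating-path switching argument with its Borel measurability and measure preservation, has already been handled in \cref{lem:measurable_matching_exchange}. The remaining verification is routine, and I expect no further difficulty.
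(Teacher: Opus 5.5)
Your proof is correct and follows the paper's argument: \ref{ax:i1} and \ref{ax:i2} are immediate, and \ref{ax:i3''} follows by applying \cref{lem:measurable_matching_exchange} directly, whose output set lies in $X_1\cup X_2\subseteq S$. The paper first normalizes so that $V(N_i)\cap S=X_i$, which is redundant because the lemma makes this reduction internally. One minor point: avoid calling the augmented set $G$, since $G$ already denotes the graphing.
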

	
	\begin{proof}
		Axioms \ref{ax:i1} and \ref{ax:i2} are immediate. To prove \ref{ax:i3''}, let $X_1,X_2\in\cT_G$ with $\nu(X_1)\leq\nu(X_2)$. Choose measurable matchings $N_1$ and $N_2$ covering $X_1$ and $X_2$, respectively. After replacing $X_i$ by $X_i\cap V(N_i)$ and restricting $N_i$ to the edges whose $S$-endpoint belongs to $X_i$, we may assume that $V(N_i)\cap S=X_i$ for $i\in\{1,2\}$. The result now follows from \cref{lem:measurable_matching_exchange}.
	\end{proof}
	
	\cref{prop:transversal_strong_prematroid} gives the measurable analogue of the finite exchange property. However, $\cT_G$ need not be closed under increasing unions, as shown by the following example. 
	
	\begin{figure}[t]
		\begin{subfigure}{0.48\textwidth}
			\centering
			
			\begin{tikzpicture}[scale=0.4]
				
				% Ground square
				\draw[very thick] (0,0) rectangle (10,10);
				
				% Coordinate copies
				\node[label A] at (5,-0.65) {\small $S$};
				\node[label A] at (-0.65,5) {\small $T$};
				
				% Corners
				\node[label A,below left] at (0,0) {\small $(0,0)$};
				\node[label A,below right] at (10,0) {\small $(1,0)$};
				\node[label A,above left] at (0,10) {\small $(0,1)$};
				\node[label A,above right] at (10,10) {\small $(1,1)$};
				
				% Boundary of the tilted rectangle
				% Schematic choice alpha = 0.3
				\fill[gray!8]
				(0,7) --
				(7,0) --
				(10,3) --
				(3,10) --
				cycle;
				
				\draw[set boundary]
				(0,7) --
				(7,0) --
				(10,3) --
				(3,10) --
				cycle;
				
				% Vertices
				\node[label A,left] at (0,7) {\small $(0,1-\alpha)$};
				\node[label A,below] at (7,0) {\small $(1-\alpha,0)$};
				\node[label A,right] at (10,3) {\small $(1,\alpha)$};
				\node[label A,above] at (3,10) {\small $(\alpha,1)$};
				
				% A generic point and its two partners
				\coordinate (p) at (8,1);
				\coordinate (ps) at (8,5);
				\coordinate (pt) at (6,1);
				
				% Coordinate fibres
				\draw[gray!85, densely dotted, line width=1pt, ->]
				(p) -- (ps);
				\draw[gray!85, densely dotted, line width=1pt, ->]
				(p) -- (pt);
				
				% Points
				\fill[f1color] (p) circle (0.11);
				\fill[f2color] (ps) circle (0.11);
				\fill[f3color] (pt) circle (0.11);
				
				% Labels
				\node[font=\small, below right] at (8.2,1.5) {$p(t)$};
				\node[font=\small, above right] at (7.1,5.4) {$p(\sigma(t))$};
				\node[font=\small, below left] at (5.8,1.5) {$p(\tau(t))$};
				
				% Map labels
				\node[label A, right] at (8,3.0) {\small $\sigma$};
				\node[label A, below] at (7.0,1) {\small $\tau$};
				
			\end{tikzpicture}
			
			\caption{Dotted vertical and horizontal segments illustrate the involutions $\sigma$ and $\tau$: $p(\sigma(t))$ has the same first coordinate as $p(t)$, while $p(\tau(t))$ has the same second coordinate.}
			\label{fig:laczkovich1}
		\end{subfigure}
		\hfill
		\begin{subfigure}{0.48\textwidth}
			\centering
			
			\begin{tikzpicture}[scale=0.95]
				
				\def\rad{2.35}
				
				% Parameter circle
				\fill[gray!8] (0,0) circle (\rad);
				\draw[set boundary] (0,0) circle (\rad);
				
				% Reflection axes
				% The placement is schematic, corresponding to alpha approximately 0.3.
				\draw[gray!45, densely dotted, line width=0.7pt]
				(180:\rad) -- (0:\rad);
				
				\draw[gray!45, densely dotted, line width=0.7pt]
				(126:\rad) -- (306:\rad);
				
				% Fixed points of the two involutions
				\node[font=\small, right] at (0:\rad) {$0$};
				\node[font=\small, left] at (180:\rad) {$1$};
				
				\node[font=\small, above left] at (126:\rad) {$1-\alpha$};
				\node[font=\small, below right] at (306:\rad) {$2-\alpha$};
				
				% A generic point and its successive images
				\coordinate (t) at (75:\rad);
				\coordinate (st) at (285:\rad);
				\coordinate (tst) at (327:\rad);
				
				\fill[f1color] (t) circle (0.10);
				\fill[f2color] (st) circle (0.10);
				\fill[f3color] (tst) circle (0.10);
				
				\node[font=\small, above right] at (t) {$t$};
				\node[font=\small, below left] at (st) {$\sigma(t)$};
				\node[font=\small, right] at (tst) {$(\tau\circ\sigma)(t)$};
				
				% The two reflections
				\draw[gray!85, densely dotted, line width=1pt, ->]
				(t) -- node[midway, above right, font=\small] {$\sigma$} (st);
				
				\draw[gray!85, densely dotted, line width=1pt, ->]
				(st) -- node[midway, above, font=\small] {$\tau$} (tst);
				
				% Their composition
				\draw[gray!70, line width=0.9pt, ->]
				(t) .. controls (3.05,1.75) and (3.25,-0.65) .. (tst);
				
				\node[font=\small] at (3.45,0.55) {$\tau\circ\sigma$};
				
			\end{tikzpicture}
			
			\caption{Under the parametrization $p\colon\bR/2\bZ\to C$, the involutions $\sigma$ and $\tau$ are reflections, and their composition is the irrational rotation $\tau\circ\sigma(t)=t+2(1-\alpha)$.}
			\label{fig:laczkovich2}
		\end{subfigure}
		\caption{Illustration of Laczkovich's construction.}
		\label{fig:laczkovich}
	\end{figure}
	
	\begin{rem}\label{rem:laczkovich_matching}
		Consider the following example of Laczkovich~\cite{laczkovich1988closed}. Fix an irrational number $\alpha\in(0,1)$, let $R\subseteq[0,1]^2$ be the rectangle with vertices $(0,1-\alpha)$, $(1-\alpha,0)$, $(1,\alpha)$, $(\alpha,1)$, and let $C$ denote its boundary. Let $S$ and $T$ be disjoint copies of $[0,1]$, and join $x\in S$ to $y\in T$ whenever $(x,y)\in C$. Equip both copies with the Lebesgue measure. This defines a bipartite graphing $G$ that is $2$-regular apart from finitely many vertices.
		
		We claim that $G$ has no measurable perfect matching. Let $p\colon\bR/2\bZ\to C$ be the parametrization obtained by traversing $C$ counterclockwise at constant speed $\sqrt{2}$, starting from $p(0)=(0,1-\alpha)$, where $\bR/2\bZ$ is regarded as the circle of length $2$, equipped with its usual Lebesgue measure. A point $p(t)=(x,y)$ represents the edge joining $x\in S$ and $y\in T$. Define $\sigma(t)=-t$ and $\tau(t)=2(1-\alpha)-t$ in $\bR/2\bZ$. For all but finitely many $t$, the points $p(\sigma(t))$ and $p(\tau(t))$ are the other points of $C$ having the same first and second coordinates as $p(t)$, respectively; see \cref{fig:laczkovich1}.
		
		Suppose that $N$ is a measurable perfect matching, and let $A\subseteq\bR/2\bZ$ be the set of parameters corresponding to the edges of $N$. For almost every $t$, exactly one of $p(t)$ and $p(\sigma(t))$ belongs to $N$, and exactly one of $p(t)$ and $p(\tau(t))$ belongs to $N$. Hence, $\sigma(A)$ and $\tau(A)$ both differ from $(\bR/2\bZ)\setminus A$ by a null set. It follows that $A$ is invariant up to a null set under the rotation
		\[
		\tau\circ\sigma(t)=t+2(1-\alpha),
		\]
		see \cref{fig:laczkovich2}. Since $\alpha$ is irrational, every measurable set invariant up to a null set under this rotation has either zero or full measure; see~\cite{laczkovich1988closed}. However, $\sigma(A)$ differs from the complement of $A$ by a null set, and $\sigma$ preserves measure. Thus, $A$ and its complement have the same measure, a contradiction.
		
		As an easy consequence of a result of Elek and Lippner~\cite[Proposition~1.1]{elek2009borel}, for every $\varepsilon>0$ there is a measurable matching covering a subset $X\subseteq S$ such that $\nu(S\setminus X)<\varepsilon$. If $\cT_G$ were $\sigma$-increasing, then \cref{cor:nem_novekvo_unio} would give $S\in\cT_G$. Let $N$ be a measurable matching covering $S$. Since $\nu(S)=\nu(T)$ and the partner map $\tau_N$ is measure-preserving by \cref{lem:graphing_matching_tools}\ref{it:grii}, the matching $N$ also covers $T$. This would give a measurable perfect matching, a contradiction.
	\end{rem}
	
	We therefore define the measurable transversal matroid by taking the upward closure of $\cT_G$:
	\begin{equation}\label{eq:transversal_matroid}
		\overline{\cT}_G
		\coloneqq
		\left\{
		\bigcup_{n=1}^{\infty}X_n
		\,\middle|\,
		(X_n)_{n\in\bN}\text{ is increasing in }\cT_G
		\right\}.
	\end{equation}
	
	\begin{cor}\label{cor:transversal_matroid}
		The system $(S,\Sigma_S,\nu|S,\overline{\cT}_G)$ 
		is a measurable matroid.
	\end{cor}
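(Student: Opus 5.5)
This is a direct combination of two earlier results. By \cref{prop:transversal_strong_prematroid}, the system $(S,\Sigma_S,\nu|S,\cT_G)$ is a measurable pre-matroid; that is, $\cT_G$ satisfies \ref{ax:i1}, \ref{ax:i2} and \ref{ax:i3''} on the standard measure space $(S,\Sigma_S,\nu|S)$.

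The ground space needs a brief check. Since $S$ is a Borel subset of the standard Borel space $J$, the restriction $(S,\Sigma_S)$ is again a standard Borel space. Moreover, $\nu|S$ is finite and atomless because $\nu$ is. So the standing hypotheses of the definition are met.

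We then apply \cref{thm:upward_closure_prematroid} with $\cS\coloneqq\cT_G$. It states that the upward closure $\overline{\cS}$ of any measurable pre-matroid is the independent-set family of a measurable matroid. The family $\overline{\cT}_G$ defined in \eqref{eq:transversal_matroid} is, by definition, exactly this upward closure $\overline{\cS}$. Hence $(S,\Sigma_S,\nu|S,\overline{\cT}_G)$ is a measurable matroid.

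One point in this chain is not formal. \Cref{thm:upward_closure_prematroid} relies on \cref{lem:upward_closure}, which requires closure under countable intersections. For $\cT_G$ this holds by \ref{ax:i2}: a countable intersection of transversal sets is a subset of each of them, hence still covered by the same matching, so it belongs to $\cT_G$. That argument is already contained in the proof of \cref{thm:upward_closure_prematroid}, so nothing beyond the two cited results is needed. In particular, the substantive step, the exchange property \ref{ax:i3''} for $\cT_G$ via alternating paths, is already supplied by \cref{lem:measurable_matching_exchange}.
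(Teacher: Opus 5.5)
Your proposal is correct and is exactly the paper's proof: the corollary follows by combining \cref{prop:transversal_strong_prematroid} with \cref{thm:upward_closure_prematroid}. Your two extra checks are correct but are left implicit in the paper: that $(S,\Sigma_S,\nu|S)$ is a standard measure space, and that a family satisfying \ref{ax:i2} is closed under countable intersections, as \cref{lem:upward_closure} requires.
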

	
	\begin{proof}
		By \cref{prop:transversal_strong_prematroid}, the system $(S,\Sigma_S,\nu|S,\cT_G)$ is a measurable pre-matroid. The result follows from \cref{thm:upward_closure_prematroid}.
	\end{proof}
	
	We call $(S,\Sigma_S,\nu|S,\overline{\cT}_G)$ the \emph{measurable transversal matroid} of the bipartite graphing $G$.
	
	%%%%%%%%%%%%%%%%
	\subsection{Matching Matroids}
	\label{sec:matching}
	%%%%%%%%%%%%%%%%
	
	We next recall the definition of a finite matching matroid. Let $G=(V,E)$ be a finite graph. A set $X\subseteq V$ is called \emph{matchable} if there exists a matching of $G$ covering every vertex of $X$. The family
	\[
	\cI_G
	\coloneqq
	\{X\subseteq V\mid X\text{ is matchable in }G\}
	\]
	is the family of independent sets of a matroid on $V$, called the \emph{matching matroid} of $G$; see~\cite{frank2011connections}. If $G$ is bipartite with bipartition $V=S\cup T$, then the restriction of this matroid to $S$ is the transversal matroid associated with $G$.
	
	We now make the analogous definition for graphings. Let $G=(J,\Sigma_J,\nu,E)$ be a graphing. We say that $X\in\Sigma_J$ is \emph{matchable} if there exists a measurable matching covering $X$. Let
	\[
	\cM_G
	\coloneqq
	\{X\in\Sigma_J\mid X\text{ is matchable in }G\}.
	\]
	
	\begin{prop}\label{prop:matching_prematroid}
		The system $(J,\Sigma_J,\nu,\cM_G)$ is a measurable pre-matroid.
	\end{prop}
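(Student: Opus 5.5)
The plan is to follow the transversal case (\cref{prop:transversal_strong_prematroid}) and reduce everything to \cref{lem:measurable_matching_exchange}. That lemma was stated for an arbitrary graphing $G$, not just a bipartite one. Its proof uses the alternating graph $H$ with edge set $N_1\triangle N_2$, whose components are paths, rays, double rays, or cycles; this structure does not depend on bipartiteness. The measure-preserving property of the partner maps comes from \cref{lem:graphing_matching_tools}\ref{it:grii}, which is also stated for general graphings. So the lemma applies verbatim, and I will use it as the key step.

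Axiom \ref{ax:i1} holds because the empty matching covers $\emptyset$. Axiom \ref{ax:i2} holds because if $N$ covers $X_2$ and $X_1\subseteq X_2$, then $N$ covers $X_1$ as well. For \ref{ax:i3''}, take $X_1,X_2\in\cM_G$ with $\nu(X_1)\le\nu(X_2)$, and choose measurable matchings $N_1,N_2$ covering them. \Cref{lem:measurable_matching_exchange} then gives a measurable matching $N$ covering a Borel set $X$ with $X_1\subseteq X\subseteq X_1\cup X_2$ and $\nu(X)\ge\nu(X_2)$. Hence $X\in\cM_G$ is the required set $G$ in \ref{ax:i3''}.

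The main point to double-check is that nothing in the proof of \cref{lem:measurable_matching_exchange} silently used a bipartition. Three places need checking.
\begin{itemize}
\item \Cref{cla:first}. Its argument is purely local: a vertex of $X_1$ in a switched component does not lie in $B_0$, so it is still covered by $N_2$.
\item The switched set $N$. In a general graph, alternating components may be odd cycles only if edges repeat colours, which cannot happen. Every cycle in $N_1\triangle N_2$ alternates and therefore has even length, so the componentwise switch is still a matching.
\item \Cref{cla:second}. Each component meeting $A_0\setminus W$ is a finite path with one endpoint in $A_0$ and the other in $B_0$. The endpoint map $\theta$ is a finite composition of partner maps on pieces indexed by path length, so it remains a measure-preserving Borel injection.
\end{itemize}
Once these are confirmed, the final measure computation goes through unchanged, and the proposition follows.
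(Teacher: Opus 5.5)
Your proposal is correct and matches the paper's proof, which likewise treats \ref{ax:i1} and \ref{ax:i2} as immediate and obtains \ref{ax:i3''} by applying \cref{lem:measurable_matching_exchange} to measurable matchings covering $X_1$ and $X_2$. Your check that the exchange argument never uses a bipartition is sound but not needed, since the lemma is already stated and proved for an arbitrary graphing.
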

	
	\begin{proof}
		Axioms \ref{ax:i1} and \ref{ax:i2} are immediate. To prove \ref{ax:i3''}, let $X_1,X_2\in\cM_G$ with $\nu(X_1)\leq\nu(X_2)$. Choose measurable matchings covering $X_1$ and $X_2$, respectively. The result follows from \cref{lem:measurable_matching_exchange}.
	\end{proof}
	
	We call $(J,\Sigma_J,\nu,\cM_G)$ provided by \cref{prop:matching_prematroid} the \emph{matching pre-matroid} of $G$. 
	
	\begin{rem}\label{rem:gen_matching}
		As for transversal matroids, the family $\cM_G$ need not be closed under increasing unions. Indeed, consider the graphing $G$ from \cref{rem:laczkovich_matching}. For every $\varepsilon>0$, there is a measurable matching $N$ covering a set $X\subseteq S$ such that $\nu(S\setminus X)<\varepsilon/2$. By replacing $X$ with $V(N)\cap S$, we may assume that $X=V(N)\cap S$. Since $\tau_N$ is measure-preserving and $\nu(S)=\nu(T)$, we have
		\[
		\nu(J\setminus V(N))=2\nu(S\setminus X)<\varepsilon.
		\]
		If $\cM_G$ were $\sigma$-increasing, then \cref{cor:nem_novekvo_unio} would give $J\in\cM_G$, contradicting the fact that $G$ has no measurable perfect matching.
	\end{rem}
	
	We therefore define the measurable matching matroid of $G$ by taking the upward closure
	\[
	\overline{\cM}_G
	\coloneqq
	\left\{
	\bigcup_{i=1}^{\infty}X_i
	\,\middle|\,
	(X_i)_{i\in\bN}\text{ is increasing in }\cM_G
	\right\}.
	\]
	
	\begin{cor}\label{cor:matching_matroid}
		The system $(J,\Sigma_J,\nu,\overline{\cM}_G)$ is a measurable matroid.
	\end{cor}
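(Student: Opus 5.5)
The proof mirrors that of \cref{cor:transversal_matroid}: it is a direct combination of two earlier results. By \cref{prop:matching_prematroid}, the system $(J,\Sigma_J,\nu,\cM_G)$ is a measurable pre-matroid. That is, $\cM_G$ satisfies \ref{ax:i1}, \ref{ax:i2}, and the Steinitz-type augmentation \ref{ax:i3''}, the last of which is supplied by the alternating-path switching of \cref{lem:measurable_matching_exchange}. We then apply \cref{thm:upward_closure_prematroid} with $\cS\coloneqq\cM_G$. It states precisely that the upward closure $\overline{\cS}=\overline{\cM}_G$ is the independent-set family of a measurable matroid on $(J,\Sigma_J,\nu)$.

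The only point to check is that the ambient space meets the standing requirements of the theorem. The space $(J,\Sigma_J,\nu)$ must be a standard atomless finite measure space, which is part of the definition of a graphing in this section. Since every matchable set is Borel by definition, we also have $\cM_G\subseteq\Sigma_J$. Inside \cref{thm:upward_closure_prematroid}, \ref{ax:i4} comes from \cref{lem:upward_closure}, which requires closure under countable intersections. This holds for $\cM_G$ by \ref{ax:i2}: any intersection of sets of $\cM_G$ is a subset of one of them, hence again matchable, witnessed by the same matching. No further work is needed, so there is no real obstacle. The substantive content is the exchange lemma, which has already been proved.
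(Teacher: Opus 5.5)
Your proposal is correct and matches the paper's proof exactly: the paper also combines \cref{prop:matching_prematroid} (whose \ref{ax:i3''} comes from \cref{lem:measurable_matching_exchange}) with \cref{thm:upward_closure_prematroid}. Your extra check that $\cM_G$ is closed under countable intersections, which \cref{lem:upward_closure} needs, is correct; it is the same downward-closedness fact that \cref{thm:upward_closure_prematroid} already uses internally.
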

	
	\begin{proof}
		By \cref{prop:matching_prematroid}, the system $(J,\Sigma_J,\nu,\cM_G)$ is a measurable pre-matroid. The result follows from \cref{thm:upward_closure_prematroid}.
	\end{proof}
	
	We call $(J,\Sigma_J,\nu,\overline{\cM}_G)$ the \emph{measurable matching matroid} of $G$. If $G$ is bipartite with bipartition $J=S\cup T$, then the restrictions of its matching pre-matroid to $S$ and $T$ are precisely the corresponding transversal pre-matroids. The same holds for the measurable matroids obtained by taking upward closures. Indeed, let $X\subseteq S$. If $X=\bigcup_{i=1}^{\infty}X_i$ for an increasing sequence $(X_i)_{i\in\bN}$ in $\cM_G$, then $X_i\subseteq S$ for every $i$, and hence $X_i\in\cT_G$. Conversely, every increasing sequence in $\cT_G$ is also an increasing sequence in $\cM_G$. Thus, the restriction of $\overline{\cM}_G$ to $S$ is $\overline{\cT}_G$, and the same argument applies to $T$.
	
	%%%%%%%%%%%%%%%%%%%%%%%%%%%%%%%%
	\section{Basic Operations}
	\label{sec:operations}
	%%%%%%%%%%%%%%%%%%%%%%%%%%%%%%%%
	
	Operations such as truncation, elongation, direct sum, dualization, minors, and union are important tools in structural and optimization problems for matroids. In this section, we extend these operations to measurable matroids.
	
	%%%%%%%%%%%%%%%%%%%%%%%%%%%%%%%%
	\subsection{Truncation and Elongation}
	\label{sec:truncation}
	%%%%%%%%%%%%%%%%%%%%%%%%%%%%%%%%
	
	Let $M=(J,\Sigma,\mu,\cF)$ be a measurable matroid and let $g\in\bR_{\geq 0}$. Define \begin{equation}\label{eq:truncation} 
		\cF_g\coloneqq \{F\in\cF \mid \mu(F)\leq g\}. 
	\end{equation} 
	We call $M_g=(J,\Sigma,\mu,\cF_g)$ the $g$-\emph{truncation} of $M$. 
	
	\begin{prop}\label{prop:truncation} 
		If $\cF_g$ is defined by \eqref{eq:truncation}, then $M_g=(J,\Sigma,\mu,\cF_g)$ is a measurable matroid with rank function \[ r_g(X)=\min\{r(X),g\}. \] 
	\end{prop}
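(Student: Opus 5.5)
We verify the independence axioms \ref{ax:i1}, \ref{ax:i2}, \ref{ax:i3}, \ref{ax:i4} for $\cF_g$ directly, and compute the rank along the way.

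Axioms \ref{ax:i1} and \ref{ax:i2} are immediate, since $\cF$ satisfies them and $\mu$ is monotone. For \ref{ax:i4}, let $(F_i)_{i\in\bN}$ be an increasing sequence in $\cF_g$ with union $F$. Then $F\in\cF$ by \ref{ax:i4} for $M$, and continuity of measure from below gives $\mu(F)=\lim_{i\to\infty}\mu(F_i)\leq g$. Hence $F\in\cF_g$.

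The main step is \ref{ax:i3} together with the rank formula. Fix $X\in\Sigma$ and let $I$ be a maximal member of $\cF_g|X$, which exists by \cref{lem:maximal}. We claim that $\mu(I)=\min\{r(X),g\}$.

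The inequality $\mu(I)\leq\min\{r(X),g\}$ is clear, since $I\in\cF|X$ and $\mu(I)\leq g$. For the reverse, suppose that $\mu(I)<\min\{r(X),g\}$. Take a maximal $F\in\cF|X$, so that $\mu(F)=r(X)>\mu(I)$. Axiom \ref{ax:i3'}, which holds for $M$ by \cref{prop:i3i3'}, gives $G\in\cF$ with $I\subsetneq G\subseteq I\cup F\subseteq X$. The difference $G\setminus I$ has positive measure. By the exact-subset property of the atomless measure $\mu$, we choose $H\subseteq G\setminus I$ with
\[
0<\mu(H)\leq g-\mu(I).
\]
Then $I\cup H\subseteq G$, so $I\cup H\in\cF$ by \ref{ax:i2}. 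Moreover $\mu(I\cup H)\leq g$ and $I\cup H\subseteq X$. This contradicts the maximality of $I$ in $\cF_g|X$.

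Therefore every maximal member of $\cF_g|X$ has measure $\min\{r(X),g\}$. This proves \ref{ax:i3} and identifies the rank function as $r_g(X)=\min\{r(X),g\}$.

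The only delicate point is this cutting step. Atomlessness is what allows trimming the augmenting set to fit under the bound $g$; in the finite setting one instead adds a single element.
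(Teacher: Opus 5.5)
Your proof is correct and follows essentially the same argument as the paper. Both verify \ref{ax:i1}, \ref{ax:i2} and \ref{ax:i4} directly, then show that a maximal member of $\cF_g|X$ with measure below $\min\{r(X),g\}$ can be augmented inside $X$ and trimmed by atomlessness so that it stays within the bound $g$. The only differences are cosmetic: the paper splits into the cases $r(X)\le g$ and $r(X)>g$ and augments by extending to a maximal member of $\cF|X$, whereas you handle both cases at once via \ref{ax:i3'}; when you invoke the exact-subset property, take $0<\mu(H)\le\min\{\mu(G\setminus I),\,g-\mu(I)\}$ explicitly.
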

	
	\begin{proof}
		Independence axioms \ref{ax:i1} and \ref{ax:i2} clearly hold, while \ref{ax:i4} follows from the continuity of $\mu$ from below. Let $X\subseteq J$ and let $F_1,F_2\in\cF_g|X$ be maximal sets. If $r(X)\leq g$, then every member of $\cF|X$ has measure at most $r(X)\leq g$, and hence belongs to $\cF_g|X$. Therefore, $F_1$ and $F_2$ are also maximal in $\cF|X$, so they both have measure $r(X)$.
		
		Assume now that $r(X)>g$, and let $F\in\cF_g|X$ be maximal. If $\mu(F)<g$, extend $F$ to a maximal member $B$ of $\cF|X$; such a set exists by \cref{lem:maximal}, applied to the family of members of $\cF|X$ containing $F$. Then $\mu(B)=r(X)>g$, so $\mu(B\setminus F)>0$. By the atomlessness of $\mu$, choose $A\subseteq B\setminus F$ with $0<\mu(A)\leq g-\mu(F)$. Since $F\cup A\subseteq B$, we have $F\cup A\in\cF$, and the choice of $A$ gives $\mu(F\cup A)\leq g$. Thus, $F\cup A\in\cF_g|X$, contradicting the maximality of $F$. Hence, every maximal member of $\cF_g|X$ has measure $g$.
		
		Therefore, \ref{ax:i3} holds, and $r_g(X)=\min\{r(X),g\}$.
	\end{proof}
	
	Let $M=(J,\Sigma,\mu,\cF)$ be a measurable matroid and let $g\in\bR_{\geq 0}$. Define 
	\begin{equation}\label{eq:elongation} 
		\cF^g\coloneqq \{X\in\Sigma \mid \text{there exists } Y\subseteq X \text{ with } \mu(Y)\leq g \text{ and } X\setminus Y\in\cF\}. 
	\end{equation} 
	We call $M^g=(J,\Sigma,\mu,\cF^g)$ the $g$-\emph{elongation} of $M$. 
	
	\begin{prop}\label{prop:elongation}  
		If $\cF^g$ is defined by \eqref{eq:elongation}, then $M^g=(J,\Sigma,\mu,\cF^g)$ is a measurable matroid with rank function \[ r^g(X)=\min\{r(X)+g,\mu(X)\}. \] 
	\end{prop}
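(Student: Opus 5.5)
The cleanest route is through the rank axioms of \cref{thm:rank}. Set $\rho(X)\coloneqq\min\{r(X)+g,\mu(X)\}$. We will show that $\rho$ satisfies \ref{ax:r1}--\ref{ax:r5} and that $\{X\mid\rho(X)=\mu(X)\}=\cF^g$. Then \cref{thm:rank} gives at once that $M^g$ is a measurable matroid with rank function $\rho$.

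First we identify the independent sets. Suppose $X\in\cF^g$, witnessed by $Y$. Then $X\setminus Y\in\cF$ gives $r(X)\geq\mu(X\setminus Y)\geq\mu(X)-g$, so $\rho(X)=\mu(X)$. Conversely, suppose $\mu(X)\leq r(X)+g$. Take a maximal $F\in\cF|X$ with $\mu(F)=r(X)$ and put $Y\coloneqq X\setminus F$. Then $\mu(Y)=\mu(X)-r(X)\leq g$ and $X\setminus Y=F\in\cF$, so $X\in\cF^g$.

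Axioms \ref{ax:r1}--\ref{ax:r3} are immediate. For submodularity \ref{ax:r4}, use the standard fact that the minimum of a submodular function and a modular function, both monotone and with $r+g\leq\mu+g$, is submodular. Fix $X_1,X_2$ and split into cases according to which term attains each minimum. If both are attained by the same term, use submodularity of $r+g$ or modularity of $\mu$. Now suppose $\rho(X_1)=r(X_1)+g$ and $\rho(X_2)=\mu(X_2)$. Submodularity of $r$ together with $r\leq\mu$ on differences gives
\[
r(X_1\cup X_2)-r(X_1)\leq r(X_2\setminus X_1)\leq\mu(X_2)-\mu(X_1\cap X_2).
\]
Hence
\[
\rho(X_1\cup X_2)+\rho(X_1\cap X_2)\leq \bigl(r(X_1\cup X_2)+g\bigr)+\mu(X_1\cap X_2)\leq r(X_1)+g+\mu(X_2).
\]

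The main work is \ref{ax:r5}. Let $X\subseteq Y$ with $\rho(X)=\mu(X)$, and write $X=F\cup D$ with $F\in\cF$, $F\cap D=\emptyset$, and $\mu(D)\leq g$.
\begin{itemize}
\item Extend $F$ to a maximal $F'\in\cF|(Y\setminus D)$ with $F\subseteq F'$, using \cref{lem:maximal}.
\item We claim $\mu(F')\geq r(Y)-\mu(D)$. By submodularity, $r(Y)\leq r(Y\setminus D)+r(D)\leq\mu(F')+\mu(D)$.
\item Set $Z\coloneqq F'\cup D\cup A$, where $A\subseteq Y\setminus(F'\cup D)$ is chosen by atomlessness with $\mu(A)=\min\{g-\mu(D),\mu(Y\setminus(F'\cup D))\}$.
\end{itemize}
Then $X\subseteq Z\subseteq Y$ and $Z\in\cF^g$, witnessed by $D\cup A$, so $\rho(Z)=\mu(Z)$. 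It remains to check $\mu(Z)=\rho(Y)$, in two cases.
\begin{itemize}
\item If $A$ exhausts $Y\setminus(F'\cup D)$, then $Z=Y$ and $\mu(Z)=\mu(Y)\geq\rho(Y)$.
\item Otherwise $\mu(Z)=\mu(F')+g\geq r(Y)-\mu(D)+g$.
\end{itemize}
The second bound is not yet enough, and this is the obstacle. The fix is to also show $\mu(F')=r(Y\setminus D)\geq r(Y)-\mu(D)$ and to choose $F$ maximal in $Y$ from the start rather than in $Y\setminus D$. If $\mu(F')<r(Y)$, apply \ref{ax:i3''} to $F'$ and a basis of $Y$ to obtain $G\supseteq F'$ in $\cF|Y$ with $\mu(G)=r(Y)$. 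Then use $G\cup D$, trimming $D\setminus G$ while keeping the defect at most $g$: take $Y'\coloneqq(D\setminus G)\cup A$. This gives $\mu(Z)\geq r(Y)+g$ or $Z=Y$, so in either case $\mu(Z)=\rho(Y)$, since $\rho(Y)\leq\mu(Z)\leq\rho(Y)$ by \ref{ax:r3} and monotonicity.
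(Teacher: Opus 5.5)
Your proof is correct, but it takes a different route from the paper. You verify the rank axioms for $\rho=\min\{r+g,\mu\}$ and invoke \cref{thm:rank}, whereas the paper verifies the independence axioms directly.

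Both arguments hinge on the same identification $\cF^g=\{X\mid \mu(X)-r(X)\le g\}$. From there the paper proceeds as follows:
\begin{itemize}
\item it observes that $q=\mu-r$ is monotone, which gives \ref{ax:i1} and \ref{ax:i2};
\item it obtains \ref{ax:i4} from continuity of $r$ from below (\cref{cor:smooth});
\item it proves \ref{ax:i3} together with the rank formula by showing that a maximal member of $\cF^g|X$ contains a maximal independent subset of $X$, and then padding it by atomlessness.
\end{itemize}

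Your route costs an explicit check of submodularity and of \ref{ax:r5}. The paper gets both for free once $M^g$ is known to be a matroid. In exchange, \ref{ax:i4} comes automatically from \cref{thm:rank}, so you never need continuity of $r$. Your \ref{ax:r5} construction is essentially the paper's padding step in another guise.

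Two small remarks on presentation.

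First, what makes $\min\{r+g,\mu\}$ submodular is the increment bound $r(X_1\cup X_2)-r(X_1)\le\mu(X_2\setminus X_1)$. This follows from submodularity, $r(\emptyset)=0$ and $r\le\mu$; without it, the minimum of a submodular and a modular function need not be submodular. Your mixed case uses exactly this bound, so the case analysis is the proof and the appeal to a ``standard fact'' is unnecessary.

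Second, the \ref{ax:r5} paragraph can drop the false start, the set $F'$, and \ref{ax:i3''} altogether. Extend $F$ directly to a maximal $G\in\cF|Y$ by \cref{lem:maximal}. Then set $Z=G\cup D\cup A$, where $A\subseteq Y\setminus(G\cup D)$ has measure $\min\{g-\mu(D\setminus G),\mu(Y\setminus(G\cup D))\}$, so that $Z\in\cF^g$ is witnessed by $(D\setminus G)\cup A$. Either $Z=Y$, or $\mu(Z)=r(Y)+g$; in both cases $\mu(Z)=\rho(Z)=\rho(Y)$.
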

	
	\begin{proof}
		For $X\in\Sigma$, set $q(X)\coloneqq\mu(X)-r(X)$. We first observe that
		\begin{equation*}
			X\in\cF^g
			\quad\Longleftrightarrow\quad
			q(X)\leq g.
		\end{equation*}
		Indeed, if $X\in\cF^g$, then there exists $Y\subseteq X$ with $\mu(Y)\leq g$ and $X\setminus Y\in\cF$. Hence, $r(X)\geq\mu(X\setminus Y)\geq\mu(X)-g$, and therefore $q(X)\leq g$. Conversely, if $q(X)\leq g$ and $F$ is a maximal independent subset of $X$, then $\mu(X\setminus F)=\mu(X)-r(X)=q(X)\leq g$, so $X\in\cF^g$.
		
		The function $q$ is monotone. Indeed, if $X\subseteq Y$, then submodularity and \ref{ax:r3} give
		\begin{equation*}
			r(Y)\leq r(X)+r(Y\setminus X)\leq r(X)+\mu(Y\setminus X),
		\end{equation*}
		and hence $q(X)\leq q(Y)$. Since $q(\emptyset)=0$, axioms \ref{ax:i1} and \ref{ax:i2} hold for $\cF^g$.
		
		To verify \ref{ax:i4}, let $(X_i)_{i\in\bN}$ be an increasing sequence in $\cF^g$, and set $X\coloneqq\bigcup_{i=1}^{\infty}X_i$. Since $\mu$ is continuous from below and $r$ is continuous from below by \cref{cor:smooth}, we have $q(X)=\lim_{i\to\infty}q(X_i)\leq g$. Hence, $X\in\cF^g$.
		
		It remains to verify \ref{ax:i3} and to determine the rank function. Let $X\in\Sigma$, and let $I$ be a maximal member of $\cF^g|X$. Let $F$ be a maximal independent subset of $I$, and extend $F$ to a maximal independent subset $K$ of $X$. Since $\mu(I\setminus F)=q(I)\leq g$ and $F\subseteq K$, we have $\mu(I\setminus K)\leq g$. Thus, $I\cup K\in\cF^g|X$, since deleting $I\setminus K$ leaves the independent set $K$. By the maximality of $I$, it follows that $K\subseteq I$. Consequently, $\mu(K)=r(X)$ and $\mu(I)\leq r(X)+g$.
		
		Suppose that $\mu(I)<\min\{r(X)+g,\mu(X)\}$. Then both $\mu(X\setminus I)$ and $r(X)+g-\mu(I)$ are positive. By the atomlessness of $\mu$, there exists an $A\subseteq X\setminus I$ such that
		\begin{equation*}
			0<\mu(A)\leq r(X)+g-\mu(I).
		\end{equation*}
		Since $K\subseteq I$, we have
		\begin{equation*}
			\mu\left((I\cup A)\setminus K\right)
			=
			\mu(I)-r(X)+\mu(A)
			\leq g.
		\end{equation*}
		Hence, $I\cup A\in\cF^g|X$, contradicting the maximality of $I$. Therefore, $\mu(I)=\min\{r(X)+g,\mu(X)\}$. Thus, all maximal members of $\cF^g|X$ have the same measure, so \ref{ax:i3} holds, and $r^g(X)=\min\{r(X)+g,\mu(X)\}$.
	\end{proof}
	
	%%%%%%%%%%%%%%%%%%%%%%%%%%%%%%%%
	\subsection{Direct Sums}
	\label{sec:direct}
	%%%%%%%%%%%%%%%%%%%%%%%%%%%%%%%%
	
	Let $(M_i)_{i\in\bN}$ be a sequence of measurable matroids, where $M_i=(J_i,\Sigma_i,\mu_i,\cF_i)$, the ground spaces $J_i$ are pairwise disjoint, and $\sum_{i=1}^{\infty}\mu_i(J_i)<\infty$. Set $J\coloneqq\bigcup_{i=1}^{\infty}J_i$,
	\[
	\Sigma\coloneqq\{A\subseteq J\mid A\cap J_i\in\Sigma_i\text{ for every }i\in\bN\},
	\]
	and define $\mu(A)\coloneqq\sum_{i=1}^{\infty}\mu_i(A\cap J_i)$ for $A\in\Sigma$. Then $(J,\Sigma,\mu)$ is a standard measure space. Finally, define
	\begin{equation}\label{eq:direct_sum}
		\cF\coloneqq\{A\in\Sigma\mid A\cap J_i\in\cF_i\text{ for every }i\in\bN\}.
	\end{equation}
	We call
	\[
	\bigoplus_{i\in\bN}M_i\coloneqq(J,\Sigma,\mu,\cF)
	\]
	the \emph{direct sum} of the matroids $(M_i)_{i\in\bN}$. Finite direct sums are obtained by taking $J_i=\emptyset$ for all but finitely many $i\in\bN$; in particular, the direct sum of two measurable matroids is denoted by $M_1\oplus M_2$.
	
	\begin{prop}\label{prop:direct_sum}\label{prop:countable_direct_sum}
		If $\cF$ is defined by \eqref{eq:direct_sum}, then $\bigoplus_{i\in\bN}M_i=(J,\Sigma,\mu,\cF)$ is a measurable matroid. Its rank function is given by
		\[
		r(A)=\sum_{i=1}^{\infty}r_i(A\cap J_i),
		\]
		where $r_i$ is the rank function of $M_i$.
	\end{prop}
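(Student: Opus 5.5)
I plan to verify the independence axioms \ref{ax:i1}, \ref{ax:i2}, \ref{ax:i4} and \ref{ax:i3} directly for $\cF$, computing the rank along the way. First, $(J,\Sigma,\mu)$ is a standard measure space. The disjoint union of countably many standard Borel spaces is standard Borel; $\mu$ is finite because $\sum_i\mu_i(J_i)<\infty$; and $\mu$ is atomless because each $\mu_i$ is. Countable additivity of $\mu$ follows by interchanging sums of nonnegative terms. Null sets of $\mu$ are exactly the sets whose traces on every $J_i$ are $\mu_i$-null. Hence the identification modulo null sets is compatible with working componentwise.

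Axioms \ref{ax:i1} and \ref{ax:i2} are immediate componentwise from the corresponding axioms for each $\cF_i$. For \ref{ax:i4}, take an increasing sequence $(F_n)_{n\in\bN}$ in $\cF$. For each fixed $i$, the sequence $(F_n\cap J_i)_{n\in\bN}$ is increasing in $\cF_i$. Hence $\bigl(\bigcup_n F_n\bigr)\cap J_i=\bigcup_n (F_n\cap J_i)\in\cF_i$ by \ref{ax:i4} for $M_i$, so $\bigcup_n F_n\in\cF$.

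The main step is \ref{ax:i3}, together with the rank formula. I will show that a set $I$ is maximal in $\cF|A$ if and only if $I\cap J_i$ is maximal in $\cF_i|(A\cap J_i)$ for every $i$. For the forward direction, suppose $I\cap J_i\subsetneq G\in\cF_i|(A\cap J_i)$ for some $i$. Then $I'\coloneqq (I\setminus J_i)\cup G$ lies in $\cF|A$, because its traces on the other components are unchanged. It also strictly contains $I$, since $\mu(I'\setminus I)=\mu_i(G\setminus I)>0$, contradicting maximality. Once this is established, countable additivity gives $\mu(I)=\sum_i\mu_i(I\cap J_i)=\sum_i r_i(A\cap J_i)$ for every maximal $I\in\cF|A$. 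The right-hand side does not depend on $I$, which proves \ref{ax:i3} and the claimed rank formula simultaneously; the series converges because $r_i\le\mu_i$. Existence of maximal members is supplied by \cref{lem:maximal} once \ref{ax:i4} is known.

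I expect no real obstacle. The only point requiring care is the bookkeeping with null sets and the countable index set. Strict containment must be witnessed by positive measure in a single component, and the componentwise characterization of $\mu$-null sets must be used so that equalities in the measure algebra of $(J,\Sigma,\mu)$ correspond to equalities in each $(J_i,\Sigma_i,\mu_i)$.
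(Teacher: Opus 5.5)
Your proposal is correct and follows essentially the same route as the paper: you verify \ref{ax:i1}, \ref{ax:i2}, \ref{ax:i4} componentwise, then show that the trace on each $J_i$ of a maximal member of $\cF|A$ is maximal in $\cF_i|(A\cap J_i)$, and summing gives \ref{ax:i3} and the rank formula at once. Only the forward direction of your announced equivalence is needed, and it is the only one you prove; your explicit check that $(J,\Sigma,\mu)$ is standard fills in what the paper simply asserts.
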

	
	\begin{proof}
		Axioms \ref{ax:i1} and \ref{ax:i2} are immediate. To prove \ref{ax:i4}, let $(A_k)_{k\in\bN}$ be an increasing sequence in $\cF$, and set $A\coloneqq\bigcup_{k=1}^{\infty}A_k$. Then $A\cap J_i=\bigcup_{k=1}^{\infty}(A_k\cap J_i)$ for every $i\in\bN$. Since each $\cF_i$ is $\sigma$-increasing, we get $A\cap J_i\in\cF_i$ for every $i\in\bN$, and hence $A\in\cF$.
		
		It remains to verify \ref{ax:i3}. Let $X\in\Sigma$, and let $A,B$ be maximal members of $\cF|X$. For every $i\in\bN$, the sets $A\cap J_i$ and $B\cap J_i$ are maximal members of $\cF_i|(X\cap J_i)$. Indeed, suppose that $A\cap J_i$ is not maximal. Then there exists $C\in\cF_i|(X\cap J_i)$ such that $A\cap J_i\subseteq C$ and $\mu_i(C\setminus A)>0$. The set $(A\setminus J_i)\cup C$ belongs to $\cF|X$ and contradicts the maximality of $A$. The same argument applies to $B$. Thus, $\mu_i(A\cap J_i)=\mu_i(B\cap J_i)$ for every $i\in\bN$. Summing these equalities gives $\mu(A)=\mu(B)$, proving \ref{ax:i3}.
		
		The rank formula follows from the same argument. If $A$ is a maximal member of $\cF|X$, then $A\cap J_i$ is a maximal member of $\cF_i|(X\cap J_i)$ for every $i\in\bN$. Therefore,
		\[
		r(X)=\mu(A)=\sum_{i=1}^{\infty}r_i(X\cap J_i).\qedhere
		\]
	\end{proof}
	
	%%%%%%%%%%%%%%%%%%%%%%%%%%%%%%%%
	\subsection{Dual Matroid}
	\label{sec:dual}
	%%%%%%%%%%%%%%%%%%%%%%%%%%%%%%%%
	
	Let $M=(J,\Sigma,\mu,\cB)$ be a measurable matroid defined by its bases. Define
	\begin{equation}\label{eq:dual_bases}
		\cB^*\coloneqq\{B\in\Sigma\mid J\setminus B\in\cB\}.
	\end{equation}
	We call $M^*=(J,\Sigma,\mu,\cB^*)$ the \emph{dual} of $M$. Clearly, we have $(M^*)^*=M$.
	
	\begin{prop}\label{prop:dual}
		Let $M=(J,\Sigma,\mu,\cB)$ be a measurable matroid defined by its bases. If $\cB^*$ is defined by \eqref{eq:dual_bases}, then $M^*=(J,\Sigma,\mu,\cB^*)$ is a measurable matroid. Its rank function is given by
		\begin{equation}\label{eq:dual_rank}
			r^*(S)=\mu(S)-r(J)+r(J\setminus S)
		\end{equation}
		for every $S\in\Sigma$.
	\end{prop}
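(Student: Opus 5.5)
The plan is to verify the basis axioms \ref{ax:b1}--\ref{ax:b3} for $\cB^*$ via \cref{thm:basis}, and then compute the rank of $M^*$ directly from the definition of rank as the measure of a maximal independent subset.

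Axiom \ref{ax:b1} holds because $\cB$ is nonempty, so $\cB^*$ is nonempty as well. For \ref{ax:b3}, note that if $B_i\in\cB^*$ and $\mu(B_i\triangle B)\to0$, then the complements satisfy $\mu((J\setminus B_i)\triangle(J\setminus B))=\mu(B_i\triangle B)\to0$. Hence $J\setminus B\in\cB$ by \ref{ax:b3} for $M$, and so $B\in\cB^*$.

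For the exchange axiom \ref{ax:b2}, let $B_1^*,B_2^*\in\cB^*$ with complements $B_1,B_2\in\cB$, and let $X\subseteq B_1^*\setminus B_2^*=B_2\setminus B_1$. We need $Y\subseteq B_2^*\setminus B_1^*=B_1\setminus B_2$ with $\mu(Y)=\mu(X)$ and $(B_1^*\setminus X)\cup Y\in\cB^*$. Passing to complements, this means $(B_1\setminus Y)\cup X\in\cB$. This is exactly the co-exchange axiom \ref{ax:b2'} applied with the roles of $B_1$ and $B_2$ swapped: for $X\subseteq B_2\setminus B_1$ it yields $Y\subseteq B_1\setminus B_2$ with $\mu(Y)=\mu(X)$ and $(B_1\setminus Y)\cup X\in\cB$. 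Since \cref{prop:coexchange} establishes \ref{ax:b2'} for measurable matroids, this step is immediate. So $M^*$ is a measurable matroid by \cref{thm:basis}.

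For the rank formula, fix $S$ and relate independent sets of $M^*$ to $M$: a set $F$ is $M^*$-independent if and only if $J\setminus F$ contains a basis of $M$, that is, $r(J\setminus F)=r(J)$. I would then take a maximal independent set $I$ of $M|(J\setminus S)$, with $\mu(I)=r(J\setminus S)$. Next, extend $I$ to a basis $B$ of $M$ using \cref{lem:maximal} on the $\sigma$-increasing family of independent sets containing $I$; this gives $B\cap(J\setminus S)=I$ by maximality of $I$. Then $F\coloneqq S\setminus B$ lies in $J\setminus B\in\cB^*$, so $F$ is $M^*$-independent, and
\[
\mu(F)=\mu(S)-\mu(B\cap S)=\mu(S)-(r(J)-r(J\setminus S)).
\]
This gives the lower bound $r^*(S)\ge\mu(S)-r(J)+r(J\setminus S)$.

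For the upper bound, take any $M^*$-independent $F\subseteq S$, and let $B$ be a basis with $B\subseteq J\setminus F$. Then $B\cap S\subseteq S\setminus F$, and $\mu(B\cap S)=r(J)-\mu(B\setminus S)\ge r(J)-r(J\setminus S)$. Hence $\mu(S\setminus F)\ge r(J)-r(J\setminus S)$, which gives the reverse inequality. The main subtle point is the extension step $B\cap(J\setminus S)=I$, which relies on maximality of $I$ in $J\setminus S$; the rest is routine bookkeeping in the measure algebra.
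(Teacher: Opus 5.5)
Your proposal is correct and follows essentially the same route as the paper: the exchange axiom for $\cB^*$ comes from the co-exchange axiom of \cref{prop:coexchange} applied to the complementary bases. The rank formula comes from extending a maximal independent subset of $J\setminus S$ to a basis, together with the lower bound $\mu(B\cap S)\geq r(J)-r(J\setminus S)$ valid for every basis $B$. The only difference is cosmetic: you split the rank computation into separate lower and upper bounds, whereas the paper writes $r^*(S)=\mu(S)-\min\{\mu(B\cap S)\mid B\in\cB\}$ and evaluates that minimum.
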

	
	\begin{proof}
		Axioms \ref{ax:b1} and \ref{ax:b3} clearly hold. We now prove \ref{ax:b2}. Let $B^*_1,B^*_2\in\cB^*$ and let $X\subseteq B^*_1\setminus B^*_2$. Define $B_1\coloneqq J\setminus B^*_1$ and $B_2\coloneqq J\setminus B^*_2$. As $B_1,B_2\in\cB$ and $X\subseteq B_2\setminus B_1$, we can apply \ref{ax:b2'} in $M$, which holds by \cref{prop:coexchange}, to obtain a set $Y\subseteq B_1\setminus B_2$ such that $(B_1\setminus Y)\cup X\in\cB$ and $\mu(Y)=\mu(X)$. It follows that $Y\subseteq B^*_2\setminus B^*_1$ and
		\[
		J\setminus\left((B_1\setminus Y)\cup X\right)=(B^*_1\setminus X)\cup Y\in\cB^*,
		\]
		which is exactly \ref{ax:b2}.
		
		It remains to prove the rank formula. Let $S\in\Sigma$. Since the bases of $M^*$ are the complements of the bases of $M$, we have $r^*(S)=\mu(S)-\min\{\mu(B\cap S)\mid B\in\cB\}$. We now show that this minimum is attained and determine its value. Let $I$ be a maximal independent subset of $J\setminus S$, so $\mu(I)=r(J\setminus S)$. Extend $I$ to a basis $B$ of $M$. By the maximality of $I$ in $J\setminus S$, we have $\mu(B\cap(J\setminus S))=\mu(I)=r(J\setminus S)$, and hence $\mu(B\cap S)=r(J)-r(J\setminus S)$. Conversely, for every basis $B$ of $M$, the set $B\cap(J\setminus S)$ is independent in $J\setminus S$, so $\mu(B\cap(J\setminus S))\leq r(J\setminus S)$ and therefore $\mu(B\cap S)\geq r(J)-r(J\setminus S)$. Thus, the minimum is $r(J)-r(J\setminus S)$, proving \eqref{eq:dual_rank}.
	\end{proof}
	
	As an easy consequence, we show that measurable matroids can be characterized using the co-exchange axiom \ref{ax:b2'} instead of the exchange axiom \ref{ax:b2}.
	
	\begin{cor} \label{cor:coexchange}
		The two sets of axioms \{\ref{ax:b1}, \ref{ax:b2}, \ref{ax:b3}\} and \{\ref{ax:b1}, \ref{ax:b2'}, \ref{ax:b3}\} are equivalent.
	\end{cor}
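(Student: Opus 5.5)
One direction is already available. If $\cB$ is the family of bases of a measurable matroid, then \ref{ax:b1} and \ref{ax:b3} hold by \cref{thm:basis}, and \ref{ax:b2'} holds by \cref{prop:coexchange}. So $\{\ref{ax:b1},\ref{ax:b2},\ref{ax:b3}\}$ implies $\{\ref{ax:b1},\ref{ax:b2'},\ref{ax:b3}\}$. The point of that corollary is that \cref{thm:basis} turns the first axiom set into a measurable matroid with $\cB$ as its family of bases.

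For the converse, suppose $\cB\subseteq\Sigma$ satisfies \ref{ax:b1}, \ref{ax:b2'} and \ref{ax:b3}. Set
\[
\cB^*\coloneqq\{B\in\Sigma\mid J\setminus B\in\cB\}.
\]
The plan is to show that $\cB^*$ satisfies $\{\ref{ax:b1},\ref{ax:b2},\ref{ax:b3}\}$.

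\begin{itemize}
\item \ref{ax:b1} for $\cB^*$ is immediate from \ref{ax:b1} for $\cB$.
\item \ref{ax:b3} for $\cB^*$ follows because complementation preserves convergence in measure: $\mu((J\setminus B_n)\triangle(J\setminus B))=\mu(B_n\triangle B)$.
\item \ref{ax:b2} for $\cB^*$ is the computation from the proof of \cref{prop:dual}, read in reverse. Let $B_1^*,B_2^*\in\cB^*$ and $X\subseteq B_1^*\setminus B_2^*$, and put $B_i\coloneqq J\setminus B_i^*\in\cB$. Then $X\subseteq B_2\setminus B_1$. Apply \ref{ax:b2'} to the ordered pair $(B_2,B_1)$ with this $X$. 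This gives $Y\subseteq B_1\setminus B_2=B_2^*\setminus B_1^*$ with $\mu(Y)=\mu(X)$ and $(B_1\setminus Y)\cup X\in\cB$. Its complement is $(B_1^*\setminus X)\cup Y$, which therefore lies in $\cB^*$. This is exactly \ref{ax:b2} for $\cB^*$.
\end{itemize}

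Now \cref{thm:basis} shows that $\cB^*$ is the family of bases of a measurable matroid $M^*$. By \cref{prop:dual}, the dual of $M^*$ is a measurable matroid whose bases are the complements of members of $\cB^*$, namely $\cB$. By \cref{thm:basis} again, these bases satisfy \ref{ax:b2}, which completes the equivalence.

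The only point that needs care is the index bookkeeping in the exchange step. One has to apply \ref{ax:b2'} with the roles of $B_1$ and $B_2$ swapped, and check that the sets $X$ and $Y$ land in the correct differences after complementation. I expect no genuine obstacle here, since everything is a formal consequence of \cref{thm:basis} and \cref{prop:dual}.
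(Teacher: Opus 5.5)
Your proposal is correct and follows the paper's proof: you pass to the complement family, verify \ref{ax:b1}, \ref{ax:b2}, \ref{ax:b3} for it via the computation in the proof of \cref{prop:dual} (applying \ref{ax:b2'} to the swapped pair, as the paper does), invoke \cref{thm:basis}, and return to $\cB$ through \cref{prop:dual}. The only difference is that you write out the complementation bookkeeping, which the paper summarizes as ``as in the proof of \cref{prop:dual}''.
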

	
	\begin{proof}
		We already proved in \cref{prop:coexchange} that \ref{ax:b2'}  holds in measurable matroids. For the other direction, let $M^*=(J, \Sigma, \mu, \cB^*)$ be a system where $\cB^*$ satisfies \ref{ax:b1}, \ref{ax:b2'}  and \ref{ax:b3}. Let $M=(J, \Sigma, \mu, \cB)$ where  
		$$\cB \coloneqq \{ B \in \Sigma \mid J \setminus B \in \cB^*\}.$$
		As in the proof of \cref{prop:dual}, we can see that $\cB$ satisfies \ref{ax:b1}, \ref{ax:b2} and \ref{ax:b3}. Consequently, $M$ is a measurable matroid. Applying \cref{prop:dual}, we obtain that the dual of $M$, which is $M^*$, is also a measurable matroid, hence \ref{ax:b2} holds.
	\end{proof}
	
	%%%%%%%%%%%%%%%%%%%%%%%%%%%%%%%%
	\subsection{Restriction and Contraction}
	\label{sec:contraction}
	%%%%%%%%%%%%%%%%%%%%%%%%%%%%%%%%
	
	Let $M=(J,\Sigma,\mu,\cF)$ be a measurable matroid and let $J' \in \Sigma$. Let $M|J'=(J',\Sigma|J',\mu|J',\cF|J')$ be the \emph{restriction} of $M$ to $J'$, where $\mu|J'$ is the restriction of $\mu$ to $\Sigma|J'$. Clearly, $M|J'$ is a measurable matroid with the same rank function as $M$, restricted to $J'$. If $Z \coloneqq J \setminus J'$, we also say that $M \backslash Z \coloneqq M|J'$ is the \emph{deletion} of $Z$ from $M$.
	
	Now we introduce, in a certain sense, the dual operation. Again, let $M=(J,\Sigma,\mu,r)$ be a measurable matroid and let $J' \in \Sigma$. Let $Z \coloneqq J \setminus J'$ and define $r' \colon \Sigma|J' \to \bR_{\geq 0}$ by
	\[
	r'(X) \coloneqq r(X \cup Z)-r(Z).
	\]
	
	\begin{prop}\label{prop:contraction} 
		$r'$ is the rank function of a measurable matroid.
	\end{prop}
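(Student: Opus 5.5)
We verify the rank axioms \ref{ax:r1}--\ref{ax:r5} for $r'$ on the standard measure space $(J',\Sigma|J',\mu|J')$ and then invoke \cref{thm:rank}. The first four are routine consequences of the corresponding properties of $r$. Axiom \ref{ax:r1} is immediate from $r'(\emptyset)=r(Z)-r(Z)=0$. Axiom \ref{ax:r2} follows from monotonicity of $r$. For \ref{ax:r3}, subadditivity gives $r(X\cup Z)\leq r(Z)+r(X)\leq r(Z)+\mu(X)$. For \ref{ax:r4}, apply submodularity of $r$ to $X\cup Z$ and $Y\cup Z$. Since $X,Y\subseteq J'$, we have $(X\cup Z)\cap(Y\cup Z)=(X\cap Y)\cup Z$, and the union is $(X\cup Y)\cup Z$. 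Subtracting $2r(Z)$ gives the claim.

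The real work is \ref{ax:r5}. As a preliminary, fix a maximal independent set $F_Z\in\cF|Z$, so $\mu(F_Z)=r(Z)$; it exists by \cref{lem:maximal}. Then $r'(X)=\mu(X)$ holds if and only if $X\cup F_Z\in\cF$. Indeed, if $X\cup F_Z\in\cF$, then $r(X\cup Z)\geq\mu(X)+r(Z)$, and \ref{ax:r3} for $r'$ gives equality. Conversely, suppose $r(X\cup Z)=\mu(X)+r(Z)$. Extend $F_Z$ to a maximal independent subset $K$ of $X\cup Z$. Since $F_Z$ is maximal in $Z$, we have $\mu(K\cap Z)=r(Z)$. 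Hence $\mu(K\setminus Z)=\mu(X)$, so $K\setminus Z=X$ up to null sets, and therefore $X\cup F_Z\subseteq K$ is independent. This also shows that the statement does not depend on the choice of $F_Z$.

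Now let $X\subseteq Y\subseteq J'$ with $r'(X)=\mu(X)$, so $X\cup F_Z\in\cF$. Consider the family $\cG=\{G\in\cF\mid X\cup F_Z\subseteq G\subseteq Y\cup F_Z\}$. By \ref{ax:i4} and \cref{lem:maximal}, it has a maximal member $G$. Because $G$ contains $X\cup F_Z$, it is maximal in $\cF|(Y\cup F_Z)$, so $\mu(G)=r(Y\cup F_Z)$. Set $W\coloneqq G\setminus Z=G\cap J'$; then $X\subseteq W\subseteq Y$ and $G=W\cup F_Z$. By the characterization above, $r'(W)=\mu(W)$.

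It remains to show $r'(W)=r'(Y)$. The key identity to check is $r(Y\cup F_Z)=r(Y\cup Z)$. It holds because $Z\subseteq\clo(F_Z)$: closure is monotone by \ref{ax:cl2}, so $Y\cup Z\subseteq\clo(Y\cup F_Z)$. Then
\[
\mu(W)=\mu(G)-r(Z)=r(Y\cup Z)-r(Z)=r'(Y).
\]
The same identity applied to $W$ gives $r'(W)=r(W\cup Z)-r(Z)=\mu(W)$, consistent with the above, so $r'(Y)=r'(W)=\mu(W)$. This is \ref{ax:r5} with witness $Z'=W$.

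The main obstacle I anticipate is the preliminary characterization of $r'$-independence via $F_Z$. This requires care with null sets: we must identify $K\setminus Z$ with $X$, and we must make the closure step $r(Y\cup F_Z)=r(Y\cup Z)$ precise. Once these are settled, \cref{thm:rank} yields a measurable matroid on $J'$ with rank function $r'$, the contraction $M/Z$.
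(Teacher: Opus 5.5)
Your proof is correct and follows essentially the paper's route: \ref{ax:r1}--\ref{ax:r4} are routine, and \ref{ax:r5} is obtained by taking an independent set made of $X$ together with a maximal independent subset of $Z$, extending it to a maximal independent subset of $Y\cup Z$, and taking its trace $W$ on $J'$. The only difference is bookkeeping. The paper produces the initial set by applying \ref{ax:r5} for $r$ to $X\subseteq X\cup Z$, so it never needs your characterization of $r'$-independence via a fixed $F_Z$; that characterization is instead proved afterwards as \cref{prop:contraction_char}. The paper then checks directly that the extended maximal independent subset of $Y\cup Z$ still meets $Z$ in measure $r(Z)$, rather than restricting to $Y\cup F_Z$ and using $r(Y\cup F_Z)=r(Y\cup Z)$.
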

	
	\begin{proof}
		We verify the rank axioms for $r'$. Clearly, $r'(\emptyset)=r(Z)-r(Z)=0$, so \ref{ax:r1} holds. Monotonicity \ref{ax:r2} follows immediately from the monotonicity of $r$. Furthermore, for every $X\in\Sigma|J'$, we have
		\[
		r'(X)=r(X\cup Z)-r(Z)\leq r(X)\leq\mu(X),
		\]
		where the first inequality follows from submodularity applied to $X$ and $Z$. Thus, \ref{ax:r3} holds.
		
		To prove submodularity, let $X_1,X_2\in\Sigma|J'$. Then
		\[
		\begin{aligned}
			r'(X_1)+r'(X_2)
			&=r(X_1\cup Z)+r(X_2\cup Z)-2r(Z)\\
			&\geq r((X_1\cap X_2)\cup Z)+r((X_1\cup X_2)\cup Z)-2r(Z)\\
			&=r'(X_1\cap X_2)+r'(X_1\cup X_2),
		\end{aligned}
		\]
		where we used the submodularity of $r$ for the sets $X_1\cup Z$ and $X_2\cup Z$. Hence, \ref{ax:r4} holds.
		
		It remains to verify \ref{ax:r5}. Let $X\subseteq Y\subseteq J'$ with $r'(X)=\mu(X)$. Since \[
		r'(X)=r(X\cup Z)-r(Z)\leq r(X)\leq\mu(X),
		\]
		we have $r(X)=\mu(X)$. Applying \ref{ax:r5} to $X\subseteq X\cup Z$, we obtain a set $X'$ such that $X\subseteq X'\subseteq X\cup Z$ and $r(X')=\mu(X')=r(X\cup Z)$. Applying \ref{ax:r5} once more, now to $X'\subseteq Y\cup Z$, we obtain a set $X''$ such that $X'\subseteq X''\subseteq Y\cup Z$ and $r(X'')=\mu(X'')=r(Y\cup Z)$.
		
		We claim that $\mu(X''\cap Z)=r(Z)$. First,
		\[
		\mu(X'\cap Z)=\mu(X')-\mu(X)=r(X\cup Z)-r'(X)=r(Z).
		\]
		Since $X''$ is independent, the set $X''\cap Z$ is independent as well, and hence $\mu(X''\cap Z)\leq r(Z)$. On the other hand, $X'\cap Z\subseteq X''\cap Z$, so $\mu(X''\cap Z)\geq\mu(X'\cap Z)=r(Z)$. Therefore, $\mu(X''\cap Z)=r(Z)$.
		
		Set $W\coloneqq X''\cap J'$. Then $X\subseteq W\subseteq Y$. Moreover,
		\[
		\mu(W)=\mu(X'')-\mu(X''\cap Z)=r(Y\cup Z)-r(Z)=r'(Y).
		\]
		Finally, since $X''\subseteq W\cup Z\subseteq Y\cup Z$ and $r(X'')=r(Y\cup Z)$, we have $r(W\cup Z)=r(Y\cup Z)$. Consequently,
		\[
		r'(W)=r(W\cup Z)-r(Z)=r(Y\cup Z)-r(Z)=r'(Y)=\mu(W).
		\]
		Thus, $X\subseteq W\subseteq Y$ and $r'(W)=\mu(W)=r'(Y)$, proving \ref{ax:r5}. Hence, $r'$ satisfies the rank axioms.
	\end{proof}
	
	Hence, $M' \coloneqq (J',\Sigma|J',\mu|J',r')$ is a measurable matroid. We say that the matroid $M'$ is obtained from $M$ by the \emph{contraction} of $Z$, and we will use the notation $M/Z$ for it. Denote the family of independent sets of $M'$ by $\cF'$. We prove the following characterization of independent sets; both the statement and the proof are analogous to the discrete case.
	
	\begin{prop} \label{prop:contraction_char}
		Let $M=(J,\Sigma,\mu,\cF)$ be a measurable matroid, let $J'\in\Sigma$, and set $Z\coloneqq J\setminus J'$. Then, for every $F\in\Sigma|J'$, the following are equivalent.
		\begin{enumerate}[label=(\roman*)]\itemsep0em
			\item $F$ is independent in $M/Z$. \label{cont:i}
			\item For every maximal set $I\in\cF|Z$, we have $I\cup F\in\cF$. \label{cont:ii}
			\item There exists a maximal set $I\in\cF|Z$ such that $I\cup F\in\cF$. \label{cont:iii}
		\end{enumerate}
	\end{prop}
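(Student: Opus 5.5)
We will prove the cycle of implications \ref{cont:i}$\Rightarrow$\ref{cont:ii}$\Rightarrow$\ref{cont:iii}$\Rightarrow$\ref{cont:i}. Throughout we use the characterization \eqref{ind:r} of independence via rank: $F\in\cF'$ if and only if $r'(F)=\mu(F)$, that is, $r(F\cup Z)-r(Z)=\mu(F)$. Maximal members of $\cF|Z$ exist by \cref{lem:maximal}, and each has measure $r(Z)$ by \ref{ax:i3}. Hence \ref{cont:ii}$\Rightarrow$\ref{cont:iii} is immediate.

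For \ref{cont:iii}$\Rightarrow$\ref{cont:i}, let $I$ be maximal in $\cF|Z$ with $I\cup F\in\cF$. Since $F\subseteq J'$ and $I\subseteq Z$ are disjoint, we get
\[
r(F\cup Z)\geq r(I\cup F)=\mu(I)+\mu(F)=r(Z)+\mu(F).
\]
Therefore $r'(F)\geq\mu(F)$. Combined with \ref{ax:r3} for $r'$, which was established in \cref{prop:contraction}, this gives $r'(F)=\mu(F)$, so $F\in\cF'$.

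For \ref{cont:i}$\Rightarrow$\ref{cont:ii}, let $F\in\cF'$ and let $I$ be an arbitrary maximal member of $\cF|Z$. The plan is to show that $I$ extends to a maximal independent subset of $I\cup F$ of the full measure $\mu(I)+\mu(F)$.
\begin{enumerate}[label=(\arabic*), itemsep=0em]
\item By \cref{lem:maximal}, applied to the $\sigma$-increasing family of independent sets $G$ with $I\subseteq G\subseteq I\cup F$, choose a maximal such $G$.
\item Show that $G$ is maximal in $\cF|(I\cup F)$, so that $\mu(G)=r(I\cup F)$.
\item Show that $I$ spans $Z$: since $r(I)=\mu(I)=r(Z)$, we have $Z\subseteq\clo(I)$.
\item Deduce $r(I\cup F)=r(Z\cup F)$. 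This follows from submodularity, since $r(I\cup F)+r(Z)\geq r(Z\cup F)+r(I)$ after applying submodularity to the sets $I\cup F$ and $Z$, whose intersection is $I$ and whose union is $Z\cup F$. Together with monotonicity, this yields equality.
\item Conclude that $r(I\cup F)=r(Z)+\mu(F)=\mu(I)+\mu(F)=\mu(I\cup F)$. By \eqref{ind:r}, this means $I\cup F\in\cF$, which is \ref{cont:ii}.
\end{enumerate}

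The main obstacle is step (4): correctly applying submodularity together with the disjointness of $F$ and $Z$ modulo null sets. Everything else is routine bookkeeping with \ref{ax:i3}, \eqref{ind:r}, and the rank axioms.
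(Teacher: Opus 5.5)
Your proof is correct. The directions \ref{cont:ii}$\Rightarrow$\ref{cont:iii} and \ref{cont:iii}$\Rightarrow$\ref{cont:i} coincide with the paper's.

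For \ref{cont:i}$\Rightarrow$\ref{cont:ii} the mechanism differs slightly. The paper extends $I$ to a maximal independent subset $H$ of $F\cup Z$. Maximality of $I$ in $\cF|Z$ gives $H\cap Z=I$, so $H\subseteq I\cup F$. Then $\mu(H)=r(F\cup Z)=r(Z)+\mu(F)=\mu(I\cup F)$ forces $H=I\cup F$. You instead get $r(I\cup F)=r(Z\cup F)$ directly from submodularity on $I\cup F$ and $Z$ together with $r(I)=r(Z)$, and then invoke \eqref{ind:r}.

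Your version is purely rank-theoretic and does not need \cref{lem:maximal} in this direction. The paper's is a set-level maximality argument; both are equally short.

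Your steps (1) and (2), which build the auxiliary set $G$, are never used in your conclusion and can be dropped. Step (3) only restates the equality $r(I)=r(Z)$, which is what step (4) actually uses.
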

	
	\begin{proof}\mbox{}
		\medskip
		
		\noindent \ref{cont:i}$\Rightarrow$\ref{cont:ii}. Let $I$ be a maximal set in $\cF|Z$. Extend $I$ to a maximal independent subset $H$ of $F\cup Z$. Since $H\cap Z$ is an independent subset of $Z$ containing $I$, the maximality of $I$ implies $H\cap Z=I$. Hence, $H=I\cup F_0$ for some $F_0\subseteq F$. By \ref{cont:i}, the set $F$ is independent in $M/Z$, so $r(F\cup Z)-r(Z)=\mu(F)$. Since $I$ is maximal in $\cF|Z$, we have $\mu(I)=r(Z)$. As $H$ is maximal in $\cF|(F\cup Z)$, it follows that
		$
		\mu(H)=r(F\cup Z)=\mu(F)+r(Z)=\mu(F)+\mu(I)=\mu(F\cup I).
		$
		But $H\subseteq F\cup I$, hence $H=F\cup I$. Therefore, $F\cup I\in\cF$, proving \ref{cont:ii}.
		\medskip
		
		\noindent \ref{cont:ii}$\Rightarrow$\ref{cont:iii}. The implication is immediate, since $\cF|Z$ contains a maximal set by \cref{lem:maximal}.
		\medskip
		
		\noindent \ref{cont:iii}$\Rightarrow$\ref{cont:i}. Let $I$ be a maximal set in $\cF|Z$ such that $I\cup F\in\cF$. Then $r(Z)=\mu(I)$ and $r(F\cup Z)\geq r(F\cup I)=\mu(F\cup I)$. Therefore
		$
		r'(F)=r(F\cup Z)-r(Z)\geq \mu(F\cup I)-\mu(I)=\mu(F).
		$
		Since $r'(F)\leq \mu(F)$ by \ref{ax:r3}, equality holds. Thus, $r'(F)=\mu(F)$, so $F$ is independent in $M/Z$. This proves \ref{cont:i}.
	\end{proof}
	
	We are now ready to define \emph{minors}. Minors play a central role in finite matroid theory, particularly in structural results and excluded-minor characterizations; see, e.g., \cite{oxley2011matroid}. Although we introduce the basic minor operations here, their study is left as a future direction. 
	
	As in finite matroid theory, a minor is obtained by deletions and contractions. In the measurable setting, it is more convenient to contract and delete measurable subsets at once. The following proposition shows that successive operations of the same type can be combined and that deletion and contraction commute.
	
	\begin{prop}\label{prop:delete_contract_commute}
		Let $M=(J,\Sigma,\mu,r)$ be a measurable matroid.
		\begin{enumerate}[label=(\alph*)]\itemsep0em
			\item If $D_1,D_2\in\Sigma$ are disjoint, then $(M\backslash D_1)\backslash D_2=M\backslash(D_1\cup D_2)$. \label{minor:a}
			\item If $C_1,C_2\in\Sigma$ are disjoint, then $(M/C_1)/C_2=M/(C_1\cup C_2)$. \label{minor:b}
			\item If $C,D\in\Sigma$ are disjoint, then $(M/C)\backslash D=(M\backslash D)/C$. \label{minor:c}
		\end{enumerate}
	\end{prop}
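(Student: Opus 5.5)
The plan is to compare rank functions. By \cref{thm:rank}, a measurable matroid is determined by its rank function, so in each part it suffices to show that the two matroids share a ground space and have equal rank functions. The ground spaces are immediate: in \ref{minor:a} and \ref{minor:b} both sides live on $J\setminus(D_1\cup D_2)$, respectively $J\setminus(C_1\cup C_2)$. In \ref{minor:c} both sides live on $J\setminus(C\cup D)$. The restricted $\sigma$-algebras and measures agree trivially, so only the ranks need checking.

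For \ref{minor:a}, deletion does not change the rank function; it only restricts its domain. Hence both $(M\backslash D_1)\backslash D_2$ and $M\backslash(D_1\cup D_2)$ have rank function $r$ restricted to subsets of $J\setminus(D_1\cup D_2)$. Alternatively, both have independent-set family $\cF|(J\setminus(D_1\cup D_2))$.

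For \ref{minor:b}, write $r_1(X)=r(X\cup C_1)-r(C_1)$ for the rank function of $M/C_1$ on $J\setminus C_1$. Then for $X\subseteq J\setminus(C_1\cup C_2)$ the rank in $(M/C_1)/C_2$ telescopes:
\[
r_1(X\cup C_2)-r_1(C_2)=r(X\cup C_2\cup C_1)-r(C_1)-r(C_2\cup C_1)+r(C_1)=r(X\cup C_1\cup C_2)-r(C_1\cup C_2).
\]
The right-hand side is exactly the rank of $X$ in $M/(C_1\cup C_2)$. This step uses the disjointness of $C_1$ and $C_2$ only to ensure that $C_2\subseteq J\setminus C_1$ is a legitimate set to contract in $M/C_1$.

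For \ref{minor:c}, take $X\subseteq J\setminus(C\cup D)$. Its rank in $(M/C)\backslash D$ is the restriction of the contraction rank, namely $r(X\cup C)-r(C)$. In $M\backslash D$ the rank function is $r$ restricted to subsets of $J\setminus D$. Since $C\subseteq J\setminus D$ by disjointness, contracting $C$ there gives $r(X\cup C)-r(C)$, the same value.

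There is no real obstacle. The only point requiring care is bookkeeping: every set fed to a rank function must lie in the correct ground space, which the disjointness hypotheses guarantee. The uniqueness of the matroid with a given rank function, supplied by \cref{thm:rank}, then upgrades equality of ranks to equality of matroids.
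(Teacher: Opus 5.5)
Your proposal is correct and follows the paper's proof essentially verbatim: part (a) is immediate from the definition of deletion, part (b) is the same telescoping computation of contraction ranks, and part (c) observes that both sides assign rank $r(X\cup C)-r(C)$ on the common ground space $J\setminus(C\cup D)$. Your explicit appeal to \cref{thm:rank} to pass from equal rank functions to equal matroids is the step the paper leaves implicit.
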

	
	\begin{proof}
		Part \ref{minor:a} follows immediately from the definition of deletion.
		
		To prove \ref{minor:b}, let $J'\coloneqq J\setminus(C_1\cup C_2)$ and let $X\in\Sigma|J'$. The rank of $X$ in $(M/C_1)/C_2$ is
		\[
		\left(r(X\cup C_1\cup C_2)-r(C_1)\right)
		-\left(r(C_1\cup C_2)-r(C_1)\right)=r(X\cup C_1\cup C_2)-r(C_1\cup C_2),
		\]
		which is its rank in $M/(C_1\cup C_2)$.
		
		Finally, let $C,D\in\Sigma$ be disjoint and set $J'\coloneqq J\setminus(C\cup D)$. Both matroids in \ref{minor:c} are defined on the measure space $(J',\Sigma|J',\mu|J')$. For every $X\in\Sigma|J'$, its rank in either matroid is $r(X\cup C)-r(C)$. Thus, their rank functions coincide.
	\end{proof}
	
	By \cref{prop:delete_contract_commute}, every finite sequence of deletions and contractions can be replaced by one contraction and one deletion. We call
	\[
	(M/C)\backslash D=(M\backslash D)/C
	\]
	a \emph{minor} of $M$, where $C,D\in\Sigma$ are disjoint. The same conclusion extends to countable sequences in the following sense.
	
	\begin{cor}\label{cor:countable_delete_contract}
		Let $(C_i)_{i\in\bN}$ and $(D_i)_{i\in\bN}$ be sequences of subsets of $J$ such that all the sets $C_i$ and $D_i$ are pairwise disjoint. Set $C\coloneqq\bigcup_{i\in\bN}C_i$ and $D\coloneqq\bigcup_{i\in\bN}D_i$. For every $k\in\bN$, let $M_k$ be the matroid obtained from $M$ by contracting $C_1,\dots,C_k$ and deleting $D_1,\dots,D_k$, in any order, and let $r_k$ denote its rank function. Then, for every $X\in\Sigma|(J\setminus(C\cup D))$,
		\[
		\lim_{k\to\infty}r_k(X)
		=
		r_{(M/C)\backslash D}(X).
		\]
	\end{cor}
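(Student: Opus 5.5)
By \cref{prop:delete_contract_commute}, the order of the operations is irrelevant, and $M_k=(M/C^{(k)})\backslash D^{(k)}$ with $C^{(k)}\coloneqq C_1\cup\dots\cup C_k$ and $D^{(k)}\coloneqq D_1\cup\dots\cup D_k$. Deletion does not change the rank of sets lying in the remaining ground set. Hence, for every $X\in\Sigma|(J\setminus(C\cup D))$, we can write the rank explicitly as
\[
r_k(X)=r(X\cup C^{(k)})-r(C^{(k)}),
\]
and similarly
\[
r_{(M/C)\backslash D}(X)=r(X\cup C)-r(C).
\]
The deleted sets $D_i$ therefore play no role, and the statement reduces to a continuity statement about $r$ along the increasing sequence $C^{(k)}$. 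One bookkeeping point needs attention: $X\cup C^{(k)}$ must lie in the ground set of $M/C^{(k)}$, and indeed $X$ is disjoint from $C$, so this holds.

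The main step is to use \cref{cor:smooth}: the rank function $r$ is continuous from below. The sequences $(C^{(k)})_{k\in\bN}$ and $(X\cup C^{(k)})_{k\in\bN}$ are increasing, with unions $C$ and $X\cup C$, respectively. Therefore $r(C^{(k)})\to r(C)$ and $r(X\cup C^{(k)})\to r(X\cup C)$. Subtracting these two limits gives the claimed convergence.

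I do not expect any substantial obstacle. The only care needed is to check that the finite iterates really are given by the combined formula. This is parts \ref{minor:b} and \ref{minor:c} of \cref{prop:delete_contract_commute}, applied by induction on $k$, and it uses that all the sets involved are pairwise disjoint. Continuity from below, which is the analytic heart of the argument, is already available from \cref{lem:smooth}.
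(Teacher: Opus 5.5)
Your proposal is correct and follows the paper's proof: both use \cref{prop:delete_contract_commute} to write $M_k=(M/C^{(k)})\backslash D^{(k)}$, so that $r_k(X)=r(X\cup C^{(k)})-r(C^{(k)})$. Both then pass to the limit using continuity from below of $r$ (\cref{cor:smooth}) along the increasing sequences $C^{(k)}$ and $X\cup C^{(k)}$.
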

	
	\begin{proof}
		Set $C^{(k)}\coloneqq\bigcup_{i=1}^k C_i$ and $D^{(k)}\coloneqq\bigcup_{i=1}^k D_i$. By \cref{prop:delete_contract_commute}, we have $M_k=(M/C^{(k)})\setminus D^{(k)}$, independently of the order of the operations. Hence, for every $X\in\Sigma|(J\setminus(C\cup D))$,
		\[
		r_k(X)=r(X\cup C^{(k)})-r(C^{(k)}).
		\]
		Since $(C^{(k)})_{k\in\bN}$ increases to $C$ and $r$ is continuous from below by \cref{cor:smooth}, we have
		\[
		\lim_{k\to\infty}r_k(X)
		=
		r(X\cup C)-r(C)
		=
		r_{(M/C)\backslash D}(X).
		\qedhere
		\]
	\end{proof}
	
	Having introduced duality, deletion, and contraction, we settle the standard relations between these operations. In the finite case, truncation is dual to elongation, while deletion is dual to contraction. The next proposition shows that the same statements hold for measurable matroids.
	
	\begin{prop}\label{prop:dual_operations}
		Let $M=(J,\Sigma,\mu,\cF)$ be a measurable matroid with rank function $r$, and let $r^*$ denote the rank function of $M^*$.
		\begin{enumerate}[label=(\roman*)]\itemsep0em
			\item For every $g\in\bR_{\geq 0}$ with $g\leq r(J)$, we have $(M^*)^g=(M_{r(J)-g})^*$. For every $g\in\bR_{\geq 0}$ with $g\leq r^*(J)$, we have $(M^*)_g=(M^{r^*(J)-g})^*$.\label{dual:i}
			\item For every $Z\in\Sigma$, we have $(M\backslash Z)^*=M^*/Z$ and $(M/Z)^*=M^*\backslash Z$.\label{dual:ii}
		\end{enumerate}
	\end{prop}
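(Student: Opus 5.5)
Since a measurable matroid is determined by its rank function (\cref{thm:rank}), I will prove each identity by comparing rank functions. The inputs are the formulas already established: $r_g=\min\{r,g\}$ (\cref{prop:truncation}), $r^g=\min\{r+g,\mu\}$ (\cref{prop:elongation}), $r^*(S)=\mu(S)-r(J)+r(J\setminus S)$ (\cref{prop:dual}), $r'(X)=r(X\cup Z)-r(Z)$ for contraction, and the fact that restriction simply restricts $r$.

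For \ref{dual:i}, fix $0\le g\le r(J)$ and put $h\coloneqq r(J)-g$. The truncation $M_h$ has rank function $\min\{r,h\}$ and total rank $h$. Its dual therefore has rank
\[
\mu(S)-h+\min\{r(J\setminus S),h\}=\min\{\mu(S)-h+r(J\setminus S),\,\mu(S)\}.
\]
On the other side, $(M^*)^g$ has rank
\[
\min\{r^*(S)+g,\mu(S)\}=\min\{\mu(S)-r(J)+r(J\setminus S)+g,\,\mu(S)\}.
\]
Since $-r(J)+g=-h$, these two expressions coincide. For the second identity, I apply the first one to $M^*$ in place of $M$, using $(M^*)^*=M$ and $r^*(J)=\mu(J)-r(J)$. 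This gives $((M^*)^*)^{g'}=((M^*)_{r^*(J)-g'})^*$, that is, $M^{g'}=((M^*)_{r^*(J)-g'})^*$. Dualizing both sides and setting $g=r^*(J)-g'$ yields $(M^*)_g=(M^{r^*(J)-g})^*$. The range condition $0\le g\le r^*(J)$ is equivalent to $0\le g'\le r^*(J)$, so the first identity applies. The hypothesis $g\le r(J)$ is needed precisely so that the truncation $M_h$ is defined with $h\ge0$.

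For \ref{dual:ii}, set $J'=J\setminus Z$ and take $X\subseteq J'$. The restriction $M\backslash Z$ has total rank $r(J')$, so its dual has rank
\[
\mu(X)-r(J')+r(J'\setminus X).
\]
The contraction $M^*/Z$ has rank $r^*(X\cup Z)-r^*(Z)$. Expanding with the dual rank formula gives
\[
\mu(X\cup Z)-r(J)+r(J\setminus(X\cup Z))-\mu(Z)+r(J)-r(J\setminus Z)=\mu(X)+r(J'\setminus X)-r(J'),
\]
which is the same expression. The second identity, $(M/Z)^*=M^*\backslash Z$, follows from the first. Apply the first identity to $M^*$ to get $(M^*\backslash Z)^*=M/Z$, and then dualize both sides using that duality is an involution.

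I do not expect a serious obstacle; the computations are all algebraic. The points needing care are bookkeeping ones. First, the dual rank formula must be applied within the correct ground space $J'$ rather than $J$. Second, one has to confirm that duality is an involution (immediate from \eqref{eq:dual_bases}). Third, equality of rank functions determines the matroid by \cref{thm:rank} through \eqref{ind:r}.
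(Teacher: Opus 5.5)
Your proposal is correct and follows the paper's proof essentially step for step. For each part you compare rank functions using the truncation, elongation, dual, and contraction rank formulas, and you obtain the second identity by applying the first to $M^*$ and using $(M^*)^*=M$. Your explicit dualization step and the range bookkeeping for $g'=r^*(J)-g$ just spell out what the paper leaves implicit.
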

	
	\begin{proof}
		We first prove \ref{dual:i}. Let $r_1$ and $r_2$ denote the rank functions of $(M^*)^g$ and $(M_{r(J)-g})^*$, respectively. For $S\in\Sigma$, we have $r_1(S)=\min\{r^*(S)+g,\mu(S)\}=\min\{\mu(S)-r(J)+r(J\setminus S)+g,\mu(S)\}$. On the other hand, $r_2(S)=\mu(S)-(r(J)-g)+\min\{r(J\setminus S),r(J)-g\}$. The last expression is equal to $\min\{\mu(S)-r(J)+g+r(J\setminus S),\mu(S)\}$. Hence, $r_1(S)=r_2(S)$ for every $S\in\Sigma$, and therefore $(M^*)^g=(M_{r(J)-g})^*$. Applying this equality to the dual matroid $M^*$, and using $(M^*)^*=M$, gives $(M^*)_g=(M^{r^*(J)-g})^*$ for every $g\leq r^*(J)$.
		
		We now prove \ref{dual:ii}. Let $Z\in\Sigma$, set $J'\coloneqq J\setminus Z$, and let $r_1$ and $r_2$ denote the rank functions of $(M\backslash Z)^*$ and $M^*/Z$, respectively. For $S\in\Sigma|J'$, the dual rank formula gives $r_1(S)=\mu(S)-r(J')+r(J'\setminus S)$. Since the rank function of $M^*$ is $r^*(A)=\mu(A)-r(J)+r(J\setminus A)$, the contraction rank formula gives $r_2(S)=r^*(S\cup Z)-r^*(Z)$. Expanding this expression gives $r_2(S)=\mu(S)-r(J')+r(J'\setminus S)$, which is exactly $r_1(S)$. Thus, $(M\backslash Z)^*=M^*/Z$. The statement $(M/Z)^*=M^*\backslash Z$ follows by applying the previous equality to $M^*$ in place of $M$ and using $(M^*)^*=M$. This proves the proposition.
	\end{proof}
	
	%%%%%%%%%%%%%%%%%%%%%%%%%%%%%%%%
	\subsection{Matroid Union}
	\label{sec:union}
	%%%%%%%%%%%%%%%%%%%%%%%%%%%%%%%%
	
	Given $k$ measurable matroids $M_i = (J,\Sigma,\mu,\cF_i)$ on a common ground space, a natural analogue of the \emph{union} or \emph{sum} operation from finite matroid theory is to define a family $\cF$ by declaring a set $F \in \Sigma$ to be independent if it can be written as $F = F_1 \cup \dots \cup F_k$ with $F_i \in \cF_i$ for each $i \in [k]$. In the finite setting, this construction yields a matroid, and Edmonds and Fulkerson~\cite{edmonds1965transversals} showed that the corresponding rank function is given by $r_\vee(Z) = \min\big\{\sum_{i=1}^k r_i(X) + |Z \setminus X| \mid X \subseteq Z\big\}$, together with an algorithm to find maximum independent sets and their decompositions. In contrast, as we show below, this natural definition does not in general yield a measurable matroid, even in the case $k=2$, see \cref{rem:unionnotm}.
	
	Let $(J,\Sigma,\mu)$ be a standard measure space, and let $\cS_1,\cS_2\subseteq \Sigma$. We define
	$$
	\cS_1+\cS_2
	\coloneqq
	\{S\in\Sigma \mid S=S_1\cup S_2 \text{ for some } S_i\in\cS_i,\ i=1,2\}.
	$$
	More generally, for a finite or countable collection $(\cS_i)_{i\in I}$ of families of measurable sets, we write
	$$
	\sum_{i\in I}\cS_i
	\coloneqq
	\left\{
	S\in\Sigma \,\middle|\,
	S=\bigcup_{i\in I} S_i
	\text{ for some } S_i\in\cS_i \text{ for all } i\in I
	\right\}.
	$$
	
	\begin{thm} \label{thm:union}
		Let $M_1=(J, \Sigma, \mu, \cF_1)$ and $M_2=(J, \Sigma, \mu, \cF_2)$ be measurable matroids.
		Then $M=(J, \Sigma, \mu, \cF_1 + \cF_2)$ is a measurable pre-matroid.
	\end{thm}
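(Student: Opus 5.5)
The plan is to verify \ref{ax:i1}, \ref{ax:i2} and \ref{ax:i3''} for $\cF_1+\cF_2$ directly. Axiom \ref{ax:i1} is trivial. For \ref{ax:i2}, take $F\subseteq F_1\cup F_2$ with $F_i\in\cF_i$. Then $F=(F\cap F_1)\cup(F\cap F_2)$, and each piece lies in $\cF_i$ by \ref{ax:i2} for $M_i$.

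For \ref{ax:i3''}, let $A=A_1\cup A_2$ and $B=B_1\cup B_2$ with $A_i,B_i\in\cF_i$ and $\mu(A)\leq\mu(B)$. Replacing $A_2$ by $A_2\setminus A_1$ and $B_2$ by $B_2\setminus B_1$, we may assume both decompositions are disjoint. Call a pair $(G_1,G_2)$ \emph{admissible} if $G_i\in\cF_i$, the $G_i$ are disjoint, and $A\subseteq G_1\cup G_2\subseteq A\cup B$. The goal is an admissible pair with $\mu(G_1\cup G_2)\geq\mu(B)$.

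First, an augmentation lemma. If $(G_1,G_2)$ is admissible and $\mu(G_1\cup G_2)<\mu(B)$, then there is an admissible $(G_1',G_2')$ with $G_1\cup G_2\subsetneq G_1'\cup G_2'$. I would imitate the finite proof of matroid union through closures rather than circuits, as in the closure formulation of \cref{thm:closure}. Since $\mu(B\setminus G)>0$ for $G\coloneqq G_1\cup G_2$, if a positive part of $B\setminus G$ is outside $\clo_1(G_1)$ or outside $\clo_2(G_2)$, then \ref{ax:cl5} adds it directly. Otherwise $B\setminus G\subseteq\clo_1(G_1)\cap\clo_2(G_2)$. In that case I would build a measurable alternating chain. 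Use \ref{ax:b2'}-type exchanges inside the restriction of $M_1$ to $\clo_1(G_1)$ to swap a piece $X$ of $B_1\setminus G$ into $G_1$ against a piece $Y$ of $G_1$ of equal measure. Then push $Y$ into $G_2$, using \ref{ax:i3''} in $M_2$ against $B_2$, and so on. The counting
\[
\mu(B_1)+\mu(B_2)=\mu(B)>\mu(G)=\mu(G_1)+\mu(G_2)
\]
should force some stage to terminate with a genuine gain of positive measure.

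Second, I would iterate with quantitative control. I would strengthen the lemma so that the gain is a fixed fraction of a suitable deficiency, for instance of the part of $B\setminus G$ not spanned in the appropriate sense. The aim is to produce admissible pairs $(G_1^n,G_2^n)$ with the following properties: $\sum_n\mu(G_i^n\triangle G_i^{n+1})<\infty$ for $i=1,2$; $G_1^n\cup G_2^n$ is increasing; and $\mu(G_1^n\cup G_2^n)\to\mu(B)$, or the process stops once $\mu(B)$ is reached. Then \cref{lem:erosebb_i4}, applied in each $M_i$, shows that $H_i\coloneqq\lim_n G_i^n$ exists and lies in $\cF_i$. Since $G^n_1\cup G^n_2$ is increasing, $H_1\cup H_2$ is its union, so $A\subseteq H_1\cup H_2\subseteq A\cup B$ and $\mu(H_1\cup H_2)\geq\mu(B)$. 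This $H_1\cup H_2$ is the required $G$.

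The main obstacle is this second step, namely making the augmentation quantitative while keeping the individual pieces $G_i^n$ convergent. The union can grow monotonically while the representing pieces oscillate, so there is no naive Zorn argument on pairs; this is exactly why the sum family need not be $\sigma$-increasing (\cref{rem:unionnotm}). My first attempt would be to perform every exchange via the co-exchange axiom \ref{ax:b2'} applied to bases of the restrictions of $M_i$ to $A\cup B$. With this choice, the change of $G_i$ at step $n$ is bounded by a constant times the measure gained at step $n$. Summability of the symmetric differences then follows from $\mu(G^n)\leq\mu(A\cup B)<\infty$.
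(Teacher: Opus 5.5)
Your verification of \ref{ax:i1} and \ref{ax:i2} is correct. So is your final limiting step, \emph{provided} you already have admissible pairs with summable symmetric differences whose unions increase to measure $\mu(B)$. The gap is that both ingredients producing such pairs are only asserted.

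Your first step, the alternating chain inside $\clo_1(G_1)\cap\clo_2(G_2)$, is a sketch. Nothing you say ensures that the chain closes after finitely many exchanges with a positive net gain. For intersection, the analogous statement (\cref{lem:short_augmentation}) needs a full layered-closure construction, and it terminates only because each layer gains at least the current gap.

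More seriously, your second step is exactly the crux and is unsupported. You need each augmentation to change $G_i$ by at most a \emph{fixed} constant times the gain, and each gain to be a fixed fraction of the deficiency. The only general mechanism available, \cref{lem:short_augmentation}, bounds the change by $(2n+1)\eta$ with $n\le\mu(J)/\delta$. So the cost grows like $1/\delta$ as you approach the target, $\sum_i2^{-i}C(R2^{-i})$ diverges, and \cref{lem:summable_attainment} yields only approximate conclusions. This is not a technicality: \cref{rem:intersection_sup_not_attained} shows that augmentation plus Borel--Cantelli cannot give exact attainment in general. Yet \ref{ax:i3''} demands an exact $G$ with $\mu(G)\geq\mu(B)$. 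Saying that co-exchange in the restrictions to $A\cup B$ gives a uniform constant does not control the length of the chain, and the constant depends on that length.

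The paper needs no quantitative estimate; it uses a monotone construction. Take disjoint decompositions $S=A\cup B$ and $S'=A'\cup B'$, and swap the matroids if necessary so that $\mu(A)\leq\mu(A')$. The paper then maintains pairs $(A_i,B_i)$ with $A_i\in\cF_1$, $B_i\in\cF_2$, $\mu(A_i)=\mu(A')$, $\mu(B_i)=\mu(B')$, and $S\subseteq A_i\cup B_i\subseteq S\cup S'$. Crucially, these pairs \emph{may overlap}, and overlaps can occur only in $P=A\cap B'$ and $Q=A'\cap B$. The construction alternates two moves:
\begin{itemize}
\item remove the current overlap in $P$ from the first part, then restore its measure to $\mu(A')$ by \ref{ax:i3''} against $A'$;
\item remove the current overlap in $Q$ from the second part, then restore its measure against $B'$.
\end{itemize}
Since $P\cap A'=Q\cap B'=\emptyset$, the sets $A_i\cap P$ and $B_i\cap Q$ decrease while $A_i\setminus P$ and $B_i\setminus Q$ increase. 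So either the union reaches measure $\mu(S')$ at some finite stage, or the monotone limits $A_\infty$ and $B_\infty$ exist. In the latter case they lie in $\cF_1$ and $\cF_2$ by \ref{ax:i2} and \ref{ax:i4}, are disjoint, have measures $\mu(A')$ and $\mu(B')$, and cover $S$.

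The idea you are missing is to keep both parts at full measure and let only their overlap shrink monotonically. Keeping the parts disjoint and trying to grow their union forces you into augmenting chains with uncontrolled cost.
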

	
	\begin{proof}
		Let $\cF \coloneqq \cF_1 + \cF_2$. We verify \ref{ax:i1}, \ref{ax:i2}, and \ref{ax:i3''}. Axiom \ref{ax:i1} is immediate. To prove \ref{ax:i2}, let $F\in\cF$ and let $F'\subseteq F$. Choose $A\in\cF_1$ and $B\in\cF_2$ such that $F=A\cup B$. Then $F'=(A\cap F')\cup(B\cap F')$, and $A\cap F'\in\cF_1$, $B\cap F'\in\cF_2$ by \ref{ax:i2} for $M_1$ and $M_2$. Hence, $F'\in\cF$.
		
		It remains to prove \ref{ax:i3''}. Let $S,S'\in\cF$ with $\mu(S)\leq \mu(S')$. If $\mu(S)=\mu(S')$, then $S$ itself is a suitable augmentation. We may therefore assume that $\mu(S)<\mu(S')$. Choose representations $S=A\cup B$ and $S'=A'\cup B'$, where $A,A'\in\cF_1$ and $B,B'\in\cF_2$. After replacing $B$ with $B\setminus A$ and $B'$ with $B'\setminus A'$, we may assume that $A\cap B=\emptyset$ and $A'\cap B'=\emptyset$. Set $a'\coloneqq\mu(A')$, $b'\coloneqq\mu(B')$, and $R\coloneqq\mu(S')=a'+b'$. Since $\mu(S)<\mu(S')$, after possibly interchanging the two matroids, we may assume that $\mu(A)\leq a'$.
		
		We construct alternating augmentations. Put $A_0\coloneqq A$ and $B_0\coloneqq B$, and define $P\coloneqq A\cap B'$ and $Q\coloneqq A'\cap B$. The idea is to modify $A_i$ and $B_i$ while keeping their union nondecreasing, until the first and second parts have measures $a'$ and $b'$, respectively, and no overlap remains. Since $A_i\subseteq A\cup A'$ and $B_i\subseteq B\cup B'$, any overlap can occur only in $P\cup Q$. We therefore alternate between the two regions: in the $A$-step, we remove the current overlap in $P$ from the first part and augment towards $A'$ to restore its measure to $a'$; in the $B$-step, we do the analogous operation in $Q$, augmenting towards $B'$ to restore the second part to measure $b'$. If at any point the union has measure at least $R$, we are done. Otherwise, the parts in $P$ and $Q$ evolve monotonically, which allows us to pass to disjoint limits of measures $a'$ and $b'$ whose union still contains $S$. Note that $P\cap Q=P\cap A'=Q\cap B'=\emptyset$ and $(A\cup A')\cap(B\cup B')=P\cup Q$. At each completed stage $i\geq 1$ at which the construction has not terminated, we maintain $A_i\in\cF_1$, $B_i\in\cF_2$, $\mu(A_i)=a'$, and $\mu(B_i)=b'$, as well as
		\[
		A_i\subseteq A\cup A',\qquad B_i\subseteq B\cup B',\qquad S\subseteq A_i\cup B_i\subseteq S\cup S',\qquad \mu(A_i\cup B_i)<R.
		\]
		The construction will also ensure the monotonicity relations used below.
		
		Suppose that $A_{i-1}$ and $B_{i-1}$ have already been constructed. We first define $L_i\coloneqq A_{i-1}\setminus(B_{i-1}\cap P)$. Then $L_i\subseteq A_{i-1}$, hence $L_i\in\cF_1$ and $\mu(L_i)\leq a'$ for $i\geq2$, while for $i=1$ this follows from $\mu(A)\leq a'$. Moreover,
		\[
		L_i\subseteq(A_{i-1}\setminus B_{i-1})\cup A',
		\]
		since any point of $A_{i-1}\cap B_{i-1}$ which is not in $P$ must lie in $Q\subseteq A'$. Applying \ref{ax:i3''} in $M_1$ to $L_i$ and $A'$, and then using atomlessness and downward closure to trim if necessary, we obtain $A_i\in\cF_1$ such that
		\[
		L_i\subseteq A_i\subseteq L_i\cup A',\qquad \mu(A_i)=a'.
		\]
		In particular, $A_i\subseteq A\cup A'$, and, since $P\cap A'=\emptyset$, we have
		\[
		A_i\cap P\subseteq(A_{i-1}\cap P)\setminus B_{i-1},\qquad A_{i-1}\setminus P\subseteq A_i\setminus P.
		\]
		Thus, $A_i\cup B_{i-1}$ contains $A_{i-1}\cup B_{i-1}$. If $\mu(A_i\cup B_{i-1})\geq R$, then $G\coloneqq A_i\cup B_{i-1}$ belongs to $\cF$, satisfies $S\subseteq G\subseteq S\cup S'$, and has $\mu(G)\geq\mu(S')$, so we are done.
		
		Suppose otherwise that $\mu(A_i\cup B_{i-1})<R$. Since $A_i\subseteq A\cup A'$ and $B_{i-1}\subseteq B\cup B'$, we have $A_i\cap B_{i-1}\subseteq P\cup Q$. The first monotonicity relation above gives $A_i\cap B_{i-1}\cap P=\emptyset$, and hence $A_i\cap B_{i-1}\subseteq Q$. Therefore, if we define $H_i\coloneqq B_{i-1}\setminus(A_i\cap Q)$, then $H_i=B_{i-1}\setminus A_i$. In particular,
		\[
		\mu(H_i)=\mu(A_i\cup B_{i-1})-\mu(A_i)<R-a'=b'.
		\]
		Applying \ref{ax:i3''} in $M_2$ to $H_i$ and $B'$, and trimming if necessary, we obtain $B_i\in\cF_2$ such that
		\[
		H_i\subseteq B_i\subseteq H_i\cup B',\qquad \mu(B_i)=b'.
		\]
		In particular, $B_i\subseteq B\cup B'$, and, since $Q\cap B'=\emptyset$, we have
		\[
		B_i\cap Q\subseteq(B_{i-1}\cap Q)\setminus A_i,\qquad B_{i-1}\setminus Q\subseteq B_i\setminus Q.
		\]
		Moreover, $A_i\cup B_i$ contains $A_i\cup B_{i-1}$. Hence, if $\mu(A_i\cup B_i)\geq R$, then $G\coloneqq A_i\cup B_i$ has the required properties. Otherwise, the stated invariants hold at the end of stage $i$, and the construction continues.
		
		We may therefore assume that the construction never stops. Then we obtain sequences $(A_i)_{i\geq0}$ in $\cF_1$ and $(B_i)_{i\geq0}$ in $\cF_2$ such that $\mu(A_i)=a'$, $\mu(B_i)=b'$, and $\mu(A_i\cup B_i)<R$ for every $i\geq1$. Moreover, $A_i\subseteq A\cup A'$ and $B_i\subseteq B\cup B'$ for all $i$. Since $A\cap B=\emptyset$ and $A'\cap B'=\emptyset$, possible overlaps between the two sequences can occur only in $P\cup Q$.    
		
		We now pass to the limit. The relations established during the construction give that $(A_i\cap P)_{i\geq0}$ and $(B_i\cap Q)_{i\geq0}$ are decreasing, while $(A_i\setminus P)_{i\geq0}$ and $(B_i\setminus Q)_{i\geq0}$ are increasing. Define
		\[
		A_\infty\coloneqq\left(\bigcap_{i=0}^{\infty}(A_i\cap P)\right)\cup\left(\bigcup_{i=0}^{\infty}(A_i\setminus P)\right)
		\]
		and
		\[
		B_\infty\coloneqq\left(\bigcap_{i=0}^{\infty}(B_i\cap Q)\right)\cup\left(\bigcup_{i=0}^{\infty}(B_i\setminus Q)\right).
		\]
		For every $n\geq0$, the set $(\bigcap_{i=0}^{\infty}(A_i\cap P))\cup(A_n\setminus P)$ is a subset of $A_n$ and hence belongs to $\cF_1$. These sets form an increasing sequence with union $A_\infty$, so \ref{ax:i4} gives $A_\infty\in\cF_1$. The same argument gives $B_\infty\in\cF_2$. By continuity of measure along the monotone pieces,
		\[
		\mu(A_\infty)=\lim_{i\to\infty}\mu(A_i)=a',\qquad \mu(B_\infty)=\lim_{i\to\infty}\mu(B_i)=b'.
		\]
		
		We claim that $A_\infty$ and $B_\infty$ are disjoint. Since $A_i\subseteq A\cup A'$ and $B_i\subseteq B\cup B'$ for every $i$, their possible intersections are contained in $P\cup Q$. On $P$, the preceding inclusions give
		\[
		A_\infty\cap B_\infty\cap P=\left(\bigcap_{i=0}^{\infty}(A_i\cap P)\right)\cap\left(\bigcup_{i=0}^{\infty}(B_i\cap P)\right)\subseteq\bigcup_{i=0}^{\infty}(A_{i+1}\cap B_i\cap P)=\emptyset.
		\]
		Indeed, any point in the intersection on the left belongs to $B_i\cap P$ for some $i$ and to $A_{i+1}\cap P$, while the first monotonicity relation implies $A_{i+1}\cap B_i\cap P=\emptyset$. Similarly, on $Q$ we have
		\[
		A_\infty\cap B_\infty\cap Q=\left(\bigcup_{i=0}^{\infty}(A_i\cap Q)\right)\cap\left(\bigcap_{i=0}^{\infty}(B_i\cap Q)\right)\subseteq\bigcup_{i=0}^{\infty}(A_i\cap B_i\cap Q)=\emptyset.
		\]
		Here $A_0\cap B_0=\emptyset$, and for $i\geq1$ the second monotonicity relation implies $A_i\cap B_i\cap Q=\emptyset$. Thus, $A_\infty\cap B_\infty=\emptyset$.
		
		It remains to see that $S\subseteq A_\infty\cup B_\infty$. We have $A\setminus P\subseteq A_i\setminus P$ and $B\setminus Q\subseteq B_i\setminus Q$ for every $i$, hence $A\setminus P\subseteq A_\infty$ and $B\setminus Q\subseteq B_\infty$. On $P$, we have $P\subseteq(A_i\cap P)\cup(B_i\cap P)$ for every $i$. Therefore,
		\[
		P\setminus\bigcup_{i=0}^{\infty}(B_i\cap P)\subseteq\bigcap_{i=0}^{\infty}(A_i\cap P),
		\]
		which gives $P\subseteq A_\infty\cup B_\infty$.
		The same argument on $Q$ gives
		\[
		Q\setminus\bigcup_{i=0}^{\infty}(A_i\cap Q)\subseteq\bigcap_{i=0}^{\infty}(B_i\cap Q),
		\]
		and hence $Q\subseteq A_\infty\cup B_\infty$. Therefore, $A\cup B\subseteq A_\infty\cup B_\infty$.
		
		Finally, $A_\infty\subseteq A\cup A'$ and $B_\infty\subseteq B\cup B'$, so $S\subseteq A_\infty\cup B_\infty\subseteq S\cup S'$. Since $A_\infty$ and $B_\infty$ are disjoint and have measures $a'$ and $b'$, respectively, we have $\mu(A_\infty\cup B_\infty)=a'+b'=R=\mu(S')$. Thus, $G\coloneqq A_\infty\cup B_\infty$ belongs to $\cF$ and satisfies
		\[
		S\subseteq G\subseteq S\cup S',\qquad \mu(G)=\mu(S').
		\]
		This proves \ref{ax:i3''}, and hence $M=(J,\Sigma,\mu,\cF)$ is a measurable pre-matroid.
	\end{proof}
	
	\begin{rem}\label{rem:unionnotm}
		The measurable pre-matroid $M$ in \Cref{thm:union} need not itself be a measurable matroid. To see this, consider the interval-halving matroid $N=([0,1], \Sigma, \lambda, \cF)$ introduced in \Cref{sec:nested}, and define $\cF'$ by declaring that $F' \in \cF'$ if and only if $F' = F_1 \cup F_2$ for some $F_1, F_2 \in \cF$. We show that $[0,1] \notin \cF'$, while there exists an increasing sequence $(F'_i)_{i\in\bN}$ in $\cF'$ with $\lim_{i\to\infty}\lambda(F'_i)=1$. This implies that \ref{ax:i4} fails for $\cF'$.
		
		For $i\in\bN$, let $F_1^{i}\coloneqq\bigcup_{j=1}^i [\frac{2j-1}{2i+1},\frac{2j}{2i+1}]$ and $F_2^{i}\coloneqq\bigcup_{j=1}^i [\frac{2j}{2i+1},\frac{2j+1}{2i+1}]$, and set $F'_i\coloneqq F_1^{i}\cup F_2^{i}=[\frac{1}{2i+1},1]$. Then $F_1^{i},F_2^{i}\in\cF$, so $F'_i\in\cF'$, and the sequence $(F'_i)_{i\in\bN}$ is increasing with $\lim_{i\to\infty}\lambda(F'_i)=1$; see \cref{fig:halving2}.
		
		Finally, suppose for contradiction that $F_1\cup F_2=[0,1]$ for some $F_1,F_2\in\cF$. For every $t\in[0,1]$, independence gives $\lambda(F_i\cap[0,t])\leq t/2$ for $i=1,2$. Since $F_1\cup F_2=[0,1]$, both inequalities must be equalities. This contradicts \cref{lem:halving_measure}.
	\end{rem}
	
	We define the \emph{union} of two measurable matroids $M_1=(J,\Sigma,\mu,\cF_1)$ and $M_2=(J,\Sigma,\mu,\cF_2)$ by $M_1\vee M_2\coloneqq(J,\Sigma,\mu,\overline{\cF_1+\cF_2})$. By \cref{thm:union,thm:upward_closure_prematroid}, this is a measurable matroid. We next show that the union operation is associative for finite unions.
	
	\begin{prop}\label{prop:finite_union_associativity}
		Let $M_i=(J,\Sigma,\mu,\cF_i)$ be measurable matroids for $i\in[k]$. Every iterated binary union of $M_1,\dots,M_k$, with any parenthesization, has independent-set family $\overline{\cF_1+\dots+\cF_k}$. In particular, the result is independent of the parenthesization.
	\end{prop}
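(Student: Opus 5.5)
We argue by induction on the structure of the parenthesization. The key lemma to establish first is: for families $\cS,\cT\subseteq\Sigma$ that are downward closed (satisfy \ref{ax:i2}), we have
\[
\overline{\overline{\cS}+\overline{\cT}}=\overline{\cS+\cT}.
\]
Granting this, suppose an iterated union splits at the top level as $N_1\vee N_2$, where $N_1$ is an iterated union of $M_1,\dots,M_j$ and $N_2$ of $M_{j+1},\dots,M_k$. By induction, their independent-set families are $\overline{\cS}$ and $\overline{\cT}$ with $\cS=\cF_1+\dots+\cF_j$ and $\cT=\cF_{j+1}+\dots+\cF_k$. Both of these are downward closed, by the same argument as in \ref{ax:i2} in the proof of \cref{thm:union}. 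The family of $N_1\vee N_2$ is $\overline{\overline{\cS}+\overline{\cT}}$, so the lemma together with $\cS+\cT=\cF_1+\dots+\cF_k$ gives the claim. The base case $k=1$ is $\overline{\cF_1}=\cF_1$ by \ref{ax:i4}. (If one prefers to relabel, order is irrelevant anyway since $+$ is commutative and associative on families.)

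For the lemma, the inclusion $\supseteq$ is immediate. We have $\cS+\cT\subseteq\overline{\cS}+\overline{\cT}$, and the upward closure is monotone.

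For $\subseteq$, first take $I=S\cup T$ with $S\in\overline{\cS}$ and $T\in\overline{\cT}$, written as $S=\bigcup S_n$ and $T=\bigcup T_n$ for increasing sequences. Then $S_n\cup T_n\in\cS+\cT$ increases to $I$, so $I\in\overline{\cS+\cT}$. This shows $\overline{\cS}+\overline{\cT}\subseteq\overline{\cS+\cT}$. Hence
\[
\overline{\overline{\cS}+\overline{\cT}}\subseteq\overline{\overline{\cS+\cT}},
\]
and it remains to show that $\overline{\cS+\cT}$ is $\sigma$-increasing. This is the main obstacle, since by \cref{rem:upward_closure} upward closures need not be $\sigma$-increasing in general.

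I would handle this by an approximation criterion rather than \cref{lem:upward_closure}, because $\cS+\cT$ need not be closed under intersections. For a downward-closed family $\cX$, a set $I$ lies in $\overline{\cX}$ if and only if for every $\varepsilon>0$ there is $X\in\cX$ with $X\subseteq I$ and $\mu(I\setminus X)<\varepsilon$. This is exactly what \cref{lem:upward_closure_approximation} gives for measurable pre-matroids, and $\cS+\cT$ qualifies. Indeed, $\cS+\cT=\cF_1+\dots+\cF_k$ is a pre-matroid: iterating \cref{thm:union} shows that a finite sum of pre-matroid families satisfying \ref{ax:i3''} again satisfies it, because \ref{ax:i4} is only used there to pass to limits, and those limits can be taken in the upward closures, which are matroids. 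Alternatively, one can directly invoke \cref{thm:upward_closure_prematroid} applied to the pre-matroid $(\cF_1+\cF_2)+\cdots$.

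With the criterion in hand, $\sigma$-increasingness is a diagonal argument. If $I=\bigcup I_n$ with $I_n\in\overline{\cS+\cT}$ increasing, pick $n$ with $\mu(I\setminus I_n)<\varepsilon/2$. Then pick $X\in\cS+\cT$ with $X\subseteq I_n$ and $\mu(I_n\setminus X)<\varepsilon/2$. This gives $\mu(I\setminus X)<\varepsilon$, so $I\in\overline{\cS+\cT}$.
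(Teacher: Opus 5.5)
Your overall structure matches the paper's. You reduce to $\overline{\overline{\cS}+\overline{\cT}}=\overline{\cS+\cT}$, get $\supseteq$ by monotonicity, get $\overline{\cS}+\overline{\cT}\subseteq\overline{\cS+\cT}$ via $S_n\cup T_n$, and conclude once $\overline{\cS+\cT}$ is $\sigma$-increasing. The gap is in that last step. Your reason for avoiding \cref{lem:upward_closure}, namely that $\cS+\cT$ ``need not be closed under intersections'', is false. You have already noted that $\cS+\cT$ is downward closed. A downward-closed family is automatically closed under countable intersections, since $\bigcap_i X_i\subseteq X_1$. So \cref{lem:upward_closure} applies directly and gives $\sigma$-increasingness at once; this is exactly how the paper argues.

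Instead, you route through the claim that $\cF_1+\dots+\cF_k$ is a measurable pre-matroid, and your argument does not establish this. \Cref{thm:union} needs both summands to satisfy \ref{ax:i4}, because it uses it to place the limits $A_\infty$ and $B_\infty$ in $\cF_1$ and $\cF_2$. If one summand is only the pre-matroid $\cF_1+\cF_2$, the corresponding limit lies only in its upward closure. The augmented set then lies in $\overline{\cF_1+\cF_2}+\cF_3$, not in the raw sum, so you do not get \ref{ax:i3''} for $\cF_1+\cF_2+\cF_3$. The paper in fact remarks that the raw sum of two measurable pre-matroids need not be a pre-matroid. It also says that the pre-matroid property of $\cF_1+\dots+\cF_k$ for $k\geq 3$ does not follow from the preceding results and needs a separate, more technical argument. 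Your alternative route via \cref{thm:upward_closure_prematroid} presupposes the same fact.

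The repair is easy. The approximation criterion you want holds for every downward-closed family $\cX$, with no exchange property needed. Given $X_n\in\cX$ with $X_n\subseteq I$ and $\mu(I\setminus X_n)<2^{-n}$, the sets $W_m\coloneqq\bigcap_{n\geq m}X_n$ lie in $\cX$, increase, and exhaust $I$ up to a null set. This is just the proof of \cref{lem:upward_closure} again.
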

	
	\begin{proof}
		For every nonempty set $A\subseteq[k]$, write $\cS_A\coloneqq\sum_{i\in A}\cF_i$. Each family $\cS_A$ is downward closed and therefore closed under countable intersections. Hence, $\overline{\cS_A}$ is $\sigma$-increasing by \cref{lem:upward_closure}.
		
		\begin{cla}\label{cla:finite_union_closure}
			For disjoint nonempty sets $A,B\subseteq[k]$,
			\[
			\overline{\overline{\cS_A}+\overline{\cS_B}}=\overline{\cS_{A\cup B}}.
			\]
		\end{cla}
		
		\begin{claimproof}
			Since $\cS_{A\cup B}=\cS_A+\cS_B\subseteq\overline{\cS_A}+\overline{\cS_B}$, taking upward closures gives
			\[
			\overline{\cS_{A\cup B}}\subseteq\overline{\overline{\cS_A}+\overline{\cS_B}}.
			\]
			For the reverse inclusion, let $S=X\cup Y$, where $X\in\overline{\cS_A}$ and $Y\in\overline{\cS_B}$. Choose increasing sequences $(X_n)_{n\in\bN}$ in $\cS_A$ and $(Y_n)_{n\in\bN}$ in $\cS_B$ such that $X=\bigcup_{n=1}^\infty X_n$ and $Y=\bigcup_{n=1}^\infty Y_n$. Then $(X_n\cup Y_n)_{n\in\bN}$ is an increasing sequence in $\cS_{A\cup B}$ with union $S$, and hence $S\in\overline{\cS_{A\cup B}}$. Thus,
			\[
			\overline{\cS_A}+\overline{\cS_B}\subseteq\overline{\cS_{A\cup B}}.
			\]
			Taking upward closures and using that $\overline{\cS_{A\cup B}}$ is $\sigma$-increasing proves the claim.
		\end{claimproof}
		
		We now prove by induction on $|A|$ that every iterated binary union of the matroids $M_i$, for $i\in A$, has independent-set family $\overline{\cS_A}$. If $A=\{i\}$, then $\overline{\cS_A}=\overline{\cF_i}=\cF_i$ by \ref{ax:i4}. For the induction step, consider a parenthesization whose outermost operation separates the indices into disjoint nonempty sets $B$ and $C$ with $A=B\cup C$. By induction, the two matroids being unioned have independent-set families $\overline{\cS_B}$ and $\overline{\cS_C}$. Hence, their binary union has independent-set family
		\[
		\overline{\overline{\cS_B}+\overline{\cS_C}}=\overline{\cS_A}
		\]
		by \cref{cla:finite_union_closure}. This completes the induction.
	\end{proof}
	
	In view of \cref{prop:finite_union_associativity}, we denote the resulting measurable matroid by $M_1\vee\dots\vee M_k$.
	
	\begin{rem}
		The preceding results do not by themselves imply that the raw sum
		$
		\cF_1+\cdots+\cF_k
		$
		of finitely many measurable matroids is a measurable pre-matroid for $k \geq 3$. This statement is nevertheless true, and can be proved by extending the argument of \cref{thm:union}. However, since the resulting proof is even more technical and the statement will not be used later, we omit the details. 
		Interestingly, the analogous statement fails for pre-matroids: the raw sum of two measurable pre-matroids need not be a measurable pre-matroid. We leave the study of raw sums of measurable pre-matroids for future work.
	\end{rem}
	
	The following proposition gives the corresponding approximation by disjoint independent sets.
	
	\begin{prop}\label{prop:finite_union_raw_description}
		Let $M_i=(J,\Sigma,\mu,\cF_i)$ be measurable matroids for $i\in[k]$. A set $I\in\Sigma$ is independent in $M_1\vee\dots\vee M_k$ if and only if, for every $\varepsilon>0$, there exist pairwise disjoint sets $F_i\in\cF_i$, for $i\in[k]$, such that $\bigcup_{i=1}^kF_i\subseteq I$ and $\mu\left(I\setminus\bigcup_{i=1}^kF_i\right)<\varepsilon$.
	\end{prop}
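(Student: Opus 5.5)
By \cref{prop:finite_union_associativity}, the independent sets of $M_1\vee\dots\vee M_k$ form the family $\overline{\cS}$, where $\cS\coloneqq\cF_1+\dots+\cF_k$. The plan is to reduce the statement to the approximation criterion of \cref{lem:upward_closure_approximation} and to handle disjointness by trimming.

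\emph{Reduction for $\cS$.} First note that any $S\in\cS$ can be written as a union of pairwise disjoint sets $F_i\in\cF_i$. Indeed, if $S=\bigcup_i S_i$ with $S_i\in\cF_i$, put $F_i\coloneqq S_i\setminus\bigcup_{j<i}S_j$. Then $F_i\subseteq S_i$, so $F_i\in\cF_i$ by \ref{ax:i2}, the sets $F_i$ are pairwise disjoint, and their union is still $S$. Conversely, any union of sets $F_i\in\cF_i$ belongs to $\cS$. Hence the condition in the statement is equivalent to the following: for every $\varepsilon>0$ there is $S\in\cS$ with $S\subseteq I$ and $\mu(I\setminus S)<\varepsilon$.

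\emph{Forward direction.} Suppose $I\in\overline{\cS}$. Then $I=\bigcup_n S_n$ for some increasing sequence $(S_n)$ in $\cS$. By continuity of measure, $\mu(I\setminus S_n)\to 0$, and $S_n\subseteq I$ for every $n$. Disjointifying $S_n$ as above gives the required sets $F_i$.

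\emph{Converse direction.} Suppose that for each $n$ there is $S_n\in\cS$ with $S_n\subseteq I$ and $\mu(I\setminus S_n)<1/n$. Each $S_n$ lies in $\overline{\cS}$, which is the independent-set family of a measurable matroid by \cref{thm:union,thm:upward_closure_prematroid} together with \cref{prop:finite_union_associativity}. Since $S_n\subseteq I$ and $\mu(S_n)\to\mu(I)$, \cref{cor:nem_novekvo_unio} applied to $\overline{\cS}$ gives $I\in\overline{\cS}$.

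One could try to apply \cref{lem:upward_closure_approximation} directly, but that lemma requires $\cS$ to be a measurable pre-matroid, and for $k\geq3$ this has not been established (see the preceding remark). The argument above avoids the issue: it uses only that $\overline{\cS}$ is a measurable matroid, and then \cref{cor:nem_novekvo_unio}, which needs only \ref{ax:i2} and \ref{ax:i4}. Checking that this use of $\overline{\cS}$ is legitimate is the one delicate point; the remaining steps are routine.
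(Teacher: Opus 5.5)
Your proof is correct and is essentially the paper's argument: you disjointify via $G_i\setminus\bigcup_{j<i}G_j$ for the forward direction, and for the converse you apply \cref{cor:nem_novekvo_unio} to $\overline{\cS}$, which is a matroid's independent-set family by \cref{prop:finite_union_associativity}. The ``delicate point'' you flag is harmless, since \cref{cor:nem_novekvo_unio} only needs \ref{ax:i2} and \ref{ax:i4} for $\overline{\cS}$. Both already follow from the downward closedness of $\cS$ together with \cref{lem:upward_closure}.
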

	
	\begin{proof}
		Set $\cS\coloneqq\cF_1+\dots+\cF_k$ and $\cU\coloneqq\overline{\cS}$. By \cref{prop:finite_union_associativity}, $\cU$ is the independent-set family of $M_1\vee\dots\vee M_k$.
		
		Suppose first that $I\in\cU$. Then $I=\bigcup_{n=1}^\infty S_n$ for some increasing sequence $(S_n)_{n\in\bN}$ in $\cS$. Given $\varepsilon>0$, choose $n$ such that $\mu(I\setminus S_n)<\varepsilon$, and write $S_n=\bigcup_{i=1}^kG_i$ with $G_i\in\cF_i$. For $i\in[k]$, set $F_i\coloneqq G_i\setminus\bigcup_{j<i}G_j$. Then the sets $F_i$ are pairwise disjoint, $F_i\in\cF_i$ for every $i\in[k]$, and $\bigcup_{i=1}^kF_i=S_n$. Thus, $\bigcup_{i=1}^kF_i\subseteq I$ and $\mu\left(I\setminus\bigcup_{i=1}^kF_i\right)<\varepsilon$.
		
		Conversely, suppose that the stated approximation property holds. For every $\varepsilon>0$, choose pairwise disjoint sets $F_i^\varepsilon\in\cF_i$, for $i\in[k]$, such that $\bigcup_{i=1}^kF_i^\varepsilon\subseteq I$ and $\mu\left(I\setminus\bigcup_{i=1}^kF_i^\varepsilon\right)<\varepsilon$. Then $\bigcup_{i=1}^kF_i^\varepsilon$ belongs to $\cS\subseteq\cU$. Applying \cref{cor:nem_novekvo_unio} finishes the proof.
	\end{proof}
	
	To pass from finite to countable unions, we use the following observation.
	
	\begin{prop}\label{prop:increasing_limit_rank}
		Let $M_i=(J,\Sigma,\mu,\cF_i)$ be measurable matroids with rank functions $r_i$ for $i\in\bN$, and assume that $r_i(X)\leq r_{i+1}(X)$ for every $X\in\Sigma$ and every $i\in\bN$. Set $\cS\coloneqq\bigcup_{i\in\bN}\cF_i$ and define $r\colon\Sigma\to\bR_{\geq 0}$ by $r(X)\coloneqq\lim_{i\to\infty}r_i(X)$. Then $(J,\Sigma,\mu,\cS)$ is a measurable pre-matroid, and its upward closure is a measurable matroid with rank function $r$.
	\end{prop}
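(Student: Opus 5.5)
The plan is to check the pre-matroid axioms for $\cS$ directly, and then identify the rank of the upward closure $\overline{\cS}$, which is a measurable matroid by \cref{thm:upward_closure_prematroid}. The first point to settle is that the families are nested, i.e. $\cF_i\subseteq\cF_{i+1}$. By \eqref{ind:r}, $F\in\cF_i$ means $r_i(F)=\mu(F)$. Using $r_i\leq r_{i+1}\leq\mu$, this gives $r_{i+1}(F)=\mu(F)$, so $F\in\cF_{i+1}$.

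With nestedness in hand, \ref{ax:i1} and \ref{ax:i2} for $\cS$ are immediate. For \ref{ax:i3''}, take $F_1,F_2\in\cS$ with $\mu(F_1)\leq\mu(F_2)$. By nestedness both lie in a single $\cF_i$. Applying \ref{ax:i3''} inside the measurable matroid $M_i$, which holds by \cref{prop:i3i3',prop:strong_prematroid}, yields the required $G\in\cF_i\subseteq\cS$. So $(J,\Sigma,\mu,\cS)$ is a measurable pre-matroid, and $\overline{\cS}$ is a measurable matroid; write $\bar r$ for its rank function.

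It remains to show $\bar r=r$. The limit $r$ exists because $(r_i(X))$ is nondecreasing and bounded by $\mu(X)$.

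For $\bar r\geq r$, fix $X$ and $i$, and choose a maximal $F\in\cF_i|X$. Then $F\in\cS\subseteq\overline{\cS}$, so $\bar r(X)\geq\mu(F)=r_i(X)$. Letting $i\to\infty$ gives $\bar r(X)\geq r(X)$.

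For $\bar r\leq r$, let $I$ be a maximal member of $\overline{\cS}|X$, so $\mu(I)=\bar r(X)$. By \cref{lem:upward_closure_approximation}, for every $\varepsilon>0$ there is $S\in\cS$ with $S\subseteq I\subseteq X$ and $\mu(I\setminus S)<\varepsilon$. Since $S\in\cF_i$ for some $i$, we get $\mu(S)=r_i(S)\leq r_i(X)\leq r(X)$. Hence $\bar r(X)<r(X)+\varepsilon$ for every $\varepsilon>0$, which gives $\bar r\leq r$.

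I do not expect a serious obstacle in this argument. The step that carries the weight is nestedness of the families: without it, \ref{ax:i3''} could not be applied inside a single $M_i$, and the upper bound on $\bar r$ would fail. That step rests only on $r_i\leq r_{i+1}\leq\mu$ together with \eqref{ind:r}.
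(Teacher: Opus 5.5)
Your proof is correct and follows the paper's argument essentially step by step: nestedness $\cF_i\subseteq\cF_{i+1}$ from $r_i\le r_{i+1}\le\mu$, \ref{ax:i3''} applied inside a single $M_i$, and the two inequalities between $\bar r$ and $r$. The only cosmetic difference is that for $\bar r\le r$ you invoke \cref{lem:upward_closure_approximation}, whereas the paper reads off $\mu(F)\le r(X)$ directly from an increasing sequence in $\cS$ whose union is $F$.
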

	
	\begin{proof}
		For every $X\in\Sigma$, the sequence $(r_i(X))_{i\in\bN}$ is increasing and bounded above by $\mu(X)$. Hence, $r(X)=\lim_{i\to\infty}r_i(X)$ is well-defined.
		Since $r_i\leq r_{i+1}$ and both rank functions are dominated by $\mu$, we have $\cF_i\subseteq\cF_{i+1}$ for every $i\in\bN$. Indeed, if $F\in\cF_i$, then $\mu(F)=r_i(F)\leq r_{i+1}(F)\leq\mu(F)$, and hence $F\in\cF_{i+1}$.
		
		Now we prove that $(J,\Sigma,\mu,\cS)$ is a measurable pre-matroid. Axioms \ref{ax:i1} and \ref{ax:i2} are immediate. To verify \ref{ax:i3''}, let $F_1,F_2\in\cS$ with $\mu(F_1)\leq\mu(F_2)$. Choose $i\in\bN$ such that $F_1,F_2\in\cF_i$. Since $M_i$ is a measurable matroid, \ref{ax:i3''} gives a set $G\in\cF_i\subseteq\cS$ such that $F_1\subseteq G\subseteq F_1\cup F_2$ and $\mu(G)\geq\mu(F_2)$, proving \ref{ax:i3''}.
		
		By \cref{thm:upward_closure_prematroid}, the upward closure $\overline{\cS}$ defines a measurable matroid. Let $\rho$ be its rank function. We show that $\rho=r$. Let $X\in\Sigma$. If $F\in\cS|X$, then $F\in\cF_i|X$ for some $i\in\bN$, so $\mu(F)\leq r_i(X)\leq r(X)$. Now let $F\in\overline{\cS}|X$. By the definition of the upward closure, there is an increasing sequence $(F_i)_{i\in\bN}$ in $\cS$ such that $F=\bigcup_{i=1}^\infty F_i$. Since $\mu(F_i)\leq r(X)$ for every $i\in\bN$, we have $\mu(F)\leq r(X)$. Therefore, $\rho(X)\leq r(X)$.
		
		Conversely, for every $i\in\bN$, there exists $F_i\in\cF_i|X$ with $\mu(F_i)=r_i(X)$. Since $\cF_i\subseteq\overline{\cS}$, we have $\rho(X)\geq r_i(X)$ for every $i\in\bN$. Letting $i\to\infty$ gives $\rho(X)\geq r(X)$. Hence, $\rho=r$.
	\end{proof}
	
	\begin{rem}
		The upward closure in \cref{prop:increasing_limit_rank} cannot in general be omitted. Let $M=(J,\Sigma,\mu,\cF)$ be a measurable matroid of positive rank $R$, and let $(g_i)_{i\in\bN}$ be an increasing sequence with $g_i<R$ and $\lim_{i\to\infty}g_i=R$. For each $i\in\bN$, let $M_i$ be the $g_i$-truncation of $M$. Then
		\[
		\bigcup_{i\in\bN}\cF_i
		=
		\{F\in\cF\mid \mu(F)<R\}.
		\]
		If $B$ is a basis of $M$, then, by atomlessness, there exists an increasing sequence $(B_i)_{i\in\bN}$ of subsets of $B$ such that $\mu(B_n)=g_n$ for every $n\in\bN$. Thus, $B_n\in\bigcup_{i\in\bN}\cF_i$ for every $n\in\bN$, while $\bigcup_{i\in\bN}B_i$ has measure $R$ and does not belong to $\bigcup_{i\in\bN}\cF_i$. Hence, this union need not be closed under increasing unions.
	\end{rem}
	
	\begin{rem}
		There is no directly analogous useful statement for decreasing sequences. Indeed, the intersection of a decreasing sequence of measurable matroids may collapse to the zero matroid. Let $J\coloneqq [0,1)$ with its Borel $\sigma$-algebra and Lebesgue measure $\lambda$, and let $I_{n,k}\coloneqq [k2^{-n},(k+1)2^{-n})$ for $n\in\bN$ and $k=0,\dots,2^n-1$. Define $M_n=(J,\Sigma,\lambda,\cF_n)$ to be the finitary measurable
		partition matroid from \cref{prop:finitary_partition_matroid}, with
		\[
		\cF_n \coloneqq
		\{F\in\Sigma \mid \lambda(F\cap I_{n,k})\leq 2^{-(n+1)} \text{ for } k=0,\dots,2^n-1\}.
		\]
		Then $\cF_{n+1}\subseteq \cF_n$ for every $n\in\bN$, since each interval $I_{n,k}$ is the union of two intervals of the next dyadic level.
		
		We claim that $\bigcap_{n\in\bN}\cF_n$ contains no set of positive measure. Suppose that $\lambda(F)>0$. By the Lebesgue density theorem~\cite[Chapter~3]{folland1999real}, almost every point of $F$ is a density point of $F$. In particular, for some $x\in F$ and some sufficiently large $n$, the dyadic interval $I_{n,k}$ containing $x$ satisfies $\lambda(F\cap I_{n,k})>\frac12\lambda(I_{n,k})=2^{-(n+1)}$. Hence, $F\notin\cF_n$. Consequently, $\bigcap_{n\in\bN}\cF_n$ consists only of null sets and hence defines the zero matroid. On the other hand, the rank function $r_n$ of $M_n$ satisfies $r_n(J)=1/2$ for every $n\in\bN$. Thus, the rank function of the intersection is not the pointwise limit of the rank functions.
	\end{rem}
	
	We now extend the union operation to countably many measurable matroids.
	
	\begin{thm}\label{thm:countable_union}
		Let $M_i=(J,\Sigma,\mu,\cF_i)$ be measurable matroids for $i\in\bN$, and let $\cR\coloneqq\sum_{i\in\bN}\cF_i$. Then $(J,\Sigma,\mu,\overline{\cR})$ is a measurable matroid. Moreover, a set $I\in\Sigma$ belongs to $\overline{\cR}$ if and only if, for every $\varepsilon>0$, there exist a finite set $A\subseteq\bN$ and pairwise disjoint sets $F_i\in\cF_i$, for $i\in A$, such that $\bigcup_{i\in A}F_i\subseteq I$ and $\mu\left(I\setminus\bigcup_{i\in A}F_i\right)<\varepsilon$. In particular, the resulting measurable matroid is invariant under every permutation of the sequence $(M_i)_{i\in\bN}$.
	\end{thm}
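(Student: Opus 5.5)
The plan is to reduce the countable union to finite unions and then invoke \cref{prop:increasing_limit_rank}. For $n\in\bN$, let $N_n\coloneqq M_1\vee\dots\vee M_n$. By \cref{prop:finite_union_associativity}, $N_n$ is a measurable matroid with independent-set family $\cU_n\coloneqq\overline{\cF_1+\dots+\cF_n}$. Since $\emptyset\in\cF_{n+1}$, we have $\cF_1+\dots+\cF_n\subseteq\cF_1+\dots+\cF_{n+1}$, hence $\cU_n\subseteq\cU_{n+1}$. Therefore the rank functions $\rho_n$ of $N_n$ satisfy $\rho_n\le\rho_{n+1}$ pointwise, because a maximal independent subset of $X$ in $N_n$ is independent in $N_{n+1}$. \Cref{prop:increasing_limit_rank} then shows that $\cS\coloneqq\bigcup_n\cU_n$ is a measurable pre-matroid whose upward closure $\overline{\cS}$ is a measurable matroid. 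It remains to show $\overline{\cR}=\overline{\cS}$.

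For $\overline{\cR}\subseteq\overline{\cS}$, take $R=\bigcup_i F_i\in\cR$ with $F_i\in\cF_i$. The sets $R_n\coloneqq F_1\cup\dots\cup F_n\in\cU_n\subseteq\cS$ increase to $R$, so $R\in\overline{\cS}$. Thus $\cR\subseteq\overline{\cS}$. Since $\overline{\cS}$ is $\sigma$-increasing, this gives $\overline{\cR}\subseteq\overline{\cS}$.

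For the reverse inclusion, first note that every finite sum $\cF_1+\dots+\cF_n$ lies in $\cR$, by padding with $\emptyset\in\cF_i$ for $i>n$. Hence $\cU_n\subseteq\overline{\cR}$ and $\cS\subseteq\overline{\cR}$. We need $\overline{\cR}$ to be $\sigma$-increasing. As in the proof of \cref{prop:finite_union_associativity}, $\cR$ is downward closed: if $R'\subseteq\bigcup_iF_i$, then $R'=\bigcup_i(F_i\cap R')$. Consequently $\cR$ is closed under countable intersections, and \cref{lem:upward_closure} shows $\overline{\cR}$ is $\sigma$-increasing. This gives $\overline{\cS}\subseteq\overline{\cR}$, so $(J,\Sigma,\mu,\overline{\cR})$ is a measurable matroid.

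For the approximation characterization, first suppose $I\in\overline{\cR}=\overline{\cS}$. Then $I$ is the increasing union of sets in $\cS$. Given $\varepsilon$, pick such a set $S\subseteq I$ with $\mu(I\setminus S)<\varepsilon/2$, and $n$ with $S\in\cU_n$. Apply \cref{prop:finite_union_raw_description} to $S$ in $N_n$ with error $\varepsilon/2$. This yields pairwise disjoint $F_i\in\cF_i$ for $i\in A\coloneqq[n]$, with $\bigcup_{i\in A}F_i\subseteq S\subseteq I$ and total error below $\varepsilon$. Conversely, each union $\bigcup_{i\in A}F_i$ lies in $\cR\subseteq\overline{\cR}$, so \cref{cor:nem_novekvo_unio} applied to the matroid family $\overline{\cR}$ gives $I\in\overline{\cR}$.

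Permutation invariance is then immediate, since the characterization is symmetric in the indices: finite index sets $A$ and disjoint families indexed by $A$ are carried to the same kind of data by any permutation.

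The only delicate step is keeping straight that $\cR$, and not just $\cS$, has a $\sigma$-increasing upward closure. This is handled by downward closedness of $\cR$ together with \cref{lem:upward_closure}. Everything else is bookkeeping with the earlier propositions.
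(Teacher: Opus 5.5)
Your proposal is correct and follows the paper's proof essentially step for step. You pass to the increasing finite unions via \cref{prop:increasing_limit_rank}, identify $\overline{\cR}=\overline{\cS}$ using downward closedness of $\cR$ and \cref{lem:upward_closure}, and get the approximation characterization from \cref{prop:finite_union_raw_description}. The only cosmetic difference is that, for the converse direction of the characterization, you apply \cref{cor:nem_novekvo_unio} to $\overline{\cR}$ directly, whereas the paper goes through \cref{lem:upward_closure_approximation}, which is itself proved from that corollary.
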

	
	\begin{proof}
		For every $n\in\bN$, let $M^{(n)}\coloneqq M_1\vee\dots\vee M_n$, and write $M^{(n)}=(J,\Sigma,\mu,\cF^{(n)})$. By \cref{prop:finite_union_associativity}, we have $\cF^{(n)}=\overline{\cF_1+\dots+\cF_n}$. Since $\cF_1+\dots+\cF_n\subseteq\cF_1+\dots+\cF_{n+1}$, we have $\cF^{(n)}\subseteq\cF^{(n+1)}$. Hence, the corresponding rank functions are increasing pointwise. Set $\cS\coloneqq\bigcup_{n=1}^{\infty}\cF^{(n)}$. By \cref{prop:increasing_limit_rank}, $(J,\Sigma,\mu,\cS)$ is a measurable pre-matroid, and $(J,\Sigma,\mu,\overline{\cS})$ is a measurable matroid.
		
		We claim that $\overline{\cS}=\overline{\cR}$. The family $\cR$ is downward closed and therefore closed under countable intersections. Hence, $\overline{\cR}$ is $\sigma$-increasing by \cref{lem:upward_closure}. For every $n\in\bN$, we have $\cF_1+\dots+\cF_n\subseteq\cR$. Therefore, $\cF^{(n)}\subseteq\overline{\cR}$, and hence $\cS\subseteq\overline{\cR}$. Since $\overline{\cR}$ is $\sigma$-increasing, it follows that $\overline{\cS}\subseteq\overline{\cR}$.
		
		Conversely, let $R\in\cR$, and write $R=\bigcup_{i\in\bN}F_i$ with $F_i\in\cF_i$. For every $n\in\bN$, set $R_n\coloneqq\bigcup_{i=1}^nF_i$. Then $R_n\in\cF_1+\dots+\cF_n\subseteq\cF^{(n)}\subseteq\cS$, and $(R_n)_{n\in\bN}$ is increasing with union $R$. Hence, $R\in\overline{\cS}$. Thus $\cR\subseteq\overline{\cS}$. Since $\overline{\cS}$ is the independent-set family of a measurable matroid, it is $\sigma$-increasing, and therefore $\overline{\cR}\subseteq\overline{\cS}$. This proves the claim. 
		
		It remains to prove the approximation characterization. Suppose first that $I\in\overline{\cR}=\overline{\cS}$, and fix $\varepsilon>0$. By \cref{lem:upward_closure_approximation}, there exists $S\in\cS$ such that $S\subseteq I$ and $\mu(I\setminus S)<\varepsilon/2$. Since $S\in\cF^{(n)}$ for some $n\in\bN$, \cref{prop:finite_union_raw_description} gives pairwise disjoint sets $F_i\in\cF_i$, for $i\in[n]$, such that $\bigcup_{i=1}^nF_i\subseteq S$ and $\mu\left(S\setminus\bigcup_{i=1}^nF_i\right)<\varepsilon/2$. It follows that $\mu\left(I\setminus\bigcup_{i=1}^nF_i\right)\leq\mu(I\setminus S)+\mu\left(S\setminus\bigcup_{i=1}^nF_i\right)<\varepsilon$.
		
		Conversely, suppose that the stated approximation property holds. Given $\varepsilon>0$, choose a finite set $A\subseteq\bN$ and corresponding sets $F_i$. Choose $n\in\bN$ such that $A\subseteq[n]$, and set $F_i\coloneqq\emptyset$ for $i\in[n]\setminus A$. Then $\bigcup_{i\in A}F_i\in\cF_1+\dots+\cF_n\subseteq\cF^{(n)}\subseteq\cS$. Since such a set approximates $I$ from below for every $\varepsilon>0$, \cref{lem:upward_closure_approximation} gives $I\in\overline{\cS}=\overline{\cR}$.
		
		Finally, the approximation characterization is unchanged under every permutation of $(M_i)_{i\in\bN}$. Hence, the resulting measurable matroid is invariant under every permutation of $(M_i)_{i\in\bN}$. 
	\end{proof}
	
	We call $(J,\Sigma,\mu,\overline{\cR})$ the \emph{countable measurable union} of the matroids $M_i$, for $i\in\bN$, and denote it by $\bigvee_{i\in\bN}M_i$. The rank function of such matroids is determined in \cref{thm:countable_union_rank_formula}.
	
	%%%%%%%%%%%%%%%%%%%%%%%%%%%%%%%%
	\section{Connections to Infinite Submodularity}
	\label{sec:measurable}
	%%%%%%%%%%%%%%%%%%%%%%%%%%%%%%%%
	
	Before turning to the intersection and union theorems, we relate measurable matroids to the theory of submodular set functions on $\sigma$-algebras developed by Lovász~\cite{lovasz2023submodular}. We first discuss the Choquet extension, the measurable analogue of the Lovász extension, and then turn to minorizing measures, which provide infinite analogues of independent-set and base polytopes.
	
	%%%%%%%%%%%%%%%%%%%%%%%%%%%%%%%%
	\subsection{Linear Extension}
	\label{sec:linear}
	%%%%%%%%%%%%%%%%%%%%%%%%%%%%%%%%
	
	The finite Lovász extension and its measurable analogue, the Choquet extension, are instances of the same construction. We will refer to both as the \emph{Choquet--Lovász extension}. We first recall the finite version in a slightly more general form. Let $E$ be a finite set, and let $\varphi\colon 2^E\to\bR$ be a set function with $\varphi(\emptyset)=0$. For $w\in\bR^E$, there is a unique representation
	\[
	w=c\mathbf 1_E+\sum_{i=1}^k a_i\mathbf 1_{A_i},
	\]
	where $c\in\bR$, $a_i>0$, and $\emptyset\subsetneq A_1\subsetneq\dots\subsetneq A_k\subsetneq E$,
	with $A_1,\dots,A_k$ being the nontrivial upper level sets of $w$. Here, $k=0$ is allowed when $w$ is constant. The \emph{Lovász extension} of $\varphi$ is defined by
	\begin{equation}\label{eq:finite_lovasz_extension}
		\widehat\varphi(w)
		\coloneqq
		c\varphi(E)+\sum_{i=1}^k a_i\varphi(A_i).
	\end{equation}
	It extends $\varphi$, since $\widehat\varphi(\mathbf 1_X)=\varphi(X)$ for every $X\subseteq E$. It is linear on each cone determined by an ordering of the coordinates of $w$. 
	
	If $\varphi$ is submodular, then $\widehat\varphi$ is convex~\cite{lovasz1983submodular}; equivalently, it is subadditive and positively homogeneous. In particular, for every $A_1,\dots,A_m\subseteq E$ and every $a_1,\dots,a_m\in\bR_{\geq 0}$,
	\begin{equation}\label{eq:finite_generalized_submodular}
		\widehat\varphi\left(\sum_{i=1}^m a_i\mathbf 1_{A_i}\right)
		\leq
		\sum_{i=1}^m a_i\varphi(A_i).
	\end{equation}
	We will refer to \eqref{eq:finite_generalized_submodular} as the \emph{generalized submodular inequality}. For $m=2$ and $a_1=a_2=1$, it gives the usual submodular inequality, since $\mathbf 1_{A_1}+\mathbf 1_{A_2}$ has upper level sets $A_1\cup A_2$ and $A_1\cap A_2$. The form \eqref{eq:finite_generalized_submodular} is often useful when several sets have to be handled simultaneously; for example, it appears naturally in strengthenings of the basis-exchange axiom and in list-coloring theorems for matroids~\cite{schrijver2003combinatorial}.
	
	We now consider the measurable setting; see~\cite{lovasz2023submodular} for a detailed discussion. Let $(J,\Sigma)$ be a standard Borel space, and let $\varphi\colon\Sigma\to\bR$ be a bounded submodular set function with $\varphi(\emptyset)=0$. As in~\cite{lovasz2023submodular}, we use the shorthand $\{f\ge t\}$ for the level set $\{x\in J\mid f(x)\ge t\}$. For a bounded measurable function $w\colon J\to\bR$, choose $c\leq \inf{w}$, and define
	\begin{equation}\label{eq:measurable_lovasz_extension}
		\widehat\varphi(w)
		\coloneqq
		\int_c^\infty \varphi(\{w\geq t\})\diff t
		+
		c\varphi(J).
	\end{equation}
	The expression is well defined~\cite[Lemma~3.18]{lovasz2023submodular}, and its value is independent of the choice of $c$. This is the Choquet--Lovász extension of $\varphi$ in the measurable setting. If $\varphi$ is a measure, then $\widehat\varphi$ is the usual integral with respect to $\varphi$.
	
	It follows directly from the definition that $\widehat\varphi$ is positively homogeneous, that is,
	\[
	\widehat\varphi(af)=a\widehat\varphi(f)
	\]
	for every bounded measurable function $f\colon J\to\bR$ and every $a\in\bR_{\geq 0}$. In particular,
	\[
	\widehat\varphi(a\mathbf 1_X)=a\varphi(X)
	\]
	for every $X\in\Sigma$ and $a\in\bR_{\geq 0}$. Moreover, if $\varphi$ is increasing, then $\widehat\varphi$ is monotone: if $f\leq g$, then $\widehat\varphi(f)\leq\widehat\varphi(g)$.
	
	The following fundamental property of the Choquet--Lovász extension is due to Lovász~\cite[Section~4.3]{lovasz2023submodular}.
	
	\begin{prop}\label{prop:choquet_convex}
		Let $\varphi\colon\Sigma\to\bR$ be a bounded submodular set function with $\varphi(\emptyset)=0$. Then its Choquet--Lovász extension $\widehat\varphi$ is convex.
	\end{prop}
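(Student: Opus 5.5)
The plan is to reduce convexity to subadditivity, since positive homogeneity has already been noted: if $\widehat\varphi(f+g)\leq\widehat\varphi(f)+\widehat\varphi(g)$ for all bounded measurable $f,g$, then for $\lambda\in[0,1]$
\[
\widehat\varphi(\lambda f+(1-\lambda)g)\leq\widehat\varphi(\lambda f)+\widehat\varphi((1-\lambda)g)=\lambda\widehat\varphi(f)+(1-\lambda)\widehat\varphi(g).
\]
To prove subadditivity, I would first treat \emph{simple} functions, that is, finite-valued measurable functions, and then pass to general bounded functions by uniform approximation.

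\textbf{Simple functions.} Let $f$ and $g$ be simple. Then $f$, $g$ and $f+g$ are all measurable with respect to the finite algebra $\cA$ generated by the atoms of the common refinement of the level partitions of $f$ and $g$. Collapsing each atom to a point gives a finite set $E$, and $\varphi$ restricted to $\cA$ is a submodular function $\varphi_E$ on $2^E$ with $\varphi_E(\emptyset)=0$. For a simple function $w$ with values $t_1<\dots<t_m$, the integral in \eqref{eq:measurable_lovasz_extension} with $c=t_1$ splits into pieces on which $\{w\geq t\}$ is constant. This gives
\[
\widehat\varphi(w)=t_1\varphi(J)+\sum_{j\ge2}(t_j-t_{j-1})\varphi(\{w\geq t_j\}),
\]
which is exactly the finite Lovász extension \eqref{eq:finite_lovasz_extension} of $\varphi_E$ evaluated at the vector representing $w$. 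Hence subadditivity for simple functions follows from the finite convexity result of Lovász~\cite{lovasz1983submodular}, which is quoted before \eqref{eq:finite_generalized_submodular}.

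\textbf{Approximation.} Let $f,g$ be bounded measurable, and pick simple functions $f_n,g_n$ with $\|f-f_n\|_\infty\leq 1/n$ and $\|g-g_n\|_\infty\leq 1/n$, for example by rounding down to the grid $\frac1n\bZ$. The key estimate is a Lipschitz bound: if $\|u-v\|_\infty\leq\delta$, then $|\widehat\varphi(u)-\widehat\varphi(v)|\leq C\delta$ for a constant $C$ depending only on $\sup|\varphi|$. Granting this, we get
\[
\widehat\varphi(f+g)=\lim_n\widehat\varphi(f_n+g_n)\leq\lim_n\bigl(\widehat\varphi(f_n)+\widehat\varphi(g_n)\bigr)=\widehat\varphi(f)+\widehat\varphi(g).
\]

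\textbf{Main obstacle.} The Lipschitz bound is where care is needed, because $\varphi$ is not assumed monotone, so we cannot simply sandwich $\widehat\varphi(u)$ between $\widehat\varphi(v-\delta)$ and $\widehat\varphi(v+\delta)$. I would first use the translation identity $\widehat\varphi(w+a)=\widehat\varphi(w)+a\varphi(J)$ for constants $a$, which follows from the definition by shifting $c$. With it, I would reduce to the case $v\leq u\leq v+\delta$ and estimate $\widehat\varphi(u)-\widehat\varphi(v)$ directly. If this direct route proves awkward, the fallback is to avoid uniform approximation altogether: choose $f_n,g_n$ by rounding down along a common grid, so that the level sets $\{f_n+g_n\geq t\}$, $\{f_n\geq t\}$ and $\{g_n\geq t\}$ converge to those of $f+g$, $f$ and $g$ for almost every $t$. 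Dominated convergence of the $t$-integrals would then be justified by the cited well-definedness result~\cite[Lemma~3.18]{lovasz2023submodular}. In the nonmonotone case this fallback needs continuity of $\varphi$ along monotone sequences, which is not available for general $\varphi$. For that reason I would commit to the Lipschitz route, and expect it to be the main technical step.
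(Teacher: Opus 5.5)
\textbf{Verdict: genuine gap.} Your architecture is sound, but the central estimate is unproved.

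What works: reducing convexity to subadditivity via positive homogeneity is correct. Identifying $\widehat\varphi$ on simple functions with the finite Lovász extension of $\varphi$ restricted to the finite algebra of atoms is also correct. So subadditivity for simple functions does follow from Lovász's finite theorem. (The paper gives no proof of this proposition; it cites Lovász~\cite[Section~4.3]{lovasz2023submodular}.)

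The gap is the Lipschitz estimate, which you flag as the main step but do not carry out. It cannot come from boundedness of $\varphi$ alone. Even when $v\leq u\leq v+\delta$, the integrand $\varphi(\{u\geq t\})-\varphi(\{v\geq t\})$ is only bounded by $2\sup|\varphi|$ pointwise, and it is integrated over an interval of length about $\sup v-\inf v$. An $O(\delta)$ bound therefore needs cancellation, and your sketch does not say where it comes from. As you correctly note, $\varphi$ is neither monotone nor continuous along monotone sequences, so the cancellation must be extracted from submodularity.

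The missing lemma is that a bounded submodular function has uniformly bounded variation along chains. Take a chain $X_0\subseteq\dots\subseteq X_m$ and set $K\coloneqq\sup|\varphi|$. Let $P$ be the set of indices with $\varphi(X_i)>\varphi(X_{i-1})$, and let $Y_j\coloneqq X_0\cup\bigcup_{i\in P,\,i\leq j}(X_i\setminus X_{i-1})$. Since $Y_{i-1}\subseteq X_{i-1}$, diminishing returns give $\varphi(X_i)-\varphi(X_{i-1})\leq\varphi(Y_i)-\varphi(Y_{i-1})$ for $i\in P$. Telescoping bounds the positive increments by $2K$, and hence the total variation by $6K$.

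This lemma supplies both pieces you need.
\begin{enumerate}[label=(\roman*)]
\item Take simple $u,v$ over a common finite algebra. The finite extension is linear on each ordering cone $\pi$ with coefficients $x^\pi_{\pi(i)}=\varphi(S_i)-\varphi(S_{i-1})$. Their $\ell_1$-norm is a variation along a maximal chain, so $|\widehat\varphi(u)-\widehat\varphi(v)|\leq 6K\|u-v\|_\infty$.
\item Take bounded measurable $w$ and set $w_n\coloneqq\lfloor nw\rfloor/n$. Then $\{w_n\geq t\}=\{w\geq\lceil nt\rceil/n\}$. With $c\in\frac1n\bZ$, the value $\widehat\varphi(w_n)$ is therefore a right-endpoint Riemann sum of $t\mapsto\varphi(\{w\geq t\})$. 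This function has variation at most $6K$, so $|\widehat\varphi(w_n)-\widehat\varphi(w)|\leq 6K/n$. No continuity of $\varphi$ is needed, which is exactly what your fallback was missing.
\end{enumerate}
Together, (i) and (ii) give the Lipschitz bound for all bounded measurable functions, and your limit argument then goes through.
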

	
	Together with positive homogeneity, convexity implies that $\widehat\varphi$ is subadditive. This gives the following generalized form of the submodular inequality.
	
	\begin{cor}\label{cor:generalized_submodular_inequality}
		Let $\varphi\colon\Sigma\to\bR$ be a bounded submodular set function with $\varphi(\emptyset)=0$. Then, for every $A_1,\dots,A_m\in\Sigma$ and $a_1,\dots,a_m\in\bR_{\geq 0}$,
		\begin{equation}
			\widehat\varphi\left(\sum_{i=1}^m a_i\mathbf 1_{A_i}\right)
			\leq
			\sum_{i=1}^m a_i\varphi(A_i).\label{eq:generalized_submodular_inequality}
		\end{equation}
	\end{cor}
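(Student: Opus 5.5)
The plan is to derive \eqref{eq:generalized_submodular_inequality} directly from \cref{prop:choquet_convex} together with positive homogeneity of $\widehat\varphi$. First I would note that all functions involved are bounded and measurable: each $a_i\mathbf 1_{A_i}$ is a bounded Borel function, and so is the finite sum $f\coloneqq\sum_{i=1}^m a_i\mathbf 1_{A_i}$. Hence $\widehat\varphi$ is defined on all of them by \eqref{eq:measurable_lovasz_extension}, and the right-hand side is finite since $\varphi$ is bounded.

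The key step is subadditivity: for bounded measurable $g,h$ we want $\widehat\varphi(g+h)\leq\widehat\varphi(g)+\widehat\varphi(h)$. Write $g+h=\tfrac12(2g)+\tfrac12(2h)$. Convexity from \cref{prop:choquet_convex} gives
\[
\widehat\varphi(g+h)\leq\tfrac12\widehat\varphi(2g)+\tfrac12\widehat\varphi(2h),
\]
and positive homogeneity, $\widehat\varphi(2g)=2\widehat\varphi(g)$, turns the right-hand side into $\widehat\varphi(g)+\widehat\varphi(h)$. By induction on $m$, it follows that $\widehat\varphi\big(\sum_{i=1}^m g_i\big)\leq\sum_{i=1}^m\widehat\varphi(g_i)$ for bounded measurable $g_1,\dots,g_m$.

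Apply this with $g_i\coloneqq a_i\mathbf 1_{A_i}$ and use $\widehat\varphi(a_i\mathbf 1_{A_i})=a_i\varphi(A_i)$, which was noted after \eqref{eq:measurable_lovasz_extension}. This gives \eqref{eq:generalized_submodular_inequality}. When $a_i=0$, both sides of that identity are $0$: we have $\widehat\varphi(0)=0$, because taking $c=0$ in \eqref{eq:measurable_lovasz_extension} gives $\int_0^\infty\varphi(\{0\geq t\})\diff t=\varphi(\emptyset)\cdot\infty$-length $=0$.

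I expect no real obstacle; the only care needed is the boundary case $a=0$ in positive homogeneity, which I have handled directly. If one prefers, the identity $\widehat\varphi(a\mathbf 1_X)=a\varphi(X)$ can also be rechecked from the definition: take $c=0$, so the integrand equals $\varphi(X)$ on $(0,a]$ and $\varphi(\emptyset)=0$ for $t>a$.
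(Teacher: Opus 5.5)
Your proof is correct and is essentially the paper's argument: convexity from \cref{prop:choquet_convex} together with positive homogeneity gives subadditivity of $\widehat\varphi$, and applying this to the functions $a_i\mathbf 1_{A_i}$ with $\widehat\varphi(a_i\mathbf 1_{A_i})=a_i\varphi(A_i)$ yields the inequality. Your wording of the $a_i=0$ case is loose, but the point is sound: the integrand $\varphi(\{0\geq t\})=\varphi(\emptyset)=0$ for every $t>0$, so $\widehat\varphi(0)=0$.
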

	
	\begin{proof}
		By convexity and positive homogeneity, $\widehat\varphi$ is subadditive. Hence,
		\[
		\widehat\varphi\left(\sum_{i=1}^m a_i\mathbf 1_{A_i}\right)
		\leq
		\sum_{i=1}^m\widehat\varphi(a_i\mathbf 1_{A_i})
		=
		\sum_{i=1}^m a_i\varphi(A_i).\qedhere
		\]
	\end{proof}
	
	We now discuss the interpretation for rank functions of measurable matroids. For finite matroids, this is the usual greedy algorithm. In the measurable setting one should be slightly careful: the value is still the correct supremum, but a greedy construction along a chain need not produce a basis. 
	
	\begin{prop}\label{prop:rank_lovasz_weight}
		Let $M=(J,\Sigma,\mu,\cF)$ be a measurable matroid with rank function $r$, let $\cB$ denote its family of bases, and let $\widehat r$ be the extension defined by \eqref{eq:measurable_lovasz_extension}. Then, for every bounded measurable function $w\colon J\to\bR$,
		\begin{equation}\label{eq:lovasz_basis_weight}
			\widehat r(w)=\sup_{B\in\cB}\int_B w\diff\mu.
		\end{equation}
	\end{prop}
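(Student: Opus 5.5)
We prove the two inequalities separately. First we reduce to $w\geq 0$. Every basis has measure $r(J)$. Hence replacing $w$ by $w-c$ with $c\leq\inf w$ changes the right-hand side of \eqref{eq:lovasz_basis_weight} by $-c\,r(J)$. By \eqref{eq:measurable_lovasz_extension}, the left-hand side changes by exactly the same amount. So we may assume $w\geq 0$, and then $\widehat r(w)=\int_0^\infty r(\{w\geq t\})\diff t$.

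\emph{Upper bound.} Fix $B\in\cB$ and use the layer-cake formula for the measure $\mu|_B$:
\[
\int_B w\diff\mu=\int_0^\infty \mu(B\cap\{w\geq t\})\diff t.
\]
The set $B\cap\{w\geq t\}$ is independent and contained in $\{w\geq t\}$. Therefore $\mu(B\cap\{w\geq t\})\leq r(\{w\geq t\})$ for every $t$, and integrating gives $\int_B w\diff\mu\leq\widehat r(w)$.

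\emph{Lower bound.} We use a discretized greedy construction. Fix $\varepsilon>0$ and levels $0=t_0<t_1<\dots<t_N$ with $t_N>\sup w$ and mesh at most $\varepsilon$. Set $U_j\coloneqq\{w\geq t_j\}$; this is a decreasing chain with $U_0=J$ and $U_N=\emptyset$. Going from $j=N-1$ down to $j=0$, we build independent sets $F_{N-1}\subseteq\dots\subseteq F_0$. Take $F_{N-1}$ to be a maximal independent subset of $U_{N-1}$. Given $F_{j+1}$, extend it to a maximal independent subset $F_j$ of $U_j$; such an extension exists by \cref{lem:maximal}, applied to the $\sigma$-increasing family of independent sets between $F_{j+1}$ and $U_j$. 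Then $F_j\subseteq U_j$ and $\mu(F_j)=r(U_j)$. Finally, extend $F_0$ to a basis $B$; since $U_0=J$, the set $F_0$ is already a basis, so $B=F_0$.

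The nesting gives $B\cap U_j\supseteq F_j$. On the other hand, $B\cap U_j$ is independent in $U_j$, so $\mu(B\cap U_j)\leq r(U_j)=\mu(F_j)$. Hence $\mu(B\cap U_j)=r(U_j)$ for every $j$. On $U_j\setminus U_{j+1}$ we have $w\geq t_j$, which gives
\[
\int_B w\diff\mu\geq\sum_{j}t_j\,\mu\big(B\cap(U_j\setminus U_{j+1})\big)=\sum_{j\geq1}(t_j-t_{j-1})\,r(U_j).
\]
Because $t\mapsto r(\{w\geq t\})$ is nonincreasing, the last sum is at least $\int_{t_1}^{\infty}r(\{w\geq t\})\diff t-\varepsilon\, r(J)$. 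This in turn is at least $\widehat r(w)-2\varepsilon\,r(J)$. Letting $\varepsilon\to0$ gives the lower bound.

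Two steps need care. The first is justifying the layer-cake identity with a possibly nonconstant $w$. The second is the monotone Riemann-sum comparison, which requires that $r(\{w\geq t\})$ be measurable in $t$; this holds because the function is monotone. Neither step is substantial. The one genuinely measure-theoretic point is that we use only finitely many levels, so no limit of greedy chains is needed. This also explains why the statement has a supremum rather than a maximum: passing to a continuum of levels could fail to produce a basis, as the remark preceding the statement warns.
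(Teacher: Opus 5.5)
Your proof is correct and follows essentially the paper's route: you shift to $w\geq 0$, greedily extend along a finite chain of level sets to obtain a basis that is tight on every level, and then approximate. The only difference is in the approximation step. You bound the errors directly, using the layer-cake formula for the upper bound and a monotone Riemann-sum estimate for the lower bound; for a nonuniform mesh, the latter is most cleanly justified by noting that the upper and lower sums of $t\mapsto r(\{w\geq t\})$ differ by at most $\varepsilon\, r(J)$. The paper instead proves exact equality for simple $w$ and sandwiches a general $w$ between simple functions $u_n\leq w\leq v_n$, using the monotonicity and translation property of $\widehat r$.
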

	
	\begin{proof}
		First assume that $w\geq0$ takes only finitely many values. Then we can write
		$
		w=\sum_{i=1}^k a_i\mathbf 1_{A_i}
		$
		with $a_i>0$ and
		$
		\emptyset\subsetneq A_1\subsetneq\dots\subsetneq A_k\subseteq J.
		$
		Choose a maximal independent set $I_1$ in $A_1$. Having chosen $I_{i-1}$, extend it to a maximal independent set $I_i$ in $A_i$. Then $I_k$ is independent. Moreover, for every $i$, the set $I_k\cap A_i$ is an independent subset of $A_i$ containing $I_i$. Since $I_i$ is maximal in $A_i$, we have
		$
		\mu(I_k\cap A_i)=r(A_i).
		$
		Hence
		$
		\int_{I_k} w\diff \mu
		=
		\sum_i a_i r(A_i)
		=
		\widehat r(w).
		$
		Extend $I_k$ to a basis $B$. Since $w\geq0$, we have $\int_B w\diff\mu\geq\widehat r(w)$. Conversely, if $B'\in\cB$, then $\mu(B'\cap A_i)\leq r(A_i)$ for every $i$, and therefore $\int_{B'} w\diff \mu\leq\widehat r(w)$. This proves \eqref{eq:lovasz_basis_weight} for nonnegative functions taking finitely many values.
		
		Now let $w\geq0$ be an arbitrary bounded measurable function. For every $n$, choose nonnegative functions $u_n,v_n$ taking only finitely many values such that
		$
		u_n\leq w\leq v_n
		$
		and
		$
		\|v_n-u_n\|_\infty\leq 1/n.
		$
		By the case already proved,
		\[
		\widehat r(u_n)
		=
		\sup_{B\in\cB}\int_B u_n\diff \mu
		\leq
		\sup_{B\in\cB}\int_B w\diff \mu
		\leq
		\sup_{B\in\cB}\int_B v_n\diff \mu
		=
		\widehat r(v_n).
		\]
		Since $v_n-(1/n)\mathbf 1_J\leq u_n\leq v_n$, monotonicity and the translation property of $\widehat r$ give
		\[
		\widehat r(v_n)-\frac{r(J)}{n}
		=
		\widehat r\left(v_n-\frac{1}{n}\mathbf 1_J\right)
		\leq \widehat r(u_n)
		\leq \widehat r(v_n).
		\]
		Hence $0\leq \widehat r(v_n)-\widehat r(u_n)\leq r(J)/n$, and therefore $\widehat r(v_n)-\widehat r(u_n)\to0$ as $n\to\infty$. Since $u_n\leq w\leq v_n$, monotonicity also gives $\widehat r(u_n)\leq\widehat r(w)\leq\widehat r(v_n)$. Thus, \eqref{eq:lovasz_basis_weight} holds for every bounded measurable $w\geq0$.
		
		Finally, let $w$ be arbitrary, and choose $c\leq\inf w$. Then $w-c\mathbf 1_J\geq0$, and
		$
		\widehat r(w)=\widehat r(w-c\mathbf 1_J)+c\,r(J).
		$
		By the nonnegative case, and since every basis has measure $r(J)$,
		\[
		\widehat r(w)
		=
		\sup_{B\in\cB}\int_B(w-c\mathbf 1_J)\diff \mu+c\,r(J)
		=
		\sup_{B\in\cB}\int_B w\diff \mu,
		\]
		proving \eqref{eq:lovasz_basis_weight}.
	\end{proof}
	
	\begin{rem}
		For a finite matroid, the Lovász extension of the rank function has the familiar greedy interpretation, namely
		\[
		\widehat r(w)=\max_{B\in\cB}\sum_{e\in B}w(e).
		\]
		Indeed, after ordering the elements by decreasing weight, the upper level sets of $w$ form a chain, and the Lovász extension is determined by the successive rank increases along this chain, exactly as in the greedy algorithm.
		
		A measure-valued version of this interpretation survives in the measurable setting. Lovász proved that, for an increasing submodular set function that is continuous from above, every chain admits a minorizing measure that agrees with the set function on every member of the chain; see~\cite[Corollary~7.2]{lovasz2023submodular}. Applied to the chain of level sets of $w$, this gives a basic minorizing measure that is tight on these sets and hence realizes the Choquet--Lovász extension as the optimal fractional weight. This is the measure-theoretic analogue of the greedy construction in~\cite[Corollary~5.6]{lovasz2023submodular}. For a measurable matroid, however, such an optimal minorizing measure need not be induced by a basis, and, as the example below shows, the supremum over bases in \cref{prop:rank_lovasz_weight} need not be attained.
		
		For example, consider the interval-halving matroid from \cref{sec:nested}. Along the chain of initial intervals we have $r([0,t])=t/2$, so the unique minorizing measure that is tight on this chain is $\lambda/2$ by \cref{lem:halving_measure}. This is a basic minorizing measure, but it is not induced by any basis. More concretely, for $w(x)=1-x$,
		\[
		\widehat r(w)
		=
		\int_0^1 r([0,1-t])\diff t
		=
		\int_0^1\frac{1-t}{2}\diff t
		=
		\frac14.
		\]
		Hence, by \cref{prop:rank_lovasz_weight},
		\[
		\sup_{B\in\cB}\int_B w\diff\lambda=\frac14.
		\]
		If a basis $B$ attained this value, then
		\[
		\int_B(1-x)\diff\lambda
		=
		\int_0^1\lambda(B\cap[0,t])\diff t
		\leq
		\int_0^1\frac{t}{2}\diff t
		=
		\frac14.
		\]
		Equality implies $\lambda(B\cap[0,t])=t/2$ for almost every $t$. Since $t\mapsto\lambda(B\cap[0,t])$ is continuous, the equality holds for every $t$, contradicting \cref{lem:halving_measure}. Thus, the fractional greedy optimum is attained by a basic minorizing measure, while the corresponding supremum over bases need not be attained.
	\end{rem}

	%%%%%%%%%%%%%%%%%%%%%%%%%%%%%%%%
	\subsection{Extreme Points of Minorizing Measures}
	\label{sec:extreme_points}
	%%%%%%%%%%%%%%%%%%%%%%%%%%%%%%%%
	
	We now turn to the minorizing measures of the rank function. In the general theory of submodular functions, the compact convex set of basic minorizing charges is often studied through its exposed and extreme points. For measurable matroids, we show that the measures induced by independent sets are weak-star dense in $\mm(r)$ and that the measures induced by bases are weak-star dense in $\bmm(r)$.
	
	Let $M=(J,\Sigma,\mu,\cF)$ be a measurable matroid with rank function $r$. Recall that $\mm(r)$ and $\bmm(r)$ denote the sets of minorizing charges and basic minorizing charges of $r$, respectively. By \cref{cor:mm-measures}, every element of $\mm(r)$ is a finite measure dominated by $\mu$. Accordingly, throughout this section we refer to the elements of $\mm(r)$ and $\bmm(r)$ as minorizing measures and basic minorizing measures, respectively.
	
	By the Radon--Nikodym theorem, each $\alpha\in\mm(r)$ has a density $f_\alpha\coloneqq d\alpha/d\mu$; see, e.g., \cite[Chapter~3]{folland1999real}. Since $\alpha\leq\mu$, we have $0\leq f_\alpha\leq 1$ almost everywhere. Thus, we may identify $\mm(r)$ with a subset of the order interval $\{f\in L^\infty(\mu)\mid 0\leq f\leq 1\}$. We use the weak-star topology on $L^\infty(\mu)$ coming from the duality with $L^1(\mu)$. Thus, convergence in $\mm(r)$ means convergence of the corresponding densities against all functions in $L^1(\mu)$. 
	
	Whenever we speak of exposed points below, this is with respect to the weak-star topology. If $C\subseteq\mm(r)$ is convex, then $\alpha\in C$ is an \emph{exposed point} of $C$ if there exists $h\in L^1(\mu)$ such that
	\[
	\int_J h\diff\alpha>\int_J h\diff\beta
	\]
	for every $\beta\in C\setminus\{\alpha\}$. A basic weak-star neighbourhood of $\alpha$ is obtained by choosing $h_1,\dots,h_k\in L^1(\mu)$ and $\varepsilon>0$, and requiring
	\[
	\left|\int h_\ell\diff \beta-\int h_\ell\diff \alpha\right|<\varepsilon
	\]
	for every $\ell\in[k]$; see, e.g.,~\cite[Sections~3.8 and~3.14]{rudin1991functional}. 
	
	For $I\in\cF$, let $\mu_I$ denote the measure defined by $\mu_I(X)\coloneqq \mu(I\cap X)$. Then $\mu_I\in\mm(r)$, since $I\cap X$ is an independent subset of $X$. If $B$ is a basis, then $\mu_B\in\bmm(r)$.
	
	The next lemma shows that the rank inequalities are sufficient for realizing prescribed measures on the parts of a finite partition by an independent set.
	
	\begin{lem}\label{lem:finite_partition_realization}
		Let $M=(J,\Sigma,\mu,\cF)$ be a measurable matroid with rank function $r$, and let $P_1,\dots,P_q$ be a finite measurable partition of $J$. Let $\alpha_1,\dots,\alpha_q\in\bR_{\geq 0}$. Then there exists $F\in\cF$ such that $\mu(F\cap P_j)=\alpha_j$ for every $j\in[q]$ if and only if, for every $Q\subseteq[q]$,
		\begin{equation}\label{eq:finite_partition_realization}
			\sum_{j\in Q}\alpha_j
			\leq
			r\left(\bigcup_{j\in Q}P_j\right).
		\end{equation}
	\end{lem}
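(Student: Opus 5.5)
Necessity is immediate: if $F\in\cF$ realizes the values $\alpha_j$, then for every $Q\subseteq[q]$ the set $F\cap\bigcup_{j\in Q}P_j$ is an independent subset of $\bigcup_{j\in Q}P_j$. Hence its measure $\sum_{j\in Q}\alpha_j$ is at most $r(\bigcup_{j\in Q}P_j)$.

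For sufficiency, I would reduce to the finite polymatroid setting on $[q]$. Define $f\colon 2^{[q]}\to\bR_{\geq 0}$ by $f(Q)\coloneqq r(\bigcup_{j\in Q}P_j)$. Since the map $Q\mapsto\bigcup_{j\in Q}P_j$ preserves unions and intersections, $f$ inherits from the rank axioms \ref{ax:r1}, \ref{ax:r2}, \ref{ax:r4} that it is normalized, monotone and submodular; thus $f$ is a polymatroid rank function. The hypothesis says that $\alpha=(\alpha_1,\dots,\alpha_q)$ lies in the polymatroid $P(f)=\{x\geq0\mid x(Q)\leq f(Q)\ \forall Q\}$.

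My plan is an induction on $q$ (or equivalently a greedy construction) realizing $\alpha$ exactly. Fix the order $P_1,\dots,P_q$. I would build independent sets $F_0=\emptyset\subseteq F_1\subseteq\dots\subseteq F_q$ with $F_k\setminus F_{k-1}\subseteq P_k$ and $\mu(F_k\cap P_k)=\alpha_k$.

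The danger is that a careless choice in $P_k$ blocks later parts. To avoid it, I would maintain a stronger invariant: the vector $(\alpha_{k+1},\dots,\alpha_q)$ lies in the polymatroid of the contraction $M/F_k$, restricted to the parts $P_{k+1},\dots,P_q$. Concretely, for every $Q\subseteq\{k+1,\dots,q\}$,
\[
\sum_{j\in Q}\alpha_j\leq r\Big(F_k\cup\bigcup_{j\in Q}P_j\Big)-\mu(F_k).
\]
By \cref{prop:contraction} and \cref{prop:contraction_char}, $M/F_k$ is again a measurable matroid whose independent sets, united with $F_k$, are independent in $M$. This lets the induction run on the contracted matroid $M/F_1$ with $q-1$ parts. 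So the whole matter reduces to one step: choose $F_1\subseteq P_1$ independent with $\mu(F_1)=\alpha_1$ such that
\[
\sum_{j\in Q}\alpha_j\leq r\Big(F_1\cup\bigcup_{j\in Q}P_j\Big)-\alpha_1\quad\text{for all }Q\subseteq\{2,\dots,q\}.
\]

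This single step is the main obstacle. I would handle it with the tight-set argument used for the finite-matroid embedding and for \ref{ax:r5}. Start from any independent $F\subseteq P_1$ of measure at most $\alpha_1$ satisfying the invariant; $F=\emptyset$ works. Take the supremum of achievable measures, using \ref{ax:i4} together with continuity from below of $r$ (\cref{cor:smooth}) to pass to increasing limits; this shows the invariant is preserved in the limit, so a maximal such $F$ exists by \cref{lem:maximal}. Then show $\mu(F)=\alpha_1$.

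Suppose instead $\mu(F)<\alpha_1$. Call $Q$ tight if $r(F\cup P_Q)-\mu(F)-\alpha(Q)$ is $0$, or small. Submodularity of $Q\mapsto r(F\cup P_Q)$ makes the tight sets closed under union, so the union $Q^*$ of all tight sets is tight. Note $1\notin Q$ is built in, so take instead the set-function on $Q\subseteq\{1,\dots,q\}$, where slack on sets containing $1$ is measured by $\alpha_1-\mu(F)$ as well. Using the hypothesis, the set $\{1\}\cup Q^*$ has positive slack. Hence $P_1$ is not spanned by $F\cup P_{Q^*}$, i.e.\ $\mu(P_1\setminus\clo(F\cup P_{Q^*}))>0$.

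By \ref{ax:cl5} applied in the contraction by $P_{Q^*}$, I would then add a small positive-measure $H\subseteq P_1$ to $F$. By atomlessness, $H$ can be taken so small that every non-tight $Q$ keeps its slack, while the tight ones are unaffected because $H$ is independent from $P_{Q^*}$. This contradicts maximality. The delicate point is the uniformity of the slack over the finitely many $Q$. Finiteness of $2^{[q]}$ makes this work, so I expect it to go through.
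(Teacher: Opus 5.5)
Your proof is correct, but it is organized differently from the paper's. The paper inducts on $q$. It discards zero targets, truncates to rank $\sum_j\alpha_j$, and splits into two cases. In Case~1 some nonempty proper $Q$ is tight: it realizes $\alpha$ on $Q$ by induction, contracts the resulting spanning set, and realizes the rest by induction. In Case~2 there is no tight set: it takes a maximal $H\subseteq P_q$ in the same feasible family you use. If $\mu(H)<\alpha_q$, it shows that a tight set must appear in $M/H$ and falls back on Case~1.

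The paper's augmentation step is exactly yours with $Q^*=\emptyset$. You instead absorb the tight constraints directly: you take the union $Q^*$ of all tight sets and augment modulo $F\cup P_{Q^*}$. This shows that the maximal member always reaches $\alpha_1$, so the proof collapses to one sequential greedy pass with no case split and no truncation. It also shows that the parts can be filled in any order. The paper's route only ever needs the trivial augmentation with no tight constraints, at the cost of a two-level recursion through restriction and contraction.

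When you write it out, tighten three points.
\begin{itemize}
\item Tight must mean slack exactly $0$; drop ``or small''. Union-closure uses that every slack is $\geq 0$, including slack $0$ at $Q=\emptyset$. The finite minimum of the slacks over $Q\not\subseteq Q^*$ is then strictly positive. Sets containing $1$ enter only through the original hypothesis at $\{1\}\cup Q^*$, which gives $r(P_1\cup P_{Q^*})\geq\alpha_1+\alpha(Q^*)>\mu(F)+\alpha(Q^*)=r(F\cup P_{Q^*})$.
\item Do the augmentation in $M/(F\cup P_{Q^*})$, where $P_1\setminus F$ has positive rank, rather than via \ref{ax:cl5} in $M/P_{Q^*}$. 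The set $F$ is independent in $M/P_{Q^*}$ only if $r(P_{Q^*})=\alpha(Q^*)$, which need not hold. For an independent $H$ of $M/(F\cup P_{Q^*})$, \cref{prop:contraction_char} gives $F\cup H\in\cF$.
\item Make ``tight constraints are unaffected'' precise as follows. The map $Z\mapsto r(H\cup Z)-r(Z)$ is nonincreasing by submodularity, so $r(F\cup H\cup P_Q)\geq r(F\cup P_Q)+\mu(H)$ for every $Q\subseteq Q^*$.
\end{itemize}
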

	
	\begin{proof}
		The necessity is immediate. Indeed, if such an independent set $F$ exists, then $F\cap \bigcup_{j\in Q}P_j$ is independent for every $Q\subseteq[q]$, and therefore
		$
		\sum_{j\in Q}\alpha_j
		=
		\mu\left(F\cap\bigcup_{j\in Q}P_j\right)
		\leq
		r\left(\bigcup_{j\in Q}P_j\right).
		$
		
		We prove the converse by induction on $q$. If some $\alpha_j$ is zero, then we delete the part $P_j$, apply the induction hypothesis to the restriction of $M$ to the union of the remaining parts, and take no element from $P_j$. Thus, we may assume that $\alpha_j>0$ for every $j$. Set $g\coloneqq \sum_{j=1}^q \alpha_j$. By replacing $M$ with its $g$-truncation, we may assume that $r(J)=g$. The inequalities in \eqref{eq:finite_partition_realization} remain valid for the truncated rank function $r_g(X)=\min\{r(X),g\}$, and every independent set in the truncation is independent in $M$. The case $q=1$ then follows from the definition of rank. Assume now that $q\geq 2$. We distinguish two cases.
		\medskip
		
		\noindent \textbf{Case 1.} There is a nonempty proper set $Q\subsetneq[q]$ such that
		$
		\sum_{j\in Q}\alpha_j
		=
		r\left(\bigcup_{j\in Q}P_j\right).
		$
		
		By induction, applied to the restriction of $M$ to $\bigcup_{j\in Q}P_j$, there exists an independent set $F_Q$ such that $\mu(F_Q\cap P_j)=\alpha_j$ for every $j\in Q$. Since $\mu(F_Q)=r\left(\bigcup_{j\in Q}P_j\right)$, the set $F_Q$ spans $\bigcup_{j\in Q}P_j$. Let $R=[q]\setminus Q$, and consider the contraction $M/F_Q$. Fix $S\subseteq R$, and set $U_Q\coloneqq \bigcup_{j\in Q}P_j$ and $U_S\coloneqq \bigcup_{j\in S}P_j$. Since $F_Q$ spans $U_Q$, we have
		\[
		r_{M/F_Q}(U_S)
		=
		r(F_Q\cup U_S)-\mu(F_Q)
		=
		r(U_Q\cup U_S)-r(U_Q).
		\]
		Applying \eqref{eq:finite_partition_realization} to $Q\cup S$ and using the tightness of $Q$, we obtain
		$
		\sum_{j\in S}\alpha_j
		\leq
		r(U_Q\cup U_S)-r(U_Q)
		=
		r_{M/F_Q}(U_S).
		$
		This proves that the inequalities required for applying the induction hypothesis hold in the contraction. By induction, there exists an independent set $F_R$ of $M/F_Q$ such that $\mu(F_R\cap P_j)=\alpha_j$ for every $j\in R$. Then $F_Q\cup F_R$ is independent in $M$ and has the required measures.
		\medskip
		
		\noindent \textbf{Case 2.} For every nonempty proper set $Q\subsetneq[q]$, we have $\sum_{j\in Q}\alpha_j<r\left(\bigcup_{j\in Q}P_j\right)$. 
		
		We work with the last part $P_q$. Let $\mathcal H$ be the family of all independent sets $H\subseteq P_q$ such that $\mu(H)\leq\alpha_q$ and, for every $S\subseteq[q-1]$,
		\begin{equation}\label{eq:H_feasible}
			\sum_{j\in S}\alpha_j
			\leq
			r\left(H\cup\bigcup_{j\in S}P_j\right)-\mu(H).
		\end{equation}
		The family $\mathcal H$ is nonempty, since $\emptyset\in\mathcal H$. It is also $\sigma$-increasing. Indeed, if $(H_n)_{n\in\bN}$ is increasing in $\mathcal H$ and $H=\bigcup_{n \in \bN} H_n$, then $H$ is independent, $\mu(H)\leq\alpha_q$, and \eqref{eq:H_feasible} follows from the continuity from below of $\mu$ and $r$. Hence, by \cref{lem:maximal}, $\mathcal H$ contains a maximal member $H$.
		
		If $\mu(H)=\alpha_q$, then \eqref{eq:H_feasible} says exactly that the remaining vector $(\alpha_1,\dots,\alpha_{q-1})$ satisfies the required inequalities in the contraction $M/H$. By induction, applied in $M/H$ to the parts $P_1,\dots,P_{q-1}$, there exists an independent set $F'$ of $M/H$ such that $\mu(F'\cap P_j)=\alpha_j$ for every $j<q$. Then $H\cup F'$ is independent in $M$ and has the required measures.
		
		We may therefore assume that $\mu(H)<\alpha_q$. Set $M'\coloneqq M/H$, and define residual targets $\alpha'_q\coloneqq \alpha_q-\mu(H)$ and $\alpha'_j\coloneqq \alpha_j$ for $j<q$. The vector $\alpha'$ satisfies the inequalities in \eqref{eq:finite_partition_realization} in $M'$. For sets not containing $q$, this is precisely \eqref{eq:H_feasible}. For sets containing $q$, it follows from the original inequalities, since $H\subseteq P_q$.
		
		We claim that, in $M'$, some nonempty proper subset of $[q]$ is tight with respect to the residual targets. Suppose not. Then every nonempty proper subset has positive slack. In particular, among the nonempty sets $S\subseteq[q-1]$, the minimum of
		$
		r_{M'}\left(\bigcup_{j\in S}P_j\right)
		-
		\sum_{j\in S}\alpha_j
		$
		is positive. Moreover, the inequality for the singleton $\{q\}$ gives $r_{M'}(P_q\setminus H)\geq \alpha'_q>0$. Thus, there exists an independent set $H'\subseteq P_q\setminus H$ in $M'$ of positive measure. Since $\mu$ is atomless, we may choose $H'$ so that $\mu(H')\leq\alpha'_q$ and smaller than the minimum slack above. Then $H\cup H'$ is independent in $M$ and has measure at most $\alpha_q$. For every nonempty $S\subseteq[q-1]$, monotonicity gives
		\[
		r\left(H\cup H'\cup\bigcup_{j\in S}P_j\right)-\mu(H)-\mu(H')
		\geq
		r\left(H\cup\bigcup_{j\in S}P_j\right)-\mu(H)-\mu(H')
		\geq
		\sum_{j\in S}\alpha_j .
		\]
		For $S=\emptyset$, the same inequality holds because $H\cup H'$ is independent. Hence, $H\cup H'$ belongs to $\mathcal H$, contradicting the maximality of $H$. This proves the claim.
		
		We can now apply the tight-set case, Case 1 above, in the contracted matroid $M'$ to the partition
		$
		P_1,\dots,P_{q-1}, P_q\setminus H
		$
		and to the residual targets $\alpha'_1,\dots,\alpha'_q$. We obtain an independent set $F'$ of $M'$ such that $\mu(F'\cap P_j)=\alpha_j$ for every $j<q$ and $\mu(F'\cap(P_q\setminus H))=\alpha'_q$. Then $H\cup F'$ is independent in $M$, and its intersection with each $P_j$ has measure $\alpha_j$.
	\end{proof}
	
	\begin{prop}\label{prop:minorizing_closure}
		The set $\mm(r)$ is the weak-star closure of
		$
		\{\mu_I\mid I\in\cF\}.
		$
	\end{prop}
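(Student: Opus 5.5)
The proof splits into two inclusions. For the first, I will show that $\mm(r)$ is convex and weak-star closed and contains every $\mu_I$. Containment of $\mu_I$ for $I\in\cF$ was noted just before \cref{lem:finite_partition_realization}.

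For weak-star closedness, identify $\mm(r)$ with the set of densities $f$ satisfying $0\le f\le 1$ and $\int \mathbf 1_X f\diff\mu\le r(X)$ for every $X\in\Sigma$. Each such constraint is given by pairing against $\mathbf 1_X\in L^1(\mu)$, so it is weak-star closed. The condition $0\le f$ is likewise weak-star closed, and additivity of the integral is automatic. Hence $\mm(r)$ is an intersection of weak-star closed sets. It follows that the weak-star closure of $\{\mu_I\mid I\in\cF\}$ is contained in $\mm(r)$.

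For the reverse inclusion, fix $\alpha\in\mm(r)$ and a basic neighbourhood determined by $h_1,\dots,h_k\in L^1(\mu)$ and $\varepsilon>0$. I want to find $I\in\cF$ with $|\int h_\ell\diff\mu_I-\int h_\ell\diff\alpha|<\varepsilon$ for every $\ell$.

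First approximate each $h_\ell$ in $L^1(\mu)$ within $\varepsilon/3$ by a simple function. Since $\alpha$ and $\mu_I$ both have densities in $[0,1]$, replacing $h_\ell$ by a simple function changes both integrals by at most $\varepsilon/3$. Let $P_1,\dots,P_q$ be the common refinement of the level sets of these simple functions; this is a finite measurable partition of $J$. It then suffices to find $I\in\cF$ with $\mu(I\cap P_j)=\alpha(P_j)$ for all $j$, because the integrals of the simple functions against $\mu_I$ and against $\alpha$ then agree exactly.

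Apply \cref{lem:finite_partition_realization} with $\alpha_j\coloneqq\alpha(P_j)$. Its hypothesis \eqref{eq:finite_partition_realization} reads $\alpha\bigl(\bigcup_{j\in Q}P_j\bigr)\le r\bigl(\bigcup_{j\in Q}P_j\bigr)$ for each $Q\subseteq[q]$, and this holds precisely because $\alpha$ is a minorizing measure. The lemma therefore provides the required $I$. Since every basic neighbourhood of $\alpha$ meets $\{\mu_I\mid I\in\cF\}$, $\alpha$ lies in its weak-star closure.

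The real difficulty, realizing prescribed masses on a finite partition, has already been handled by \cref{lem:finite_partition_realization}. What remains is a small bookkeeping point: the simple-function approximation must use a single common partition for all $h_\ell$, and the error bound needs densities in $[0,1]$. For $\alpha$ this comes from \cref{cor:mm-measures}.
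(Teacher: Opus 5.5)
Your proposal is correct and matches the paper's proof step for step. You show $\mm(r)$ is weak-star closed as an intersection of the order interval with the halfspaces $\alpha(X)\le r(X)$. You then approximate the test functions by simple functions on a common finite partition and apply \cref{lem:finite_partition_realization} with $\alpha_j=\alpha(P_j)$, which is exactly the paper's argument (the convexity you mention is not needed).
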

	
	\begin{proof}
		The set $\mm(r)$ is weak-star closed. Indeed, the order interval $0\leq f\leq 1$ in $L^\infty(\mu)$ is weak-star closed. For every $X\in\Sigma$, the map $\alpha\mapsto\alpha(X)$ is weak-star continuous, since it is evaluation against $\mathbf 1_X\in L^1(\mu)$. Hence, the inequalities $\alpha(X)\leq r(X)$ define weak-star closed halfspaces, and $\mm(r)$ is the intersection of the order interval with these halfspaces.
		
		We have already observed that $\mu_I\in\mm(r)$ for every $I\in\cF$. It remains to show that every $\alpha\in\mm(r)$ lies in the weak-star closure of these measures. Let $h_1,\dots,h_k\in L^1(\mu)$ and let $\varepsilon>0$. By the density of simple functions in $L^1(\mu)$~\cite[Proposition~6.7]{folland1999real}, we may choose simple functions $s_1,\dots,s_k$ such that $\|h_\ell-s_\ell\|_1<\varepsilon$ for every $\ell\in[k]$. By taking a common refinement of their level-set partitions, we may assume that there is a finite measurable partition $P_1,\dots,P_q$ of $J$ such that every $s_\ell$ is constant on each part $P_j$.
		
		Set $\alpha_j\coloneqq \alpha(P_j)$ for $j\in[q]$. Since $\alpha\in\mm(r)$, for every $Q\subseteq[q]$ we have
		\[
		\sum_{j\in Q}\alpha_j
		=
		\alpha\left(\bigcup_{j\in Q}P_j\right)
		\leq
		r\left(\bigcup_{j\in Q}P_j\right).
		\]
		By \cref{lem:finite_partition_realization}, there exists $I\in\cF$ such that $\mu(I\cap P_j)=\alpha_j$ for every $j\in[q]$. Thus, $\mu_I$ and $\alpha$ agree on every part of the partition, and therefore
		$
		\int s_\ell\diff \mu_I=\int s_\ell\diff \alpha
		$
		for every $\ell\in[k]$. Since both $\mu_I$ and $\alpha$ are dominated by $\mu$, we get
		\[
		\left|\int h_\ell\diff \mu_I-\int h_\ell\diff \alpha\right|
		\leq
		2\|h_\ell-s_\ell\|_1
		<
		2\varepsilon
		\]
		for every $\ell\in[k]$. This proves that every weak-star neighbourhood of $\alpha$ contains some $\mu_I$ with $I\in\cF$.
	\end{proof}
	
	\begin{cor}\label{cor:basic_minorizing_closure}
		The set $\bmm(r)$ is the weak-star closure of
		$
		\{\mu_B\mid B \text{ is a basis of } M\}.
		$
	\end{cor}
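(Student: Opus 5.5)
The argument follows the proof of \cref{prop:minorizing_closure}, with \cref{lem:finite_partition_realization} supplemented by an extension step that turns the independent set into a basis without changing its measure on the parts.

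\emph{Closedness.} First I show that $\bmm(r)$ is weak-star closed. By \cref{prop:minorizing_closure}, $\mm(r)$ is weak-star closed. The extra condition $\alpha(J)=r(J)$ is evaluation against $\mathbf 1_J\in L^1(\mu)$, so it defines a weak-star closed hyperplane. Hence $\bmm(r)$ is the intersection of two closed sets. It contains every $\mu_B$, since $\mu_B\in\mm(r)$ and $\mu_B(J)=\mu(B)=r(J)$. So the weak-star closure of $\{\mu_B\}$ lies in $\bmm(r)$.

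\emph{Density.} Fix $\alpha\in\bmm(r)$, test functions $h_1,\dots,h_k\in L^1(\mu)$ and $\varepsilon>0$. As in \cref{prop:minorizing_closure}, approximate the $h_\ell$ in $L^1$ by simple functions that are constant on the parts of a finite partition $P_1,\dots,P_q$. Set $\alpha_j\coloneqq\alpha(P_j)$. The inequalities \eqref{eq:finite_partition_realization} hold because $\alpha$ is minorizing, so \cref{lem:finite_partition_realization} gives $I\in\cF$ with $\mu(I\cap P_j)=\alpha_j$ for every $j\in[q]$. Summing over $j$ gives $\mu(I)=\alpha(J)=r(J)$. Therefore $I$ is already a maximal independent set: any independent superset has measure at most $r(J)$, so it agrees with $I$ up to a null set. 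Thus $I$ is a basis $B$, and $\mu_B$ agrees with $\alpha$ on every $P_j$. The same estimate as before, using that both measures are dominated by $\mu$, gives $|\int h_\ell\diff\mu_B-\int h_\ell\diff\alpha|<2\varepsilon$ for every $\ell\in[k]$. So every weak-star neighbourhood of $\alpha$ contains some $\mu_B$.

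\emph{Main obstacle.} The only step needing care is that the set produced by \cref{lem:finite_partition_realization} is a basis rather than merely independent. This is exactly where the total-mass condition $\alpha(J)=r(J)$ enters: it forces $\mu(I)=r(J)$, and bases are precisely the independent sets of measure $r(J)$ by \ref{ax:i3}. No further limiting argument is needed.
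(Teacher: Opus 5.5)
Your proposal is correct and follows essentially the same route as the paper. The paper likewise reruns the approximation from \cref{prop:minorizing_closure}, uses $\alpha(J)=r(J)$ to conclude that the set $I$ from \cref{lem:finite_partition_realization} has measure $r(J)$ and is therefore a basis, and gets the reverse inclusion from the weak-star closedness of $\bmm(r)$.
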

	
	\begin{proof}
		Let $\alpha\in\bmm(r)$. In the proof of \cref{prop:minorizing_closure}, apply \cref{lem:finite_partition_realization} 
		with $\alpha_j=\alpha(P_j)$. The independent set $I$ obtained there satisfies
		$
		\mu(I)=\sum_j\alpha_j=\alpha(J)=r(J).
		$
		Hence, $I$ is a basis. The same approximation argument therefore uses only measures of the form $\mu_B$, where $B$ is a basis.
		
		The reverse inclusion follows from $\mu_B\in\bmm(r)$ for every basis $B$, and from the weak-star closedness of $\bmm(r)$.
	\end{proof}
	
	\begin{rem}\label{rem:weak_star_compactness}
		The order interval
		$
		\{f\in L^\infty(\mu)\mid 0\leq f\leq 1\}
		$
		is weak-star compact and weak-star sequentially compact. Indeed, it is weak-star closed in the closed unit ball of
		$L^\infty(\mu)=(L^1(\mu))^*$, and hence is weak-star compact by the Banach--Alaoglu theorem~\cite[Theorem~V.3.1]{conway1990course}. Moreover, since $L^1(\mu)$ is separable, the weak-star topology on the closed unit ball is metrizable~\cite[Theorem~V.5.1]{conway1990course}. Thus, compactness and sequential compactness agree on this ball.
		
		As observed in the proof of \cref{prop:minorizing_closure}, $\mm(r)$ is weak-star closed. Moreover, $\bmm(r)$ is weak-star closed in $\mm(r)$, since the map $\alpha\mapsto\alpha(J)$ is weak-star continuous. Consequently, both $\mm(r)$ and $\bmm(r)$ are weak-star compact and weak-star sequentially compact.
	\end{rem}
	
	Thus every basic minorizing measure is a weak-star limit of base measures. This does not mean, however, that every extreme point of $\bmm(r)$ is itself a base measure.
	
	For brevity, write $\bmm(M)\coloneqq\bmm(r)$. Every base measure is an
	extreme point of $\bmm(M)$. Indeed, suppose that $\mu_B=(\alpha+\beta)/2$ for some $\alpha,\beta\in\bmm(M)$. The densities of $\alpha$ and $\beta$ with respect to $\mu$ take values in $[0,1]$, while their average is $\mathbf 1_B$. It follows that both densities are equal to
	$1$ on $B$ and to $0$ outside $B$. Hence $\alpha=\beta=\mu_B$. The converse is false. For the interval-halving matroid introduced in \cref{sec:nested} and illustrated in \cref{fig:halving1}, there is an extreme basic minorizing measure that is not a base measure.
	
	\begin{prop}
		Let $N$ be the interval-halving matroid, and let $\nu$ 
		be the measure defined by $\nu(X) = \lambda(X)/2$ for all $X \in \Sigma$. Then $\nu$ is an extreme point of $\bmm(N)$.
	\end{prop}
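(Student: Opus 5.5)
First check that $\nu\in\bmm(N)$. The rank function of the interval-halving matroid is
\[
r(X)=\min_{t\in[0,1]}\{t/2+\lambda(X\setminus[0,t])\}.
\]
For every $t$ we have
\[
\nu(X)=\tfrac12\lambda(X\cap[0,t])+\tfrac12\lambda(X\setminus[0,t])\le t/2+\lambda(X\setminus[0,t]),
\]
so $\nu(X)\le r(X)$. Also $\nu(J)=1/2=r(J)$, so $\nu$ is a basic minorizing measure.

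For extremality, suppose $\nu=(\alpha+\beta)/2$ with $\alpha,\beta\in\bmm(N)$. The aim is to pin down each of $\alpha$ and $\beta$ on the chain of initial intervals and then invoke \cref{lem:halving_measure}. Fix $t\in[0,1]$. Since $\alpha,\beta\in\mm(r)$ and $r([0,t])=t/2$, we get
\[
\alpha([0,t])\le t/2,\qquad \beta([0,t])\le t/2.
\]
On the other hand,
\[
\alpha([0,t])+\beta([0,t])=2\nu([0,t])=t .
\]
Hence both inequalities are equalities: $\alpha([0,t])=\beta([0,t])=t/2$ for every $t\in[0,1]$.

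By \cref{cor:mm-measures}, $\alpha$ and $\beta$ are finite Borel measures on $[0,1]$. The first part of \cref{lem:halving_measure} then gives $\alpha=\lambda/2=\nu$, and likewise $\beta=\nu$. So $\nu$ admits no nontrivial convex decomposition in $\bmm(N)$, and it is an extreme point.

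The argument has no real obstacle. The only point needing care is that $\alpha$ and $\beta$ are countably additive, so that \cref{lem:halving_measure} applies. This is supplied by \cref{cor:mm-measures}, because the rank function is increasing, submodular and dominated by $\lambda$.

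The same reasoning works without appealing to basicness: $\nu$ is tight on the whole chain of initial intervals, and any decomposition must then be tight there too. Finally, one may note that $\nu$ is not a base measure: its density is identically $1/2$, not an indicator function. This is consistent with \cref{lem:halving_measure}, which shows no Borel set realizes $t/2$ on every initial interval.
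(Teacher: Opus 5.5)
Your proof is correct. Its extremality half is exactly the paper's argument: tightness of $\alpha$ and $\beta$ on every $[0,t]$, followed by \cref{lem:halving_measure}. Your remark that this step uses only $\alpha,\beta\in\mm(r)$, so that $\nu$ is in fact extreme in $\mm(r)$, is also right.

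Where you differ is in proving $\nu\in\bmm(N)$. You bound $\nu(X)$ by every term $t/2+\lambda(X\setminus[0,t])$ of the nested rank formula. This is quick, but it relies on the nontrivial inequality $r(X)\geq\inf_t\{t/2+\lambda(X\setminus[0,t])\}$ from \cref{thm:nested_matroids}, which is the direction proved via the maximal-chain argument there. The paper avoids that formula altogether. It chooses $c$ with $\lambda(X\cap[0,c])=\lambda(X)/2$, using continuity of $t\mapsto\lambda(X\cap[0,t])$, and checks directly from the definition of $\cF$ that $X\cap[c,1]$ is independent. This gives $r(X)\geq\lambda(X)/2$ with an explicit witness.

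So the paper's route is self-contained and constructive: it shows that the ``upper half'' of any set is independent. Yours is shorter once the nested rank formula is available. Both are valid.
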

	\begin{proof}
		Let $r$ denote the rank function of $N$. We first show that $\nu\in\bmm(N)$. Let $X\in\Sigma$; we need to show that $r(X)\geq\nu(X)=\lambda(X)/2$. It suffices to find a set $X'\subseteq X$ with $X'\in\cF$ and $\lambda(X')=\lambda(X)/2$.
		
		Define $f(t)\coloneqq\lambda(X\cap[0,t])$ for $t\in[0,1]$. If $0\leq s\leq t\leq1$, then $0\leq f(t)-f(s)=\lambda(X\cap(s,t])\leq t-s$, so $f$ is continuous. Let $c\coloneqq\sup\{t\in[0,1]\mid f(t)\leq\lambda(X)/2\}$. Since $f(0)=0$, $f(1)=\lambda(X)$, and $f$ is continuous, we have $f(c)=\lambda(X)/2$. Moreover, $f(c)\leq c$, and hence $c\geq\lambda(X)/2$.
		
		Set $X'\coloneqq X\cap[c,1]$. Then we have $\lambda(X')=\lambda(X)-f(c)=\lambda(X)/2$. To verify that $X'\in\cF$, let $t\in[0,1]$. If $t\leq c$, then $\lambda(X'\cap[0,t])=0\leq t/2$. If $t\geq c$, then
		\[
		\lambda(X'\cap[0,t])
		=
		f(t)-f(c)
		=
		f(t)-\frac{\lambda(X)}{2}
		\leq
		\frac{f(t)}{2}
		\leq
		\frac{t}{2}.
		\]
		Thus, $X'\in\cF$, proving that $r(X)\geq\nu(X)$.
		
		Finally, suppose that $\nu=(\alpha+\beta)/2$ for some $\alpha,\beta\in\bmm(N)$. For every $t\in[0,1]$, we have
		\[
		\alpha([0,t])\leq r([0,t])=\frac{t}{2},
		\qquad
		\beta([0,t])\leq r([0,t])=\frac{t}{2}.
		\]
		Since their average is $\nu([0,t])=t/2$, equality holds in both inequalities. Thus, $\alpha([0,t])=\beta([0,t])=t/2$ for every $t\in[0,1]$. By \cref{lem:halving_measure}, $\alpha=\beta=\nu$. Therefore, $\nu$ is an extreme point of $\bmm(N)$.
	\end{proof}
	
	%%%%%%%%%%%%%%%%%%%%%%%%%%%%%%%%
	\section{Measurable Matroid Intersection and Union Theorems}
	\label{sec:intun}
	%%%%%%%%%%%%%%%%%%%%%%%%%%%%%%%%
	
	This section proves the measurable analogues of the two central min--max theorems of finite matroid optimization: matroid intersection and matroid union. The intersection theorem may also be viewed as an integral counterpart of Lovász's fractional intersection theorem~\cite[Theorem~5.15]{lovasz2023submodular}. Lovász's theorem gives the same min--max value over nonnegative measures minorized by both rank functions, whereas we optimize over restrictions $\mu|_F$ to common measurable independent sets. Thus, there is no integrality gap, although the integral supremum need not be attained. In fact, for measurable matroid rank functions, Lovász's theorem follows from ours: if $(F_i)_{i\in\bN}$ is a sequence of common independent sets whose measures converge to the supremum, then a weak-star limit point of the measures $\mu|_{F_i}$ is minorized by both rank functions and attains the same optimal value.
	
	The central and technically deepest result of the paper is the intersection theorem, stated in \cref{thm:intersection}. Its proof is the most technical argument in the paper, and we believe that it is difficult to follow without further motivation. We therefore devote \cref{sec:intersection_guide} to explaining the connections, motivations, and main ideas behind the proof. Although this overview makes the formal argument substantially easier to follow, no result depends on it, so readers interested only in the precise proof may safely skip it. In \cref{sec:preparations}, we present the technical lemmas on which the proof is based, while \cref{sec:intersection} contains the proof of the intersection theorem. In \cref{sec:expansion_attainment}, we give an expansion condition under which the supremum in the intersection theorem is attained. Finally, in \cref{sec:union_rank}, we use the intersection theorem to prove our other main result, the union theorem.
	
	Recall that all equalities and inclusions between measurable sets are understood in the measure algebra unless explicitly stated otherwise.
	
	%%%%%%%%%%%%%%%%%%%%%%%%%%%%%%%%
	\subsection{Outline of the Proof}
	\label{sec:intersection_guide}
	%%%%%%%%%%%%%%%%%%%%%%%%%%%%%%%%
	
	The main statement of this section, proved in \cref{thm:intersection}, is the following. If $M_1=(J,\Sigma,\mu,\cF_1)$ and $M_2=(J,\Sigma,\mu,\cF_2)$ are measurable matroids with rank functions $r_1$ and $r_2$, then
	\[
	\sup\{\mu(F)\mid F\in\cF_1\cap\cF_2\}
	=
	\min_{X\in\Sigma}\{r_1(X)+r_2(J\setminus X)\}.
	\]
	The expression on the right is indeed a minimum by \cref{lem:minimum}. It is easy to see that it is an upper bound on the measure of any common independent set. The theorem establishes the reverse inequality: for every $\varepsilon>0$, there exists a common independent set whose measure is within $\varepsilon$ of the right-hand side. However, the supremum on the left need not be attained, as shown in \cref{rem:intersection_sup_not_attained}.
	
	This distinction is already important in the most basic application, namely measurable bipartite matchings. Approximate Hall-type statements, which produce matchings leaving a set of arbitrarily small measure uncovered, follow from the local-improvement method of Elek and Lippner~\cite{elek2009borel} and are now considered standard. Finding a matching that covers all vertices up to a null set is a genuinely stronger and much more delicate problem. Hall's condition alone does not suffice, even for regular acyclic graphings, as shown by Laczkovich~\cite{laczkovich1988closed} (see also \cref{rem:laczkovich_matching}) and Kun~\cite{kun2021measurable}. For higher even degrees, Conley and Kechris~\cite[Section~6]{conley2013measurable} constructed $d$-regular bipartite Borel graphs with no Borel perfect matching for every even $d\geq 4$. Positive results typically impose additional structure, most notably expansion~\cite{lyons2011perfect,grabowski2022measurable}. Consequently, the supremum form of the intersection theorem captures the correct general min--max value, but the attainment question remains essential.
	
	The arguments below build on the classical augmenting-path proof for finite bipartite matchings~\cite{frobenius1917zerlegbare,konig1931graphok,hall1935representatives}. Let $G=(S\cup T,E)$ be a finite bipartite graph and let $M$ be a matching. Starting from the unmatched vertices of $S$, one follows paths whose edges alternate between $E\setminus M$ and $M$. If such a path reaches an unmatched vertex of $T$, then switching the status of every edge along the path increases the size of the matching by one. If no such path exists, the sets reachable by alternating paths encode an obstruction explaining why the matching cannot be enlarged. This obstruction yields the dual certificates underlying Hall's theorem and Kőnig's min--max theorem.
	
	Edmonds' finite matroid intersection algorithm~\cite{edmonds1970submodular} follows the same general strategy, although a matroid does not initially come with any local structure. Let $M_1=(E,\cF_1)$ and $M_2=(E,\cF_2)$ be finite matroids, and let $F\in\cF_1\cap\cF_2$. The missing local structure is manufactured from $F$ by an auxiliary directed graph. Set 
	\[
	S_1\coloneqq\{x\in E\setminus F\mid F+x \in\cF_1\},
	\qquad
	S_2\coloneqq\{x\in E\setminus F\mid F+x\in\cF_2\}.
	\]
	For $x\in E\setminus F$ and $y\in F$, put an arc from $x$ to $y$ if $F-y+x\in\cF_2$, and an arc from $y$ to $x$ if $F-y+x\in\cF_1$. If the auxiliary graph contains a directed path
	$
	x_0,y_1,x_1,\dots,y_n,x_n
	$
	from $S_1$ to $S_2$, where $x_i\notin F$ and $y_i\in F$, then a shortest such path is augmenting: a standard fundamental-circuit argument shows that
	$
	\widehat F
	=
	F\setminus\{y_1,\dots,y_n\}\cup\{x_0,\dots,x_n\}
	$ 
	is independent in both matroids and has cardinality $|F|+1$. If no directed path from $S_1$ to $S_2$ exists, let $R$ be the set of vertices reachable from $S_1$. It is not difficult to prove that
	\[
	r_1(E\setminus R)+r_2(R)
	=
	|F\setminus R|+|F\cap R|
	=
	|F|,
	\]
	which is exactly the dual certificate in the finite matroid intersection theorem, namely
	\[
	\max\{|I| \mid I \in \cF_1 \cap \cF_2\}
	=
	\min_{X \subseteq E}\{r_1(X)+r_2(E\setminus X)\}.
	\]
	
	Edmonds' algorithm does not generalize easily to the measurable setting. First, (A) there is no evident way to construct a local structure for measurable matroids: the finite argument uses the possible exchanges through fundamental circuits, while our axiomatic framework provides no corresponding circuit structure. Second, (B) exchanging a single element has no effect; an augmentation must exchange measurable sets and produce a positive net gain. Third, (C) even if one can always find a common independent set of slightly larger measure, simply repeating this step would not be enough: the sequence of sets obtained need not converge almost everywhere to a common independent set.
	
	The first key observation addresses (A). Given two measurable matroids and a common independent set $F$, although we cannot construct a measurable exchange graph, the existence of alternating paths of bounded length can still be expressed using the closure operators. In the finite setting, one can describe recursively, using closure, the sets of elements that can be connected to $S_1$ by an alternating path of a given length. We imitate this construction in the measurable setting and denote the resulting sets by
	$
	A_0,F_1,A_1,F_2,A_2,\dots.
	$
	These sets should be viewed as the measurable analogues of the reachability layers in the finite exchange graph: they record the existence of alternating paths of bounded length without requiring us to construct the paths themselves or any underlying local graph structure. Moreover, if $\mu(F)$ is smaller than the right-hand side of the intersection formula, then this recursive construction must terminate after finitely many steps.
	
	The layers alone do not yet give an augmentation, so (B) remains. The proof of \cref{lem:short_augmentation} works backwards through the layers and constructs pairwise disjoint positive-measure sets
	$X_n,Y_n,X_{n-1},Y_{n-1},\dots,\allowbreak Y_1,X_0$.
	The sets $X_i$ lie outside $F$, the sets $Y_i$ lie inside $F$, with $\mu(X_i)=\mu(Y_i)$ for all $i\in[n]$, and $\left(F\cup\bigcup_{i=0}^nX_i\right)\setminus\bigcup_{i=1}^nY_i$ is a common independent set of measure strictly larger than $\mu(F)$. Conceptually, this is a measurable augmenting chain that plays the role of a single path and resolves (B).
	
	For iteration, merely obtaining a larger set is not enough, leaving (C) open. The same issue already appears for measurable matchings. Elek and Lippner showed that, for every fixed $q$, one can construct a Borel matching with no augmenting path containing at most $q$ matched edges~\cite{elek2009borel}. Under Hall's condition, a standard argument implies that at most a proportion of order $1/q$ of the vertices can remain unmatched. This yields the standard approximate Hall theorem. Their construction, however, relies on the local structure of graphs, and hence does not extend directly to matroids. Attainment requires an additional convergence mechanism. A key idea in the Lyons--Nazarov argument~\cite{lyons2011perfect} is to control how much the matching changes during each improvement. Under expansion, the total measure of these changes can be made summable. The Borel--Cantelli lemma then implies that almost every edge changes its status only finitely many times, so the sequence has a measurable limiting matching. Our proof uses the same idea.
	
	The raw measurable augmenting chain constructed above may replace a large portion of $F$ in exchange for a very small gain, so it does not yet provide the control required in (C). The second key step is therefore a controlled thinning of the chain. After the raw chain has been constructed backwards, the proof chooses, in the forward order, subsets
	$
	X'_0,Y'_1,X'_1,\dots,Y'_n,X'_n
	$
	having the same nonzero measure such that
	\[
	\widehat F
	\coloneqq
	\Big(F\cup\bigcup_{i=0}^nX'_i\Big)\setminus\bigcup_{i=1}^nY'_i.
	\]
	Consequently, the total amount changed is bounded by the length of the augmenting chain times the actual gain. 
	
	Fix $L\in\bN$. As long as the current common independent set is more than $\mu(J)/L$ below the right-hand side of the intersection formula, there exists an augmenting chain of length at most $L$. The small-change estimate allows us to iterate these augmentations transfinitely, using the Borel--Cantelli lemma at countable limit stages to obtain an almost-everywhere limit that remains common independent. Since the measure strictly increases at every successor stage, the process must terminate at a countable stage, yielding a common independent set with no augmenting chain of length at most $L$. Such a set is within $\mu(J)/L$ of the right-hand side of the intersection formula. Letting $L$ tend to infinity yields the measurable matroid intersection theorem.
	
	%%%%%%%%%%%%%%%%%%%%%%%%%%%%%%%%
	\subsection{Auxiliary Lemmas}
	\label{sec:preparations}
	%%%%%%%%%%%%%%%%%%%%%%%%%%%%%%%%
	
	We first collect the general tools that will be used repeatedly in the construction.
	
	The next lemma allows us to identify, inside an independent set $F$, the smallest part that is needed to span a prescribed set $A$.
	
	\begin{lem} \label{lem:closure_intersection}
		Let $M=(J, \Sigma, \mu, \cF)$ be a measurable matroid with closure operator $\clo$. Let $F_1,F_2 \in \cF$ and $A \in \Sigma$ be such that $F_1 \cup F_2 \in \cF$ and $A \subseteq \clo(F_1) \cap \clo(F_2)$. Then $A \subseteq \clo(F_1 \cap F_2)$.
	\end{lem}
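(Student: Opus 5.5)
The plan is to prove the statement by a rank computation using submodularity of $r$. Set $G\coloneqq F_1\cup F_2\in\cF$. By \ref{ax:i2}, the sets $F_1$, $F_2$ and $F_1\cap F_2$ are independent. Put $A'\coloneqq A\setminus(F_1\cap F_2)$; it suffices to show $r((F_1\cap F_2)\cup A)=\mu(F_1\cap F_2)$. Then $(F_1\cap F_2)\cup A\in\cA_{F_1\cap F_2}$, and the maximality of $\clo(F_1\cap F_2)$ in this family gives $A\subseteq\clo(F_1\cap F_2)$.

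The key inequality comes from applying \ref{ax:r4} to $X\coloneqq F_1\cup A$ and $Y\coloneqq F_2\cup A$. Since $A\subseteq\clo(F_1)$, we have $r(X)=r(F_1)=\mu(F_1)$, using that $F_1\subseteq X\subseteq\clo(F_1)$ and $r(\clo(F_1))=r(F_1)$. Similarly, $r(Y)=\mu(F_2)$. Moreover, $X\cup Y=G\cup A\supseteq G$, so $r(X\cup Y)\geq r(G)=\mu(G)=\mu(F_1)+\mu(F_2)-\mu(F_1\cap F_2)$. Also $X\cap Y=(F_1\cap F_2)\cup A$, using distributivity of the set operations in the measure algebra. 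Submodularity then yields
\[
r((F_1\cap F_2)\cup A)\leq \mu(F_1)+\mu(F_2)-r(X\cup Y)\leq \mu(F_1\cap F_2).
\]
The reverse inequality $r((F_1\cap F_2)\cup A)\geq r(F_1\cap F_2)=\mu(F_1\cap F_2)$ follows from monotonicity \ref{ax:r2}. This gives equality.

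I expect no serious obstacle. The only points to check are two. First, the identity $(F_1\cup A)\cap(F_2\cup A)=(F_1\cap F_2)\cup A$ holds pointwise, hence also modulo null sets. Second, the closure is the unique maximal element of $\cA_{F_1\cap F_2}$, which is closed under unions, so every member of $\cA_{F_1\cap F_2}$ is contained in it by \cref{cor:unionclosed}. With these, the conclusion $A\subseteq(F_1\cap F_2)\cup A\subseteq\clo(F_1\cap F_2)$ follows.
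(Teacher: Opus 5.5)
Your proposal is correct and is essentially the paper's proof: both apply submodularity to $F_1\cup A$ and $F_2\cup A$, bound $r((F_1\cap F_2)\cup A)$ above by $\mu(F_1\cap F_2)$, and finish with monotonicity and the maximality of the closure. The only difference is that you use $r((F_1\cup F_2)\cup A)\geq\mu(F_1\cup F_2)$ by monotonicity, whereas the paper obtains equality from $A\subseteq\clo(F_1\cup F_2)$; the inequality suffices. The set $A'$ you define is never used and can be dropped.
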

	
	\begin{proof}
		Let $r$ be the rank function of $M$. Since $A \subseteq \clo(F_i)$ for $i=1,2$, we have $r(F_i\cup A)=r(F_i)=\mu(F_i)$. Also, $A\subseteq \clo(F_1\cup F_2)$, and $F_1\cup F_2$ is independent by assumption. Applying submodularity to $F_1\cup A$ and $F_2\cup A$ gives
		\[
		\mu(F_1)+\mu(F_2)
		=
		r(F_1\cup A)+r(F_2\cup A)
		\geq
		r((F_1\cup F_2)\cup A)+r((F_1\cap F_2)\cup A)
		=
		\mu(F_1\cup F_2)+r((F_1\cap F_2)\cup A).
		\]
		Hence, $r((F_1\cap F_2)\cup A)\leq \mu(F_1\cap F_2)$. Since $F_1\cap F_2$ is independent, the reverse inequality follows from monotonicity. Thus, $r((F_1\cap F_2)\cup A)=r(F_1\cap F_2)$, which means exactly that $A\subseteq \clo(F_1\cap F_2)$.
	\end{proof}
	
	The next lemma allows us to define an operation $\widehat{\clo}$ which, in a sense, reverses closure by selecting the smallest part of an independent set needed to span a prescribed set.
	
	\begin{lem} \label{lem:inverse_closure}
		Let $M=(J, \Sigma, \mu, \cF)$ be a measurable matroid with closure operator $\clo$. Let $F \in \cF$ and $A \in \Sigma$ be such that $A \subseteq \clo(F)$. Then the family
		\[
		\cG \coloneqq \{F'\subseteq F \mid A\subseteq \clo(F')\}
		\]
		has a unique minimal member $G$, for which $G \subseteq X$ for all $X \in \cG$.
	\end{lem}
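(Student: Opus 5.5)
The plan is to show that $\cG$ is nonempty, closed under finite intersections, and $\sigma$-decreasing. Then the second part of \cref{cor:unionclosed} gives a unique minimal member $G$ with $G\subseteq X$ for all $X\in\cG$.

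Nonemptiness is immediate, since $F\in\cG$ by the assumption $A\subseteq\clo(F)$. For closure under finite intersections, let $F_1,F_2\in\cG$. Then $F_1,F_2\subseteq F$, so $F_1\cup F_2\subseteq F$ is independent by \ref{ax:i2}. Also $A\subseteq\clo(F_1)\cap\clo(F_2)$ by definition of $\cG$. Hence \cref{lem:closure_intersection} yields $A\subseteq\clo(F_1\cap F_2)$, that is, $F_1\cap F_2\in\cG$.

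The main step, which I expect to be the only real obstacle, is $\sigma$-decreasingness. Let $(F_i)_{i\in\bN}$ be a decreasing sequence in $\cG$ and set $F_\infty\coloneqq\bigcap_{i=1}^\infty F_i\subseteq F$. I would phrase the condition through the rank function $r$: $A\subseteq\clo(F')$ is equivalent to $r(F'\cup A)=r(F')$, by the definition of closure as the maximal same-rank superset. Every $F_i$ is independent, so $r(F_i\cup A)=r(F_i)=\mu(F_i)$. The sets $F_i\cup A$ decrease to $F_\infty\cup A$. By \cref{cor:smooth}, $r$ is continuous from above, so
\[
r(F_\infty\cup A)=\lim_{i\to\infty}r(F_i\cup A)=\lim_{i\to\infty}\mu(F_i)=\mu(F_\infty)=r(F_\infty),
\]
where the last equality uses that $F_\infty\subseteq F$ is independent. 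Thus $A\subseteq\clo(F_\infty)$, so $F_\infty\in\cG$.

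With these three properties, \cref{cor:unionclosed} (via \cref{lem:maximal}) provides the unique minimal $G$, and it is contained in every member of $\cG$. All inclusions are understood in the measure algebra, consistent with the paper's conventions.
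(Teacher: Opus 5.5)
Your proposal is correct and follows the paper's proof essentially step for step: nonemptiness, closure under finite intersections via \cref{lem:closure_intersection}, $\sigma$-decreasingness, and then \cref{cor:unionclosed}. The only cosmetic difference is in the $\sigma$-decreasing step. There the paper uses only monotonicity, $r(F_\infty\cup A)\leq r(F_i\cup A)=\mu(F_i)\to\mu(F_\infty)$, together with independence of $F_\infty$ for the reverse inequality, rather than continuity from above of $r$ via \cref{cor:smooth}.
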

	
	\begin{proof}
		The family $\cG$ is nonempty, since $F\in\cG$. If $F_1,F_2\in\cG$, then $F_1\cap F_2\in\cG$ by \cref{lem:closure_intersection}, because $F_1,F_2$ and $F_1\cup F_2$ are all independent as subsets of $F$.
		
		We claim that $\cG$ is $\sigma$-decreasing. Let $(F_i)_{i\in\bN}$ be a decreasing sequence in $\cG$, and set $F_0\coloneqq\bigcap_{i=1}^\infty F_i$. Since $A\subseteq\clo(F_i)$, we have $r(F_i\cup A)=r(F_i)=\mu(F_i)$ for every $i\in\bN$. Hence, $r(F_0\cup A)\leq r(F_i\cup A)=\mu(F_i)$ for every $i$. Letting $i\to\infty$ gives $r(F_0\cup A)\leq\mu(F_0)$. The reverse inequality follows from $F_0\subseteq F_0\cup A$ and the independence of $F_0$. Thus, $r(F_0\cup A)=r(F_0)$, and so $A\subseteq\clo(F_0)$. Hence, $F_0\in\cG$, proving the claim.
		
		The family $\cG$ is therefore $\sigma$-decreasing and closed under finite intersections. By \cref{cor:unionclosed}, the lemma follows.
	\end{proof}

	For $F\in\cF$ and $A\in\Sigma$ with $A\subseteq\clo(F)$, we denote the unique minimal member from \cref{lem:inverse_closure} by $\widehat{\clo}(F,A)$.
	
	The next lemma is the basic simultaneous augmentation step. It says that if a positive-measure set lies outside the closure of several independent sets, then a common positive-measure part of it can be added to all of them at once.
	
	\begin{lem} \label{lem:extend_outside_closure}
		Let $M_i=(J, \Sigma, \mu, \cF_i)$ be measurable matroids for $i\in[n]$, with closure operators $\clo_i$. Let $F_i\in\cF_i$ for $i\in[n]$, and let $Y\in\Sigma$ satisfy $\mu(Y)>0$ and $\mu(Y\cap \clo_i(F_i))=0$ for every $i\in[n]$. Then there exists $Y'\subseteq Y$ with $\mu(Y')>0$ such that $F_i\cup Y'\in\cF_i$ for every $i\in[n]$.
	\end{lem}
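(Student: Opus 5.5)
We argue by induction on $n$, shrinking $Y$ one matroid at a time. The key observation is the single-matroid case. Since $\mu(Y\cap\clo_1(F_1))=0$, we have $\mu(Y\setminus\clo_1(F_1))=\mu(Y)>0$. By axiom \ref{ax:cl5}, which holds for the closure operator of a measurable matroid by \cref{thm:closure}, applied with $A=Y$, there exists $H\subseteq Y\setminus\clo_1(F_1)$ with $\mu(H)>0$ such that $F_1\cup H\in\cF_1$.

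For the inductive step, suppose we already have $Y_{k}\subseteq Y$ with $\mu(Y_k)>0$ and $F_i\cup Y_k\in\cF_i$ for $i\le k$. Since $Y_k\subseteq Y$, monotonicity of measure gives $\mu(Y_k\cap\clo_{k+1}(F_{k+1}))=0$. Applying the single-matroid case to $F_{k+1}$ and $Y_k$ yields $Y_{k+1}\subseteq Y_k$ with $\mu(Y_{k+1})>0$ and $F_{k+1}\cup Y_{k+1}\in\cF_{k+1}$. For $i\le k$, the set $F_i\cup Y_{k+1}$ is contained in $F_i\cup Y_k\in\cF_i$, so it is independent by \ref{ax:i2}. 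Setting $Y'\coloneqq Y_n$ then finishes the proof.

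There is essentially no obstacle here. The only point needing care is that hereditary closure \ref{ax:i2} preserves the earlier independences when $Y$ is shrunk. Alternatively, if one prefers not to invoke \ref{ax:cl5} directly, the base case can be done by hand. Take a maximal independent set $I$ of $F_1\cup Y$ containing $F_1$, which exists by \cref{lem:maximal}. If $I=F_1$, then $r_1(F_1\cup Y)=r_1(F_1)$, so $Y\subseteq\clo_1(F_1)$, contradicting $\mu(Y\cap\clo_1(F_1))=0<\mu(Y)$. Hence $H\coloneqq I\setminus F_1\subseteq Y$ has positive measure and $F_1\cup H=I\in\cF_1$.
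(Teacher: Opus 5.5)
Your proof is correct and takes essentially the same route as the paper. The paper also shrinks $Y$ along a chain $Y=Y_0\supseteq Y_1\supseteq\dots\supseteq Y_n$, taking $Y_{k+1}\coloneqq F'_{k+1}\setminus F_{k+1}$ for a maximal independent set $F'_{k+1}$ of $F_{k+1}\cup Y_k$ containing $F_{k+1}$ (exactly your ``by hand'' base case), and uses \ref{ax:i2} to preserve the earlier independences; your alternative via \ref{ax:cl5} and \cref{thm:closure} is also valid, because that proof shows $\clo$-independence coincides with independence in the matroid.
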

	
	\begin{proof}
		We construct sets $Y=Y_0\supseteq Y_1\supseteq \dots \supseteq Y_n$ with positive measure such that $F_i\cup Y_j\in\cF_i$ for all $1\leq i\leq j$. Suppose that $Y_0,\dots,Y_k$ have already been defined for some $0\leq k<n$. Since $\mu(Y_k)>0$ and $\mu(Y_k\cap \clo_{k+1}(F_{k+1}))=0$, we have $Y_k\not\subseteq \clo_{k+1}(F_{k+1})$. Hence, $r_{k+1}(F_{k+1}\cup Y_k)>r_{k+1}(F_{k+1})$.
		
		Extend $F_{k+1}$ to a maximal independent set $F'_{k+1}$ in $F_{k+1}\cup Y_k$, and set $Y_{k+1}\coloneqq F'_{k+1}\setminus F_{k+1}$. Then $\mu(Y_{k+1})>0$, $Y_{k+1}\subseteq Y_k$, and $F_{k+1}\cup Y_{k+1}\in\cF_{k+1}$. Moreover, for $1\leq i\leq k$, we have $F_i\cup Y_{k+1}\subseteq F_i\cup Y_k\in\cF_i$, so $F_i\cup Y_{k+1}\in\cF_i$ by \ref{ax:i2}. Thus, the induction continues.
		
		Taking $Y'\coloneqq Y_n$ completes the proof.
	\end{proof}
	
	We will also use the following consequence of the characterization of contraction.
	
	\begin{lem}\label{lem:contraction}
		Let $M=(J,\Sigma,\mu,\cF)$ be a measurable matroid, and let $Z_0\subseteq Z_1$ be measurable sets. Let $K_1,K_2\subseteq Z_1\setminus Z_0$. Assume that there exists a maximal set $I_0\in\cF|Z_0$ such that $I_0\cup K_1$ and $I_0\cup K_2$ are maximal sets in $\cF|Z_1$. Then, for every maximal set $I\in\cF|Z_0$ and every $H\subseteq J\setminus Z_1$, we have $I\cup K_1\cup H\in\cF$ if and only if $I\cup K_2\cup H\in\cF$.
	\end{lem}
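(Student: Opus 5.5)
The plan is to reduce everything to the contraction $M/Z_0$ and use \cref{prop:contraction_char}. Since $I_0$ is maximal in $\cF|Z_0$ and $I_0\cup K_j$ is independent, \cref{prop:contraction_char} (implication (iii)$\Rightarrow$(i)) shows that $K_1$ and $K_2$ are independent in $M/Z_0$. Moreover, they are bases of the restriction $(M/Z_0)|(Z_1\setminus Z_0)$. Indeed, $\mu(I_0\cup K_j)=r(Z_1)$, so
\[
\mu(K_j)=r(Z_1)-r(Z_0)=r_{M/Z_0}(Z_1\setminus Z_0).
\]
In particular, $\mu(K_1)=\mu(K_2)$, and $K_1$ and $K_2$ have the same closure in $M/Z_0$, namely the closure of $Z_1\setminus Z_0$. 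Both statements follow from \cref{lem:closure_same}, or directly, since each spans $Z_1\setminus Z_0$.

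Now fix any maximal $I\in\cF|Z_0$ and any $H\subseteq J\setminus Z_1$. By \cref{prop:contraction_char}, applied in both directions with the maximal set $I$, we have $I\cup K_j\cup H\in\cF$ if and only if $K_j\cup H$ is independent in $M/Z_0$. This is the step that removes the dependence on the particular choice of $I$ versus $I_0$, since (i)$\Leftrightarrow$(ii) holds for every maximal set. The problem is thus reduced to the following claim in the single matroid $N\coloneqq M/Z_0$ with rank $r_N$: if $K_1,K_2\subseteq W\coloneqq Z_1\setminus Z_0$ are both independent with $\mu(K_j)=r_N(W)$, and $H$ is disjoint from $W$, then $K_1\cup H$ is independent exactly when $K_2\cup H$ is.

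To prove the claim, I would use the rank characterization \eqref{ind:r}. Suppose $K_1\cup H$ is independent, so $r_N(K_1\cup H)=\mu(K_1)+\mu(H)$. Since $K_1\subseteq\clo_N(K_2)$, we have $r_N(K_2\cup H)=r_N(K_1\cup K_2\cup H)$. Indeed, $r_N(K_2\cup H\cup K_1)\le r_N(K_2\cup H)$ by submodularity with $\clo_N(K_2)$, and this is the standard fact that $r(X\cup\clo(Y))=r(X\cup Y)$ for $Y\subseteq X$. Combining this with monotonicity and $\mu(K_1)=\mu(K_2)$ gives
\[
r_N(K_2\cup H)\ge r_N(K_1\cup H)=\mu(K_1)+\mu(H)=\mu(K_2\cup H).
\]
Together with \ref{ax:r3}, this yields independence of $K_2\cup H$. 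The converse follows by symmetry.

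The only delicate points are the spanning fact $r(X\cup A)=r(X)$ whenever $A\subseteq\clo(Y)$ and $Y\subseteq X$, and the disjointness needed to write $\mu(K_j\cup H)=\mu(K_j)+\mu(H)$. The first follows from submodularity applied to $X$ and $\clo(Y)$: $r(X\cup\clo(Y))+r(Y)\le r(X)+r(\clo(Y))$, using $r(X\cap\clo(Y))\ge r(Y)$. The second holds because $H$ lies outside $Z_1$. I expect no real obstacle beyond carefully applying \cref{prop:contraction_char} with the correct roles of $Z_0$ and $F$, where $F=K_j\cup H\subseteq J\setminus Z_0$.
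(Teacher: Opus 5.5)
Your argument is correct and follows the paper's route: pass to $N=M/Z_0$ via \cref{prop:contraction_char}, show that $K_1,K_2$ are bases of $N|(Z_1\setminus Z_0)$, and then show that they are interchangeable in $N$. The only difference is the last step. The paper applies \cref{prop:contraction_char} once more in $N/(Z_1\setminus Z_0)$, so that each $K_j\cup H$ is $N$-independent if and only if $H$ is independent there, whereas you verify interchangeability directly with the spanning inequality $r_N(K_1\cup K_2\cup H)\le r_N(K_2\cup H)$, which is equally short.
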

	
	\begin{proof}
		Let $N\coloneqq M/Z_0$. We first show that $K_1$ and $K_2$ are maximal independent sets of $N|(Z_1\setminus Z_0)$. By \cref{prop:contraction_char}, both $K_1$ and $K_2$ are independent in $N$, since $I_0\cup K_1$ and $I_0\cup K_2$ are independent in $M$. If, say, $K_1$ were not maximal in $N|(Z_1\setminus Z_0)$, then there would be an independent set $K\in N|(Z_1\setminus Z_0)$ with $K_1\subsetneq K$. Applying \cref{prop:contraction_char} again, $I_0\cup K$ would be independent in $M$ and contained in $Z_1$, contradicting the maximality of $I_0\cup K_1$ in $\cF|Z_1$. The same argument applies to $K_2$.
		
		Now let $H\subseteq J\setminus Z_1$. Since $K_1$ and $K_2$ are maximal independent sets of $N|(Z_1\setminus Z_0)$, \cref{prop:contraction_char} applied to the contracted matroid $N/(Z_1\setminus Z_0)$ gives that $K_1\cup H$ is independent in $N$ if and only if $H$ is independent in $N/(Z_1\setminus Z_0)$, and this holds if and only if $K_2\cup H$ is independent in $N$. Thus, $K_1\cup H$ is independent in $N$ if and only if $K_2\cup H$ is independent in $N$. Finally, applying \cref{prop:contraction_char} to $N$, we get that, for every maximal set $I\in\cF|Z_0$ and for $j=1,2$, the set $K_j\cup H$ is independent in $N$ if and only if $I\cup K_j\cup H$ is independent in $M$. Therefore, $I\cup K_1\cup H\in\cF$ if and only if $I\cup K_2\cup H\in\cF$.
	\end{proof}
	
	Finally, we discuss a consequence of submodularity and continuity from above and below that was communicated to us by Lovász~\cite{lovasz2026personal}. We include the proof for completeness.
	
	\begin{lem} \label{lem:minimum}
		Let $\varphi \colon \Sigma \to \bR$ be a bounded submodular set function on a $\sigma$-algebra $(J,\Sigma)$ that is continuous from below and continuous from above. Then $\varphi$ attains its minimum; that is, there exists $X\in\Sigma$ such that $\varphi(X)\leq\varphi(Y)$ for all $Y\in\Sigma$.
	\end{lem}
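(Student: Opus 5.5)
Let $m\coloneqq\inf_{Y\in\Sigma}\varphi(Y)$, which is finite because $\varphi$ is bounded. The plan is to take a minimizing sequence, upgrade it to a monotone one using submodularity, and then pass to the limit using continuity from below.

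\textbf{Step 1 (minimizing sequence).} Choose $Y_i\in\Sigma$ with $\varphi(Y_i)\le m+2^{-i}$.

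\textbf{Step 2 (monotone upgrade).} For $n\le k$, set $U_{n,k}\coloneqq Y_n\cup\dots\cup Y_k$. I claim by induction on $k$ that
\[
\varphi(U_{n,k})\le m+\sum_{i=n}^{k}2^{-i}.
\]
For $k=n$ this is the choice of $Y_n$. For the inductive step, apply submodularity to $U_{n,k}$ and $Y_{k+1}$:
\[
\varphi(U_{n,k+1})\le \varphi(U_{n,k})+\varphi(Y_{k+1})-\varphi(U_{n,k}\cap Y_{k+1}).
\]
Since $\varphi(U_{n,k}\cap Y_{k+1})\ge m$, this gives $\varphi(U_{n,k+1})\le \varphi(U_{n,k})+2^{-(k+1)}$, and the claim follows. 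Consequently $\varphi(U_{n,k})\le m+2^{-n+1}$ for all $k\ge n$.

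\textbf{Step 3 (upper limit sets).} For fixed $n$, the sets $U_{n,k}$ increase in $k$ to $Z_n\coloneqq\bigcup_{i\ge n}Y_i$. Continuity from below gives $\varphi(Z_n)\le m+2^{-n+1}$.

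\textbf{Step 4 (passing to the limit).} The sets $Z_n$ decrease to $X\coloneqq\limsup_{i\to\infty} Y_i$. Continuity from above then gives $\varphi(X)=\lim_{n\to\infty}\varphi(Z_n)\le m$. Since $m$ is the infimum, $\varphi(X)=m$, so the minimum is attained at $X$.

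The main obstacle is Step 2: an arbitrary minimizing sequence has no monotone structure. The inductive union bound, which uses $m$ as a lower bound for the intersection terms, is what provides that structure. The remaining steps are routine applications of the two continuity hypotheses; both are genuinely needed, the first to pass from finite to countable unions and the second to pass from the $Z_n$ to their intersection.
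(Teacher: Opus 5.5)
Your proof is correct and is essentially the paper's argument with unions and intersections interchanged. The paper bounds the finite intersections $\bigcap_{i=k}^{N}X_i$ by submodularity, using $m$ as a lower bound for the union terms. It then passes to $\bigcap_{i\ge k}X_i$ by continuity from above and to $\liminf X_i$ by continuity from below. You bound finite unions, apply continuity from below and then from above, and obtain $\limsup Y_i$ as the minimizer.
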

	
	\begin{proof}
		Let $m\coloneqq \inf\{\varphi(X)\mid X\in\Sigma\}$, and choose sets $X_i\in\Sigma$ such that $\varphi(X_i)<m+\varepsilon_i$, where $\varepsilon_i>0$ and $\sum_{i=1}^\infty \varepsilon_i<\infty$.
		
		For $k\leq N$, set $D_{k,N}\coloneqq \bigcap_{i=k}^N X_i$. We claim that $\varphi(D_{k,N})\leq m+\sum_{i=k}^N\varepsilon_i$. This is clear for $N=k$. If it holds for $N-1$, then submodularity gives
		\[
		\varphi(D_{k,N})+\varphi(D_{k,N-1}\cup X_N)
		\leq
		\varphi(D_{k,N-1})+\varphi(X_N).
		\]
		Since $\varphi(D_{k,N-1}\cup X_N)\geq m$, the induction hypothesis implies $\varphi(D_{k,N})\leq m+\sum_{i=k}^N\varepsilon_i$.
		
		Now set $D_k\coloneqq\bigcap_{i=k}^\infty X_i$. By continuity from above, $\varphi(D_k)=\lim_{N\to\infty}\varphi(D_{k,N})$, and hence $\varphi(D_k)\leq m+\sum_{i=k}^\infty\varepsilon_i$. The sets $D_k$ form an increasing sequence. Let $X\coloneqq\bigcup_{k=1}^\infty D_k$. By continuity from below, $\varphi(X)=\lim_{k\to\infty}\varphi(D_k)=m$. Thus, $X$ attains the minimum.
	\end{proof}
	
	%%%%%%%%%%%%%%%%%%%%%%%%%%%%%%%%
	\subsection{Intersection Theorem}
	\label{sec:intersection}
	%%%%%%%%%%%%%%%%%%%%%%%%%%%%%%%%
	
	We now prove the measurable analogue of Edmonds' matroid intersection theorem. The main step is an augmenting-chain lemma. It shows that whenever a common independent set has measure strictly below the optimum value, an alternating construction produces a common independent set of slightly larger measure. Moreover, the amount by which the set changes can be controlled in terms of the length of the chain. The alternating layers used in the proof are illustrated in \cref{fig:XY}.
	
	\begin{figure}[t]
		\centering
		
		\begin{tikzpicture}[scale=1]
			
			% 1. Groundset J
			\draw[groundset] (-0.5, -2.8) rectangle (8.5, 2.8);
			\node[label J] at (8.3, 2.6) {\small $J$};
			
			% 2. Set F (Horizontal strip)
			\draw[set F] (1.5, -0.6) rectangle (8, 0.6);
			\node[label F] at (8, 0) {\small $F$};
			
			% 3. Intersections F_i = A_i \cap F (Shaded in progressive gray steps)
			\begin{scope}
				\clip (1.5, -0.6) rectangle (8, 0.6); 
				
				\fill[f3color] (3.5, 0) ellipse (3.5cm and 2.4cm);
				\fill[f2color] (2.5, 0) ellipse (2.4cm and 1.9cm);
				\fill[f1color] (1.5, 0) ellipse (1.3cm and 1.4cm);
			\end{scope}
			
			% Redraw top and bottom edges of F over fills
			\draw[edge F] (1.5, -0.6) -- (8, -0.6);
			\draw[edge F] (1.5, 0.6) -- (8, 0.6);
			\draw[edge F] (1.5, -0.6) -- (1.5, 0.6);

			% 4. Sets A_i (Uniform style across all set boundaries)
			\draw[set boundary] (3.5, 0) ellipse (3.5cm and 2.4cm);
			\node[label A] at (4.8, 2) {\small $A_3$};
			
			\draw[set boundary] (2.5, 0) ellipse (2.4cm and 1.9cm);
			\node[label A] at (3.6, 1.4) {\small $A_2$};
			
			\draw[set boundary] (1.5, 0) ellipse (1.3cm and 1.4cm);
			\node[label A] at (2, 1) {\small $A_1$};
			
			\draw[set boundary] (0.8, 0) ellipse (0.5cm and 0.8cm);
			\node[label A] at (0.8, 0.4) {\small $A_0$};
			
			\draw[set boundary] (6, -1.5) ellipse (0.8cm and 0.8cm);
			\node[label A] at (6, -2) {\small $H$};

			% 5. Intersection Labels
			\node[label Fi] at (1.9, 0) {$F_1$};
			\node[label Fi] at (3.7, 0) {$F_2$};
			\node[label Fi] at (5.8, 0) {$F_3$};

			% X_0 \subseteq A_0
			\fill[f0color] (0.8, -0.3) ellipse (0.22cm and 0.22cm);
			\node[label X] at (0.8, -0.3) {\small $X_0$};
			
			% X_1 \subseteq A_1 \ (A_0 \cup F)
			\fill[f1color] (1.8, -0.95) ellipse (0.26cm and 0.26cm);
			\node[label X] at (1.8, -0.95) {\small $X_1$};
			
			% X_2 \subseteq A_2 \ (A_1 \cup F)
			\fill[f2color] (3.2, -1.2) ellipse (0.32cm and 0.32cm);
			\node[label X] at (3.2, -1.2) {\small $X_2$};
			
			% X_3 \subseteq A_3 \cap H
			\fill[f3color] (5.7, -1.3) ellipse (0.36cm and 0.36cm);
			\node[label X] at (5.7, -1.3) {\small $X_3$};
			
			% Y_1 \subseteq F_1
			\draw[set Y] (2.35, -0.2) ellipse (0.26cm and 0.26cm);
			\node[label Y] at (2.35, -0.2) {\small $Y_1$};
			
			% Y_2 \subseteq F_2 \ F_1
			\draw[set Y] (4.4, -0.2) ellipse (0.32cm and 0.32cm);
			\node[label Y] at (4.4, -0.2) {\small $Y_2$};
			
			% Y_3 \subseteq F_3 \ F_2
			\draw[set Y] (6.5, -0.2) ellipse (0.36cm and 0.36cm);
			\node[label Y] at (6.5, -0.2) {\small $Y_3$};
			
		\end{tikzpicture}
		\caption{Illustration of the alternating layers for $n=3$. The sets $X_i$ are added and the sets $Y_i$ are removed in the backward construction.}
		\label{fig:XY}
	\end{figure}
	
	\begin{lem}\label{lem:short_augmentation}
		Let $M_1=(J,\Sigma,\mu,\cF_1)$ and $M_2=(J,\Sigma,\mu,\cF_2)$ be measurable matroids with rank functions $r_1$ and $r_2$ and closure operators $\clo_1$ and $\clo_2$. Let $z\coloneqq \min_{X\in\Sigma}\{r_1(X)+r_2(J\setminus X)\}$. Let $F\in\cF_1\cap\cF_2$, and suppose that $\delta_F\coloneqq z-\mu(F)>0$. Then there exist an integer $n\leq \mu(J)/\delta_F$, a number $\eta>0$, and a set $\widehat F\in\cF_1\cap\cF_2$ such that $\mu(\widehat F)=\mu(F)+\eta$ and $\mu(\widehat F\triangle F)\leq (2n+1)\eta$.
	\end{lem}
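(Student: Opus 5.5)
The plan is to imitate Edmonds' reachability argument using closures in place of an exchange graph. First I build layers that stand for "reachable by an alternating path of length $i$". Then I show that the layers either terminate quickly or produce a dual certificate that contradicts $\delta_F>0$. Finally I extract an augmenting chain backwards and thin it forwards.

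\emph{Layers.} Set $A_0\coloneqq J\setminus\clo_1(F)$, the part that can be added to $F$ in $M_1$, and $F_0\coloneqq\emptyset$. Recursively, for $i\geq 1$, the set $A_{i-1}\cap\clo_2(F)$ lies in $\clo_2(F)$, so \cref{lem:inverse_closure} lets me define
\[
F_i\coloneqq F_{i-1}\cup\widehat{\clo}_2\bigl(F,A_{i-1}\cap\clo_2(F)\bigr),\qquad A_i\coloneqq J\setminus\clo_1(F\setminus F_i).
\]
Here $F_i$ is the least part of $F$ whose $M_2$-closure contains the reached region, which is the analogue of the arcs $x\to y$. The set $A_i$ consists of the points whose $M_1$-circuit meets $F_i$, which is the analogue of the arcs $y\to x$. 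Both sequences are increasing. The process stops at the first $n$ with $\mu(A_n\setminus\clo_2(F))>0$.

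\emph{Termination.} Suppose the process has not stopped at step $i$, so that $A_i\subseteq\clo_2(F)$ up to a null set. Then $A_i\subseteq\clo_2(F_{i+1})$ by the definition of $\widehat{\clo}_2$, and hence $r_2(A_i)\leq\mu(F_{i+1})$. Also $r_1(J\setminus A_i)=r_1(\clo_1(F\setminus F_i))=\mu(F\setminus F_i)$. Taking $X=J\setminus A_i$ in the definition of $z$ gives $z\leq\mu(F)-\mu(F_i)+\mu(F_{i+1})$, that is, $\mu(F_{i+1}\setminus F_i)\geq\delta_F$. So every non-stopping step adds measure at least $\delta_F$ to a subset of $F$. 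This forces the process to stop at some $n\leq\mu(F)/\delta_F\leq\mu(J)/\delta_F$.

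\emph{Backward chain.} Pick $X_n\subseteq(A_n\setminus A_{n-1})\setminus\clo_2(F)$ of positive measure, taking $A_n$ minimal. Using \cref{lem:extend_outside_closure} I shrink $X_n$ so that $F\cup X_n\in\cF_2$ and $X_n$ is free over $F\setminus F_n$ in $M_1$. Since $X_n\subseteq\clo_1(F\setminus F_{n-1})$ but $X_n\not\subseteq\clo_1(F\setminus F_n)$, an exchange in $M_1$ inside $F_n\setminus F_{n-1}$ (via \ref{ax:i3''} on $F\setminus F_{n-1}$) gives $Y_n\subseteq F_n\setminus F_{n-1}$ with $\mu(Y_n)=\mu(X_n)$ and $(F\setminus Y_n)\cup X_n\in\cF_1$. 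Because $Y_n\subseteq\widehat{\clo}_2(F,A_{n-1}\cap\clo_2(F))$, minimality of $\widehat{\clo}_2$ guarantees a positive-measure $X_{n-1}\subseteq A_{n-1}\setminus A_{n-2}$ that needs $Y_n$ to be spanned in $M_2$. This lets me continue down to $X_0\subseteq A_0$. At each step \cref{lem:contraction} is the tool for certifying that swapping one maximal subset of a layer for another does not affect independence of the parts added later. The layer structure is what replaces the ``shortest path'' hypothesis in the finite fundamental-circuit argument.

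\emph{Forward thinning.} Going forward through $X_0,Y_1,X_1,\dots,Y_n,X_n$, I choose subsets $X'_0,Y'_1,\dots,X'_n$ that all have the same measure $\eta>0$. At each stage the exchange conditions are preserved by \ref{ax:i2} together with atomlessness: I take the preimage under the matching exchange of the part already chosen and then trim it. The resulting set $\widehat F$ has $\mu(\widehat F)=\mu(F)+(n+1)\eta-n\eta=\mu(F)+\eta$ and $\mu(\widehat F\triangle F)=(2n+1)\eta$.

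I expect the main obstacle to be the backward step. There I must verify that the whole modified set is simultaneously independent in both matroids, and not merely each single swap. This requires a careful order of shrinkings, namely disjointness of the layers plus \cref{lem:contraction} applied with $Z_0=\clo_1(F\setminus F_{i})$-type sets. I would first try to prove the $M_1$- and $M_2$-independence by induction on the number of swaps, handling one layer at a time.
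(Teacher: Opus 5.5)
Your layers and your termination estimate coincide with the paper's. Intersecting with $\clo_2(F)$ and adjoining $F_{i-1}$ change nothing, because the construction only continues while $A_{i-1}\subseteq\clo_2(F)$, and the sets $\widehat{\clo}_2(F,A_{i-1})$ increase automatically. Your single swap $(F\setminus Y_n)\cup X_n\in\cF_1$ is also fine.

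\textbf{Backward step.} The gap is that you never fix what is carried along from one layer to the next, and \cref{lem:contraction} is not what makes this step work. The missing idea is an asymmetric pair of invariants. After $X_n,Y_n,\dots,Y_{m+1},X_m$ have been chosen, keep the whole set $W_m=\bigl(F\cup\bigcup_{i\geq m}X_i\bigr)\setminus\bigcup_{i>m}Y_i$ in $\cF_2$. In $\cF_1$, keep only $V_m=W_m\setminus F_m$, since $X_m$ is not yet paired with anything. Applying \ref{ax:i3''} inside $\clo_1(F\setminus F_{m-1})$ then gives $Y_m\subseteq F_m\setminus F_{m-1}$ and $V_m^+=V_m\cup\bigl((F_m\setminus F_{m-1})\setminus Y_m\bigr)\in\cF_1$, with $\clo_1(V_m^+)=J\setminus A_{m-1}$ by \cref{lem:closure_same}. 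The next set $X_{m-1}$ must be free over $V_m^+$ in $M_1$ \emph{and} free over $W_m^+=W_m\setminus Y_m$ in $M_2$. Lying outside $\clo_2(F\setminus Y_m)$, which is all that your ``needs $Y_n$ to be spanned'' provides, is not enough. Room for such an $X_{m-1}$ inside $A_{m-1}\setminus(A_{m-2}\cup F)$ comes from \cref{lem:closure_intersection}, applied to $W_m^+$ and $F_m$, together with the minimality of $F_m$. That lemma requires $W_m^+\cup F_m=W_m\in\cF_2$, which is exactly why the $M_2$-invariant has to cover the whole set. With these choices, $V_{m-1}=V_m^+\cup X_{m-1}$ and $W_{m-1}=W_m^+\cup X_{m-1}$. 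So \cref{lem:extend_outside_closure} closes the induction, and $F_0=\emptyset$ gives common independence at $m=0$.

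\textbf{Forward thinning.} This is where your argument actually breaks. An abstract measurable matroid supplies no ``matching exchange'' between $X_i$ and $Y_i$, so there is nothing to take preimages along. Nor can \ref{ax:i2} preserve the exchanges: replacing $Y_i$ by a smaller $Y'_i$ puts $Y_i\setminus Y'_i$ back into the set. Hence arbitrary equal-measure subsets $X'_i\subseteq X_i$ and $Y'_i\subseteq Y_i$ need not give a common independent set. The paper needs a second induction here, in three steps.
\begin{enumerate}
\item Given $U_m\in\cF_1\cap\cF_2$, apply \ref{ax:i3''} in $M_2$ to $U_m$ and $W_{m+1}$, and show that the added part lies in $Y_{m+1}$. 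This uses the fact that $U_m\cap(F_m\cup A_{m-1})$ is a maximal $M_2$-independent subset of $F_m\cup A_{m-1}$. Let $Y'_{m+1}$ be the rest of $Y_{m+1}$.
\item Show that $U_m^+\setminus X_{m+1}\in\cF_1$, where $U_m^+=U_m\cup(Y_{m+1}\setminus Y'_{m+1})$. This uses \cref{lem:contraction} to replace the block $V^+_{m+1}\setminus V^+_{m+2}$ of $U_m$ by $F_{m+1}\setminus F_m$ (with $V^+_{n+1}=F\setminus F_n$). Both blocks complete $V^+_{m+2}$ to a maximal independent subset of $J\setminus A_m$.
\item Apply \ref{ax:i3''} in $M_1$ to obtain $X'_{m+1}\subseteq X_{m+1}$.
\end{enumerate}
This is the one place where \cref{lem:contraction} is genuinely needed. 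Without this step you get a common independent set of larger measure, but not the bound $\mu(\widehat F\triangle F)\leq(2n+1)\eta$. That bound is the real content of the lemma, since it is what makes the iteration in \cref{lem:summable_attainment} converge.
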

	
	\begin{proof}
		We first construct the alternating layers. Let $A_0\coloneqq J\setminus\clo_1(F)$ and let $H\coloneqq J\setminus\clo_2(F)$. If $\mu(A_0\cap H)>0$, then by \cref{lem:extend_outside_closure} there exists a set $X_0\subseteq A_0\cap H$ with $\mu(X_0)>0$ such that $F\cup X_0\in\cF_1\cap\cF_2$. In this case, take $n=0$, set $\eta\coloneqq\mu(X_0)$, and let $\widehat F\coloneqq F\cup X_0$. Then $\mu(\widehat F)=\mu(F)+\eta$ and $\mu(\widehat F\triangle F)=\eta$, so the conclusion follows.
		
		Hence, we may assume that $\mu(A_0\cap H)=0$. Set $F_0\coloneqq\emptyset$. Suppose that $A_0,\dots,A_k$ and $F_0,\dots,F_k$ have already been defined and that $\mu(A_k\cap H)=0$. Since $\mu(A_k\cap H)=0$, we have $A_k\subseteq\clo_2(F)$. Define
		\[
		F_{k+1}\coloneqq \widehat{\clo}_2(F,A_k)
		\quad\text{and}\quad
		A_{k+1}\coloneqq J\setminus\clo_1(F\setminus F_{k+1}),
		\]
		where $\widehat{\clo}_2$ is defined after \cref{lem:inverse_closure}.

		As long as the construction continues, we have $F_k\subseteq F_{k+1}$ and $A_k\subseteq A_{k+1}$. Indeed, $F_0\subseteq F_1$ and $A_0\subseteq A_1$. If $A_{k-1}\subseteq A_k$, then every subset of $F$ spanning $A_k$ also spans $A_{k-1}$, so the minimality of $F_k=\widehat{\clo}_2(F,A_{k-1})$ gives $F_k\subseteq F_{k+1}$. Hence, $F\setminus F_{k+1}\subseteq F\setminus F_k$, and therefore $A_k\subseteq A_{k+1}$.
		
		For reference, \cref{tab:notation} summarizes the notation used in the proof. The table is meant as a guide while reading the proof; the sets are introduced in context below.
		
		\begin{table}[t]
			\centering
			\footnotesize
			\renewcommand{\arraystretch}{1.7}
			\begin{tabularx}{0.98\textwidth}{@{}>{\centering\arraybackslash}p{0.14\textwidth}X@{}}
				\toprule
				\textbf{Set} & \textbf{Definition or defining property} \\
				\midrule
				$A_0,H,F_0$
				& $A_0=J\setminus\clo_1(F)$, $H=J\setminus\clo_2(F)$, and $F_0=\emptyset$. \\
				
				$F_{k+1},A_{k+1}$
				& $F_{k+1}=\widehat{\clo}_2(F,A_k)$ and $A_{k+1}=J\setminus\clo_1(F\setminus F_{k+1})$. \\
				
				$X_k$
				& Positive-measure sets added in the alternating chain: $X_0\subseteq A_0$, $X_n\subseteq A_n\cap H$, and $X_k\subseteq A_k\setminus(A_{k-1}\cup F)$ for $1\leq k<n$. \\
				
				$Y_k$
				& Sets removed from $F_k\setminus F_{k-1}$, with $\mu(Y_k)=\mu(X_k)$. \\
				
				$W_m$
				& $\left(F\cup\bigcup_{i=m}^n X_i\right)\setminus\bigcup_{i=m+1}^n Y_i$. \\
				
				$V_m$
				& $\left(F\cup\bigcup_{i=m}^n X_i\right)\setminus\left(F_m\cup\bigcup_{i=m+1}^n Y_i\right)$. \\
				
				$W_m^+$
				& $\left(F\cup\bigcup_{i=m}^n X_i\right)\setminus\bigcup_{i=m}^n Y_i$. \\
				
				$V_m^+$
				& $\left(F\cup\bigcup_{i=m}^n X_i\right)\setminus\left(F_{m-1}\cup\bigcup_{i=m}^n Y_i\right)$. \\
				
				$X'_k,Y'_k$
				& Thinned subsets of $X_k,Y_k$, respectively, each of measure $\eta$. \\
				
				$U_m$
				& $\left(F\cup\bigcup_{i=0}^{m}X'_i\cup\bigcup_{i=m+1}^nX_i\right)\setminus\left(\bigcup_{i=1}^{m}Y'_i\cup\bigcup_{i=m+1}^nY_i\right)$. \\
				
				$U_m^+$
				& $\left(F\cup\bigcup_{i=0}^{m}X'_i\cup\bigcup_{i=m+1}^nX_i\right)\setminus\left(\bigcup_{i=1}^{m+1}Y'_i\cup\bigcup_{i=m+2}^nY_i\right)$. \\
				
				$V_{n+1}^+$
				& $F\setminus F_n$, extending the sequence $(V_m^+)_{m=1}^n$. \\
				
				$\widehat F$
				& $\left(F\cup\bigcup_{i=0}^nX'_i\right)\setminus\bigcup_{i=1}^nY'_i$. \\
				\bottomrule
			\end{tabularx}
			\caption{Notation used in the proof of \cref{lem:short_augmentation}. The sets $A_k,F_k,X_k,Y_k$ describe the unthinned alternating chain; the primed sets and $U_m,U_m^+$ are used in the thinning step.}
			\label{tab:notation}
		\end{table}
		
		\begin{cla}\label{cla:layers}
			We have $\mu(F_{k+1}\setminus F_k)\geq\delta_F$ for every $k$ for which the construction has not yet stopped.
		\end{cla}
		
		\begin{claimproof}
			By definition, $J\setminus A_k=\clo_1(F\setminus F_k)$, and hence $r_1(J\setminus A_k)=\mu(F\setminus F_k)$. Since $r_1(J\setminus A_k)+r_2(A_k)\geq z$, we get
			\[
			r_2(A_k)\geq z-\mu(F\setminus F_k)=\mu(F_k)+\delta_F.
			\]
			As $F_{k+1}$ spans $A_k$ in $M_2$, it follows that $\mu(F_{k+1})\geq r_2(A_k)\geq\mu(F_k)+\delta_F$, proving the claim.
		\end{claimproof}
		
		Thus, the construction must stop after finitely many steps. Indeed, if $\mu(A_k\cap H)=0$ for every $k$, then \cref{cla:layers} gives $\mu(F_k)\geq k\delta_F$ for all $k$, which is impossible for $k>\mu(J)/\delta_F$. Let $n$ be the first index such that $\mu(A_n\cap H)>0$. Then $n\leq\mu(J)/\delta_F$.
		
		We now construct positive-measure sets $X_n,Y_n,X_{n-1},Y_{n-1},\dots,Y_1,X_0$ in this order, so that the following properties hold:
		\begin{enumerate}[label=\normalfont{(\arabic*)}, left=0pt, itemsep=0em]
			\item $X_n \subseteq A_n \cap H$, $X_0 \subseteq A_0$, and $X_k \subseteq A_k \setminus (A_{k-1}\cup F)$ for $1 \leq k < n$, \label{int:1}
			\item $Y_k \subseteq F_k \setminus F_{k-1}$ for $1 \leq k \leq n$, \label{int:2}
			\item $\mu(X_k)=\mu(Y_k)$ for $1 \leq k \leq n$, \label{int:3}
			\item $\left(F \cup \bigcup_{i=k}^n X_i\right) \setminus \bigcup_{i=k+1}^n Y_i \in \cF_2$ for $0 \leq k \leq n$, \label{int:4}
			\item $\left(F \cup \bigcup_{i=k}^n X_i\right) \setminus \left(F_k \cup \bigcup_{i=k+1}^n Y_i\right) \in \cF_1$ for $0 \leq k \leq n$. \label{int:5}
		\end{enumerate}
		Empty unions are interpreted as $\emptyset$. 
		
		To define $X_n$, observe that $A_n\cap H$ has positive measure and is disjoint from both $\clo_1(F\setminus F_n)$ and $\clo_2(F)$. Since $n\geq1$, the minimality of $n$ gives $\mu(A_{n-1}\cap H)=0$, and hence $(A_n\cap H)\setminus A_{n-1}$ has positive measure. Applying \cref{lem:extend_outside_closure} to $(A_n\cap H)\setminus A_{n-1}$ gives a set
		\[
		X_n\subseteq (A_n\cap H)\setminus A_{n-1}
		\]
		with positive measure such that $(F\cup X_n)\setminus F_n\in\cF_1$ and $F\cup X_n\in\cF_2$. Thus, \ref{int:1} holds for $X_n$, and \ref{int:4} and \ref{int:5} hold for $k=n$.
		
		Next, suppose that $X_n,Y_n,X_{n-1},Y_{n-1},\dots,Y_{m+1},X_m$ have already been defined for some $1\leq m\leq n$, and that all conditions involving only these sets hold. For brevity, let
		\[
		W_m\coloneqq \left(F \cup \bigcup_{i=m}^n X_i\right) \setminus \bigcup_{i=m+1}^n Y_i
		\]
		and
		\[
		V_m\coloneqq \left(F \cup \bigcup_{i=m}^n X_i\right) \setminus \left(F_m \cup \bigcup_{i=m+1}^n Y_i\right).
		\]
		By the induction hypothesis, $V_m\in\cF_1$ and $W_m\in\cF_2$. Since $V_m\subseteq J\setminus A_{m-1}=\clo_1(F\setminus F_{m-1})$, we have
		\[
		\mu(V_m)=r_1(V_m)\leq r_1(J\setminus A_{m-1})=\mu(F\setminus F_{m-1}).
		\]
		Applying \ref{ax:i3''} in $M_1$ to $V_m$ and $F\setminus F_{m-1}$, we obtain a set $Z\in\cF_1$ with $V_m\subseteq Z\subseteq V_m\cup(F\setminus F_{m-1})$ and $\mu(Z)\geq\mu(F\setminus F_{m-1})$.
		
		\begin{cla} \label{cla:first_exchange}
			The set $Z$ satisfies
			\[
			\mu(Z)=\mu(F\setminus F_{m-1})
			\quad\text{and}\quad
			Z\setminus V_m\subseteq F_m\setminus F_{m-1}.
			\]
		\end{cla}
		
		\begin{claimproof}
			Since $V_m\subseteq J\setminus A_{m-1}=\clo_1(F\setminus F_{m-1})$, and $F\setminus F_{m-1}$ also lies in this closed set, the set $Z$ lies in $J\setminus A_{m-1}$. Hence
			\[
			\mu(Z)\leq r_1(J\setminus A_{m-1})=\mu(F\setminus F_{m-1}).
			\]
			Together with the inequality given by \ref{ax:i3''}, this gives $\mu(Z)=\mu(F\setminus F_{m-1})$.
			
			It remains to locate the new part of $Z$. By \ref{int:3}, we have $\mu(V_m\setminus X_m)=\mu(F\setminus F_m)$. Moreover, $V_m\setminus X_m\subseteq J\setminus A_m=\clo_1(F\setminus F_m)$. Thus, $V_m\setminus X_m$ is a maximal independent set in the restriction $M_1|(J\setminus A_m)$. By maximality, $(Z\setminus V_m)\cap(J\setminus A_m)=\emptyset$. Since $Z\subseteq V_m\cup(F\setminus F_{m-1})$, it follows that $Z\setminus V_m\subseteq F_m\setminus F_{m-1}$.
		\end{claimproof}
		
		Define $Y_m\coloneqq (F_m\setminus F_{m-1})\setminus Z$. Then \ref{int:2} holds for $m$. Since $\mu(V_m)=\mu(F\setminus F_m)+\mu(X_m)$, \cref{cla:first_exchange} gives $\mu(Z\setminus V_m)=\mu(F_m\setminus F_{m-1})-\mu(X_m)$, and hence $\mu(Y_m)=\mu(X_m)$. Thus, \ref{int:3} holds for $m$.
		
		Define
		\[
		W^+_m\coloneqq \left(F \cup \bigcup_{i=m}^n X_i\right) \setminus \bigcup_{i=m}^n Y_i
		\]
		and
		\[
		V^+_m\coloneqq Z=\left(F \cup \bigcup_{i=m}^n X_i\right) \setminus \left(F_{m-1} \cup \bigcup_{i=m}^n Y_i\right).
		\]
		Clearly, $V^+_m\in\cF_1$, and $W^+_m\subseteq W_m\in\cF_2$, so $W^+_m\in\cF_2$ by \ref{ax:i2}.
		
		\begin{cla}\label{cla:available_region}
			There exists a positive-measure set $R$ which is disjoint from both $\clo_1(V^+_m)$ and $\clo_2(W^+_m)$, and such that $R\subseteq A_{m-1}\setminus(A_{m-2}\cup F)$ for $m\geq2$, while $R\subseteq A_0$ for $m=1$.
		\end{cla}
		
		\begin{claimproof}
			By \cref{cla:first_exchange}, we have $\mu(V^+_m)=\mu(F\setminus F_{m-1})$. Since $V^+_m=Z\subseteq V_m\cup(F\setminus F_{m-1})$ and both pieces lie in $J\setminus A_{m-1}$, we have $V^+_m\subseteq J\setminus A_{m-1}$. Hence, \cref{lem:closure_same} gives
			\[
			\clo_1(V^+_m)=\clo_1(F\setminus F_{m-1})=J\setminus A_{m-1}.
			\]
			
			We first show that $A_{m-1}$ is not spanned by $W^+_m$ in $M_2$. Suppose, to the contrary, that $A_{m-1}\subseteq\clo_2(W^+_m)$. By definition, $F_m$ also spans $A_{m-1}$ in $M_2$. Since $W^+_m\cup F_m=W_m\in\cF_2$, \cref{lem:closure_intersection} implies that $F_m\cap W^+_m$ spans $A_{m-1}$ in $M_2$. But $Y_m\subseteq F_m\setminus W^+_m$ and $\mu(Y_m)=\mu(X_m)>0$, so $F_m\cap W^+_m\subsetneq F_m$, contradicting the minimality of $F_m=\widehat{\clo}_2(F,A_{m-1})$. Thus, $A_{m-1}\not\subseteq\clo_2(W^+_m)$, and so $A_{m-1}\setminus\clo_2(W^+_m)$ has positive measure.
			
			On the other hand, $F_{m-1}\subseteq W^+_m$, and, when $m\geq2$, the definition of $F_{m-1}$ gives $A_{m-2}\subseteq\clo_2(F_{m-1})\subseteq\clo_2(W^+_m)$. Therefore, the part of $A_{m-1}\setminus\clo_2(W^+_m)$ lying in $A_{m-2}\cup F_{m-1}$ has measure zero. Moreover, $F\setminus F_{m-1}\subseteq\clo_1(F\setminus F_{m-1})=J\setminus A_{m-1}$, so $(F\setminus F_{m-1})\cap A_{m-1}$ has measure zero. It follows that the set
			\[
			R\coloneqq
			\begin{cases}
				A_{m-1}\setminus(A_{m-2}\cup F\cup\clo_2(W^+_m)), & \text{if } m\geq2,\\
				A_0\setminus\clo_2(W^+_1), & \text{if } m=1.
			\end{cases}
			\]
			has positive measure. By construction, $R$ is disjoint from $\clo_2(W^+_m)$; and since $R\subseteq A_{m-1}$, it is also disjoint from $\clo_1(V^+_m)=J\setminus A_{m-1}$.
		\end{claimproof}
		
		Applying \cref{lem:extend_outside_closure} to the set $R$ from \cref{cla:available_region}, with $V^+_m$ in $M_1$ and $W^+_m$ in $M_2$, we obtain a set $X_{m-1}\subseteq R$ with $\mu(X_{m-1})>0$ such that $V^+_m\cup X_{m-1}\in\cF_1$ and $W^+_m\cup X_{m-1}\in\cF_2$. This proves \ref{int:1}, \ref{int:4}, and \ref{int:5} for $m-1$.
		
		This completes the construction of the unthinned alternating chain. The set
		\[
		\left(F\cup\bigcup_{i=0}^n X_i\right)\setminus\bigcup_{i=1}^n Y_i
		\]
		is common independent and has measure larger than $\mu(F)$. 
		
		To obtain the bound in the statement of the lemma, we now thin the chain so that all exchanged pieces have the same measure. The thinning is carried out inductively. At each step, we first choose $Y'_{m+1}$ so that the resulting set remains independent in $M_2$, and then choose $X'_{m+1}$ so that common independence is restored.
		
		Choose $\eta>0$ such that $\eta\leq\mu(X_i)$ for $0\leq i\leq n$, and $\eta\leq\mu(Y_i)$ for $1\leq i\leq n$. We construct sets $X'_0,Y'_1,X'_1,\dots,Y'_n,X'_n$ in this order, so that the following properties hold:
		\begin{enumerate}[label=\normalfont{(\arabic*')}, left=0pt, itemsep=0em]
			\item $X'_k \subseteq X_k$ for $0 \leq k \leq n$, \label{int':1}
			\item $Y'_k \subseteq Y_k$ for $1 \leq k \leq n$, \label{int':2}
			\item $\mu(X'_0)=\eta$ and $\mu(X'_k)=\mu(Y'_k)=\eta$ for $1 \leq k \leq n$, \label{int':3}
			\item $\left(F \cup \bigcup_{i=0}^{k-1} X'_i \cup \bigcup_{i=k}^n X_i\right) \setminus \left(\bigcup_{i=1}^k Y'_i \cup \bigcup_{i=k+1}^n Y_i\right) \in \cF_2$ for $0 \leq k \leq n$, \label{int':4}
			\item $\left(F \cup \bigcup_{i=0}^{k} X'_i \cup \bigcup_{i=k+1}^n X_i\right) \setminus \left(\bigcup_{i=1}^k Y'_i \cup \bigcup_{i=k+1}^n Y_i\right) \in \cF_1 \cap \cF_2$ for $0 \leq k \leq n$. \label{int':5}
		\end{enumerate}
		
		Again, empty unions are interpreted as $\emptyset$. Choose $X'_0\subseteq X_0$ with $\mu(X'_0)=\eta$. Then \ref{int':1} and \ref{int':3} hold for $X'_0$. Moreover, \ref{int':4} holds for $k=0$, since the corresponding set is the common independent set obtained from the unthinned chain, and \ref{int':5} holds for $k=0$, since the corresponding set is a subset of it.
		
		Suppose that $X'_0,Y'_1,X'_1,\dots,Y'_m,X'_m$ have already been defined for some $0\leq m<n$, and that all relevant conditions above involving only these sets hold. Let
		\[
		U_m \coloneqq \left(F \cup \bigcup_{i=0}^{m} X'_i \cup \bigcup_{i=m+1}^n X_i\right) \setminus \left(\bigcup_{i=1}^m Y'_i \cup \bigcup_{i=m+1}^n Y_i\right).
		\]
		By the induction hypothesis, $U_m\in\cF_1\cap\cF_2$.
		
		\begin{cla} \label{cla:thin_y}
			There exists $Y'_{m+1}\subseteq Y_{m+1}$ with $\mu(Y'_{m+1})=\eta$ such that \ref{int':2}, \ref{int':3}, and \ref{int':4} hold for $m+1$.
		\end{cla}
		
		\begin{claimproof}
			Using \ref{int:3}, we have $\mu(W_{m+1})=\mu(F)+\mu(X_{m+1})$, while the already thinned equalities give $\mu(U_m)=\mu(F)+\eta$. Thus, $\mu(U_m)\leq\mu(W_{m+1})$. Applying \ref{ax:i3''} to $U_m$ and $W_{m+1}$ in $M_2$, we obtain $Z\in\cF_2$ with $U_m\subseteq Z\subseteq U_m\cup W_{m+1}$ and $\mu(Z)\geq\mu(W_{m+1})$. Since $F_{m+1}\subseteq W_{m+1}$ and $F_{m+1}$ spans $A_m$ in $M_2$, the set $W_{m+1}$ spans $A_m$ as well. The part of $U_m$ not already contained in $W_{m+1}$ is contained in $A_m$, and hence $U_m\subseteq\clo_2(W_{m+1})$. Thus, $Z\subseteq\clo_2(W_{m+1})$, which implies $\mu(Z)\leq\mu(W_{m+1})$. Therefore, $\mu(Z)=\mu(W_{m+1})$ and $\mu(Z\setminus U_m)=\mu(X_{m+1})-\eta$.
			
			We next show that $Z\setminus U_m\subseteq Y_{m+1}$. Indeed,
			\[
			Z\setminus U_m
			\subseteq
			W_{m+1}\setminus U_m
			=
			Y_{m+1}\cup\bigcup_{i=1}^mY_i'.
			\]
			If $m=0$, this proves the containment. Assume that $m>0$. Inside $F_m\cup A_{m-1}$, the set $U_m$ is obtained from $F_m$ by removing $Y'_1,\dots,Y'_m$ and adding $X'_0,\dots,X'_{m-1}$, while $X'_m$ and all later $X_i$ lie outside $F_m\cup A_{m-1}$. By \ref{int':3}, the removed and added parts both have total measure $m\eta$, and hence
			\[
			\mu\left(U_m\cap(F_m\cup A_{m-1})\right)=\mu(F_m).
			\]
			Since $F_m$ spans $A_{m-1}$ in $M_2$, we have $r_2(F_m\cup A_{m-1})=\mu(F_m)$. Thus, $U_m\cap(F_m\cup A_{m-1})$ is a maximal independent set of $M_2|(F_m\cup A_{m-1})$.
			
			Now $Z\cap(F_m\cup A_{m-1})$ is independent and contains $U_m\cap(F_m\cup A_{m-1})$. By maximality, the two sets are equal. Since
			\[
			\bigcup_{i=1}^mY_i'\subseteq F_m\subseteq F_m\cup A_{m-1}
			\]
			and $\bigcup_{i=1}^mY_i'$ is disjoint from $U_m$, it follows that
			\[
			\mu\left((Z\setminus U_m)\cap\bigcup_{i=1}^mY_i'\right)=0.
			\]
			Together with the displayed identity above, this gives $Z\setminus U_m\subseteq Y_{m+1}$.
			
			Define $Y'_{m+1}\coloneqq Y_{m+1}\setminus Z$. By definition, $Y'_{m+1}\subseteq Y_{m+1}$, proving \ref{int':2} for $m+1$. Since $Z\setminus U_m\subseteq Y_{m+1}$, the set in \ref{int':4} for $m+1$ is exactly $Z$, and hence belongs to $\cF_2$. Moreover, using the containment and \ref{int:3}, we get
			\[
			\mu(Y'_{m+1})=\mu(Y_{m+1})-\mu(Z\setminus U_m)=\mu(X_{m+1})-(\mu(X_{m+1})-\eta)=\eta,
			\]
			proving \ref{int':3} for $Y'_{m+1}$.
		\end{claimproof}
		
		We continue with defining $X'_{m+1}$. Let
		\[
		U^+_m \coloneqq \left(F \cup \bigcup_{i=0}^{m} X'_i \cup \bigcup_{i=m+1}^n X_i\right) \setminus \left(\bigcup_{i=1}^{m+1} Y'_i \cup \bigcup_{i=m+2}^n Y_i\right).
		\]
		
		\begin{cla} \label{cla:thin_cor_contraction}
			$U^+_m\setminus X_{m+1}\in\cF_1$.
		\end{cla}
		
		\begin{claimproof}
			It is enough to prove that
			\[
			T \coloneqq \left(F \cup \bigcup_{i=0}^{m} X'_i \cup \bigcup_{i=m+2}^n X_i\right) \setminus \left(\bigcup_{i=1}^{m} Y'_i \cup \bigcup_{i=m+2}^n Y_i\right)
			\]
			belongs to $\cF_1$, since $U^+_m\setminus X_{m+1}\subseteq T$.
			
			For convenience, extend the sequence $(V_j^+)_{j=1}^n$ by setting $V^+_{n+1}\coloneqq F\setminus F_n$. We apply \cref{lem:contraction} in $M_1$ with the closed sets $Z_0\coloneqq J\setminus A_{m+1}$ and $Z_1\coloneqq J\setminus A_m$. Since $A_m\subseteq A_{m+1}$, we have $Z_0\subseteq Z_1$. Let
			\[
			I\coloneqq V^+_{m+2},\quad
			K_1\coloneqq F_{m+1}\setminus F_m,\quad
			K_2\coloneqq V^+_{m+1}\setminus V^+_{m+2},
			\quad\text{and}\quad
			Q\coloneqq U_m\setminus V^+_{m+1}.
			\]
			By the first paragraph of the proof of \cref{cla:available_region}, $V_j^+$ is a maximal independent set in $M_1|(J\setminus A_{j-1})$ for every $1\leq j\leq n$. The same holds for $j=n+1$, since $J\setminus A_n=\clo_1(F\setminus F_n)$. Hence, $I=V^+_{m+2}$ is a maximal independent set in $M_1|Z_0$, while $I\cup K_2=V^+_{m+1}$ is a maximal independent set in $M_1|Z_1$.
			
			We first verify that $K_1,K_2\subseteq Z_1\setminus Z_0$. Since $I\cup K_2\subseteq Z_1$, we have $K_2\subseteq Z_1$. Moreover, $I\cup K_2$ is independent and contains the maximal independent set $I$ of $M_1|Z_0$, so $\mu(K_2\cap Z_0)=0$. Hence, $K_2\subseteq Z_1\setminus Z_0$.
			
			Similarly, $K_1\subseteq F\setminus F_m\subseteq Z_1$. The set $F\setminus F_{m+1}$ is maximal independent in $M_1|Z_0$, since $Z_0=\clo_1(F\setminus F_{m+1})$, and
			\[
			(F\setminus F_{m+1})\cup K_1=F\setminus F_m
			\]
			is independent. Therefore, $\mu(K_1\cap Z_0)=0$, and hence $K_1\subseteq Z_1\setminus Z_0$.
			
			We next show that $I\cup K_1$ is maximal independent in $M_1|Z_1$. Since $F\setminus F_{m+1}$ is maximal independent in $M_1|Z_0$ and $(F\setminus F_{m+1})\cup K_1$ is independent, \cref{prop:contraction_char} shows that $K_1$ is independent in $M_1/Z_0$. Applying \cref{prop:contraction_char} again, now with the maximal independent set $I$ of $M_1|Z_0$, gives $I\cup K_1\in\cF_1$. Furthermore,
			\[
			\mu(I\cup K_1)
			=
			\mu(F\setminus F_{m+1})+\mu(F_{m+1}\setminus F_m)
			=
			\mu(F\setminus F_m)
			=
			r_1(Z_1).
			\]
			Thus, $I\cup K_1$ is maximal independent in $M_1|Z_1$.
			
			Finally, $Q\subseteq J\setminus Z_1=A_m$, and $(I\cup K_2)\cup Q=U_m\in\cF_1$. We may therefore apply \cref{lem:contraction}, with $I$ playing the role of $I_0$ and $Q$ playing the role of $H$, to obtain
			\[
			I\cup K_1\cup Q
			=
			V^+_{m+2}\cup(F_{m+1}\setminus F_m)\cup(U_m\setminus V^+_{m+1})
			\in\cF_1.
			\]
			This set is precisely $T$, proving the claim.
		\end{claimproof}
		
		We have $\mu(U^+_m\setminus X_{m+1})=\mu(F)$ and $\mu(U_m)=\mu(F)+\eta$. Applying \ref{ax:i3''} to $U^+_m\setminus X_{m+1}$ and $U_m$ in $M_1$, we obtain a set $Z\in\cF_1$ with
		\[
		U^+_m\setminus X_{m+1}\subseteq Z\subseteq (U^+_m\setminus X_{m+1})\cup U_m
		\]
		and $\mu(Z)\geq\mu(U_m)$. Since $U_m\setminus(U^+_m\setminus X_{m+1})=X_{m+1}$, we have $Z\setminus(U^+_m\setminus X_{m+1})\subseteq X_{m+1}$ and this difference has measure at least $\eta$. Choose a subset $X'_{m+1}\subseteq Z\setminus(U^+_m\setminus X_{m+1})$ with $\mu(X'_{m+1})=\eta$. Then
		\[
		(U^+_m\setminus X_{m+1})\cup X'_{m+1}\subseteq Z,
		\]
		and hence this set belongs to $\cF_1$ by \ref{ax:i2}. Thus, \ref{int':1} and \ref{int':3} hold for $X'_{m+1}$.
		
		Set
		\[
		U_{m+1} \coloneqq \left(F \cup \bigcup_{i=0}^{m+1} X'_i \cup \bigcup_{i=m+2}^n X_i\right) \setminus \left(\bigcup_{i=1}^{m+1} Y'_i \cup \bigcup_{i=m+2}^n Y_i\right).
		\]
		By construction, $U_{m+1}\in\cF_1$, and $U_{m+1}\subseteq U^+_m\in\cF_2$. Thus, \ref{int':5} holds for $m+1$, and the induction continues.
		
		At the end, define
		\[
		\widehat F\coloneqq \left(F \cup \bigcup_{i=0}^n X'_i\right)\setminus \bigcup_{i=1}^n Y'_i.
		\]
		By \ref{int':5} with $k=n$, we have $\widehat F\in\cF_1\cap\cF_2$. The sets $X'_0,Y'_1,X'_1,\dots,Y'_n,X'_n$ are pairwise disjoint, while \ref{int':3} gives $\mu(X'_0)=\eta$ and $\mu(X'_i)=\mu(Y'_i)=\eta$ for $i\in[n]$. Hence, $\mu(\widehat F)=\mu(F)+\eta$. Moreover, the symmetric difference between $F$ and $\widehat F$ is contained in the union of these $2n+1$ sets, and therefore $\mu(\widehat F\triangle F)\leq(2n+1)\eta$.
	\end{proof}
	
	The next lemma identifies the limiting argument used both in the intersection theorem and in the attainment result. Its hypothesis says that, whenever the current gap is larger than $t$, the measure can be increased at a cost of at most $C(t)$ times the increase. Part~\ref{it:auga} shows that such improvements can be iterated until the gap is at most $t$, without losing this bound on the total change. Part~\ref{it:augb} then shows that the supremum is attained when these costs are summable along dyadic error bounds. In particular, the summability condition holds if $C(t)=O(1+\log(R/t))$ for sufficiently small $t$, since then $C(R2^{-i})=O(i)$.
	
	\begin{lem}\label{lem:summable_attainment}
		Let $\cA\subseteq\Sigma$ be a nonempty family, and let $R>0$ satisfy $\mu(F)\leq R$ for every $F\in\cA$. Suppose that $\cA$ is closed under taking limits, that is, whenever $(F_i)_{i\in\bN}$ is a sequence in $\cA$ for which $\lim_{i\to\infty} F_i$ exists, we have $\lim_{i\to\infty} F_i\in\cA$. Let $C\colon(0,R]\to\bR_{\geq0}$, and suppose that, for every $0<t\leq R$ and every $F\in\cA$ with $R-\mu(F)>t$, there exists $\widehat F\in\cA$ such that $\mu(\widehat F)>\mu(F)$ and
		\begin{equation}\label{eq:summable_local_augmentation}
			\mu(\widehat F\triangle F)\leq C(t)\left(\mu(\widehat F)-\mu(F)\right).
		\end{equation}
		Then the following statements hold.
		\begin{enumerate}[label=(\alph*)]\itemsep0em
			\item For every $0<t\leq R$ and every $F_0\in\cA$, there exists $G\in\cA$ such that $\mu(G)\geq\mu(F_0)$, $R-\mu(G)\leq t$, and $\mu(G\triangle F_0)\leq C(t)\left(\mu(G)-\mu(F_0)\right)$. \label{it:auga}
			\item If $\sum_{i=1}^{\infty}2^{-i}C(R2^{-i})<\infty$,
			then there exists $G\in\cA$ with $\mu(G)=R$. \label{it:augb}
		\end{enumerate}
	\end{lem}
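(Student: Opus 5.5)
For part \ref{it:auga}, I will run a transfinite iteration of the local augmentation. Fix $0<t\leq R$ and $F_0\in\cA$. I define a sequence $(F_\alpha)$ indexed by countable ordinals that keeps two invariants:
\[
\mu(F_\alpha)\geq\mu(F_\beta)\quad\text{for }\beta\le\alpha,
\qquad
\mu(F_\alpha\triangle F_0)\leq C(t)\bigl(\mu(F_\alpha)-\mu(F_0)\bigr).
\]
In fact I will aim for the stronger statement that $\mu(F_\alpha\triangle F_\beta)\le C(t)(\mu(F_\alpha)-\mu(F_\beta))$ for all $\beta\le\alpha$.

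At a successor step, if $R-\mu(F_\alpha)\le t$ I stop and set $G\coloneqq F_\alpha$. Otherwise I apply the hypothesis to get $F_{\alpha+1}$ satisfying \eqref{eq:summable_local_augmentation}. The triangle inequality $\mu(F_{\alpha+1}\triangle F_\beta)\le\mu(F_{\alpha+1}\triangle F_\alpha)+\mu(F_\alpha\triangle F_\beta)$ then telescopes the bound, so the invariant survives.

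At a countable limit ordinal $\lambda$, I choose a cofinal increasing sequence $\beta_1<\beta_2<\cdots$. The measures $\mu(F_{\beta_k})$ are nondecreasing and bounded by $R$, so
\[
\sum_k\mu(F_{\beta_{k+1}}\triangle F_{\beta_k})\le C(t)\sum_k\bigl(\mu(F_{\beta_{k+1}})-\mu(F_{\beta_k})\bigr)\le C(t)R<\infty .
\]
By \cref{cor:Borel_Cantelli} the limit $F_\lambda\coloneqq\lim_k F_{\beta_k}$ exists and $\mu(F_\lambda)=\lim_k\mu(F_{\beta_k})$. Closure of $\cA$ under limits gives $F_\lambda\in\cA$. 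The invariant passes to the limit because $\mu(F_{\beta_k}\triangle F_\lambda)\to0$ and the measures converge.

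Monotonicity in $\mu$ for $\beta\le\lambda$ follows because every $\beta<\lambda$ lies below some $\beta_k$. The limit is independent of the chosen cofinal sequence up to null sets: interleaving two cofinal sequences gives a third cofinal sequence that also has summable symmetric differences. This independence matters only for well-definedness; I can equally fix one cofinal sequence.

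Termination is the step that needs care. The measures $\mu(F_{\alpha+1})>\mu(F_\alpha)$ strictly increase at every successor stage. If the process never stopped, we would obtain uncountably many disjoint nonempty intervals $(\mu(F_\alpha),\mu(F_{\alpha+1}))\subseteq[0,R]$, which is impossible. Hence the process stops at some countable stage, and the resulting $G$ satisfies all three requirements of \ref{it:auga}.

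For part \ref{it:augb}, I iterate \ref{it:auga} with $t_i\coloneqq R2^{-i}$. Starting from any $G_0\in\cA$, part \ref{it:auga} with $t=t_i$ gives $G_i$ with $R-\mu(G_i)\le R2^{-i}$, applied to the starting set $G_{i-1}$. For $i\ge2$ the increment is controlled by the previous gap:
\[
\mu(G_i)-\mu(G_{i-1})\le R-\mu(G_{i-1})\le R2^{-(i-1)} ,
\]
and therefore
\[
\mu(G_i\triangle G_{i-1})\le C(R2^{-i})\,R\,2^{1-i}=2R\cdot2^{-i}C(R2^{-i}).
\]
The first term is bounded by $C(R/2)R$. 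By the summability assumption, $\sum_i\mu(G_i\triangle G_{i-1})<\infty$. \Cref{cor:Borel_Cantelli} then yields a limit $G\in\cA$, using closure of $\cA$ under limits, with $\mu(G)=\lim_i\mu(G_i)=R$.

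The main obstacle is the limit-ordinal bookkeeping in \ref{it:auga}. A plain $\omega$-sequence of augmentations may stall with its gap still above $t$, so the transfinite iteration is genuinely needed. The key is to carry the invariant in the two-index form $\mu(F_\alpha\triangle F_\beta)\le C(t)(\mu(F_\alpha)-\mu(F_\beta))$, so that summability at limit stages and the final estimate both follow from telescoping.
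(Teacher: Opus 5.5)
Your proposal is correct and follows essentially the same route as the paper. Part (a) is a transfinite iteration over countable ordinals carrying the two-index bound $\mu(F_\alpha\triangle F_\beta)\le C(t)\bigl(\mu(F_\alpha)-\mu(F_\beta)\bigr)$, with Borel--Cantelli along a cofinal sequence at limit stages and termination by the disjoint-intervals countability argument. Part (b) iterates (a) along $t_i=R2^{-i}$, bounding each increment by the previous gap to obtain a summable sequence of symmetric differences.
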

	
	\begin{proof}
		Fix $0<t\leq R$ and $F_0\in\cA$. If $R-\mu(F_0)\leq t$, then we may take $G=F_0$. Otherwise, writing $\omega_1$ for the first uncountable ordinal, we construct a transfinite sequence $(F^\alpha)_{\alpha<\omega_1}$ in $\cA$, unless a set whose gap is at most $t$ is obtained earlier. We maintain
		\begin{equation}\label{eq:transfinite_change_bound}
			\mu(F^\gamma\triangle F^\beta)\leq C(t)\left(\mu(F^\gamma)-\mu(F^\beta)\right)
		\end{equation}
		whenever $\beta<\gamma$ and both sets have been defined. Start with $F^0=F_0$.
		
		Suppose that $F^\alpha$ has been defined and $R-\mu(F^\alpha)>t$. Choose $F^{\alpha+1}$ using the assumption. Then \eqref{eq:transfinite_change_bound} holds for $\beta=\alpha$ by \eqref{eq:summable_local_augmentation}. If $\beta<\alpha$, then the triangle inequality and the induction hypothesis give
		\begin{align*}
			\mu(F^{\alpha+1}\triangle F^\beta)
			&\leq \mu(F^{\alpha+1}\triangle F^\alpha)+\mu(F^\alpha\triangle F^\beta)\\
			&\leq C(t)\left(\mu(F^{\alpha+1})-\mu(F^\alpha)\right)+C(t)\left(\mu(F^\alpha)-\mu(F^\beta)\right)\\
			&=C(t)\left(\mu(F^{\alpha+1})-\mu(F^\beta)\right).
		\end{align*}
		Thus, \eqref{eq:transfinite_change_bound} is preserved at successor steps.
		
		Let $\lambda<\omega_1$ be a limit ordinal, and suppose that $F^\alpha$ has been defined for every $\alpha<\lambda$. Choose an increasing sequence $\alpha_1<\alpha_2<\dots$ cofinal in $\lambda$, that is, such that for every $\beta<\lambda$ there exists $j$ with $\beta<\alpha_j$. By \eqref{eq:transfinite_change_bound},
		\[
		\sum_{j=1}^{\infty}\mu(F^{\alpha_j}\triangle F^{\alpha_{j+1}})\leq C(t)\sum_{j=1}^{\infty}\left(\mu(F^{\alpha_{j+1}})-\mu(F^{\alpha_j})\right)\leq C(t)R<\infty.
		\]
		By \cref{cor:Borel_Cantelli}, the limit $F^\lambda\coloneqq\lim_{j\to\infty}F^{\alpha_j}$ exists and satisfies $\mu(F^\lambda)=\lim_{j\to\infty}\mu(F^{\alpha_j})$. Since $\cA$ is closed under taking limits, $F^\lambda\in\cA$.
		
		Moreover, as in the proof of \cref{cor:Borel_Cantelli},
		\[
		\mu(F^{\alpha_j}\triangle F^\lambda)\leq\sum_{\ell=j}^{\infty}\mu(F^{\alpha_\ell}\triangle F^{\alpha_{\ell+1}})\longrightarrow0.
		\]
		Fix $\beta<\lambda$. For all sufficiently large $j$, we have $\beta<\alpha_j$, and hence
		\[
		\mu(F^{\alpha_j}\triangle F^\beta)\leq C(t)\left(\mu(F^{\alpha_j})-\mu(F^\beta)\right).
		\]
		Therefore,
		\[
		\mu(F^\lambda\triangle F^\beta)\leq\mu(F^\lambda\triangle F^{\alpha_j})+C(t)\left(\mu(F^{\alpha_j})-\mu(F^\beta)\right),
		\]
		and letting $j\to\infty$ gives
		\[
		\mu(F^\lambda\triangle F^\beta)\leq C(t)\left(\mu(F^\lambda)-\mu(F^\beta)\right).
		\]
		Thus, \eqref{eq:transfinite_change_bound} is preserved at limit steps.
		
		The construction cannot continue through all countable ordinals. Indeed, the measure increases strictly at every successor step and is nondecreasing throughout the construction. Hence, for every $\alpha<\omega_1$, one could choose a rational number in the interval $\left(\mu(F^\alpha),\mu(F^{\alpha+1})\right)$. These intervals are pairwise disjoint, which would give an injection from $\omega_1$ into $\bQ$. Therefore, the construction stops with a set $G\in\cA$ satisfying $R-\mu(G)\leq t$. Taking $\beta=0$ in \eqref{eq:transfinite_change_bound} proves part~\ref{it:auga}.
		
		To prove part~\ref{it:augb}, set $t_i\coloneqq R2^{-i}$ for $i\in\bZ_{\geq0}$. Starting with an arbitrary $G_0\in\cA$, apply part~\ref{it:auga} successively to obtain $G_i\in\cA$ such that $R-\mu(G_i)\leq t_i$ and
		\begin{equation*}
			\mu(G_i\triangle G_{i-1})\leq C(t_i)\left(\mu(G_i)-\mu(G_{i-1})\right)
		\end{equation*}
		for every $i\in\bN$. Since $\mu(G_i)-\mu(G_{i-1})\leq R-\mu(G_{i-1})\leq t_{i-1}$, we have
		\begin{equation*}
			\sum_{i=1}^{\infty}\mu(G_i\triangle G_{i-1})\leq2R\sum_{i=1}^{\infty}2^{-i}C(R2^{-i})<\infty.
		\end{equation*}
		By \cref{cor:Borel_Cantelli}, the limit $G\coloneqq\lim_{i\to\infty}G_i$ exists and satisfies $\mu(G)=\lim_{i\to\infty}\mu(G_i)=R$. Since $\cA$ is closed under taking limits, $G\in\cA$.
	\end{proof}
	
	We now obtain the measurable analogue of Edmonds' matroid intersection theorem.
	
	\begin{thm}\label{thm:intersection}
		Let $M_1=(J,\Sigma,\mu,\cF_1)$ and $M_2=(J,\Sigma,\mu,\cF_2)$ be measurable matroids with rank functions $r_1$ and $r_2$. Then
		\begin{equation*}
			\sup\{\mu(F)\mid F\in\cF_1\cap\cF_2\}=\min_{X\in\Sigma}\{r_1(X)+r_2(J\setminus X)\}.
		\end{equation*}
	\end{thm}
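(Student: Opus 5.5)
The plan is to combine \cref{lem:minimum}, \cref{lem:short_augmentation} and \cref{lem:summable_attainment}\ref{it:auga}. First I would check that the right-hand side is a genuine minimum. The function $\varphi(X)\coloneqq r_1(X)+r_2(J\setminus X)$ is bounded and submodular, since $X\mapsto r_2(J\setminus X)$ is submodular whenever $r_2$ is. It is also continuous from below and from above. This follows from \cref{cor:smooth}: an increasing sequence $X_i$ gives a decreasing sequence $J\setminus X_i$, and conversely, so each summand is continuous in both directions. \cref{lem:minimum} then shows that the minimum is attained; call its value $z$.

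Next comes weak duality. For $F\in\cF_1\cap\cF_2$ and any $X\in\Sigma$, the sets $F\cap X$ and $F\setminus X$ are independent in $M_1$ and $M_2$ respectively, by \ref{ax:i2}. Hence
\[
\mu(F)=\mu(F\cap X)+\mu(F\setminus X)\leq r_1(X)+r_2(J\setminus X),
\]
so the supremum is at most $z$. If $z=0$ we are done, because $\emptyset$ is common independent; so assume $z>0$.

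For the reverse inequality I would apply \cref{lem:summable_attainment}\ref{it:auga} with $\cA\coloneqq\cF_1\cap\cF_2$ and $R\coloneqq z$. The family $\cA$ is nonempty, it satisfies $\mu(F)\leq R$ by weak duality, and it is closed under limits. For the last point, if $\lim F_i$ exists it equals $\liminf F_i$, which lies in both $\cF_1$ and $\cF_2$ by \cref{lem:liminf}. For the augmentation hypothesis, fix $0<t\leq z$ and $F\in\cA$ with $z-\mu(F)>t$. Then $\delta_F>t$, and \cref{lem:short_augmentation} gives $\widehat F\in\cA$ with gain $\eta>0$ and
\[
\mu(\widehat F\triangle F)\leq(2n+1)\eta,\qquad n\leq \mu(J)/\delta_F<\mu(J)/t.
\]
So the hypothesis holds with $C(t)\coloneqq 2\mu(J)/t+1$. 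Part \ref{it:auga} then yields, for every $t>0$, some $G\in\cA$ with $z-\mu(G)\leq t$. Letting $t\to0$ gives $\sup\geq z$.

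All the real difficulty already sits inside \cref{lem:short_augmentation}, namely the layered augmenting chain and its thinning, together with the transfinite limiting argument of \cref{lem:summable_attainment}. Given those lemmas, the only points I expect to need care are these:
\begin{itemize}
\item verifying continuity of the complemented term $r_2(J\setminus\cdot)$ so that \cref{lem:minimum} applies and the right-hand side is a minimum rather than an infimum;
\item noting that closedness of $\cF_1\cap\cF_2$ under limits uses \cref{lem:liminf} in each matroid separately.
\end{itemize}
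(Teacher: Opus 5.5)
Your proposal is correct and follows essentially the same route as the paper. The paper also obtains attainment of the minimum from \cref{lem:minimum}, proves weak duality by splitting $F$ along $X$, and then applies \cref{lem:summable_attainment}\ref{it:auga} with $\cA=\cF_1\cap\cF_2$, $R=z$, $F_0=\emptyset$ and $C(t)=1+2\mu(J)/t$ supplied by \cref{lem:short_augmentation}. Your explicit checks of submodularity and two-sided continuity for $X\mapsto r_2(J\setminus X)$ just spell out what the paper asserts in one line.
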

	
	\begin{proof}
		Let $z\coloneqq\min_{X\in\Sigma}\{r_1(X)+r_2(J\setminus X)\}$, which is attained by \cref{lem:minimum}, since the function $X\mapsto r_1(X)+r_2(J\setminus X)$ is bounded, submodular, and continuous from above and below. For every $F\in\cF_1\cap\cF_2$ and every $X\in\Sigma$, we have
		\begin{equation*}
			\mu(F)=\mu(F\cap X)+\mu(F\cap(J\setminus X))\leq r_1(X)+r_2(J\setminus X).
		\end{equation*}
		Hence, $\mu(F)\leq z$, and the left-hand side is at most the right-hand side.
		
		For the converse, the assertion is immediate if $z=0$, so assume that $z>0$. Set $\cA\coloneqq\cF_1\cap\cF_2$ and
		\begin{equation*}
			C(t)\coloneqq1+\frac{2\mu(J)}{t}
		\end{equation*}
		for $0<t\leq z$. Every member of $\cA$ has measure at most $z$. By \cref{lem:liminf}, both $\cF_1$ and $\cF_2$ are closed under taking limits, and hence so is $\cA$.
		
		Let $0<t\leq z$ and $F\in\cA$ satisfy $z-\mu(F)>t$. By \cref{lem:short_augmentation}, there exist an integer
		\[
		n\leq\frac{\mu(J)}{z-\mu(F)}<\frac{\mu(J)}{t}
		\]
		and a set $\widehat F\in\cA$ such that $\mu(\widehat F)>\mu(F)$ and
		\[
		\mu(\widehat F\triangle F)\leq(2n+1)\left(\mu(\widehat F)-\mu(F)\right)\leq C(t)\left(\mu(\widehat F)-\mu(F)\right).
		\]
		Thus, \cref{lem:summable_attainment}\ref{it:auga}, applied with $R=z$ and $F_0=\emptyset$, gives common independent sets of measure arbitrarily close to $z$. Hence, the supremum on the left-hand side is at least $z$.
	\end{proof}
	
	\begin{rem}\label{rem:intersection_sup_not_attained}
		The supremum in \cref{thm:intersection} need not be attained. Let $J=[0,1]$ with the Lebesgue measure $\lambda$, and let $N^+=(J,\Sigma,\lambda,\cF^+)$ be the interval-halving matroid, where
		\[
		\cF^+ \coloneqq \left\{F\in\Sigma\,\middle|\, \lambda(F\cap[0,t])\leq \frac{t}{2} \text{ for every } t\in[0,1]\right\}.
		\]
		Let $N^-=(J,\Sigma,\lambda,\cF^-)$ be its reflected copy under the map $x\mapsto 1-x$; equivalently,
		\[
		\cF^- \coloneqq \left\{F\in\Sigma\,\middle|\, \lambda(F\cap[t,1])\leq \frac{1-t}{2} \text{ for every } t\in[0,1]\right\}.
		\]
		Both matroids have rank $1/2$. We show that $N^+$ and $N^-$ have common independent sets of measure arbitrarily close to $1/2$, but no common basis. For $n\geq2$, define
		\[
		F_n\coloneqq\bigcup_{k=1}^{n-1}\left[\frac{2k-1}{2n-1},\frac{2k}{2n-1}\right].
		\]
		Then
		\[
		\lambda(F_n)=\frac{n-1}{2n-1}=\frac12-\frac{1}{2(2n-1)}.
		\]
		A direct check gives
		\[
		0\leq\frac{t}{2}-\lambda(F_n\cap[0,t])\leq\frac{1}{2(2n-1)}
		\]
		for every $t\in[0,1]$, so $F_n\in\cF^+$. Moreover, $F_n$ is invariant under the reflection $x\mapsto1-x$, and hence $F_n\in\cF^-$. Thus, common independent sets can have measure arbitrarily close to $1/2$; see \cref{fig:halving3}.
		
		It remains to show that no common independent set of measure $1/2$ exists. Suppose that $B\in\cF^+\cap\cF^-$ and $\lambda(B)=1/2$. The constraint defining $\cF^+$ gives $\lambda(B\cap[0,t])\leq t/2$ for every $t\in[0,1]$. On the other hand, the constraint defining $\cF^-$ gives $\lambda(B\cap[t,1])\leq (1-t)/2$, which, since $\lambda(B)=1/2$, is equivalent to $\lambda(B\cap[0,t])\geq t/2$. Thus, $\lambda(B\cap[0,t])=t/2$ for every $t\in[0,1]$. As observed in \cref{lem:halving_measure}, no Borel set has this property. Therefore, $N^+$ and $N^-$ have no common basis, although their common independent sets have measures arbitrarily close to the common rank. 
		
		The same interval-halving obstruction also appears in the example from \cref{rem:unionnotm} showing that the raw measurable union need not be closed: in both cases, an exact solution would force every prefix constraint to be tight. The next subsection gives sufficient rank-expansion conditions for attainment.
	\end{rem}
	
	\begin{rem}
		Laczkovich's example~\cite{laczkovich1988closed} gives another nonattainment example. Let $M_S$ and $M_T$ be the two incidence partition matroids of the bipartite graphing $G$ from \cref{rem:laczkovich_matching}. Their common independent sets are precisely the measurable matchings. By \cref{rem:laczkovich_matching}, they have common independent sets of measure arbitrarily close to their common rank, but no common basis.
	\end{rem}
	
	%%%%%%%%%%%%%%%%%%%%%%%%%%%%%%%%
	\subsection{Expansion and Attainment}
	\label{sec:expansion_attainment}
	%%%%%%%%%%%%%%%%%%%%%%%%%%%%%%%%
	
	The intersection theorem determines the supremum of the measures of common independent sets, but the example in \cref{rem:intersection_sup_not_attained} shows that this supremum need not be attained. The distinction is particularly important for measurable matchings. Hall's condition yields matchings covering all but a set of arbitrarily small measure, whereas a measurable matching covering all vertices outside a null set may fail to exist~\cite{laczkovich1988closed,kun2021measurable}; see \cref{rem:laczkovich_matching} for a concrete example. Expansion is a standard additional hypothesis under which such matchings can be obtained~\cite{lyons2011perfect,grabowski2022measurable}. We give a rank-expansion criterion that guarantees the existence of a common basis.
	
	We first discuss the closure relations used in the proof. Let $F\in\cF_1\cap\cF_2$, and let $F_0,F_1,\dots$ and $A_0,A_1,\dots$ be the alternating layers constructed in the proof of \cref{lem:short_augmentation}.
	
	\begin{lem}\label{lem:alternating_layer_closures}
		For every index $k$ for which $F_{k+1}$ is defined, we have
		\[
		\clo_2(F\setminus F_{k+1})\setminus\clo_2(\emptyset)\subseteq\clo_1(F\setminus F_k)
		\qquad\text{and}\qquad
		\clo_1(F_k)\setminus\clo_1(\emptyset)\subseteq\clo_2(F_{k+1}).
		\]
	\end{lem}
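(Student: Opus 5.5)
Both inclusions should follow from \cref{lem:closure_intersection}, applied to the complementary pieces of the common independent set $F$: once in $M_2$ and once in $M_1$. The only other inputs are the definitions
\[
A_k=J\setminus\clo_1(F\setminus F_k),
\qquad
F_{k+1}=\widehat{\clo}_2(F,A_k).
\]
In particular, $F_{k+1}\subseteq F$ and $A_k\subseteq\clo_2(F_{k+1})$. Throughout, fix an index $k$ for which $F_{k+1}$ is defined, so that $A_k\subseteq\clo_2(F)$.

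For the first inclusion, note that $\clo_1(F\setminus F_k)=J\setminus A_k$. The claim is therefore equivalent to
\[
D\coloneqq A_k\cap\clo_2(F\setminus F_{k+1})\subseteq\clo_2(\emptyset).
\]
Since $D\subseteq A_k\subseteq\clo_2(F_{k+1})$, the set $D$ lies in both $\clo_2(F_{k+1})$ and $\clo_2(F\setminus F_{k+1})$. The two sets $F_{k+1}$ and $F\setminus F_{k+1}$ are subsets of $F\in\cF_2$, and their union $F$ is independent in $M_2$. So \cref{lem:closure_intersection} applies in $M_2$ and gives
\[
D\subseteq\clo_2\bigl(F_{k+1}\cap(F\setminus F_{k+1})\bigr)=\clo_2(\emptyset).
\]

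For the second inclusion, split $\clo_1(F_k)$ according to whether a point lies in $A_k$ or in its complement $J\setminus A_k=\clo_1(F\setminus F_k)$. The part lying in $A_k$ is contained in $\clo_2(F_{k+1})$ by the definition of $F_{k+1}$. For the remaining part, $\clo_1(F_k)\cap\clo_1(F\setminus F_k)$, apply \cref{lem:closure_intersection} in $M_1$ to $F_k$ and $F\setminus F_k$. Both are subsets of $F\in\cF_1$ with union $F$, so this intersection is contained in $\clo_1(\emptyset)$. Hence
\[
\clo_1(F_k)\setminus\clo_1(\emptyset)\subseteq A_k\subseteq\clo_2(F_{k+1}).
\]
The case $k=0$ needs no separate treatment: then $F_0=\emptyset$ and both sides are handled by the same argument.

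I do not expect a real obstacle here. The only point needing care is that all inclusions are understood up to null sets, as fixed in the standing convention. \cref{lem:closure_intersection} is stated in exactly that sense, so the argument goes through without modification.
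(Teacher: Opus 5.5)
Your proposal is correct and is essentially the paper's proof. Both arguments apply \cref{lem:closure_intersection} to $F_{k+1}$ and $F\setminus F_{k+1}$ in $M_2$, and to $F_k$ and $F\setminus F_k$ in $M_1$, together with $A_k=J\setminus\clo_1(F\setminus F_k)$ and $A_k\subseteq\clo_2(F_{k+1})$. The only difference is cosmetic: for the first inclusion you use the equivalent form $A_k\cap\clo_2(F\setminus F_{k+1})\subseteq\clo_2(\emptyset)$, whereas the paper chains $\clo_2(F\setminus F_{k+1})\setminus\clo_2(\emptyset)\subseteq J\setminus\clo_2(F_{k+1})\subseteq J\setminus A_k$ directly.
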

	
	\begin{proof}
		Since $F_{k+1}$ and $F\setminus F_{k+1}$ are disjoint subsets of the $M_2$-independent set $F$, \cref{lem:closure_intersection} gives
		\[
		\clo_2(F\setminus F_{k+1})\setminus\clo_2(\emptyset)\subseteq J\setminus\clo_2(F_{k+1}).
		\]
		By the construction of $F_{k+1}$, we have $A_k\subseteq\clo_2(F_{k+1})$, and therefore
		\[
		J\setminus\clo_2(F_{k+1})\subseteq J\setminus A_k=\clo_1(F\setminus F_k),
		\]
		which proves the first inclusion. Similarly, $F_k$ and $F\setminus F_k$ are disjoint subsets of the $M_1$-independent set $F$, so \cref{lem:closure_intersection} gives
		\[
		\clo_1(F_k)\setminus\clo_1(\emptyset)\subseteq J\setminus\clo_1(F\setminus F_k)=A_k\subseteq\clo_2(F_{k+1}),
		\]
		which proves the second inclusion.
	\end{proof}
	
	We will also use the following consequence of \cref{lem:closure_intersection}.
	
	\begin{lem}\label{lem:complementary_closed_set}
		Let $M=(J,\Sigma,\mu,\cF)$ be a measurable matroid of rank $R$, with rank function $r$ and closure operator $\clo$. For every $M$-closed set $C$, there exists an $M$-closed set $D$ such that $C\cap D=\clo(\emptyset)$ and  $r(D)=R-r(C)$.
	\end{lem}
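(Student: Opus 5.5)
The plan is to produce $D$ as the closure of a complementary piece of a basis. Let $I$ be a maximal independent subset of $C$, so that $\mu(I)=r(C)$. Since $C$ is closed, $\clo(I)=C$: indeed $I\subseteq C$ and $r(I)=r(C)$, so $C\subseteq\clo(I)$, and $\clo(I)\subseteq\clo(C)=C$ by \ref{ax:cl2} and \ref{ax:cl3}. By \cref{lem:maximal}, extend $I$ to a basis $B$ of $M$, and set $K\coloneqq B\setminus I$. Then $\mu(K)=R-r(C)$, and $K$ is independent. We then define $D\coloneqq\clo(K)$, which is closed by \ref{ax:cl3} and satisfies $r(D)=r(K)=\mu(K)=R-r(C)$, as required.

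It remains to check that $C\cap D=\clo(\emptyset)$. The inclusion $\clo(\emptyset)\subseteq C\cap D$ follows from \ref{ax:cl2}. For the reverse inclusion, apply \cref{lem:closure_intersection} with $F_1\coloneqq I$, $F_2\coloneqq K$ and $A\coloneqq C\cap D$. We have $F_1\cup F_2=B\in\cF$, and $A\subseteq\clo(I)\cap\clo(K)$. Hence $A\subseteq\clo(I\cap K)=\clo(\emptyset)$, because $I\cap K=\emptyset$.

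The only point needing care is the identity $\clo(I)=C$, where closedness of $C$ is used. This is immediate from the definition of closure as the unique maximal set of the same rank containing $I$. So I expect no real obstacle; the argument is a direct application of \cref{lem:closure_intersection}.
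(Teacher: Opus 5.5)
Your proposal is correct and matches the paper's proof: both take a maximal independent subset of $C$, extend it to a basis, let $D$ be the closure of the complementary part of that basis, and get $C\cap D\subseteq\clo(\emptyset)$ from \cref{lem:closure_intersection}. You also spell out $\clo(I)=C$ and $\clo(\emptyset)\subseteq C\cap D$, which the paper states without argument.
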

	
	\begin{proof}
		Let $B_1$ be a basis of $M|C$, and extend it to a basis $B_1\cup B_2$ of $M$. Set $D\coloneqq\clo(B_2)$. Since $C=\clo(B_1)$ and $B_1\cup B_2$ is independent, \cref{lem:closure_intersection} gives $C\cap D\subseteq\clo(\emptyset)$. The reverse inclusion holds because every closed set contains $\clo(\emptyset)$. Moreover, $r(D)=\mu(B_2)=R-\mu(B_1)=R-r(C)$.
	\end{proof}
	
	We introduce two forms of expansion, differing only in whether the rank comparison is imposed on closed sets or on their complements. Let $M_1$ and $M_2$ be measurable matroids on the same measure space, of common rank $R$, with rank functions $r_1,r_2$ and closure operator $\clo_1$ for $M_1$. Let $\varepsilon>0$ and $\alpha\in[0,1]$. The ordered pair $(M_1,M_2)$ is \emph{$(\varepsilon,\alpha)$-expanding} if, for every $M_1$-closed set $C$,
	\begin{equation}\label{eq:rank_expansion}
		r_2(C\setminus\clo_1(\emptyset))\geq\alpha R
		\qquad\text{or}\qquad
		r_2(C\setminus\clo_1(\emptyset))\geq(1+\varepsilon)r_1(C\setminus\clo_1(\emptyset)).
	\end{equation}
	It is \emph{$(\varepsilon,\alpha)$-co-expanding} if, for every $M_1$-closed set $C$,
	\begin{equation}\label{eq:complement_rank_expansion}
		r_2(J\setminus C)\geq\alpha R
		\qquad\text{or}\qquad
		r_2(J\setminus C)\geq(1+\varepsilon)r_1(J\setminus C).
	\end{equation}
	
	The proof of the next theorem strengthens the construction from \cref{lem:short_augmentation}. We first show that the min--max value in \cref{thm:intersection} is $R$. Then, starting from a common independent set $F$ with gap $\delta=R-\mu(F)$, we run the alternating-layer construction from \cref{lem:short_augmentation}. The expansion hypotheses force the forward layers to grow geometrically until they reach the threshold $\theta R$, after which the reverse expansion forces the remaining parts of $F$ to decay geometrically. This bounds the length of the augmentation by $O_\varepsilon(\log(R/\delta))$, and \cref{lem:summable_attainment} then yields a common basis.
	
	\begin{thm}\label{thm:intersection_attainment_expansion}
		Let $M_1=(J,\Sigma,\mu,\cF_1)$ and $M_2=(J,\Sigma,\mu,\cF_2)$ be measurable matroids of common rank $R$. Suppose that there exist $\varepsilon>0$ and $\theta\in[0,1]$ such that the following conditions hold.
		\begin{enumerate}[label=(\alph*)]\itemsep0em
			\item The ordered pair $(M_1,M_2)$ is either $(\varepsilon,\theta)$-expanding or $(\varepsilon,\theta)$-co-expanding.
			\item The ordered pair $(M_2,M_1)$ is either $(\varepsilon,1-\theta)$-expanding or $(\varepsilon,1-\theta)$-co-expanding.
		\end{enumerate}
		Then $M_1$ and $M_2$ have a common basis.
	\end{thm}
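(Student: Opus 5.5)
The plan is to apply \cref{lem:summable_attainment}\ref{it:augb} to the family $\cA\coloneqq\cF_1\cap\cF_2$, with the bound $R$ equal to the common rank and a cost function of the form $C(t)=O_\varepsilon(1+\log(R/t))$. This choice of cost makes the sum $\sum_i 2^{-i}C(R2^{-i})$ finite. As in the proof of \cref{thm:intersection}, $\cA$ is nonempty, closed under limits by \cref{lem:liminf}, and every member has measure at most $R$. A set of measure $R$ in $\cA$ is then a common basis. Two things remain to be shown.

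First, the min--max value $z$ of \cref{thm:intersection} equals $R$. I take a minimizer $X$ and write $C\coloneqq\clo_1(X)$, which is $M_1$-closed and satisfies $r_1(C)=r_1(X)$ and $J\setminus C\subseteq J\setminus X$. Suppose $r_1(C)+r_2(J\setminus C)<R$.

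In the co-expanding case for $(M_1,M_2)$, either $r_2(J\setminus C)\ge\theta R$ or $r_2(J\setminus C)\ge(1+\varepsilon)r_1(J\setminus C)$. I combine this with $r_1(C)+r_1(J\setminus C)\ge R$ and with the hypothesis on $(M_2,M_1)$, applied to the $M_2$-closed set $\clo_2(J\setminus C)$ or its complementary closed set from \cref{lem:complementary_closed_set}. The aim is a contradiction, since a deficient cut would give a set that expands in neither direction.

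In the expanding case, \cref{lem:complementary_closed_set} converts a closed set $C$ into a closed $D$ with $r_1(D)=R-r_1(C)$, and I expect the same contradiction to follow. I anticipate handling the four combinations of hypotheses (a) and (b) by this single dichotomy: one inequality pushes the rank above $\theta R$, the other above $(1-\theta)R$.

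Second, I need a quantitative version of \cref{lem:short_augmentation}. Let $F\in\cA$ have gap $\delta=R-\mu(F)>t$. I run the same alternating layers $A_k,F_k$ and bound the index $n$ at which $\mu(A_n\cap H)>0$ by $O_\varepsilon(\log(R/\delta))$, instead of the bound $\mu(J)/\delta$. The thinning part of that lemma is unchanged, so this yields \eqref{eq:summable_local_augmentation} with $C(t)=2n+1$.

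For the growth phase I use \cref{lem:alternating_layer_closures}. Because $\clo_1(F_k)\setminus\clo_1(\emptyset)\subseteq\clo_2(F_{k+1})$, expansion of $(M_1,M_2)$ applied to the $M_1$-closed set $\clo_1(F_k)$ gives either $\mu(F_{k+1})\ge\theta R$ or $\mu(F_{k+1})\ge(1+\varepsilon)\mu(F_k)$. In the co-expanding variant I would instead apply this to $\clo_1(F\setminus F_k)=J\setminus A_k$, which controls $r_2(A_k)\le\mu(F_{k+1})$. Starting from $\mu(F_1)\ge\delta$ (\cref{cla:layers}), after $O(\varepsilon^{-1}\log(R/\delta))$ steps $\mu(F_k)$ reaches $\theta R$.

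For the decay phase I use the other inclusion, $\clo_2(F\setminus F_{k+1})\setminus\clo_2(\emptyset)\subseteq\clo_1(F\setminus F_k)$, together with hypothesis (b) for $(M_2,M_1)$. This gives $\mu(F\setminus F_k)\ge(1+\varepsilon)\mu(F\setminus F_{k+1})$ while $\mu(F\setminus F_{k+1})<(1-\theta)R$. So the remainder $\mu(F\setminus F_k)$ shrinks geometrically, and once it drops below $\delta$ the claim forces the construction to stop.

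The main obstacle is making these two phases mesh near the threshold, where only the $\alpha$-alternative of the hypothesis is available, and correctly tracking the loop sets $\clo_i(\emptyset)$ together with the gap $\delta$ so that the constants depend only on $\varepsilon$. I would first try to show that at the crossover step the sum $\mu(F_{k+1})+$(the remaining required rank) exceeds $R-\delta$, which would stop the process within one extra step.
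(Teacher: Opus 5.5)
Your plan is the paper's proof. You establish the min--max value $R$ through the dichotomy $r_2(J\setminus X)<\theta R$ or $r_1(X)<(1-\theta)R$, which contradicts (a) or (b) respectively via \cref{lem:complementary_closed_set} or submodularity. You then bound the number of alternating layers by a growth phase and a decay phase using \cref{lem:alternating_layer_closures}, and finish with \cref{lem:summable_attainment}\ref{it:augb} using $C(t)=O_\varepsilon(1+\log(R/t))$.

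The meshing at the crossover, which you flag as the main obstacle, is not one, but your decay condition is mis-indexed. Write $f_k\coloneqq\mu(F_k)$ and $s_k\coloneqq\mu(F\setminus F_k)$. The threshold alternative of (b), for the $M_2$-closed set $\clo_2(F\setminus F_{k+1})$ (or $\clo_2(F_{k+1})$ in the co-expanding case), concerns an $r_1$-rank bounded by $r_1(\clo_1(F\setminus F_k))=s_k$. So you need $s_k<(1-\theta)R$, not $s_{k+1}<(1-\theta)R$.

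With this correction the phases mesh directly. Once some $m\le L(\delta)\coloneqq\lceil\log(R/\delta)/\log(1+\varepsilon)\rceil+1$ has $f_m\ge\theta R$, you get $s_m=\mu(F)-f_m\le R-\delta-\theta R<(1-\theta)R$. Since $s_k$ is nonincreasing, the multiplicative alternative $s_k\ge(1+\varepsilon)s_{k+1}$ holds for every $m\le k<n$. Together with $s_k\ge\delta$ for $k<n$ (\cref{cla:layers}), this gives $n\le m+L(\delta)\le 2L(\delta)$.

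Drop the idea of making the construction stop one step after the crossover. It is not needed, and it cannot be expected in general: for $\theta=0$ the crossover already occurs at $m=0$ because $f_0=0$, and the whole bound on $n$ comes from the geometric decay.
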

	
	\begin{proof}
		If $R=0$, then the assertion is immediate, so assume that $R>0$. We first show that
		\begin{equation}\label{eq:expansion_intersection_condition}
			r_1(X)+r_2(J\setminus X)\geq R
		\end{equation}
		for every $X\in\Sigma$. Suppose, to the contrary, that this inequality fails for some $X$. Then $r_2(J\setminus X)<\theta R$ or $r_1(X)<(1-\theta)R$. The two cases are symmetric under interchanging $M_1$ and $M_2$, replacing $X$ by $J\setminus X$, and replacing $\theta$ by $1-\theta$. We may therefore assume that $r_2(J\setminus X)<\theta R$. Set $C\coloneqq\clo_1(X)$ and $A\coloneqq J\setminus C$. Since closure preserves rank and $A\subseteq J\setminus X$, we have
		\begin{equation}\label{eq:expansion_obstruction}
			r_2(A)<\theta R
			\qquad\text{and}\qquad
			r_1(C)+r_2(A)<R.
		\end{equation}
		
		Suppose first that $(M_1,M_2)$ is expanding. Apply \cref{lem:complementary_closed_set} in $M_1$ to obtain an $M_1$-closed set $D$ such that $C\cap D=\clo_1(\emptyset)$ and $r_1(D)=R-r_1(C)$. Then $D\setminus\clo_1(\emptyset)\subseteq A$, and hence
		\[
		r_2(D\setminus\clo_1(\emptyset))\leq r_2(A)<\theta R.
		\]
		Moreover,
		\[
		r_1(D\setminus\clo_1(\emptyset))
		=
		R-r_1(C)
		>
		r_2(A)
		\geq
		r_2(D\setminus\clo_1(\emptyset)).
		\]
		Thus, both alternatives in \eqref{eq:rank_expansion} fail for $D$, a contradiction.
		
		Now suppose that $(M_1,M_2)$ is co-expanding. By submodularity of $r_1$,
		\[
		r_1(A)\geq R-r_1(C)>r_2(A).
		\]
		Together with $r_2(A)<\theta R$, this contradicts both alternatives in \eqref{eq:complement_rank_expansion} for the closed set $C$. This proves \eqref{eq:expansion_intersection_condition}.
		
		Since equality holds in \eqref{eq:expansion_intersection_condition} for $X=J$, \cref{thm:intersection} shows that the supremum of the measures of common independent sets is $R$. Set $\cA\coloneqq\cF_1\cap\cF_2$. By \cref{lem:liminf}, both $\cF_1$ and $\cF_2$ are closed under taking limits, and hence so is $\cA$.
		
		Let $F\in\cA$ with $\mu(F)<R$, and set $\delta\coloneqq R-\mu(F)$. Set
		\[
		A_0\coloneqq J\setminus\clo_1(F),
		\qquad
		H\coloneqq J\setminus\clo_2(F),
		\qquad
		F_0\coloneqq\emptyset,
		\]
		and run the alternating-layer construction from the proof of \cref{lem:short_augmentation}. Let $n$ be the first index for which $\mu(A_n\cap H)>0$. For $0\leq k\leq n$, set $f_k\coloneqq\mu(F_k)$ and $s_k\coloneqq\mu(F\setminus F_k)$. By \cref{cla:layers}, $f_1\geq\delta$ when $n\geq1$, and $s_k\geq\delta$ for every $k<n$.
		
		\begin{cla}\label{cla:help}
			For every $k<n$, we have $f_{k+1}\geq\theta R$ or $f_{k+1}\geq(1+\varepsilon)f_k$.
		\end{cla}
		
		\begin{claimproof}
			Suppose first that $(M_1,M_2)$ is expanding. Apply \eqref{eq:rank_expansion} to the $M_1$-closed set $\clo_1(F_k)$, and set $C_k\coloneqq\clo_1(F_k)\setminus\clo_1(\emptyset)$. By the loop-set observation, $r_1(C_k)=r_1(F_k)=f_k$. Moreover, \cref{lem:alternating_layer_closures} gives $C_k\subseteq\clo_2(F_{k+1})$, and hence $r_2(C_k)\leq r_2(F_{k+1})=f_{k+1}$. Thus, \eqref{eq:rank_expansion} gives either $f_{k+1}\geq\theta R$ or $f_{k+1}\geq r_2(C_k)\geq(1+\varepsilon)f_k$.
			
			Suppose now that $(M_1,M_2)$ is co-expanding. Apply \eqref{eq:complement_rank_expansion} to the $M_1$-closed set $J\setminus A_k=\clo_1(F\setminus F_k)$. Since $F\setminus F_k$ is $M_1$-independent, submodularity gives $r_1(A_k)\geq R-r_1(J\setminus A_k)=R-\mu(F\setminus F_k)=\delta+f_k$. By the construction of $F_{k+1}$, we have $A_k\subseteq\clo_2(F_{k+1})$, and hence $r_2(A_k)\leq r_2(F_{k+1})=f_{k+1}$. Thus, \eqref{eq:complement_rank_expansion} gives either $f_{k+1}\geq\theta R$ or $f_{k+1}\geq r_2(A_k)\geq(1+\varepsilon)r_1(A_k)\geq(1+\varepsilon)(\delta+f_k)\geq(1+\varepsilon)f_k$. This proves the claim.
		\end{claimproof}
		
		Define $L(\delta)\coloneqq\left\lceil\frac{\log(R/\delta)}{\log(1+\varepsilon)}\right\rceil+1$. If $n>L(\delta)$ and the first alternative in \cref{cla:help} does not occur among the first $L(\delta)$ layers, then $f_1\geq\delta$ and the second alternative give
		\begin{equation*}
			f_{L(\delta)}\geq(1+\varepsilon)^{L(\delta)-1}\delta\geq R,
		\end{equation*}
		contrary to $F_{L(\delta)}\subseteq F$ and $\mu(F)<R$. Thus, either $n\leq L(\delta)$, or there exists $m\leq L(\delta)$ such that $f_m\geq\theta R$.
		
		Assume that the latter case occurs. We claim that
		\begin{equation}\label{eq:backward_decay}
			s_k\geq(1+\varepsilon)s_{k+1}
		\end{equation}
		for every $m\leq k<n$. Since
		\[
		s_k\leq s_m=\mu(F)-f_m\leq\mu(F)-\theta R<(1-\theta)R
		\]
		for every $m\leq k<n$, we prove the claim separately in the expanding and co-expanding cases. 
		
		Suppose first that $(M_2,M_1)$ is expanding, and set $D_k\coloneqq\clo_2(F\setminus F_{k+1})$. By \cref{lem:alternating_layer_closures}, we have $r_1(D_k\setminus\clo_2(\emptyset))\leq s_k<(1-\theta)R$, while the loop-set observation gives $r_2(D_k\setminus\clo_2(\emptyset))=s_{k+1}$. Thus, the threshold alternative in \eqref{eq:rank_expansion} cannot hold, and the multiplicative alternative gives $s_k\geq r_1(D_k\setminus\clo_2(\emptyset))\geq(1+\varepsilon)r_2(D_k\setminus\clo_2(\emptyset))=(1+\varepsilon)s_{k+1}$.
		
		Suppose now that $(M_2,M_1)$ is co-expanding, and set $D_k\coloneqq\clo_2(F_{k+1})$. By the construction of $F_{k+1}$, we have $A_k\subseteq D_k$, so $J\setminus D_k\subseteq J\setminus A_k=\clo_1(F\setminus F_k)$, and hence $r_1(J\setminus D_k)\leq s_k<(1-\theta)R$. On the other hand, \cref{lem:closure_intersection} gives $\clo_2(F\setminus F_{k+1})\setminus\clo_2(\emptyset)\subseteq J\setminus D_k$, and therefore $r_2(J\setminus D_k)\geq s_{k+1}$. Again, the threshold alternative in \eqref{eq:complement_rank_expansion} cannot hold, and the multiplicative alternative gives $s_k\geq r_1(J\setminus D_k)\geq(1+\varepsilon)r_2(J\setminus D_k)\geq(1+\varepsilon)s_{k+1}$.
		
		If $m+L(\delta)<n$, then repeated application of \eqref{eq:backward_decay} gives
		\begin{equation*}
			s_{m+L(\delta)}\leq(1+\varepsilon)^{-L(\delta)}s_m<\delta,
		\end{equation*}
		contradicting \cref{cla:layers}. Therefore, $n\leq m+L(\delta)\leq2L(\delta)$. By \cref{lem:short_augmentation}, there exists $\widehat F\in\cA$ such that $\mu(\widehat F)>\mu(F)$ and
		\begin{equation}\label{eq:expansion_local_augmentation}
			\mu(\widehat F\triangle F)\leq\left(4L(\delta)+1\right)\left(\mu(\widehat F)-\mu(F)\right).
		\end{equation}
		For $0<t\leq R$, define
		\begin{equation*}
			C(t)\coloneqq4\left(\left\lceil\frac{\log(R/t)}{\log(1+\varepsilon)}\right\rceil+1\right)+1.
		\end{equation*}
		If $\delta>t$, then $L(\delta)\leq L(t)$, so \eqref{eq:expansion_local_augmentation} verifies the local augmentation hypothesis of \cref{lem:summable_attainment}. Moreover, $C(R2^{-i})=O_\varepsilon(i)$, and hence
		\begin{equation*}
			\sum_{i=1}^{\infty}2^{-i}C(R2^{-i})<\infty.
		\end{equation*}
		Therefore, \cref{lem:summable_attainment}\ref{it:augb} gives a set $B\in\cA$ with $\mu(B)=R$. Thus, $B$ is a common basis.
	\end{proof}
	
	For the two incidence partition matroids associated with a bipartite graphing, the usual bipartite-expander inequalities imply the co-expanding hypotheses of \cref{thm:intersection_attainment_expansion} with $\theta=1/2$. Thus, the theorem recovers the measurable perfect-matching theorem of Lyons and Nazarov~\cite{lyons2011perfect}; we give the details in \cref{sec:bipartite}, following the formulation of Grabowski, M\'ath\'e and Pikhurko~\cite[Equation~(2.5)]{grabowski2022measurable}.
	
	At $\theta=0$, the forward expansion condition is automatic. After truncation, this gives the following unequal-rank consequence.
	
	\begin{cor}\label{cor:one_sided_flat_expansion}
		Let $M_1=(J,\Sigma,\mu,\cF_1)$ and $M_2=(J,\Sigma,\mu,\cF_2)$ be measurable matroids with rank functions $r_1,r_2$ and closure operators $\clo_1,\clo_2$. Suppose that $\clo_2(\emptyset)\subseteq\clo_1(\emptyset)$ and that there exists $\varepsilon>0$ such that
		\begin{equation}\label{eq:one_sided_flat_expansion}
			r_1(C)\geq\min\{r_1(J),(1+\varepsilon)r_2(C)\}
		\end{equation}
		for every $M_2$-closed set $C$. Then there exists $F\in\cF_1\cap\cF_2$ such that
		\begin{equation*}
			\mu(F)=\min\{r_1(J),r_2(J)\}.
		\end{equation*}
	\end{cor}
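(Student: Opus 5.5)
The plan is to reduce to \cref{thm:intersection_attainment_expansion} with $\theta=0$, after truncating to equalize ranks. Set $R\coloneqq\min\{r_1(J),r_2(J)\}$ and replace $M_1,M_2$ by their $R$-truncations $M_1'\coloneqq(M_1)_R$, $M_2'\coloneqq(M_2)_R$ from \cref{prop:truncation}. Both have rank $R$, their rank functions are $\min\{r_i,R\}$, and their independent sets are independent in the originals. A common basis of $M_1'$ and $M_2'$ is therefore a common independent set of measure $R$, which is what we want. If $R=0$ there is nothing to prove, so assume $R>0$.

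Condition (a) concerns the pair $(M_1',M_2')$ with $\theta=0$. It holds trivially in the expanding form, since the threshold alternative $r_2'(\cdot)\geq 0$ is always satisfied. The substance lies in (b): we must show that $(M_2',M_1')$ is $(\varepsilon,1)$-expanding. That is, for every $M_2'$-closed set $C$, with $C^\circ\coloneqq C\setminus\clo_2'(\emptyset)$, we need
\[
r_1'(C^\circ)\geq R
\quad\text{or}\quad
r_1'(C^\circ)\geq(1+\varepsilon)r_2'(C^\circ).
\]
To obtain this, we first compare closures. If $r_2(C)<R$, then an $M_2'$-closed set is $M_2$-closed, because adding positive-measure material outside $\clo_2(C)$ raises $r_2$ and hence $\min\{r_2,R\}$. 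If $r_2(C)\geq R$, then $C=J$. Also $\clo_2'(\emptyset)=\clo_2(\emptyset)$ whenever $R>0$.

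Case $C=J$: we have $r_1'(J\setminus\clo_2(\emptyset))\geq r_1(J)\wedge R=R$. This uses $\clo_2(\emptyset)\subseteq\clo_1(\emptyset)$, so removing $M_2$-loops does not lower $r_1$. Hence the first alternative holds.

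Case $C$ is $M_2$-closed with $r_2(C)<R$: the loop set $\clo_2(\emptyset)$ consists of $M_1$-loops, so $r_1(C^\circ)=r_1(C)$ and $r_2(C^\circ)=r_2(C)$. Hypothesis \eqref{eq:one_sided_flat_expansion} then gives $r_1(C)\geq r_1(J)\geq R$ or $r_1(C)\geq(1+\varepsilon)r_2(C)$. Taking the minimum with $R$, we get either $r_1'(C^\circ)=R$, or $r_1'(C^\circ)\geq\min\{R,(1+\varepsilon)r_2'(C^\circ)\}$. The latter yields one of the two alternatives, since $r_2'(C^\circ)=r_2(C)<R$.

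With (a) and (b) verified, \cref{thm:intersection_attainment_expansion} produces a common basis of $M_1'$ and $M_2'$, which finishes the argument.

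The main obstacle I expect is the bookkeeping of closed sets and loop sets under truncation. One point needs care: the truncated closure of a set of rank at least $R$ is all of $J$, not the original closure. A second is that $\clo_2'(\emptyset)$ may differ from $\clo_1'(\emptyset)$; this is exactly where the hypothesis $\clo_2(\emptyset)\subseteq\clo_1(\emptyset)$ is needed. I would verify these two closure facts carefully, using the definition of $\clo$ as the maximal set of equal rank.
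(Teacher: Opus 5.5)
Your proposal is correct and follows the paper's proof essentially step for step. Like the paper, you truncate both matroids to $\min\{r_1(J),r_2(J)\}$, observe that proper $M_2'$-closed sets are $M_2$-closed, that $J$ is the only one of full rank, and that the loop sets are unchanged, use $\clo_2(\emptyset)\subseteq\clo_1(\emptyset)$ to strip $M_2$-loops without changing $r_1'$, and apply \cref{thm:intersection_attainment_expansion} with $\theta=0$. Your explicit case split ($C=J$ versus proper closed sets) only spells out what the paper compresses into one sentence.
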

	
	\begin{proof}
		Set $z\coloneqq\min\{r_1(J),r_2(J)\}$. If $z=0$, then the assertion is immediate. Let $M_i'\coloneqq(M_i)_z$, for $i=1,2$. By \cref{prop:truncation}, these matroids have common rank $z$, with rank functions $r_i'(X)=\min\{z,r_i(X)\}$. Since $z>0$, truncation leaves the loop set unchanged, and hence
		\begin{equation*}
			\clo_{M_2'}(\emptyset)=\clo_2(\emptyset)\subseteq\clo_1(\emptyset)=\clo_{M_1'}(\emptyset).
		\end{equation*}
		If $C$ is a proper $M_2'$-closed set, then $r_2'(C)<z$, and the rank formula for truncation shows that $C$ is $M_2$-closed. The only $M_2'$-closed set of rank $z$ is $J$. Consequently, \eqref{eq:one_sided_flat_expansion} gives
		\begin{equation}\label{eq:truncated_one_sided_expansion}
			r_1'(C)\geq\min\{z,(1+\varepsilon)r_2'(C)\}
		\end{equation}
		for every $M_2'$-closed set $C$. 
		
		Set $L_2\coloneqq\clo_{M_2'}(\emptyset)$. Since $L_2\subseteq\clo_{M_1'}(\emptyset)$, removing $L_2$ does not change either $r_2'$ or $r_1'$. Hence, \eqref{eq:truncated_one_sided_expansion} implies that, for every $M_2'$-closed set $C$,
		\[
		r_1'(C\setminus L_2)\geq z \quad\text{or}\quad r_1'(C\setminus L_2)\geq(1+\varepsilon)r_2'(C\setminus L_2).
		\]
		Thus, $(M_2',M_1')$ is $(\varepsilon,1)$-expanding. By the definition with threshold $0$, $(M_1',M_2')$ is automatically $(\varepsilon,0)$-expanding. Applying \cref{thm:intersection_attainment_expansion} with $\theta=0$ gives a common basis $F$ of $M_1'$ and $M_2'$. Therefore, $\mu(F)=z$, and, since truncation only removes independent sets, $F\in\cF_1\cap\cF_2$.
	\end{proof}
	
	%%%%%%%%%%%%%%%%%%%%%%%%%%%%%%%%
	\subsection{The Rank of the Union}
	\label{sec:union_rank}
	%%%%%%%%%%%%%%%%%%%%%%%%%%%%%%%%
	
	We now derive the rank formula for measurable matroid union and give sufficient conditions under which the union rank is attained before taking the upward closure. In the finite setting, matroid union can be reduced to matroid intersection in two standard ways. For any finite number of matroids, one can pass to disjoint copies of the ground set, take the direct sum of the matroids on the copies, and intersect it with the partition matroid which allows at most one copy of each original element to be chosen. This reduction provides a decomposition into independent sets of the original matroids and extends directly to measurable matroids.
	
	For two matroids on a finite ground set, duality gives another reduction to matroid intersection: the rank of $M_1\vee M_2$ is obtained by adding the rank of $M_2$ to the maximum size of a common independent set of $M_1$ and $M_2^*$. The same reduction works for measurable matroids, with the maximum size replaced by the supremum of the measures of common independent sets. Here we use the direct-sum and partition-matroid construction, which also keeps track of the decomposition into independent sets of the original matroids. We return to the dual formulation for two matroids in \cref{rem:two_matroid_dual_union}.
	
	Let $M_i=(J,\Sigma,\mu,\cF_i)$ be measurable matroids with rank functions $r_i$, for $i\in[k]$. Let
	\[
	M_\vee\coloneqq\bigvee_{i=1}^k M_i
	\]
	denote their measurable union, and let $r_\vee$ be its rank function.
	
	\begin{thm}\label{thm:union_rank_formula}
		For every $A\in\Sigma$,
		\begin{equation}\label{eq:union_rank_formula}
			r_\vee(A)=\min_{X\subseteq A}\left\{\mu(A\setminus X)+\sum_{i=1}^k r_i(X)\right\}.
		\end{equation}
	\end{thm}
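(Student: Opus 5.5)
The plan is to reduce to \cref{thm:intersection} via disjoint copies, as announced in the text before the statement. Since $r_\vee$ restricted to $A$ is the rank of $M_\vee|A$, and restriction commutes with union, I may replace $J$ by $A$ and prove the formula for $A=J$.

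The easy inequality $r_\vee(A)\le\mu(A\setminus X)+\sum_i r_i(X)$ comes first. For a raw sum $S=\bigcup_i F_i\subseteq A$ with $F_i\in\cF_i$ we have
\[
\mu(S)\le\mu(A\setminus X)+\sum_i\mu(F_i\cap X)\le\mu(A\setminus X)+\sum_i r_i(X).
\]
Every member of $\overline{\cF_1+\dots+\cF_k}$ contained in $A$ is an increasing limit of such $S$, so the bound passes to $r_\vee(A)$. The minimum on the right is attained by \cref{lem:minimum}: $X\mapsto\mu(A\setminus X)+\sum_ir_i(X)$ is bounded and submodular, and it is continuous from above and below by \cref{cor:smooth}.

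For the reverse inequality, take disjoint copies $J^{(1)},\dots,J^{(k)}$ of $J$ and let $\pi\colon\widetilde J\coloneqq\bigsqcup_i J^{(i)}\to J$ be the projection. Place $M_i$ on $J^{(i)}$ and let $N_1\coloneqq\bigoplus_i M_i$; this is a measurable matroid by \cref{prop:direct_sum}, with rank $\sum_i r_i(\widetilde X\cap J^{(i)})$. Let $N_2$ be the finite-classed measurable partition matroid of \cref{prop:finite_classed_partition} for the relation $xRy\iff\pi(x)=\pi(y)$. This relation is finite (classes of size $k$) and measure-preserving, since $R$-invariant bijections just permute copies. A set is $N_2$-independent iff it is mapped injectively a.e.\ by $\pi$, and $N_2$ has rank $r_2'(\widetilde Y)=\mu(\pi(\widetilde Y))$, argued exactly as in \cref{prop:bipartite_partition_matroid}. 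A common independent set $\widetilde F$ gives pairwise disjoint sets $F_i\coloneqq\pi(\widetilde F\cap J^{(i)})\in\cF_i$ with union of measure $\mu(\widetilde F)$. Hence $r_\vee(J)\ge\sup\{\mu(\widetilde F)\}$, which by \cref{thm:intersection} equals
\[
\min_{\widetilde X}\Big\{\sum_i r_i(\widetilde X\cap J^{(i)})+\mu\big(\pi(\widetilde J\setminus\widetilde X)\big)\Big\}.
\]

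The main step is to show that this minimum is at least the right-hand side of \eqref{eq:union_rank_formula}. Given $\widetilde X$, write $X_i\coloneqq\pi(\widetilde X\cap J^{(i)})$ and $X\coloneqq\bigcap_i X_i$. Then $\pi(\widetilde J\setminus\widetilde X)=\bigcup_i(J\setminus X_i)=J\setminus X$, and monotonicity gives $r_i(X_i)\ge r_i(X)$. So the value at $\widetilde X$ is at least $\mu(J\setminus X)+\sum_ir_i(X)$, which closes the argument. The points I would check most carefully are the rank formula for $N_2$ and the Borel measurability of the images $\pi(\cdot)$. The latter is fine because $\pi$ is injective on each copy, so Lusin--Souslin applies.
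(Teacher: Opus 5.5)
Your proposal is correct and follows the paper's proof. Both arguments use disjoint copies of $J$, intersect the direct sum with the finite-classed partition matroid via \cref{thm:intersection}, and reduce a cut $\widetilde X$ to $X=\bigcap_i X_i$ by monotonicity. The only difference is in the easy inequality: you derive it directly from the definition of the union and get attainment from \cref{lem:minimum}, whereas the paper proves both directions of $r_\vee(J)=\sup\{\nu(Z)\}$ using \cref{prop:finite_union_raw_description} and obtains the minimizer from the intersection theorem through $Z_X$ and $X_{Z'}$.
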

	
	\begin{proof}
		It is enough to prove the formula for $A=J$. Indeed, for every $A\in\Sigma$, the measurable union of the restrictions $M_i|A$, for $i\in[k]$, is the restriction $M_\vee|A$ by \cref{prop:finite_union_raw_description}. Moreover, the rank function of $M_i|A$ is the restriction of $r_i$ to the measurable subsets of $A$. Thus, applying the ground-set formula to the matroids $M_i|A$, for $i\in[k]$, gives \eqref{eq:union_rank_formula} for $A$.
		
		For each $i\in[k]$, let $J_i$ be a disjoint copy of $J$, and let $\theta_i\colon J_i\to J$ be the natural bijection. Set $B\coloneqq J_1\cup\dots\cup J_k$. A set $Z\subseteq B$ is measurable if $\theta_i(Z\cap J_i)$ is measurable for every $i\in[k]$, and for such a set define
		\[
		\nu(Z)\coloneqq\sum_{i=1}^k\mu\left(\theta_i(Z\cap J_i)\right).
		\]
		Let $\theta\colon B\to J$ be the map whose restriction to $J_i$ is $\theta_i$, and let $N$ be the direct sum of the copies of $M_i$ on $J_i$, for $i\in[k]$. For $Z\subseteq B$, write $Z_i\coloneqq\theta_i(Z\cap J_i)$. By \cref{prop:direct_sum}, we have $r_N(Z)=\sum_{i=1}^k r_i(Z_i)$.
		
		Let $R$ be the finite Borel equivalence relation on $B$ defined by $xRy$ if and only if $\theta(x)=\theta(y)$. This relation is measure-preserving, so \cref{prop:finite_classed_partition} gives a measurable partition matroid $P$ whose independent sets meet almost every fiber of $\theta$ in at most one point. Its rank function is $r_P(Z)=\mu(\theta(Z))$. Indeed, the restriction of $\theta$ to every independent subset of $Z$ in $P$ is injective outside a null set, so its measure is at most $\mu(\theta(Z))$. Conversely, choosing for each point of $\theta(Z)$ the least-indexed copy in which it occurs gives a measurable independent subset of $Z$ whose $\nu$-measure is $\mu(\theta(Z))$.
		
		\begin{cla}\label{cla:rank}
			$r_\vee(J)=\sup\left\{\nu(Z)\,\middle|\,Z\in\cF_N\cap\cF_P\right\}$.
		\end{cla}
		\begin{claimproof}
			If $Z\in\cF_N\cap\cF_P$, then $Z_i\in\cF_i$ for every $i\in[k]$. Moreover, since $Z$ is independent in $P$, the sets $Z_1,\dots,Z_k$ are pairwise disjoint. Hence, $Z_1\cup\dots\cup Z_k\in\cF_1+\dots+\cF_k$, and therefore it is independent in $M_\vee$. Consequently,
			\[
			\nu(Z)=\mu\left(\bigcup_{i=1}^kZ_i\right)\leq r_\vee(J).
			\]
			
			Conversely, let $I$ be a basis of $M_\vee$. By \cref{prop:finite_union_raw_description}, for every $\delta>0$ there exist pairwise disjoint sets $F_i\in\cF_i$, for $i\in[k]$, such that $\bigcup_{i=1}^kF_i\subseteq I$ and $\mu\left(I\setminus\bigcup_{i=1}^kF_i\right)<\delta$. Then $F'\coloneqq\bigcup_{i=1}^k\theta_i^{-1}(F_i)$ is independent in both $N$ and $P$, and
			\[
			\nu(F')=\mu\left(\bigcup_{i=1}^kF_i\right)>r_\vee(J)-\delta.
			\]
			This proves the claim.
		\end{claimproof}
		
		Applying \Cref{cla:rank} and \cref{thm:intersection} to $N$ and $P$ gives
		\[
		r_\vee(J)=\min_{Z\subseteq B}\left\{r_N(Z)+r_P(B\setminus Z)\right\}.
		\]
		For $Z\subseteq B$, set $X_Z\coloneqq\bigcap_{i=1}^kZ_i$. Since $\theta(B\setminus Z)=J\setminus X_Z$, monotonicity of the rank functions gives
		\[
		r_N(Z)+r_P(B\setminus Z)
		=\sum_{i=1}^k r_i(Z_i)+\mu(J\setminus X_Z)
		\geq\sum_{i=1}^k r_i(X_Z)+\mu(J\setminus X_Z).
		\]
		
		Conversely, for $X\subseteq J$, set $Z_X\coloneqq\bigcup_{i=1}^k\theta_i^{-1}(X)$. Then
		\[
		r_N(Z_X)+r_P(B\setminus Z_X)=\sum_{i=1}^k r_i(X)+\mu(J\setminus X).
		\]
		Together, these give \eqref{eq:union_rank_formula} for $A=J$. Moreover, if $Z'\subseteq B$ attains the minimum given by the intersection theorem, then $X_{Z'}$ attains the minimum in \eqref{eq:union_rank_formula}.
	\end{proof}
	
	We next pass to a countable family. Let $M_i=(J,\Sigma,\mu,\cF_i)$ be measurable matroids with rank functions $r_i$, for $i\in\bN$. Let
	\[
	M_\vee\coloneqq\bigvee_{i\in\bN}M_i
	\]
	denote their countable measurable union, and let $r_\vee$ be its rank function.
	
	\begin{thm}\label{thm:countable_union_rank_formula}
		For every $A\in\Sigma$,
		\begin{equation}\label{eq:countable_union_rank_formula}
			r_\vee(A)=\min_{X\subseteq A}\left\{\mu(A\setminus X)+\sum_{i=1}^{\infty}r_i(X)\right\},
		\end{equation}
		where the series is understood in $[0,\infty]$.
	\end{thm}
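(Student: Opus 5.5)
We reduce to the finite case via the approximation from the proof of \cref{thm:countable_union}. As there, let $M^{(n)}\coloneqq M_1\vee\dots\vee M_n$ with rank function $r^{(n)}$. The families $\cF^{(n)}$ increase in $n$, and \cref{prop:increasing_limit_rank} together with the identification $\overline{\cS}=\overline{\cR}$ shows that $r_\vee(A)=\lim_{n\to\infty}r^{(n)}(A)$ for every $A\in\Sigma$. We also have \cref{thm:union_rank_formula}, which says that
\[
r^{(n)}(A)=\min_{X\subseteq A}\Big\{\mu(A\setminus X)+\sum_{i=1}^n r_i(X)\Big\}.
\]
So it suffices to show that these finite minima converge to the countable expression and that the limit is itself a minimum.

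For the upper bound, fix $X\subseteq A$. Every $n$ satisfies $r^{(n)}(A)\leq\mu(A\setminus X)+\sum_{i=1}^n r_i(X)\leq\mu(A\setminus X)+\sum_{i=1}^\infty r_i(X)$. Letting $n\to\infty$ gives $r_\vee(A)\leq\inf_X\{\cdots\}$. This is trivial when the series diverges.

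For the lower bound and attainment, choose a minimizer $X_n\subseteq A$ for each $n$, as provided by \cref{thm:union_rank_formula}. Set $\varphi_n(X)\coloneqq\mu(A\setminus X)+\sum_{i\le n}r_i(X)$. Each $\varphi_n$ is submodular, bounded, and continuous from above and below by \cref{cor:smooth}, and $\varphi_n\le\varphi_{n+1}$. The values $m_n=\varphi_n(X_n)=r^{(n)}(A)$ increase to $m\coloneqq r_\vee(A)$, and the task is to produce a single $X$ with $\varphi_n(X)\le m$ for all $n$. For this we imitate the proof of \cref{lem:minimum}. Pass to a subsequence with $m-m_{n_j}<\varepsilon_j$, where $\sum_j\varepsilon_j<\infty$, and consider the tail intersections $D_{k,N}=\bigcap_{j=k}^N X_{n_j}$.

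Fix $p\le n_k$. Monotonicity gives $\varphi_p\le\varphi_{n_j}$ for $j\ge k$. Submodularity of $\varphi_{n_N}$ applied to $D_{k,N-1}$ and $X_{n_N}$ then gives
\[
\varphi_{n_N}(D_{k,N})\le\varphi_{n_N}(D_{k,N-1})+\varphi_{n_N}(X_{n_N})-\varphi_{n_N}(D_{k,N-1}\cup X_{n_N}).
\]
We bound the terms on the right. We have $\varphi_{n_N}(X_{n_N})=m_{n_N}\le m$ and $\varphi_{n_N}(D_{k,N-1}\cup X_{n_N})\ge m_{n_N}>m-\varepsilon_N$. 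The term $\varphi_{n_N}(D_{k,N-1})$ must be controlled by induction in terms of $\varphi_{n_{N-1}}(D_{k,N-1})$. This is the main obstacle: the functions change along the induction, and $\varphi_{n_N}(D)-\varphi_{n_{N-1}}(D)=\sum_{n_{N-1}<i\le n_N}r_i(D)$ need not be small.

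To handle this, I would run the induction directly on a single function $\varphi_{n_N}$, using only the inequality $\varphi_{n_N}(Y)\ge m_{n_N}$ for all $Y$. This is cleaner because all the $X_{n_j}$ with $j\le N$ are near-minimizers of $\varphi_{n_N}$ only if $\varphi_{n_N}(X_{n_j})$ is close to $m$, which need not hold. If this fails, the fallback is weak-star compactness. Identify sets with indicator functions, take a weak-star cluster point $g$ of $\mathbf 1_{X_n}$, and use that $\widehat{\varphi_p}$ is convex, hence weak-star lower semicontinuous on the order interval by \cref{prop:rank_lovasz_weight} as a supremum of linear functionals $w\mapsto\int_B w\diff\mu$. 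This gives $\widehat{\varphi_p}(g)\le m$ for every $p$. Then extract a level set $X=\{g\ge t\}$ with $\varphi_p(X)\le m$ for all $p$, using $\widehat{\varphi_p}(g)=\int_0^1\varphi_p(\{g\ge t\})\diff t$ together with the fact that each $\varphi_p(\{g\ge t\})\ge m_p$. Letting $p\to\infty$, monotone convergence in $t$ yields a single threshold $t$ with $\mu(A\setminus X)+\sum_{i=1}^\infty r_i(X)\le m$. Combined with the upper bound, this proves \eqref{eq:countable_union_rank_formula} with the minimum attained.
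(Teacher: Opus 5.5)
Your first route, imitating the tail-intersection argument of \cref{lem:minimum}, does not close. The ``single function'' remedy fails for exactly the reason you give: the earlier $X_{n_j}$ need not be near-minimizers of $\varphi_{n_N}$. So your proof is carried entirely by the weak-star fallback, and that argument is correct.

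For $p\le n$ you have $\varphi_p(X_n)\le\varphi_n(X_n)=m_n\le m$. On the order interval, let $\cB_i$ be the bases of $M_i$. The layer-cake formula and \cref{prop:rank_lovasz_weight} give $\widehat{\varphi_p}(w)=\int_0^1\varphi_p(\{w\ge t\})\diff t=\mu(A)-\int_A w\diff\mu+\sum_{i\le p}\sup_{B\in\cB_i}\int_B w\diff\mu$. Hence $\widehat{\varphi_p}$ is weak-star lower semicontinuous. It is this representation, not convexity alone, that gives semicontinuity; the term $\mu(A\setminus X)$ contributes only an affine continuous part. Therefore $\widehat{\varphi_p}(g)\le m$ for every $p$.

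Since $\varphi_p(\{g\ge t\})\ge m_p$ and $m_p\uparrow m$, the function $t\mapsto\mu(A\setminus\{g\ge t\})+\sum_{i=1}^\infty r_i(\{g\ge t\})$ is at least $m$ on $(0,1]$. By monotone convergence in $p$, its integral over $(0,1]$ is at most $m$. So almost every level set $\{g\ge t\}$ attains the minimum.

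The paper's proof avoids compactness altogether by choosing nested minimizers. Suppose $X_n$ minimizes $f_n=\varphi_n$ and $Y$ minimizes $f_{n+1}=f_n+r_{n+1}$. Submodularity of $f_n$ and monotonicity of $r_{n+1}$ give $f_n(X_n\cup Y)+f_{n+1}(X_n\cap Y)\le f_n(X_n)+f_n(Y)+r_{n+1}(Y)=m_n+m_{n+1}$. This forces $X_n\cap Y$ to be a minimizer of $f_{n+1}$. With $X_1\supseteq X_2\supseteq\cdots$ and $X\coloneqq\bigcap_n X_n$, continuity of $\mu$ and continuity from above of finitely many $r_i$ at a time (\cref{cor:smooth}) give $\mu(A\setminus X)+\sum_{i\le k}r_i(X)\le r_\vee(A)$ for every $k$.

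This nesting step is exactly what your first route was missing. Instead of intersecting unrelated near-minimizers and tracking errors across changing functions, you intersect a minimizer of the next function with the previous one.

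The paper's argument is elementary and stays within the submodularity-plus-continuity framework. Yours uses heavier tools: Banach--Alaoglu with separability as in \cref{rem:weak_star_compactness}, the greedy formula, and a threshold extraction. In exchange, it is the measurable form of the convex-relaxation approach to submodular minimization. It needs neither nested minimizers nor exact minimizers of the $\varphi_n$; near-minimizers with vanishing error would do.
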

	
	\begin{proof}
		It is enough to prove the formula for $A=J$. Indeed, for every $A\in\Sigma$, the countable measurable union of the restrictions $M_i|A$, for $i\in\bN$, is the restriction $M_\vee|A$ by \cref{thm:countable_union}. Moreover, the rank function of $M_i|A$ is the restriction of $r_i$ to the measurable subsets of $A$. Thus, applying the ground-set formula to the matroids $M_i|A$, for $i\in\bN$, gives \eqref{eq:countable_union_rank_formula} for $A$.
		
		For $n\in\bN$, let $M_\vee^{(n)}\coloneqq M_1\vee\dots\vee M_n$ and denote its rank function by $r_\vee^{(n)}$. By \cref{thm:countable_union,prop:increasing_limit_rank},
		\begin{equation}\label{eq:countable_union_rank_limit}
			r_\vee(J)=\lim_{n\to\infty}r_\vee^{(n)}(J).
		\end{equation}
		For $X\subseteq J$, put $f_n(X)\coloneqq\mu(J\setminus X)+\sum_{i=1}^n r_i(X)$ and $v_n\coloneqq\min_{X\subseteq J}f_n(X)$. By \cref{thm:union_rank_formula}, $v_n=r_\vee^{(n)}(J)$ and the minimum defining $v_n$ is attained. Hence, \eqref{eq:countable_union_rank_limit} gives $\lim_{n\to\infty}v_n=r_\vee(J)$.
		
		A countable sum of rank functions need not be continuous from above, so \cref{lem:minimum} cannot be applied directly. We instead choose a minimizer $X_n$ of $f_n$ for each $n$ so that $X_{n+1}\subseteq X_n$. Choose any minimizer $X_1$ of $f_1$. Suppose that $X_n$ has been chosen, and let $Y$ be a minimizer of $f_{n+1}$. Since $f_n$ is submodular, $r_{n+1}$ is monotone, and $f_{n+1}(X)=f_n(X)+r_{n+1}(X)$, we have
		\[
		f_n(X_n\cup Y)+f_{n+1}(X_n\cap Y)
		\leq f_n(X_n)+f_n(Y)+r_{n+1}(Y)
		=v_n+v_{n+1}.
		\]
		The two terms on the left are at least $v_n$ and $v_{n+1}$, respectively. Hence equality holds, and $X_n\cap Y$ is a minimizer of $f_{n+1}$. We may therefore take $X_{n+1}\coloneqq X_n\cap Y$.
		
		Set $X\coloneqq\bigcap_{n=1}^{\infty}X_n$. For fixed $m\in\bN$ and every $n\geq m$,
		\[
		\mu(J\setminus X_n)+\sum_{i=1}^m r_i(X_n)\leq f_n(X_n)=v_n.
		\]
		Since $(X_n)_{n\in\bN}$ is decreasing with intersection $X$, continuity of $\mu$ and continuity from above of the rank functions, given by \cref{cor:smooth}, yield
		\[
		\mu(J\setminus X)+\sum_{i=1}^m r_i(X)\leq r_\vee(J).
		\]
		Letting $m\to\infty$, we obtain
		\[
		\mu(J\setminus X)+\sum_{i=1}^{\infty}r_i(X)\leq r_\vee(J).
		\]
		
		On the other hand, for every $Y\subseteq J$ and every $n\in\bN$, we have $v_n\leq f_n(Y)$. Letting $n\to\infty$ gives
		\[
		r_\vee(J)\leq\mu(J\setminus Y)+\sum_{i=1}^{\infty}r_i(Y).
		\]
		Taking $Y=X$ proves equality and shows that $X$ attains the minimum in \eqref{eq:countable_union_rank_formula}.
	\end{proof}
	
	\begin{rem}\label{rem:two_matroid_dual_union}
		For two matroids, the union rank also admits a dual formulation in terms of matroid intersection. Let $M_1$ and $M_2$ be measurable matroids on $(J,\Sigma,\mu)$, and let $r_2^*$ be the rank function of the dual matroid $M_2^*$. Set $R\coloneqq r_{M_1\vee M_2}(J)$ and $z\coloneqq R-r_2(J)$. The dual rank formula and \cref{thm:union_rank_formula} give
		\[
		z=\min_{X\subseteq J}\left\{r_1(X)+r_2^*(J\setminus X)\right\}
		=\sup\left\{\mu(F)\mid F\text{ is independent in both }M_1\text{ and }M_2^*\right\},
		\]
		where the second equality follows from \cref{thm:intersection}. Thus, $R$ is obtained by adding $r_2(J)$ to the supremum of the measures of common independent sets of $M_1$ and $M_2^*$.
		
		Suppose that this supremum is attained by a set $F_1$. Extend $F_1$ to a basis $B^*$ of $M_2^*$, and set $F_2\coloneqq J\setminus B^*$. Then $F_1\in\cF_1$, the set $F_2$ is a basis of $M_2$, and $F_1\cap F_2=\emptyset$. Moreover,
		\[
		\mu(F_1\cup F_2)=z+r_2(J)=R.
		\]
		Hence, a common independent set of $M_1$ and $M_2^*$ attaining the supremum gives a set in $\cF_1+\cF_2$ whose measure is the rank of $M_1\vee M_2$. In particular, suppose that $z>0$, and let $N_1\coloneqq(M_1)_z$ and $N_2\coloneqq(M_2^*)_z$. Since $z$ is the supremum displayed above, we have $z\leq r_1(J)$ and $z\leq r_2^*(J)$. Thus, both $N_1$ and $N_2$ have rank $z$.
		
		If $N_1$ and $N_2$ satisfy the hypotheses of \cref{thm:intersection_attainment_expansion}, then they have a common basis of measure $z$, and the preceding construction gives a set in $\cF_1+\cF_2$ whose measure is $R$. If $z=0$, the same conclusion follows by taking $F_1=\emptyset$ and letting $F_2$ be a basis of $M_2$.
	\end{rem}
	
	For arbitrary finite families, a general union-attainment statement can be deduced from \cref{thm:intersection_attainment_expansion} by applying it to the direct-sum and partition matroids used above. Since translating its assumptions into conditions on the original matroids leads to a rather technical statement, we restrict ourselves to the following simpler consequence of \cref{cor:one_sided_flat_expansion}.
	
	\begin{thm}\label{thm:union_attainment_expansion}
		Let $M_i=(J,\Sigma,\mu,\cF_i)$ be measurable matroids with rank functions $r_i$, for $i\in[k]$, and let $A\in\Sigma$. Suppose that there exists $\varepsilon>0$ such that
		\begin{equation}\label{eq:union_attainment_expansion}
			\sum_{i=1}^k r_i(X)
			\geq
			\min\left\{
			\sum_{i=1}^k r_i(A),
			(1+\varepsilon)\mu(X)
			\right\}
		\end{equation}
		for every $X\subseteq A$. Then there exist pairwise disjoint sets $F_i\in\cF_i$ with $F_i\subseteq A$, for $i\in[k]$, such that
		\begin{equation*}
			\mu\left(\bigcup_{i=1}^kF_i\right)
			=
			\min\left\{
			\mu(A),
			\sum_{i=1}^k r_i(A)
			\right\}.
		\end{equation*}
		Consequently,
		\begin{equation*}
			r_\vee(A)
			=
			\min\left\{
			\mu(A),
			\sum_{i=1}^k r_i(A)
			\right\},
		\end{equation*}
		and $\bigcup_{i=1}^kF_i$ is a basis of $M_\vee|A$ which belongs to $\cF_1+\dots+\cF_k$.
	\end{thm}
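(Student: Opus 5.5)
The plan is to reduce to \cref{cor:one_sided_flat_expansion}, applied to the direct sum and the partition matroid used in the proof of \cref{thm:union_rank_formula}, but now on the ground set $A$. Restricting every $M_i$ to $A$, we may assume $A=J$. Form the disjoint copies $J_1,\dots,J_k$, the space $(B,\nu)$, the map $\theta$, the direct sum $N$ with $r_N(Z)=\sum_i r_i(Z_i)$, and the finite-classed partition matroid $P$ with $r_P(Z)=\mu(\theta(Z))$. Then $r_N(B)=\sum_i r_i(J)$ and $r_P(B)=\mu(J)$. Any $Z\in\cF_N\cap\cF_P$ gives pairwise disjoint $F_i\coloneqq Z_i\in\cF_i$ with $\mu(\bigcup F_i)=\nu(Z)$. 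So it suffices to find a common independent set of $N$ and $P$ of measure $\min\{\mu(J),\sum_i r_i(J)\}=\min\{r_P(B),r_N(B)\}$, which is exactly the conclusion of \cref{cor:one_sided_flat_expansion} with $M_1\coloneqq N$ and $M_2\coloneqq P$.

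Two hypotheses need checking. First, $\clo_P(\emptyset)=\emptyset$, since every positive-measure set contains a $P$-independent set of positive measure by \cref{lem:finite_class_selection}. Hence $\clo_P(\emptyset)\subseteq\clo_N(\emptyset)$ holds trivially. Second, we need \eqref{eq:one_sided_flat_expansion} for every $P$-closed set $C$. I will argue that $P$-closed sets are saturated: $C=\theta^{-1}(X)$ with $X=\theta(C)$. Indeed, if $C$ misses part of a fiber over $\theta(C)$, adding that part keeps $\mu(\theta(\cdot))$ unchanged, so maximality of the closure forces it in. For such $C$ we have $r_N(C)=\sum_i r_i(X)$ and $r_P(C)=\mu(X)$. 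Hypothesis \eqref{eq:union_attainment_expansion} then reads
\[
r_N(C)\ge\min\{r_N(B),(1+\varepsilon)r_P(C)\},
\]
which is precisely \eqref{eq:one_sided_flat_expansion}.

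The corollary then yields $Z$, and the sets $F_i$ are its projections. For the rank statement, $\bigcup F_i$ lies in $\cF_1+\dots+\cF_k\subseteq$ the independent sets of $M_\vee|A$. The upper bound $r_\vee(A)\le\min\{\mu(A),\sum_i r_i(A)\}$ follows from \cref{thm:union_rank_formula} by taking $X=\emptyset$ and $X=A$. So $\bigcup F_i$ has maximum measure and is a basis.

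The main point to verify carefully is the identification of $P$-closed sets with saturated sets in the measure algebra. The argument needs care because fibers are finite, so saturation of a Borel set is Borel. It also needs $\nu$-null modifications to correspond to $\mu$-null sets under $\theta$, which holds since $\theta$ is measure-preserving on each copy. Everything else is bookkeeping.
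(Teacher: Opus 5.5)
Your proposal is correct and follows the paper's proof essentially step for step. You reduce to $A=J$ and apply \cref{cor:one_sided_flat_expansion} to the ordered pair $(N,P)$ from the union-rank construction, using that $P$ is loopless and that every $P$-closed set has the form $\theta^{-1}(X)$, and then project the resulting common independent set onto the copies. Your justification of the upper bound $r_\vee(A)\le\min\{\mu(A),\sum_i r_i(A)\}$ via \cref{thm:union_rank_formula} with $X=\emptyset$ and $X=A$ fills in a step that the paper only asserts.
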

	
	\begin{proof}
		It is enough to prove the statement for $A=J$. Indeed, the general case follows by applying the ground-set statement to the restrictions $M_i|A$, for $i\in[k]$, as in the proof of \cref{thm:union_rank_formula}. 
		
		Use the direct-sum and partition-matroid construction from the proof of \cref{thm:union_rank_formula}. Denote the copies of $J$ by $J_1,\dots,J_k$, the natural bijections by $\theta_i\colon J_i\to J$, and the measure on their disjoint union $B$ by $\nu$. Let $\theta\colon B\to J$ be the map whose restriction to $J_i$ is $\theta_i$. Thus, $N$ is the direct sum of the corresponding copies of $M_1,\dots,M_k$, and $P$ is the partition matroid whose independent sets meet almost every fiber of $\theta$ in at most one point. We have $r_N(B)=\sum_{i=1}^k r_i(J)$ and $r_P(B)=\mu(J)$.
		
		The matroid $P$ is loopless, so $\clo_P(\emptyset)=\emptyset\subseteq\clo_N(\emptyset)$. Moreover, since $r_P(Z)=\mu(\theta(Z))$, the closure of $Z$ in $P$ is $\theta^{-1}(\theta(Z))$. Thus, the $P$-closed sets are precisely the sets $\theta^{-1}(X)$ with $X\in\Sigma$. For every such set, $r_N(\theta^{-1}(X))=\sum_{i=1}^k r_i(X)$ and $r_P(\theta^{-1}(X))=\mu(X)$. Hence, \eqref{eq:union_attainment_expansion} is exactly the hypothesis of \cref{cor:one_sided_flat_expansion} for the ordered pair $(N,P)$. It follows that there exists a common independent set $Z$ of $N$ and $P$ such that
		\begin{equation*}
			\nu(Z)
			=
			\min\left\{
			\sum_{i=1}^k r_i(J),
			\mu(J)
			\right\}.
		\end{equation*}
		
		For $i\in[k]$, set $F_i\coloneqq\theta_i(Z\cap J_i)$. Independence in $N$ gives $F_i\in\cF_i$, while independence in $P$ implies that the sets $F_1,\dots,F_k$ are pairwise disjoint. Therefore,
		\begin{equation*}
			\mu\left(\bigcup_{i=1}^kF_i\right)
			=
			\nu(Z)
			=
			\min\left\{
			\sum_{i=1}^k r_i(J),
			\mu(J)
			\right\}.
		\end{equation*}
		Since $\bigcup_{i=1}^kF_i$ belongs to $\cF_1+\dots+\cF_k$, the preceding equality and the bound
		\begin{equation*}
			r_\vee(J)\leq\min\left\{\mu(J),\sum_{i=1}^k r_i(J)\right\}
		\end{equation*}
		show that equality holds for $r_\vee(J)$. Moreover, $\bigcup_{i=1}^kF_i$ has measure $r_\vee(J)$, so it is a basis of $M_\vee$.
	\end{proof}
	
	\begin{rem}
		The countable rank formula in \cref{thm:countable_union_rank_formula} does not yield an analogous attainment result by the same reduction. With countably many matroids, the corresponding direct sum would use countably many copies of $J$, whose total measure is infinite when $\mu(J)>0$, and the fibers of the partition matroid would be countably infinite. These two obstacles lie outside, respectively, the finite-measure framework of the intersection theorem and the finite-classed partition-matroid construction used above.
	\end{rem}
	
	Packing and covering problems are among the classical applications of matroid union to combinatorial decomposition. Edmonds and Fulkerson~\cite{edmonds1965transversals} characterized when a finite family of matroids on a common ground set admits pairwise disjoint bases, and when the ground set can be partitioned into sets that are independent in the respective matroids. When all the matroids are copies of the cycle matroid of a connected graph, these specialize to packing edge-disjoint spanning trees and partitioning the edge set into forests. In the finite setting, the corresponding rank conditions give exact decompositions, since the raw union is already a matroid. In the measurable setting, the same rank conditions determine the correct union rank but, in general, give only approximate decompositions through the upward closure. The additional $(1+\varepsilon)$ margin in \cref{thm:union_attainment_expansion} is a sufficient condition for exact attainment. The corresponding classical theorems of Nash-Williams and Tutte, together with their measurable extensions obtained from the union and attainment results, are discussed in \cref{sec:graphings}.
	
	\Cref{thm:union_attainment_expansion} immediately yields the following measurable packing and covering consequences.
	
	\begin{cor}\label{cor:union_packing_covering}
		Suppose that the hypothesis of \cref{thm:union_attainment_expansion} holds with $A=J$. Then the following hold.
		\begin{enumerate}[label=(\alph*),itemsep=0em]
			\item If $\sum_{i=1}^k r_i(J)\geq\mu(J)$, then there exist pairwise disjoint sets $F_i\in\cF_i$, for $i\in[k]$, which cover $J$.
			\item If $\sum_{i=1}^k r_i(J)\leq\mu(J)$, then the matroids $M_1,\dots,M_k$ admit pairwise disjoint bases.
		\end{enumerate}
		In particular, if $\sum_{i=1}^k r_i(J)=\mu(J)$, then $J$ admits a measurable partition into bases of $M_1,\dots,M_k$, up to null sets. 
	\end{cor}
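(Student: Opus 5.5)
This follows directly from \cref{thm:union_attainment_expansion} applied with $A=J$. That theorem provides pairwise disjoint sets $F_i\in\cF_i$ with
\[
\mu\Bigl(\bigcup_{i=1}^kF_i\Bigr)=\min\Bigl\{\mu(J),\sum_{i=1}^k r_i(J)\Bigr\}.
\]
The plan is to read off the two parts by comparing $\sum_i r_i(J)$ with $\mu(J)$.

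For part (a), assume $\sum_i r_i(J)\geq\mu(J)$. Then the union $\bigcup_i F_i$ has measure $\mu(J)$, so it equals $J$ up to a null set. Since the $F_i$ are pairwise disjoint, they cover $J$ in the measure-algebra sense used throughout the paper.

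For part (b), assume $\sum_i r_i(J)\leq\mu(J)$. Then $\mu(\bigcup_i F_i)=\sum_i r_i(J)$. Disjointness gives $\sum_i\mu(F_i)=\sum_i r_i(J)$, and each term satisfies $\mu(F_i)\leq r_i(J)$ because $F_i\in\cF_i$. Hence $\mu(F_i)=r_i(J)$ for every $i$. Since a maximal independent subset of $J$ has measure $r_i(J)$, any independent set of that measure is maximal, so each $F_i$ is a basis of $M_i$.

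When $\sum_i r_i(J)=\mu(J)$, parts (a) and (b) apply to the same family $F_1,\dots,F_k$, which therefore partitions $J$ up to null sets into bases of $M_1,\dots,M_k$. The one step needing care is the claim in (b) that an independent set of full rank measure is a basis. This is immediate from \ref{ax:i3}: any independent superset has measure at most $r_i(J)$, so it coincides with $F_i$ up to a null set.
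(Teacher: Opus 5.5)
Your proposal is correct and follows the paper's proof essentially verbatim. You apply \cref{thm:union_attainment_expansion} with $A=J$, read off the cover in (a) from the measure of the union, and in (b) use disjointness together with $\mu(F_i)\leq r_i(J)$ to force $\mu(F_i)=r_i(J)$, so each $F_i$ is a basis. Your extra justification that an independent set of measure $r_i(J)$ is maximal is a detail the paper leaves implicit, and it is sound.
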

	
	\begin{proof}
		Let $F_1,\dots,F_k$ be the sets given by \cref{thm:union_attainment_expansion}. If $\sum_{i=1}^k r_i(J)\geq\mu(J)$, then
		\begin{equation*}
			\mu\left(\bigcup_{i=1}^kF_i\right)=\mu(J),
		\end{equation*}
		so the sets cover $J$. If $\sum_{i=1}^k r_i(J)\leq\mu(J)$, then
		\begin{equation*}
			\sum_{i=1}^k\mu(F_i)=\sum_{i=1}^k r_i(J).
		\end{equation*}
		Since $\mu(F_i)\leq r_i(J)$ for every $i\in[k]$, equality holds for every $i$, and hence each $F_i$ is a basis of $M_i$.
	\end{proof}
	
	%%%%%%%%%%%%%%%%%%%%%%%%%%%%%%%%
	\section{Structural Applications}
	\label{sec:applications}
	%%%%%%%%%%%%%%%%%%%%%%%%%%%%%%%%
	
	This section presents several applications of the min--max theorems proved in \cref{sec:intun}. As a first application, we specialize partition matroids to bipartite graphings, obtaining exact Hall-deficiency formulas for measurable matchings and rank-expansion criteria for perfect and unbalanced matchings. We then apply the intersection theorem to two partition matroids associated with the coordinate projections of a product space, obtaining a continuous bipartite $b$-matching formula and results on prescribed cross-sections. Applying measurable matroid union and its attainment theorem to cycle matroids of graphings gives exact criteria for approximate forest packings and coverings, as well as exact decompositions under strengthened rank inequalities. Finally, we use the union theorem to derive basis-exchange results.
	
	%%%%%%%%%%%%%%%%%%%%%%%%%%%%%%%%
	\subsection{Bipartite Matchings}
	\label{sec:bipartite}
	%%%%%%%%%%%%%%%%%%%%%%%%%%%%%%%%
	
	In the finite case, matchings in a bipartite graph are the common independent sets of two partition matroids on the edge set: one imposes degree at most one on one side, and the other imposes degree at most one on the other side. Thus, the classical min-max theorems for bipartite matchings are special cases of matroid intersection. The aim of this section is to show that the same mechanism works for bipartite graphings and that the attainment criteria from \cref{sec:expansion_attainment} recover perfect and unbalanced matching results in the measure-preserving setting.
	
	Measurable matching problems in graphings have been studied from several directions; see, for example, the survey of Pikhurko~\cite{pikhurko2021borel}. Lyons and Nazarov~\cite{lyons2011perfect} proved the existence of measurable perfect matchings under an expansion condition. Bowen, Kun, and Sabok~\cite{bowen2021perfect} studied perfect matchings in hyperfinite graphings and, in particular, developed a method for rounding measurable fractional perfect matchings. On the other hand, the examples of Laczkovich~\cite{laczkovich1988closed} and Kun~\cite{kun2021measurable} show that Hall-type conditions alone do not imply the existence of a measurable perfect matching. More recently, Bernshteyn, Bowen, and Weilacher~\cite{bernshteyn2026measurable} obtained an unbalanced matching theorem for arbitrary Borel probability measures.
	
	We first recall the finite version in the form most closely related to matroid intersection~\cite{frobenius1917zerlegbare,konig1931graphok,hall1935representatives}. Let $G=(S\cup T,E)$ be a finite bipartite graph. For $Z\subseteq E$, let $\partial_S (Z)$ and $\partial_T (Z)$ denote the sets of vertices in $S$ and $T$, respectively, incident to an edge of $Z$. For $X\subseteq S$, let $N_G(X)$ denote the neighborhood of $X$ in $T$.
	
	\begin{thm}[Frobenius, Hall, Kőnig]\label{thm:finite_bipartite_matching_minmax}
		Let $G=(S\cup T,E)$ be a finite bipartite graph. Then
		\[
		\max\{|M|\mid M\text{ is a matching in }G\}
		=
		\min_{Z\subseteq E}\left\{|\partial_S (Z)|+|\partial_T(E\setminus Z)|\right\}.
		\]
		Equivalently,
		\[
		\max\{|M|\mid M\text{ is a matching in }G\}
		=
		|S|-\max_{X\subseteq S}\left\{|X|-|N_G(X)|\right\}.
		\]
		In particular, $G$ has a matching covering $S$ if and only if $|N_G(X)|\geq |X|$ for every $X\subseteq S$.
	\end{thm}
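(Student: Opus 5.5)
This is the classical finite theorem, and I would derive it as the finite instance of matroid intersection; that mirrors how the paper will later treat graphings. Let $M_S$ and $M_T$ be the partition matroids on $E$ whose classes are the stars of the vertices of $S$ and of $T$ respectively, each with bound $1$. A set of edges is a matching exactly when it is independent in both, and the rank functions are $r_S(Z)=|\partial_S(Z)|$ and $r_T(Z)=|\partial_T(Z)|$. Edmonds' finite intersection theorem, which is the finite analogue of \cref{thm:intersection} and could equally be obtained through the normalized embedding of \cref{sec:finite}, then gives the first formula directly:
\[
\max|M|=\min_{Z\subseteq E}\{|\partial_S(Z)|+|\partial_T(E\setminus Z)|\}.
\]

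Next I would convert this into the deficiency form by showing that both minima coincide with $|S|-\max_X(|X|-|N_G(X)|)$. For the inequality $\le$, fix $X\subseteq S$ and let $Z$ be the set of edges not incident to $X$. Then $\partial_S(Z)\subseteq S\setminus X$, and every edge in $E\setminus Z$ is incident to $X$, so $\partial_T(E\setminus Z)\subseteq N_G(X)$. Hence the sum is at most $|S|-|X|+|N_G(X)|$. For the inequality $\ge$, start from any $Z$ and put $X:=S\setminus\partial_S(Z)$. Every edge incident to $X$ lies in $E\setminus Z$, so $N_G(X)\subseteq\partial_T(E\setminus Z)$. This gives
\[
|\partial_S(Z)|+|\partial_T(E\setminus Z)|\ge |S|-|X|+|N_G(X)|.
\]
The two inequalities together show the minima are equal.

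The final assertion then follows at once. A matching covers $S$ exactly when its size is $|S|$, which by the deficiency formula holds exactly when $\max_X(|X|-|N_G(X)|)\le 0$. Taking $X=\emptyset$ shows this maximum is always at least $0$, so the condition is $|N_G(X)|\ge|X|$ for every $X\subseteq S$.

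Most of this is bookkeeping. The one point that needs care is the $\ge$ direction of the conversion, where we must check that choosing $X$ as the complement of $\partial_S(Z)$ really forces every neighbour of $X$ into $\partial_T(E\setminus Z)$. Alternatively, one can cite the classical Kőnig–Hall theorem outright, since the statement is quoted as known background.
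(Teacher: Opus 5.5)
Your proof is correct and follows the route the paper takes: the paper quotes this finite result as classical background without proof, and proves its measurable analogue \cref{thm:graphing_bipartite_matching_minmax} by intersecting the two star partition matroids and then passing to the Hall-deficiency form via $X=S\setminus\partial_S(Z)$ and the set of edges avoiding $X$, just as you do. One aside is inaccurate: the normalized embedding of \cref{sec:finite} together with \cref{thm:intersection} yields only the fractional min--max, because measurable common independent sets of the embedded matroids correspond to arbitrary points of the intersection of the two independence polytopes, so that route would still need Edmonds' integrality theorem, whereas citing Edmonds' finite theorem directly, as in your main line, is fine.
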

	
	The first formula is the direct partition-matroid-intersection form. The second is the usual Hall-deficiency form: it says that the maximum number of vertices of $S$ left unmatched is exactly the maximum Hall deficiency $|X|-|N_G(X)|$.
	
	We now turn to graphings. Let $G=(J,\Sigma_J,\nu,E)$ be a bipartite graphing with Borel bipartition $J=S\cup T$. Let $\Sigma_E$ denote the Borel $\sigma$-algebra of the unoriented edge space, let $\Sigma_S$ and $\Sigma_T$ denote the Borel $\sigma$-algebras induced on $S$ and $T$, respectively, and let $\eta\coloneqq\eta_G$ be the edge measure. We continue to write $\nu$ for the restrictions of the vertex measure to $S$ and $T$. Recall that a measurable matching is a Borel set $N\subseteq E$ such that every vertex is incident to at most one edge of $N$. Under our standing identification of
	measurable edge sets modulo null sets it is equivalent to requiring
	$\deg_N(x)\leq 1$ for $\nu$-almost every $x\in J$. For $Z\in\Sigma_E$, define
	$
	\partial_S (Z)\coloneqq\{s\in S\mid \deg_Z(s)>0\}
	$
	and
	$
	\partial_T (Z)\coloneqq\{t\in T\mid \deg_Z(t)>0\}.
	$
	For $X\in\Sigma_S$, let $N_G(X)$ denote the set of vertices in $T$ adjacent to a vertex of $X$. The sets $\partial_S(Z)$, $\partial_T(Z)$, and $N_G(X)$ are Borel by \cite[Theorem~18.2]{lovasz2012large}, since each is the neighborhood of a Borel set in a bounded-degree Borel graph.
	
	The one-sided partition matroid construction from \cref{prop:bipartite_partition_matroid} gives two measurable matroids on the edge space $(E,\eta)$. The first is $M_S=(E,\Sigma_E,\eta,\cF_S)$,
	where $$\cF_S\coloneqq\{F\in\Sigma_E\mid \deg_F(s)\leq1\text{ for $\nu$-almost every }\allowbreak s\in S\},$$
	and its rank function is $r_S(Z)=\nu(\partial_S (Z))$. Similarly, let $ M_T=(E,\Sigma_E,\eta,\cF_T)$,
	where $$\cF_T\coloneqq\{F\in\Sigma_E\mid \deg_F(t)\leq1\text{ for $\nu$-almost every }t\in T\}$$ with rank function $r_T(Z)=\nu(\partial_T (Z))$. Then a set of edges is independent in both $M_S$ and $M_T$ if and only if it is a measurable matching.
	
	The measurable bipartite matching theorem below is by now folklore. It follows from the local-im\-prove\-ment argument of Elek and Lippner~\cite[Proposition~1.1]{elek2009borel}, which gives, for every fixed $q$, a matching with no augmenting path containing at most $q$ matched edges. Applying the usual alternating-path argument to these matchings and letting $q\to\infty$ gives the result. Here we deduce it from \cref{thm:intersection}.
	
	\begin{thm}\label{thm:graphing_bipartite_matching_minmax}
		Let $G=(J,\Sigma_J,\nu,E)$ be a bipartite graphing with Borel bipartition $J=S\cup T$. Then
		\[
		\sup\{\eta(M)\mid M\text{ is a measurable matching in }G\}
		=
		\min_{Z\in\Sigma_E}
		\left\{
		\nu(\partial_S (Z))+\nu(\partial_T(E\setminus Z))
		\right\}.
		\]
		Equivalently,
		\[
		\sup\{\eta(M)\mid M\text{ is a measurable matching in }G\}
		=
		\nu(S)-\max_{X\in\Sigma_S}
		\left\{
		\nu(X)-\nu(N_G(X))
		\right\}.
		\]
	\end{thm}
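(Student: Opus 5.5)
The first formula follows by applying \cref{thm:intersection} to the two incidence partition matroids $M_S$ and $M_T$ on $(E,\Sigma_E,\eta)$. By \cref{prop:bipartite_partition_matroid}, both are measurable matroids, with rank functions $r_S(Z)=\nu(\partial_S(Z))$ and $r_T(Z)=\nu(\partial_T(Z))$. Their common independent sets are exactly the measurable matchings, since membership in $\cF_S\cap\cF_T$ means $\deg_F\le1$ almost everywhere on $S$ and on $T$. So \cref{thm:intersection} gives
\[
\sup\{\eta(M)\}=\min_{Z\in\Sigma_E}\{\nu(\partial_S(Z))+\nu(\partial_T(E\setminus Z))\},
\]
with the minimum attained by \cref{lem:minimum}.

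For the Hall-deficiency form, I will compare both sides via explicit substitutions. Given $X\in\Sigma_S$, let $Z\coloneqq E\setminus E(X)$, where $E(X)$ is the Borel set of edges with $S$-endpoint in $X$. Then $\partial_S(Z)\subseteq S\setminus X$ and $\partial_T(E\setminus Z)=N_G(X)$. This yields
\[
\min_Z\{\cdots\}\le \nu(S)-\nu(X)+\nu(N_G(X)),
\]
so the min is at most $\nu(S)-\max_X\{\nu(X)-\nu(N_G(X))\}$.

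Conversely, given $Z$, set $Y\coloneqq\partial_T(E\setminus Z)$ and $X\coloneqq S\setminus\partial_S(Z)$. Every edge at a vertex of $X$ lies in $E\setminus Z$, so $N_G(X)\subseteq Y$. Hence
\[
\nu(\partial_S(Z))+\nu(Y)\ge \nu(S)-\nu(X)+\nu(N_G(X)),
\]
which is at least $\nu(S)-\max_X\{\nu(X)-\nu(N_G(X))\}$. Taking the minimum over $Z$ shows the two expressions coincide. The same computation also shows that the max over $X$ is attained, namely at $X=S\setminus\partial_S(Z^*)$ for a minimizing $Z^*$.

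The only delicate point is measurability and null-set bookkeeping. I must check that $E(X)$ is Borel (the preimage of $X$ under the Borel endpoint projection) and that $N_G(X)$, $\partial_S$ and $\partial_T$ are Borel, which the paper already notes via \cite[Theorem~18.2]{lovasz2012large}. I must also check that the identifications modulo null sets do not break the inclusions $\partial_S(Z)\subseteq S\setminus X$ and $N_G(X)\subseteq Y$. Both inclusions hold pointwise for Borel representatives, so passing to null-set classes is harmless. I expect no real obstacle beyond this, since the content lies in \cref{thm:intersection}.
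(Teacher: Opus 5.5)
Your proposal is correct and follows the paper's proof essentially step for step. You apply \cref{thm:intersection} to $M_S$ and $M_T$ and then convert the minimum to Hall-deficiency form via the same two substitutions, $X=S\setminus\partial_S(Z)$ and $Z=E\setminus E(X)$. Your observation that the chain of inequalities also shows the maximum over $X$ is attained is a small, valid addition that the paper leaves implicit.
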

	
	\begin{proof}
		Applying the measurable matroid intersection theorem to $M_S$ and $M_T$ gives
		\[
		\sup\{\eta(M)\mid M\in\cF_S\cap\cF_T\}
		=
		\min_{Z\subseteq E}
		\left\{
		r_S(Z)+r_T(E\setminus Z)
		\right\}.
		\]
		Since $\cF_S\cap\cF_T$ is precisely the family of measurable matchings, and since $r_S(Z)=\nu(\partial_S (Z))$ and $r_T(E\setminus Z)=\nu(\partial_T(E\setminus Z))$, this proves the first formula.
		
		We now rewrite the minimum in Hall-deficiency form. Let $Z\subseteq E$, and put $A\coloneqq S\setminus\partial_S (Z)$. Every edge incident to $A$ belongs to $E\setminus Z$, so $N_G(A)\subseteq\partial_T(E\setminus Z)$. Hence
		\[
		\nu(\partial_S (Z))+\nu(\partial_T(E\setminus Z))
		\geq
		\nu(S\setminus A)+\nu(N_G(A))
		=
		\nu(S)-\left(\nu(A)-\nu(N_G(A))\right).
		\]
		Taking the minimum over $Z$ gives one inequality. Conversely, fix $A\subseteq S$, and let $Z_A$ be the set of all edges whose endpoint in $S$ lies in $S\setminus A$. Then $\partial_S (Z_A)\subseteq S\setminus A$ and $\partial_T(E\setminus Z_A)=N_G(A)$. Therefore
		\[
		\nu(\partial_S (Z_A))+\nu(\partial_T(E\setminus Z_A))
		\leq
		\nu(S\setminus A)+\nu(N_G(A))
		=
		\nu(S)-\left(\nu(A)-\nu(N_G(A))\right).
		\]
		Taking the supremum over $A\subseteq S$ gives the reverse inequality, proving the second formula.
	\end{proof}
	
	The preceding theorem can be viewed as the deficiency form of the measurable Hall theorem: it determines the supremal size of a measurable matching also when Hall's condition fails. We mention separately the zero-deficiency case, which follows immediately from the existence of Borel matchings without short augmenting paths proved by Elek and Lippner~\cite[Proposition~1.1]{elek2009borel}.
	
	\begin{cor}\label{cor:graphing_approx_hall}
		Let $G=(J,\Sigma_J,\nu,E)$ be a bipartite graphing with Borel bipartition $J=S\cup T$. The following are equivalent.
		\begin{enumerate}[label=(\roman*)]\itemsep0em
			\item For every $\varepsilon>0$, there exists a measurable matching $M$ such that $\nu(S\setminus\partial_S(M))\leq\varepsilon$.
			\item $\nu(N_G(X))\geq\nu(X)$ for every $X\in\Sigma_S$.
		\end{enumerate}
	\end{cor}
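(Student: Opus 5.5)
The plan is to deduce the corollary directly from \cref{thm:graphing_bipartite_matching_minmax}, using the identity $\eta(M)=\nu(\partial_S(M))$ for a measurable matching $M$. This identity holds because, by the computation in the proof of \cref{prop:bipartite_partition_matroid}, $\eta(M)=\int_S\deg_M(s)\diff\nu(s)$, and $\deg_M(s)\in\{0,1\}$ for almost every $s$. Consequently,
\[
\nu(S\setminus\partial_S(M))=\nu(S)-\eta(M),
\]
so condition (i) is equivalent to
\[
\sup\{\eta(M)\mid M\text{ is a measurable matching}\}=\nu(S).
\]
Here the inequality ``$\leq$'' is automatic, since $\eta(M)=\nu(\partial_S(M))\leq\nu(S)$.

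Next I would invoke the Hall-deficiency form of \cref{thm:graphing_bipartite_matching_minmax}. It shows that the supremum above equals $\nu(S)$ if and only if
\[
\max_{X\in\Sigma_S}\{\nu(X)-\nu(N_G(X))\}=0.
\]
Taking $X=\emptyset$ shows this maximum is always at least $0$. Hence it equals $0$ exactly when $\nu(X)\leq\nu(N_G(X))$ for every $X\in\Sigma_S$, which is condition (ii). Chaining these two equivalences gives (i)$\Leftrightarrow$(ii).

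I do not expect a real obstacle here. The only points that need care are the measure bookkeeping $\eta(M)=\nu(\partial_S(M))$, which holds up to null sets, and the fact that the supremum in the theorem need not be attained. The second point is harmless, because (i) only asks for approximation within $\varepsilon$, which is exactly what a supremum provides. One could also argue via Elek--Lippner as the paper remarks, but the reduction to the min--max theorem is cleaner.
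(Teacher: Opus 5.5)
Your proof is correct. For (ii)$\Rightarrow$(i) it is the paper's argument: the Hall condition makes the deficiency maximum zero, since $X=\emptyset$ already gives $0$. Then \cref{thm:graphing_bipartite_matching_minmax} gives supremal matching measure $\nu(S)$, and the identity $\eta(M)=\nu(\partial_S(M))$ turns this into (i).

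For (i)$\Rightarrow$(ii) the two routes differ.
\begin{itemize}
\item You read necessity off the same min--max equality. In fact you only use its weak-duality half, namely $\eta(M)\leq\nu(S)-\nu(X)+\nu(N_G(X))$ for every matching $M$ and every $X\in\Sigma_S$.
\item The paper argues directly. It fixes $X$ and a matching $M$ with $\nu(S\setminus\partial_S(M))\leq\varepsilon$. The partner map $\tau_M$ is measure-preserving by \cref{lem:graphing_matching_tools} and injects $X\cap\partial_S(M)$ into $N_G(X)$. Hence $\nu(X)\leq\varepsilon+\nu(N_G(X))$.
\end{itemize}
Your version is more uniform, since both directions come from one equivalence chain and nothing beyond the theorem is needed. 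The paper's version shows that necessity of Hall's condition is elementary, so only sufficiency actually depends on the measurable intersection theorem.
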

	
	\begin{proof}
		If the Hall condition holds, then $\sup_{X\subseteq S}\{\nu(X)-\nu(N_G(X))\}=0$, since the value for $X=\emptyset$ is $0$. By \cref{thm:graphing_bipartite_matching_minmax}, the supremum of $\eta(M)$ over measurable matchings is $\nu(S)$. Since $\eta(M)=\nu(\partial_S(M))$ for every measurable matching $M$, there are measurable matchings covering subsets of $S$ of measure arbitrarily close to $\nu(S)$.
		
		Conversely, suppose that such approximate matchings exist, and fix $X\subseteq S$ and $\varepsilon>0$. Choose a measurable matching $M$ with
		\[
		\nu(S\setminus\partial_S(M))\leq\varepsilon.
		\]
		By \cref{lem:graphing_matching_tools}, the partner map $\tau_M$ is measure-preserving, and its restriction maps $X\cap\partial_S(M)$ injectively into $N_G(X)$.
		Therefore,
		\[
		\nu(X)
		\leq\varepsilon+\nu(X\cap\partial_S(M))
		\leq\varepsilon+\nu(N_G(X)).
		\]
		Since $\varepsilon>0$ was arbitrary, the Hall condition follows.
	\end{proof}
	
	An important special case is the regular bipartite case, which is often considered in the study of matchings in graphings.
	
	\begin{cor}\label{cor:regular_bipartite_graphing_approx_matching}
		Let $G=(J,\Sigma_J,\nu,E)$ be a $d$-regular bipartite graphing with Borel bipartition $J=S\cup T$, where $d\geq1$. Then, for every $\varepsilon>0$, there exists a measurable matching $M$ such that $\nu(S\setminus\partial_S (M))\leq\varepsilon$.
	\end{cor}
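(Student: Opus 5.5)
By \cref{cor:graphing_approx_hall}, it is enough to verify Hall's condition $\nu(N_G(X))\geq\nu(X)$ for every $X\in\Sigma_S$. The tool is the graphing identity, which balances the two sides of a bipartite edge set.

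Fix $X\in\Sigma_S$ and let $Z$ be the set of edges with an endpoint in $X$. This set is Borel, and its $T$-endpoints lie in $N_G(X)$. Since every vertex has degree $d$, each $s\in X$ satisfies $\deg_Z(s)=d$. Applying the graphing identity to the Borel sets $X$ and $N_G(X)$ gives
\[
d\,\nu(X)=\int_X\deg(N_G(X),s)\diff\nu(s)=\int_{N_G(X)}\deg(X,t)\diff\nu(t)\leq d\,\nu(N_G(X)).
\]
The first equality holds because all $d$ neighbours of a vertex of $X$ lie in $N_G(X)$. The inequality holds because each vertex of $T$ has at most $d$ neighbours in $X$. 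Dividing by $d\geq1$ yields Hall's condition.

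Equivalently, in the language of \cref{prop:bipartite_partition_matroid}, the computation says $\eta(Z)=\int_S\deg_Z\diff\nu=\int_T\deg_Z\diff\nu$. Either way, the conclusion then follows immediately from \cref{cor:graphing_approx_hall}.

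There is no substantial obstacle. The only points needing care are that $N_G(X)$ is Borel, which holds by \cite[Theorem~18.2]{lovasz2012large}, and that the integrals are finite, which holds because degrees are bounded.
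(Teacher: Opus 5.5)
Your proof is correct and is essentially the paper's argument. You verify Hall's condition by applying the graphing mass-transport identity to $X$ and $N_G(X)$, obtaining $d\,\nu(X)\leq d\,\nu(N_G(X))$, and then invoke \cref{cor:graphing_approx_hall}.
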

	
	\begin{proof}
		We verify the Hall condition. Let $X\in\Sigma_S$. By the graphing mass-transport identity~\cite{lovasz2012large}, 
		\[
		d \cdot \nu(X)
		=
		\int_X \deg(T,x)\,\diff\nu(x)
		=
		\int_{N_G(X)} \deg(X,y)\,\diff\nu(y)
		\leq
		d \cdot \nu(N_G(X)).
		\]
		Thus, $\nu(X)\leq\nu(N_G(X))$, and the result follows from \cref{cor:graphing_approx_hall}.
	\end{proof}
	
	\begin{rem}\label{rem:match}
		The conclusion of \cref{cor:graphing_approx_hall} cannot in general be strengthened to a matching covering all of $S$. Indeed, the graphing in \cref{rem:laczkovich_matching} is $2$-regular apart from a finite set, and hence satisfies the Hall condition by the same mass-transport argument as in \cref{cor:regular_bipartite_graphing_approx_matching}. However, it has no measurable perfect matching. More generally, Kun~\cite{kun2021measurable} constructed, for every $d\geq 2$, a $d$-regular measurably bipartite treeing with no measurable perfect matching.
	\end{rem}
	
	\Cref{rem:match} shows that Hall's condition alone does not force the supremum in \cref{thm:graphing_bipartite_matching_minmax} to be attained. In general, the measurable Hall condition guarantees only approximately perfect matchings. An important sufficient condition for exact attainment is expansion. The following theorem of Lyons and Nazarov~\cite{lyons2011perfect}, stated here in the graphing form used, for instance, by Grabowski, M\'ath\'e and Pikhurko~\cite{grabowski2022measurable}, is a fundamental example. We state the theorem in the form used in the literature. With the expansion convention of \cref{sec:expansion_attainment}, \cref{thm:intersection_attainment_expansion} also gives the same conclusion if the first inequality is weakened to $\nu(N_G(X))\geq\frac12\nu(S)$.
	
	\begin{thm}[Lyons--Nazarov]\label{thm:lyons_nazarov_expander_matching}
		Let $G=(J,\Sigma_J,\nu,E)$ be a locally finite bipartite graphing with Borel bipartition $J=S\cup T$ and $\nu(S)=\nu(T)<\infty$. Suppose that there exists $c>0$ such that, for every $X\in\Sigma_S\cup\Sigma_T$,
		\[
		\nu(N_G(X))>\frac{1}{2}\nu(S)
		\quad\text{or}\quad
		\nu(N_G(X))\geq (1+c)\nu(X).
		\]
		Then $G$ admits a measurable matching covering both $S$ and $T$.
	\end{thm}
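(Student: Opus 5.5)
We derive \cref{thm:lyons_nazarov_expander_matching} from \cref{thm:intersection_attainment_expansion}, applied to the incidence partition matroids $M_S$ and $M_T$ on $(E,\Sigma_E,\eta)$ with $\theta=1/2$. By bounded degree and the mass-transport identity, $\eta$ is finite. By \cref{prop:bipartite_partition_matroid}, the rank of $M_S$ is $r_S(E)=\nu(\partial_S(E))$, and similarly for $M_T$. The expansion hypothesis applied to a set $X\subseteq S$ of positive measure with $N_G(X)=\emptyset$ would fail both alternatives. Hence almost every vertex has positive degree, and both ranks equal $R\coloneqq\nu(S)=\nu(T)$. A common basis of $M_S$ and $M_T$ is then a measurable matching $N$ with $\eta(N)=R$. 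Since $\eta(N)=\nu(\partial_S(N))=\nu(\partial_T(N))$, such an $N$ covers both $S$ and $T$ up to null sets, and that is the conclusion.

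Next I would verify that $(M_S,M_T)$ is $(c,1/2)$-co-expanding. The key observation is that the $M_S$-closed sets are exactly the sets $C=\{e\in E\mid \text{the $S$-endpoint of $e$ lies in } Y\}$ for $Y\in\Sigma_S$. Indeed, $r_S(Z)=\nu(\partial_S(Z))$, so the closure of $Z$ consists of all edges whose $S$-endpoint lies in $\partial_S(Z)$. The complement $J\setminus C$ in the ground set $E$ is then the set of all edges incident to $X\coloneqq S\setminus Y$. This gives $r_S(E\setminus C)=\nu(X)$, up to vertices of degree zero, which are null, and $r_T(E\setminus C)=\nu(N_G(X))$. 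The hypothesis for this $X$ is exactly \eqref{eq:complement_rank_expansion}: either $r_T(E\setminus C)>\tfrac12 R$ or $r_T(E\setminus C)\geq(1+c)r_S(E\setminus C)$. The symmetric computation, using the hypothesis for $X\in\Sigma_T$, shows that $(M_T,M_S)$ is $(c,1/2)$-co-expanding, since $1-\theta=1/2$. Then \cref{thm:intersection_attainment_expansion} gives a common basis.

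The main obstacle is the careful identification of closed sets and ranks modulo null sets. One has to check that the closure in $M_S$ really is the full $S$-fiber saturation. This uses the rank formula together with the maximality defining $\clo$. One also has to handle the local finiteness assumption: the statement allows locally finite rather than bounded degree graphings, whereas the edge measure and \cref{prop:bipartite_partition_matroid} were set up for bounded degree. I would first treat bounded degree, and then remark that the same arguments go through whenever $\eta$ is finite. Where the degree is unbounded but the graph is locally finite, I would apply the construction to the matroids on the edge space with a finite equivalent measure. The plan is to replace $\eta$ by a finite weighting such as $\deg(s)^{-1}$ on each edge. Under that weighting, $r_S$ is still $\nu(\partial_S(\cdot))$, but $r_T$ would need to be rechecked, and I expect this point to need the most care.
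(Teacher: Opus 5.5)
Your argument is correct and is essentially the paper's proof: you apply \cref{thm:intersection_attainment_expansion} with $\theta=1/2$ to $M_S$ and $M_T$, use the expansion hypothesis to show that isolated vertices form a null set (so both ranks equal $R$), and identify the $M_S$-closed sets as full $S$-fiber saturations, so that the hypothesis becomes exactly the co-expansion condition in both directions. Your concern about unbounded degrees is unnecessary, since graphings in this paper have bounded degrees by definition. You should drop the reweighting idea regardless: weighting edges by $\deg(s)^{-1}$ changes $r_S$ to $\int_{\partial_S(Z)}\deg(s)^{-1}\diff\nu(s)$, and it makes the $T$-side equivalence relation non-measure-preserving, so $M_T$ would no longer be a measurable matroid.
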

	\begin{proof}
		Set $R\coloneqq\nu(S)=\nu(T)$. If $R=0$, then the assertion is immediate. The expansion assumption implies that the sets of isolated vertices in $S$ and $T$ are null. Hence, the measurable partition matroids $M_S$ and $M_T$ on the edge space have common rank $R$.
		
		Let $C$ be an $M_S$-closed set, and set $X\coloneqq S\setminus\partial_S(C)$. Since $C$ is a union of edge classes corresponding to vertices of $S$, the set $E\setminus C$ consists of all edges incident with $X$. Therefore, $r_S(E\setminus C)=\nu(X)$ and $r_T(E\setminus C)=\nu(N_G(X))$. Similarly, if $C$ is $M_T$-closed and $Y\coloneqq T\setminus\partial_T(C)$, then $r_T(E\setminus C)=\nu(Y)$ and $r_S(E\setminus C)=\nu(N_G(Y))$. Thus, $(M_S,M_T)$ and $(M_T,M_S)$ are both $(c,1/2)$-co-expanding. By \cref{thm:intersection_attainment_expansion}, the two matroids have a common basis. Such a common basis is a measurable matching covering both $S$ and $T$.
	\end{proof}
	
	The measure-preserving case of the following unbalanced matching result was already known from the Lyons--Nazarov method~\cite{lyons2011perfect}; see also~\cite[Section~2]{conley2018folner} for a detailed presentation. Here it follows directly from \cref{cor:one_sided_flat_expansion}. It is also worth mentioning that Bernshteyn, Bowen, and Weilacher~\cite[Section~1.2 and Theorem~1.4]{bernshteyn2026measurable} recently extended the result to arbitrary Borel probability measures.
	
	\begin{cor}
		\label{cor:unbalanced-graphing-matching}
		Let $G=(J,\Sigma_J,\nu,E)$ be a locally finite bipartite graphing with Borel bipartition $J=S\cup T$. Suppose that there are integers $a>b\ge0$ such that $\deg_G(s)\ge a$ and $\deg_G(t)\le b$ for every $s\in S$ and $t\in T$, respectively. Then $G$ admits a measurable matching covering $S$.
	\end{cor}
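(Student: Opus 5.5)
The plan is to apply \cref{cor:one_sided_flat_expansion} to the pair $(M_1,M_2)=(M_T,M_S)$, with $M_S,M_T$ the two incidence partition matroids on the edge space $(E,\eta)$. The goal is a common independent set of measure $\min\{r_T(E),r_S(E)\}$ that equals $\nu(S)$. A measurable matching of edge measure $\nu(S)$ covers $S$ up to a null set, because $\eta(M)=\nu(\partial_S(M))$ for every measurable matching $M$.

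\textbf{Step 1: ranks.} Since every $s\in S$ has degree at least $a\geq1$, we have $r_S(E)=\nu(S)$. For $r_T(E)$, use the mass-transport identity with $A=S$ and $B=\partial_T(E)$. This gives $a\,\nu(S)\le\int_S\deg(T,s)\,\diff\nu=\int_{\partial_T(E)}\deg(S,t)\,\diff\nu\le b\,\nu(\partial_T(E))$. Hence $r_T(E)\geq (a/b)\nu(S)\geq\nu(S)$; if $b=0$, then $S$ must be null and the statement is trivial. So the minimum of the two ranks is $\nu(S)$.

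\textbf{Step 2: loop sets.} The loop set of $M_S$ is the set of edges $e$ with $r_S(\{e\}\text{-class})=0$, which is empty up to null sets since every edge has an $S$-endpoint. So $M_S$ is loopless, and $\clo_S(\emptyset)=\emptyset\subseteq\clo_T(\emptyset)$ holds trivially.

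\textbf{Step 3: the expansion inequality.} This is the main step. As in the proof of \cref{thm:lyons_nazarov_expander_matching}, an $M_S$-closed set $C$ is, up to null sets, the set of all edges whose $S$-endpoint lies in some Borel set $X\subseteq S$. Then $r_S(C)=\nu(X)$ and $r_T(C)=\nu(N_G(X))$. Applying mass transport to $X$ and $N_G(X)$ gives $a\,\nu(X)\le\int_X\deg(T,x)\,\diff\nu=\int_{N_G(X)}\deg(X,y)\,\diff\nu\le b\,\nu(N_G(X))$. Therefore $r_T(C)\geq (a/b)\,r_S(C)=(1+\varepsilon)r_S(C)$ with $\varepsilon\coloneqq a/b-1>0$, which is stronger than \eqref{eq:one_sided_flat_expansion}.

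The point needing care is the description of $M_S$-closed sets. The closure of $Z$ contains every edge at a vertex of $\partial_S(Z)$, because adding such edges does not increase $\nu(\partial_S(\cdot))$. Conversely, adding edges at new vertices of positive measure strictly increases the rank. This gives the claimed form, and the relevant sets are Borel by \cite[Theorem~18.2]{lovasz2012large}.

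With Steps 1–3, \cref{cor:one_sided_flat_expansion} produces $F\in\cF_S\cap\cF_T$ with $\eta(F)=\nu(S)$. Such an $F$ is a measurable matching covering $S$ up to a null set. Finally, since measurable sets are identified modulo null sets, we pass to a Borel representative.
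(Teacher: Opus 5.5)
Your proposal is correct and follows essentially the same route as the paper. Both apply \cref{cor:one_sided_flat_expansion} with $M_1=M_T$, $M_2=M_S$ and $\varepsilon=a/b-1$, use the looplessness of $M_S$ and the description of $M_S$-closed sets as all edges incident with some $X\subseteq S$, and obtain $r_T(C)\geq (a/b)\,r_S(C)$ and $r_T(E)\geq\nu(S)=r_S(E)$ by degree counting. The only cosmetic differences are that you compute $r_T(E)$ by a separate mass-transport step rather than as the case $C=E$, and that you divide by $b$ before disposing of $b=0$.
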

	
	\begin{proof}
		If $\nu(S)=0$, the empty matching suffices, so assume that $\nu(S)>0$. Then $b\ge1$, since otherwise $E=\emptyset$, contradicting $\deg_G(s)\ge a\ge1$ for every $s\in S$. Let $M_S$ and $M_T$ be the two partition matroids on the edge space, with rank functions $r_S$ and $r_T$. If $C\subseteq E$ is $M_S$-closed, then $C$ consists of all edges incident with a set $X\subseteq S$. Degree counting gives
		\begin{equation*}
			a\nu(X)\le\eta(C)\le b\nu(N_G(X)),
		\end{equation*}
		and hence
		\begin{equation*}
			r_T(C)=\nu(N_G(X))\ge\frac{a}{b}\nu(X)=\frac{a}{b}r_S(C).
		\end{equation*}
		Both partition matroids are loopless, $r_S(E)=\nu(S)$, and the same inequality with $C=E$ gives $r_T(E)\ge(a/b) \nu(S)>\nu(S)$. \cref{cor:one_sided_flat_expansion}, applied with $M_1=M_T$, $M_2=M_S$, and $\varepsilon=a/b-1$, therefore gives a measurable matching of measure $\nu(S)$, which covers $S$.
	\end{proof}
	
	%%%%%%%%%%%%%%%%%%%%%%%%%%%%%%%%
	\subsection{Continuous Bipartite \texorpdfstring{$b$}{b}-Matchings and Prescribed Cross-Sections}
	\label{sec:continuous_b_matching}
	%%%%%%%%%%%%%%%%%%%%%%%%%%%%%%%%
	
	The two fiberwise partition matroids associated with the coordinate projections give an atomless version of capacitated bipartite matching. This problem is closely related to capacity-constrained optimal transport. Indeed, identifying a measurable set $F\subseteq S\times T$ with its indicator function $1_F$, the section measures become marginal constraints, while $F\subseteq E$ is equivalent to the pointwise capacity constraint $0\leq 1_F\leq 1_E$. Replacing $1_F$ by an arbitrary density $h$ with $0\leq h\leq 1_E$ gives the corresponding fractional transport problem. General duality results for marginal problems were proved by Kellerer~\cite{kellerer1984duality}, and the fractional problem is also covered by the measurable flow and transshipment theory of Lovász~\cite{lovasz2021flows}. Our intersection theorem gives the min--max formula for the set-valued problem, while weak-star compactness and the Krein--Milman theorem~\cite[Chapter~V]{conway1990course}, together with the extreme-point characterization of Korman and McCann~\cite[Proposition~3.2]{korman2013insights}, give attainment.
	
	We begin with the classical theorem of Lorentz~\cite{lorentz1949plane} characterizing the possible cross-sections of a measurable subset of the unit square.
	
	\begin{thm}[Lorentz]
		\label{thm:lorentz_cross_sections}
		Let $b,c\colon[0,1]\to[0,1]$ be measurable, and let $b^\downarrow\colon[0,1]\to[0,1]$ denote the nonincreasing rearrangement of $b$, defined by $b^\downarrow(t)\coloneqq \inf\left\{s\in[0,1]\colon\lambda(\{x\in[0,1]\mid b(x)>s\})\leq t\right\}$. There exists a measurable set $F\subseteq[0,1]^2$ such that, writing $F_y\coloneqq\{x\in[0,1]\mid(x,y)\in F\}$ and $F^x\coloneqq\{y\in[0,1]\mid(x,y)\in F\}$,
		\[
		\lambda(F_y)=b(y)
		\quad\text{and}\quad
		\lambda(F^x)=c(x)
		\]
		for almost every $y\in[0,1]$ and almost every $x\in[0,1]$ if and only if $\int_0^1 b(y)\diff y=\int_0^1 c(x)\diff x$ and, for all $0\leq a\leq1$, 
		\[
		\int_0^a b^\downarrow(u)\diff u
		\leq
		\int_0^1\min\{a,c(x)\}\diff x.
		\]
	\end{thm}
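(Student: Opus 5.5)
Necessity is the easy direction. If such an $F$ exists, Fubini gives $\int b=\mu(F)=\int c$. Fix $0\le a\le 1$ and a set $Y\subseteq[0,1]$ with $\lambda(Y)=a$. Then
\[
\int_Y b(y)\diff y=\mu(F\cap([0,1]\times Y))=\int_0^1\lambda(F^x\cap Y)\diff x\le\int_0^1\min\{a,c(x)\}\diff x .
\]
Since $\sup_{\lambda(Y)=a}\int_Y b=\int_0^a b^\downarrow$ (a standard rearrangement fact, the sup being attained at an upper level set of $b$ with a tie-breaking piece, available by atomlessness), the inequality follows.

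For sufficiency, take $S=T=[0,1]$ with Lebesgue measure and $J=S\times T$. Consider the two fiberwise partition matroids of \cref{prop:fiberwise_partition_matroid}: $M_1$ with bound $b(t)$ on the horizontal sections $F_t$, and $M_2$ with bound $c(s)$ on the vertical sections $F^s$ (the symmetric version with the coordinates swapped). Their ranks are $r_1(Z)=\int\min\{\lambda(Z_t),b(t)\}\diff t$ and $r_2(Z)=\int\min\{\lambda(Z^s),c(s)\}\diff s$. A common independent set of measure $\int b$ has all sections bounded by $b$ and $c$ and total mass $\int b=\int c$, so by Fubini the sections must equal $b$ and $c$ almost everywhere. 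So the task is to find a common basis of measure $R\coloneqq\int b$.

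First I would show that the min--max value of \cref{thm:intersection} equals $R$. For $X\subseteq J$, estimate
\[
r_1(X)+r_2(J\setminus X)\ge\int_0^1\min\{\lambda(X_t),b(t)\}\diff t+\int_0^1\min\{\lambda((J\setminus X)^s),c(s)\}\diff s .
\]
Write $\phi(t)\coloneqq\lambda(X_t)$. For each $s$, the vertical section $(J\setminus X)^s$ is the complement of $X^s$. Pointwise minimization over the sets $X$ with given horizontal profile $\phi$ is a bathtub / Hardy--Littlewood type argument: the worst case is when the $X^s$ are nested upper level sets in the $t$-direction, and I expect it to reduce exactly to the stated inequality $\int_0^a b^\downarrow\le\int\min\{a,c\}$. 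The decisive cases are those where $X$ contains the horizontal lines $t\in Y$ with $\lambda(Y)=1-a$. Making this reduction rigorous, by a rearrangement or layer-cake decomposition of $\phi$, is the first technical step. With it, \cref{thm:intersection} gives common independent sets $F_n$ with $\mu(F_n)\to R$.

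The main obstacle is attainment. \cref{rem:intersection_sup_not_attained} shows that attainment genuinely fails in general, and no expansion hypothesis is available here. I would instead pass to the fractional problem. The densities $\mathbf 1_{F_n}$ lie in the weak-star compact order interval of \cref{rem:weak_star_compactness}. The set $K$ of densities $0\le h\le1$ with $\int h(\cdot,t)\diff s=b(t)$ and $\int h(s,\cdot)\diff t=c(s)$ is convex and weak-star closed, since the marginal constraints are tested against $L^1$ functions of the form $g(t)$. Every weak-star limit point of $(\mathbf 1_{F_n})$ lies in $K$, because the section bounds pass to the limit and the total mass is $R$; hence $K\neq\emptyset$. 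By Krein--Milman, $K$ has an extreme point $h$. By the characterization of Korman and McCann \cite[Proposition~3.2]{korman2013insights}, extreme points of such capacity-constrained marginal sets are indicator functions $h=\mathbf 1_F$ almost everywhere. This $F$ has the required cross-sections.
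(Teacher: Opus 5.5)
Your architecture is the paper's. There, \cref{thm:lorentz_cross_sections} is the feasibility part of \cref{cor:lorentz_deficiency}, obtained from \cref{thm:intersection} for the two fiberwise partition matroids and then weak-star compactness, Krein--Milman and Korman--McCann, exactly as in your attainment step. That step is correct: $p\le b$ a.e.\ and $\int p=R=\int b$ force the limit density to have marginals exactly $b$ and $c$. Your necessity argument is also fine.

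\textbf{The gap.} The one place where the Lorentz hypothesis enters sufficiency is never proved: you do not show that $r_1(X)+r_2(J\setminus X)\geq R$ for every $X\subseteq J$. You only sketch two steps. First, a bathtub argument should single out configurations with nested sections. Second, a layer-cake decomposition should reduce a general profile $\phi$ to indicator profiles. Neither is carried out, and the second is not routine: $\int\min\{\phi,b\}\diff t$ is not linear in $\phi$, and the placement of the columns relative to $c$ also matters. Without this bound, \cref{thm:intersection} does not give sets $F_n$ with $\mu(F_n)\to R$, so $K\neq\emptyset$ is not established.

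\textbf{The fix.} The missing idea is to linearize the two rank terms instead of locating extremal sets; only a lower bound is needed. Given $X$, put $\phi(t)\coloneqq\lambda(X_t)$, $\psi(s)\coloneqq\lambda((J\setminus X)^s)$, $A\coloneqq\{t\mid\phi(t)<b(t)\}$ and $B\coloneqq\{s\mid c(s)\leq\psi(s)\}$. By Fubini,
\[
r_1(X)+r_2(J\setminus X)
=\int_{T\setminus A}b\diff t+\int_B c\diff s+\mu\bigl(X\cap(S\times A)\bigr)+\mu\bigl((J\setminus X)\cap((S\setminus B)\times T)\bigr).
\]
The last two terms together cover $(S\setminus B)\times A$, so they sum to at least $\lambda(S\setminus B)\lambda(A)$. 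Hence
\[
r_1(X)+r_2(J\setminus X)\geq\int_{T\setminus A}b\diff t+\int_S\min\{c(s),\lambda(A)\}\diff s.
\]
The Lorentz inequality with $a=\lambda(A)$, combined with $\int_A b\diff t\leq\int_0^{\lambda(A)}b^\downarrow(u)\diff u$, bounds the right-hand side below by $R$.

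This is the paper's computation in \cref{thm:continuous_matching_minmax}. It applies the identity $\int\min\{u,v\}=\min_C\{\int_C u+\int_{C^c}v\}$ to both rank terms, and is followed by \cref{cor:continuous_hall} and the rearrangement identity in \cref{cor:lorentz_deficiency}. The bound is attained by the product sets $X=S\times(T\setminus A)$ that you singled out, so your guess about the decisive cases was right. What was missing is the argument that no other $X$ does better.
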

	
	A second classical result generalized by the measurable formula below is the integral max-flow--min-cut theorem for capacitated bipartite networks. In the finite setting, it gives the following min--max formula for the maximum size of a bipartite matching subject to prescribed degree capacities; see, for example, Gale~\cite{gale1957flows}.
	
	\begin{thm}\label{thm:finite_capacitated_b_matching}
		Let $G=(S,T;E)$ be a finite bipartite graph, and let $b\colon T\to\bZ_{\geq 0}$ and $c\colon S\to\bZ_{\geq 0}$. For $A\subseteq T$ and $s\in S$, let $d_E(A,s)$ denote the number of edges joining $s$ to a vertex in $A$. Then the largest cardinality of an edge set $F\subseteq E$ satisfying $\deg_F(t)\leq b(t)$ for every $t\in T$ and $\deg_F(s)\leq c(s)$ for every $s\in S$ is
		\[
		\min_{A\subseteq T}
		\left\{
		\sum_{t\in T\setminus A}b(t)
		+
		\sum_{s\in S}\min\{c(s),d_E(A,s)\}
		\right\}.
		\]
	\end{thm}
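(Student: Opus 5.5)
The plan is to deduce the formula from the finite matroid intersection theorem, applied to two partition matroids on $E$. Let $M_T$ be the partition matroid on $E$ whose classes are the stars $\delta(t)$, $t\in T$, with bound $b(t)$. Let $M_S$ be the analogous matroid with classes $\delta(s)$, $s\in S$, and bounds $c(s)$. Since every edge has exactly one endpoint in $T$ and one in $S$, the feasible sets $F$ in the statement are precisely the common independent sets of $M_T$ and $M_S$. The rank functions are
\[
r_T(Z)=\sum_{t\in T}\min\{b(t),\deg_Z(t)\},
\qquad
r_S(Z)=\sum_{s\in S}\min\{c(s),\deg_Z(s)\}.
\]
Edmonds' theorem then says that the desired maximum equals $\min_{Z\subseteq E}\{r_T(E\setminus Z)+r_S(Z)\}$. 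The remaining work is to rewrite this minimum over edge sets as the stated minimum over vertex sets $A\subseteq T$.

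First, we show the stated expression is achieved by some $Z$. Given $A\subseteq T$, take $Z_A$ to be the set of edges with an endpoint in $A$. Then $E\setminus Z_A$ is incident only to $T\setminus A$, so
\[
r_T(E\setminus Z_A)\le\sum_{t\in T\setminus A}b(t).
\]
Also $\deg_{Z_A}(s)=d_E(A,s)$ for every $s\in S$, so $r_S(Z_A)=\sum_{s\in S}\min\{c(s),d_E(A,s)\}$. Hence the edge-set minimum is at most the vertex-set expression.

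Conversely, given an arbitrary $Z$, let $A$ be the set of vertices $t\in T$ with $\deg_{E\setminus Z}(t)<b(t)$. We compare the two pieces separately.
\begin{itemize}
\item For $t\notin A$, the $T$-term of $t$ equals $b(t)$.
\item For $t\in A$, the $T$-term of $t$ equals $\deg_{E\setminus Z}(t)$.
\end{itemize}
This comparison is the main obstacle, because it does not close without modifying $Z$. The clean fix is to first replace $Z$ by a better-behaved minimizer. If $t\in A$ has an edge $e\in E\setminus Z$, moving $e$ into $Z$ lowers the $T$-term by exactly $1$ (since $\deg_{E\setminus Z}(t)<b(t)$) and raises the $S$-term by at most $1$. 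So the value does not increase, and we may assume every $t\in A$ has all its edges in $Z$.

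Symmetrically, if $t\notin A$ has $\deg_{E\setminus Z}(t)>b(t)$, moving an edge at $t$ from $Z$ to $E\setminus Z$ does not increase the $T$-term and does not increase the $S$-term. Repeating these moves, and then shrinking further, we may assume $Z=Z_A$ exactly. At that point the value is at least the expression for $A$. The rank value is integral, and each move changes a finite quantity monotonically, so the process terminates.

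Combining the two inequalities gives the formula.
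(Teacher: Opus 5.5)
Your proof is correct. The paper does not prove this finite statement itself; it cites it via Gale's integral max-flow--min-cut theorem. Its proof of the atomless analogue, \cref{thm:continuous_matching_minmax}, starts exactly as yours does: intersect the two partition matroids and apply the intersection theorem.

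The two routes differ only in how the minimum over $Z\subseteq E$ is converted into a minimum over $A\subseteq T$. The paper does this algebraically. It writes each rank term using $\min\{u,v\}=\min_C\{u\mathbf 1_C+v\mathbf 1_{\text{complement of }C}\}$, which introduces a set $A\subseteq T$ on one side and a set $B\subseteq S$ on the other. For fixed $A,B$ the objective is modular in $Z$, so the optimal $Z$ can be written down explicitly. Minimizing over $B$ pointwise then produces $\min\{c(s),d_E(A,s)\}$. That formulation needs no exchange argument, which is why it transfers to the measurable setting, where moving single edges changes nothing. Your local moves are the finite combinatorial counterpart of this step, and they are valid.

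Two small repairs are needed.
\begin{itemize}
\item Your second move should apply to every $t\notin A$ that has an edge in $Z$, not only to those with $\deg_{E\setminus Z}(t)>b(t)$. For $t\notin A$ you already have $\deg_{E\setminus Z}(t)\geq b(t)$, so moving an edge at $t$ out of $Z$ keeps the $T$-term at $b(t)$ while the $S$-term can only drop. As written, vertices with $\deg_{E\setminus Z}(t)=b(t)$ are left to the vague ``shrinking further''.
\item You should state that the moves never change membership in $A$. Each edge has a unique endpoint in $T$, and the moves only decrease $\deg_{E\setminus Z}(t)$ for $t\in A$ and only increase it for $t\notin A$. Hence each edge moves at most once and the process ends at $Z_A$. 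There, $\deg_E(t)\geq b(t)$ for $t\notin A$ makes the value exactly the expression for $A$.
\end{itemize}

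Alternatively, you can skip modifying $Z$ altogether. Let $d_{E\setminus Z}(A,s)$ be the number of edges of $E\setminus Z$ joining $s$ to $A$. Then
\[
\min\{c(s),d_E(A,s)\}\leq\min\{c(s),\deg_Z(s)\}+d_{E\setminus Z}(A,s).
\]
Sum this over $s\in S$ and use $\sum_{s\in S}d_{E\setminus Z}(A,s)=\sum_{t\in A}\deg_{E\setminus Z}(t)$. With $A$ defined as in your proposal, this gives $r_T(E\setminus Z)+r_S(Z)\geq\sum_{t\in T\setminus A}b(t)+\sum_{s\in S}\min\{c(s),d_E(A,s)\}$ directly.
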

	
	We now derive the corresponding atomless min--max formula from the intersection of the two fiberwise partition matroids. Its fractional form is also covered by measurable max-flow--min-cut and transshipment theory~\cite{lovasz2021flows}.
	
	Let $(S,\Sigma_S,\nu)$ and $(T,\Sigma_T,\eta)$ be standard measure spaces, and set $J\coloneqq S\times T$, $\Sigma\coloneqq\Sigma_S\otimes\Sigma_T$, and $\mu\coloneqq\nu\otimes\eta$. Let $b\colon T\to[0,\nu(S)]$ and $c\colon S\to[0,\eta(T)]$ be Borel measurable, and let $\cF_{\mathrm h}(b)$ and $\cF_{\mathrm v}(c)$ denote the families defined by the conditions $\nu(F_t)\leq b(t)$ for $\eta$-almost every $t\in T$ and $\eta(F^s)\leq c(s)$ for $\nu$-almost every $s\in S$, respectively. By \cref{prop:fiberwise_partition_matroid}, $M_{\mathrm h}(b)\coloneqq(J,\Sigma,\mu,\cF_{\mathrm h}(b))$ and $M_{\mathrm v}(c)\coloneqq(J,\Sigma,\mu,\cF_{\mathrm v}(c))$ are measurable matroids, with rank functions $r_{\mathrm h}(Z)=\int_T\min\{\nu(Z_t),b(t)\}\diff\eta(t)$ and $r_{\mathrm v}(Z)=\int_S\min\{\eta(Z^s),c(s)\}\diff\nu(s)$, respectively.
	
	Fix a measurable set $E\subseteq J$ of admissible pairs. For $A\subseteq T$ and $s\in S$, let $d_E(A,s)\coloneqq\eta(E^s\cap A)$, and set
	\begin{equation}
		m_E(b,c)\coloneqq
		\sup\{\mu(F)\mid F\subseteq E,\ F\in\cF_{\mathrm h}(b)\cap\cF_{\mathrm v}(c)\}.\label{eq:mebc}
	\end{equation}
	The finite and measurable settings are illustrated in \cref{fig:continuous_b_matching}.
	
	\begin{figure}[t]
		
		\begin{subfigure}[t]{0.48\textwidth}
			\centering
			
			\begin{tikzpicture}[scale=0.78]
				
				% Vertices
				\coordinate (s1) at (0,4.8);
				\coordinate (s2) at (0,3.2);
				\coordinate (s3) at (0,1.6);
				\coordinate (s4) at (0,0);
				
				\coordinate (t1) at (5.6,4.8);
				\coordinate (t2) at (5.6,3.2);
				\coordinate (t3) at (5.6,1.6);
				\coordinate (t4) at (5.6,0);
				
				% Admissible edges E
				\draw[gray!45, line width=0.7pt] (s1) -- (t1);
				\draw[gray!45, line width=0.7pt] (s1) -- (t2);
				\draw[gray!45, line width=0.7pt] (s1) -- (t3);
				
				\draw[gray!45, line width=0.7pt] (s2) -- (t1);
				\draw[gray!45, line width=0.7pt] (s2) -- (t3);
				\draw[gray!45, line width=0.7pt] (s2) -- (t4);
				
				\draw[gray!45, line width=0.7pt] (s3) -- (t1);
				\draw[gray!45, line width=0.7pt] (s3) -- (t2);
				\draw[gray!45, line width=0.7pt] (s3) -- (t4);
				
				\draw[gray!45, line width=0.7pt] (s4) -- (t2);
				\draw[gray!45, line width=0.7pt] (s4) -- (t3);
				\draw[gray!45, line width=0.7pt] (s4) -- (t4);
				
				% Feasible capacitated matching F
				\draw[black!85, line width=1.6pt] (s1) -- (t1);
				\draw[black!85, line width=1.6pt] (s1) -- (t2);
				\draw[black!85, line width=1.6pt] (s2) -- (t3);
				\draw[black!85, line width=1.6pt] (s3) -- (t2);
				\draw[black!85, line width=1.6pt] (s3) -- (t4);
				\draw[black!85, line width=1.6pt] (s4) -- (t3);
				
				% Vertex circles
				\foreach \p in {s1,s2,s3,s4,t1,t2,t3,t4}
				\fill[white] (\p) circle (0.13);
				
				\foreach \p in {s1,s2,s3,s4,t1,t2,t3,t4}
				\draw[black!85, line width=0.9pt] (\p) circle (0.13);
				
				% Vertex labels
				\node[font=\small, above right] at (s1) {$s_1$};
				\node[font=\small, above right] at (s2) {$s_2$};
				\node[font=\small, above right] at (s3) {$s_3$};
				\node[font=\small, above right] at (s4) {$s_4$};
				
				\node[font=\small, above left] at (t1) {$t_1$};
				\node[font=\small, above left] at (t2) {$t_2$};
				\node[font=\small, above left] at (t3) {$t_3$};
				\node[font=\small, above left] at (t4) {$t_4$};
				
				% Capacity labels
				\node[font=\small, left] at (-0.25,4.8) {$c=2$};
				\node[font=\small, left] at (-0.25,3.2) {$c=1$};
				\node[font=\small, left] at (-0.25,1.6) {$c=2$};
				\node[font=\small, left] at (-0.25,0) {$c=1$};
				
				\node[font=\small, right] at (5.85,4.8) {$b=1$};
				\node[font=\small, right] at (5.85,3.2) {$b=2$};
				\node[font=\small, right] at (5.85,1.6) {$b=2$};
				\node[font=\small, right] at (5.85,0) {$b=1$};
				
				% Side labels
				\node[font=\small] at (0,5.45) {$S$};
				\node[font=\small] at (5.6,5.45) {$T$};
			\end{tikzpicture}
			
			\caption{A finite bipartite graph with degree bounds. The light edges form $E$, while the dark edges form a feasible capacitated matching $F$.}
			\label{fig:continuous_b_matching_finite}
			
		\end{subfigure}
		\hfill
		\begin{subfigure}[t]{0.48\textwidth}
			\centering
			
			\begin{tikzpicture}[scale=0.78]
				
				% Paths used below
				\def\Epath{
					(0.45,0.65)
					.. controls (1.25,0.20) and (2.10,0.45) .. (2.75,0.80)
					.. controls (3.55,1.20) and (4.65,0.35) .. (5.45,0.70)
					.. controls (5.85,1.35) and (5.25,2.20) .. (5.45,2.90)
					.. controls (5.65,3.80) and (5.85,4.80) .. (5.25,5.45)
					.. controls (4.35,5.80) and (3.75,5.25) .. (3.00,5.05)
					.. controls (2.05,4.80) and (1.30,5.65) .. (0.65,5.20)
					.. controls (0.15,4.50) and (0.75,3.70) .. (0.65,2.85)
					.. controls (0.55,2.05) and (0.15,1.25) .. (0.45,0.65)
					-- cycle
				}
				
				\def\Flow{
					(2.60,0.85)
					.. controls (3.05,0.65) and (3.75,0.75) .. (4.20,1.05)
					.. controls (4.40,1.35) and (4.20,1.70) .. (3.85,1.90)
					.. controls (3.40,2.05) and (2.85,1.85) .. (2.55,1.55)
					.. controls (2.35,1.25) and (2.35,1.00) .. (2.60,0.85)
					-- cycle
				}
				
				\def\Fcentral{
					(1.15,2.65)
					.. controls (1.85,2.35) and (2.55,2.65) .. (3.10,2.45)
					.. controls (3.85,2.20) and (4.70,2.55) .. (4.85,3.10)
					.. controls (4.95,3.65) and (4.40,4.10) .. (3.75,4.15)
					.. controls (3.00,4.25) and (2.45,3.85) .. (1.75,4.05)
					.. controls (1.20,4.10) and (0.90,3.55) .. (1.15,2.65)
					-- cycle
				}
				
				\def\Fupper{
					(1.25,4.55)
					.. controls (1.70,4.35) and (2.15,4.45) .. (2.55,4.75)
					.. controls (2.75,5.00) and (2.45,5.30) .. (2.05,5.35)
					.. controls (1.55,5.40) and (1.10,5.15) .. (1.25,4.55)
					-- cycle
				}
				
				% Admissible region E
				\fill[gray!10] \Epath;
				
				% Feasible set F, clipped to E
				\begin{scope}
					\clip \Epath;
					\fill[gray!45] \Flow;
					\fill[gray!45] \Fcentral;
					\fill[gray!45] \Fupper;
				\end{scope}
				
				% Boundary of E
				\draw[black!55, line width=0.8pt] \Epath;
				
				% Chosen horizontal and vertical fibers
				\draw[gray!70, densely dotted, line width=0.8pt]
				(0,3.20) -- (6,3.20);
				
				\draw[gray!70, densely dotted, line width=0.8pt]
				(3.45,0) -- (3.45,6);
				
				% The thick portions of the fibers inside F are
				% precisely F_{t_0} and F^{s_0}.
				\begin{scope}
					\clip \Epath;
					
					% Intersection with the central component
					\begin{scope}
						\clip \Fcentral;
						\draw[black!90, line width=2.1pt]
						(0,3.20) -- (6,3.20);
						\draw[black!90, line width=2.1pt]
						(3.45,0) -- (3.45,6);
					\end{scope}
					
					% Lower component of the vertical section
					\begin{scope}
						\clip \Flow;
						\draw[black!90, line width=2.1pt]
						(3.45,0) -- (3.45,6);
					\end{scope}
				\end{scope}
				
				% Ground square
				\draw[very thick] (0,0) rectangle (6,6);
				
				% Coordinate spaces
				\node[label A] at (3,-0.65) {\small $S$};
				\node[label A] at (-0.65,2.7) {\small $T$};
				
				% Labels for E and F
				\node[font=\small] at (4.85,4.85) {$E$};
				\node[font=\small] at (2.15,3.55) {$F$};
				
				% Fiber labels
				\node[font=\small, left] at (-0.05,3.20) {$t_0$};
				\node[font=\small, below] at (3.45,-0.05) {$s_0$};
				
				% Section labels
				\node[font=\small, above] at (4.10,3.20) {$F_{t_0}$};
				\node[font=\small, right] at (3.45,2.18) {$F^{s_0}$};
				
			\end{tikzpicture}
			
			\caption{A feasible measurable set $F\subseteq E\subseteq S\times T$. The thick portions of the dotted fibers are the sections $F_{t_0}$ and $F^{s_0}$.}
			\label{fig:continuous_b_matching_measurable}
			
		\end{subfigure}
		
		\caption{Finite and continuous bipartite $b$-matchings.}
		\label{fig:continuous_b_matching}
		
	\end{figure}
	
	\begin{thm}
		\label{thm:continuous_matching_minmax}
		We have
		\begin{equation}
			\label{eq:continuous_matching_hall}
			m_E(b,c)
			=
			\min_{A\subseteq T}
			\left\{
			\int_{T\setminus A} b(t)\diff\eta(t)
			+
			\int_S \min\{c(s),d_E(A,s)\}\diff\nu(s)
			\right\}.
		\end{equation}
	\end{thm}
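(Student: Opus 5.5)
Restrict both fiberwise partition matroids to $E$ and apply \cref{thm:intersection}. Let $M_{\mathrm h}'\coloneqq M_{\mathrm h}(b)|E$ and $M_{\mathrm v}'\coloneqq M_{\mathrm v}(c)|E$. The members of $\cF_{\mathrm h}(b)\cap\cF_{\mathrm v}(c)$ contained in $E$ are exactly the common independent sets of these two restricted matroids, so \cref{thm:intersection} gives
\[
m_E(b,c)=\min_{Z\subseteq E}\{r_{\mathrm h}(Z)+r_{\mathrm v}(E\setminus Z)\},
\]
with the minimum attained. The remaining work is to show that this minimum equals the right-hand side of \eqref{eq:continuous_matching_hall}, by two inequalities.

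\emph{Upper bound.} Fix $A\subseteq T$ and set $Z_A\coloneqq E\cap(S\times(T\setminus A))$. Then $(Z_A)_t\subseteq S$ for $t\notin A$ and $(Z_A)_t=\emptyset$ for $t\in A$, so $r_{\mathrm h}(Z_A)\leq\int_{T\setminus A}b\diff\eta$. Also $(E\setminus Z_A)^s=E^s\cap A$, so $r_{\mathrm v}(E\setminus Z_A)=\int_S\min\{c(s),d_E(A,s)\}\diff\nu(s)$. Hence the $Z$-minimum is at most the $A$-expression for every $A$.

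\emph{Lower bound.} Given $Z\subseteq E$, we need some $A$ whose expression is at most $r_{\mathrm h}(Z)+r_{\mathrm v}(E\setminus Z)$. I would take
\[
A\coloneqq\{t\in T\mid \nu(Z_t)<b(t)\},
\]
which is measurable by Fubini. For $t\notin A$ we have $\min\{\nu(Z_t),b(t)\}=b(t)$, so $r_{\mathrm h}(Z)\geq\int_{T\setminus A}b\diff\eta+\int_A\nu(Z_t)\diff\eta(t)$. By Fubini, $\int_A\nu(Z_t)\diff\eta(t)=\int_S\eta(Z^s\cap A)\diff\nu(s)$. For every $s$,
\[
\min\{c(s),\eta(E^s\cap A)\}\leq\eta(Z^s\cap A)+\min\{c(s),\eta((E\setminus Z)^s)\},
\]
because $E^s\cap A\subseteq(Z^s\cap A)\cup(E\setminus Z)^s$ and $x\mapsto\min\{c,x\}$ is monotone and $1$-Lipschitz. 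Integrating over $S$ and combining the estimates gives the $A$-expression $\leq r_{\mathrm h}(Z)+r_{\mathrm v}(E\setminus Z)$.

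The two inequalities together give equality of the values. Choosing $Z$ to be a minimizer from \cref{lem:minimum}, the corresponding set $A$ attains the minimum over $A$, which justifies writing $\min$ in \eqref{eq:continuous_matching_hall}.

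The step most likely to need care is the lower bound, specifically measurability of $A$ and of the functions $s\mapsto d_E(A,s)$ and $s\mapsto\eta(Z^s\cap A)$. This follows from Fubini's theorem on the product $\sigma$-algebra, as in the proof of \cref{prop:fiberwise_partition_matroid}. The exceptional null sets in the a.e.-defined fiber conditions are harmless. The restriction step itself is routine.
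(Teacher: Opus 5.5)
Your proof is correct and follows the same route as the paper: apply \cref{thm:intersection} to the restrictions of $M_{\mathrm h}(b)$ and $M_{\mathrm v}(c)$ to $E$, then convert the minimum over $Z\subseteq E$ into a minimum over $A\subseteq T$. The difference is only in this conversion. The paper rewrites both rank terms via $\int\min\{u,v\}=\min_C\{\int_C u+\int_{J'\setminus C}v\}$, introduces an auxiliary $B\subseteq S$, and exchanges the order of minimization, with optimal $Z=E\cap((S\setminus B)\times(T\setminus A))$. You instead give explicit witnesses $Z_A$ and $A=\{\nu(Z_t)<b(t)\}$ together with a $1$-Lipschitz estimate, which also makes the attainment of the minimum over $A$ explicit.
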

	
	\begin{proof}
		Restrict $M_{\mathrm h}(b)$ and $M_{\mathrm v}(c)$ to $E$. The measurable matroid intersection theorem and the rank formulas give
		\[
		m_E(b,c)
		=
		\min_{Z\subseteq E}
		\left\{
		\int_T\min\{\nu(Z_t),b(t)\}\diff\eta(t)
		+
		\int_S\min\{\eta((E\setminus Z)^s),c(s)\}\diff\nu(s)
		\right\}.
		\]
		For nonnegative integrable measurable functions $u$ and $v$ on a standard measure space $(J',\Sigma',\rho)$, we have the elementary identity
		\[
		\int_{J'} \min\{u,v\}\diff\rho
		=
		\min_{C\in\Sigma'}
		\left\{
		\int_C u\diff\rho
		+
		\int_{J'\setminus C} v\diff\rho
		\right\}.
		\]
		Indeed, for every $C\in\Sigma'$, we have $\min\{u,v\}\leq u\mathbf{1}_C+v\mathbf{1}_{J'\setminus C}$ pointwise, with equality for $C=\{u\leq v\}$. Applying this identity to the first rank term with $C=A$, and to the second with $C=S\setminus B$, gives
		\[
		m_E(b,c)=\min_{\substack{A\subseteq T\\ B\subseteq S\\ Z\subseteq E}}
		\left\{
		\int_{T\setminus A} b\,\diff\eta
		+\mu\left(Z\cap(S\times A)\right)
		+\mu\left((E\setminus Z)\cap((S\setminus B)\times T)\right)
		+\int_B c\,\diff\nu
		\right\}.
		\]
		For fixed $A$ and $B$, the minimum over $Z$ is attained by
		\[
		Z=E\cap\left((S\setminus B)\times(T\setminus A)\right).
		\]
		The two middle terms then reduce to $\mu\left(E\cap((S\setminus B)\times A)\right)$, and hence
		\[
		m_E(b,c)=\min_{\substack{A\subseteq T\\ B\subseteq S}}
		\left\{
		\int_{T\setminus A} b\,\diff\eta
		+\mu\left(E\cap((S\setminus B)\times A)\right)
		+\int_B c\,\diff\nu
		\right\}.
		\]
		By Fubini's theorem, the middle term is $\int_{S\setminus B}d_E(A,s)\diff\nu(s)$. For fixed $A$, minimizing over $B$ pointwise gives \eqref{eq:continuous_matching_hall}.
	\end{proof}

	In contrast to general measurable matroid intersection, the supremum is always attained for the two fiberwise partition matroids considered here. We will rely on a result of Korman and McCann~\cite[Proposition~3.2]{korman2013insights} for joint densities on $\bR^d\times\bR^d$, which states that, among those with prescribed marginals satisfying $0\leq h\leq\bar h$, the extreme points are precisely the densities of the form $h=\mathbf{1}_W\bar h$ for some measurable set $W$.
	
	\begin{thm}
		\label{thm:continuous_matching_attainment}
		The supremum in \eqref{eq:mebc} is attained.
	\end{thm}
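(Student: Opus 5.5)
The plan is to pass to the fractional relaxation, obtain an optimal density by weak-star compactness, and then push it to an extreme point, which is an indicator by the extreme-point characterization of Korman and McCann.

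\textbf{Step 1: the fractional feasible set.} Consider the set $\mathcal K$ of measurable densities $h\colon J\to[0,1]$ with $0\leq h\leq\mathbf 1_E$ satisfying the section constraints
\[
\int_S h(s,t)\diff\nu(s)\leq b(t)\ \text{for $\eta$-a.e.\ }t,
\qquad
\int_T h(s,t)\diff\eta(t)\leq c(s)\ \text{for $\nu$-a.e.\ }s.
\]
As in \cref{rem:weak_star_compactness}, the order interval $0\leq h\leq 1$ in $L^\infty(\mu)$ is weak-star compact and metrizable. The constraints in $\mathcal K$ are preserved under weak-star limits. Indeed, testing against $\mathbf 1_{S\times D}$ for $D\in\Sigma_T$ turns the first constraint into $\int_{S\times D}h\diff\mu\leq\int_D b\diff\eta$ for all $D$, and each such inequality is weak-star closed; the second constraint is handled symmetrically. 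Hence $\mathcal K$ is a weak-star compact convex set. The functional $h\mapsto\int h\diff\mu$ is weak-star continuous, so it attains its maximum $m^*$ on $\mathcal K$. Since $\mathbf 1_F\in\mathcal K$ for every feasible $F$, we have $m^*\geq m_E(b,c)$.

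\textbf{Step 2: no integrality gap.} We need $m^*\leq m_E(b,c)$. Take the minimizing $A$ in \cref{thm:continuous_matching_minmax}. For $h\in\mathcal K$, split $\int h\diff\mu$ into the part over $S\times(T\setminus A)$ and the part over $S\times A$. The first part is at most $\int_{T\setminus A}b\diff\eta$ by the $b$-constraint. For the second, for each $s$ we have $\int_A h(s,t)\diff\eta(t)\leq\min\{c(s),d_E(A,s)\}$, which uses $h\leq\mathbf 1_E$ and the $c$-constraint. Fubini then gives $m^*\leq m_E(b,c)$, so $m^*=m_E(b,c)$.

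\textbf{Step 3: an indicator optimum.} Let $\mathcal O\subseteq\mathcal K$ be the set of maximizers; it is a nonempty weak-star compact face of $\mathcal K$. By Krein--Milman, $\mathcal O$ has an extreme point $h_0$, and $h_0$ is also an extreme point of $\mathcal K$. It remains to show that $h_0=\mathbf 1_W$ for some $W\subseteq E$; then $F\coloneqq W$ is feasible with $\mu(F)=m_E(b,c)$.

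To reduce to Korman--McCann, fix the marginals of $h_0$, namely $\beta(t)\coloneqq\int_S h_0\diff\nu$ and $\gamma(s)\coloneqq\int_T h_0\diff\eta$. Let $\mathcal K_0$ be the set of $h$ with $0\leq h\leq\mathbf 1_E$ and marginals exactly $\beta,\gamma$. Then $\mathcal K_0\subseteq\mathcal K$ and $h_0\in\mathcal K_0$, so $h_0$ is an extreme point of $\mathcal K_0$. By \cite[Proposition~3.2]{korman2013insights} with $\bar h=\mathbf 1_E$, $h_0=\mathbf 1_W\mathbf 1_E$, which is an indicator.

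\textbf{Main obstacle.} The transfer of Korman--McCann from $\bR^d\times\bR^d$ with Lebesgue-type measures to general standard atomless spaces $(S,\nu)$ and $(T,\eta)$. I would use measure isomorphisms to intervals (\cite[Theorem~17.41]{kechris1995classical}) on each factor separately, after normalizing the masses. A product of such isomorphisms preserves product measure, densities, marginals and the capacity $\mathbf 1_E$, so the characterization applies. The null-set cases $\nu(S)=0$ or $\eta(T)=0$ are trivial.

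If this transfer proves delicate, the fallback is a direct argument that an extreme point is $\{0,1\}$-valued. If $0<h_0<1$ on a positive-measure subset of $E$, one finds $\varepsilon>0$ and a set $Q=Q_1\times Q_2$ where $\varepsilon<h_0<1-\varepsilon$ on a large part. Inside it one constructs a bounded perturbation $g$ supported there with zero marginals, for example a checkerboard $\pm$ pattern on $Q_1'\times Q_2'$ split into equal-measure halves. Then $h_0\pm g\in\mathcal K_0$, which contradicts extremality.
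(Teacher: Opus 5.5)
Your main argument is correct and is essentially the paper's: both use weak-star compactness of $\{0\le h\le\mathbf 1_E\}$, Krein--Milman on the set of densities with fixed marginals, and Korman--McCann to show that an extreme point is an indicator. The only difference is that the paper fixes the marginals of a weak-star limit of a maximizing sequence of feasible indicators, whose mass is automatically $m_E(b,c)$, whereas you take an extreme point of the optimal face of the relaxation. This makes your Step~2 unnecessary, since the resulting indicator is itself feasible and so has measure at most $m_E(b,c)$.

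Your checkerboard fallback would not work as stated, but the main route through Korman--McCann does not need it. A rectangle-supported $\pm$ pattern must lie inside $\{0<h_0<1\}$ up to a null set. A positive-measure subset of $S\times T$ need not contain any rectangle of positive measure up to a null set, for example $\{(x,y)\mid x-y\notin U\}$ with $U$ open, dense and of small measure. A genuinely finer perturbation would be needed to replace Korman--McCann.
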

	
	\begin{proof}
		As usual, by the isomorphism theorem for atomless standard probability spaces \cite[Theorem~17.41]{kechris1995classical}, we may identify $S$ and $T$ with finite intervals equipped with Lebesgue measure.
		
		If $m_E(b,c)=0$, then the empty set attains the supremum. Suppose that $m_E(b,c)>0$, let $(F_i)_{i\in\bN}$ be a sequence of measurable subsets of $E$ belonging to $\cF_{\mathrm h}(b)\cap\cF_{\mathrm v}(c)$ such that $\lim_{i\to\infty}\mu(F_i)=m_E(b,c)$, and set $h_i\coloneqq\mathbf{1}_{F_i}$.
		Since $0\leq h_i\leq\mathbf{1}_E\leq\mathbf{1}_J$ for every $i\in\bN$, the weak-star sequential compactness in \cref{rem:weak_star_compactness} shows that, after passing to a subsequence, $(h_i)_{i\in\bN}$ converges weak-star to some $h\in L^\infty(\mu)$ with $0\leq h\leq\mathbf{1}_E$.
		
		Define the two marginals of $h$ by
		\[
		p(t)\coloneqq\int_S h(s,t)\diff\nu(s),
		\qquad
		q(s)\coloneqq\int_T h(s,t)\diff\eta(t).
		\]
		For every measurable set $A\subseteq T$, weak-star convergence and Fubini's theorem give
		\[
		\int_A p(t)\diff\eta(t)
		=
		\int_{S\times A}h\diff\mu
		=
		\lim_{i\to\infty}\int_{S\times A}h_i\diff\mu
		=
		\lim_{i\to\infty}\int_A\nu((F_i)_t)\diff\eta(t)
		\leq
		\int_A b(t)\diff\eta(t).
		\]
		It follows that $p\leq b$ almost everywhere. The same argument, applied to measurable subsets of $S$, shows that $q\leq c$ almost everywhere. Testing the weak-star convergence against the constant function $\mathbf{1}_J$ also gives
		\[
		\int_Tp\diff\eta
		=
		\int_Jh\diff\mu
		=
		\lim_{i\to\infty}\mu(F_i)
		=
		m_E(b,c),
		\]
		and, similarly, $\int_Sq\diff\nu=m_E(b,c)$.
		
		Let $\cK$ be the set of all functions $g\in L^\infty(\mu)$ satisfying $0\leq g\leq\mathbf{1}_E$,
		\[
		\int_S g(s,t)\diff\nu(s)=p(t)
		\text{ for almost every }t\in T
		\quad\text{and}\quad
		\int_T g(s,t)\diff\eta(t)=q(s)
		\text{ for almost every }s\in S.
		\]
		This set is nonempty, since $h\in\cK$, and it is convex. The constraint $0\leq g\leq\mathbf{1}_E$ is weak-star closed by the same argument as in \cref{rem:weak_star_compactness}. Moreover, by Fubini's theorem, the marginal constraints are equivalent to
		\[
		\int_{S\times A}g\diff\mu=\int_Ap\diff\eta
		\quad\text{for every }A\in\Sigma_T,
		\qquad
		\int_{B\times T}g\diff\mu=\int_Bq\diff\nu
		\quad\text{for every }B\in\Sigma_S.
		\]
		Indeed, equality of the integrals over every measurable set implies equality of the corresponding marginal functions almost everywhere. Each equality above defines a weak-star closed set, hence $\cK$ is weak-star closed, and hence is a weak-star compact convex subset of the unit ball of $L^\infty(\mu)$. By the Krein--Milman theorem~\cite[Chapter~V]{conway1990course}, $\cK$ has an extreme point, say $h^*$.
		
		The result of Korman and McCann~\cite[Proposition~3.2]{korman2013insights} applies and yields $h^*=\mathbf{1}_F$ almost everywhere for some measurable set $F \subseteq S \times T$.
		
		Replacing $F$ by a Borel representative modulo a null set, if necessary, does not change its marginals almost everywhere. Thus, $F\subseteq E$ has marginals $p$ and $q$, and
		\[
		\mu(F)=\int_Tp\diff\eta=m_E(b,c).
		\]
		Hence, $F$ attains the supremum.
		
	\end{proof}

	Combining \cref{thm:continuous_matching_minmax} and \cref{thm:continuous_matching_attainment} gives the exact atomless counterpart of \cref{thm:finite_capacitated_b_matching}: sums are replaced by integrals, and the number of available edges from $A$ to a vertex $s$ is replaced by the section measure $d_E(A,s)$. It also gives the exact deficiency formula, rather than only a feasibility criterion.
	
	\begin{cor}
		\label{cor:continuous_hall}
		There exists a measurable set $F\subseteq E$ such that $\nu(F_t)=b(t)$ for $\eta$-almost every $t\in T$ and $\eta(F^s)=c(s)$ for $\nu$-almost every $s\in S$ if and only if
		\[
		\int_Tb(t)\diff\eta(t)=\int_Sc(s)\diff\nu(s)
		\]
		and, for every measurable $A\subseteq T$,
		\[
		\int_A b(t)\diff\eta(t)
		\leq
		\int_S\min\{c(s),d_E(A,s)\}\diff\nu(s).
		\]
	\end{cor}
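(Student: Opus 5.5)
The plan is to derive the corollary from \cref{thm:continuous_matching_minmax} and \cref{thm:continuous_matching_attainment}. We first note that any feasible set $F\subseteq E$ with $F\in\cF_{\mathrm h}(b)\cap\cF_{\mathrm v}(c)$ satisfies $\mu(F)=\int_T\nu(F_t)\diff\eta\le\int_T b\diff\eta$. The same Fubini computation gives $\mu(F)\le\int_S c\diff\nu$. A set with the prescribed sections is exactly a feasible set with $\mu(F)=\int_T b\diff\eta=\int_S c\diff\nu$. Indeed, if $\mu(F)=\int_T b\diff\eta$ while $\nu(F_t)\le b(t)$ almost everywhere, then $\nu(F_t)=b(t)$ almost everywhere, and similarly on the $S$-side. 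So the corollary is equivalent to the following claim: $\int b=\int c$ and $m_E(b,c)=\int_T b\diff\eta$ hold, and the supremum is attained.

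For necessity, we integrate the section identities to obtain $\int b=\int c$. For a measurable $A\subseteq T$, we compute $\int_A b\diff\eta=\mu(F\cap(S\times A))=\int_S\eta(F^s\cap A)\diff\nu(s)$. The integrand is at most $\eta(F^s)=c(s)$, and it is at most $\eta(E^s\cap A)=d_E(A,s)$ because $F\subseteq E$. This gives the displayed inequality.

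For sufficiency, we subtract $\int_A b\diff\eta$ from $\int_T b\diff\eta$ in the minimand of \eqref{eq:continuous_matching_hall}. The value for $A$ becomes $\int_T b\diff\eta+\bigl(\int_S\min\{c,d_E(A,\cdot)\}\diff\nu-\int_A b\diff\eta\bigr)\ge\int_T b\diff\eta$. Hence $m_E(b,c)\ge\int_T b\diff\eta$, and the upper bound above gives equality. \cref{thm:continuous_matching_attainment} then yields $F\subseteq E$, independent in both fiberwise partition matroids, with $\mu(F)=m_E(b,c)=\int_T b\diff\eta=\int_S c\diff\nu$. By the observation in the first paragraph, $F$ has the prescribed sections almost everywhere.

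No step looks hard, since the attainment theorem carries the real difficulty. The only point needing care is measurability and well-definedness of $s\mapsto d_E(A,s)$ and of the section functions. These follow from Fubini as in \cref{prop:fiberwise_partition_matroid}, with the ranges $b\le\nu(S)$ and $c\le\eta(T)$ assumed in the setup.
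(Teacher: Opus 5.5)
Your proposal is correct and follows the paper's proof. Necessity comes from integrating the sections and bounding $\eta(F^s\cap A)$ by $\min\{c(s),d_E(A,s)\}$. Sufficiency uses \cref{thm:continuous_matching_minmax} to get $m_E(b,c)=\int_T b\,\mathrm{d}\eta$ and \cref{thm:continuous_matching_attainment} to realize it, after which the almost-everywhere inequalities $\nu(F_t)\le b(t)$ and $\eta(F^s)\le c(s)$ become equalities because the integrals agree.
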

	
	\begin{proof}
		Necessity follows by integrating the two section functions. Indeed, for $\nu$-almost every $s\in S$, we have $\eta(F^s\cap A)\leq\min\{c(s),d_E(A,s)\}$. Conversely, the inequalities and \cref{thm:continuous_matching_minmax} give $m_E(b,c)=\int_Tb\diff\eta$. By \cref{thm:continuous_matching_attainment}, there is a feasible set $F$ of this measure. Since $\nu(F_t)\leq b(t)$ almost everywhere and the integrals agree, equality holds almost everywhere. Equality of the total capacities gives the same conclusion for the vertical sections.
	\end{proof}
	
	We finally specialize to the complete product. Let $b^\downarrow\colon[0,\eta(T)]\to\bR_{\geq 0}$ denote the nonincreasing rearrangement of $b$ with respect to $\eta$.
	
	\begin{cor}
		\label{cor:lorentz_deficiency}
		We have
		\[
		m_{S\times T}(b,c)
		=
		\int_Tb\diff\eta
		-
		\max_{0\leq a\leq\eta(T)}
		\left\{
		\int_0^ab^\downarrow(u)\diff u
		-
		\int_S\min\{a,c(s)\}\diff\nu(s)
		\right\}.
		\]
		In particular, exact cross-sections $b$ and $c$ can be realized if and only if their integrals agree and the expression inside the maximum is nonpositive for every $a\in[0,\eta(T)]$.
	\end{cor}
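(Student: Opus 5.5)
We specialize \cref{thm:continuous_matching_minmax} to $E=S\times T$ and then reduce the minimum over $A\subseteq T$ to a one-parameter family of level sets of $b$. For $E=S\times T$ we have $d_E(A,s)=\eta(A)$ for every $s\in S$. Hence \eqref{eq:continuous_matching_hall} becomes
\[
m_{S\times T}(b,c)=\min_{A\subseteq T}\left\{\int_Tb\diff\eta-\int_Ab\diff\eta+\int_S\min\{c(s),\eta(A)\}\diff\nu(s)\right\}.
\]
The last term depends on $A$ only through $a\coloneqq\eta(A)\in[0,\eta(T)]$. So it suffices to show that
\[
\sup\left\{\int_Ab\diff\eta \,\middle|\, \eta(A)=a\right\}=\int_0^ab^\downarrow(u)\diff u,
\]
and that this supremum is attained. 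Once this holds, minimizing first over $A$ with fixed measure $a$ and then over $a$ gives the displayed formula. Because the original minimum is attained, the supremum over $a$ is a maximum.

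The rearrangement identity is the key step. For the upper bound, I would use the layer-cake representation $\int_Ab\diff\eta=\int_0^\infty\eta(A\cap\{b>s\})\diff s$. Each integrand satisfies $\eta(A\cap\{b>s\})\le\min\{a,\eta(\{b>s\})\}$. Integrating $\min\{a,\eta(\{b>s\})\}$ over $s$ gives exactly $\int_0^ab^\downarrow$, since $b^\downarrow$ is equimeasurable with $b$ on $[0,\eta(T)]$.

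For attainment, I would choose $A$ between level sets. Let $s^*\coloneqq b^\downarrow(a)$. Take $\{b>s^*\}\subseteq A\subseteq\{b\ge s^*\}$, with the missing mass chosen inside $\{b=s^*\}$ using the exact-subset property of the atomless measure $\eta$. Then equality holds in the layer-cake bound for every $s$. The only delicate points are the identity $\eta(\{b>b^\downarrow(a)\})\le a\le\eta(\{b\ge b^\downarrow(a)\})$ and the endpoint cases $a=0$ and $a=\eta(T)$. These are standard properties of the nonincreasing rearrangement, which I would verify directly from its definition as a generalized inverse. I expect this bookkeeping to be the main (mild) obstacle. Alternatively, one can avoid the attainment issue entirely: the upper bound together with the fact that the sup over $A$ equals $\int_0^a b^\downarrow$ (approximating from level sets) already suffices, because the outer expression is continuous in $a$.

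For the ``in particular'' statement, I would combine this formula with \cref{cor:continuous_hall} for $E=S\times T$. Exact realization holds if and only if the integrals agree and $m_{S\times T}(b,c)=\int_Tb\diff\eta$. By the formula, the second condition says the maximum is at most $0$; it is always at least $0$, as the value at $a=0$ shows. This is equivalent to nonpositivity of the bracket for every $a$. The case $S=T=[0,1]$ recovers \cref{thm:lorentz_cross_sections}.
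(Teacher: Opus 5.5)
Your proposal is correct and follows essentially the same route as the paper. You specialize the min--max formula to $E=S\times T$ via $d_E(A,s)=\eta(A)$, reduce the optimization over $A$ to one over $a=\eta(A)$ using the rearrangement identity $\max\{\int_A b\diff\eta \mid \eta(A)=a\}=\int_0^a b^\downarrow(u)\diff u$, and get the feasibility statement from the preceding corollary. The only difference is that the paper quotes this identity as standard, while you supply its layer-cake proof and the level-set choice of maximizer, which are correct.
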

	
	\begin{proof}
		For $E=S\times T$, we have $d_E(A,s)=\eta(A)$. Since $T$ is atomless, the definition of $b^\downarrow$ implies the standard rearrangement identity
		\[
		\max\left\{\int_A b\diff\eta\,\middle|\,A\subseteq T,\ \eta(A)=a\right\}=\int_0^a b^\downarrow(u)\diff u
		\]
		for every $a\in[0,\eta(T)]$. By \eqref{eq:continuous_matching_hall},
		\begin{align*}
			m_{S\times T}(b,c)
			&=\min_{A\subseteq T}\left\{\int_{T\setminus A}b\diff\eta+\int_S\min\{\eta(A),c(s)\}\diff\nu(s)\right\}\\
			&=\int_Tb\diff\eta-\max_{A\subseteq T}\left\{\int_A b\diff\eta-\int_S\min\{\eta(A),c(s)\}\diff\nu(s)\right\}\\
			&=\int_Tb\diff\eta-\max_{0\leq a\leq\eta(T)}\left\{\int_0^a b^\downarrow(u)\diff u-\int_S\min\{a,c(s)\}\diff\nu(s)\right\}.
		\end{align*}
		The final assertion follows from the preceding corollary, since for each $a\in[0,\eta(T)]$ the left-hand side of its Hall-type inequality is maximized over sets $A\subseteq T$ with $\eta(A)=a$ by $\int_0^a b^\downarrow(u)\diff u$.
	\end{proof}
	
	For $S=T=[0,1]$, the feasibility part of \cref{cor:lorentz_deficiency} is precisely \cref{thm:lorentz_cross_sections}.

	%%%%%%%%%%%%%%%%%%%%%%%%%%%%%%%%
	\subsection{Measurable Nash-Williams--Tutte Theorems}
	\label{sec:graphings}
	%%%%%%%%%%%%%%%%%%%%%%%%%%%%%%%%
	
	Packing spanning trees and covering edges by forests are two fundamental decomposition problems in graph theory. The forest-covering theorem is due to Nash-Williams~\cite{nash1964decomposition}, while the spanning-tree packing theorem was proved independently by Nash--Williams and Tutte~\cite{nash1961edge,tutte1961problem}. These results give exact min--max characterizations for the two problems, determining the arboricity of a graph and the maximum number of pairwise edge-disjoint spanning trees it contains. Arboricity is a basic measure of graph sparsity and plays an important role in algorithms for sparse graphs, for example in the classical work of Chiba and Nishizeki~\cite{chiba1985arboricity} on listing triangles and cliques. The spanning-tree packing theorem implies that every $2k$-edge-connected graph contains $k$ pairwise edge-disjoint spanning trees, and such packings also appear in algorithms for minimum cuts~\cite{karger2000minimum} and in rigidity theory~\cite{tay1984rigidity}. Measurable analogues of the Nash--Williams theorem have previously been studied in connection with measurable arboricity; see, for example, \cite[p.~2]{aim2022descriptive}.
	
	The two theorems have a common matroidal formulation. Covering the edge set by $k$ forests is equivalent to the edge set being independent in the union of $k$ copies of the cycle matroid, while packing $k$ spanning trees is equivalent to the cycle matroid having $k$ pairwise disjoint bases. Using this connection, we apply the measurable matroid union theorem and its attainment theorem to cycle matroids of graphings. The union theorem gives exact characterizations of approximate packings and coverings, while the attainment theorem gives exact decompositions under strengthened rank inequalities.
	
	We first recall the finite statements in their graph-theoretic form. If $G$ is a finite graph and $U\subseteq V(G)$, let $i_G(U)$ denote the number of edges \emph{induced by $U$}. If $\cP$ is a partition of $V(G)$, let $e_G(\cP)$ denote the number of edges with \emph{endpoints in different classes of $\cP$}.
	
	\begin{thm}[Nash-Williams, Tutte]\label{thm:nash_williams_tutte_finite}
		Let $G$ be a finite graph and let $k\in\bN$.
		\begin{enumerate}[label=(\alph*)]\itemsep0em
			\item The edge set of $G$ can be covered by $k$ forests if and only if
			\[
			i_G(U)\leq k(|U|-1)
			\]
			for every nonempty set $U\subseteq V(G)$.\label{it:cover}
			\item If $G$ is connected, then $G$ contains $k$ pairwise edge-disjoint spanning trees if and only if
			\[
			e_G(\cP)\geq k(|\cP|-1)
			\]
			for every partition $\cP$ of $V(G)$.\label{it:pack}
		\end{enumerate}
	\end{thm}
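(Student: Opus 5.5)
The plan is to derive both parts from the finite Edmonds--Fulkerson rank formula for matroid union, recalled at the start of \cref{sec:union}, applied to $k$ copies of the cycle matroid $M(G)$ on $E=E(G)$. Write $r$ for its rank function. For $X\subseteq E$ we have $r(X)=|V|-c(X)$, where $c(X)$ is the number of components of the spanning subgraph $(V,X)$. Let $r_\vee$ be the rank function of the $k$-fold union. A set is independent in the union exactly when it is the union of $k$ forests, and
\[
r_\vee(E)=\min_{X\subseteq E}\{|E\setminus X|+k\,r(X)\}.
\]
Both parts will then reduce to comparing this minimum with a target value, by translating an arbitrary $X$ into a vertex set or vertex partition.

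For \ref{it:cover}, necessity is immediate. A forest on the vertex set $U$ has at most $|U|-1$ edges, so each of the $k$ forests contributes at most $|U|-1$ of the $i_G(U)$ induced edges. For sufficiency it suffices to show $r_\vee(E)=|E|$, that is, $k\,r(X)\geq|X|$ for every $X\subseteq E$. Fix $X$ and let $U_1,\dots,U_m$ be the vertex sets of the components of $(V,X)$ that contain at least one edge of $X$. Then every edge of $X$ is induced by some $U_j$, so
\[
|X|\leq\sum_j i_G(U_j)\leq k\sum_j(|U_j|-1)=k\,r(X).
\]
Hence $E$ itself is independent in the union, and is therefore a union of $k$ forests.

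For \ref{it:pack}, first note that $r(E)=|V|-1$ since $G$ is connected. Consequently $r_\vee(E)\leq k(|V|-1)$. Moreover, $k$ edge-disjoint spanning trees exist if and only if $r_\vee(E)=k(|V|-1)$. Indeed, an independent set of the union of this size decomposes into $k$ disjoint forests, each with at most $|V|-1$ edges, so each must be a spanning tree. Necessity of the partition condition holds because every spanning tree contains at least $|\cP|-1$ edges crossing $\cP$, and the $k$ trees are disjoint. For sufficiency, fix $X\subseteq E$ and let $\cP$ be the partition of $V$ into the vertex sets of the components of $(V,X)$. Then $r(X)=|V|-|\cP|$. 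No edge of $X$ crosses $\cP$, so $E\setminus X$ contains all crossing edges, and therefore
\[
|E\setminus X|+k\,r(X)\geq e_G(\cP)+k(|V|-|\cP|)\geq k(|\cP|-1)+k(|V|-|\cP|)=k(|V|-1).
\]
Taking the minimum over $X$ gives $r_\vee(E)\geq k(|V|-1)$, which completes the argument.

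There is no real obstacle here. The only step requiring care is choosing the right combinatorial object from an arbitrary $X$ in each sufficiency direction: the component vertex sets in \ref{it:cover} and the component partition in \ref{it:pack}. This is what makes the rank formula match the stated inequalities.
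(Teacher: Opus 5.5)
Your proof is correct. The paper gives no proof of this finite statement; it only recalls it with citations to Nash-Williams and Tutte. Your argument is, however, exactly the finite version of the paper's proof of its measurable analogue, \cref{thm:graphing_packing_covering}. There, too, the union rank formula (\cref{thm:union_rank_formula}) is applied to $k$ copies of the cycle matroid, and an arbitrary edge set $X$ is replaced by the partition of the vertices into $X$-components. In the measurable setting two extra steps are needed, both trivial in your finite one: the component partition is produced by \cref{lem:rank_preserving_measurable_partition} via a Borel coloring, and for packing it is approximated by the finite partitions $\cP_n$ using continuity from above of $\rho$.
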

	
	For graphings, we formulate these problems through the cycle matroid. As in \cref{sec:graphic}, we normalize $\nu(J)=1$ throughout this subsection. Let $G=(J,\Sigma_J,\nu,E)$ be a graphing, let $\Sigma_E$ denote the Borel $\sigma$-algebra of the edge space, and consider its cycle matroid on $(E,\Sigma_E,\eta_G)$, with rank function $\rho_G$. As $G$ will be fixed throughout \cref{sec:graphings}, we use $\eta \coloneqq \eta_G$ and $\rho \coloneqq \rho_G$ for brevity. Recall from \cref{sec:graphic} that the independent sets of this matroid are the hyperfinite forests, while its bases are the hyperfinite essential spanning forests. Thus, a covering by hyperfinite forests is equivalent to a covering by hyperfinite essential spanning forests, since every hyperfinite forest is contained in a hyperfinite essential spanning forest.
	
	We say that $G$ has an \emph{approximate covering} by $k$ hyperfinite essential spanning forests if, for every $\varepsilon>0$, there exist hyperfinite essential spanning forests $T_1,\dots,T_k\subseteq E$ such that $\eta(E\setminus (T_1\cup\dots\cup T_k))\leq\varepsilon$. We say that $G$ has an \emph{approximate packing} of $k$ hyperfinite essential spanning forests if, for every $\varepsilon>0$, there exist pairwise disjoint hyperfinite forests $F_1,\dots,F_k\subseteq E$ such that
	\[
	\sum_{i=1}^k \left(\rho(E)-\eta(F_i)\right)\leq\varepsilon.
	\]
	Equivalently, each $F_i$ can be extended to a hyperfinite essential spanning forest by adding edge sets of total measure at most $\varepsilon$. 
	
	For a measurable set $A\subseteq J$, let
	\[
	I(A)\coloneqq \left\{\{x,y\}\in E\mid x,y\in A\right\}.
	\]
	and, for $x\in A$, let $G[A]_x$ denote the connected component of the induced subgraph $G[A]$ containing $x$. Define
	\[
	\kappa(A)
	\coloneqq
	\int_A \frac{1}{|V(G[A]_x)|}\diff\nu(x)
	\]
	and
	$
	i(A)\coloneqq \eta(I(A)).
	$
	The integrand is Borel measurable  and
	\begin{equation}\label{eq:rank_induced_subgraph}
		\rho(I(A))
		=
		\nu(A)-\kappa(A)
	\end{equation}
	by the rank formula~\eqref{eq:cycle_matroid_rank}.
	In particular,
	\begin{equation}\label{eq:rank_whole_graphing}
		\rho(E)
		=
		1-\kappa(J).
	\end{equation}
	
	Let $\cP=(P_1,\ldots,P_m)$ be a finite measurable partition of a measurable set $A\subseteq J$. Set
	\[
	I(\cP)\coloneqq \bigcup_{i=1}^m I(P_i),
	\qquad
	\kappa(\cP)\coloneqq \sum_{i=1}^m\kappa(P_i).
	\]
	Define
	$
	i(\cP)\coloneqq \eta(I(\cP)).
	$
	If $\cP$ is a finite measurable partition of $J$, also define
	$
	e(\cP)
	\coloneqq
	\eta(E\setminus I(\cP)).
	$
	Again by the rank formula \eqref{eq:cycle_matroid_rank},
	\begin{equation}\label{eq:rank_partition_subgraph}
		\rho(I(\cP))
		=
		\nu(A)-\kappa(\cP).
	\end{equation}

	We will need the following lemma.
	
	\begin{lem}\label{lem:rank_preserving_measurable_partition}
		Let $A\subseteq J$ and $X\subseteq E$, where every edge in $X$ has both endpoints in $A$. Then there is a countable measurable partition $(P_n)_{n\in\bN}$ of $A$ such that, with
		\[
		Y\coloneqq \bigcup_{n\in\bN}I(P_n),
		\]
		we have $X\subseteq Y$ and $\rho(Y)=\rho(X)$.
	\end{lem}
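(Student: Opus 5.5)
The idea is to take as partition classes the vertex sets of the connected components of the Borel graph $(A,X)$, grouped in a Borel way. Let $H$ denote the bounded-degree Borel graph on $A$ with edge set $X$. Its connectivity relation is a countable Borel equivalence relation on $A$. In general this relation is not smooth, so it does not directly give a countable partition. The key observation is that only the \emph{finite} components matter for rank. For the infinite components, the integrand $1/|V(\cdot)|$ is already $0$, so merging them into one class cannot decrease the rank contribution computed by \eqref{eq:cycle_matroid_rank}.

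Concretely, let $A_\infty\subseteq A$ be the union of the infinite $H$-components, and for $m\in\bN$ let $A_m$ be the union of the $H$-components with exactly $m$ vertices. These sets are Borel, by \cref{lem:graphing_matching_tools}\ref{it:gri} together with the standard fact that component sizes are Borel functions in bounded-degree Borel graphs. On each $A_m$, the restricted relation is a finite Borel equivalence relation, so by \cref{lem:finite_class_selection}\ref{it:borel2} it admits a Borel transversal. Using the Borel linear order $\prec$ from the proof of that lemma, enumerate each finite class as its $1$st, \dots, $m$th element. This gives Borel sets $Q_{m,1},\dots,Q_{m,m}$. However, a single transversal-type set is not a union of whole components, so that is the wrong partition.

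Instead, I would partition $A_m$ into countably many Borel pieces, each of which is a union of whole components. I would do this by coloring components. Since every vertex of $A_m$ has finitely many neighbours within distance $2m$ in $G$, a countable Borel proper coloring of the graph on components exists, where two components are adjacent if they are joined by a $G$-edge. I would obtain this coloring from a Kechris--Solecki--Todorcevic style countable Borel coloring, applied to the transversal points. Then I take the $P_n$ to be the color classes (for each $m$), together with the single class $A_\infty$.

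Within a class $P_n\subseteq A_m$, distinct components are not $G$-adjacent. Hence the components of $G[P_n]$ coincide with the $H$-components, and in particular they have the same size $m$. Also $I(P_n)\supseteq X\cap I(P_n)$. For the class $A_\infty$, every component of $G[A_\infty]$ contains an infinite $H$-component, so it is infinite.

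It then follows that $X\subseteq Y$, because every $X$-edge lies inside one $H$-component and hence inside one class. For the rank, I use \eqref{eq:cycle_matroid_rank} (via \eqref{eq:rank_partition_subgraph} extended to countable partitions, which follows from the same formula). For every $x\in A$, the $Y$-component of $x$ and the $X$-component of $x$ have the same cardinality: both are finite of size $m$ on $A_m$, and both are infinite on $A_\infty$. Vertices outside $A$ are isolated for both $X$ and $Y$. Therefore the integrands agree pointwise and $\rho(Y)=\rho(X)$.

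The main obstacle I expect is the Borel bookkeeping: showing that the countable component-coloring is Borel, and that $A_m$ and $A_\infty$ are Borel. I would handle this with \cite[Lemma~18.12]{kechris1995classical} and the countable Borel chromatic number of locally finite Borel graphs.
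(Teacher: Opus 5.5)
Your proposal is correct and follows essentially the same route as the paper: take the union $A_\infty$ of the infinite $(A,X)$-components as one class, choose a Borel transversal of the finite components, properly Borel-color the locally finite graph on these components (adjacent when joined by a $G$-edge), and take the preimages of the color classes. Then the finite components of $(J,X)$ and $(J,Y)$ coincide and the infinite ones stay infinite, so $\rho(Y)=\rho(X)$. The only difference is that you further split the finite part by component size $m$; this is harmless but unnecessary, since the paper colors all finite components at once.
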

	
	\begin{proof}
		Consider the Borel graph $(A,X)$. Let $A_{\mathrm{fin}}$ be the set of
		vertices belonging to finite connected components of $(A,X)$, and let
		$A_\infty\coloneqq A\setminus A_{\mathrm{fin}}$. A standard consequence of the Lusin--Novikov theorem~\cite[Theorem~18.10]{kechris1995classical} is that the set $A_{\mathrm{fin}}$ is Borel, and the connectedness relation of $(A,X)$
		restricted to $A_{\mathrm{fin}}$ is a finite Borel equivalence relation.
		Hence, \cref{lem:finite_class_selection}\ref{it:borel2} gives a Borel
		transversal $S\subseteq A_{\mathrm{fin}}$. Let
		$\pi\colon A_{\mathrm{fin}}\to S$ be the corresponding map that
		sends each vertex to the representative of its $X$-component, which is Borel by \cite[Theorem~14.12]{kechris1995classical}.
		
		Define an auxiliary graph $L$ on $S$ by joining distinct $s,t\in S$ if some edge of $G$ has one endpoint in the $X$-component represented by $s$ and the other endpoint in the $X$-component represented by $t$. Since the $X$-components represented in $S$ are finite and $G$ is locally finite, the graph $L$ is locally finite. Moreover, the map $(x,y)\mapsto(\pi(x),\pi(y))$ is finite-to-one on the relevant Borel set of edges, and hence $L$ is Borel by the Lusin--Novikov theorem~\cite[Theorem~18.10]{kechris1995classical}. By the standard Borel coloring theorem for locally finite Borel graphs
		\cite[Lemma~5.5]{pikhurko2021borel}, there is a countable Borel proper
		coloring $c\colon S\longrightarrow\ \bN \setminus \{1\}$.

		Put $P_1\coloneqq A_\infty$, and, for $n\geq2$, let
		\[
		P_n\coloneqq \pi^{-1}(c^{-1}(n)).
		\]
		These sets form a countable measurable partition of $A$. Moreover, $X\subseteq Y$.
		
		The properness of $c$ ensures that no edge of $G$ joins two distinct finite $X$-components contained in the same partition class. It follows that every finite $X$-component is also a connected component of $Y$. Every connected component of $Y$ intersecting $P_1$, on the other hand, contains an infinite $X$-component and is therefore infinite. Thus, the spanning graphs $(J,X)$ and $(J,Y)$ have exactly the same finite connected components, and hence $\rho(Y)=\rho(X)$.
	\end{proof}
	
	\begin{thm}\label{thm:graphing_packing_covering}
		Let $G=(J,\Sigma_J,\nu,E)$ be a graphing and let $k\in\bN$. Then the following hold.
		\begin{enumerate}[label=(\alph*)]\itemsep0em
			\item The graphing $G$ has an approximate covering by $k$ hyperfinite essential spanning forests if and only if
			\begin{equation}
				i(A)\leq k\left(\nu(A)-\kappa(A)\right)\label{eq:graphing_forest_covering_condition}
			\end{equation}
			for every measurable $A\subseteq J$.\label{it:mcov}
			\item The graphing $G$ has an approximate packing of $k$ hyperfinite essential spanning forests if and only if
			\begin{equation}
				e(\cP)\geq k\left(\kappa(\cP)-\kappa(J)\right)\label{eq:graphing_base_packing_condition}
			\end{equation}
			for every finite measurable partition $\cP$ of $J$.\label{it:mpack}
		\end{enumerate}
	\end{thm}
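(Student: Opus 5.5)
Both parts reduce to the union rank formula for $M^{(k)}\coloneqq M_\rho\vee\dots\vee M_\rho$ ($k$ copies of the cycle matroid $M_\rho$ of $G$ on $(E,\Sigma_E,\eta)$). By \cref{thm:union_rank_formula},
\[
r_\vee(E)=\min_{X\subseteq E}\{\eta(E\setminus X)+k\rho(X)\}.
\]
By \cref{prop:finite_union_raw_description}, an approximate covering exists iff $E$ is independent in $M^{(k)}$. Indeed, every hyperfinite forest extends to a hyperfinite essential spanning forest, so coverings by forests and by essential spanning forests coincide. Thus \ref{it:mcov} is equivalent to $r_\vee(E)=\eta(E)$, that is, to $\eta(X)\le k\rho(X)$ for all $X\subseteq E$.

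Similarly, an approximate packing exists iff $r_\vee(E)=k\rho(E)$. One direction: disjoint forests $F_i$ with $\sum(\rho(E)-\eta(F_i))\le\varepsilon$ give a member of $\cF+\dots+\cF$ of measure $\ge k\rho(E)-\varepsilon$. Conversely, if $r_\vee(E)=k\rho(E)$, take a basis $I$ of $M^{(k)}$ and apply \cref{prop:finite_union_raw_description} to get disjoint forests whose union has measure $>k\rho(E)-\varepsilon$; since each has measure $\le\rho(E)$, the deficits sum to $<\varepsilon$. Thus \ref{it:mpack} is equivalent to $\eta(E\setminus X)\ge k(\rho(E)-\rho(X))$ for all $X\subseteq E$.

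It remains to show that these edge-set inequalities are equivalent to the vertex conditions. For necessity in \ref{it:mcov}, take $X=I(A)$ and use \eqref{eq:rank_induced_subgraph}. For \ref{it:mpack}, take $X=I(\cP)$ and use \eqref{eq:rank_partition_subgraph} with $A=J$ together with \eqref{eq:rank_whole_graphing}; this gives $\rho(E)-\rho(I(\cP))=\kappa(\cP)-\kappa(J)$. For sufficiency, given an arbitrary $X$, apply \cref{lem:rank_preserving_measurable_partition} (with $A=J$) to obtain a countable partition $(P_n)$ with $Y=\bigcup I(P_n)\supseteq X$ and $\rho(Y)=\rho(X)$. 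Then:
\begin{itemize}
\item In \ref{it:mcov}, $\eta(X)\le\eta(Y)=\sum_n i(P_n)\le k\sum_n(\nu(P_n)-\kappa(P_n))=k\rho(Y)=k\rho(X)$. Here $\rho(Y)=\sum_n\rho(I(P_n))$ because the components of $Y$ lie within classes, so the rank formula \eqref{eq:cycle_matroid_rank} splits as $\nu(J)-\sum_n\kappa(P_n)$.
\item In \ref{it:mpack}, $\eta(E\setminus X)\ge\eta(E\setminus Y)$ and $\rho(X)=\rho(Y)$, so it suffices to prove the inequality for $Y$. This is the countable-partition version of \eqref{eq:graphing_base_packing_condition}.
\end{itemize}

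The main obstacle is this passage from countable to finite partitions in \ref{it:mpack}. I would merge the tail: let $\cP_N\coloneqq(P_1,\dots,P_N,Q_N)$ with $Q_N=\bigcup_{n>N}P_n$. Then $e(\cP_N)\le\eta(E\setminus Y)$, since $I(\cP_N)\supseteq\bigcup_{n\le N}I(P_n)\cup\bigcup_{n>N}I(P_n)=Y$. Also $\kappa(\cP_N)\to\sum_n\kappa(P_n)$, because $\kappa(Q_N)\le\nu(Q_N)\to0$. Applying the hypothesis to $\cP_N$ and letting $N\to\infty$ gives $\eta(E\setminus Y)\ge k(\sum_n\kappa(P_n)-\kappa(J))=k(\rho(E)-\rho(Y))$. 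Routine measurability of $\kappa$ and of the partition sets is taken from the cited results.
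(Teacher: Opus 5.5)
Your proposal is correct and is essentially the paper's proof. Both arguments reduce the two statements to the union rank formula via \cref{prop:finite_union_raw_description}, use \cref{lem:rank_preserving_measurable_partition} to replace an arbitrary edge set $X$ by $Y=\bigcup_n I(P_n)$, and pass from countable to finite partitions in (b) by merging the tail $\bigcup_{n>N}P_n$. The differences are cosmetic:
\begin{itemize}
\item you derive necessity from the edge-set reformulation, whereas the paper argues directly from the forests;
\item in (a) you apply the lemma with $A=J$ instead of the endpoint set of $X$;
\item in (b) you use the direct bounds $e(\cP_N)\le\eta(E\setminus Y)$ and $\kappa(Q_N)\le\nu(Q_N)$, whereas the paper uses continuity from above of $\rho$.
\end{itemize}
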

	
	\begin{proof}
		Let $M_\vee$ denote the measurable union of $k$ copies of the cycle matroid of $G$, and let $r_\vee$ be its rank function.
		
		We first prove \ref{it:mcov}. Suppose that $G$ has an approximate covering by $k$ hyperfinite essential spanning forests. Let $A\subseteq J$ be measurable, and fix $\varepsilon>0$. Choose hyperfinite essential spanning forests $T_1,\ldots,T_k$ such that
		\begin{equation*}
			\eta\left(E\setminus(T_1\cup\cdots\cup T_k)\right)\leq\varepsilon.
		\end{equation*}
		Then
		\[
		i(A)
		\leq
		\varepsilon
		+
		\sum_{j=1}^k \eta(T_j\cap I(A))
		\leq
		\varepsilon
		+
		k\rho(I(A))
		=
		\varepsilon
		+
		k\left(\nu(A)-\kappa(A)\right),
		\]
		where the last equality follows from \eqref{eq:rank_induced_subgraph}.
		Letting $\varepsilon$ tend to $0$ gives \eqref{eq:graphing_forest_covering_condition}.
		
		Conversely, suppose that \eqref{eq:graphing_forest_covering_condition} holds. Let $X\subseteq E$ be measurable, and let $A_X$ be the set of endpoints of the edges in $X$, which is Borel
		by \cite[Theorem~18.2]{lovasz2012large}. Apply \cref{lem:rank_preserving_measurable_partition} to $A_X$ and $X$. Write $(P_n)_{n\in\bN}$ for the resulting partition and set
		\[
		Y\coloneqq\bigcup_{n\in\bN}I(P_n).
		\]
		The edge sets $I(P_n)$ are pairwise disjoint, and hence
		\[
		\eta(X)
		\leq \eta(Y)
		=
		\sum_{n\in\bN}i(P_n)
		\leq
		k\sum_{n\in\bN}\left(\nu(P_n)-\kappa(P_n)\right)
		=
		k\rho(Y)
		=
		k\rho(X),
		\]
		where the penultimate equality follows from the rank formula and countable additivity, and the last equality follows from \cref{lem:rank_preserving_measurable_partition}. By \cref{thm:union_rank_formula},
		\[
		r_\vee(E)=\min_{X\subseteq E}\left\{\eta(E\setminus X)+k\rho(X)\right\}\geq\eta(E).
		\]
		Since $r_\vee(E)\leq\eta(E)$, it follows that $r_\vee(E)=\eta(E)$. Thus, $E$ is independent in $M_\vee$. By \cref{prop:finite_union_raw_description}, for every $\varepsilon>0$ there exist pairwise disjoint hyperfinite forests $F_1,\dots,F_k$ such that $\bigcup_{i=1}^kF_i\subseteq E$ and $\eta(E\setminus\bigcup_{i=1}^kF_i)<\varepsilon$. For every $i\in[k]$, extend $F_i$ to a hyperfinite essential spanning forest $T_i$. Then $\eta(E\setminus\bigcup_{i=1}^kT_i)<\varepsilon$, so $G$ has the required approximate covering.
		
		We now prove \ref{it:mpack}. Suppose that $G$ has an approximate packing of $k$ hyperfinite essential spanning forests. Let $\cP$ be a finite measurable partition of $J$, and fix $\varepsilon>0$. Choose pairwise disjoint hyperfinite forests $F_1,\ldots,F_k$ such that
		\begin{equation*}
			\sum_{j=1}^k \left(\rho(E)-\eta(F_j)\right)\leq\varepsilon.
		\end{equation*}
		Let $Y\coloneqq I(\cP)$. For every $j \in [k]$, the set $F_j\cap Y$ is a hyperfinite forest contained in $Y$. Hence,
		\begin{equation*}
			\eta(F_j\cap Y)\leq\rho(Y).
		\end{equation*}
		Since the sets $F_1,\ldots,F_k$ are pairwise disjoint, we have
		\[
		e(\cP)
		\geq
		\sum_{j=1}^k\eta(F_j\setminus Y)
		=
		\sum_{j=1}^k\left(\eta(F_j)-\eta(F_j\cap Y)\right)
		\geq
		k\rho(E)-\varepsilon-k\rho(Y)
		=
		k\left(\kappa(\cP)-\kappa(J)\right)-\varepsilon,
		\]
		where the last equality follows from \eqref{eq:rank_whole_graphing} and \eqref{eq:rank_partition_subgraph}. Since $\varepsilon>0$ was arbitrary, \eqref{eq:graphing_base_packing_condition} follows.
		
		Conversely, suppose that \eqref{eq:graphing_base_packing_condition} holds for every finite measurable partition of $J$. Let $X\subseteq E$ be measurable. Apply \cref{lem:rank_preserving_measurable_partition} with $A=J$. Let $(P_i)_{i\in\bN}$ be the resulting partition and put
		\[
		Y\coloneqq\bigcup_{i\in\bN}I(P_i).
		\]
		Thus, $X\subseteq Y$ and $\rho(Y)=\rho(X)$. For $n\in\bN$, set
		\[
		R_n\coloneqq\bigcup_{i>n}P_i,
		\qquad
		\cP_n\coloneqq(P_1,\ldots,P_n,R_n),
		\]
		and let $Y_n\coloneqq I(\cP_n)$. Then
		$
		Y_{n+1} \subseteq Y_n
		$ for all $n \in \bN$ and $\bigcap_{n\in\bN}Y_n=Y.
		$
		Using \eqref{eq:rank_whole_graphing}, \eqref{eq:rank_partition_subgraph} and \eqref{eq:graphing_base_packing_condition},
		\[
		\eta(E\setminus Y_n)
		\geq
		k\left(\kappa(\cP_n)-\kappa(J)\right)
		=
		k\left(\rho(E)-\rho(Y_n)\right).
		\]
		Continuity from below of $\eta$ and the continuity from above of $\rho$ proved in \cref{cor:smooth} give, upon letting $n$ tend to infinity,
		\[
		\eta(E\setminus Y)
		\geq
		k\left(\rho(E)-\rho(Y)\right).
		\]
		Since $X\subseteq Y$ and $\rho(Y)=\rho(X)$, it follows that
		\begin{equation*}
			\eta(E\setminus X)+k\rho(X)\geq k\rho(E).
		\end{equation*}
		By \cref{thm:union_rank_formula},
		\[
		r_\vee(E)=\min_{X\subseteq E}\left\{\eta(E\setminus X)+k\rho(X)\right\}\geq k\rho(E).
		\]
		Taking $X=E$ in the minimum gives the reverse inequality, so $r_\vee(E)=k\rho(E)$. Choose a basis $I$ of $M_\vee$. Then $\eta(I)=k\rho(E)$. By \cref{prop:finite_union_raw_description}, for every $\varepsilon>0$ there exist pairwise disjoint hyperfinite forests $F_1,\dots,F_k$ such that $\bigcup_{i=1}^kF_i\subseteq I$ and $\eta(I\setminus\bigcup_{i=1}^kF_i)<\varepsilon$. Consequently,
		\[
		\sum_{i=1}^k\left(\rho(E)-\eta(F_i)\right)=k\rho(E)-\eta\big(\bigcup_{i=1}^kF_i\big)=\eta\big(I\setminus\bigcup_{i=1}^kF_i\big)<\varepsilon.
		\]
		Thus, $G$ has the required approximate packing.
	\end{proof}
	
	\Cref{thm:graphing_packing_covering} characterizes approximate decompositions. Under a strengthening of the corresponding rank inequalities, \cref{thm:union_attainment_expansion} gives exact decompositions. For coverings, we apply it to $k$ copies of a suitable truncation of the cycle matroid, while for packings we apply it directly to $k$ copies of the cycle matroid.
	
	\begin{cor}\label{cor:graphing_exact_packing_covering}
		Let $G=(J,\Sigma_J,\nu,E)$ be a graphing and let $k\in\bN$. Then the following hold.
		\begin{enumerate}[label=(\alph*),itemsep=0em]
			\item Suppose that there exists $\varepsilon>0$ such that, for every $A\in\Sigma_J$ and every finite measurable partition $\cP$ of $A$,
			\[k\left(\nu(A)-\kappa(\cP)\right)\geq\min\left\{\eta(E),(1+\varepsilon)i(\cP)\right\}.\]
			Then $E$ can be partitioned, up to a null set, into $k$ hyperfinite forests. Consequently, $G$ admits a covering by $k$ hyperfinite essential spanning forests. \label{it:graphinga}
			\item Suppose that
			\[k\left(1-\kappa(J)\right)\leq\eta(E)\]
			and that there exists $\varepsilon>0$ such that, for every $A\in\Sigma_J$ and every finite measurable partition $\cP$ of $A$,
			\[k\left(\nu(A)-\kappa(\cP)\right)\geq\min\left\{k\left(1-\kappa(J)\right),(1+\varepsilon)i(\cP)\right\}.\]
			Then $G$ contains $k$ pairwise disjoint hyperfinite essential spanning forests. \label{it:graphingb}
		\end{enumerate}
	\end{cor}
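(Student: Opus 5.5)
The plan is to deduce both parts from \cref{thm:union_attainment_expansion}, and \cref{cor:union_packing_covering} built on it. The only real work is translating the hypotheses on vertex partitions into the rank inequality \eqref{eq:union_attainment_expansion} for arbitrary edge sets $X\subseteq E$. The translation tool is \cref{lem:rank_preserving_measurable_partition}. Given $X\subseteq E$, apply it with $A$ the endpoint set of $X$ (or $A=J$). This gives a countable partition $(P_n)_{n\in\bN}$ of $A$ such that $Y=\bigcup_n I(P_n)$ satisfies $X\subseteq Y$ and $\rho(Y)=\rho(X)$.

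Truncating to the finite partitions $\cP_n=(P_1,\dots,P_n,R_n)$ of $A$, as in the proof of \cref{thm:graphing_packing_covering}, gives $I(\cP_n)\supseteq Y$ with $I(\cP_n)$ decreasing to $Y$. By \eqref{eq:rank_partition_subgraph}, $\rho(I(\cP_n))=\nu(A)-\kappa(\cP_n)$. Continuity from above of $\rho$ (\cref{cor:smooth}) and of $\eta$ then yields
\[
\lim_{n\to\infty}\bigl(\nu(A)-\kappa(\cP_n)\bigr)=\rho(Y)=\rho(X),
\qquad
\lim_{n\to\infty} i(\cP_n)=\eta(Y)\geq\eta(X).
\]
So the hypothesis in either part, applied to $\cP_n$ and passed to the limit (the minimum is continuous), gives
\[
k\rho(X)\geq\min\{c,(1+\varepsilon)\eta(X)\}
\]
for every $X\subseteq E$. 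Here $c=\eta(E)$ in \ref{it:graphinga} and $c=k(1-\kappa(J))=k\rho(E)$ in \ref{it:graphingb}, by \eqref{eq:rank_whole_graphing}.

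For \ref{it:graphingb}, take $M_1=\dots=M_k$ to be the cycle matroid. The displayed inequality is exactly \eqref{eq:union_attainment_expansion} with $A=E$, since $\sum_i r_i(E)=k\rho(E)$. The assumption $k\rho(E)\leq\eta(E)$ then puts us in case (b) of \cref{cor:union_packing_covering}, which gives pairwise disjoint bases, i.e.\ disjoint hyperfinite essential spanning forests.

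For \ref{it:graphinga}, applying the theorem to untruncated copies could fail, since $\min\{\eta(E),\cdot\}$ need not match $\min\{k\rho(E),\cdot\}$. So I would use the truncation suggested in the text: let $g\coloneqq\eta(E)/k$ and take $k$ copies of the $g$-truncation $M_g$, whose rank is $\min\{\rho,g\}$ by \cref{prop:truncation}. Then $\sum_i r_i(X)=\min\{k\rho(X),\eta(E)\}$, which is at least $\min\{\eta(E),(1+\varepsilon)\eta(X)\}$ by the inequality above, and $\sum_i r_i(E)=\min\{k\rho(E),\eta(E)\}$.

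I also need $\sum_i r_i(E)\geq\eta(E)$ to land in case (a) of \cref{cor:union_packing_covering}. For this, apply the derived inequality to $X=E$: either $k\rho(E)\geq\eta(E)$ directly, or $k\rho(E)\geq(1+\varepsilon)\eta(E)\geq\eta(E)$. Hence $\sum_i r_i(E)=\eta(E)$, and we obtain disjoint $F_i$ independent in $M_g$, hence hyperfinite forests, covering $E$ up to a null set. Extending each $F_i$ to a basis gives the covering by essential spanning forests.

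The main obstacle I expect is the limiting step. One must verify that $\eta(I(\cP_n))\to\eta(Y)$, which holds because $I(\cP_n)\setminus Y$ consists of edges inside $R_n$ and $R_n$ decreases to $\emptyset$. One must also check that the hypothesis is stated for finite partitions of the same $A$, which holds since $\cP_n$ partitions $A$.
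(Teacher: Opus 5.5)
Your proposal is correct and matches the paper's proof essentially step for step. Both use \cref{lem:rank_preserving_measurable_partition} together with the finite partitions $\cP_n$ and continuity from above to obtain $k\rho(X)\geq\min\{C,(1+\varepsilon)\eta(X)\}$, truncate to rank $\eta(E)/k$ in part (a), and use untruncated copies of the cycle matroid in part (b). The only cosmetic difference is that you cite \cref{cor:union_packing_covering}, whereas the paper applies \cref{thm:union_attainment_expansion} directly and repeats the short measure-counting argument that each $F_i$ is a basis.
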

	
	\begin{proof}
		Let $M$ denote the cycle matroid of $G$. We first record a consequence of \cref{lem:rank_preserving_measurable_partition} that will be used in both parts. Fix $\varepsilon>0$ and $C\geq0$, and suppose that
		\begin{equation}\label{eq:graphing_partition_expansion}
			k\left(\nu(A)-\kappa(\cP)\right)
			\geq
			\min\left\{C,(1+\varepsilon)i(\cP)\right\}
		\end{equation}
		for every measurable $A\subseteq J$ and every finite measurable partition $\cP$ of $A$. We claim that
		\begin{equation}\label{eq:graphing_edge_expansion}
			k\rho(X)
			\geq
			\min\left\{C,(1+\varepsilon)\eta(X)\right\}
		\end{equation}
		for every measurable $X\subseteq E$.
		
		Let $A_X$ be the set of endpoints of the edges in $X$. As above, $A_X$ is measurable. Apply \cref{lem:rank_preserving_measurable_partition} to $A_X$ and $X$. Let $(P_i)_{i\in\bN}$ be the resulting partition and put
		\[
		Y\coloneqq\bigcup_{i\in\bN}I(P_i).
		\]
		For $n\in\bN$, set
		\[
		R_n\coloneqq\bigcup_{i>n}P_i,
		\qquad
		\cP_n\coloneqq(P_1,\ldots,P_n,R_n),
		\]
		and let $Y_n\coloneqq I(\cP_n)$. Then
		$
		Y_{n+1} \subseteq Y_n
		$ for all $n \in \bN$ and $\bigcap_{n\in\bN}Y_n=Y.
		$ By \eqref{eq:rank_partition_subgraph} and \eqref{eq:graphing_partition_expansion},
		\[
		k\rho(Y_n)
		\geq
		\min\left\{C,(1+\varepsilon)\eta(Y_n)\right\}.
		\]
		Since $\rho$ is continuous from above by \cref{cor:smooth} and $\eta$ is
		also continuous from above, letting $n$ tend to infinity gives
		\[
		k\rho(Y)
		\geq
		\min\left\{C,(1+\varepsilon)\eta(Y)\right\}.
		\]
		Since $X\subseteq Y$ and $\rho(Y)=\rho(X)$, this implies \eqref{eq:graphing_edge_expansion}.
		
		Suppose first that the hypothesis in \ref{it:graphinga} holds. Applying the preceding observation with $C=\eta(E)$ gives
		\[
		k\rho(X)\geq\min\left\{\eta(E),(1+\varepsilon)\eta(X)\right\}
		\]
		for every $X\in\Sigma_E$. Taking $X=E$ gives
		$
		k\rho(E)\geq\eta(E).
		$
		Set $t\coloneqq\eta(E)/k$, and let $M'\coloneqq(M)_t$ be the $t$-truncation of $M$. Its rank function is $r'(X)=\min\{t,\rho(X)\}$, and its rank is $t$. For every $X\in\Sigma_E$, we have
		\[kr'(X)=\min\left\{\eta(E),k\rho(X)\right\}\geq\min\left\{\eta(E),(1+\varepsilon)\eta(X)\right\}.\]
		Thus, \cref{thm:union_attainment_expansion}, applied to $k$ copies of $M'$, gives pairwise disjoint sets $F_1,\dots,F_k$ that are independent in $M'$ and satisfy
		\[\eta\big(\bigcup_{i=1}^kF_i\big)=\eta(E).\]
		Each $F_i$ is a hyperfinite forest, and the sets cover $E$ up to a null set. Hence, they form the asserted partition. Extending each $F_i$ to a hyperfinite essential spanning forest gives the required covering.
		
		Now suppose that the hypotheses in \ref{it:graphingb} hold. By \eqref{eq:rank_whole_graphing}, the first hypothesis says that $k\rho(E)\leq\eta(E)$. Applying the preceding observation with $C=k\rho(E)$ gives
		\[k\rho(X)\geq\min\left\{k\rho(E),(1+\varepsilon)\eta(X)\right\}\]
		for every $X\in\Sigma_E$. Therefore, \cref{thm:union_attainment_expansion}, applied to $k$ copies of $M$, gives pairwise disjoint hyperfinite forests $F_1,\dots,F_k$ such that
		\[\eta\big(\bigcup_{i=1}^kF_i\big)=\min\left\{\eta(E),k\rho(E)\right\}=k\rho(E).\]
		Since $\eta(F_i)\leq\rho(E)$ for every $i\in[k]$, equality holds for every $i$. Thus, each $F_i$ is a basis of the cycle matroid and hence a hyperfinite essential spanning forest.
	\end{proof}
	
	\begin{rem}
		For two graphings $G$ and $H$ on the same probability space, write $G\sim H$ if they induce the same connectedness relation. The \emph{cost} of $G$ is defined by
		\[
		\cost(G)\coloneqq\inf\{\eta_H(E(H))\mid H\sim G\}.
		\]
		Thus, cost depends only on the measured equivalence relation generated by $G$, whereas the covering and packing thresholds in \cref{thm:graphing_packing_covering} depend on the chosen graphing. Cost is closely related to $\ell^2$-Betti numbers and measured isoperimetry; see~\cite{gaboriau2002invariants,pichot2010cout}.
		
		For every measurable edge set $F\subseteq E$, we have
		\[
		\rho_G(F)\leq\cost(G_F)\leq\eta_G(F),
		\]
		where $G_F$ denotes the subgraphing with edge set $F$; see~\cite{berczi2026cycle}. Consequently, an approximate covering of $E$ by $k$ hyperfinite essential spanning forests implies
		\[
		\cost(G)\leq\eta_G(E)\leq k\rho_G(E).
		\]
		If $G$ is a treeing, then $\cost(G)=\eta_G(E)$ by a result of Gaboriau~\cite{gaboriau2000cout}, so an approximate packing of $k$ hyperfinite essential spanning forests implies
		\[
		k\rho_G(E)\leq\cost(G).
		\]
	\end{rem}
	
	%%%%%%%%%%%%%%%%%%%%%%%%%%%%%%%%
	\subsection{Basis Exchanges}
	\label{sec:exchanges}
	%%%%%%%%%%%%%%%%%%%%%%%%%%%%%%%%
	
	We conclude this section with a measurable analogue of a theorem of Greene and Magnanti~\cite{greene1975some}. In the finite setting, the theorem is as follows.
	
	\begin{thm}[Greene and Magnanti]\label{thm:greene_magnanti_finite}
		Let $M$ be a finite matroid, and let $B_1,B_2$ be bases of $M$. If $B_1=X_1\cup\dots\cup X_q$ is a partition of $B_1$, then there exists a partition $B_2=Y_1\cup\dots\cup Y_q$ such that $(B_1\setminus X_i)\cup Y_i$ is a basis for every $i\in[q]$.
	\end{thm}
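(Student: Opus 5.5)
This is the classical finite Greene--Magnanti theorem, which the paper cites rather than proves. The plan below derives it from the finite matroid union theorem of Edmonds--Fulkerson, which \cref{thm:union_rank_formula} contains as the finite case with counting measure.

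\emph{Reduction to a union problem.} For $i\in[q]$, let $N_i\coloneqq (M/(B_1\setminus X_i))|B_2'$, where $B_2'\coloneqq B_2\setminus(B_1\setminus X_i)$, and keep track of $B_1\cap B_2$ separately. Cleaner still, work on the ground set $B_2$ and set
\[
N_i\coloneqq \bigl(M/(B_1\setminus X_i)\bigr)\big|\bigl(B_2\setminus(B_1\setminus X_i)\bigr),
\]
extended by loops on $B_2\cap(B_1\setminus X_i)$. Then $Y_i\subseteq B_2$ with $|Y_i|=|X_i|$ makes $(B_1\setminus X_i)\cup Y_i$ a basis exactly when $Y_i$ is a basis of $N_i$; here the elements of $B_1\cap B_2$ lying in $X_i$ go naturally into $Y_i$. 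Each $N_i$ has rank $|X_i|$, since $B_2$ spans $M$. The ranks sum to $|B_2|$, so the required partition exists if and only if $B_2$ is independent in $N_1\vee\dots\vee N_q$.

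\emph{Verifying the rank condition.} By the union rank formula it suffices to show that for every $Z\subseteq B_2$,
\[
|B_2\setminus Z|+\sum_i r_{N_i}(Z)\ge |B_2|, \qquad\text{equivalently}\qquad \sum_i r_{N_i}(Z)\ge |Z|.
\]
Write $r_{N_i}(Z)=r(Z\cup(B_1\setminus X_i))-(r-|X_i|)$. Submodularity gives
\[
r(Z\cup(B_1\setminus X_i))-|B_1\setminus X_i| \ge r(Z\cup B_1\cup\cdots)\ \dots
\]
More precisely, summing over $i$ and using the generalized submodular inequality (\cref{cor:generalized_submodular_inequality}) applied to the sets $Z\cup(B_1\setminus X_i)$, whose multiset of level sets is $Z\cup B_1$ taken $q-1$ times together with $Z$ once, yields
\[
\sum_i r(Z\cup(B_1\setminus X_i))\ge (q-1)\,r(Z\cup B_1)+r(Z)=(q-1)r+|Z|.
\]
The inequality here goes the wrong way for submodularity, so the decomposition must be chosen carefully. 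The correct version chains the inequality
\[
r(Z\cup(B_1\setminus X_i))+r(Z\cup(B_1\setminus(X_1\cup\dots\cup X_{i-1}))\setminus\dots)
\]
inductively, peeling off one $X_i$ at a time. Let $W_j\coloneqq B_1\setminus(X_1\cup\dots\cup X_j)$. Submodularity applied to $Z\cup W_{j-1}$ and $Z\cup(B_1\setminus X_j)$ gives
\[
r(Z\cup(B_1\setminus X_j))\ge r(Z\cup W_{j-1})+r(Z\cup B_1)-\ \text{(a term)}\ ,
\]
which rearranges to
\[
r(Z\cup(B_1\setminus X_j))-|B_1\setminus X_j|\ \ge\ r(Z\cup W_j)-|W_j|-\bigl(r(Z\cup W_{j-1})-|W_{j-1}|\bigr)+\ \dots
\]
Summing over $j$ telescopes to
\[
\sum_j r_{N_j}(Z)\ \ge\ r(Z\cup W_q)-|W_q| = r(Z)=|Z|,
\]
since $W_q=\emptyset$ and $Z\subseteq B_2$ is independent.

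The main obstacle is getting this telescoping submodular chain right. The inequality needed at each step is
\[
r(Z\cup(B_1\setminus X_j))+r(Z\cup W_{j-1}) \le\ \dots,
\]
with the intersection $Z\cup W_j$ and the union $Z\cup B_1$ of full rank, and one must check its direction against the sign of $r_{N_j}$ carefully. Once the inequality holds, the finite union theorem gives disjoint independent sets $Y_i$ of $N_i$ covering $B_2$. Cardinality counting then forces each $Y_i$ to be a basis of $N_i$, which proves the claim.
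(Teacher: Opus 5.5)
Your reduction is the one the paper uses to prove the measurable version, \cref{thm:measurable_greene_magnanti}, and that proof is just the finite argument. You contract the part of $B_1$ that must survive, take the union of the contracted matroids on (part of) $B_2$, check $\sum_i r_{N_i}(Z)\ge|Z|$ by the generalized submodular inequality, and finish by counting. The paper contracts $B_1\setminus(X_i\setminus B_2)$ and distributes only $B_2\setminus B_1$. You instead contract $B_1\setminus X_i$ and make $B_2\cap(B_1\setminus X_i)$ loops of $N_i$. The two bookkeepings are equivalent.

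The flaw is that you talk yourself out of the correct key step. Your ``More precisely'' computation is right, and it is exactly the paper's. \cref{cor:generalized_submodular_inequality} states $\widehat r\bigl(\sum_i\mathbf 1_{A_i}\bigr)\le\sum_i r(A_i)$, which is a \emph{lower} bound on a sum of ranks, and that is the direction you need. With $A_i=Z\cup(B_1\setminus X_i)$ we have $\sum_i\mathbf 1_{A_i}=(q-1)\mathbf 1_{Z\cup B_1}+\mathbf 1_Z$, hence
\[
\sum_{i=1}^q r_{N_i}(Z)=\sum_{i=1}^q r\bigl(Z\cup(B_1\setminus X_i)\bigr)-(q-1)|B_1|\ \ge\ (q-1)\,r(Z\cup B_1)+r(Z)-(q-1)|B_1|=|Z|.
\]
Nothing goes the wrong way here.

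The telescoping chain you substitute can also be made to work, but as written it contains placeholders and reversed inequalities; your last display should read $\ge$. The correct step is submodularity for $Z\cup W_{j-1}$ and $Z\cup(B_1\setminus X_j)$. Their intersection is $Z\cup W_j$ and their union is $Z\cup B_1$, because $X_j\subseteq W_{j-1}$, so
\[
r(Z\cup W_{j-1})+r\bigl(Z\cup(B_1\setminus X_j)\bigr)\ \ge\ r(Z\cup W_j)+|B_1|.
\]
Set $\phi(W)\coloneqq r(Z\cup W)-|W|$ and use $|X_j|=|W_{j-1}|-|W_j|$. Then the display says $r_{N_j}(Z)\ge\phi(W_j)-\phi(W_{j-1})$. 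Summing over $j$ gives $\phi(\emptyset)-\phi(B_1)=|Z|-0$; you dropped the boundary term $\phi(B_1)$, which happens to vanish. This is just the generalized inequality unrolled into $q-1$ ordinary submodular steps. Keep either version, but delete your final paragraph, which leaves the crux declared unresolved.

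A minor point: \cref{thm:union_rank_formula} is stated for atomless ground spaces, so it does not literally contain the finite case. Cite the finite Edmonds--Fulkerson theorem directly. You also need its exactness (no upward closure) to obtain an honest partition of $B_2$.
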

	
	In the measurable setting, the same argument gives an approximate statement. The loss comes from the upward closure in the definition of measurable matroid union.
	
	\begin{thm}\label{thm:measurable_greene_magnanti}
		Let $M=(J,\Sigma,\mu,\cF)$ be a measurable matroid, and let $B_1,B_2$ be bases of $M$. Let $B_1=X_1\cup\dots\cup X_q$ be a measurable partition of $B_1$. Then, for every $\varepsilon>0$, there exists a measurable partition $B_2=Y_0\cup Y_1\cup\dots\cup Y_q$ such that $\mu(Y_0)\leq\varepsilon$ and $(B_1\setminus X_i)\cup Y_i\in\cF$ for every $i\in[q]$. In particular, each set $(B_1\setminus X_i)\cup Y_i$ can be extended to a basis after adding a set of measure at most $\varepsilon$.
	\end{thm}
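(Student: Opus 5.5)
We follow the classical union-based proof of Greene and Magnanti. For each $i\in[q]$ consider the contraction $N_i\coloneqq M/(B_1\setminus X_i)$ on the ground space $J\setminus(B_1\setminus X_i)$, and extend it to all of $J$ by declaring $B_1\setminus X_i$ to consist of loops. Concretely, let $N_i$ be the direct sum of $M/(B_1\setminus X_i)$ with the zero matroid on $B_1\setminus X_i$; by \cref{prop:contraction} and \cref{prop:direct_sum}, this is a measurable matroid on $(J,\Sigma,\mu)$. By \cref{prop:contraction_char}, applied with the maximal independent set $B_1\setminus X_i$ of $M|(B_1\setminus X_i)$, a set $Y\subseteq B_2$ is independent in $N_i$ exactly when $\mu(Y\cap(B_1\setminus X_i))=0$ and $(B_1\setminus X_i)\cup Y\in\cF$. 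The rank function is $r_{N_i}(Z)=r(Z\cup(B_1\setminus X_i))-\mu(B_1\setminus X_i)$.

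We then restrict attention to $B_2$ and compute the rank of the union $N_1\vee\dots\vee N_q$ on $B_2$ via \cref{thm:union_rank_formula}. We must show that for every $X\subseteq B_2$,
\[
\mu(B_2\setminus X)+\sum_{i=1}^q r_{N_i}(X)\geq\mu(B_2).
\]
Since $X\subseteq B_2$ is independent, this amounts to $\sum_i\bigl(r(X\cup(B_1\setminus X_i))-\mu(B_1\setminus X_i)\bigr)\geq\mu(X)$. This is the measurable version of the finite submodularity computation. We use the generalized submodular inequality \cref{cor:generalized_submodular_inequality} for $r$ with the sets $A_i\coloneqq X\cup(B_1\setminus X_i)$ and weights $1$. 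The function $\sum_i\mathbf 1_{A_i}$ equals $q$ on $X\cup\bigl(B_1\cap\bigcap_i(B_1\setminus X_i)\bigr)$, which is just $X$ on $B_1$ up to null sets since the $X_i$ partition $B_1$. It equals $q-1$ on $B_1\setminus X$ and at least $q-1$ on $X$. Hence the upper level sets contain $X\cup B_1$ at level $q-1$ and $X$ at level $q$. Monotonicity of $\widehat r$ then gives
\[
\sum_i r(A_i)\geq (q-1)r(X\cup B_1)+r(X)=(q-1)\mu(B_1)+\mu(X).
\]
Here we used that $B_1$ is a basis, so $r(X\cup B_1)=\mu(B_1)$. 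Subtracting $\sum_i\mu(B_1\setminus X_i)=(q-1)\mu(B_1)$ gives the required inequality. So the union rank on $B_2$ equals $\mu(B_2)$, i.e.\ $B_2$ is independent in $N_1\vee\dots\vee N_q$.

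Next we apply \cref{prop:finite_union_raw_description}: for the given $\varepsilon$ there are pairwise disjoint $Y_i\in\cF(N_i)$ with $Y_i\subseteq B_2$ and $\mu(B_2\setminus\bigcup_iY_i)<\varepsilon$. Set $Y_0\coloneqq B_2\setminus\bigcup_iY_i$. By the characterization above, $(B_1\setminus X_i)\cup Y_i\in\cF$ (the null overlap with $B_1\setminus X_i$ is harmless). This set has measure $\mu(B_1)-\mu(X_i)+\mu(Y_i)$, and it can be extended to a basis. The final quantitative claim needs $\mu(Y_i)\geq\mu(X_i)-\varepsilon$. We get this from summation: $\sum_i\mu(Y_i)\geq\mu(B_2)-\varepsilon=\sum_i\mu(X_i)-\varepsilon$, while each $\mu(Y_i)\leq r_{N_i}(B_2)\leq\mu(X_i)$. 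Therefore each deficit $\mu(X_i)-\mu(Y_i)$ is nonnegative, and their total is at most $\varepsilon$.

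The main obstacle is the level-set bookkeeping in the generalized submodular inequality. One must handle the parts of $X$ that meet $B_1$ and the null-set identifications, and confirm that the Choquet--Lovász extension of $\sum_i\mathbf 1_{A_i}$ dominates $(q-1)r(X\cup B_1)+r(X)$. If this is awkward, the fallback is a direct induction on $q$ using the two-set submodular inequality, merging $A_1\cap A_2$ and $A_1\cup A_2$ step by step.
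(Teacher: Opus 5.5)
Your proposal is correct and follows essentially the paper's argument: contract, apply the union rank formula of \cref{thm:union_rank_formula} together with the generalized submodular inequality, then use the approximate decomposition of \cref{prop:finite_union_raw_description}. The only difference is bookkeeping. The paper contracts $A_i=B_1\setminus(X_i\setminus B_2)$ and takes the union of the restrictions to $B_2\setminus B_1$, so no loop or direct-sum device is needed, and automatically $X_i\cap B_2\subseteq Y_i$ and $Y_0\subseteq B_2\setminus B_1$. You instead contract $B_1\setminus X_i$, add it back as loops, and let the union distribute all of $B_2$, which is equally valid for the stated conclusion.
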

	
	\begin{proof}
		Let $r$ be the rank function of $M$, and set $R\coloneqq r(J)=\mu(B_1)=\mu(B_2)$. Let $C\coloneqq B_1\cap B_2$ and $D\coloneqq B_2\setminus B_1$. For each $i\in[q]$, define
		\[
		A_i\coloneqq (B_1\setminus X_i)\cup(X_i\cap B_2)=B_1\setminus(X_i\setminus B_2).
		\]
		Then $A_i$ is independent. For each $i\in[q]$, consider the contraction of $M$ by $A_i$, restricted to $D$: let $M_i\coloneqq(M/A_i)|D$, and let $r_i$ be its rank function. Since $A_i$ is independent, the contraction formula gives $r_i(Z)=r(A_i\cup Z)-\mu(A_i)$ for every $Z\subseteq D$.
		
		Consider the measurable union of the matroids $M_1,\dots,M_q$ on $D$, and let $r_\vee$ denote its rank function. By \Cref{thm:union_rank_formula},
		\[
		r_\vee(D)=\min_{Z\subseteq D}\left\{\mu(D\setminus Z)+\sum_{i=1}^q r_i(Z)\right\}.
		\]
		We claim that $\sum_{i=1}^q r_i(Z)\geq\mu(Z)$ for every $Z\subseteq D$. Fix $Z\subseteq D$, and set $U\coloneqq B_1\cup Z$. By \cref{cor:generalized_submodular_inequality},
		\[
		\sum_{i=1}^q r(A_i\cup Z)=\sum_{i=1}^q\widehat r(\mathbf 1_{A_i\cup Z})\geq\widehat r\left(\sum_{i=1}^q\mathbf 1_{A_i\cup Z}\right)=\widehat r\left((q-1)\mathbf 1_U+\mathbf 1_{C\cup Z}\right)=(q-1)r(U)+r(C\cup Z),
		\]
		since the sets $X_i\setminus B_2$ are pairwise disjoint and form a partition of $B_1\setminus B_2$, while $C\cup Z\subseteq U$. Since $B_1$ is a basis and $B_1\subseteq U$, we have $r(U)=R$. Moreover, $C\cup Z\subseteq B_2$, so $C\cup Z$ is independent, and hence $r(C\cup Z)=\mu(C)+\mu(Z)$. On the other hand,
		\[
		\sum_{i=1}^q\mu(A_i)=\sum_{i=1}^q\left(R-\mu(X_i\setminus B_2)\right)=qR-\mu(B_1\setminus B_2)=(q-1)R+\mu(C).
		\]
		Together with the contraction formula, this gives
		\[
		\sum_{i=1}^q r_i(Z)=\sum_{i=1}^q\left(r(A_i\cup Z)-\mu(A_i)\right)\geq\mu(Z).
		\]
		
		Hence, every term inside the minimum is at least $\mu(D\setminus Z)+\mu(Z)=\mu(D)$. The reverse inequality follows from $r_\vee(D)\leq\mu(D)$, so $r_\vee(D)=\mu(D)$. Thus, $D$ is independent in the union matroid. By \cref{prop:finite_union_raw_description}, there exist pairwise disjoint independent sets $Z_1,\dots,Z_q$ in $M_1,\dots,M_q$, respectively, such that $D'\coloneqq\bigcup_{i=1}^qZ_i\subseteq D$ and $\mu(D\setminus D')<\varepsilon$.
		
		Define $Y_0\coloneqq D\setminus D'$ and $Y_i\coloneqq(X_i\cap B_2)\cup Z_i$ for $i\in[q]$. Then $B_2=Y_0\cup Y_1\cup\dots\cup Y_q$ is a measurable partition and $\mu(Y_0)<\varepsilon$. Since each $Z_i$ is independent in $M_i=(M/A_i)|D$, we have $A_i\cup Z_i\in\cF$. Thus,
		\[
		(B_1\setminus X_i)\cup Y_i=(B_1\setminus X_i)\cup(X_i\cap B_2)\cup Z_i=A_i\cup Z_i\in\cF
		\]
		for every $i\in[q]$.
		
		It remains to prove the last assertion. Since $A_i\cup Z_i$ is independent, we have $\mu(Z_i)\leq R-\mu(A_i)=\mu(X_i\setminus B_2)$. Hence,
		\[
		\sum_{i=1}^q\left(\mu(X_i\setminus B_2)-\mu(Z_i)\right)=\mu(B_1\setminus B_2)-\mu(D')=\mu(D)-\mu(D')=\mu(Y_0)<\varepsilon.
		\]
		Therefore, each set $(B_1\setminus X_i)\cup Y_i$ is independent and has measure at least $R-\varepsilon$. Extending it to a maximal independent set gives a basis, and the added set has measure at most $\varepsilon$.
	\end{proof}
	
	The case $q=2$ of \cref{thm:greene_magnanti_finite} is the multiple symmetric basis exchange property, independently observed by Greene~\cite{greene1973multiple}, Brylawski~\cite{brylawski1973some}, and Woodall~\cite{woodall1974exchange}: if $A$ and $B$ are bases of a finite matroid, then for every $X\subseteq A$ there exists $Y\subseteq B$ such that both
	\[
	(A\setminus X)\cup Y
	\qquad\text{and}\qquad
	(B\setminus Y)\cup X
	\]
	are bases. Since exchanging individual elements is not meaningful in the measurable setting, we refer to the corresponding measurable statement simply as the \emph{symmetric basis exchange property}. We now show that this exact statement can fail for measurable matroids. In particular, the exceptional set $Y_0$ in \cref{thm:measurable_greene_magnanti} cannot in general be omitted, even when $q=2$.
	
	Let $I_1,I_2,I_3$ be three disjoint copies of $[0,1]$, and write $[0,t]_i$ for the copy of $[0,t]$ inside $I_i$. Set
	\[
	J\coloneqq I_1\cup I_2\cup I_3.
	\]
	Let $\mu$ be the measure on $J$ which is Lebesgue measure on $I_1$ and one half of Lebesgue measure on each of $I_2$ and $I_3$. Thus,
	\[
	\mu(I_1)=1
	\qquad\text{and}\qquad
	\mu(I_2)=\mu(I_3)=\frac{1}{2}.
	\]
	Define
	\begin{equation}\label{eq:symmetric_exchange_counterexample}
		\cF\coloneqq
		\left\{
		F\in\Sigma
		\,\middle|\,
		\lambda(F\cap [0,t]_1)
		+\tfrac{1}{2}\lambda(F\cap [0,t]_2)
		+\tfrac{1}{2}\lambda(F\cap [0,t]_3)
		\leq t
		\text{ for every } t\in[0,1]
		\right\}.
	\end{equation}
	
	\begin{prop}\label{prop:symmetric_exchange_fails}
		If $\cF$ is defined by \eqref{eq:symmetric_exchange_counterexample}, then $M=(J,\Sigma,\mu,\cF)$ is a measurable matroid for which the measurable symmetric basis exchange property fails.
	\end{prop}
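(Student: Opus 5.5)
The plan is to recognise $M$ as a measurable nested matroid, and then to exhibit two bases and a set $X$ for which any symmetric exchange partner $Y$ would have to realise the interval-halving profile excluded by \cref{lem:halving_measure}.

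\textbf{Step 1: $M$ is a measurable nested matroid.} For $t\in[0,1]$ set $C_t\coloneqq[0,t]_1\cup[0,t]_2\cup[0,t]_3$. By the definition of $\mu$,
\[
\mu(F\cap C_t)=\lambda(F\cap[0,t]_1)+\tfrac12\lambda(F\cap[0,t]_2)+\tfrac12\lambda(F\cap[0,t]_3),
\]
so \eqref{eq:symmetric_exchange_counterexample} says exactly that $\mu(F\cap C_t)\le t$ for every $t$. The map $t\mapsto C_t$ is increasing, continuous in measure, and satisfies $C_0=\emptyset$ (up to a null set) and $C_1=J$. By the remark after \cref{cor:nested_rank_minimum}, $\cC=\{C_t\}$ is therefore a closed chain. \Cref{thm:nested_matroids} with $b(C_t)=t$ then shows that $M$ is a measurable matroid. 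Its rank is $r(J)=1$: it is at most $b(C_1)=1$, and the basis exhibited below attains it. Hence the bases are precisely the sets $B$ with $\mu(B)=1$ and $\mu(B\cap C_t)\le t$ for all $t$.

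\textbf{Step 2: the two bases and the exchange set.} Take $A\coloneqq I_2\cup I_3$ and $B\coloneqq I_1$. Both satisfy $\mu(\,\cdot\,\cap C_t)=t$ for every $t$, and both have measure $1$, so both are bases. Take $X\coloneqq I_2\subseteq A$, and suppose that $Y\subseteq B$ makes both $(A\setminus X)\cup Y$ and $(B\setminus Y)\cup X$ bases. Write $x(t)\coloneqq\mu(X\cap C_t)=t/2$ and $y(t)\coloneqq\mu(Y\cap C_t)=\lambda(Y\cap[0,t]_1)$.
\begin{itemize}
\item Since $X\subseteq A$ and $Y$ is disjoint from $A$, the first basis gives $t-x(t)+y(t)\le t$, that is, $y\le x$.
\item Symmetrically, the second basis gives $t-y(t)+x(t)\le t$, that is, $x\le y$.
\end{itemize}
(Equality of total measures, $\mu(Y)=\mu(X)=1/2$, is also forced, but we will not need it separately.) Hence $\lambda(Y\cap[0,t]_1)=t/2$ for every $t\in[0,1]$. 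Identifying $I_1$ with $[0,1]$, this contradicts \cref{lem:halving_measure}. So no such $Y$ exists, and the symmetric basis exchange property fails.

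\textbf{Where the difficulty lies.} The main point is choosing the configuration correctly. With $X$ taken inside $I_1$ the argument fails: the profile $x$ then has slope in $\{0,1\}$, and it can always be matched by placing copies of $X$ in both $I_2$ and $I_3$. The obstruction only appears when $X$ lives in the half-weighted copies, where its profile $x$ has slope $1/2$. Any partner $Y\subseteq I_1$ has a profile with slope in $\{0,1\}$, and this mismatch is exactly what \cref{lem:halving_measure} forbids. Beyond that, one only has to check routinely that the null set $\{0\}$ causes no trouble in the chain argument of Step 1.
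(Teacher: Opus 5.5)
Your proof is correct and follows the paper's argument essentially verbatim. It uses the same identification of $M$ as a measurable nested matroid via the chain $[0,t]_1\cup[0,t]_2\cup[0,t]_3$, the same bases $A=I_2\cup I_3$ and $B=I_1$ with exchange set $X=I_2$, and the same squeeze forcing $\lambda(Y\cap[0,t]_1)=t/2$, which contradicts \cref{lem:halving_measure}. Your inequalities $y\le x$ and $x\le y$ are just a repackaging of the paper's step of adding the two prefix inequalities for $Y$ and $I_1\setminus Y$.
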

	
	\begin{proof}
		Let $Z(t)\coloneqq [0,t]_1\cup[0,t]_2\cup[0,t]_3$. Then the defining condition for $\cF$ is precisely $\mu(F\cap Z(t))\leq t$ for every $t\in[0,1]$. Hence, $M$ is a measurable nested matroid of the type defined in \cref{sec:nested}.
		
		Set $A\coloneqq I_2\cup I_3$ and $B\coloneqq I_1$. Both $A$ and $B$ are independent, and $\mu(A)=\mu(B)=1$. Since every independent set has measure at most $1$, both $A$ and $B$ are bases.
		
		We claim that the symmetric basis exchange property fails for $X\coloneqq I_2\subseteq A$. Suppose that there exists a measurable set $Y\subseteq B$ such that both $(A\setminus X)\cup Y=I_3\cup Y$ and $(B\setminus Y)\cup X=(I_1\setminus Y)\cup I_2$ are bases. Since all bases have measure $1$, we must have $\mu(Y)=\mu(X)=1/2$.
		
		The independence of $I_3\cup Y$ and $(I_1\setminus Y)\cup I_2$ gives, for every $t\in[0,1]$,
		\[
		\lambda(Y\cap[0,t]_1)\leq\frac{t}{2}
		\qquad\text{and}\qquad
		\lambda((I_1\setminus Y)\cap[0,t]_1)\leq\frac{t}{2}.
		\]
		Adding the two inequalities, we get equality throughout, so $\lambda(Y\cap[0,t]_1)=t/2$ for every $t\in[0,1]$. Identifying $I_1$ with $[0,1]$, this contradicts \cref{lem:halving_measure}. This proves that no such $Y$ exists.
	\end{proof}

	%%%%%%%%%%%%%%%%%%%%%%%%%%%%%%%%
	\section{Further Directions}
	\label{sec:remarks}
	%%%%%%%%%%%%%%%%%%%%%%%%%%%%%%%%
	
	The present paper is the first step in a broader study of measurable matroids. One of our main ongoing directions is the development of a limit theory of finite matroids based on measurable matroids and quotient-convergence. The theory developed here is intended to provide the structural side of such a limit theory: rather than viewing a limit only through its rank function, one would like the limiting object to retain notions such as independence, bases, duality, intersection, and union. Such a theory could provide a bridge between finite matroid theory and measurable combinatorics analogous to the role played by graphons and graphings in graph limit theory. 
	
	We highlight three research directions that arise naturally from the results of this paper.
	
	\paragraph{Limits of finite matroids.}
	A basic problem is to understand more precisely the relation between quotient-convergence of finite matroid rank functions and measurable matroids. In particular, one would like to characterize which measurable matroids arise as quotient-limits of finite matroids, identify natural classes that admit finite approximations, and understand which structural and optimization properties are preserved under convergence. These questions are necessary steps toward a limit theory in which measurable matroids serve as genuine limit objects for finite matroids.
	
	\paragraph{Attainment and exact decompositions.}
	The measurable matroid intersection theorem determines the supremum of the measures of common independent sets, but the supremum need not be attained. The rank-expansion conditions in \cref{sec:intun} give sufficient conditions for attainment, and it would be interesting to understand what other natural assumptions imply the existence of an optimizer.
	
	A similar issue occurs for measurable union. An independent set of the measurable union can be approximated arbitrarily well by unions of independent sets of the original matroids, but need not itself admit such a decomposition. This is also why exceptional sets appear in the packing and basis-exchange applications. It would be useful to characterize classes of measurable matroids for which the raw union family is already $\sigma$-increasing, or for which every independent set of the measurable union admits an exact decomposition. Analogous questions arise for the transversal and matching constructions of \cref{sec:examples}.
	
	\paragraph{Minorizing measures.}
	The results of \cref{sec:extreme_points} show that the measures induced by independent sets are weak-star dense in $\mm(r)$, and those induced by bases are weak-star dense in $\bmm(r)$. Every base measure is an extreme point of $\bmm(r)$, but the converse fails, as shown by the interval-halving example. It remains to understand natural conditions under which every extreme point, or every exposed point, of $\bmm(r)$ is induced by a basis. Even for cycle matroids of graphings, it is not known whether every extreme point of $\bmm(r)$ is exposed.
	
	%%%%%%%%%%%%%%%%%%%%%%%%%%%%%%%%
	\section{Acknowledgements and Disclosures}
	%%%%%%%%%%%%%%%%%%%%%%%%%%%%%%%%
	
	\paragraph{Acknowledgements.}
	
	The authors are grateful to Márton Borbényi, Boglárka Gehér, Anett Kocsis, László Lovász, and László Márton Tóth for helpful discussions.
	
	András Imolay was supported by the EKÖP-KDP-25 University Research Scholarship Program, Cooperative Doctoral Program of the Ministry for Culture and Innovation, from the source of the National Research, Development and Innovation Fund. Ádám Schweitzer was supported by the Knut and Alice Wallenberg Foundation. The research received further support from the Lend\"ulet Programme of the Hungarian Academy of Sciences -- grant number LP2021-1/2021, from the Ministry of Innovation and Technology of Hungary from the National Research, Development and Innovation Fund -- grant numbers ADVANCED 150556 and 153096, and from the Dynasnet European Research Council Synergy project -- grant number ERC-2018-SYG 810115.
	
	\paragraph{Use of generative AI.}
	
	The main mathematical results and the framework of this paper were developed over a long period, before the authors began using advanced language models for mathematical research. In particular, the axiomatic theory of measurable matroids, the matroid intersection and union formulas, the expansion-based attainment theorem, and most of the applications were obtained by the authors independently of AI assistance. During later stages of the project, the authors used ChatGPT (OpenAI, including GPT-5.6 Sol and earlier versions) for grammar checking, language editing, formatting, polishing routine mathematical exposition, checking elementary arguments, and identifying minor errors. The only part of the paper in which AI assistance contributed substantively to the identification of mathematical results is \cref{sec:continuous_b_matching}. The direction studied there was proposed by the authors, while some of the formulations and results were first identified with the assistance of ChatGPT. The corresponding arguments were subsequently checked and rewritten by the authors before being included in the paper. All mathematical arguments and claims produced with AI assistance were verified by the authors, who take full responsibility for the paper.

	%%%%%%%%%%%%%%%%%%%%%%%%%%%%%%%%
	\bibliographystyle{abbrv}
	\bibliography{measurable}
	%%%%%%%%%%%%%%%%%%%%%%%%%%%%%%%%
	
\end{document}